\def\flnm{caf-}
\def\version{12}
\def\datum{Preprint, April 27, 2014}
\let\iflabels\iffalse
\let\ifcitenumber\iftrue
\let\iffilename\iffalse
\let\iftxfnts\iftrue
\let\ifscrltx\iffalse
\let\ifindexindication\iffalse
\newcommand\subsnb{\ifnum\arabic{section}>0\quad\S\arabic{section}%
\ifnum\arabic{subsection}=0
\else.\arabic{subsection}\fi\fi}
\def\@oddfoot{\rm{\footnotesize{File
{\tt\flnm\version.tex}\subsnb}\qquad\today\hfil \thepage}}
\let\@evenfoot\@oddfoot \addtolength{\textheight}{-.4cm}
 \newcommand\il[2]{\label{inl-#1}\indexindication{#2}{l}}
 \newcommand\ir[2]{\indexindication{$#2$}{r}}
 \newcommand\il[2]{\label{inl-#1}}
 \newcommand\ir[2]{}
\definecolor{blue}{rgb}{0,0,1}
\definecolor{red}{rgb}{1,0,0}
\definecolor{green}{rgb}{0,.6,.2}
\definecolor{purple}{rgb}{1,0,1}
\long\def\red#1\endred{\textcolor{red}{#1}}
\long\def\blue#1\endblue{\textcolor{blue}{#1}}
\long\def\purple#1\endpurple{\textcolor{purple}{ #1}}
\long\def\green#1\endgreen{\textcolor{green}{#1}}
\newtheorem{thm}{Theorem}[section]
\newtheorem{prop}[thm]{Proposition}
\newtheorem{cor}[thm]{Corollary}
\newtheorem{lem}[thm]{Lemma}
\newtheorem{mainthm}{Theorem}
\newcounter{rst}
\theoremstyle{definition}
\newtheorem{defn}[thm]{Definition}
\newtheorem{rmk}[thm]{Remark}
\newcommand\rmrk[1]{\medskip\par\noindent\emph{#1. }}
\newcommand\rmrkn[1]{\medskip\par\noindent\emph{#1 }}
\newcommand\rmrke{\medskip\par\noindent\emph{Remark. }}
\newcommand\rmrks{\medskip\par\noindent\emph{Remarks. }}
\newcounter{rmrkcnt}
\renewcommand\thermrkcnt{(\alph{rmrkcnt})}
\newcommand\itmi{\setcounter{rmrkcnt}{0}%
\refstepcounter{rmrkcnt}\thermrkcnt\ \ }
\newcommand\itm{\smallskip\par\noindent%
\refstepcounter{rmrkcnt}\thermrkcnt\ \ }
\newcommand\al{\alpha}
\newcommand\bt{\beta}
\newcommand\Gm{\Gamma}
\newcommand\gm{\gamma}
\newcommand\Dt{\Delta}
\newcommand\dt{\delta}
\newcommand\e{\varepsilon}
\newcommand\eps{\epsilon}
\newcommand\zt{\zeta}
\newcommand\Eta{{\mathsf H}}
\renewcommand\th{\vartheta}
\renewcommand\k{\kappa}
\newcommand\Ld{\Lambda}
\newcommand\ld{\lambda}
\newcommand\s{\sigma}
\newcommand\Ph{\Phi}
\newcommand\ph{\varphi}
\newcommand\ch{\chi}
\newcommand\ps{\psi}
\newcommand\Om{\Omega}
\newcommand\om{\omega}
\newcommand\EE{{\mathbf{E}}}
\newcommand\HH{{\mathbf{H}}}
\newcommand\VV{{\mathbf{V}}}
\newcommand\WW{{\mathbf{W}}}
\newcommand\XX{{\mathbf{X}}}
\newcommand\YY{{\mathbf{Y}}}
\newcommand\CC{\mathbb{C}}
\newcommand\QQ{\mathbb{Q}}
\newcommand\RR{\mathbb{R}}
\newcommand\ZZ{\mathbb{Z}}
\renewcommand\setminus{\smallsetminus}
\let\ovln\=
\renewcommand\={\;=\;}
\newcommand\SL{{\mathrm{SL}}}
\newcommand\PSL{{\mathrm{PSL}}}
\newcommand\SO{{\mathrm{SO}}}
\newcommand\oh{{\mathrm{O}}}
\newcommand\re{\mathrm{Re}\,}
\newcommand\im{\mathrm{Im}\,}
\newcommand\proj[1]{{\mathbb P}^1_{#1}}
\newcommand\hp{{\mathfrak{H}}}
\newcommand\uhp{\hp}
\newcommand\lhp{\hp^-}
\newcommand\coh[2]{\mathbf{r}_{#1}^{#2}}
\newcommand\bcoh[2]{\mathbf{q}_{#1}^{#2}}
\newcommand\parb{{\scriptscriptstyle\mathrm{pb}}}
\newcommand\hpar{H_\parb}
\newcommand\zpar{Z_\parb}
\newcommand\fs{{\om^\ast}}
\newcommand\fsn{{\om^0}}
\newcommand\wdg{{\mathrm{exc}}}
\let\exc\wdg
\newcommand\cusp[1]{S_{\!#1}}
\newcommand\harm{{\mathrm{Harm}}}
\newcommand\sharm{{\mathcal{H}}}
\newcommand\sharmb[1]{{\mathcal{H}^\mathrm{b}_{#1}}}
\newcommand\sharmh[1]{{\mathcal{H}^\mathrm{h}_{#1}}}
\newcommand\shad{{\xi}}
\newcommand\Ci{\iota}
\newcommand\Gmod{{\Gamma(1)}}
\newcommand\tGmod{{\widetilde{\Gmod}}}
\newcommand\bsing{\mathsf{BdSing}\,}
\newcommand\singr{\mathsf{Sing}_r\,}
\newcommand\tG{{\tilde G}}
\newcommand\tZ{{\tilde Z}}
\newcommand\tP{{\tilde P}}
\newcommand\tGm{{\tilde\Gm}}
\newcommand\hol{{\mathcal{O}}}
\newcommand\prj{{\scriptscriptstyle\mathrm{prj}}}
\newcommand\Prj[1]{\mathsf{prj}_{#1}}
\newcommand\indlim{{\displaystyle\lim_{\longrightarrow}\,}}
\newcommand\Gf{\Gamma} % gamma-function
\newcommand\smp{{\mathrm{smp}}}
\newcommand\hypg[2]{{}_{#1}\!F_{\!#2}}
\newcommand\rs{\rho}
\newcommand\rsp{\rs^\prj}
\newcommand\Gr[2]{\mathcal{G}_{\!#1}^{#2}}
\newcommand\N[2]{\mathcal{N}_{\!#1}^{#2}}
\newcommand\I[2]{\mathcal{I}_{\!#1}(#2)}
\newcommand\tess{{\mathcal T}}
\newcommand\fd{{\mathfrak F}}
\newcommand\fdcu{\fd^{\mathrm{cu}}}
\newcommand\pnt{{\hbox{.}}} % slightly bigger dot than \cdot
\newcommand\cu{{\mathcal{C}}}
\newcommand\Q{{\mathcal{Q}}}
\newcommand\R{{\mathcal{R}}}
\newcommand\qA{{}^q\!A}
\newcommand\qC{{}^q\!C}
\newcommand\pr{{\mathrm{pr}}}
\newcommand\sign{{\mathrm{Sign}\,}}
\newcommand\pol{{\mathrm{pol}}}
\newcommand\Pcal{{\mathcal P}}
\newcommand\av[1]{\mathrm{A\!v}_{\!#1}}
\newcommand\avGm[1]{\mathrm{A\!v}_{\Gm,#1}}
\newcommand\cond{{\mathrm{cond}}}
\newcommand\Id{{\mathrm{I}}}
\newcommand\E{{\mathcal E}}
\newcommand\F[1]{F_{\!#1}}
\newcommand\M[1]{\mathrm{M}_{#1}}
\renewcommand\H[1]{\mathrm{H}_{\!#1}}
\renewcommand\P[1]{\mathrm{P}_{\!#1}}
\newcommand\ca{\mathfrak a}
\newcommand\cb{\mathfrak b}
\newcommand\glie{{\mathfrak g}}
\newcommand\hk[1]{[\![#1]\!]}
\newcommand\V[2]{{\mathcal V}_{\!#1}^{#2}}
\newcommand\dsv[2]{\mathcal{D}_{\!#1}^{#2}}
\newcommand\bg{\mathcal{B}}
\newcommand\W[2]{{\mathcal W}_{\!#1}^{#2}}
\newcommand\Whol[2]{{}^{\mathrm{h}}{\mathcal W}_{\!#1}^{#2}}
\newcommand\esv[2]{\mathcal{E}_{#1}^{#2}}
\newcommand\Sg[2]{{\mathcal S}_{#1}^{#2}}
\newcommand\be{\begin{equation}}
\newcommand\ee{\end{equation}}
\newcommand\bad{\be\begin{aligned}}
\newcommand\ead{\end{aligned}\ee}
\newcommand\badl[1]{\be\label{#1}\begin{aligned}}
\newcommand\eadl{\end{aligned}\ee}
\newcommand\hmatc[4]{\left[ {#1\@@atop #3}{#2\@@atop #4}\right]}
\newcommand\hmatr[4]{\left[ {\hfill #1\@@atop\hfill #3}{\hfill
#2\@@atop\hfill #4}\right]}
\newcommand\matc[4]{\left( {#1\@@atop #3}{#2\@@atop #4}\right)}
\newcommand\matr[4]{\left( {\hfill #1\@@atop\hfill #3}{\hfill
#2\@@atop\hfill #4}\right)} \makeatother
\newcommand\wdth{.475\textwidth}
\newcommand\twocolwithpictl[2]{\par\noindent
\refstepcounter{figure}
\begin{minipage}{\wdth}\hspace*{\fill}$#1$\hspace*{\fill}\bigskip\\
\hspace*{\fill}{\sc Figure \arabic{figure}}\hspace*{\fill}
%\hspace*{\fill}$#1{}$\hspace*{\fill}
\end{minipage}
\hspace{\fill}
\begin{minipage}{\wdth}#2\end{minipage}}
\newcommand\twocolwithpictr[2]{\par\noindent
\refstepcounter{figure}\hspace*{-1mm}
\begin{minipage}{\wdth}#1\end{minipage}
\hspace*{\fill}
\begin{minipage}{\wdth}\hspace*{\fill}$#2$\hspace*{\fill}\bigskip\\
\hspace*{\fill}{\sc Figure \arabic{figure}}\hspace*{\fill}
% old version:\hspace*{\fill}$#2${}\hspace*{\fill}
\end{minipage}}
\begin{document}

\title[Automorphic forms and cohomology]{Holomorphic
automorphic forms and cohomology}
\author{Roelof Bruggeman}
\address{Mathematisch Instituut Universiteit Utrecht, Postbus 80010,
3508 TA \ Utrecht, Nederland}
\email{r.w.bruggeman@uu.nl}
\author{YoungJu Choie}
\address{Dept.~of Mathematics and PMI, Postech, Pohang, Korea
1790--784}
\email{yjchoie@gmail.com}
\author{Nikolaos Diamantis}
\address{School of Mathematical Sciences, University of Nottingham,
Nottingham NG7 2RD, United Kingdom}
\email{nikolaos.diamantis@nottingham.ac.uk}
\date{Version \version, \datum}

\begin{abstract}We investigate the correspondence between holomorphic
automorphic forms on the upper half-plane with complex weight and
parabolic cocycles. For integral weights at least $2$ this
correspondence is given by the Eichler integral. We use Knopp's
generalization of this integral to real weights, and apply it to
complex weights that are not an integer at least~$2$. We show that
for these weights the generalized Eichler integral gives an injection
into the first cohomology group with values in a module of
holomorphic functions, and characterize the image. We impose no
condition on the growth of the automorphic forms at the cusps. Our
result concerns arbitrary cofinite discrete groups with cusps, and
covers exponentially growing automorphic forms, like those studied by
Borcherds, and like those in the theory of mock automorphic forms.

For real weights that are not an integer at least~$2$ we similarly
characterize the space of cusp forms and  the space of entire
automorphic forms. We give a relation between the cohomology classes
attached to holomorphic automorphic forms of real weight and the
existence of harmonic lifts.

A tool in establishing these results is the relation to cohomology
groups with values in modules of ``analytic boundary germs'', which
are represented by harmonic functions on subsets of the upper
half-plane. It turns out that for integral weights at least~$2$ the
map from general holomorphic automorphic forms to cohomology with
values in analytic boundary germs is injective. So cohomology with
these coefficients can distinguish all holomorphic automorphic forms,
unlike the classical Eichler theory.
\end{abstract}

\keywords{holomorphic automorphic form, Eichler integral,
cohomology, mixed parabolic cohomology, period function, harmonic
lift, harmonic functions, boundary germ}

\subjclass[2010]{11F67, 1175 (primary), 11F12, 22E40 (secondary)}
%\item[11F12] Automorphic forms, one variable
%\item[11F67] Special values of automorphic $L$-series, periods of
%   modular forms, cohomology, modular symbols \textbf{(primary)}
%\item[11F75] Cohomology of arithmetic groups
%\item[22E40] Discrete subgroups of Lie groups

\maketitle

\setcounter{tocdepth}{1} \tableofcontents

%%%%%%%%%%%%%%%%%%%%%%%%%%%%%%%%%%%%%%%%%%%%%%%%%%%%%%%%%%
\section*{Introduction}

Classically, the interpretation of holomorphic modular forms of
integral weight on the complex upper half-plane $\uhp$ in terms of
group cohomology has provided a tool that has had many important
applications to the geometry of modular forms, the study of their
periods, the arithmetic of special values of their $L$-functions, for
instance in \cite{Sh59, Kn74, Ma73, KZ84}.

A similar interpretation for Maass forms had to wait until the
introduction of periods of Maass forms given by Lewis and Zagier
\cite{Le97, LZ1}. The analogue of Eichler cohomology and the
Eichler-Shimura isomorphism for Maass forms of weight zero was
established in \cite{BLZm}.

We recall that Eichler \cite{Ei57} attached a cocycle $\ps$ to
meromorphic automorphic forms $F$ of weight~$k\in 2\ZZ_{\geq 1}$ by
\be\label{cocdefE} \ps_{F,\gm}(t) \= \int_{\gm^{-1}z_0}^{z_0} F(z)\,
(z-t)^{k-2}\, dt\,.
\ee
This cocycle has values in the space of polynomial functions of degree
at most $k-2$, with the action of weight $2-k$. The action of a
Fuchsian group $\Gm$ is induced by the action $|_{2-k}$ on functions
$f: \uhp \to \CC$, and is given by
$$(f|_{2-k} \gm)(z):=(cz+d)^{k-2} f(\gm z).$$
The class of the cocycle does not depend on the base point $z_0$
in~$\uhp$. To get independence of the integrals on the path of
integration $F$ is supposed to have zero residue at all its
singularities. This is the case for cusp forms~$F$. In that case we
can put the base point $z_0$ at a cusp, and arrive at so-called
parabolic cocycles.

For cusp forms for the modular group $\Gamma(1)
= \SL_2(\ZZ)$ one takes $z_0$ at~$\infty$. Then the cocycle is
determined by its value on $S=\matr0{-1}10$. One calls $\ps_{F,S}$ a
\emph{period polynomial} of~$F$, whose coefficients are values of the
$L$-function of~$F$ at integral points in the critical strip.

Knopp \cite{Kn74} generalized this approach to automorphic forms with
arbitrary real weight. Then a multiplier system is needed in the
transformation behavior of holomorphic automorphic forms. The factor
$(z-\nobreak t)^{k-2}$ becomes ambiguous if one replaces the positive
even weight $k$ by a real weight~$r$. Knopp solves this problem by
replacing $t$ by $\bar t$ for points $t\in\uhp$, and restoring
holomorphy by complex conjugation of the whole integral. The values
of the resulting cocycle are holomorphic functions on the upper
half-plane. Knopp \cite{Kn74} shows that for cusp forms $F$ they have
at most polynomial growth as $t$ approaches the boundary. In this way
he obtains an antilinear map between the space of cusp forms and the
first cohomology group with values in a module of holomorphic
functions with polynomial growth. He showed, \cite{Kn74}, that for
many real weights, this map is a bijection, and conjectured this for
all $r\in \RR$. Together with Mawi \cite{KM10} he proved it for the
remaining real weights.

For positive even weights this seems to contradict the classical
results of Eichler \cite{Ei57} and Shimura \cite{Sh59}, which imply
that the parabolic cohomology with values in the polynomials of
degree at most $k-2$ is isomorphic to the direct sum of the space of
cusp forms of weight~$k$ and its complex conjugate. The apparent
contradiction is explained by the fact that Knopp uses a larger
module for the cohomology. Half of the cohomology classes for the
classical situation do not survive the extension of the module.
\smallskip

In the modular case the period function of a modular cusp form of
positive even weight satisfies functional equations
({\it Shimura-Eichler relations}). Zagier noticed that a functional
equation with a similar structure occurs in Lewis's discussion in
\cite{Le97} of holomorphic functions attached to even Maass cusp
forms. Together \cite{LZ1} they showed that there is a cohomological
interpretation. In \cite{BLZm} this relation is extended to arbitrary
cofinite discrete groups of motions in the upper half-plane and Maass
forms of weight zero with spectral parameters in the vertical strip
$0<\re s < 1$. It gives an isomorphism between spaces of Maass cusp
forms of weight $0$ and a number of parabolic cohomology groups, and
for the spaces of all invariant eigenfunctions to larger cohomology
groups.
\smallskip

In this paper we study relations between the space of automorphic
forms without growth condition at the cusps and various parabolic
cohomology groups. We use the approach of \cite{BLZm} in the context
of holomorphic automorphic forms for cofinite discrete groups of
motions in the upper half-plane that have cusps. Like in \cite{BLZm}
we do not need to impose growth conditions at the cusps, and speak of
\emph{unrestricted} holomorphic automorphic forms. We take the module
of holomorphic functions in which the cocycles take their values as
small as possible. That means the classical space of polynomials of
degree at most $k-2$ for weights $k\in \ZZ_{\geq 2}$. Also for other
weights we use a smaller module than Knopp \cite{Kn74}. To avoid the
complex conjugation we use modules of holomorphic functions on the
lower half-plane~$\lhp$.  The flexibility gained by considering
functions on the lower half plane was recognized by the last two
authors \cite{CD7} in the context of Eichler cohomology associated to
non-critical values of $L$-functions.  It turns out that, for
the main results, working with arbitrary weights in
$\CC\setminus \ZZ_{\geq 2}$ takes no more effort than working with
real weights; so that is the generality that we choose where
possible. We shall show that the definition in \eqref{cocdefE},
suitably interpreted, gives a bijection between the spaces of
unrestricted holomorphic automorphic forms and several isomorphic
parabolic cohomology groups.

There are several motivations and potential applications for this.
Knopp's approach could ``see'' only cusp forms, we work with smaller
modules of analytic vectors in a highest weight subspace of a
principal series representation, and obtain a cohomological
description of all automorphic forms. In particular, this covers the
case of automorphic forms with exponential growth at the cusps. This
case is important especially because of its prominent role in
Borcherds's theory \cite{Bo} and in the theory of mock modular forms.

In the same way that representation theory has provided an important
unified setting for holomorphic and Maass forms, our construction
reflects a common framework for the cohomology of holomorphic and
Maass forms.

There are a lot of important relations between the theory of
cohomology of modular forms and various problems in number theory.
For instance, Zagier \cite{Za93} gives a new elementary proof of the
Eichler-Selberg trace formula using the explicit description of the
action of Hecke operators on the space of cohomology groups. In the
same paper Zagier connects cocycles with double zeta values, in which
many interesting further results are developed recently (\cite{IKZ6},
\cite{Za13}). Another application is the possibility of an
interpretation of the higher Kronecker limit formula in terms of
cohomology group~\cite{VZ13}. 

Finally, we note that one of striking applications of Eichler
cohomology concerns algebraicity results for critical values of
$L$-functions of classical (integral weight) cusp forms, eg, Manin's
periods theorem \cite{Ma73}, or \cite{Ma72}. The results obtained
were later extended, at least conjecturally, to other values and to
values of derivatives in a manner eventually formalized in the
conjectures of Deligne, Beilinson, Bloch-Kato and others.
See~\cite{KnZ1}.

In the case of values of derivatives, the main pathway to such results
did not involve directly Eichler cohomology. However, for $f$ of
weight $2$, in \cite{Go95}, (resp. \cite{Di99}), $L_f'(1)$ (resp.
$L_f^{(n)}(1)$) is expressed in terms of a ``period'' integral
similar to an Eichler cocycle, when $L_f(1)=0$. Despite the
similarity, this ``period'' integral does not seem at first to have a
direct cohomological interpretation. Nevertheless, in \S\ref{sect-Lf}
we are able to show that, $L_f'(1)$ can be expressed as a derivative
with respect to a the parameter of a family of  parabolic 
cocycles $r\mapsto \ps^\infty_{f_r,\cdot}$
associated to a family $r\mapsto f_r$ of automorphic forms. With
\cite{Di99}, similar expressions can be proved for higher
derivatives. We hope that better insight into the cohomology whose
foundations we establish here should yield information about the
algebraic structure of derivatives of $L$-functions along the lines
of the algebraicity results for critical values derived with the help
of classical Eichler cohomology.
\medskip

We now proceed with a discussion of the results of this paper. We
avoid many technicalities, and state the main theorems giving only
rough descriptions of the cohomology groups and coefficient modules
involved. In the next sections we define precisely all objects
occurring in the statements.

Let $\Gm$ be a cofinite discrete subgroup of $\SL_2(\RR)$ with cusps.
We take a \emph{complex} weight $r\in \CC$ and an associated
multiplier system $v:\Gm\rightarrow\CC^\ast$. We denote by
$A_r(\Gm,v)$ the space of all holomorphic functions
$F:\uhp\rightarrow\CC$ such that
\[ F(\gm z) \= v(\gm)\, (cz+d)^r\, F( z)\qquad \text{for all }\gm =
\matc\ast\ast cd\in \Gm,\; z\in \uhp\,.\]

For a fixed $z_0\in \uhp$ and an $F\in A_r(\Gm,v)$ consider the map
$\ps_{F}^{z_0}:\gm \mapsto \ps_{F,\gm}^{z_0}$ on $\Gm$, where
$\ps_{F,\gm}^{z_0}$ is the function of $t\in \lhp$ given by
$t$\ir{psiz0def}{\ps_{F,\ast}^{z_0}}
\be\label{psiz0def} \ps_{F,\gm}^{z_0}(t) \;:=\;
\int_{\gm^{-1}z_0}^{z_0} (z-t)^{r-2}\, F(z)\, dz\,. \ee
 We take the branch of $(z-\nobreak t)^{r-2}$ with
$-\frac\pi2<\arg(z-\nobreak t)
<\frac{3\pi}2$.

Our first main theorem is:
\begin{mainthm}\label{THMac}Let $\Gm$ be a cofinite discrete subgroup
of~$\SL_2(\RR)$ with cusps. Let $r\in \CC\setminus \ZZ_{\geq 2}$, and
let $v$ be an associated multiplier system.
\begin{enumerate}
\item[i)] The assignment $\ps_F^{z_0}:\gm\mapsto \ps^{z_0}_{F,\gm}$ is
a cocycle, and $F\mapsto \ps_F^{z_0}$ induces an injective linear map
\be \coh r \om:A_r(\Gm,v) \rightarrow H^1(\Gm;\dsv{v,2-r}\om)\,.\ee
Here $\dsv{v,2-r}\om$ denotes a space of holomorphic functions on the
lower half-plane $\lhp$ that are holomorphically continuable to a
neighbourhood of $\lhp\cup\RR$, together with an action depending
on~$v$.
\item[ii)] The image \il{cohrom-thm}{$\coh r \om$}$\coh r \om
A_r(\Gm,v)$ is equal to the mixed parabolic cohomology group
$\hpar^1\bigl(\Gm;\dsv{v,2-r}\om,\dsv{v,2-r}{\fsn,\wdg} \bigr)$,
which consists of elements of $H^1(\Gm;\dsv{v,2-r}\om)$ represented
by cocycles whose values on parabolic elements of $\Gm$ satisfy
certain additional conditions at the cusps.
\end{enumerate}
\end{mainthm}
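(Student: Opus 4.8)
The plan is to prove Theorem~\ref{THMac} in two movements: first a purely cohomological step, reducing it to a statement about cocycles with values in the larger module $\dsv{v,2-r}\om$, and second an analytic step that identifies the image inside $H^1(\Gm;\dsv{v,2-r}\om)$ with the mixed parabolic subgroup. For part~(i), I would first verify the cocycle relation: from \eqref{psiz0def} the concatenation identity $\int_{\gm_1^{-1}\gm_2^{-1}z_0}^{z_0} = \int_{\gm_1^{-1}\gm_2^{-1}z_0}^{\gm_1^{-1}z_0} + \int_{\gm_1^{-1}z_0}^{z_0}$ combined with the transformation behaviour of $F$ under $\Gm$ and the invariance of the $1$-form $(z-t)^{r-2}F(z)\,dz$ under the weight-$(2-r)$ action yields $\ps^{z_0}_{F,\gm_1\gm_2} = \ps^{z_0}_{F,\gm_1}|_{2-r}\gm_2 + \ps^{z_0}_{F,\gm_2}$; changing $z_0$ changes the cocycle by a coboundary, so the class is well defined. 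Linearity in $F$ is immediate. For injectivity I would argue that if $\ps^{z_0}_F$ is a coboundary, i.e. $\ps^{z_0}_{F,\gm} = h|_{2-r}\gm - h$ for some $h\in\dsv{v,2-r}\om$, then the holomorphic function $z\mapsto$ (the Eichler-type primitive of $F$) minus $h$ descends to a $\Gm$-invariant object of weight $2-r$; a growth/holomorphy argument on $\uhp$ forces $F=0$, using crucially that $r\notin\ZZ_{\geq 2}$ so that the relevant space of "polynomial" obstructions is trivial.

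For part~(ii) the plan is the standard two inclusions. One inclusion, that $\coh r\om A_r(\Gm,v)$ lands in the mixed parabolic group, is an analytic local computation at each cusp: if $\gm$ is parabolic fixing a cusp $\cusp{}$, one expands $F$ in its Fourier expansion at that cusp and shows that $\ps^{z_0}_{F,\gm}$, transported to the cusp, decomposes into a part lying in $\dsv{v,2-r}{\fsn,\wdg}$ (coming from the non-negative, including exponentially growing, Fourier terms) plus a coboundary term, which is exactly the defining condition of $\hpar^1$. The reverse inclusion — every class satisfying the parabolic conditions comes from an automorphic form — is the harder direction. Here I expect to invoke the machinery relating $\dsv{v,2-r}\om$ to the module of analytic boundary germs advertised in the abstract: given a parabolic cocycle, one builds from its local data at the cusps and an averaging/Poincaré-series construction a harmonic (here, holomorphic) function on $\uhp$ whose coboundary is the given cocycle, and whose automorphy defect vanishes precisely because the mixed parabolic conditions were imposed; differentiating or taking the appropriate shadow recovers $F\in A_r(\Gm,v)$.

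The main obstacle is this surjectivity onto the mixed parabolic cohomology in part~(ii): one must produce a genuine holomorphic automorphic form from cohomological data with no growth control, and the only available handle is the comparison with boundary-germ cohomology, where the parabolic conditions translate into the existence of a global harmonic representative. Making that comparison precise — defining $\dsv{v,2-r}{\fsn,\wdg}$ and the semilocal conditions at the cusps correctly so that the boundary-germ resolution is exact — is where the real work lies, and I would expect it to occupy the bulk of the paper's technical sections, with the cocycle/injectivity part of~(i) being comparatively routine. A secondary subtlety is the choice of branch of $(z-t)^{r-2}$ with $-\tfrac\pi2<\arg(z-t)<\tfrac{3\pi}{2}$: one must check throughout that paths of integration from $\gm^{-1}z_0$ to $z_0$ stay in a region where this branch is single-valued and that the resulting functions genuinely extend holomorphically across $\RR$ into a neighbourhood of $\lhp\cup\RR$, so that they lie in $\dsv{v,2-r}\om$ as claimed.
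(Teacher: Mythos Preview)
Your overall architecture is right: the cocycle property and base-point independence are routine (this is the paper's Proposition~\ref{prop-cohrom}), the inclusion $\coh r\om A_r(\Gm,v)\subset\hpar^1$ is indeed a local Fourier computation at each cusp (the paper's \S\ref{sect-constr-peq} and Theorem~\ref{thm-imafwdg}), and the reverse inclusion does go through boundary germs and an averaging construction (the paper's \S\ref{sect-cocaf}).

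The genuine gap is your injectivity argument in part~(i). You write that if $\ps^{z_0}_F=dh$ with $h\in\dsv{v,2-r}\om$, then ``the Eichler-type primitive of $F$ minus $h$'' is $\Gm$-invariant and a growth argument forces $F=0$. But for $r\notin\ZZ_{\geq2}$ there is no holomorphic Eichler primitive on~$\uhp$. The natural candidate is $Q_F(t)=\int_{z_0}^{\bar t}(z-t)^{r-2}F(z)\,dz$ on $\lhp$ (the paper's \eqref{QF}), which does satisfy $Q_F|_{v,2-r}(\gm-1)=\ps^{z_0}_{F,\gm}$, but $Q_F$ is \emph{not holomorphic} in~$t$; after applying $\Ci$ it is only $(2-\bar r)$-harmonic. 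So $Q_F-h$ being $\Gm$-invariant gives you a harmonic automorphic form whose shadow is a nonzero multiple of $F$ --- this is exactly the content of Theorem~\ref{THMhl} --- which tells you $F$ has a harmonic lift, not that $F=0$. No growth or holomorphy argument will close this, because harmonic lifts of nonzero forms genuinely exist.

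The paper does not prove injectivity separately. Instead it uses the boundary-germ isomorphism $\rs_r:\esv{v,r}\om\stackrel{\cong}{\to}\dsv{v,2-r}\om$ (valid precisely when $r\notin\ZZ_{\geq2}$) to transport the problem, and constructs an explicit left inverse $\al_r:\hpar^1(\Gm;\esv{v,r}\om,\esv{v,r}{\fsn,\wdg})\to A_r(\Gm,v)$ (Proposition~\ref{prop-alr}): one lifts the cocycle to representatives in $\Gr{v,r}\om$, sums their values around a large cycle in a tesselation of~$\uhp$, and a Cauchy-type argument shows the result is holomorphic and automorphic. Injectivity of $\al_r$ (Proposition~\ref{prop-alr-inj}) together with $\al_r\circ\bcoh r\om=\mathrm{id}$ then gives injectivity and surjectivity of $\coh r\om$ simultaneously (Theorem~\ref{thmbg} and \S\ref{sect-recap-THMac}). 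Your description of the surjectivity step is close in spirit but slightly off: one does not build a function whose \emph{coboundary} is the cocycle and then take a shadow; one directly sums translates of the cocycle values, viewed as $r$-harmonic functions near $\proj\RR$, to obtain the automorphic form itself.
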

This result is comparable to Theorem~C in Bruggeman, Lewis, Zagier
\cite{BLZm} where a linear injection of Maass forms of weight $0$
into a cohomology group is established.

The proof of Theorem~\ref{THMac} will require many steps, and will be
summarized in Subsection~\ref{sect-recap-THMac}.\smallskip

We characterize the images under $\coh r \om$ of the spaces $\cusp
r(\Gm,v)$ of cusps forms and $M_r(\Gm,v)$ of entire automorphic
forms:
\begin{mainthm}\label{THMcc}Let $\Gm$ be a cofinite discrete subgroup
of~$\SL_2(\RR)$ with cusps. Let $r\in \RR$ and let $v$ be a
\emph{unitary} multiplier system on $\Gm$ for the weight~$r$.
\begin{enumerate}
\item[i)] If $r\in\RR\setminus\ZZ_{\geq 2}$
\[ \coh r \om \, \cusp r(\Gm,v) \= \hpar^1(\Gm; \dsv{v,2-r}\om,
\dsv{v,2-r}{\fsn,\infty,\wdg})\,, \]
where $\dsv{v,2-r}{\fsn,\infty,\wdg}$ is a subspace of
$\dsv{v,2-r}{\fsn,\wdg}$ defined by a smoothness condition.
\item[ii)] If $r\in \RR\setminus\ZZ_{\geq 1}$
\[ \coh r \om \, M_ r(\Gm,v) \= \hpar^1(\Gm; \dsv{v,2-r}\om,
\dsv{v,2-r}{\fsn,\smp,\wdg})\,, \]
with the $\Gm$-module $\dsv{v,2-r}{\fsn,\smp,\wdg}\supset
\dsv{v,2-r}{\fsn,\infty,\wdg}$ also contained in
$\dsv{v,2-r}{\fsn,\wdg}$.
\end{enumerate}
\end{mainthm}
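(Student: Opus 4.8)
The plan is to deduce Theorem~\ref{THMcc} from Theorem~\ref{THMac} by refining, at each cusp, the local analysis of the values of the cocycle $\ps_F^{z_0}$ on parabolic generators. Theorem~\ref{THMac}(ii) already identifies $\coh r\om A_r(\Gm,v)$ with the mixed parabolic cohomology group built from the pair of modules $\bigl(\dsv{v,2-r}\om,\dsv{v,2-r}{\fsn,\wdg}\bigr)$; so in both parts of Theorem~\ref{THMcc} we only have to show that the extra analytic conditions defining $\cusp r(\Gm,v)$ (respectively $M_r(\Gm,v)$) inside $A_r(\Gm,v)$ translate, under $\coh r\om$, into the extra conditions that cut out the submodule $\dsv{v,2-r}{\fsn,\infty,\wdg}$ (respectively $\dsv{v,2-r}{\fsn,\smp,\wdg}$) inside $\dsv{v,2-r}{\fsn,\wdg}$. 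Concretely, I would fix a cusp, conjugate it to $\infty$ by a scaling matrix, expand $F$ in its Fourier series there, and compute the asymptotic behaviour of $\ps_{F,\pi}^{z_0}(t)$ as $t\to\infty$ in $\lhp$ for $\pi$ the generator of the stabilizer of that cusp. The Fourier coefficients with negative index control the ``singularity'' part of the boundary germ, the zero-th coefficient contributes the term governed by the weight, and the positive-index part contributes an exponentially small remainder.

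The key steps, in order, are: (1) recall that $F\in\cusp r(\Gm,v)$ iff at every cusp the Fourier expansion of $F$ (in the appropriate local coordinate, twisted by $v$) has only terms with strictly positive exponent, while $F\in M_r(\Gm,v)$ iff it has only terms with non-negative exponent; (2) for a single parabolic element $\pi$ fixing $\infty$, split the defining integral \eqref{psiz0def} along a path through $\infty$ and integrate term-by-term the Fourier series, as is done in the construction underlying Theorem~\ref{THMac}; (3) show that the ``non-negative-exponent'' part of $F$ produces a value lying in $\dsv{v,2-r}{\fsn,\infty,\wdg}$, and that the strictly-positive-exponent part produces a value in $\dsv{v,2-r}{\fsn,\smp,\wdg}$ — here one uses that a positive Fourier exponent gives a holomorphic boundary germ (exponential decay), whereas the exponent $0$ gives only the weaker smoothness built into $\dsv{v,2-r}{\fsn,\infty,\wdg}$; (4) conversely, show these conditions are also necessary: if the cocycle value at a cusp lies in the smaller module, the corresponding Fourier coefficients of $F$ with the ``bad'' exponents must vanish. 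Step (4) uses injectivity of $\coh r\om$ from Theorem~\ref{THMac}(i) together with the explicit inversion of the Eichler-type integral, reading off each offending Fourier coefficient from the germ. Finally, one does this simultaneously at all cusps and checks compatibility with the mixed parabolic cohomology formalism to assemble the two equalities.

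The restriction to real $r$ and unitary $v$ enters precisely in making the Fourier analysis at the cusps clean: unitarity of $v$ means $|v(\pi)|=1$, so the local expansion is an honest Fourier series with real exponents and the notions of ``positive/zero/negative exponent'' behave as expected; for general complex $r$ the scaling $(cz+d)^r$ with the exponential translation part would mix growth and oscillation and the dichotomy underlying Theorem~\ref{THMcc} would blur. The main obstacle I expect is step (3)–(4): pinning down exactly which smoothness class on $\RR$ (the modules $\dsv{v,2-r}{\fsn,\infty,\wdg}$ and $\dsv{v,2-r}{\fsn,\smp,\wdg}$) is hit by the zero-th Fourier term, as a function of the weight $r$, and showing this matches the a priori definitions of those modules — this is where the parameter ranges $r\notin\ZZ_{\ge2}$ in (i) and $r\notin\ZZ_{\ge1}$ in (ii) come from, since at those excluded integers the power $(z-t)^{r-2}$ acquires logarithmic or polynomial degeneracies that change the boundary-germ class. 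A careful term-by-term estimate of the tail of the integrated Fourier series, uniform as $t\to\RR$, together with the already-established properties of the coefficient modules, should close the argument.
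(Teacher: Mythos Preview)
Your overall strategy---reduce to Theorem~\ref{THMac}, work cusp by cusp after conjugating to~$\infty$, and split the Fourier expansion of $F$ into positive, zero, and negative exponent parts---is exactly what the paper does. But two points need correction.

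First, in your step (3) you have the modules swapped. The strictly-positive-exponent (cuspidal) part lands in the \emph{smaller} module $\dsv{v,2-r}{\fsn,\infty,\wdg}$ (this is Lemma~\ref{lem-inf}), while the zero-exponent (constant) term, when present, only lands in the larger module $\dsv{v,2-r}{\fsn,\smp,\wdg}$: the explicit antiderivative in Lemma~\ref{lem-peq-cst} shows $(\Prj{2-r}h)(t)$ has an expansion starting at $t^{1}$, i.e.\ a first-order pole, not a smooth extension. This is why cusp forms correspond to the condition~``$\infty$'' and entire forms to the condition~``$\smp$''.

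Second, and more seriously, your step (4) is where the real content lies, and the phrase ``explicit inversion of the Eichler-type integral, reading off each offending Fourier coefficient from the germ'' does not capture the mechanism. There is no direct inversion. What the paper does is: given that the class lies in the smaller mixed parabolic group, there exists some $h_\ca$ in $\dsv{v,2-r}{\om,\infty,\wdg}[\infty]$ (resp.\ $\dsv{v,2-r}{\om,\smp,\wdg}[\infty]$) solving the parabolic equation. One also has the \emph{constructed} solutions $h_c$, $h_0$, $h_e$ from Lemmas~\ref{lem-peq-cusp}, \ref{lem-peq-cst}, \ref{lem-expgr} for the three Fourier pieces. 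The difference $h_\ca - (h_c+h_0+h_e)$ is a $\ld$-periodic function with a one-sided asymptotic expansion at~$\infty$. The key lemma you are missing is Lemma~\ref{lem-ldpr}: a $\ld$-periodic function in $\dsv{2-r}\om[\infty]$ with such an expansion must vanish (here $|\ld|=1$ is essential---this is precisely where unitarity of $v$ enters). Once the periodic ambiguity is killed, $h_e = h_\ca - h_c - h_0$ acquires a \emph{two-sided} asymptotic expansion, and Part~ii) of Lemma~\ref{lem-expgr} then forces $E_e=0$, i.e.\ no negative Fourier terms. This two-step uniqueness argument (first kill the periodic part, then use the non-existence of two-sided expansions for the exponentially-growing piece) is the heart of the converse direction; your proposal does not identify it. The exclusion of $r=1$ in Part~ii) comes specifically from the logarithm in Lemma~\ref{lem-peq-cst}, and the exclusion of $r\in\ZZ_{\ge2}$ in both parts from the exceptional cases in Lemma~\ref{lem-ldpr}.
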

Here we give only a result for real weights. It seems that for
non-real weights the cusp forms do not form a very special subspace
of the space of all automorphic forms. There is, as far as we know,
no nice bound for the Fourier coefficients and it seems hard to
define $L$-functions in a sensible way.

In Theorems \ref{THMac} and~\ref{THMcc} automorphic forms of weight
$r$ are related to cohomology with values in a module with the ``dual
weight'' $2-\nobreak r$.\il{dw0}{dual weight}
\smallskip

The characterization in Theorems \ref{THMac} and~\ref{THMcc} of the
images of spaces of automorphic forms is one of several possibilities
given in Theorem~\ref{THMiso}, which we state in
Subsection~\ref{sect-isocg}, after some $\Gm$-modules
containing~$\dsv{v,2-r}\om$ have been defined. There we see that the
map $\coh r \om$ in Theorem~\ref{THMac} is far from surjective. In
Section~\ref{sect-qaf} we discuss a space of \emph{quantum
automorphic forms}, for which there is, if $r\not\in \ZZ_{\geq 1}$, a
surjection to the space $H^1(\Gm;\dsv{v,2-r}\om)$.
\smallskip

In \S\ref{sect-KM} we will compare Part~i) of Theorem~\ref{THMcc} to
the main theorem of Knopp and Mawi \cite{KM10}, which gives an
isomorphism $\cusp r
(\Gm,v) \rightarrow H^1(\Gm;\dsv{v,2-r}{-\infty})$ for some
larger $\Gm$-module $\dsv{v,2-r}{-\infty}\supset \dsv{v,2-r}\om$. The
combination of the theorem of Knopp and Mawi with Theorem~\ref{THMac}
shows that there are many automorphic forms $F\in A_r(\Gm,v)$ for
which $\coh r \om F $ is sent to zero by the natural map
$H^1(\Gm;\dsv{v,2-r}\om)\rightarrow H^1(\Gm;\dsv{v,2-r}{-\infty})$.
This means that the cocycle $\gm\mapsto\ps_{F,\gm}^{z_0}$ becomes a
coboundary when viewed over the module $\dsv{v,2-r}{-\infty}$, i.e.,
that there is $\Ph\in \dsv{v,2-r}{-\infty}$ such that
$\ps_{F,\gm}^{z_0}=\Ph|_{v,2-r}(\gm-\nobreak 1)$ for all $\gm\in
\Gm$.
\smallskip

The following result relates the vanishing of the cohomology class of
$\gm\mapsto\ps_{F,\gm}^{z_0}$ over a still larger module $
\dsv{v,2-r}{-\om}\supset\dsv{v,2-r}{-\infty}$ to the existence of
harmonic lifts, a concept that we will discuss in Subsections
\ref{sect-hlaf} and~\ref{sect-prfhl}.
\begin{mainthm}\label{THMhl}Let $\Gm$ be a cofinite discrete subgroup
of $\SL_2(\RR)$ with cusps. Let $r\in \CC$ and let $v$ be a
multiplier system for the weight~$r$. The following statements are
equivalent for $F\in A_r(\Gm,v)$:
\begin{enumerate}
\item[a)] The image of $\coh r \om F$ under the natural map
$H^1(\Gm;\dsv{v,2-r}\om)\rightarrow H^1(\Gm;\dsv{v,2-r}{-\om})$
vanishes.
\item[b)] The automorphic form $F$ has a harmonic lift in
$\harm_{2-\bar r}(\Gm,\bar v)$; ie, $F$ is in the image of the
antilinear map $ \harm_{2-\bar r}(\Gm,\bar v) \rightarrow A_r(\Gm,v)$
given by $H\mapsto 2i y^{2-r} \,\overline{\partial_{\bar
z} H}$.
\end{enumerate}
\end{mainthm}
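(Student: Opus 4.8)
The plan is to prove the equivalence by passing through an intermediate description of the harmonic lift in terms of a period-type integral, and then identifying the coboundary condition over $\dsv{v,2-r}{-\om}$ with the existence of such a lift. First I would recall (from the subsections on harmonic lifts) the structure of $\harm_{2-\bar r}(\Gm,\bar v)$: such an $H$ decomposes locally as a sum of a holomorphic piece and a piece built from the antiholomorphic primitive of $y^{r-2}\overline{F(z)}$. Concretely, if $H\mapsto 2iy^{2-r}\overline{\partial_{\bar z}H}=F$, then $\partial_{\bar z}H$ is (up to the explicit factor) $y^{r-2}\overline{F}$, so $\bar\partial H$ is determined by $F$ and $H$ is determined by $F$ together with a holomorphic function; the holomorphy of the ambiguity is exactly what makes the lift an automorphic object rather than just a local one. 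The key computational input is that an antiderivative of $(z-t)^{r-2}F(z)$ in $z$, for $t$ in the lower half-plane, can be realized by the same kind of non-holomorphic Eichler integral that produces $H$; this is the bridge between the cocycle $\ps^{z_0}_{F,\cdot}$ and the lift.

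Second, I would carry out the direction (b) $\Rightarrow$ (a). Given a harmonic lift $H\in\harm_{2-\bar r}(\Gm,\bar v)$, form the associated non-holomorphic Eichler integral, i.e. a primitive $\tilde F$ on $\uhp$ with $\partial_z\tilde F(t)$ related to $(\ovln{\,z-t\,})^{\,\cdot}$-kernels against $H$; the point is to produce an explicit element $\Ph\in\dsv{v,2-r}{-\om}$ — a function on $\lhp$ with the weaker real-analytic-boundary-germ regularity that distinguishes $\dsv{v,2-r}{-\om}$ from $\dsv{v,2-r}\om$ — such that $\ps^{z_0}_{F,\gm}=\Ph|_{v,2-r}(\gm-1)$ for all $\gm\in\Gm$. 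The verification that $\Ph|_{v,2-r}(\gm-1)$ equals the integral in \eqref{psiz0def} is a Stokes/residue computation: the difference of the two primitives over the path from $\gm^{-1}z_0$ to $z_0$ is exactly $\int_{\gm^{-1}z_0}^{z_0}(z-t)^{r-2}F(z)\,dz$ because the $\bar\partial$-contributions cancel against the automorphy of $H$ under $\bar v$ and the chosen branch of $(z-t)^{r-2}$. Hence $\coh r\om F$ maps to $0$ in $H^1(\Gm;\dsv{v,2-r}{-\om})$.

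Third, the direction (a) $\Rightarrow$ (b). Suppose there is $\Ph\in\dsv{v,2-r}{-\om}$ with $\ps^{z_0}_{F,\gm}=\Ph|_{v,2-r}(\gm-1)$. I would set $G(t):=\int_{z_0}^{t}(z-t')\,$-type primitive minus $\Ph$ — more precisely, I would define a function on $\uhp$ whose $\gm$-translates differ by the globally given $\Ph$, so that it descends to a $\Gm$-invariant (for the weight-$r$ action, with the correct multiplier) object; its failure to be holomorphic is governed by $F$, and rewriting in terms of $\bar z$ gives precisely an $H\in\harm_{2-\bar r}(\Gm,\bar v)$ with $2iy^{2-r}\overline{\partial_{\bar z}H}=F$. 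The regularity of $\Ph$ as an element of $\dsv{v,2-r}{-\om}$ (continuation to a real-analytic boundary germ, rather than a genuine neighbourhood of $\lhp\cup\RR$) is what allows $H$ to be harmonic with the prescribed behaviour but not force $F$ to be a coboundary already over $\dsv{v,2-r}\om$; this matches the discussion immediately preceding the theorem. I expect the main obstacle to be exactly this regularity bookkeeping: showing that the coboundary witness $\Ph$ lies in the correct module $\dsv{v,2-r}{-\om}$ and not something larger, and conversely that the harmonic lift produced in (a) $\Rightarrow$ (b) indeed lands in $\harm_{2-\bar r}(\Gm,\bar v)$ with no spurious singularities at the cusps — in other words, matching the analytic class of the boundary germ on one side with the space of harmonic functions on the other. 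The cocycle identities and the Stokes computations are routine once the branch of $(z-t)^{r-2}$ and the factor $2iy^{2-r}$ are pinned down; the analytic characterization of the modules is where the real work lies, and I would lean on the boundary-germ machinery developed earlier in the paper to close that gap.
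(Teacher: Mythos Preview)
Your overall strategy is right in spirit, but you have misread the module $\dsv{v,2-r}{-\om}$, and this makes you anticipate difficulties that are not there. By Definition~\ref{Dsomdef}(i), $\dsv{2-r}{-\om}$ is simply the space of \emph{all} holomorphic functions on $\lhp$, with no boundary regularity whatsoever; it is not a space of boundary germs, and no ``continuation to a real-analytic boundary germ'' is involved. Consequently statement~a) just says: there exists $\Ph\in\hol(\lhp)$ with $\Ph|_{v,2-r}(\gm-1)=\ps^{z_0}_{F,\gm}$ for all $\gm$. There is no regularity bookkeeping, no cusp analysis, and no boundary-germ machinery needed; the obstacle you identify as ``where the real work lies'' does not exist.

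The paper's proof is correspondingly short and explicit. It writes down the single function
\[
Q_F(t)\;=\;\int_{z_0}^{\bar t}(z-t)^{r-2}F(z)\,dz\qquad(t\in\lhp),
\]
checks directly that $Q_F|_{v,2-r}(\gm-1)=\ps^{z_0}_{F,\gm}$ and that $\Ci Q_F$ is $(2-\bar r)$-harmonic on $\uhp$ with $\shad_{2-\bar r}(\Ci Q_F)$ a nonzero constant times $F$. After that the equivalence is a chain of tautologies: a coboundary witness $\Ph\in\hol(\lhp)$ exists iff $\Ph-Q_F$ is $\Gm$-invariant, iff (applying $\Ci$) $M-\Ci Q_F$ is invariant with $M$ holomorphic, iff $H:=M-\Ci Q_F\in\harm_{2-\bar r}(\Gm,\bar v)$ with $\shad_{2-\bar r}H$ a multiple of $F$. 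No Stokes or residue computation, no path manipulations beyond the one-line verification of the coboundary identity. Your proposal gestures at the same bridge (a primitive linking the cocycle to a harmonic function) but never writes it down; once you see that the endpoint of the integral is $\bar t$ and that $\dsv{v,2-r}{-\om}=\hol(\lhp)$, the argument collapses to a few lines.
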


We prove this theorem in Subsection~\ref{sect-prfhl}. Combining the
theorem of Knopp and Mawi \cite{KM10} with Theorem~\ref{THMhl} we
obtain the existence of harmonic lifts in many cases. See
Theorem~\ref{thm-c-hl}  and Corollary~\ref{cor-Kra}.
\medskip

\rmrk{Boundary germs}An essential aspect of the approach in
\cite{BLZm} is the use of ``analytic boundary germs''. These germs
form $\Gm$-modules isomorphic to the modules in \cite{BLZm}
corresponding to $\dsv {v,2-r}\om$ and $\dsv{v,2-r}{\fsn,\wdg}$ in
our case. In \cite{BLZm} the boundary germs are indispensable for the
proof of the surjectivity of the map from Maass forms of weight zero
to cohomology. The same holds for this paper.

In Sections \ref{sect-bg}--\ref{sect-hws} we study the spaces of
boundary germs that are relevant for our present purpose. In
particular we define spaces $\esv{v,r}\om$ and $\esv{v,r}{\fsn,\wdg}$
that are for weights in $\CC\setminus \ZZ_{\geq 2}$ isomorphic to
$\dsv{v,2-r}\om$ and $\dsv{v,2-r}{\fsn,\wdg}$, respectively. In
Theorem~\ref{thmbg} we obtain, for all complex weights~$r$, an
injective map
\be \bcoh r \om : A_r(\Gm,v) \rightarrow H^1(\Gm;\esv{v,r}\om)\ee
and study the image.

For weights $ r\in\CC\setminus\ZZ_{\geq 2}$ we use Theorem~\ref{thmbg}
in the proof of Theorem~\ref{THMac}. Theorem~\ref{thmbg} is also
valid for weights in $\ZZ_{\geq 2}$. For these weights it leads to
the following result:
\begin{mainthm}\label{THMaci}
Let $r\in \ZZ_{\geq 2}$, let $\Gm$ be a cofinite discrete subgroup of
$\SL_2(\RR)$ with cusps, and let $v$ be a multiplier system on $\Gm$
with weight~$r$.
\begin{enumerate}
\item[i)] Put \il{crt}{$c_r$}$c_r = \frac i{2\,(r-1)!}$, let $\rs_r$
denote the natural morphism $\esv{v,r}\om \rightarrow
\dsv{v,2-r}\om$, and let $\dsv{v,2-r}\pol$ denote the submodule of
$\dsv{v,2-r}\om$ consisting of polynomial functions of degree at most
$r-2$. The following diagram commutes:
\badl{Ediag} \xymatrix@C=.5cm{ H^1(\dsv{v,r}\om) \ar[r]
& H^1(\esv {v,r} \om) \ar[rr]^{\rs_r}
&& H^1(\dsv{v,2-r}\pol) \ar[r]
&0\\
\hpar^1(\dsv{v,r}\om,\dsv{v,r}{\fsn,\wdg})
\ar[r] \ar@{^{(}->}[u]
& \hpar^1(\esv{v,r}\om,\esv {v,r}{\fsn,\wdg}) \ar[rr]^{\rs_r}
 \ar@{^{(}->}[u]
&& \hpar^1(\dsv{v,2-r}\pol) \ar@{^{(}->}[u]
\\
A_{2-r}(v) \ar[r]^{c_r\,\partial_\tau^{r-1}}
\ar[u]_\cong^{\coh{2-r}\om}
& A_r^0(v) \ar[u]_\cong^{\bcoh r \om} \ar@{^{(}->}[rr]
&& A_r(v) \ar@/_2.5pc/[uu]_(.7){\coh r \om}
\ar@/_.8pc/@{_{(}->}[lluu]_(.6){\bcoh r \om} } \eadl
{\rm (To save space the group $\Gm$ is suppressed in the notation.)}
\item[ii)] The top row and the middle row are exact.
\item[iii)] The maps $H^1(\Gm;\dsv{v,r}\om)\rightarrow
H^1(\Gm;\esv{v,r}\om)$ in the top row and the map and
$\hpar^1(\Gm;\dsv{v,r}\om,\dsv{v,r}{\fsn,\wdg})
\rightarrow \hpar^1(\Gm;\dsv{v,r}\om,\dsv{v,r}{\fsn,\wdg})
$ in the middle row are injective, unless $r=2$ and $v$ is the trivial
multiplier system. In that exceptional case both maps have a kernel
isomorphic to the trivial $\Gm$-module~$\CC$.
\end{enumerate}
\end{mainthm}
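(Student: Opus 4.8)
The plan is to deduce Theorem~\ref{THMaci} from a single short exact sequence of $\Gm$-modules together with Bol's identity for the Eichler integral. The structural input, supplied by the analysis of boundary germs in Sections~\ref{sect-bg}--\ref{sect-hws}, is that for $r\in\ZZ_{\geq2}$ the module $\dsv{v,r}\om$ embeds in $\esv{v,r}\om$ and the natural morphism $\rs_r\colon\esv{v,r}\om\to\dsv{v,2-r}\om$ has kernel $\dsv{v,r}\om$ and image the polynomial submodule $\dsv{v,2-r}\pol$ of degree $\leq r-2$; hence
\[ 0\longrightarrow\dsv{v,r}\om\longrightarrow\esv{v,r}\om\xrightarrow{\ \rs_r\ }\dsv{v,2-r}\pol\longrightarrow0 \]
is exact, and it restricts to the analogous sequence for the submodules carrying the parabolic conditions at the cusps, the quotient being unchanged since a polynomial automatically satisfies those conditions. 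First I would pass to the long exact sequences in $\Gm$-cohomology and in mixed parabolic cohomology attached to these two sequences — for the full cohomology the usual one, in the parabolic case its mixed-parabolic analogue established earlier in the paper. The rows of \eqref{Ediag} and their exactness in part~ii) are pieces of these; the vertical maps between the middle and top rows are the inclusions of (mixed) parabolic into full cohomology, compatible with the horizontal maps by naturality; and the vanishing $H^2(\Gm;M)=0$ for every $\CC[\Gm]$-module~$M$ (valid because $\Gm$ has virtual cohomological dimension one) makes $\rs_r$ surjective onto $H^1(\Gm;\dsv{v,2-r}\pol)$ and accounts for the ``$\to0$'' on the right of the top row. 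This disposes of part~ii).

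Next I would install the vertical isomorphisms at the bottom of \eqref{Ediag}. The map $\coh{2-r}\om$ is the isomorphism $A_{2-r}(\Gm,v)\xrightarrow{\cong}\hpar^1(\Gm;\dsv{v,r}\om,\dsv{v,r}{\fsn,\wdg})$ furnished by Theorem~\ref{THMac} applied to the weight $2-r\in\ZZ_{\leq0}\subset\CC\setminus\ZZ_{\geq2}$, for which the given $v$ remains an admissible multiplier system because $r$ and $2-r$ have the same parity; the map $\bcoh r\om$ is the injection of Theorem~\ref{thmbg}, which restricts to an isomorphism of the subspace $A_r^0(\Gm,v)$ onto $\hpar^1(\Gm;\esv{v,r}\om,\esv{v,r}{\fsn,\wdg})$. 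The squares of \eqref{Ediag} that involve only cohomological maps then commute by functoriality of (mixed parabolic) cohomology in the short exact sequence above, and $A_r^0(\Gm,v)\hookrightarrow A_r(\Gm,v)$ is compatible with $\bcoh r\om$ because the cocycle formula \eqref{psiz0def} is the same on both. Two squares carry analytic content. First, $\rs_r\circ\bcoh r\om=\coh r\om$ on $A_r(\Gm,v)$: since $\rs_r$ is precisely the passage from a boundary germ to its polynomial part, and the polynomial part of the germ attached to $F$ is the classical cocycle $\gm\mapsto\ps^{z_0}_{F,\gm}(t)=\int_{\gm^{-1}z_0}^{z_0}(z-t)^{r-2}F(z)\,dz$ with values in $\dsv{v,2-r}\pol$ (here $(z-t)^{r-2}$ is a polynomial in $t$), the square commutes. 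Second, the Bol square $\bcoh r\om\circ(c_r\partial_\tau^{r-1})=\iota_*\circ\coh{2-r}\om$, with $\iota_*$ induced by the inclusion $\dsv{v,r}\om\hookrightarrow\esv{v,r}\om$: for $g\in A_{2-r}(\Gm,v)$ one integrates $\int_{\gm^{-1}z_0}^{z_0}(z-t)^{r-2}c_r\,\partial_z^{r-1}g(z)\,dz$ by parts $r-1$ times and uses $\partial_z^{r-1}(z-t)^{r-2}=0$; the remaining quantity is, up to a coboundary in $\esv{v,r}\om$, the image under $\iota$ of the weight-$(2-r)$ Eichler integral $\int_{\gm^{-1}z_0}^{z_0}(z-t)^{-r}g(z)\,dz$, the constant $c_r=\tfrac i{2(r-1)!}$ and the chosen branch of $(z-t)^{r-2}$ being exactly what makes the two match. (In particular $\coh r\om$ then vanishes on $c_r\partial_\tau^{r-1}A_{2-r}(\Gm,v)$, consistently with exactness of the top row.) This gives part~i).

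For part~iii), the long exact sequence of the short exact sequence above yields
\[ \ker\bigl(H^1(\Gm;\dsv{v,r}\om)\to H^1(\Gm;\esv{v,r}\om)\bigr)\ \cong\ \operatorname{coker}\bigl((\esv{v,r}\om)^\Gm\to(\dsv{v,2-r}\pol)^\Gm\bigr), \]
so this kernel is a quotient of $(\dsv{v,2-r}\pol)^\Gm$, the polynomials of degree $\leq r-2$ invariant under $|_{v,2-r}$. I would compute that space by hand: invariance under a parabolic element of $\Gm$ (which after conjugation we may take to be $z\mapsto z+1$) forces a nonzero invariant polynomial to be constant and $v$ to be trivial on that element, and invariance under an element of $\Gm$ not fixing $\infty$ (which exists, $\Gm$ being non-elementary) then multiplies that constant by the genuinely non-constant factor $(cz+d)^{r-2}$ unless $r=2$; hence $(\dsv{v,2-r}\pol)^\Gm=0$ unless $r=2$, and for $r=2$ it is $\CC$ if $v$ is trivial and $0$ otherwise. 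This already forces the kernel to be $0$ in all non-exceptional cases. For the middle-row map the commuting Bol square identifies it, via $\coh{2-r}\om$ and $\bcoh r\om$, with $c_r\partial_\tau^{r-1}\colon A_{2-r}(\Gm,v)\to A_r^0(\Gm,v)$, whose kernel is the set of polynomials of degree $\leq r-2$ lying in $A_{2-r}(\Gm,v)$, i.e.\ $(\dsv{v,2-r}\pol)^\Gm$ again. Finally, in the exceptional case $r=2$ with $v$ trivial: the middle-row kernel is $\cong\CC$, it maps injectively into the top-row kernel by commutativity together with the injectivity of $\hpar^1(\Gm;\dsv{v,2}\om,\dsv{v,2}{\fsn,\wdg})\hookrightarrow H^1(\Gm;\dsv{v,2}\om)$, and the top-row kernel is a quotient of the one-dimensional space $(\dsv{v,0}\pol)^\Gm=\CC$; hence both kernels are isomorphic to $\CC$ and the natural map between them is an isomorphism. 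The hard part I expect is the Bol square: coordinating the $r-1$ integrations by parts, the normalisation $c_r$, and the branches of $(z-t)^{r-2}$ and $(z-t)^{-r}$ so that the difference of the two Eichler integrals is visibly a coboundary in $\esv{v,r}\om$ — and, upstream of that, making the basic exact sequence and its parabolic refinement precise, which is exactly where the boundary-germ machinery of Sections~\ref{sect-bg}--\ref{sect-hws} does the work.
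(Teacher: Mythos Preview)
Your overall architecture matches the paper's: the short exact sequence $0\to\dsv{v,r}\om\to\esv{v,r}\om\xrightarrow{\rs_r}\dsv{v,2-r}\pol\to0$ (Proposition~\ref{prop-iw-esv}) and its parabolic refinement drive parts~ii) and~iii) via the long exact sequences (Proposition~\ref{prop-esH}), the vertical isomorphisms come from Theorem~\ref{THMac} in weight $2-r$ and from Theorem~\ref{thmbg} together with Corollaries~\ref{cor-A0E} and~\ref{cor-A0imq}, and your computation of $(\dsv{v,2-r}\pol)^\Gm$ and the sandwich argument for the exceptional kernels are correct. The square $\rs_r\circ\bcoh r\om=\coh r\om$ is also fine; this is Lemma~\ref{lem-coh-bcoh}, ultimately Proposition~\ref{prop-bcoh}.

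The gap is the Bol square. The integral you write down,
\[
\int_{\gm^{-1}z_0}^{z_0}(z-t)^{r-2}\,c_r\,\partial_z^{r-1}g(z)\,dz,
\]
is the cocycle representing $\coh r\om\bigl(c_r g^{(r-1)}\bigr)$, not $\bcoh r\om\bigl(c_r g^{(r-1)}\bigr)$; the latter uses the kernel $K_r(z;\tau)=\frac{2i}{z-\tau}\bigl(\frac{\bar z-\tau}{\bar z-z}\bigr)^{r-1}$ of~\eqref{Kq} and takes values in boundary germs, not polynomials. Integrating your expression by parts $r-1$ times, using $\partial_z^{r-1}(z-t)^{r-2}=0$, leaves only boundary terms that are polynomials in~$t$ of degree $\leq r-2$ --- an element of $\dsv{v,2-r}\pol$. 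This cannot be, ``up to a coboundary in $\esv{v,r}\om$'', the weight-$(2-r)$ Eichler integral $\int(z-t)^{-r}g(z)\,dz$, which lies in the entirely different module $\dsv{v,r}\om$. A polynomial kernel cannot produce $(z-t)^{-r}$ under differentiation.

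The paper establishes the Bol square by a different mechanism (Lemma~\ref{lem-Bol}). Rather than comparing cocycle formulas, it uses that $\bcoh r\om$ and the map $\al_r$ of Proposition~\ref{prop-alr} are mutually inverse on the parabolic range. The cocycle for $\iota_*\coh{2-r}\om g$ is $c(x;z)=\int_x(\tau-z)^{-r}g(\tau)\,d\tau$, with $z\in\uhp$ now the boundary-germ variable; applying $\al_r$ means evaluating $\frac1{4\pi}\,c(\partial_2\fd_Y;z)$ on a compact face of the tesselation. This is a closed contour integral
\[
\frac1{4\pi}\oint_{\partial_2\fd_Y}(\tau-z)^{-r}\,g(\tau)\,d\tau
\;=\;\frac{2\pi i}{4\pi}\cdot\frac1{(r-1)!}\,g^{(r-1)}(z)\;=\;c_r\,g^{(r-1)}(z)
\]
by Cauchy's formula for the $(r-1)$-st derivative. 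Hence $\al_r\bigl(\iota_*\coh{2-r}\om g\bigr)=c_r g^{(r-1)}$, and inverting gives $\bcoh r\om\bigl(c_r g^{(r-1)}\bigr)=\iota_*\coh{2-r}\om g$. The point is that the passage back from cohomology to automorphic forms converts the question into a residue computation; integration by parts on the Eichler kernel plays no role.
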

\rmrks
\itmi For $r\in \ZZ_{\geq 1}$ the $\Gm$-module $\dsv{v,r}\om$ can be
considered as a submodule of $\esv{v,r}\om$. The space $A^0_r(\Gm,v)$
is the space of unrestricted holomorphic automorphic form for which
the Fourier terms of order zero at all cusps vanish.

\itm We note that automorphic forms both of weight $r$ and of the dual
weight $2-r$ occur in the diagram. The theorem shows that boundary
germ cohomology in some sense interpolates between the cohomology
classes attached to automorphic forms of weight $2-r$ and of weight
$r$, with $r\in \ZZ_{\geq 2}$.

\itm The second line in diagram~\eqref{Ediag} has no closing
$\rightarrow 0$. In \S\ref{sect-cocl} we will discuss how this
surjectivity can be derived by classical methods, provided that we
assume that the multiplier system $v$ is unitary.
\medskip

\rmrk{Comparison with \cite{BLZm}} This paper has much in common with
the notes \cite{BLZm}. Both give isomorphisms between spaces of
functions with automorphic transformation behavior and mixed
parabolic cohomology groups. The main difference is in the modules in
which the cohomology groups have their values. The $\Gm$-modules in
\cite{BLZm} are spherical principal series representations. The
linear map in \cite{BLZm} analogous to our map $\coh r \om$ sends
Maass forms of weight zero to cohomology classes in $H^1(\Gm;\V s
\om)$, where $\V s \om$ is the space of analytic vectors in the
principal series representation of $\PSL_2(\RR)$ with spectral
parameter~$s$. The assumption $0<\re s < 1$ ensures that the
representation $\V s \om$ is irreducible. Holomorphic automorphic
forms of weight~$r\in \CC$ correspond to a spectral parameter $\frac
r2$, for which the corresponding space of analytic vectors is
reducible. Hence here we work with the highest weight subspace. It is
irreducible precisely if $r\not\in \ZZ_{\geq 2}$, which explains that
in this paper weights in $\ZZ_{\geq 2}$ require a special treatment.

Another complication arises as soon as the weight is not an integer.
This means that we deal with highest weight subspace of principal
series representations of the universal covering group of
$\SL_2(\RR)$. In the main text of these notes we have avoided use of
the covering group. We discuss it in the Appendix.

Although the main approach of this paper relies heavily on
methods from \cite{BLZm}, and also on ideas in~\cite{LZ1}, it was far
from trivial to handle the complications not present in~\cite{BLZm}.

\rmrk{Overview of the paper} In Sections
\ref{sect-defnot}--\ref{sect-osa} we discuss results that can be
formulated with the modules $\dsv{v,2-r}\ast$. Here the proof of
Theorem~\ref{THMcc} is reduced to that of Theorem~\ref{THMac}.
Sections \ref{sect-hf}--\ref{sect-pol} give results for harmonic
functions and boundary germs. In section~\ref{sect-hf} one finds the
proof of Theorem~\ref{THMhl}. We use the boundary germs in
Sections~\ref{sect-hws}--\ref{sect-abg-iw} to determine the image of
automorphic forms in cohomology, and prove Theorems \ref{THMac}
and~\ref{THMaci}. Sections \ref{sect-isos-pc} and~\ref{sect-coc-sing}
give the proof of Theorem~\ref{THMiso} (which itself is stated on
page~\pageref{THMiso}). The map $\coh r \om$ in Theorem~\ref{THMac}
is not surjective. In Section~\ref{sect-qaf} we discuss how quantum
automorphic forms are mapped, under some conditions, onto
$H^1(\Gm;\dsv{v,2-r}\om)$.

At the end of most sections we mention directly related literature. In
Section~\ref{sect-lit} we give a broader discussion of literature
related to the relation between automorphic forms and cohomology. In
the Appendix we give a short discussion of the universal covering
group and its principal series representations.

\rmrk{Acknowledgements} The preparation of these notes has taken
several years. Essential has been the opportunities to work on it
together. Bruggeman was enabled by the Pohang Mathematical Institute,
Korea, to visit Choie for a month in 2013. During part of the work
on this project, Diamantis was a guest at the Max-Planck Institut f\"ur
Mathematik, Bonn, Germany, whose support he gratefully acknowledges.
The three authors participated in 2014 in the Research in
Pairs program of the Mathe\-matisches Forschungsinstitut Oberwolfach,
Germany. We thank the PMI and the MFO for this support and for the
excellent working condition provided.

Choie has been partially supported by NRF 2013053914,
NRF-2011-0008928 and NRF-2013R1A2A2A01068676.

We thank Seokho Jin for his remarks on an earlier version of these
notes.
\medskip

%%%%%%%%%%%%%%%%%%%%%%%%%%%%%%%%%%%%%%%%%%%%%%%%%%%%%%%%%%
% in the introduction equation numbers (e) without section
% after that (s.e) with the section number
\numberwithin{equation}{section}

%%%%%%%%%%%%% Part I %%%%%%%%%%%%%%%%%%%%%%%%%%%%%%%%%%%%%%%%%

\part{Cohomology with values in holomorphic functions}

%%%%%%%%%%%%%%%%%%%%%%%%%%%%%%%%%%%%%%%%%%%%%%%%%%%%%%%%%%%%%%

\section{Definitions and notations} \label{sect-defnot}

We work with the \il{uhpl}{upper half-plane}\emph{upper half-plane}
\il{ulhp}{$\uhp, \lhp$}$\uhp= \bigl\{ z\in \CC\;:\; \im z>0\}$ and the
\il{lhpl}{lower half-plane}\emph{lower half-plane} $\lhp$ defined by
$\im z<0$. For $z\in \uhp\cup\lhp$ we will often use without further
explanation \il{yulhp}{$y=\im z$}$y=\im z$,
\il{xulhp}{ $x=\re z$}$x=\re z$. Both half-planes are disjoint open
sets in the \il{cpl}{complex projective line}complex projective line
\il{proj}{$\proj\CC,\; \proj \R$}$\proj\CC = \CC \cup\{\infty\}$, with
the \il{rpl}{real projective line}real projective line $\proj\RR =
\RR\cup\{\infty\}$ as their common boundary.

\subsection{Operators on functions on the upper and lower
half-plane}\label{sect-actions} Let $r\in \CC$. For functions $f$ on
the upper or lower half-plane\ir{SL2op}{|_rg}
\be\label{SL2op} f|_r g\,(z) \;:=\; (c z+d)^{-r}\, f\Bigl( \frac{a
z+b}{c z+d} \Bigr)\qquad \text{for }g=\matc abcd\in \SL_2(\RR)\,, \ee
where we compute $(cz+\nobreak d)^{-r}$ according to the
\ir{ac}{\text{argument convention for }cz+d}\emph{argument
convention} to take
\be\label{ac} \arg(cz+d)\in (-\pi,\pi]\text{ if }z\in \uhp\,,
\qquad\arg(c z+\nobreak d)\in [-\pi,\pi)\text{ if }z\in \lhp\,.
\ee
These operators $|_r g$ do not define a representation of
$\SL_2(\RR)$.
(One may relate it to a representation of the universal covering group
of $\SL_2(\RR)$. See the Appendix, \S\ref{app-wf}.)
There are however two useful identities. Set \ir{G0}{G_0}
\be\label{G0} G_0\;:=\; \biggl\{ \matc abcd \in \SL_2(\RR)\;:\; -\pi <
\arg(ci+d)<\pi\biggr\}\,. \ee
Then, for all $g \in G_0$ and $p=\matc y x 0 {y^{-1}}$ with $x\in \RR$
and $y>0$:
\begin{align}
(f|_r g^{-1})|_rg &\=(f|_r g)|_rg^{-1}\=f \,,\\
\label{ncj}
f|_r g p g^{-1} &\= \bigl((f|_r g)|_r p\bigr) |_r g^{-1}\,.
\end{align}

To interchange functions on the upper and the lower half-plane we use
the antilinear involution $\Ci$ given by\ir{Ci}{\Ci}
\be\label{Ci} \bigl(\Ci f\bigr)(z) \;:=\; \overline{f(\bar z)}\,. \ee
It maps holomorphic functions to holomorphic functions, and satisfies
\be\label{Ci-act} \Ci\,\bigl( f|_r g\bigr) \= (\Ci f)|_{\bar r }
g\qquad\bigl(g\in \SL_2(\RR)\bigr)\,. \ee

\subsection{Discrete group} Everywhere in this paper we denote by
\il{Gm}{$\Gm$}$\Gm$ a cofinite
\il{ds}{discrete subgroup}dis\-crete subgroup of $\SL_2(\RR)$ with
cusps, containing $\matr{-1}00{-1}$.
\il{cft}{cofinite}\emph{Cofinite} means that the quotient
$\Gm\backslash\uhp$ has finite volume with respect to the
\il{hpm}{hyperbolic measure}hyperbolic measure $\frac{dx\,dy}{y^2}$.
The presence of cusps implies that the quotient is not compact. The
standard example is the
\il{mgr}{modular group}\emph{modular group}
\il{Gmod}{$\Gmod$}$\Gmod=\SL_2(\ZZ)$.

\rmrk{Multiplier system}A \il{ms}{multiplier system}\emph{multiplier
system} on $\Gm$ \il{mssuit}{multiplier system for a given weight}for
the weight $r\in \CC$ is a map \il{vnc}{$v$ multiplier
system}$v:\Gm\rightarrow \CC^\ast$ such that the function on
$\Gm\times\uhp$ given by
\be\label{jvr}
j_{v,r}\Bigl( \matc abcd,z\Bigr) \= v\matc abcd\;
(cz+d)^r\ee
satisfies the following conditions:
\badl{mscond} j_{v,r}(\gm\dt,z) &\= j_{v,r}(\gm,\dt z)\,
j_{v,r}(\dt,z)&\qquad&\text{for }\gm,\dt\in \Gm\,,\\
j_{v,r}\biggl( \matr{-a}{-b}{-c}{-d},z\biggr)
&\= j_{v,r}\biggl(\matc abcd,z\biggr)
&&\text{for }\matc abcd\in \Gm \,. \eadl
We call a multiplier system \il{ums}{unitary multiplier
system}\il{msu}{multiplier system, unitary}\emph{unitary} if
$\bigl|v(\gm)\bigr|=1$ for all $\gm\in \Gm$.

\rmrk{Action of the discrete group}Let $v$ be a multiplier system on
$\Gm$ for the weight~$r$. For functions on $\uhp$ and $p\equiv r
\bmod 2$ we put for $\gm=\matc abcd \in \Gm$:\il{vq-act}{$|_{v,p}$}
\be\label{vq-act+}
f|_{v,p}\gm \,(z) \;:=\; v(\gm)^{-1}\, (cz+d)^{-p}\, f(\gm z) \=
j_{v,p}(\gm,z)^{-1}\, f(\gm z)\,,\ee
and for functions on $\lhp$ and $p\equiv
-r\bmod 2$
\be\label{vq-act-}
f|_{v,p}\gm \,(z) \;:=\; v(\gm)^{-1}\, (cz+d)^{-p}\, f(\gm z)\,.\ee
The operator $|_{v,p}$ defines a holomorphy-preserving action of $\Gm$
on the spaces of functions on $\uhp$ and on $\lhp$, {\sl ie.},
$(f|_{v,p}\gm)|_{v,p}\dt = f|_{v,p}\gm\dt$ for all $\gm,\dt\in \Gm$.
Furthermore, $f|_{v,p}\matr{-1}00{-1}=f$, hence we have an action of
\il{bGm}{$\bar \Gm$}$\bar \Gm := \Gm/\{1,-1\}\subset\PSL_2(\RR)$.
Finally,
\be \Ci \bigl( f|_{v,r}\gm\bigr) \= (\Ci f)|_{\bar v,\bar r}\gm\qquad
\text{for }\gm\in \Gm\,.
\ee

\subsection{Automorphic forms}\label{sect-af}
We consider automorphic forms without any growth condition.
\begin{defn}\label{uhaf-def}A \il{uhaf}{unrestricted holomorphic
automorphic form}\il{hafu}{holomorphic automorphic form,
unrestricted}\emph{unrestricted holomorphic automorphic form} on
$\Gm$ with weight \il{rwt}{$r$ weight}$r\in \CC$ and multiplier
system $v$ on~$\Gm$ for the
\il{wt}{weight}weight~$r$ is a holomorphic function
$F:\uhp\rightarrow\CC$ such that
\be\label{autcond} F|_{v,r}\gm \= F \qquad\text{for all }\gm\in \Gm\,.
\ee
We use \il{Ar}{$A_r(\Gm,v)$}$A_r(\Gm,v)$ to denote the space of all
such unrestricted holomorphic automorphic forms. We often abbreviate
\emph{unrestricted holomorphic automorphic form} to
\il{haf}{holomorphic automorphic form}\il{afuh}{ automorphic form,
unrestricted holomorphic}\il{afh}{automorphic form,
holomorphic}\emph{holomorphic automorphic form} or to
\il{af}{automorphic form}\emph{automorphic form}.
\end{defn}

\rmrk{Cusps} A \il{cu}{cusp}cusp of $\Gm$ is a point $\ca\in
\proj\RR=\RR\cup\{\infty\}$ such that the
\il{stab}{stabilizer}\emph{stabilizer} 
\il{stabxi}{$\Gm_{\ca}$}$\Gm_{\ca}:= \bigl\{\gm\in \Gm\;:\;
\gm\,\ca=\ca \bigr\}$ is infinite and has no other fixed points in
$\proj\CC$. This group is of the form $\Gm_\ca =\bigl\{\pm
\pi_\ca^n\;:\;n\in \ZZ\}$ for an element $\pi_\ca\in \Gm$
that is conjugate to \il{Tmat}{$T=\matc 1101$}$T=\matc 1101$
in~$\SL_2(\RR)$. The elements $\pi_\ca^n$ have trace~$2$, and are,
for $n\neq0$, called \il{parbm}{parabolic matrix}\emph{parabolic}.
The elements $\pi_\ca$ and $\pi_\ca^{-1}$ are \il{prpar}{primitive
parabolic}\emph{primitive parabolic} since they are not of the form
$\gm^n$ with $\gm\in \Gm$ and $n\geq 2$.

For each cusp $\ca$ there are (non-unique) \il{gxi}{$\sigma_{\ca
}$}$\s_\ca\in G_0$ such that \il{pica}{$\pi_\ca$}$\pi_\ca=\s_\ca T
\s_\ca^{-1}$. We arrange the choice such that for all $\gm\in \Gm$ we
have $\s_{\gm\ca}=\pm \gm \s_\ca T^n$ for some $n\in \ZZ$.

The set of cusps of a given discrete group $\Gm$ is a finite union of
$\Gm$-orbits. Each of these orbits is an infinite subset of
$\proj\RR$.

\rmrk{Fourier expansion}Each $F\in A_r(\Gm,v)$ has at each cusp $\ca$
of $\Gm$ a \il{Fe}{Fourier expansion}\emph{Fourier expansion}
\be \label{FexpF-xi}F|_r \sigma_{\ca}\;(z)
\= \sum_{n\equiv \al_{\ca}\bmod 1} a_n(\ca,F)\, e^{2\pi i n z}\,, \ee
with \il{alca}{$\al_\ca$}$\al_{\ca}$ such that $v(\pi_{\ca})
= e^{2\pi i \al_{\ca}}$. The
\il{Fc}{Fourier coefficient}Fourier coefficients
\il{anxi}{$a_n(\ca,F)$}$a_n(\ca,F)$ depend (by a non-zero factor) on
the choice of $\sigma_{\ca}$ in $\SL_2(\RR)$. In general,
$\sigma_{\ca}\not\in \Gm$, so we have to use the operator $|_r
\sigma_{\ca}$, and not the action $|_{v,r}$ of~$\Gm$.

If the multiplier system is not unitary, it may happen that
$\bigl|v(\pi_{\ca})\bigr|\neq 1$ for some cusps~$\ca$. Then
$\al_{\ca}\in \CC\setminus \RR$, and the Fourier term orders $n$
in~\eqref{FexpF-xi} are not real.

\begin{defn}\label{def-cf}We define the following subspaces of
$A_r(\Gm,v)$:
\begin{enumerate}
\item[i)] The space of \il{cf}{cusp form}\emph{cusp forms}
is\il{Scf}{$\cusp r(\Gm,v)$}
\[\cusp r(\Gm,v)\;:=\; \Bigl\{ F\in A_r(\Gm;v)\;:\; \forall_{\ca\text{
cusp }} \forall_{n\equiv \al_{\ca}(1)} \; \re n\leq 0 \Rightarrow
a_n(\ca,F)=0\Bigr\}\,.\]
\item[ii)] The space of \il{eaf}{entire automorphic form}\emph{entire
automorphic forms} is\il{Mdef}{$M_r(\Gm,v)$}
\[ M_r(\Gm,v) \;:=\; \Bigl\{ F\in A_r(\Gm,v)\;:\; \forall_{\ca \text{
cusp }} \forall _{n\equiv\al_{\ca}(1)}\; \re n<0 \Rightarrow
a_n(\ca,F)=0\Bigr\}\,.\]
\end{enumerate}
\end{defn}
If $v(\pi_\ca)\neq 1$ the name ``entire'' is not very appropriate,
since then the Fourier expansion at $\ca$ in \eqref{FexpF-xi} needs
 non-integral powers of $q=e^{2\pi i z}$.

This implies that $F\in \cusp r(\Gm,v)$ has \il{expdecc}{exponential
decay at cusps}\emph{exponential decay at all cusps}:
\bad \forall_{\ca\text{ cusp of
$\Gm$}}&\,\forall{X>0}\,\exists_{\e>0}\, \forall_{x\in [-X,X]}\,
 \\
& F\bigl(\s_\ca (x+iy)\bigr)\= \oh(e^{-\e y})\text{ as
}y\rightarrow\infty\,. \ead
If $v$ is not unitary we need to restrict $x$ to compact sets.
Similarly, functions $F\in M_r(\Gm,v)$ have at most
\il{polgrc}{polynomial growth at the cusps}\emph{polynomial growth} at
the cusps:
\badl{polgrc} \forall_{\ca\text{ cusp of
$\Gm$}}&\,\forall{X>0}\,\exists_{a>0}\, \forall_{x\in [-X,X]}\,
 \\
& F\bigl(\s_\ca (x+iy)\bigr)\= \oh(y^a)\text{ as
}y\rightarrow\infty\,. \eadl

\subsection{Cohomology and mixed parabolic
cohomology}\label{sect-coh-mpc}
We recall the basic definitions of group cohomology. Let $V$ be a
right $\CC[\Gm]$-module. Then the \il{cg}{cohomology group}first
cohomology group $H^1(\Gm;V)$ is\ir{1coh}{H^1(\cdot;\cdot)}
\be\label{1coh} H^1(\Gm;V) \= Z^1(\Gm;V) \bmod B^1(\Gm;V)\,,\ee
where \il{Z1}{$Z^1(\cdot;\cdot)$}$Z^1(\Gm;V)$ is the space of
\il{coc}{cocycle}\emph{$1$-cocycles} and
\il{B1}{$B^1(\cdot;\cdot)$}$B^1(\Gm;V)\subset
Z^1(\Gm;V)$ the space of
\il{cob}{coboundary}\emph{$1$-coboundaries}. A $1$-cocycle is a map
$\ps:\Gm\rightarrow V:\gm\mapsto \ps_\gm$ such that $\ps_{\gm\dt} =
\ps_\gm|\dt+\ps_\dt$ for all $\gm,\dt\in \Gm$ and a $1$-coboundary is
a map $\ps:\Gm\rightarrow V$ of the form $\ps_\gm = a|\gm-a$ for some
$a\in V$ not depending on~$\gm$.

\begin{defn}\label{mpcg}Let $V\subset W$ be right $\Gm$-modules. The
\il{mpcg}{mixed parabolic cohomology group}\emph{mixed parabolic
cohomology group}
\il{parbc}{$\hpar^1(\Gm;V,W)$}$\hpar^1(\Gm;V,W)\subset H^1(\Gm;V)$ is
the quotient $\zpar^1(\Gm;V,W)/B^1(\Gm;V)$,
where\ir{zparb}{\zpar^1(\Gm;V,W)}
\be\label{zparb} \zpar^1(\Gm;V,W) \= \bigl\{ \ps\in Z^1(\Gm;V)\;:\;
\ps_\pi\in W|(\pi-1)\text{ for all parabolic }\pi\in \Gm\bigr\}\,.
\ee
The space \il{hpar}{$\hpar^1(\Gm;V)$}$\hpar^1(\Gm;V)
:= \hpar^1(\Gm;V,V)$ is the usual \il{pchgp}{parabolic cohomology
group}\emph{parabolic cohomology group}.

We call cocycles in $\zpar^1(\Gm;V,W)$ \il{mpcoc}{mixed parabolic
cocycle}\emph{mixed parabolic cocycles}, and \il{pbcoc}{parabolic
cocycle}\emph{parabolic cocycles} if $V=W$.
\end{defn}

In~\eqref{zparb} it suffices to check the condition $\ps_\pi \in
W|(\pi-\nobreak 1)$ for $\pi=\pi_\ca$ with $\ca$ in a (finite) set of
representatives of the $\Gm$-orbits of cusps.

\subsection{Modules}\label{sect-modules}
The coefficient modules that we will use for cohomology are based on
the following spaces:
\begin{defn}\label{Dsomdef}Let $r\in \CC$. For functions $\ph$ define
the function $\Prj{2-r} \ph$ by \ir{Prj}{\Prj{2-r}}
\be\label{Prj} (\Prj{2-r} \ph)(t) \;:=\; (i-t)^{2-r}\, \ph(t)\,,\ee
where $(i-\nobreak t)^{2-r} $ denotes the branch with $\arg(i-\nobreak
t)\in \bigl(-\frac\pi2,\frac{3\pi}2\bigr)$.\il{dsv*}{$\dsv{2-r}\ast$}
\begin{enumerate}
\item[i)] $\dsv{2-r}{-\om}:=\Bigl\{\ph :\lhp\rightarrow\CC\;:\;\ph
\text{ is holomorphic} \Bigr\}$.
\item[ii)] $\dsv {2-r}{-\infty} := \Bigl\{ \ph \in \dsv
{2-r}{-\om}\;:\; \exists_{B>0} \; \ph(t) = \oh\bigl( |\im
t|^{-B}\bigr) + \oh\bigl( |t|^{B } \bigr)\text{ on }\lhp
\Bigr\}$, the space of functions with at most \il{polgr}{polynomial
growth}\emph{polynomial growth}.
\item[iii)] $\displaystyle \dsv{2-r}\infty \= \Bigl\{\ph\in \dsv{2-r}{
-\om }\;:\; \Prj{2-r}\ph \in C^\infty( \lhp\cup\nobreak\proj\RR)
\Bigr\}$
\item[iv)] $\displaystyle \dsv{2-r}\om \=\Prj{2-r}^{-1}\indlim\hol(U)$
where $U$ runs over the open neighbourhoods of $\lhp\cup\proj\RR$
in~$\proj\CC$, and
\il{holdef}{$\hol$}$\hol$ denotes the sheaf of holomorphic functions
on~$\proj\CC$.
\item[v)] For $r\in \ZZ_{\geq 2}$ we put
\il{dsvpol}{$\dsv{2-r}{\pol}$}$\dsv{2-r}\pol:=\Bigl\{ \ph \in
\dsv{2-r}\om\;:\; \ph $ is given by a polynomial function on $\CC$ of
degree at most $r-2\Bigr\}$.
\end{enumerate}
\end{defn}

\rmrk{Discussion}
\itmi The largest of these space, $\dsv{2-r}{-\om}$, consists of all
holomorphic functions on the lower half-plane. The subspace
$\dsv{2-r}{-\infty}$ is determined by behavior of $\ph(t)$ as $t$
approaches the boundary $\proj\RR$ of $\lhp$. The real-analytic
function $Q(t) = \frac {|\im t|}{|t-i|^2}$ on $\proj\CC\setminus
\{i\}$ satisfies $0< Q(t) \leq 1$ on the lower
half-plane and zero on its boundary. A more uniform definition of
polynomial growth requires that functions $f$ satisfy $f(t)
\ll Q(t)^{-B}$ for some $B>0$. In Part~ii) we use Knopp's formulation
 in \cite{Kn74}, transformed to the lower half-plane. Both are
equivalent. To see this, we use in one direction that (for $t\in
\lhp$)
\[ \frac { |\im t|^{-B} + |t|^{B}} {Q(t)^{-B}} \;\leq\;
\frac{1 + |t|^{2B}}{|t-i|^{2B}} \;\leq\; 1+1\,. \]
In the other direction we carry out separate estimates for the
following three cases
(1)~$|t|\leq 1$, with $Q(t)^{-B}\leq |\im t|^{-2B}$; (2)~$|t|\geq 1$,
$|\im t|\geq \frac12$ with $Q(t)^{-B} \ll |t|^{2B} + 1$;
(3)~$|t|\geq 1$, $|\im t|\leq \frac12$, with $ Q(t)^{-B} \leq
\frac{|t|^{2B}}{|\im t|^B} + |\im t|^{-B}\,\leq\, |t|^{4B}
+|\im t|^{-2B}+ |\im t|^{-B} $.

\itm With $t\in \lhp$, the factor $(i-\nobreak t)^{2-r} $ in
\eqref{Prj} is $\oh(1)$ if $\re r\geq 2$ and $\oh(|t|^{2-\re r})$ if
$\re r\leq 2$, and its inverse $(i-\nobreak t)^{r-2}$ satisfies
similar estimates. So the function $\ph$ on $\lhp$ has at most
polynomial growth if and only $\Prj{2-r}\ph$ has polynomial growth.
So we could formulate the definition of $\dsv{v,2-r}{-\infty}$ with
$\Prj{2-r}\ph$ instead of~$\ph$.

\itm The polynomial growth in Part~ii) concerns the behavior of
$\ph(t)$ as $t$ approaches the boundary $\proj\RR$ of $\lhp$ at any
point. The polynomial growth at the cusps in~\eqref{polgrc} concerns
the approach of $F(z)$ as $z$ approaches cusps in the boundary
$\proj\RR$ of~$\uhp$.

\itm For some holomorphic $\ph$ on $\lhp$ it may happen that
$\Prj{2-r}\ph$ extends from $\lhp$ to yield a function that is smooth
on $\lhp\cup\proj\RR$. Then $\Prj{2-r}\ph$ satisfies near $\xi\in
\RR$ a Taylor approximation of any order $N$
\[ \Prj{2-r}\ph(t) \= \sum_{n=0}^{N-1} a_n \, (t-\xi)^n + \oh\bigl(
(t-\xi)^N \bigr)
\]
as $t$ approaches $\xi$ through $\lhp\cup\RR$. Near $\infty$ we have a
Taylor approximation in $-1/t$. This defines the space in Part~iii)
as a subspace of $\dsv{2-r}{-\om}$.

These Taylor expansions imply that $\Prj{2-r}\ph$ has at most
polynomial growth at the boundary. So $\dsv{2-r}\infty$ is in fact a
subspace of $\dsv{2-r}{-\infty}$.

\itm Instead of Taylor expansions of any order, we may require that
$\Prj{2-r}\ph$ is near each $\xi\in \proj\RR$ given by a convergent
power series expansion. Then it extends as a holomorphic function to
a neighbourhood of $\lhp\cup\proj\RR$ in~$\proj\CC$. That defines the
space $\dsv{2-r}\om$ in Part~iv).

The formulation with an inductive limit implies that we consider two
extensions to be equal if they have the same restriction to~$\lhp$.

\itm If $r\in \ZZ_{\geq 2}$, and $\ph$ is a polynomial function of
degree at most $r-2$ the function $\Prj{2-r}\ph(t)$ extends
holomorphically to $\proj\CC \setminus \{i\}$.

\itm We have defined a decreasing sequence of spaces of holomorphic
functions on the lower half-plane: $\dsv{2-r}{ -\om} \supset
\dsv{2-r}{ - \infty} \supset\dsv{2-r}{\infty}\supset
\dsv{2-r}{\om}\supset\dsv{2-r}\pol$ (the last one only if $r\in
\ZZ_{\geq 2}$).

One can show that the spaces $\dsv{2-r}\ast$ arise as highest weight
subspaces occurring in principal series representations of the
universal covering group of $\SL_2(\RR)$. Then $\dsv{2-r}\om$
corresponds to a space of \emph{analytic vectors}, $\dsv{2-r}\infty$
to a space of $C^\infty$-vectors, $\dsv{2-r}{-\infty}$ to a space of
\emph{distribution vectors}, and $\dsv{2-r}{-\om}$ to a space of
\emph{hyperfunction vectors}. This motivates the choice of the
superscripts $\om$, $\infty$, $-\infty$ and $-\om$. See
\S\ref{app-ps} in the Appendix.

\itm The vector spaces $\dsv{2-r}\om$ and $\dsv{2-r}\infty$ depend
on~$r$, the spaces $\dsv{2-r}{-\infty}$ and $\dsv{2-r}{-\om}$ do not.

\rmrk{Projective model} We have characterized the spaces
$\dsv{2-r}\ast$ in iii) and~iv) in Definition~\ref{Dsomdef} by
properties of $\Prj{2-r}\ph$, not of $\ph$ itself, and could also
equally well use $\Prj{2-r}\ph$ in i) and~ii).

We call $\Prj{2-r}\dsv{2-r}\ast$ the \il{prjm}{projective
model}\emph{projective model} of $\dsv{2-r}\ast$. Advantages of the
projective model are the simpler definitions and the fact that none
of the spaces $\Prj{2-r}\dsv{2-r}\ast$ depends on~$r$. Moreover, the
projective model focuses our attention to the behavior of the
functions near the boundary $\proj\RR$ of the lower half-plane.

A big advantage of the spaces $\dsv{2-r}\ast$ themselves is the simple
form of the operators $|_{2-r}g$ with $g\in \SL_2(\RR)$. We will
mostly work with these spaces, and use the projective model only
where it makes concepts or proofs easier.

The formula in~\eqref{SL2op} for the operators $|_r g$ is the usual
formula when one works with holomorphic automorphic forms. Of course
these operators can be formulated in the projective model, as is done
in Proposition~\ref{prop-opprj} below. At first sight that
description looks rather complicated. However, even this formula has
its advantage, as will become clear in the proof of
Proposition~\ref{prop-inv}.

\begin{prop}\label{prop-opprj}Let $r\in \CC$. Under the linear map
$\Prj{2-r}$ the operators $|_{ 2-r } g$ with $g=\matc abcd\in
\SL_2(\RR)$ correspond to operators $|^\prj_{2-r}g$ given on $h$ in
the projective model by\ir{prjact}{|^\prj_{2-r} g}
\be\label{prjact}
h |^\prj_{2-r} g (t) \= (a-ic)^{r-2}\, \Bigl( \frac{t-i}{t-g^{-1} i}
\Bigr)^{2-r}\, h(gt)\,, \ee
for $t\in \lhp$ and the choice $\arg(a-\nobreak ic)\in [-\pi,\pi)$.
\end{prop}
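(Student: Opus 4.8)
The plan is to compute directly the effect of conjugating the operator $|_{2-r}g$ by the linear map $\Prj{2-r}$, and then simplify using the argument conventions. Concretely, I would set $h = \Prj{2-r}\ph$, so that $\ph(t) = (i-t)^{r-2} h(t)$, apply the definition \eqref{SL2op} of $|_{2-r}g$ to $\ph$, and then apply $\Prj{2-r}$ again to the result. This gives
\[
\bigl(\Prj{2-r}(\ph|_{2-r}g)\bigr)(t) \= (i-t)^{2-r}\,(ct+d)^{r-2}\,\ph(gt)
\= (i-t)^{2-r}\,(ct+d)^{r-2}\,(i-gt)^{r-2}\,h(gt)\,.
\]
So the claim reduces to the identity
\[
(i-t)^{2-r}\,(ct+d)^{r-2}\,(i-gt)^{r-2} \= (a-ic)^{r-2}\,\Bigl(\frac{t-i}{t-g^{-1}i}\Bigr)^{2-r}\,,
\]
which I would verify by checking first that the moduli match and then that the branch choices are consistent.

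First I would handle the algebraic core, ignoring branches: since $g\in\SL_2(\RR)$ one has $i - gt = i - \frac{at+b}{ct+d} = \frac{(ci+d)\,(t-g^{-1}i)}{ct+d}\cdot\frac{\text{const}}{\dots}$ — more precisely, using $ad-bc=1$, a short computation gives $i - gt = \dfrac{(a-ic)(g^{-1}i - t)}{-(ct+d)}$ up to sign bookkeeping, equivalently $(ct+d)(i - gt) = (a-ic)(t - g^{-1}i)\cdot(-1)$, and also $i-t = -(t-i)$. Substituting these into the left-hand side collapses the $(ct+d)$ factors and produces $(a-ic)^{r-2}$ times $(t-i)^{2-r}(t-g^{-1}i)^{r-2}$ times a sign factor $(\pm 1)^{r-2}$, matching the right-hand side up to that sign ambiguity. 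The content of the proposition is entirely in pinning down that these powers of $-1$ cancel given the stated argument conventions.

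The main obstacle, therefore, is the branch analysis: all the quantities $(i-t)^{2-r}$, $(ct+d)^{r-2}$, $(i-gt)^{r-2}$, $(a-ic)^{r-2}$, $(t-i)^{2-r}$, $(t-g^{-1}i)^{2-r}$ must be interpreted with specific arguments — the convention $\arg(i-t)\in(-\frac\pi2,\frac{3\pi}2)$ from \eqref{Prj} applied at $t$ and at $gt$, the convention \eqref{ac} for $(ct+d)^{-r}$ with $t\in\lhp$, and the stated $\arg(a-ic)\in[-\pi,\pi)$. I would track the argument of each factor as $t$ ranges over $\lhp$: for $t\in\lhp$ one has $\im(i-t) = 1 - \im t > 0$, so $\arg(i-t)\in(0,\pi)$; for $g\in\SL_2(\RR)$ one checks $gt\in\lhp$ when $t\in\lhp$, so the same applies to $i - gt$; and $\arg(ct+d)$ lies in a half-plane by \eqref{ac}. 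The identity of arguments modulo $2\pi$ is automatic from the algebra above; what must be shown is that they are actually equal (not just congruent mod $2\pi$), which follows because both sides are continuous functions of $t$ on the connected set $\lhp$ and one can check equality at a single convenient point, e.g.\ $t = -i$ or $t\to i\infty$ along the imaginary axis. I expect a small amount of care is needed near the boundary cases of the argument intervals (the half-open endpoints in \eqref{ac} and in $\arg(a-ic)$), but for $t$ in the open lower half-plane these boundary values are not attained, so continuity plus one base point suffices. Finally, since $\Prj{2-r}$ is a bijection intertwining $|_{2-r}g$ with the operator defined by \eqref{prjact}, and since a direct check shows $|^\prj_{2-r}g$ as defined does satisfy $(h|^\prj_{2-r}g)|^\prj_{2-r}h = h|^\prj_{2-r}(gh)$ on $G_0$ via the corresponding identities for $|_{2-r}$, the proof is complete.
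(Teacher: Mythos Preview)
Your approach is essentially the same as the paper's: set $h=\Prj{2-r}\ph$, compute $\Prj{2-r}(\ph|_{2-r}g)$, and reduce to the scalar identity
\[
(i-t)^{2-r}(ct+d)^{r-2}(i-gt)^{r-2} \= (a-ic)^{r-2}\Bigl(\frac{t-i}{t-g^{-1}i}\Bigr)^{2-r},
\]
then verify by continuity from a base point. The algebraic check of moduli is fine (though your sketch is a bit rough; the clean relation is $(ct+d)(i-gt)=(ic-a)(t-g^{-1}i)$).

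There is one point of confusion worth flagging. You write that the boundary cases of the argument intervals ``are not attained for $t$ in the open lower half-plane,'' but the delicate boundary is in $g$, not in $t$: the half-open endpoint $\arg(a-ic)=-\pi$ is attained precisely when $c=0$ and $a<0$, i.e.\ $g\in\SL_2(\RR)\setminus G_0$, regardless of $t$. The paper handles this by arguing in two steps: first use real-analyticity of both sides jointly in $(t,g)\in\lhp\times G_0$ (not just continuity in $t$) to extend from a single base point $(t,g)=(-i,\mathrm{Id})$ to all of $\lhp\times G_0$; then separately check that as $g\to\matr{-p}{q}{0}{-p^{-1}}$ with $p>0$, both $\arg(ct+d)$ and $\arg(a-ic)$ tend to $-\pi$, so the identity survives in the limit. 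Your per-$g$ continuity-in-$t$ argument would need a base-point check for every $g$, which reintroduces the branch problem you are trying to avoid; the joint-analyticity approach sidesteps this.

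The final sentence about verifying $(h|^\prj_{2-r}g_1)|^\prj_{2-r}g_2 = h|^\prj_{2-r}(g_1g_2)$ is not part of the proposition and can be dropped.
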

\begin{proof}
We want to determine the operator $|_{2-r}g$ for $g=\matc abcd \in
\SL_2(\RR)$ such that the following diagram commutes:
\[ \xymatrix{ \Prj{2-r}\dsv{2-r}{-\om} \ar[r]^{|^\prj_{2-r}g}
& \Prj{2-r}\dsv{2-r}{-\om} \\
\dsv{2-r}{-\om} \ar[r]^{|_{2-r}g} \ar[u]_{\Prj{2-r}}
& \dsv{2-r}{-\om} \ar[u]^{\Prj{2-r}} }\]
For $\ph\in \dsv{2-r}{-\om}$ put $h=\Prj{2-r}\ph$. So $\ph(t)
=
(i-\nobreak t)^{r-2}\,h(t)$. Then $h|_{2-r}^\prj g\,(t)$ should be
given by
\[ \bigl( \Prj{2-r} (\ph|_{2-r}g) \bigr)(t)
\= (i-t)^{2-r} \,(ct+d)^{r-2}\,(i-gt)^{r-2}\, h(gt)
\]
So we need to check that
\[ (i-t)^{2-r}\, (ct+d)^{r-2}\, (i-g t)^{r-2}\= (a-ic)^{r-2}\, \Bigl(
\frac{t-i}{t-g^{-1}i}\Bigr)^{2-r}\,.\]
For $g$ near to the identity in $\SL_2(\RR)$ and $t$ near $-i$ this
can be done by a direct computation. The equality extends by
analyticity of both sides to $(t,g)\in \lhp\times G_0$. (See
\eqref{G0} for $G_0$.)

All factors are real-analytic in $(t,g)$ on $\lhp \times \SL_2(\RR)$,
except $(ct+\nobreak d)^{r-2}$ and $(a-ic)^{r-2}$. So we have to
check that the arguments of these two factors tend to the same limit
as $g=\matc abcd \rightarrow\matr{-p}{q}{0}{-p^{-1}}\in
\SL_2(\RR)\setminus G_0$, with $p>0$ and $q\in \RR$. We have indeed
$\arg(ct+\nobreak d)\rightarrow-\pi$, and
$\arg(a-ic)\rightarrow-\pi$.
\end{proof}

\begin{prop}\label{prop-inv} Each of the spaces $\dsv{2-r}\pol$,
$\dsv{2-r}\om$, $\dsv{2-r}\infty$, $\dsv{2-r}{-\infty}$ and
$\dsv{2-r}{-\om}$ is invariant under the operators $|_{2-r}g$ with
$g\in \SL_2(\RR)$.
\end{prop}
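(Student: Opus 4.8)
The plan is to work in the projective model, where by Proposition~\ref{prop-opprj} the operator $|_{2-r}g$ corresponds to
\[ h|^\prj_{2-r}g\,(t) \= (a-ic)^{r-2}\,\Bigl(\frac{t-i}{t-g^{-1}i}\Bigr)^{2-r}\,h(gt)\,, \]
and the five spaces become $\Prj{2-r}$ of the original ones. Since $\Prj{2-r}$ is a linear isomorphism, it suffices to check that each projective-model space is stable under $h\mapsto h|^\prj_{2-r}g$. The key point is that the prefactor $(a-ic)^{r-2}\bigl((t-i)/(t-g^{-1}i)\bigr)^{2-r}$ is a fixed nonvanishing holomorphic function of $t$ on a full neighbourhood of $\lhp\cup\proj\RR$ in $\proj\CC$, because $g^{-1}i\in\uhp$ lies off that set; so multiplication by it preserves holomorphy, smoothness up to $\proj\RR$, real-analyticity up to $\proj\RR$, and polynomial growth near $\proj\RR$. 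Likewise $t\mapsto gt$ is a Möbius transformation fixing $\proj\RR$ and mapping $\lhp$ to $\lhp$, so precomposition with it preserves all of these properties as well.

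Concretely I would argue space by space. For $\dsv{2-r}{-\om}$: $h\circ g$ is holomorphic on $\lhp$ since $g$ preserves $\lhp$, and the prefactor is holomorphic on $\lhp$, so the product lies in the projective model of $\dsv{2-r}{-\om}$. For $\dsv{2-r}{-\infty}$: here I invoke the uniform description via $Q(t)=|\im t|/|t-i|^2$ recorded in the Discussion after Definition~\ref{Dsomdef}, namely that polynomial growth is equivalent to $f(t)\ll Q(t)^{-B}$; the function $Q$ transforms by $Q(gt)=Q(t)\cdot|\text{something bounded above and below}|$ near $\proj\RR$ (one can also simply note $Q\circ g$ and $Q$ are comparable away from $i$ and $g^{-1}i$, both in $\uhp$), and the bounded nonvanishing prefactor does not affect the estimate, so polynomial growth is preserved. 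For $\dsv{2-r}\infty$ and $\dsv{2-r}\om$: these are defined by $\Prj{2-r}\ph$ being $C^\infty$, resp.\ holomorphically continuable, on a neighbourhood of $\lhp\cup\proj\RR$; composition with the Möbius map $g$ (which extends to a biholomorphism of a neighbourhood of $\proj\RR$, carrying $\lhp\cup\proj\RR$ to itself) and multiplication by the holomorphic prefactor both preserve these conditions, shrinking the neighbourhood if necessary. For $\dsv{2-r}\pol$ with $r\in\ZZ_{\geq2}$: since $\dsv{2-r}\pol\subset\dsv{2-r}\om$ is already handled, and since polynomial functions on $\CC$ of degree $\leq r-2$ are exactly the elements of $A_r$ with the dual-weight action that are \emph{globally} defined and polynomial, one checks directly in the original (non-projective) model that $\ph|_{2-r}g\,(z)=(cz+d)^{r-2}\ph(gz)$ is again a polynomial of degree $\leq r-2$ — this is the classical statement that $|_{2-r}\SL_2$ preserves the polynomials of degree at most $r-2$, a routine computation with $\binom{r-2}{j}$.

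The main obstacle is not any single estimate but keeping track of branches and of the fact that $|_{2-r}g$ is \emph{not} a representation of $\SL_2(\RR)$ — so one cannot simply reduce to generators by a cocycle argument. The cleanest route, which I would follow, is to prove invariance first for $g\in G_0$, where the prefactor identity from the proof of Proposition~\ref{prop-opprj} holds with no branch ambiguity and everything is transparently holomorphic/real-analytic in $(t,g)$, and then to extend to the remaining $g=\hmatr{-p}q0{-p^{-1}}$ by the same limiting argument (arguments of $(ct+d)^{r-2}$ and $(a-ic)^{r-2}$ both tending to $-\pi$) used there; alternatively one observes that $\SL_2(\RR)=G_0\cup(-1)G_0$ and that $|_{2-r}(-1)$ is the identity on all these spaces, so invariance under $G_0$ already gives invariance under all of $\SL_2(\RR)$. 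Either way, once the prefactor is identified as a fixed nowhere-zero holomorphic function near $\lhp\cup\proj\RR$ and $g$ as a boundary-preserving Möbius map, each of the five stability statements is immediate from the defining property of the corresponding space.
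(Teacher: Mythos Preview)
Your proposal is correct and follows essentially the same approach as the paper: work in the projective model, observe that the factor $\bigl(\tfrac{t-i}{t-g^{-1}i}\bigr)^{2-r}$ and its inverse are holomorphic on $\proj\CC$ minus a path in $\uhp$ from $i$ to $g^{-1}i$, so multiplication by it (together with the Möbius precomposition $h\mapsto h\circ g$) preserves each of the four projective-model spaces, and handle $\dsv{2-r}\pol$ directly in the non-projective model. Your treatment is considerably more detailed than the paper's three-sentence proof; the extra care about branches and about $|_{2-r}$ not being a representation is unnecessary here since Proposition~\ref{prop-opprj} already establishes the projective formula for all $g\in\SL_2(\RR)$, and one small slip is that $|_{2-r}(-1)$ is multiplication by a nonzero scalar rather than the identity---harmless for invariance, but worth correcting.
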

\begin{proof}
 We work with the projective model. The factor $\bigl(
\frac{t-i}{t-g^{-1}i}\bigr)^{2-r}$ and its inverse are holomorphic on
$\proj\CC\setminus p$, were $p$ is a path in $\uhp$ from $i$ to
$g^{-1}i$ in~$\uhp$. Multiplication by this factor preserves the
projective models of each of the last four spaces. The invariance of
$\dsv{2-r}\pol$ is easily checked without use of the projective
model.
\end{proof}

\begin{defn}Let $\Gm$ be a cofinite subgroup of $\SL_2(\RR)$ and let
$v$ be a multiplier system for the weight $r\in \CC$. For each choice
of $\ast\in \bigl\{-\om,-\infty,\infty, \om,\pol\bigr\}$, we define
\il{dsv-v2-r}{$\dsv{v,2-r}\ast$}$\dsv{v,2-r}\ast$ as the space
$\dsv{2-r}\ast$ with the action $|_{v,2-r}$ of~$\Gm$, defined
in~\eqref{vq-act-}.
\end{defn}

\rmrke The finite-dimensional module $\dsv{v,2-r}\pol$ is the
coefficient module used by Eichler \cite{Ei57}. Knopp \cite{Kn74}
used an infinite-dimensional module isomorphic (under $\Ci$ in
\eqref{Ci}) to $\dsv{v,2-r}{-\infty}$ for the cocycles attached to
cusp forms of real weight. In our approach $\dsv {v,2-r}\om$ will be
the basic $\Gm$-module.

\subsection{Semi-analytic vectors}\label{sect-sav}For a precise
description of the image of the map $\coh r \om$ from automorphic
forms to cohomology with values in $\dsv{v,2-r}\om$, we need more
complicated modules, in spaces where we relax the conditions in
Part~iv) of Definition~\ref{Dsomdef} in a finite number of points
of~$\proj\RR$.

\begin{defn}\label{def-sav}\textrm{ Semi-analytic
vectors.}\il{sav}{semi-analytic vector}
\begin{enumerate}
\item[i)] Let $ \xi_1,\ldots, \xi_n\in \proj\RR$.\ir{dsvomxi}{
\dsv{2-r}\om[\xi_1,\ldots,\xi_n]}
\be \label{dsvomxi}\dsv{2-r}\om[\xi_1,\ldots,\xi_n]\;:=\;
\Prj{2-r}^{-1}\,\indlim\hol(U)\,, \ee
where $U$ runs over the open sets in $\proj\CC$ that contain $\lhp $
and $\proj\RR\setminus\{\xi_1,\ldots,\xi_n\}$.
\item[ii)]$\dsv{2-r}\fs :=
\indlim \dsv{2-r}\om[\xi_1,\ldots,\xi_n]$, where
$\{\xi_1,\ldots,\xi_n\}$ runs over the finite subsets of $\proj\RR$.
\item[iii)]$\dsv{2-r}\fsn := \indlim
\dsv{2-r}\om[\ca_1,\ldots,\ca_n]$, where $\{\ca_1,\ldots,\ca_n\}$
runs over the finite sets of cusps of $\Gm$.
\item[iv)]For $\ph \in \dsv{2-r}\fs$ we define the set of
\il{singul}{boundary singularity}\emph{boundary singularities}
\il{bsing}{$\bsing \ph$}$\bsing \ph$ as the minimal set
$\{\xi_1,\ldots,\xi_n\}$ such that $\ph \in
\dsv{2-r}\om[\xi_1,\ldots,\xi_n]$.
\end{enumerate}
\end{defn}

\rmrk{Conditions on the singularities} The elements of the spaces in
Definition~\ref{def-sav} can be viewed as real-analytic functions
on~$\RR\setminus E$ for some finite set~$E$, without conditions on
the nature of the singularities at the exceptional points in~$E$. We
will define subspaces by putting restrictions on the singularities
that we allow.

If $\ph \in \dsv{2-r}\om$ then $h=\Prj{2-r}\ph$ is holomorphic at each
point $\xi\in \proj\RR$, hence its power series at $\xi$ represents
$h$ on a neighbourhood of $\xi$ in~$\proj\CC$:
\be \label{powser}
 h(t)\= \sum_{n\geq 0} a_n \, (t-\xi)^n\quad(\xi\in \RR)\,,\qquad
h(t)\= \sum_{n\geq 0}a_n\, t^{-n}\quad(\xi=\infty)\,.\ee
If $\ph$ is in the larger space $\dsv{2-r}\infty$, then there need not
be a power series that converges to the function $h=\Prj{2-r}\ph$,
but only an \il{asmpts}{asymptotic series}\emph{asymptotic
series}\ir{asser}{\sim}
\be\label{asser}
 h(t)\;\sim\; \sum_{n\geq 0} a_n \, (t-\xi)^n\quad(\xi\in
 \RR)\,,\qquad h(t)\;\sim\; \sum_{n\geq 0}a_n\,
 t^{-n}\quad(\xi=\infty)\,,
\ee
valid as $t$ approaches $\xi$ through $\lhp \cup \proj\RR$. By this
formula we mean that for any order $N\geq 1$ we have
\[ h(t) \= \sum_{n=0}^{N-1} a_n\, (t-\xi)^n + \oh \bigl(
(t-\xi)^N\bigr)\]
as $t\rightarrow\xi$ through $\lhp\cup\proj\RR$, and analogously for
$\xi=\infty$.

\rmrk{Smooth semi-analytic vectors}The first condition on the
singularities that we define is rather strict:
\begin{defn}\il{dsvominf}{$\dsv{2-r}{\om,\infty}[\xi_1,\ldots,\xi_n]$}
$\dsv{2-r}{\om,\infty}[\xi_1,\ldots,\xi_n]:=
\dsv{2-r}{\om}[\xi_1,\ldots,\xi_n]\cap \dsv{2-r}\infty$. We call it a
space of \il{savs}{semi-analytic vector, smooth}\il{ssav}{smooth
semi-analytic vector}\emph{smooth semi-analytic vectors}.
\end{defn}

\rmrk{Semi-analytic vectors with simple singularities} We may also
allow the asymptotic expansions in~\eqref{asser} to run over $n\geq
-1$. This gives the following space of semi-analytic vectors with
\emph{simple singularities}:
\begin{defn}\label{sav-ss}We define spaces of
\il{sav-ss}{semi-analytic vectors with simple
singularities}\emph{semi-analytic vectors with simple singularities}
by\ir{smpdef}{\dsv{2-r}{\om,\smp}[\xi_1,\ldots \xi_n]}
\badl{smpdef} \dsv{2-r}{\om,\smp}&[\xi_1,\ldots \xi_n]\;:=\; \Bigl\{
\ph \in \dsv{2-r}\om[\xi_1,\ldots,\xi_n]\;:\; \\
&\quad t\mapsto(t-\xi_j)\, (\Prj{2-r}\ph)(t) \text{ is in
}C^\infty(\lhp\cup\RR)\text{ if }\xi_j\in \RR\,,\\
&\quad t\mapsto t^{-1}\, (\Prj{2-r}\ph)(t)\text{ is in }C^\infty(\lhp
\cup\proj\RR\setminus\{0\})\text{ if }\xi_j=\infty\Bigr\}\,. \eadl
\end{defn}

\rmrk{Example}Elements of $\dsv{2-r}{\om,\smp}[\cdots]$ turn up
naturally. Often we will be interested in equations like the
following one:
\[ h(t+1)- h(t) \= \ph(t)\,,\]
where $\ph$ is given. In the case $\ph \in \dsv{2-r}\pol$ with $r\in
\ZZ_{\geq 2}$, we cannot find a solution $h$ in $\dsv{2-r}\om$ if
$\ph$ is a (nonzero) polynomial with degree equal to $r-2$. If there
is a solution $h$ of the equation given by a polynomial, then
$\mathrm{deg}\, h = r-1$, and $h$ cannot be in~$\dsv {2-r}\pol$.
 Further note that such a solution $h$ is even not in $\dsv{2-r}\om$,
since $(\Prj{2-r}h)(t)
=
(i-\nobreak t)^{2-r}\, h(t)$ is not holomorphic at~$\infty$. However,
$t\mapsto t^{-1}\,(\Prj{2-r}h)(t)$ is holomorphic at~$\infty$, hence
$h\in \dsv{2-r}{\om,\smp}[\infty]$.

\rmrk{Semi-analytic vectors supported on an excised neighbourhood} Much
more freedom leaves the last condition that we define. It does not
work with asymptotic expansions, but with the nature of the domain to
which the function can be holomorphically extended.
\begin{defn}\label{dec-en}A set $\Om \subset \proj\CC$ is an
\il{en}{excised neighbourhood}\emph{excised neighbourhood} of $\lhp \cup
\proj\RR$, if it contains a set of the form
\[ U \setminus \bigcup_{\xi\in E} W_{\xi}\,, \]
where $U$ is a standard neighbourhood of $\lhp\cup\proj\RR$
in~$\proj\CC$, where $E$ is a finite subset of $\proj\RR$, called the
\il{es}{excised set}\emph{excised set}, and where $W_{\xi}$ has the
form
\[ W_{\xi} \= \bigl\{ h_{\xi} z \in \uhp\;:\; |\re z|\leq a\text{ and
}\im z>\e \bigr\}\,,\]
with $h_{\xi}\in \SL_2(\RR)$ such that $h_{\xi}\infty=\xi$, and
$a,\e>0$\,.

Instead of ``excised neighbourhood of $\lhp\cup\proj\RR$ with excised
set $E$'' we shall often write \il{Een}{$E$-excised
neighbourhood}\emph{$E$-excised neighbourhood}.
\end{defn}

A typical excised neighbourhood $\Om$ of $\lhp\cup\proj\RR$ with
excised set $ E=\{\infty,\xi_1,\xi_2\}$ looks as indicated in
Figure~\ref{fig-excnbh}.
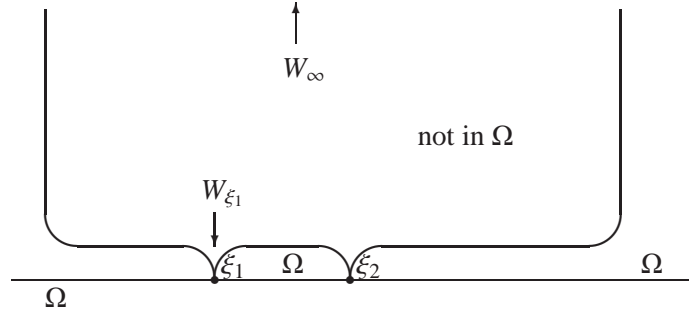
\begin{figure}[ht]
\[\setlength\unitlength{.9cm}
\begin{picture}(10,5)(0,-1)
\put(0,0){\line(1,0){10}}
\put(3.1,.1){$\xi_1$}
\put(5.1,.1){$\xi_2$}
\put(4,.1){$\Om$}
\put(.5,-.4){$\Om$}
\put(9.3,.1){$\Om$}
\put(3,0){\circle*{.1}}
\put(5,0){\circle*{.1}}
\put(3,1){\vector(0,-1){.5}}
\put(2.8,1.2){$W_{\xi_1}$}
\put(4,3){$W_\infty$}
\put(4.2,3.5){\vector(0,1){.6}}
\put(6,2){not in $\Om$}
\thicklines
\put(2.5,0){\oval(1,1)[rt]}
\put(3.5,0){\oval(1,1)[lt]}
\put(4.5,0){\oval(1,1)[rt]}
\put(5.5,0){\oval(1,1)[lt]}
\put(3.5,.5){\line(1,0){1}}
\put(1,1){\oval(1,1)[lb]}
\put(1,.5){\line(1,0){1.5}}
\put(.5,1){\line(0,1){3}}
\put(8.5,1){\oval(1,1)[rb]}
\put(9,1){\line(0,1){3}}
\put(5.5,.5){\line(1,0){3}}
\end{picture}
\]
\caption{An $\{\infty,\xi_1,\xi_2\}$-excised
neighbourhood.}\label{fig-excnbh}
\end{figure}

\begin{defn}For $\xi_1,\ldots,\xi_n\in \proj\RR$ we define spaces of
\il{esav}{excised semi-analytic vector}\il{save}{semi-analytic vector,
excised}\emph{excised semi-analytic
vector}\ir{dsvwdg}{\dsv{2-r}{\om,\wdg}[\xi_1,\ldots,\xi_n], \;
\dsv{2-r}{\fs,\wdg}}
\be \label{dsvwdg}
 \dsv{2-r}{\om,\wdg}[\xi_1,\ldots,\xi_n]\;:=\; \Prj{2-r}^{-1}\,\indlim
 \hol(\Om)\,,\ee
where $\Om$ runs over the $\{\xi_1,\ldots,\xi_n\}$-excised
neighbourhoods.
\end{defn}

\begin{defn}
For $\cond\in \{ \infty,\smp,\wdg\}$ we
define\ir{fs-fsn-cond}{\dsv{2-r}{\fs,\cond},\; \dsv{2-r}{\fsn,\cond}}
\badl{fs-fsn-cond} \dsv{2-r}{\fs,\cond} &\=
\indlim\dsv{2-r}{\om,\cond}[\xi_1,\ldots,\xi_n]\,,\\
\dsv{2-r}{\fsn,\cond} &\= \indlim
\dsv{2-r}{\om,\cond}[\ca_1,\ldots,\ca_n]\,, \eadl
where $\{\xi_1,\ldots \xi_n\}$ runs over the finite subsets of
$\proj\RR$, and $\{\ca_1,\ldots \ca_n\}$ over the finite sets of
cusps of~$\Gm$.
\end{defn}

\rmrk{Notation}The conditions $\infty$, and `$\smp$' can be combined
with `$\wdg$'.
\il{combcond}{$\dsv{v,2-r}{\fs,\cond_1,\cond_2},\;
\dsv{v,2-r}{\fsn,\cond_1,\cond_2}$}For instance, by
$\dsv{2-r}{\fs,\infty,\wdg}$ we mean $\dsv{2-r}{\fs,\infty}\cap
\dsv{2-r}{\fs,\wdg}$.

\begin{prop}\label{prop-sasi}
\begin{enumerate}
\item[i)] $\dsv{2-r}\om [\xi_1,\ldots,\xi_n] |_{2-r}g =
\dsv{2-r}\om[g^{-1}\xi_1,\ldots,g^{-1}\xi_n]$ for each $g\in
\SL_2(\RR)$. Hence
\begin{enumerate}
\item[a)] The space $\dsv{2-r}\fs$ is invariant under the operators
$|_{2-r}g$ with $g\in \SL_2(\RR)$.
\item[b)] The space $\dsv{2-r}\fsn$ is invariant under the operators
$|_{2-r}\gm$ for $\gm \in \Gm$.
\end{enumerate}
\item[ii)]The same holds for the corresponding spaces with condition
$\infty$, $\smp$ or $\wdg$ at the singularities.
\item[iii)] $\bsing (\ph|_{2-r}g) = g^{-1}\, \bsing\ph$ for $\ph\in
\dsv{2-r}\fs$ and $g\in \SL_2(\RR)$.
\end{enumerate}
\end{prop}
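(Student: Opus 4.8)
The plan is to reduce everything to a direct computation in the projective model, using the explicit formula for $|^\prj_{2-r}g$ from Proposition~\ref{prop-opprj}. First I would prove Part~i), the identity $\dsv{2-r}\om[\xi_1,\ldots,\xi_n]|_{2-r}g = \dsv{2-r}\om[g^{-1}\xi_1,\ldots,g^{-1}\xi_n]$. By Proposition~\ref{prop-opprj}, in the projective model the operator $|^\prj_{2-r}g$ sends $h$ to $(a-ic)^{r-2}\bigl(\frac{t-i}{t-g^{-1}i}\bigr)^{2-r} h(gt)$. The scalar $(a-ic)^{r-2}$ is harmless, and the factor $\bigl(\frac{t-i}{t-g^{-1}i}\bigr)^{2-r}$, as already noted in the proof of Proposition~\ref{prop-inv}, together with its inverse is holomorphic on $\proj\CC$ minus a path in $\uhp$ from $i$ to $g^{-1}i$; in particular it is holomorphic on an open neighbourhood of $\lhp\cup\proj\RR$ and hence does not create or destroy singularities on $\proj\RR$. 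So the only thing that moves the singularities is the substitution $t\mapsto gt$: if $h$ extends holomorphically to a neighbourhood $U$ of $\lhp\cup(\proj\RR\setminus\{\xi_1,\ldots,\xi_n\})$, then $t\mapsto h(gt)$ extends holomorphically to $g^{-1}U$, which is a neighbourhood of $g^{-1}(\lhp\cup(\proj\RR\setminus\{\xi_1,\ldots,\xi_n\})) = \lhp\cup(\proj\RR\setminus\{g^{-1}\xi_1,\ldots,g^{-1}\xi_n\})$, using that $g$ preserves $\lhp$ and $\proj\RR$. Running the argument backwards with $g^{-1}$ in place of $g$ gives the reverse inclusion, so equality holds. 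Consequences a) and b) are then immediate: $\dsv{2-r}\fs$ is a union (inductive limit) over all finite subsets of $\proj\RR$, which is stable under the $g$-action since $g^{-1}$ permutes finite subsets of $\proj\RR$; and $\dsv{2-r}\fsn$ is the union over finite sets of cusps, which each $\gm\in\Gm$ permutes since $\Gm$ maps cusps to cusps.

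Next I would handle Part~ii), the analogue for the spaces with a condition $\infty$, $\smp$, or $\wdg$ at the singularities. For the $\wdg$ spaces $\dsv{2-r}{\om,\wdg}[\xi_1,\ldots,\xi_n]$ the argument is literally the same as above, once one checks that $g$ maps an $\{g^{-1}\xi_1,\ldots,g^{-1}\xi_n\}$-excised neighbourhood to a $\{\xi_1,\ldots,\xi_n\}$-excised neighbourhood: this follows because an excised neighbourhood is by definition a standard neighbourhood of $\lhp\cup\proj\RR$ with a cusp-neighbourhood $W_\xi = \{h_\xi z\in\uhp: |\re z|\le a, \im z>\e\}$ removed around each excised point, and $g$ carries $W_{g^{-1}\xi} = h_{g^{-1}\xi}\{|\re z|\le a,\im z>\e\}$ to a set of the same form around $\xi$ (absorbing $g h_{g^{-1}\xi}$ into the choice of $h_\xi$, possibly shrinking $a$ and $\e$; shrinking only makes the excised neighbourhood smaller, which is fine for the inductive limit). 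For the $\infty$ and $\smp$ conditions, I would note these are defined via membership in $\dsv{2-r}\infty$ (resp. a smoothness condition on $t^{-1}$ or $(t-\xi_j)$ times $\Prj{2-r}\ph$), and invariance follows by combining Proposition~\ref{prop-inv} (which gives $|_{2-r}g$-invariance of $\dsv{2-r}\infty$) with the coordinate change in Part~i): smoothness of $\Prj{2-r}\ph$ near $\xi$ up to $\lhp\cup\proj\RR$, multiplied by the holomorphic cocycle factor and composed with the diffeomorphism induced by $g$ on $\lhp\cup\proj\RR$, transports to smoothness near $g^{-1}\xi$; the vanishing of the pole order is tracked the same way.

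Finally, Part~iii), $\bsing(\ph|_{2-r}g) = g^{-1}\bsing\ph$, is a formal consequence of Part~i). By definition $\bsing\ph$ is the minimal finite set $E$ with $\ph\in\dsv{2-r}\om[E]$. By Part~i), $\ph\in\dsv{2-r}\om[E]$ iff $\ph|_{2-r}g\in\dsv{2-r}\om[g^{-1}E]$, and the map $E\mapsto g^{-1}E$ is a bijection on finite subsets of $\proj\RR$ that is inclusion-preserving, hence sends minimal sets to minimal sets; therefore $\bsing(\ph|_{2-r}g) = g^{-1}\bsing\ph$. (One should remark that the minimal set is well-defined: if $\ph$ extends holomorphically past $\proj\RR$ away from $E_1$ and also away from $E_2$, then by the identity theorem the two extensions agree on overlaps and glue to an extension away from $E_1\cap E_2$, so intersections of ``allowed singular sets'' are again allowed, giving a unique minimal one.) I expect the only mildly delicate point to be the bookkeeping in Part~ii) for the $\wdg$-condition — verifying that the image under $g$ of an excised neighbourhood genuinely contains an excised neighbourhood with the translated excised set, with the shapes of the horoball-like regions $W_\xi$ matching after absorbing $g$ into the auxiliary matrices $h_\xi$ — but this is routine given Definition~\ref{dec-en}, and everything else is essentially the observation that the cocycle factor is holomorphic across $\proj\RR$ while only the Möbius substitution moves the singularities.
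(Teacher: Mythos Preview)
Your proposal is correct and follows essentially the same approach as the paper: work in the projective model via Proposition~\ref{prop-opprj}, use (as in the proof of Proposition~\ref{prop-inv}) that the cocycle factor $\bigl(\tfrac{t-i}{t-g^{-1}i}\bigr)^{2-r}$ is holomorphic on a neighbourhood of $\lhp\cup\proj\RR$, and observe that only the substitution $t\mapsto gt$ moves the boundary singularities. The paper's own proof is extremely terse (``Most is clear. For Part~ii) we check that the conditions are stable under the operators $|_{2-r}g$''), so your write-up in fact supplies the details the paper leaves implicit, including the bookkeeping for excised neighbourhoods and the well-definedness of the minimal singular set.
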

\begin{proof}Most is clear. For Part~ii) we check that the conditions
are stable under the operators $|_{2-r} g$.
\end{proof}

\rmrk{Notation}We denote for each of these spaces $\dsv{2-r}\fs$,
$\dsv{2-r}{\fsn}$, $\dsv{2-r}{\fs,\cond}$, $\dsv{2-r}{\fsn,\cond}$,
by \il{gmactfsfsn}{$\dsv{v,2-r}\ast$}$\dsv{v,2-r}\ast$ that space
provided with the action $|_{v,2-r}$ of~$\Gm$.

\subsection{Isomorphic cohomology groups}\label{sect-isocg}

Theorems \ref{THMac} and~\ref{THMcc} give one characterization of the
images of $A_r(\Gm,v)$, $\cusp r(\Gm,v)$ and $M_r(\Gm,v)$ under the
map $\coh r \om$ in Theorem~\ref{THMac} to the \il{analcoh}{analytic
cohomology}analytic cohomology group $H^1(\Gm;\dsv{v,2-r}\om)$. At
this point we have available all $\Gm$-modules to give several more
characterizations of these images, thus extending Theorems
\ref{THMac} and~\ref{THMcc}.

\begin{mainthm}\label{THMiso}Let $\Gm$ be a cofinite discrete subgroup
of $\SL_2(\RR)$ with cusps, and let $v$ be a multiplier system for
the weight $r\in \CC$.
\begin{enumerate}
\item[i)] Suppose that $r\not\in \ZZ_{\geq 2}$.
\begin{enumerate}
\item[a)] The image $\coh r \om
A_r(\Gm,v)=\hpar^1(\Gm;\dsv{v,2-r}\om,\dsv{v,2-r}{\fsn,\wdg})$ is
equal to
\[\hpar^1(\Gm;\dsv{v,2-r}\om,\dsv{v,2-r}{\fs,\wdg})\,,\]
and canonically isomorphic to
\[\hpar^1(\Gm;\dsv{v,2-r}{\fsn,\wdg})\,.\]
\item[b)] The codimension of
$\hpar^1(\Gm;\dsv{v,2-r}\om,\dsv{v,2-r}{\fsn,\wdg})$ in
$H^1(\Gm;\dsv{v,2-r}\om)$ is infinite.
\item[c)] The natural map $\hpar^1(\Gm;\dsv{v,2-r}{\fsn,\wdg})
\rightarrow \hpar^1(\Gm;\dsv{v,2-r}{\fs,\wdg})$ is injective, and its
image has infinite codimension in
$\hpar^1(\Gm;\dsv{v,2-r}{\fs,\wdg})$.
\end{enumerate}
\item[ii)] Suppose that $r\in \RR\setminus \ZZ_{\geq 2}$.

The image $\coh r \om \cusp
r(\Gm,v)=\hpar^1(\Gm;\dsv{v,2-r}\om,\dsv{v,2-r}{\fsn,\infty,\wdg})$
is equal to
\[ \hpar^1(\Gm;\dsv{v,2-r}\om,\dsv{v,2-r}{\fsn,\infty})\,,\quad
\hpar^1(\Gm;\dsv{v,2-r}\om,\dsv{v,2-r}{\fs,\infty})\,,\]
and canonically isomorphic to
\[ \hpar^1(\Gm;\dsv{v,2-r}{\fsn,\infty})\,,\quad
\hpar^1(\Gm;\dsv{v,2-r}{\fs,\infty}). \]
\item[iii)] Suppose that $r\in \RR\setminus \ZZ_{\geq 1}$.
\begin{enumerate}
\item[a)] The image $\coh r \om  M_r n
(\Gm,v)=\hpar^1(\Gm;\dsv{v,2-r}\om,\dsv{v,2-r}{\fsn,\smp,\wdg})$ is
 equal to
\[\hpar^1(\Gm;\dsv{v,2-r}\om,\dsv{v,2-r}{\fsn,\smp})\,,\quad
\hpar^1(\Gm;\dsv{v,2-r}\om,\dsv{v,2-r}{\fs,\smp})\,\]
and canonically isomorphic to $\hpar^1(\Gm;\dsv{v,2-r}{\fsn,\smp})$.
\item[b)] The space
$\hpar^1(\Gm;\dsv{v,2-r}\om,\dsv{v,2-r}{\fsn,\smp,\wdg})$ is
canonically isomorphic to the space
$\hpar^1(\Gm;\dsv{v,2-r}{\fs,\smp})$ if $v(\gm)\neq
e^{-r\ell(\gm)/2}$ for all primitive hyperbolic elements $\gm\in
\Gm$, where $\ell(\gm)$ is the hyperbolic length of the closed
geodesic associated to~$\gm$
\end{enumerate}
\end{enumerate}
\end{mainthm}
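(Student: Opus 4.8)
The plan is to reduce everything to Theorems~\ref{THMac} and~\ref{THMcc}, which already identify the three images $\coh r \om A_r(\Gm,v)$, $\coh r \om \cusp r(\Gm,v)$ and $\coh r \om M_r(\Gm,v)$ as the mixed parabolic cohomology groups $\hpar^1(\Gm;\dsv{v,2-r}\om,\dsv{v,2-r}{\fsn,\cond,\wdg})$ for $\cond$ empty, $\cond=\infty$, $\cond=\smp$ respectively. So the content of Theorem~\ref{THMiso} is purely cohomological bookkeeping: one must show that enlarging or shrinking the \emph{second} argument of the mixed parabolic group — replacing cusp-indexed singularities by arbitrary singularities in $\proj\RR$ (i.e.\ $\fsn\to\fs$), and/or dropping the excision condition $\wdg$ — does not change the resulting subgroup of $H^1(\Gm;\dsv{v,2-r}\om)$; and, separately, that this subgroup is canonically isomorphic to the ``intrinsic'' parabolic group $\hpar^1(\Gm;\dsv{v,2-r}{\fsn,\cond})$ (or its $\fs$-variant).

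First I would treat the comparison of the second module. For the equality $\hpar^1(\Gm;\dsv{v,2-r}\om,\dsv{v,2-r}{\fsn,\wdg})=\hpar^1(\Gm;\dsv{v,2-r}\om,\dsv{v,2-r}{\fs,\wdg})$ and the analogous statements with $\infty,\smp$, note one inclusion is trivial since $\fsn\subset\fs$. For the reverse, take a cocycle $\ps\in Z^1(\Gm;\dsv{v,2-r}\om)$ whose value on each primitive parabolic $\pi_\ca$ lies in $\dsv{v,2-r}{\fs,\wdg}|(\pi_\ca-1)$; the point is that if $\ps_{\pi_\ca}=\Ph|(\pi_\ca-1)$ with $\Ph$ an excised semi-analytic vector whose singularity set is some finite $E\subset\proj\RR$, then because $\pi_\ca$ fixes only $\ca$, the singularities of $\Ph$ away from $\ca$ can be pushed into the orbit structure: one modifies $\Ph$ by an element of $\dsv{v,2-r}\om$ (which doesn't change the coboundary relation for $\pi_\ca$ up to changing $\ps$ within its class) so that the remaining singularity sits at $\ca$ only. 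This is exactly the kind of ``local at the cusp'' argument already used implicitly in the proof of Theorem~\ref{THMac}, and I would invoke the relevant lemma about $\dsv{2-r}{\om,\wdg}$ from the boundary-germ sections. For dropping the $\wdg$ condition entirely in Part~ii) and Part~iii)a), one uses that the smooth (resp.\ simple-singularity) condition already forces enough regularity that the excised holomorphic extension exists automatically near each cusp — i.e.\ an element of $\dsv{2-r}{\fsn,\infty}$ that occurs as $\ps_{\pi_\ca}/(\pi_\ca-1)$ is in fact excised — which is again a statement about the one-variable functions $\Prj{2-r}\Ph$ near a single boundary point.

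Next, the canonical isomorphism $\hpar^1(\Gm;\dsv{v,2-r}\om,\dsv{v,2-r}{\fsn,\wdg})\cong\hpar^1(\Gm;\dsv{v,2-r}{\fsn,\wdg})$ (and its $\fs$- and $\cond$-decorated analogues): the natural map is induced by the inclusion $\dsv{v,2-r}\om\hookrightarrow\dsv{v,2-r}{\fsn,\wdg}$ on coefficients. Injectivity and surjectivity both come from the fact that the quotient module $\dsv{v,2-r}{\fsn,\wdg}/\dsv{v,2-r}\om$ is a sum of ``local'' pieces supported at the cusps, together with a long-exact-sequence argument: a parabolic cocycle with values in the bigger module can be adjusted by a coboundary from the bigger module to take values in $\dsv{v,2-r}\om$ precisely on the whole group (not just on parabolics) because the mixed parabolic condition controls exactly the cusp contributions. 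This is the mechanism behind Part~i)c) and the ``codimension'' assertions in Part~i)b)–c): the cokernel of $\hpar^1(\Gm;\dsv{v,2-r}{\fsn,\wdg})\to\hpar^1(\Gm;\dsv{v,2-r}{\fs,\wdg})$ is built from singularities allowed at \emph{non-cusp} points of $\proj\RR$, of which there are infinitely many $\Gm$-inequivalent ones, giving infinite codimension; similarly the image of $\coh r\om$ has infinite codimension in $H^1(\Gm;\dsv{v,2-r}\om)$ because a general analytic cocycle need not satisfy any parabolic condition at all — this part I would cross-reference to the discussion following Theorem~\ref{THMaci} about quantum automorphic forms mapping onto $H^1(\Gm;\dsv{v,2-r}\om)$.

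Finally, Part~iii)b), the identification of $\hpar^1(\Gm;\dsv{v,2-r}\om,\dsv{v,2-r}{\fsn,\smp,\wdg})$ with $\hpar^1(\Gm;\dsv{v,2-r}{\fs,\smp})$ under the hyperbolic-multiplier hypothesis $v(\gm)\neq e^{-r\ell(\gm)/2}$: here the extra step is that now one must also control the cocycle on \emph{hyperbolic} elements, since for the $\fs$-version with no $\wdg$ one cannot localize singularities purely at cusps. The role of the condition on $v$ is exactly that $e^{-r\ell(\gm)/2}$ is the eigenvalue obstructing solvability of $h|(\gm-1)=\ph$ within the relevant module for a hyperbolic $\gm$ with translation length $\ell(\gm)$; excluding it guarantees such equations are always solvable, so the singularity at the two fixed points of each hyperbolic element can be resolved, collapsing $\fs$-singularities at non-cusp points. \textbf{The main obstacle} I expect is precisely this last point together with the general ``pushing singularities around'' arguments: making rigorous that one can modify a coboundary element $\Ph$ to kill unwanted singularities requires a careful inductive argument over the (possibly large) excised set $E$, handling each $\Gm$-orbit of singular points and using the solvability of the relevant one-cocycle equations — hyperbolic, parabolic, or elliptic — at each point, and this is where the spectral hypothesis on $v$ and the structure theory of $\Gm$ enter essentially rather than cosmetically.
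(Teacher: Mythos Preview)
Your overall architecture is right: Theorem~\ref{THMiso} is indeed a layer of cohomological bookkeeping on top of Theorems~\ref{THMac} and~\ref{THMcc}, and your treatment of the equality $\hpar^1(\Gm;\dsv{v,2-r}\om,\dsv{v,2-r}{\fsn,\cond})=\hpar^1(\Gm;\dsv{v,2-r}\om,\dsv{v,2-r}{\fs,\cond})$ is essentially the paper's Proposition~\ref{prop-parb*0} via Lemma~\ref{lem-sing-inv}. Your reading of the hyperbolic condition in iii)b) is also on target.

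But there is a genuine gap at the step you describe as ``a long-exact-sequence argument'': the canonical isomorphism $\hpar^1(\Gm;\dsv{v,2-r}\om,W)\cong\hpar^1(\Gm;W)$. The long exact sequence only reduces surjectivity to showing that the natural map $\hpar^1(\Gm;W)\to H^1(\Gm;\Sg{}{})$ is zero, where $\Sg{}{}=W/\dsv{v,2-r}\om$. This vanishing is \emph{not} formal, even when $W=\dsv{v,2-r}{\fsn,\wdg}$ and the singularities sit only at cusps. The paper's argument (Theorem~\ref{thm-iso-mpcpc}, resting on Proposition~\ref{prop-WSg-0}) is geometric: after separating singularities so that $\Sg{}{}=\bigoplus_x \Sg{}{}\{x\}$ over $\Gm$-orbits $x\subset\proj\RR$, one works on the tesselation and shows that for each orbit $x$ and each $\xi\in x$ one can choose a path in $\ZZ[X_1^\tess]$ between two cusps that avoids the finite (or $\Gm_\xi$-finite) set $D(\xi)$ of edges on which the cocycle has a $\xi$-component. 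This path-avoidance argument has separate flavours for trivial, parabolic, and hyperbolic stabilizers $\Gm_\xi$; in the hyperbolic case the path may be blocked and one is forced into $\Sg\xi\gm$, which is exactly where the condition $v(\gm)\neq e^{-r\ell(\gm)/2}$ enters via Lemma~\ref{lem-hyp-is}. Your sketch does not supply this mechanism.

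The infinite-codimension statements also need more than you give. For i)b), the gap between $\hpar^1(\Gm;\dsv{v,2-r}\om,\dsv{v,2-r}{\fsn,\wdg})$ and $H^1(\Gm;\dsv{v,2-r}\om)$ is not ``no parabolic condition at all'': Proposition~\ref{prop-acpc} shows $H^1(\Gm;\dsv{v,2-r}\om)=\hpar^1(\Gm;\dsv{v,2-r}\om,\dsv{v,2-r}\fs)$ for $r\notin\ZZ_{\geq1}$, so the infinite codimension is the gap between the $\wdg$-condition and no $\wdg$-condition. The paper exhibits this via Proposition~\ref{prop-icod-exc-fs} and Lemma~\ref{lem-icod-exc-fs}, constructing explicit cocycles from rational functions with a pole at a chosen $z_0\in\uhp$ whose parabolic equation cannot be solved in the excised module. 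For i)c), your intuition about non-cusp singularities is correct, but producing parabolic cocycles in $\dsv{v,2-r}{\fs,\wdg}$ with prescribed hyperbolic singularities requires the explicit ``signed hyperbolic Poincar\'e series'' construction of Proposition~\ref{prop-cocp} and Lemma~\ref{lem-Psi-Eta}, together with a nontriviality argument that again passes through the singularity module; a dimension count of $\Gm$-orbits in $\proj\RR$ alone does not produce cohomology classes.
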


\rmrks
\itmi In the statement of the theorem we speak of equality of mixed
parabolic cohomology groups, all contained in
$H^1(\Gm,\dsv{v,2-r}\om)$, and of canonical isomorphisms, given by
natural maps in cohomology corresponding to inclusions of
$\Gm$-modules.

\itm Some of the isomorphisms underlying this theorem are valid for
a wider class of weights. See the results in Sections
\ref{sect-isos-pc} and~\ref{sect-coc-sing}.

\itm Proposition~\ref{prop-icd-hyp} gives some additional information
concerning $\hpar^1(\Gm;\dsv{v,2-r}{\fs,\smp})$ if
$v(\gm)=e^{-r\ell(\gm)/2}$ for some primitive hyperbolic $\gm\in
\Gm$.

\itm We will obtain Theorem~\ref{THMiso} in many steps. We recapitulate the
proof in Subsection~\ref{sect-recap-iso}

\subsection{Harmonic lifts of holomorphic automorphic
forms}\label{sect-hlaf}
The spaces of holomorphic automorphic forms are contained in larger
spaces of harmonic automorphic forms.

\begin{defn}\label{haf-def}Let $r\in \CC$.
\begin{enumerate}
\item[i)] If $U\subset \uhp$ is open and the function $F$ on $U$ is
twice continuously differentiable, then we call $F$ an
\il{r-harm}{$r$-harmonic}$r$-harmonic function on $U$ if $\Dt_r F=0$
for the differential operator \ir{Dtr}{\Dt_r}
\be\label{Dtr} \Dt_r \= - 4 y^2 \frac{\partial^2}{\partial
z\,\partial\bar z} + 2ir y\frac{\partial}{\partial\bar z}\,. \ee

\item[ii)]An \il{harmaf}{harmonic automorphic form}\emph{$r$-harmonic
automorphic form} with the multiplier system $v$ is a function
$F:\uhp\rightarrow \CC$ that satisfies
\begin{enumerate}
\item[a)] $F|_{v,r}\gm = F$ for all $\gm\in \Gm$.
\item[b)] $F$ is $r$-harmonic.
\end{enumerate}
We denote the linear space of such forms by
\il{hn}{$\harm_r(\Gm,v)$}$\harm_r(\Gm,v)$.
\end{enumerate}
\end{defn}

\begin{defn}Let $r\in \CC$. We call the following map $\shad_r$ the
\il{so}{shadow operator}\emph{shadow operator}:\ir{shad}{\shad_r f}
\be\label{shad} (\shad_r F)(z) = 2i\, y^{\bar r}
\,\overline{\frac{\partial
}{\partial \bar z} F(z)}\,. \ee
\end{defn}

A useful property of the shadow operator, which allows us to detect
$r$-har\-monic\-ity, is the following equivalence:
\be\label{harm-detect} F\in C^2(U)\text{ is
$r$-harmonic}\;\Leftrightarrow\; \shad_r F\text{ is holomorphic}\,.
\ee
This is based on the relation $\frac{\partial
}{\partial \bar z} \left ( \shad_r F \right )=-\frac{i y^{\bar
r-2}}{2} \overline{\Delta_r F} $.

The shadow operator induces an antilinear map
$$\shad_r : \harm_r(\Gm,v)
\rightarrow A_{2-\bar r}(\Gm,\bar v)$$
because $\shad_r$ sends elements in the kernel of $\Dt_r$ to
holomorphic functions, and
\be\label{xi-g} \shad_r \bigl( F|_r g) \= (\shad_r F)|_{2-\bar r} g
\quad\text{for each }g\in \SL_2(\RR)
\,. \ee

We have an exact sequence of $\RR$-linear maps
\be 0 \rightarrow A_r(\Gm,v) \rightarrow \harm_r(\Gm,v)
\stackrel{\shad_r}\rightarrow A_{2-\bar r}(\Gm,\bar v)\,. \ee

\begin{defn}\label{hldef}Let $F \in A_{2-\bar r}(\Gm,\bar v)$. We call
$H$ a
\il{hl}{harmonic lift}\emph{harmonic lift} of $F$ if
\[ H\in \harm_r(\Gm,v)\quad\text{ and }\quad \shad_r H \= F\,.\]
\end{defn}

In \S\ref{app-do} in the Appendix we discuss $r$-harmonic automorphic
forms on the universal covering group.

\begin{rmk}
The action $|_{v,r}$ of $\Gamma$ in the functions on $\uhp$ gives rise
to various spaces of invariants, for instance:
\begin{align*}
C_{v,r}^\infty(\Gamma\backslash\uhp)&\= \Bigl\{ f\in
C^\infty(\uhp)\;:\; f|_{v.r}\gm=f\text{ for all }\gm\in \Gm\Bigr\}\;;
\displaybreak[0]\\
\ker \Bigl( \Dt_r-\ld&: C_{v,r}^\infty(\Gm\backslash\uhp)
\longrightarrow C_{v,r}^\infty(\Gm\backslash\uhp)\Bigr)
\quad\text{ with }\ld\in \CC\,,\\
&\qquad \text{real-analytic automorphic forms}\;;
\displaybreak[0]\\
\harm_r(\Gm,v)&=\ker\Bigl( \Dt_r: C_{v,r}^\infty(\Gm\backslash\uhp)
\longrightarrow C_{v,r}^\infty(\Gm\backslash\uhp)\Bigr)\,,\\
 &\qquad\text{harmonic automorphic forms}\;;
 \displaybreak[0]\\
A_r(\Gm,v)&= \harm_r(\Gm,v) \cap\hol(\uhp)\,,\\
&\qquad\text{holomorphic automorphic forms}\,.
\end{align*}
For each of these spaces growth conditions at the cusp give rise to
subspaces.

\rmrk{Real-analytic}A function on an open set $U\subset \RR$ is
real-analytic if on an open neighbourhood of each $x_0\in U$ it is
given by a convergent power series of the form $\sum_{n\geq 0}
c_n\,(x-\nobreak x_0)^n$. This gives a holomorphic
extension of the function to a neighbourhood of $U$ in~$\CC$.

A function on an open set $U\subset\CC$) is real-analytic if an
open neighbourhood of each point $z_0=x_0+iy_0\in U$ it is given by an
absolutely convergent power series $\sum_{n,m\geq 0} c_{n,m}
(x-\nobreak x_0)^n\,(y-\nobreak y_0)^m$, or
equivalently by a convergent power series $\sum_{n,m\geq
0}d_{n,m}\bigl(z-\nobreak
z_0\bigr)^n\,\bigl(\overline{z-z_0}\bigr)^m$.
The latter representations give a holomorphic extension of the
function to some neighbourhood in $\CC^2$ of the image of the domain
of the function under the map $z\mapsto (z,\bar z)$.

On $\proj\CC$ one proceeds similarly, using power series in $1/z$
and $1/\bar z$ on a neighbourhood of $\infty$.
\end{rmk}

%%%%%%%%%%%%%%%%%%%%%%%%%%%%%%%%%%%%%%%%%%%%%%%%%%%%%%%%%%

\section{Modules and cocycles}\label{sect-mod-coc}
In Section~\ref{sect-defnot} we fixed the notations and defined most
of the modules occurring in the main theorems in the Introduction.
Now we turn to the map from automorphic forms to cohomology induced
by~\eqref{psiz0def}. We also discuss the relation with the theorem of
Knopp and Mawi \cite{KM10}.

\subsection{The map from automorphic forms to
cohomology}\label{sect-af-coh}

\begin{defn}Let $F$ be any holomorphic function on $\uhp$.
\ir{omr}{\om_r(F;t,z)}
\be\label{omr}
\om_r(F;t,z) \;:=\; \bigl( z-t\bigr)^{r-2}\, F(z)\, dz\, \ee
for $z\in \uhp$ and $t\in \lhp$; we take $-\frac\pi2 < \arg(z-\nobreak
t ) < \frac{3\pi}2$.
\end{defn}

This defines $\om_r(F;t,z)$ as a holomorphic $1$-form in the variable
$z$. The presence of the second variable enables us to view it as a
differential form with values in the functions on~$\lhp$.

\begin{lem}\label{lem-om-sp}
\begin{enumerate}
\item[i)]The differential form $\om_r(F;\cdot,z)$ has values in
$\dsv{2-r}\om$.
\item[ii)] If $r\in \ZZ_{\geq 2}$ it has values in the subspace
$\dsv{2-r}\pol$.
\end{enumerate}
\end{lem}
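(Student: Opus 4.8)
The plan is to show that for fixed $z\in\uhp$, the function $t\mapsto \om_r(F;t,z)$ (more precisely its coefficient $t\mapsto (z-t)^{r-2}F(z)$, with $F(z)$ just a constant in this variable) belongs to $\dsv{2-r}\om$, and in the integral-weight case to $\dsv{2-r}\pol$. For Part~i), by Definition~\ref{Dsomdef}(iv) I must check that $\Prj{2-r}$ applied to $t\mapsto (z-t)^{r-2}$ extends holomorphically to a neighbourhood of $\lhp\cup\proj\RR$ in $\proj\CC$. First I would compute $\bigl(\Prj{2-r}(z-\cdot)^{r-2}\bigr)(t) = (i-t)^{2-r}(z-t)^{r-2}$. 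Since $z\in\uhp$ is fixed, the factor $(z-t)^{r-2}$ is holomorphic in $t$ on $\proj\CC$ minus a branch cut running from $z$ to $\infty$ through the upper half-plane (the branch being pinned down by the convention $-\frac\pi2<\arg(z-t)<\frac{3\pi}2$), and likewise $(i-t)^{2-r}$ is holomorphic off a cut from $i$ to $\infty$ in $\uhp$. Both cuts can be chosen inside $\uhp$ and away from $\proj\RR$, so the product is holomorphic on an open set containing $\lhp\cup\proj\RR$; the only subtlety is the point $t=\infty$, where I would check that $(i-t)^{2-r}(z-t)^{r-2}$ behaves like $(-t)^{2-r}(-t)^{r-2}=(-t)^0=1$ up to the branch bookkeeping, hence extends holomorphically across $\infty$. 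This gives a well-defined element of $\indlim_U\hol(U)$, and therefore $\om_r(F;\cdot,z)$ takes values in $\dsv{2-r}\om$.

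For Part~ii), when $r\in\ZZ_{\geq2}$ the exponent $r-2$ is a nonnegative integer, so $(z-t)^{r-2}$ is simply a polynomial in $t$ of degree exactly $r-2$ (with leading coefficient $(-1)^{r-2}$), and the branch ambiguity disappears entirely. Multiplying by the constant $F(z)$ keeps it a polynomial of degree at most $r-2$ in $t$, which is exactly the defining condition for $\dsv{2-r}\pol$ in Definition~\ref{Dsomdef}(v). So this case is immediate once one observes the exponent is a genuine integer.

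The main obstacle is the careful handling of the multivalued factor $(z-t)^{r-2}$ and the verification that the chosen branch glues correctly at $t=\infty$ on the projective line: one must confirm that the argument conventions for $(i-t)^{2-r}$ (from Definition~\ref{Dsomdef}) and for $(z-t)^{r-2}$ (from Definition~\ref{omr}) are mutually consistent so that their product is single-valued and holomorphic in a full punctured-at-nothing neighbourhood of $\proj\RR$, including $\infty$. Concretely I would track $\arg(i-t)$ and $\arg(z-t)$ as $t$ traverses $\RR$ and tends to $\pm\infty$, and note that $\arg(i-t)-\arg(z-t)\to 0$ there (both tend to the argument of $-t$), so $(i-t)^{2-r}(z-t)^{r-2}\to 1$; continuity of this product through $t=\infty$ then follows by passing to the local coordinate $1/t$. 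Everything else is routine once this branch check is in place.
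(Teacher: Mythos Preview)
Your proposal is correct and follows essentially the same route as the paper: both verify membership in $\dsv{2-r}\om$ via the projective model $\Prj{2-r}$, and Part~ii) is immediate from the integrality of the exponent. The paper's proof is slightly slicker in one respect: rather than handling the two factors $(i-t)^{2-r}$ and $(z-t)^{r-2}$ separately (each of which has a branch cut running to $\infty$, so that your check at $t=\infty$ requires recombining them anyway), it writes the product directly as the single factor $\bigl(\tfrac{z-t}{i-t}\bigr)^{r-2}$ with $\arg\in(-\pi,\pi)$, whose only singularities lie on a path in $\uhp$ from $i$ to $z$; since $\tfrac{z-t}{i-t}$ is a M\"obius function with value $1$ at $\infty$, holomorphy across $\infty$ is then automatic and no separate limit argument is needed.
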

\begin{proof}In the projective model the differential form looks as
follows:\ir{omr-prj}{\om_r^\prj(F;t,z)}
\be\label{omr-prj}
\om_r^\prj(F;t,z) \;:=\; \bigl(\Prj{2-r} \om_r(F; \cdot,z)\bigr)
\,(t) \= \Bigl( \frac{z-t}{i-t}\Bigr)^{r-2}\, F(z)\, dz\,,
\ee
where for $t\in \lhp$ and $z\in \uhp$ we have $\arg\frac{z-t}{i-t} \in
(-\pi,\pi)$. The factor $\Bigl( \frac{z-t}{i-t}\Bigr)^{r-2}$ is
holomorphic for $t\in \proj\CC\setminus p$, where $p$ is a path in
$\uhp$ from $i$ to $z$, which implies Part~i). Part~ii) is clear
from~\eqref{omr}.
\end{proof}

\begin{lem}\label{lem-omprop}Let $F$ be holomorphic on~$\uhp$.
\begin{enumerate}
\item[i)] $\om_r(F;\cdot,g z) |_{2-r} g = \om_r\bigl( F|_r
g;\cdot,z)$ for each $g\in \SL_2(\RR)$.
\item[ii)] $\om_r(F;\cdot,\gm z) |_{v,2-r} \gm = \om_r\bigl( F|_{v,r}
\gm;\cdot,z\bigr)$ for each $\gm\in \Gm$.
\item[iii)] $\int_{\gm z_1}^{\gm z_2} \om_r(F;\cdot,z)|_{v,2-r}\gm =
\int_{z_1}^{z_2}\om_r\bigl(F|_{v,r}\gm;\cdot,z)$
for $\gm\in \Gm$ and $z_1,z_2\in\uhp$. The integral is independent of
the choice of the path.
\end{enumerate}
\end{lem}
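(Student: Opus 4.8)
The statement to prove is Lemma~\ref{lem-omprop}, which has three parts about how the differential form $\om_r(F;\cdot,z)$ interacts with the action $|_{2-r}g$ and with integration. The strategy is to prove part~(i) by a direct computation in the projective model, then deduce part~(ii) by specializing to $\gm\in\Gm$ and incorporating the multiplier system, and finally obtain part~(iii) by integrating the identity in~(ii) along a path and checking path-independence via holomorphy.

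\textbf{Part (i).} Here I would work with $\om_r^\prj(F;t,z)$ as given in~\eqref{omr-prj} and the projective operators $|^\prj_{2-r}g$ from Proposition~\ref{prop-opprj}. Writing $g=\matc abcd$, one has $\om_r^\prj(F;\cdot,gz)|^\prj_{2-r}g\,(t) = (a-ic)^{r-2}\bigl(\tfrac{t-i}{t-g^{-1}i}\bigr)^{2-r}\bigl(\tfrac{gz-gt}{i-gt}\bigr)^{r-2}F(gz)\,d(gz)$. Using $gz-gt = (cz+d)^{-1}(ct+d)^{-1}(z-t)$, $d(gz) = (cz+d)^{-2}\,dz$, and $i-gt = (ct+d)^{-1}(ci+d)^{-1}(i-t)\cdot(\text{something})$ — more precisely the standard cocycle identities for the automorphy factor — the $(ct+d)$ and $(cz+d)$ factors should combine so that what remains is $(cz+d)^{-r}F(gz)\,dz$ times $\bigl(\tfrac{z-t}{i-t}\bigr)^{r-2}$, i.e.\ exactly $\om_r^\prj(F|_r g;\cdot,z)$. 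The only subtlety is the choice of branches: all factors are holomorphic in $(t,z,g)$ on a connected set containing, say, $g$ near the identity, $t$ near $-i$, $z$ near $i$, so it suffices to verify the identity there and extend by analyticity, exactly as in the proof of Proposition~\ref{prop-opprj}. This branch-bookkeeping is the main technical obstacle, but it is routine given the machinery already set up. Alternatively one can avoid the projective model entirely and compute directly from~\eqref{omr}: $\om_r(F;\cdot,gz)|_{2-r}g\,(t) = (ct+d)^{-(2-r)}\bigl(gz - gt\bigr)^{r-2}F(gz)\,d(gz)$, and the same substitutions give the result, with the branch check amounting to confirming $\arg$ conventions are consistent on the relevant domain.

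\textbf{Parts (ii) and (iii).} Part~(ii) is immediate from~(i): for $\gm\in\Gm$ one has $|_{v,2-r}\gm = v(\gm)^{-1}|_{2-r}\gm$ on functions on $\lhp$ (by~\eqref{vq-act-}) and $F|_{v,r}\gm = v(\gm)^{-1}F|_r\gm$ on $\uhp$, so multiplying the identity of~(i) through by $v(\gm)^{-1}$ and noting that $\om_r$ is linear in $F$ gives~(ii). For part~(iii), I would integrate the identity in~(ii) over a path from $z_1$ to $z_2$: since $\om_r(F;\cdot,z)$ is a holomorphic (hence closed) $1$-form in $z$ with values in a space of holomorphic functions on $\lhp$, the integral $\int_{z_1}^{z_2}\om_r(F|_{v,r}\gm;\cdot,z)$ depends only on the endpoints (the upper half-plane is simply connected), which gives the path-independence assertion. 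Then substituting $z = \gm^{-1}w$ and using that $\gm$ maps paths from $z_1$ to $z_2$ to paths from $\gm z_1$ to $\gm z_2$, together with~(ii), yields $\int_{\gm z_1}^{\gm z_2}\om_r(F;\cdot,w)|_{v,2-r}\gm = \int_{z_1}^{z_2}\om_r(F|_{v,r}\gm;\cdot,z)$. I expect no real difficulty here beyond making sure the $\Gm$-action can be pulled outside the integral, which is legitimate because $|_{v,2-r}\gm$ acts by multiplication by a fixed holomorphic function of $t$ (independent of the integration variable) composed with a fixed fractional-linear substitution of $t$.
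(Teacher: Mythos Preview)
Your proposal is correct and essentially matches the paper's argument. For part~(i) the paper takes your ``alternative'' direct route from~\eqref{omr} rather than the projective model: it writes out the left-hand side as $(ct+d)^{r-2}\bigl(\tfrac{z-t}{(ct+d)(cz+d)}\bigr)^{r-2}F(gz)\,\tfrac{dz}{(cz+d)^2}$, checks equality with the right-hand side at the single point $(t,z)=(-i,i)$ using the argument conventions~\eqref{ac}, and extends by holomorphy in $(t,z)\in\lhp\times\uhp$; parts~(ii) and~(iii) are then obtained exactly as you describe. The projective-model computation you sketch first would also work but is more laborious (as your ``something'' placeholder suggests), so the direct route is the cleaner choice here.
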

\begin{proof}
\begin{enumerate}
\item[i)] The relation amounts for $g=\matc abcd\in \SL_2(\RR)$ to
\[
(ct+d)^{r-2}\, \Bigl(\frac{z-t}{(ct+d)\,(cz+d)} \Bigr)^{r-2}\, F(gz)\,
\frac{dz}{(cz+d)^2}\= (z-t)^{r-2} \, (cz+d)^{-r}\, F(z)\, dz\,. \]
With the argument conventions in~\eqref{ac} for $\arg(cz+\nobreak d)$
with $z\in \uhp$ and $z\in \lhp$ this equality turns out to hold for
$t=-i$ and $z=i$. It extends holomorphically for $t\in \lhp$ and
$z\in \uhp$.
\item[ii)] With $g=\gm\in \Gm$ we multiply the relation in Part~i) by
$v(\gm)^{-1}$.
\item[iii)] We note that
\[\int_{\gm z_1}^{\gm z_2} \om_r(F;\cdot,z)|_{v,2-r}\gm \;(t)
\= \int_{z_1}^{z_2}\om_r(F; \cdot;\gm z)|_{v,2-r}\gm \;(t)
\= \int_{z_1}^{z_2} \om_r\bigl( F|_{v,r}\gm;t, z)\,. \]
The differential form is holomorphic, hence closed, and the integral
does not depend on the path of integration, only on the end-points.
\end{enumerate}
\end{proof}

\begin{prop}\label{prop-cohrom}Let $F\in A_r(\Gm,v)$.
\begin{enumerate}
\item[i)] The map $\ps^{z_0}_F:\gm \mapsto \ps^{z_0}_{F,\gm}$ defined
in~\eqref{psiz0def} in the introduction is an element of
$Z^1(\Gm;\dsv{v,2-r}\om)$.\il{psiz0prop}{$\ps_F^{z_0}$}
\item[ii)] The linear map \il{cohom}{$\coh r \om$}$\coh r \om :
A_r(\Gm,v)
\rightarrow H^1(\Gm;\dsv{v,2-r}\om)$ associating to $F$ the cohomology
class of $\ps^{z_0}_{F}$ is well defined.
\item[iii)] If $r\in \ZZ_{\geq 2}$ then $\coh r \om\,
A_r(\Gm,v)\subset H^1(\Gm;\dsv{v,2-r}\pol)$.
\end{enumerate}
\end{prop}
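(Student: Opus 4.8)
The plan is to verify the cocycle relation, path-independence, and well-definedness directly from the integral in~\eqref{psiz0def}, using the transformation properties of the differential form $\om_r(F;\cdot,z)$ recorded in Lemma~\ref{lem-omprop}. First I would observe that $\ps_{F,\gm}^{z_0}(t)=\int_{\gm^{-1}z_0}^{z_0}\om_r(F;t,z)$, so that by Lemma~\ref{lem-om-sp}(i) each value $\ps_{F,\gm}^{z_0}$ lies in $\dsv{2-r}\om$; since the integrand is holomorphic in $z$, the integral depends only on the endpoints, so there is no ambiguity. This gives a well-defined map $\gm\mapsto\ps_{F,\gm}^{z_0}$ from $\Gm$ to $\dsv{2-r}\om$.

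Next I would check the cocycle relation. For $\gm,\dt\in\Gm$, split the integral as
\[
\ps_{F,\gm\dt}^{z_0}(t)\=\int_{\dt^{-1}\gm^{-1}z_0}^{z_0}\om_r(F;t,z)
\=\int_{\dt^{-1}\gm^{-1}z_0}^{\dt^{-1}z_0}\om_r(F;t,z)
+\int_{\dt^{-1}z_0}^{z_0}\om_r(F;t,z)\,.
\]
The second term is $\ps_{F,\dt}^{z_0}(t)$. For the first term I substitute $z\mapsto\dt z$ in the integral, apply Lemma~\ref{lem-omprop}(iii) with the pair $(\gm^{-1}z_0,z_0)$ (and using $F|_{v,r}\dt=F$ since $F\in A_r(\Gm,v)$), obtaining that the first term equals $\bigl(\ps_{F,\gm}^{z_0}\bigr)|_{v,2-r}\dt\,(t)$. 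Hence $\ps_{F,\gm\dt}^{z_0}=\ps_{F,\gm}^{z_0}|_{v,2-r}\dt+\ps_{F,\dt}^{z_0}$, which is exactly the cocycle identity for the right $\Gm$-module $\dsv{v,2-r}\om$. This proves (i).

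For (ii), the only thing to check is independence of the cohomology class from the base point~$z_0$. If $z_1\in\uhp$ is another choice, then the difference $\ps_{F,\gm}^{z_0}-\ps_{F,\gm}^{z_1}$ equals $\int_{\gm^{-1}z_0}^{\gm^{-1}z_1}\om_r(F;\cdot,z)-\int_{z_0}^{z_1}\om_r(F;\cdot,z)$; setting $a:=\int_{z_0}^{z_1}\om_r(F;\cdot,z)\in\dsv{2-r}\om$ and using Lemma~\ref{lem-omprop}(iii) together with $F|_{v,r}\gm=F$ to rewrite the first integral as $a|_{v,2-r}\gm$, this difference is $a|_{v,2-r}\gm-a=a|_{v,2-r}(\gm-1)$, a coboundary. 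Linearity of $F\mapsto\ps_F^{z_0}$ is immediate from linearity of the integral in~$F$, so $\coh r\om$ is a well-defined linear map to $H^1(\Gm;\dsv{v,2-r}\om)$.

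For (iii), when $r\in\ZZ_{\geq 2}$ Lemma~\ref{lem-om-sp}(ii) says the differential form takes values in the submodule $\dsv{2-r}\pol$, hence so does each integral $\ps_{F,\gm}^{z_0}$; therefore the cocycle lies in $Z^1(\Gm;\dsv{v,2-r}\pol)$ and its class lies in the image of $H^1(\Gm;\dsv{v,2-r}\pol)$ in $H^1(\Gm;\dsv{v,2-r}\om)$, giving the asserted inclusion. The only mildly delicate point in the whole argument is bookkeeping with the change of variables and the argument conventions in the $|_{v,2-r}$-action when applying Lemma~\ref{lem-omprop}; but since that lemma has already packaged those conventions correctly, there is no real obstacle here — the proof is essentially a formal manipulation of the three parts of Lemma~\ref{lem-omprop}.
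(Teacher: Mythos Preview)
Your proof is correct and follows essentially the same approach as the paper: both arguments use Lemma~\ref{lem-om-sp} to place the values in $\dsv{2-r}\om$ (resp.\ $\dsv{2-r}\pol$), and Lemma~\ref{lem-omprop}(iii) together with $F|_{v,r}\gm=F$ to verify the cocycle relation and the coboundary-independence of the base point. The only cosmetic difference is that the paper computes $\ps_{F,\gm\dt}^{z_0}-\ps_{F,\dt}^{z_0}$ directly, whereas you split $\ps_{F,\gm\dt}^{z_0}$ into two pieces; the underlying manipulation is identical.
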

\begin{proof}
\begin{enumerate}
\item[i)]Since we integrate over a compact set in $\uhp$ the values
$\ps^{z_0}_{F,\gm}$ are in $\dsv{2-r}\om$. For the cocycle relation
we compute for $\gm,\dt\in \Gm$:
\begin{align*} \ps^{z_0}_{F,\gm\dt}-&\ps^{z_0}_{F,\dt} \= \biggl(
\int_{\dt^{-1}\gm^{-1}z_0}^{z_0} - \int_{\dt^{-1}z_0}^{z_0} \biggr)\,
\om_r(F;\cdot,z)
\= \int_{\dt^{-1}\gm^{-1}z_0}^{\dt^{-1}z_0} \om_r(F;\cdot z)\\
&\stackrel{\text{Part iii) in Lemma \ref{lem-omprop}}}=
\int_{\gm^{-1}z_0}^{z_0}\, \om\bigl(
F|_{v,r}\dt^{-1};\cdot,z)|_{v,2-r}\dt
\= \ps^{z_0}_{F,\gm}|_{v,2-r}\dt\,.
\end{align*}
\item[ii)] \ To see that the cohomology class of $\ps_F^{z_0}$ does
not depend on the choice of the base point $z_0$ we check that with
two base points $z_0$ and $z_1$ the difference is a coboundary:
\begin{align*}
\ps^{z_0}_{F,\gm} &- \ps^{z_1}_{F,\gm} \= \biggl(
\int_{\gm^{-1}z_0}^{z_0} - \int_{\gm^{-1}z_1}^{z_1}\biggr)\,
\om_r(F;\cdot,z)\\
&\= \bigl( \int_{\gm^{-1}z_0}^{\gm^{-1}{z_1}} - \int_{z_0}^{z_1}
\biggr)\, \om_r(F;\cdot,z)
\stackrel{\text{Part iii) in Lemma \ref{lem-omprop}}}= b|_{v,2-r}\gm -
b\,,
\end{align*}
with $b = \int_{z_0}^{z_1}\om_r(F;\cdot;z)$ in $\dsv{v,2-r}\om$. Hence
$\coh r \om$ is well defined.
\item[iii)] See Part~ii) of Lemma~\ref{lem-om-sp}.\qedhere
\end{enumerate}
\end{proof}

\subsection{Cusp forms} A cusp form $F\in \cusp r(\Gm,v)$ decays
exponentially at each cusp $\ca$ of~$\Gm$, and we can define for the
cusp~$\ca$\ir{psparb}{\ps^\ca_F}
\be\label{psparb} \ps^\ca_F: \gm\mapsto\ps^\ca_{F,\gm} (t)
\;:=\; \int_{\gm^{-1}\ca}^\ca \om_r(F;t,z)\,. \ee
We use $\s_\ca$ such that $\ca=\s_\ca\infty$ and $\pi_\ca= \s_\ca
T\s_\ca^{-1}$ as in \S\ref{sect-af}. If $|v(\pi_{\ca})|\neq 1$, then
$F\bigl( \sigma_{\ca}(x+\nobreak iy)\bigr)$ may be unbounded as a
function of $x\in \RR$. Then it is important to approach the cusps
$\ca$ and $\gm^{-1}\ca$ along a geodesic half-line.

\rmrks
\itmi If $|v(\pi_\ca)|\neq 1$ some care is needed in the choice of the
path of integration in its approach of~$\ca$. Now $F(\s_\ca z)$ may
have exponential growth in $x=\re z$, although for a given $x$ it has
exponential decay as $\im(z)=y\uparrow \infty$. The integral
converges uniformly if we restrict $x$ to a suitable compact set, for
instance by requiring that the path approaches $\ca$ along a geodesic
half-line.

\itm Proposition~\ref{prop-cohrom} extends easily to the situation
with $\ca$ as the base point, and we see that $\ps^\ca_F$ is a
cocycle, and that a change in the choice of the cusp $\ca$ adds a
coboundary. The following lemma prepares the identification of
$\dsv{v,2-r}{\fsn,\infty,\wdg}$ as a $\Gm$-module in which
$\psi_F^{\ca}$ takes its values.

\begin{lem}\label{lem-inf}Let $\ca=g \infty$ with $g\in \SL_2(\RR)$.
Suppose that $F$ is a holomorphic function on~$\uhp$ and that there
is $a>0$ such that $F(g z )
\= \oh(e^{-a y})$ as $\im(z)=y \rightarrow\infty$ for each value of
$x=\re z$. For $z_0\in \uhp$ and $t\in \lhp$ we define $h$ by
\be\label{h-int-p} h(t) \= \int_{z_0}^\ca \om_r(F;t,z)\,, \ee
Then $h$ extends holomorphically across $\proj\RR\setminus\{\ca\}$ and
defines an element of the space $\dsv{2-r}{\om,\infty,\wdg}[\ca]$.
\end{lem}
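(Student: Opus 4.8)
The plan is to analyze the integral $h(t)=\int_{z_0}^{\ca}\om_r(F;t,z)$ by first reducing to a model situation via the operator $|_r g$ and the substitution formalism already developed. Concretely, using Part~i) of Lemma~\ref{lem-omprop}, write $h|_{2-r}g^{-1}$ as $\int_{g^{-1}z_0}^{\infty}\om_r(F|_r g;\cdot,w)$, so that without loss of generality we may assume $\ca=\infty$, $\pi_\ca = T$, and $F(x+iy)=\oh(e^{-ay})$ as $y\to\infty$ locally uniformly in $x$; we must then check at the end that the conditions defining $\dsv{2-r}{\om,\infty,\wdg}[\ca]$ transform correctly under $|_{2-r}g$, which is exactly Parts~i)--ii) of Proposition~\ref{prop-sasi}.

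In the model situation, the second step is to show that $h(t)=\int_{z_0}^{i\infty}(z-t)^{r-2}F(z)\,dz$ is holomorphic on an excised neighbourhood of $\lhp\cup(\proj\RR\setminus\{\infty\})$. For $t$ in the lower half-plane or on a compact real interval, the exponential decay of $F$ makes the integral over the vertical ray converge absolutely and locally uniformly in $t$; since for such $t$ the point $z=t$ stays off the path of integration, the integrand is holomorphic in $t$ and $h$ is holomorphic there by Morera/differentiation under the integral sign. To get holomorphy across a real point $\xi\neq\infty$, deform the tail of the contour: replace the vertical ray from some height upward by a ray going off to $i\infty$ while keeping $\re z$ bounded away from $\xi$; the factor $(z-t)^{r-2}$ stays holomorphic in $t$ on a neighbourhood of $\xi$ because $z-t$ avoids the branch cut, and the deformation is legitimate because $F$ decays exponentially. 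This yields holomorphy on a set of the form $U\setminus W_\infty$ with $W_\infty$ a truncated vertical strip near $\infty$, i.e. an $\{\infty\}$-excised neighbourhood, giving membership in $\dsv{2-r}{\om,\wdg}[\infty]$ (equivalently $\dsv{2-r}{\om,\wdg}[\ca]$ after transporting back).

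The third step is the smoothness condition at $\infty$, i.e. $h\in\dsv{2-r}\infty$, which amounts to showing $\Prj{2-r}h(t)=(i-t)^{2-r}h(t)$ has an asymptotic expansion in $1/t$ as $t\to\infty$ through $\lhp\cup\proj\RR$. Here I would expand $(z-t)^{r-2}=(-t)^{r-2}(1-z/t)^{r-2}$ and use the binomial/Taylor expansion $(1-z/t)^{r-2}=\sum_{n=0}^{N-1}\binom{r-2}{n}(-z/t)^n+\oh(|z/t|^N)$; integrating term by term against $F(z)\,dz$ over the ray produces the coefficients $a_n$, where the convergence of $\int z^n F(z)\,dz$ is guaranteed by the exponential decay of $F$ (all moments exist). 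One must control the remainder uniformly: for $|t|$ large, $|z/t|$ is small on the bulk of the contour and, near the bottom endpoint $z_0$, boundedness of $z$ handles it, while near $i\infty$ the exponential decay of $F$ dominates any polynomial in $z$. Combining the three pieces and transporting back via $|_{2-r}g$, we conclude $h\in\dsv{2-r}{\om,\infty,\wdg}[\ca]$.

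The main obstacle I expect is the contour-deformation argument for holomorphy across a finite real point $\xi$: one has to be careful that the branch of $(z-t)^{r-2}$ specified by $-\pi/2<\arg(z-t)<3\pi/2$ is preserved along the deformation and that the homotopy of paths stays in a region where the integrand is holomorphic in $z$, while simultaneously keeping $\re z$ in a band avoiding $\xi$ so that $z-t\neq 0$ for $t$ near $\xi$; handling the non-unitary multiplier case (where $F(\s_\ca(x+iy))$ may blow up in $x$) forces the path to approach $\ca$ along a geodesic half-line and requires the local-uniformity in $x$ to be used precisely. The asymptotic-expansion step is conceptually routine but bookkeeping-heavy, and the transport-back step relies entirely on Proposition~\ref{prop-sasi}, so it is essentially free.
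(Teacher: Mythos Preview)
Your outline is correct and follows the same three-step strategy as the paper: reduce by conjugation, establish holomorphic extension to an excised neighbourhood, then verify the $C^\infty$ condition at the exceptional point via a Taylor-type expansion. Two small differences are worth noting. First, the paper conjugates to $\ca=0$ (with $g=\matr0{-1}10$) rather than $\ca=\infty$; both work, but $\ca=0$ means the smoothness check is an ordinary Taylor expansion at a finite point rather than an asymptotic expansion at~$\infty$, so the $\Prj{2-r}$ bookkeeping is lighter. Second, for the holomorphic extension the paper works directly in the projective model: the integrand $\bigl(\tfrac{z-t}{i-t}\bigr)^{r-2}$ is holomorphic in $t\in\proj\CC\setminus p$ for $p$ a path from $i$ to~$z$, so the integral is holomorphic off the union of such paths, which can be taken inside a geodesic sector ending at~$\ca$. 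This is cleaner than contour deformation, and bypasses the branch-cut tracking you flag as your main obstacle. (Minor slip: the conjugate you want is $h|_{2-r}g$, not $h|_{2-r}g^{-1}$.)

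The one point where your sketch is genuinely thin is the remainder control in step three. You write the binomial remainder as $\oh(|z/t|^N)$ and then say ``exponential decay of $F$ dominates any polynomial in $z$''; but the standard Taylor remainder estimate for $(1-q)^{r-2}$ only holds for $|q|<1$, and here $|z/t|$ is unbounded along the ray to $i\infty$. The paper meets exactly the same issue (in the form $(1+it/y)^{r-2}$ with $t/y$ unbounded) and resolves it by invoking the integral form of Taylor's remainder (Lang, \cite[\S6, Chap.~XIII]{La93}): for $N>\re(r-2)$ one gets $R_N=\oh_N(q^N)$ \emph{uniformly}, because $|1-xq|\ge 1$ for $x\in[0,1]$ when $\re(1-xq)\ge 1$. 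In your normalization the same argument applies: for $t\in\lhp\cup\RR$ and $z$ on the vertical ray one checks $\re(1-xz/t)\ge 1$, so $|(1-xz/t)^{r-2-N}|$ is bounded and the remainder is genuinely $\oh_N(|z/t|^N)$. With that in hand your step~2 (integrate, use $\int|z|^N|F(z)|\,|dz|<\infty$) goes through. So the gap is not conceptual, but you should name this tool explicitly rather than rely on the heuristic about exponential decay.
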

\begin{proof}By Part~i) of Lemma~\ref{lem-omprop} it suffices to
consider the case $\ca=0$ and $g=\matr0{-1}10$. Inspection of
\eqref{omr-prj} shows that
\[ (\Prj{2-r}h)(t)\=\int_{z_0}^0 \Bigl( \frac{z-t}{i-t}\Bigr)^{r-2}\,
 F  (z)\, dz\]
extends holomorphically to $\CC\setminus p$, where $p$ is path from
$z_0$ to~$0$. Since we can take this path as a geodesic half-line, we
have a holomorphic extension to a $\{0\}$-excised neighbourhood. Hence
$h\in \dsv{2-r}{\om,\wdg}[0]$.

To show that $h\in \dsv{2-r}\infty$ we need to show that $h(t)$ has
Taylor expansions of any order at $0$ valid on a region $\{t\in
\CC\;:\; \im t\leq 0,\; |t|<\e \}$ for some $\e>0$.

We can assume that the path of integration approaches $0$ vertically,
and hence
\[ h(t) \= -i \int_0^\e (iy-t)^{r-2}\, F(iy)\,dy + \text{ a
contribution in $\dsv{v,2-r}\om$}\,.\]
The contribution in $\dsv{v,2-r}\om$ is automatically in
$\dsv{v,2-r}\infty$, so we consider only the integral. For $y\in
(0,\e]$, $|t|\leq \e$ and $\im t\leq 0$ we have
\[(iy-t)^{r-2} \= e^{\pi i r/2}\, y^{r-2}\, \bigl(
1+it/y\bigr)^{r-2}\,,\]
with $\re it/y\geq 0$. Taylor expansion of the factor $(1+\nobreak
it/y)^{r-2}$ is not completely standard, since $it/y$ is unbounded
for the values of $t$ and $y$ under consideration. We use the version
of Taylor's formula in Lang \cite[\S6, Chap.~XIII]{La93}. It shows
that the error term in the Taylor expansion of order $N-1$ of
$(1+\nobreak q)^a$ is
\[ \oh_N \biggl( \int_0^1 (1-x)^{N-1}\,(1+x q)^{a-N}\, q^N\, dx\biggr)
\= \oh_N(q^N)\,,\]
if $N>\re a$. (The subscript $N$ indicates that the implicit constant
may depend on~$N$.) For sufficiently large $N$ this leads to
\[ \bigl(1+it/y\bigr)^{r-2} \= \sum_{n=0}^{N-1}\binom{r-2}n\, i^n \,
t^n \, y^{-n}
+\oh_N\bigl(t^N y^{-N}\bigr)\,\]
and hence
\begin{align*} \int_0^\e (iy-t)^{r-2} \, F(iy)\, dy
&\= e^{\pi i r/2} \sum_{n=0}^{N-1}\binom{r-2}n i^n t^n \int_0^\e
y^{r-2-n}\, F(iy)\, dy\\
&\qquad\hbox{}
+ \oh_N \biggl( \int_0^\e y^{r-2-N} \, t^N\, F(iy)\, dy\biggr)\,.
\end{align*}

The exponential decay of $F$ implies that all integrals converge, and
we obtain a Taylor expansion of the integral of any order that is
valid for $t\in \lhp\cup\RR$ near~$0$.
\end{proof}

\rmrk{Parabolic cohomology and mixed parabolic cohomology} For $z_0\in
\uhp$ the cocycle $\ps^{z_0}_F$ for a cusp form $F$ takes values in
$\dsv{v,2-r}\om$. The next result shows that $\ps_F^{\ca}$ is a
parabolic cocycle in a larger module, and relates both cocycles.

\begin{prop}\label{prop-cu-inf}
Let $r\in \CC$.
\begin{enumerate}
\item[i)] For each cusp $\ca$ of $\Gm$ and each $F\in \cusp r(\Gm,v)$
the cocycle $\ps_F^\ca$ defined in \eqref{psparb} is a parabolic
 cocycle in $\hpar^1(\Gm;\dsv{v,2-r}{\fsn,\infty,\wdg})$.
\item[ii)] Associating to $F\in \cusp r(\Gm,v)$ the cohomology class
$[\ps^\ca_F]$ defines a linear map\ir{rinf}{\coh r \infty}
\be\label{rinf}
\coh r \infty : \cusp r(\Gm,v) \longrightarrow
\hpar^1(\Gm;\dsv{v,2-r}{\fsn,\infty,\wdg})\,.\ee
\item[iii)] $\coh r \om\, \cusp r(\Gm,v)
\,\subset\,
\hpar^1(\Gm;\dsv{v,2-r}\om,\dsv{v,2-r}{\fsn,\infty,\wdg})$.
\item[iv)] The following diagram is commutative:
\bad \xymatrix{ \cusp r(\Gm,v) \ar[r]^(.35){\coh r \om} \ar[rd]_{\coh
r \infty}
& \hpar^1(\Gm;\dsv{v,2-r}\om,\dsv{v,2-r}{\fsn,\infty,\wdg})
\ar@{^{(}->}[r] \ar[d]
& H^1(\Gm;\dsv{v,2-r}\om)
\\
& \hpar^1(\Gm;\dsv{v,2-r}{\fsn,\infty,\wdg})
} \ead
The vertical arrow denotes the natural map associated to the inclusion
$ \dsv{v,2-r}\om \subset \dsv{v,2-r}{\fsn,\infty,\wdg} $.
\end{enumerate}
\end{prop}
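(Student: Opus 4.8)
The plan is to obtain all four parts from two tools already in place: the local analysis near a cusp in Lemma~\ref{lem-inf}, and the equivariance of the form $\om_r(F;\cdot,z)$ in Lemma~\ref{lem-omprop}. First, that $\ps^\ca_F$ is a $1$-cocycle is proved exactly as in Proposition~\ref{prop-cohrom}(i): the manipulation of integrals establishing the cocycle relation there uses only Lemma~\ref{lem-omprop}(iii), which is insensitive to whether the base point lies in $\uhp$ or at a cusp, and the integrals in \eqref{psparb} converge because a cusp form $F\in\cusp r(\Gm,v)$ decays exponentially at every cusp (for non-unitary $v$ one must approach the cusps along geodesic half-lines, so that $F\circ\s_\ca$ stays controlled in the real direction). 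To see that the values lie in $\dsv{v,2-r}{\fsn,\infty,\wdg}$, I would fix an auxiliary point $z_0\in\uhp$, set $h_\cb(t):=\int_{z_0}^\cb\om_r(F;t,z)$ for a cusp $\cb$, and write $\ps^\ca_{F,\gm}=h_\ca-h_{\gm^{-1}\ca}$. Since $\gm^{-1}\ca$ is again a cusp of $\Gm$, Lemma~\ref{lem-inf} gives $h_\ca\in\dsv{2-r}{\om,\infty,\wdg}[\ca]$ and $h_{\gm^{-1}\ca}\in\dsv{2-r}{\om,\infty,\wdg}[\gm^{-1}\ca]$, so $\ps^\ca_{F,\gm}\in\dsv{2-r}{\om,\infty,\wdg}[\ca,\gm^{-1}\ca]\subset\dsv{v,2-r}{\fsn,\infty,\wdg}$.

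Next I would verify parabolicity. By the remark following Definition~\ref{mpcg} it suffices to check $\ps^\ca_{F,\pi_\cb}\in\dsv{v,2-r}{\fsn,\infty,\wdg}|(\pi_\cb-1)$ for $\cb$ running over a set of representatives of the $\Gm$-orbits of cusps, and I would take $\ca$ itself to be one of them. For $\cb=\ca$ one has $\pi_\ca^{-1}\ca=\ca$, hence $\ps^\ca_{F,\pi_\ca}=0$, which lies trivially in the coset. For any other representative $\cb$, put $g(t):=\int_\cb^\ca\om_r(F;t,z)$; the splitting argument above, applied at $\cb$ and at $\ca$, gives $g\in\dsv{2-r}{\om,\infty,\wdg}[\ca,\cb]\subset\dsv{v,2-r}{\fsn,\infty,\wdg}$. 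Applying Lemma~\ref{lem-omprop}(iii) with the endpoint pair $(\cb,\ca)$ and using $F|_{v,r}\pi_\cb=F$ yields $g|_{v,2-r}\pi_\cb(t)=\int_\cb^{\pi_\cb^{-1}\ca}\om_r(F;t,z)$, so that $g|_{v,2-r}\pi_\cb-g=\int_\ca^{\pi_\cb^{-1}\ca}\om_r(F;\cdot,z)=-\ps^\ca_{F,\pi_\cb}$; thus $\ps^\ca_{F,\pi_\cb}=(-g)|(\pi_\cb-1)$, which proves (i). Part (ii) follows at once: $F\mapsto\ps^\ca_F$ is linear because the integral in \eqref{psparb} is, and changing the base cusp from $\ca$ to $\cb$ replaces $\ps^\ca_F$ by $\ps^\cb_F+c|(\cdot-1)$ with $c=\int_\cb^\ca\om_r(F;\cdot,z)\in\dsv{v,2-r}{\fsn,\infty,\wdg}$ (again Lemma~\ref{lem-omprop}(iii)), so the class in $\hpar^1(\Gm;\dsv{v,2-r}{\fsn,\infty,\wdg})$ is well defined and $\coh r\infty$ makes sense.

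For (iii) and (iv) the key is the single identity comparing the two cocycles. Setting $b(t):=\int_{z_0}^\ca\om_r(F;t,z)\in\dsv{2-r}{\om,\infty,\wdg}[\ca]\subset\dsv{v,2-r}{\fsn,\infty,\wdg}$ (Lemma~\ref{lem-inf}), Lemma~\ref{lem-omprop}(iii) gives
\[ \ps^{z_0}_{F,\gm} \= \ps^\ca_{F,\gm} + b|_{v,2-r}\gm - b\qquad(\gm\in\Gm)\,. \]
For (iii): $\ps^{z_0}_F$ has values in $\dsv{v,2-r}\om$ by Proposition~\ref{prop-cohrom}, and for every parabolic $\pi$ both $\ps^\ca_{F,\pi}$ (by (i)) and $b|(\pi-1)$ lie in $\dsv{v,2-r}{\fsn,\infty,\wdg}|(\pi-1)$, hence $\ps^{z_0}_F\in\zpar^1(\Gm;\dsv{v,2-r}\om,\dsv{v,2-r}{\fsn,\infty,\wdg})$, so $\coh r\om F=[\ps^{z_0}_F]$ lies in the asserted mixed parabolic group. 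For (iv): the natural map induced by $\dsv{v,2-r}\om\hookrightarrow\dsv{v,2-r}{\fsn,\infty,\wdg}$ sends $[\ps^{z_0}_F]$ to the class of $\ps^{z_0}_F$ in $\hpar^1(\Gm;\dsv{v,2-r}{\fsn,\infty,\wdg})$; by the displayed identity $\ps^{z_0}_F$ and $\ps^\ca_F$ differ by a coboundary over $\dsv{v,2-r}{\fsn,\infty,\wdg}$, so that class equals $[\ps^\ca_F]=\coh r\infty F$, which is commutativity of the triangle, while commutativity of the route into $H^1(\Gm;\dsv{v,2-r}\om)$ is just Proposition~\ref{prop-cohrom}(ii).

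The genuinely delicate step, and where I expect the real care to be needed, is the analytic bookkeeping feeding into Lemma~\ref{lem-inf}: one must arrange that the paths of integration approach each cusp along geodesic half-lines, so the integrals converge even when $v$ is non-unitary and $F\circ\s_\ca$ grows exponentially in the real direction, and one must record precisely which boundary point ($\ca$, $\gm^{-1}\ca$, $\cb$, or the cusp involved in the choice of $z_0$) each contribution is singular at, so that the resulting germ actually lands in $\dsv{2-r}{\fsn,\infty,\wdg}$ and not merely in $\dsv{2-r}{\fs,\infty,\wdg}$. Everything else is a formal consequence of Lemmas~\ref{lem-omprop} and~\ref{lem-inf} together with the definitions of the modules in Subsections~\ref{sect-modules}--\ref{sect-sav}.
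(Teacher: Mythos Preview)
Your proposal is correct and follows essentially the same approach as the paper: split $\ps^\ca_{F,\gm}$ through an interior point $z_0$ and invoke Lemma~\ref{lem-inf} for the values, use $\ps^\ca_{F,\pi_\ca}=0$ together with a base-point change to another cusp for parabolicity, and compare $\ps^{z_0}_F$ with $\ps^\ca_F$ via the coboundary $b=\int_{z_0}^\ca\om_r(F;\cdot,z)\in\dsv{v,2-r}{\fsn,\infty,\wdg}$ for (iii) and~(iv). Your direct exhibition of the witness $-g=\int_\ca^\cb\om_r(F;\cdot,z)$ for the parabolic condition at $\cb\neq\ca$ is just an unwound version of the paper's observation that $\ps^\ca_F-\ps^\cb_F$ is the coboundary of that same element and $\ps^\cb_{F,\pi_\cb}=0$.
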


\rmrke For $r\in \ZZ_{\geq 2}$, the linear maps $\coh r \om$ and $\coh
r \infty$ take values in the much smaller $\Gm$-module
$\dsv{v,2-r}\pol$.

\begin{proof}We split the integral in \eqref{psparb} as
$-\int^{\gm^{-1}\ca}_{z_1} + \int_{z_1}^\ca$ for any $z_1\in \uhp$,
and find with Lemma~\ref{lem-inf} that $\ps^\ca_F \in
\dsv{v,2-r}{\fs,\infty,\wdg} \cap \dsv{v,2-r}\om[\ca,\gm^{-1}\ca]
\subset \dsv{v,2-r}{\fsn,\infty,\wdg}$. So $\ps^\ca_F \in
Z^1(\Gm;\dsv{v,2-r}{\fsn,\infty,\wdg})$.

Like in the proof of Proposition~\ref{prop-cohrom}, replacing the cusp
$\ca$ by another cusp means adding a coboundary in
$B^1(\Gm;\dsv{v,2-r}{\fsn,\infty,\wdg})$. We have
$\ps^\ca_{F,\pi_{\ca}}=0$, and hence for a cusp $\eta$ there is $p\in
\dsv{v,2-r}{\fsn,\infty,\wdg}$ such that
\[ \ps^\ca_{F,\pi_\eta} \= \ps^\eta_{F,\pi_\eta} +
p|_{v,2-r}(\pi_\eta-1)\,\in\, 0+
\dsv{v,2-r}{\fsn,\infty,\wdg}|_{v,2-r}(\pi_\eta-1)\,.\]
So $\ps^\ca_F$ is a parabolic cocycle, and $F\mapsto [\ps^\ca_F]$
defines a linear map $\coh r \infty$ as in Part~ii).

For $F\in \cusp r(\Gm,v)$ and $z_0\in \uhp$ we have for each cusp
$\ca$ of~$\Gm$
\[ \ps^{z_0}_{F,\pi_{\ca}} \= \ps^\ca_{F,\pi_{\ca}} + h
|_{v,2-r}(\pi_{\ca}-1)\,,\]
with $h= \int_{z_0}^\ca \om_r(F;\cdot,z)$. With Lemma~\ref{lem-inf} we
have $\ps^{z_0}_{F,\pi_{\ca}} \in
\dsv{v,2-r}{\fsn,\infty,\wdg}|_{v,2-r}(\pi_{\ca}-\nobreak 1)$. This
gives Parts iii)
and~iv).
\end{proof}

\subsection{The theorem of Knopp and Mawi}\label{sect-KM}
Suppose that $\infty$ is a cusp of~$\Gm$, and that $\Gm_\infty$ is
generated by $T= \matc1101 $. (This can be arranged by conjugation in
$\SL_2(\RR)$.) The involution $\Ci$ in \eqref{Ci} gives a parabolic
cocycle $\Ci \ps^\infty_F$ of the form
\be\label{Knopp-coc} (\Ci \ps^\infty_{F,\gm})(w) \= \overline{
\int_{\gm^{-1}\infty}^\infty
(z-\bar w)^{r-2}\, F(z)\, dz } \= \int_{\gm^{-1}\infty}^\infty \bigl(
\bar z-w\bigr)^{\bar r-2}\, \overline{F(z)}\, d\bar z \,. \ee
This describes Knopp's cocycle \cite[(3.8)]{Kn74}. In that paper the
weight $r$ is real and the multiplier system $v$ unitary, so $\bar
v=v^{-1}$. (Actually, in \cite{Kn74} the multiplier system for $F$ is
called $\bar v$, and the weight is called $r+2$.)

The values of $\Ci \psi^\infty_F$ are in the space $\Ci
\dsv{2-r}{\fsn,\infty}$ which is contained in the space
\ir{Pkr}{\Pcal=\Ci \dsv{2-r}{-\infty}}
\be\label{Pkr} \Pcal\;:=\;\Ci \dsv{2-r}{-\infty} \= \Bigl\{ \ph \in
\hol(\uhp)\;:\; \exists_{A\in\RR}\; \ph(z) = \oh\bigl( y^{-A}\bigr)+
\oh\bigl(|z|^A\bigr) \Bigr\}\ee
(polynomial growth), which is invariant under the action $|_{\bar
v,\bar r}$ of $\Gm$. (The notation $\Pcal$ is taken
from~\cite{Kn74}.)

Knopp \cite{Kn74} conjectured that the map $F\mapsto
[\Ci\ps_F^\infty]$ gives a bijection $\cusp r(\Gm,v)
\rightarrow \hpar^1(\Gm,\Pcal)$, and proved this for $r\in
\RR\setminus(0,2)$. He also gives a proof, by B.A.\,Taylor, that
 $\hpar^1(\Gm;\Pcal)= H^1(\Gm;\Pcal)$. In \cite{KM10} Knopp and Mawi
prove the isomorphism for all weights $r\in \RR$ and unitary
multiplier systems $v$. Transforming their result to the lower
half-plane we obtain the following theorem:
\begin{thm}\label{thm-KM}{\rm(Knopp, Mawi) } Let $v$ be a unitary
multiplier system on $\Gm$ for the weight $r\in \RR$. Then
\be \label{Knopp}\cusp r (\Gm,v)\;\cong \;
H^1(\Gm;\dsv{v,2-r}{-\infty})
\;\cong\; \hpar^1(\Gm; \dsv{v,2-r}{-\infty})\,.\ee
\end{thm}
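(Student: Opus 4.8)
The theorem is Knopp and Mawi's result \cite{KM10}, transported to the lower half-plane; the plan is therefore to carry out that transport rather than to give an independent argument. Recall from the discussion around \eqref{Knopp-coc} that the cocycle \cite[(3.8)]{Kn74} attached by Knopp to a cusp form $F$ is, in our notation, $\Ci\,\ps^\infty_F$; that its values lie in $\Pcal=\Ci\,\dsv{2-r}{-\infty}$, the space of holomorphic functions on $\uhp$ of polynomial growth (the two descriptions of ``polynomial growth'' being equivalent, as noted after Definition~\ref{Dsomdef}); and that, because $r\in\RR$ and $v$ is unitary, so that $\bar r=r$ and $\bar v=v^{-1}$, the normalizations ``weight $r+2$'' and ``multiplier system $\bar v$'' of \cite{Kn74,KM10} correspond to our weight $r$ and multiplier system $v$ on $F$. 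With this dictionary in place, the Knopp--Mawi theorem says precisely that $F\mapsto[\Ci\,\ps^\infty_F]$ is a bijection from $\cusp r(\Gm,v)$ onto $\hpar^1(\Gm;\Pcal)$, and Knopp reproduces in \cite{Kn74} a proof, due to B.\,A.~Taylor, that $\hpar^1(\Gm;\Pcal)=H^1(\Gm;\Pcal)$.

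Next I would apply the antilinear involution $\Ci$ of \eqref{Ci} to pass from $\uhp$ to $\lhp$. By \eqref{Ci-act} (used with the weight $2-r$ in place of $r$), $\Ci$ is a $\Gm$-equivariant antilinear isomorphism between $\Pcal$ and $\dsv{v,2-r}{-\infty}$; since $\Ci^{2}=\mathrm{id}$ it induces an isomorphism on cohomology taking $[\Ci\,\ps^\infty_F]$ to $[\ps^\infty_F]$, carrying $\hpar^1(\Gm;\Pcal)$ onto $\hpar^1(\Gm;\dsv{v,2-r}{-\infty})$ and $H^1(\Gm;\Pcal)$ onto $H^1(\Gm;\dsv{v,2-r}{-\infty})$. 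Composing with the Knopp--Mawi bijection gives a bijection $\cusp r(\Gm,v)\to\hpar^1(\Gm;\dsv{v,2-r}{-\infty})$ --- linear in $F$, being the composite of two antilinear maps --- and Taylor's equality transports to $\hpar^1(\Gm;\dsv{v,2-r}{-\infty})=H^1(\Gm;\dsv{v,2-r}{-\infty})$. Together these give the chain of isomorphisms \eqref{Knopp}.

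The one point needing attention --- bookkeeping rather than a real obstacle --- is the verification that the branch of $(z-\bar w)^{r-2}$, the choice of path of integration in the approach to the cusp, and the placement of the multiplier system in \cite{Kn74,KM10} really do line up with the conventions of Section~\ref{sect-defnot} under the dictionary above; the arguments of Lemma~\ref{lem-inf} and Proposition~\ref{prop-cu-inf} already do most of this matching. Once it is checked, nothing further is required, since the substantive analytic input --- surjectivity of the cocycle map onto cohomology with polynomial-growth coefficients --- is exactly what Knopp and Mawi supply.
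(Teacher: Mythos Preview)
Your proposal is correct and matches the paper's treatment exactly: the paper does not give an independent proof but simply states that the theorem is the Knopp--Mawi result \cite{KM10} ``transforming their result to the lower half-plane'', with the surrounding discussion supplying precisely the dictionary (the involution $\Ci$, the identification $\Pcal=\Ci\,\dsv{2-r}{-\infty}$, the fact that $\bar r=r$ and $\bar v=v^{-1}$ for real $r$ and unitary $v$, and Taylor's equality $\hpar^1(\Gm;\Pcal)=H^1(\Gm;\Pcal)$) that you spell out. You have filled in the transport in more detail than the paper does, but the approach is the same.
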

In combination with the, not yet proven, Theorems \ref{THMac}
and~\ref{THMcc} we obtain the following commuting diagram, valid for
weights $r\in \RR\setminus \ZZ_{\geq 2}$ and unitary multiplier
systems:
\badl{KM-diag} \xymatrix{ A_ r(\Gm,v) \ar@{^{(}->}[r]^(.4){\coh r \om}
& H^1(\Gm;\dsv{v,2-r}\om) \ar[d] \\
\cusp r(\Gm,v) \ar[r]^(.4)\cong \ar@{^{(}->}[u]&
H^1(\Gm;\dsv{v,2-r}{-\infty})
} \eadl

This implies that there is a complementary subspace $X$ giving a
direct sum decomposition $A_r(\Gm,v) = \cusp r(\Gm,v) \oplus X$, such
that for $F\in X$ the cocycle $\ps^{z_0}_F$ becomes a coboundary in
$Z^1(\Gm;\dsv{v,2-r}{-\infty})$. Then there is $H\in
\dsv{v,2-r}{-\infty}$ such that $H|_{v,2-r}\gm - H =
\psi^{z_0}_{F,\gm}$ for all $\gm\in \Gm$, in other words
\[ \int_{\gm^{-1}z_0}^{z_0} (z-t)^{r-2}\, F(z)\,dz \=
(H|_{v,2-r}\gm)(t)-H(t)\,.\]

\begin{rmk}\label{rmk-hol-ahol}
The operator $\Ci$ can also be applied to the linear map $\coh r \om
$. Thus we have two $\RR$-linear maps from automorphic forms to
cohomology:
\badl{hol-ahol} \coh r \om &: A_r(\Gm,v) \rightarrow
H^1(\Gm;\dsv{v,2-r}\om)\,,\\
\Ci \coh r \om &: A_r(\Gm,v) \rightarrow H^1(\Gm;\Ci
\dsv{v,2-r}\om)\,. \eadl
The second map is antilinear.

These two maps become interesting in the case $r\in \ZZ_{\geq 2} $
with a real-valued multiplier system $v$. Then $\dsv {v,2-r}\om$ and
$\Ci\dsv{v,2-r}\om$ have a nonzero intersection,
namely~$\dsv{v,2-r}\pol$.\end{rmk}

\subsection{Modular group and powers of the Dedekind eta-function} The
\il{mgp}{modular group}modular group
\il{Gmodgen}{$\Gmod$}$\Gmod=\SL_2(\ZZ)$ is generated by $T=\matc1101$
and \il{Smat}{$S=\matr0{-1}10$}$S=\matr0{-1}10$. In the quotient
$\overline{\Gm(1)}=\SL_2(\ZZ)$ the relations are $\bar S^2=1$ and
$(\bar S\bar T)^3=1$. There is a one-parameter family of multiplier
systems parametrized by $r\in \CC\bmod12\ZZ$, determined
by\ir{vr-mod}{v_r}
\be \label{vr-mod}v_r( T)\=e^{\pi i r/6}\,, \qquad v_r(S) \= e^{-\pi i
r/2}\,. \ee
It can be used for weights $p\equiv r\bmod 2$. The complex power
$\eta^{2r}$ of the \il{etapow}{powers of the Dedekind
eta-function}\il{Def}{Dedekind eta-function}Dedekind eta-function can
be chosen in the following way:\ir{eta2r}{\eta^{2r}}
\be\label{eta2r} \eta^{2r}(z) \;:=\; e^{2r\,\log\eta(z)}\,,\quad
\log\eta(z) \= \frac{\pi i}{12} - \sum_{n\geq 1}\s_{-1}(n)\, e^{2\pi
i n z}\,.
  \ee
It defines $\eta^{2r}\in A_r\bigl(\Gmod,v_r\bigr)$. The Fourier
expansion at the cusp~$\infty $ has the form
\be \eta^{2r}(z) \= \sum_{k\geq 0} p_k(r)\, e^{2\pi i (12k+r)z/12}\,,
\ee
where the \il{pk-eta}{$p_k(r)$}$p_k(r)$ are polynomials in $r$ of
degree $k$ with rational coefficients. These polynomials have
integral values at each $r\in \frac12\ZZ$. For $\re r>0$ we have
$\eta^{2r}\in \cusp r\bigl(\Gmod,v_r\bigr)$, and the parabolic
cocycle $ \ps^\infty_{\eta^{2r}}$ given by
\be \ps_{\eta^{2r},\gm}^\infty(t) \= \int_{\gm^{-1}\infty}^\infty
(z-t)^{ r -2}\, \eta^{2r}(z) \, dz\,.\ee
Convergence is ensured by the exponential decay of $\eta^{2r}(z)$ as
$y\uparrow \infty$, and by the corresponding decay at other cusps by
the invariance of $\eta^{2r}$ under $|_{v_r,r}\gm$.

Since $T\infty=\infty$ we have $\ps_{\eta^{2r},T}^\infty=0$. The
cocycle $\ps_{ \eta^{2r}}^\infty$ is determined by its value on the
other generator
\be \ps^\infty_{\eta^{2r},S} (t) \= \int_0^\infty( z-t)^{r-2}\,
\eta^{2r}(z) \, dz\,. \ee
We have $\ps_{\eta^{2r},S}^\infty \in
\dsv{v_r,2-r}{\om,\wdg}[0,\infty] \subset \dsv{v_r,2-r}{\fsn,\wdg}$.
This function is called the \il{pfeta2r}{period function of powers of
the $\eta$-function}\emph{period function} of $\eta^{2r}$. The
relations between $\bar S$ and $\bar T$ imply
\be \ps_{\eta^{2r},S}^\infty |_{v_r,2-r} S \= -
\ps_{\eta^{2r},S}^\infty\,,\qquad \ps_{\eta^{2r},S}^\infty
|_{v_r,2-r}\bigl(1+ST+STST)\=0\,,\ee
which is equivalent to
\be \label{perfun}
\ps_{\eta^{2r},S}^\infty|_{v_r,2-r}S\= -\ps_{\eta^{2r},S}^\infty
\,,\quad \ps_{\eta^{2r},S}^\infty \=
\ps_{\eta^{2r},S}^\infty|_{v_r,2-r}\bigl( T
+TST\bigr)\,.\ee

Let us put\ir{Irs}{I(r,s)}
\be\label{Irs}
I(r,s) \;:=\; \int_0^\infty y^s \, \eta^{2r}(iy)\,\frac {dy}y. \ee
The decay properties of $\eta^{2r}$ imply that this function is
holomorphic in $(r,s)$ for $\re r>0$ and $s\in \CC$.

The reasoning in the proof of Lemma~2.5 gives that for a given $\e>0$
and $t\in \lhp$ with $|t|<\e$ we have
\begin{align*} i  \int_0^\e (iy-t)^{r-2} \eta^{2r}(iy) \, dy \= e^{\pi i
r/2} \sum_{n=0}^{N-1} \binom{r-2}n i^n t^n \int_0^\e
y^{r-2-n}\eta^{2r}(iy)\, dy + \oh_n(t^N)
\end{align*}
for all sufficiently large $N$. The integral over $(\e,\infty)$ can be
computed by direct insertion of the Taylor series for $(iy-\nobreak
t)^{r-2}$. Since $t^N=\oh(\e^N)$, this leads to the following
equality for the period function:
\badl{pf-Iint} \ps^\infty_{\eta^{2r},S}&(t)
\= e^{\pi i r/2} \sum_{n\geq 0} i^n \,\binom{r-2}n\, I(r,r-1-n)
\, t^n\,, \eadl
for $\re r>0$, $s\in\CC$ and $t\in \lhp$ near~$0$.

For a real weight $ r >0$ one has the estimate $p_k(r) = \oh(k^{r/2})$
from the fact that $\eta^{2r}$ is a cusp form. For $\re s>1+ r/12$
the integral $I(r,s)$ can be expressed in terms of the
\il{Lser}{$L$-series}$L$-series\ir{L-etap}{L(\eta^{2r},s)}
\badl{L-etap} L(\eta^{2r},s) &\= \sum_{k\geq
0}\frac{p_k(r)}{(r/12+k)^s}\,,\\
I(r,s) &\= (2\pi)^{-s} \, \Gf(s)\, L(\eta^{2r},s)
\,. \eadl
Usually one  defines the analytic continuation of $L$-functions by the
expressing it in the period integral~\eqref{pf-Iint}.
\medskip

If $\re r\leq 0,$ $\ps_{\eta^{2r}}^{z_0}$ is defined only with a base
point $z_0\in \uhp$. For instance, the case $r=0$ gives the constant
function $1 = \eta^0\in A_0\bigl(\Gmod,1\bigr)$, with the trivial
multiplier system $v_0=1$, for which
\be\label{1-psi} \ps^{z_0}_{\eta^0,\gm}(t)
\= \frac1{\gm^{-1}z_0-t} - \frac1{z_0-t}\,. \ee
It can be checked by a direct computation that $\ps^{z_0}_{1,\gm}-
\ps^{z_1}_{1,\gm} = b|_{1,2}(\gm-\nobreak 1)$, with $b(t) =
\frac1{z_0-t}-\frac1{z_1-t}$.

We use this to find a substitute for the cocycle
$\ps^\infty_{\eta^0}$. The rational function
$b_\infty(t)=\frac1{z_0-t}$ is an element of
$\dsv{1,2}{\om,\wdg}[\infty]$. Subtracting the coboundary $\gm\mapsto
b_\infty|_{2}(\gm-\nobreak 1)$ from $\ps^{z_0}_{\eta^0}$ gives the
parabolic cocycle $\tilde \ps \in
Z^1\bigl(\Gmod;\dsv{1,2}{\fsn,\wdg}\bigr)$ given on $\gm=\matc
abcd\in \Gmod$ by
\be\label{ps-r=0} \tilde\ps_\gm (t) \= \frac{-c}{ct+d}\,. \ee
This cocycle $\tilde \ps$ is parabolic, since $\tilde\ps_T=0$. It
gives the period function $\tilde\ps_S = \frac{-1}t$ in
$\dsv{1,2}{\fsn,\wdg}$. It is in the subspace of rational functions,
hence one calls it a \il{rpf}{rational period function}\emph{rational
period function}. In \S\ref{sect-prfhl} we will return to this
example.

\subsection{Related work}\label{sect-lit2}
Much of the work on the relation between automorphic forms and
cohomology is done for integral weights at least~$2$. The association
of cocycles to automorphic forms is stated clearly in 1957 by
Eichler, \cite[\S2]{Ei57}. Eichler gives the integral
in~\eqref{psiz0def}, and notes \cite[(17),\S2]{Ei57} that for cusp
forms the cocycles have the property that we now call parabolic.

The idea can be found earlier in the literature. As pointed out
in~\cite{DIT10}, Poincar\'e mentions already in 1905
\cite[\S3]{Poinc} the repeated  antiderivative of automorphic forms
and polynomials measuring the non-invariance. Also Cohn \cite{Co56}
mentions this relation in the main theorem, for modular forms of
weight~$4$.

Shimura \cite{Sh59} studies the relation between cusp forms and
cohomology groups with the aim of obtaining a lattice in the space of
cusp form such that the quotient is an abelian variety. He discusses
real and integral structures in the cohomology groups.\smallskip

Since then the relation between automorphic forms and cohomology has
been studied in numerous papers, of which we here mention
Manin~\cite{Ma73}.

The use of the space of rational functions for cocycles associated to
modular forms originates in Knopp \cite{Kn78}. Kohnen and Zagier
\cite{KZ84} used it for period functions on the modular group. In
\cite{KZ84} the concept of mixed parabolic cohomology seems to be
arising. See also \cite{Za91}.

%%%%%%%%%%%%%%%%%%%%%%%%%%%%%%%%%%%%%%%%%%%%%%%%%%%%%%%%%%

\section{The image of automorphic forms in cohomology}
\label{sect-image}
The main goal of this section is to show that
\be \coh r \om A_r(\Gm;v) \;\subset\;
\hpar^1(\Gm;\dsv{v,2-r}\om,\dsv{v,2-r}{\fsn,\wdg})\,. \ee
This will contribute to the proof of Theorem~\ref{THMac} (which will
be completed in Subsection~\ref{sect-recap-THMac}). In Subsection
\ref{sect-prfTHMcc} we will describe, under assumptions on $r$
 and~$v$, and based on the truth of Theorem~\ref{THMac}, the images
$\coh r \om \cusp r(\Gm,v)$ and $\coh r \om M_r(\Gm,v)$. This gives
Theorem~\ref{THMcc}.

We start in Subsection~\ref{sect-mpcg} with a simple lemma, with which
we immediately can prove some of the isomorphism in
Theorem~\ref{THMiso} on page~\pageref{THMiso}.

\subsection{Mixed parabolic cohomology groups}\label{sect-mpcg}To show
that $\ps\in Z^1(\Gm;\dsv{v,2-r}\om)$ is a parabolic cocycle in
$\zpar^1(\Gm;\dsv{v,2-r}\om,W)$ for some $\Gm$-module $W\subset
\dsv{v,2-r}\fs$ we have to find for each cusp $\ca$ of $\Gm$ an
element $h_{\ca}\in W$ such that
\be \label{parb-eq}
\ps_{\pi_{\ca}} \= h_{\ca}|_{v,2-r}(\pi_{\ca}-1)\,. \ee
The following result gives the position of the singularities of the
solutions.

\begin{lem}\label{lem-sing-inv}If $h\in \dsv{2-r}\fs$ satisfies
$\ld^{-1}\,h|_{2-r}\pi- h \in \dsv{2-r}\om$ for a parabolic element
$\pi\in \SL_2(\RR)$ and $\ld \in \CC^\ast$, then $\bsing h \subset
\{\ca\}$, where $\ca$ is the unique fixed point of~$\pi$.
\end{lem}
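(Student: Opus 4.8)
The plan is to conjugate everything to the standard parabolic $T = \matc 1101$ and work in the projective model, where the operator $|_{2-r}$ is given by the explicit formula in Proposition~\ref{prop-opprj}. First I would observe that since every parabolic $\pi\in\SL_2(\RR)$ is conjugate to $T$, say $\pi = g T g^{-1}$ with $g\in G_0$ and $\ca = g\infty$, and since $\bsing(h|_{2-r}g) = g^{-1}\bsing h$ by Part~iii) of Proposition~\ref{prop-sasi}, it suffices to treat the case $\pi = T$, $\ca = \infty$. So I may assume $\ld^{-1}\, h|_{2-r}T - h =: \ph \in \dsv{2-r}\om$, and the goal becomes $\bsing h \subset\{\infty\}$, i.e.\ $h$ extends holomorphically across every finite point of $\RR$.

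The key step is a local argument near an arbitrary point $\xi\in\RR$. Suppose $\xi\in\bsing h$; I want a contradiction. Pass to the projective model and set $H = \Prj{2-r}h$, $\Ph = \Prj{2-r}\ph$; the functional equation becomes an inhomogeneous first-order difference equation relating $H(t)$, $H(t+1)$ and the \emph{holomorphic} quantity $\Ph$ (with an explicit nowhere-vanishing automorphy factor coming from the $\bigl(\frac{t-i}{t-T^{-1}i}\bigr)^{2-r}$ term in \eqref{prjact}, which is itself holomorphic and non-vanishing on a neighbourhood of $\lhp\cup(\proj\RR\setminus\{\infty\})$). Since $h\in\dsv{2-r}\fs$, $H$ is holomorphic on an excised neighbourhood of $\lhp\cup\proj\RR$, hence in particular on some half-disc $D^-_\xi = \{t : \im t\le 0,\ 0<|t-\xi|<\delta\}$ together with a genuine two-sided neighbourhood of every point of $\RR$ sufficiently far from the finitely many singular points. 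The point is that $\xi$ and $\xi+1$ cannot \emph{both} be singularities-plus: I would choose the singular point of $h$ in $\bsing h$ that is, say, maximal in its residue class mod $1$ (or use that $\bsing h$ is finite, so for $N$ large $\xi + N \notin \bsing h$), and then iterate the functional equation $N$ times. Iterating expresses $H$ near $\xi$ in terms of $H$ near $\xi+N$ (where $H$ extends holomorphically two-sidedly) plus a finite sum of translates of the holomorphic function $\Ph$, all multiplied by holomorphic non-vanishing factors. This exhibits a holomorphic extension of $H$, hence of $h$, across $\xi$, contradicting $\xi\in\bsing h$.

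The main obstacle I expect is bookkeeping with the multivalued automorphy factors: one must check that the branch choices in \eqref{prjact} (argument of $a-ic$ in $[-\pi,\pi)$, and the branch of $\bigl(\frac{t-i}{t-g^{-1}i}\bigr)^{2-r}$) are compatible under the $N$-fold iteration, i.e.\ that the cocycle-type factor $\prod_{j=0}^{N-1}$ of these automorphy factors is single-valued and holomorphic on a full neighbourhood of $\xi$ in $\proj\CC$ (not merely on an excised one). This is exactly the content of the holomorphy of the factor $\bigl(\frac{t-i}{t-g^{-1}i}\bigr)^{2-r}$ on $\proj\CC\setminus p$ for a path $p\subset\uhp$, already used in the proof of Proposition~\ref{prop-inv}: the relevant paths here lie in $\uhp$ and thus avoid a neighbourhood of $\xi\in\RR$ in $\lhp\cup\RR$. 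Once that is in place, the argument is routine: near $\xi$ the extended $H$ is a finite $\CC$-linear combination (with holomorphic-non-vanishing coefficients) of the holomorphic functions $t\mapsto\Ph(t+j)$ and the two-sided-holomorphic germ of $H$ at $\xi+N$, hence holomorphic, so $\xi\notin\bsing h$ — contradiction. Therefore $\bsing h\subseteq\{\infty\}$, which after undoing the conjugation gives $\bsing h\subseteq\{\ca\}$, as claimed.
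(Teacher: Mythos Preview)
Your proposal is correct and rests on the same core idea as the paper: conjugate to $\pi=T$, use that $\bsing h$ is finite, and exploit the translation action to force holomorphy at every finite point. The paper's version is considerably more streamlined: rather than iterating the functional equation and tracking automorphy factors to build an explicit holomorphic extension, it works purely at the level of the set $\bsing$. If $h$ had a singularity in $\RR$, take the minimal one $\xi_1$; then $\xi_1-1\in\bsing(h|_{2-r}T)=T^{-1}\bsing h$ by Proposition~\ref{prop-sasi}~iii), but $\xi_1-1\notin\bsing h$ by minimality, so $\xi_1-1\in\bsing(\ld^{-1}h|_{2-r}T-h)$, contradicting membership in $\dsv{2-r}\om$. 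Your bookkeeping with the projective-model automorphy factors is valid but unnecessary here, since the argument never needs to look at the functions themselves---only at where their singularities sit.
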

\begin{proof}Each parabolic element $\pi\in \SL_2(\RR)$ is conjugate
in $\SL_2(\RR)$ to $T=\matc 1101$ or to $T^{-1}$. With \eqref{ncj} we
can transform the hypothesis in both cases to $\ld^{-1} \, h|_{2-r}T
- h \in \dsv{2-r}\om$. If $h$ has singularities in $\RR$ put them in
increasing order: $\xi_1<\xi_2<\cdots$. Then
$\xi_1-1=T^{-1} \xi_1\in \bsing \bigl(h|_{2-r}T\bigr)$, and cannot be
canceled by a singularity of $h$. So a singularity can occur only
at~$\infty$, and only at $\ca$ in the original situation.
\end{proof}

\begin{prop}\label{prop-parb*0}Let $r\in \CC$. Then
\begin{align*}
\hpar^1(\Gm;\dsv{v,2-r}\om,\dsv{v,2-r}\fsn)&\=
\hpar^1(\Gm;\dsv{v,2-r}\om,\dsv{v,2-r}\fs) \,,\\
\hpar^1(\Gm;\dsv{v,2-r}\om,\dsv{v,2-r}{\fsn,\wdg})&\=
\hpar^1(\Gm;\dsv{v,2-r}\om,\dsv{v,2-r}{\fs,\wdg}) \,,\\
\hpar^1(\Gm;\dsv{v,2-r}\om,\dsv{v,2-r}{\fsn,\smp})&\=
\hpar^1(\Gm;\dsv{v,2-r}\om,\dsv{v,2-r}{\fs,\smp}) \,,\\
\hpar^1(\Gm;\dsv{v,2-r}\om,\dsv{v,2-r}{\fsn,\infty})&\=
\hpar^1(\Gm;\dsv{v,2-r}\om,\dsv{v,2-r}{\fs,\infty}) \,.
\end{align*}
\end{prop}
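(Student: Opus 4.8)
The plan is to prove the four asserted identities by showing that the underlying spaces of mixed parabolic cocycles already coincide; since $\hpar^1(\Gm;V,W)=\zpar^1(\Gm;V,W)/B^1(\Gm;V)$ and the denominator $B^1(\Gm;\dsv{v,2-r}\om)$ is the same in all cases, equality of the cocycle spaces gives equality of the cohomology groups. Each statement has the form $\zpar^1(\Gm;\dsv{v,2-r}\om,\dsv{v,2-r}{\fsn,\cond})=\zpar^1(\Gm;\dsv{v,2-r}\om,\dsv{v,2-r}{\fs,\cond})$, where $\cond$ is $\wdg$, $\smp$, $\infty$, or no extra condition at all (in which last case the symbols are to be read as $\dsv{v,2-r}\fsn$ and $\dsv{v,2-r}\fs$). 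One inclusion is immediate, since $\dsv{v,2-r}{\fsn,\cond}\subseteq\dsv{v,2-r}{\fs,\cond}$ forces $\dsv{v,2-r}{\fsn,\cond}|_{v,2-r}(\pi-1)\subseteq\dsv{v,2-r}{\fs,\cond}|_{v,2-r}(\pi-1)$ for every parabolic $\pi\in\Gm$.

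For the reverse inclusion I would take $\ps\in\zpar^1(\Gm;\dsv{v,2-r}\om,\dsv{v,2-r}{\fs,\cond})$ and, using the remark after Definition~\ref{mpcg}, check the parabolic condition only on the elements $\pi_\ca$ for $\ca$ running over a finite set of representatives of the $\Gm$-orbits of cusps. Fix such a representative $\ca$ and choose $h\in\dsv{v,2-r}{\fs,\cond}$ with $\ps_{\pi_\ca}=h|_{v,2-r}(\pi_\ca-1)$. Since the cocycle has values in $\dsv{v,2-r}\om$ and $h|_{v,2-r}\pi_\ca=v(\pi_\ca)^{-1}\,h|_{2-r}\pi_\ca$, this gives $v(\pi_\ca)^{-1}\,h|_{2-r}\pi_\ca-h\in\dsv{2-r}\om$. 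Noting that $\dsv{v,2-r}{\fs,\cond}\subseteq\dsv{v,2-r}\fs$, I would now apply Lemma~\ref{lem-sing-inv} to the parabolic element $\pi_\ca\in\SL_2(\RR)$ with $\ld=v(\pi_\ca)\in\CC^\ast$, whose unique fixed point is the cusp $\ca$; it yields $\bsing h\subseteq\{\ca\}$, i.e.\ $\Prj{2-r}h$ extends holomorphically to a neighbourhood of $\lhp\cup(\proj\RR\setminus\{\ca\})$.

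It then remains to see that such an $h$ lies in $\dsv{v,2-r}{\om,\cond}[\ca]$, hence in $\dsv{v,2-r}{\fsn,\cond}$ because $\ca$ is a cusp; this gives $\ps_{\pi_\ca}\in\dsv{v,2-r}{\fsn,\cond}|_{v,2-r}(\pi_\ca-1)$ for every representative cusp, and therefore $\ps\in\zpar^1(\Gm;\dsv{v,2-r}\om,\dsv{v,2-r}{\fsn,\cond})$, which finishes the argument. For $\cond\in\{\smp,\infty\}$ and in the case without extra condition this is immediate, since those conditions are imposed pointwise at the singularities of $h$ and are vacuous at points of $\proj\RR$ where $\Prj{2-r}h$ is holomorphic. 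For $\cond=\wdg$ one argues instead that if $\Prj{2-r}h$ is holomorphic on an $E$-excised neighbourhood $\Om$ and also on a neighbourhood $U$ of $\lhp\cup(\proj\RR\setminus\{\ca\})$, then $\Om\cup U$ contains a $\{\ca\}$-excised neighbourhood, because the wedges $W_\xi$ removed for the points $\xi\in E\setminus\{\ca\}$ are reabsorbed by $U$. This bookkeeping with excised neighbourhoods in the $\wdg$ case is the only step that requires a little care; everything else is a direct application of Lemma~\ref{lem-sing-inv}.
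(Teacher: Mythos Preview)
Your proof is correct and follows exactly the same approach as the paper: one inclusion is trivial, and for the other you apply Lemma~\ref{lem-sing-inv} to the solution $h$ of $\ps_{\pi_\ca}=h|_{v,2-r}(\pi_\ca-1)$ to conclude $\bsing h\subset\{\ca\}$. The paper's own proof is a terse three lines, dismissing the conditions $\wdg$, $\smp$, $\infty$ with ``the same argument is valid for the other cases''; your additional paragraph making explicit why $h\in\dsv{v,2-r}{\om,\cond}[\ca]$ in each case---in particular the small excised-neighbourhood bookkeeping for $\wdg$---is a welcome clarification that the paper leaves implicit.
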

\begin{proof}If $\ps\in \zpar^1(\Gm;\dsv {v,2-r}\om,\dsv{v,2-r}\fs)$
then we have for each cusp $\ca$ an element $h\in \dsv{v,2-r}\fs$
such that $h|_{v,r}(\pi_\ca-\nobreak 1)= \ps_{\pi_\ca}$. This is the
situation considered in Lemma~\ref{lem-sing-inv}, so $h\in
\dsv{v,2-r}\om[\ca]$. Hence $\ps \in
\zpar^1(\Gm;\dsv{v,2-r}\om,\dsv{v,2-r}\fsn)$. The same argument is
valid for the other cases.
\end{proof}

\subsection{The parabolic equation for an Eichler
integral}\label{sect-pe}
\begin{defn}
\label{def-ldper}We call a function $F$ on a subset of $\CC$ that is
invariant under horizontal translations,
\il{ldper}{$\ld$-periodic}\emph{$\ld$-periodic} if it satisfies
$F(t+\nobreak 1) \= \ld \, F(t)$ for all $t$ in its domain.
\end{defn}
\rmrk{Example} For an automorphic form $F\in A_r(\Gm,v)$ and a
cusp~$\ca$, the relation $F|_{ {v, r} } \pi_\ca = v(\pi_\ca)\,
F$ implies that the translated function $F|_r
\sigma_{\ca}$ is $v(\pi_{\ca})$-periodic.\medskip

\rmrk{Parabolic difference equation}\il{parbeq}{parabolic difference
equation} We take an arbitrary holomorphic $\ld$-periodic
function~$E$ on $\uhp$. It has an absolutely convergent Fourier
expansion
\be \label{Eexp} E(z) \= \sum_{n\equiv\al\bmod 1} a_n\, e^{2\pi i n
z}\ee
on~$\uhp$ with $\ld=e^{2\pi i \al}, \al\in \CC$. In the next
subsections we aim to find functions $h$ such that
\be \label{peq} \ld^{-1} \, h(t+1) - h(t) \= \int_{z_0-1}^{z_0}
(z-t)^{r-2}\, E
(z)\, dz \ee
at least for $t\in \lhp \cup\RR$, and to get information concerning
its behavior near~$\infty$.

\subsection{Asymptotic behavior at infinity}It will be useful to
understand the behavior of $\Prj{2-r}h$ at $\infty$ for solutions $h$
of~\eqref{peq}. For functions $f$ on $\RR$ we understand in 
these notes
\il{sim1}{$\sim$}$f(t)\sim \sum_{n\geq k}c_n\, t^{-n}$ to mean $f(t) =
\sum_{n=k}^{N-1} c_n\, t^{-n} + \oh(t^{-N})$ as $t\rightarrow
\pm\infty$ for all $N\geq k$. So $f(t) \sim 0 $ means $f(t) =
\oh(t^{-N})$ for all $N\in \ZZ_{\geq 0}$.

For elements $f\in \dsv{2-r}\infty$ we know that there are
 coefficients $b_n$ such that
\[ (\Prj{2-r}f)(t) \sim \sum_{n\geq0} b_n \, t^{-n} \]
as $t$ approaches $\infty$ through $\lhp\cup\RR$. So we have surely
this behavior as $t$ approaches $\infty$ through~$\RR$.

\begin{lem}\label{lem-ldpr}Let $r\in \CC$ and $\ld\in \CC^\ast$, and
suppose that $f\in \dsv{2-r}\om[\infty]$ is $\ld$-periodic. We
consider asymptotic expansions of $(\Prj{2-r}f)(t)=(i-t)^{2-r}\,f(t)$
of the type
\be \label{per-exp} (\Prj{2-r}f)(t) \;\sim\; \sum_{n\geq k} b_n \,
t^{-n}
\qquad\text{for some }k\in \ZZ\,.\ee
\begin{enumerate}
\item[i)] If $f$ satisfies \eqref{per-exp} for $t\uparrow \infty$ as
well as for $t\downarrow-\infty$ (with the same coefficients $b_n$),
then
\begin{enumerate}
\item[a)] $f$ is a constant function if $\ld=1$ and $r\in \ZZ$. In
this case $r\geq k+2$.
\item[b)] $f=0$ in all other cases.
\end{enumerate}
\item[ii)] Let $\e\in \{1,-1\}$. Suppose that $f$ satisfies
\eqref{per-exp} as $\e t\uparrow \infty$. Then
\begin{enumerate}
\item[a)] if $\ld=1$ and $r\in \ZZ_{\geq k+2}$,  then  $f$
is a constant function;
\item[b)] else if $|\ld|=1$, then $f=0$;
\item[c)] else $f(t)\sim0$ as $\e t\uparrow\infty$.
\end{enumerate}
\end{enumerate}
\end{lem}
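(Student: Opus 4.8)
The plan is to reduce everything to the case $\e=1$ by using the symmetry $t\mapsto -t$, which conjugates $T=\matc1101$ to $T^{-1}$, so that a $\ld$-periodic $f$ with an asymptotic expansion as $t\downarrow-\infty$ becomes a $\ld^{-1}$-periodic function with an expansion as $t\uparrow+\infty$, with the same weight $r$ and essentially the same exponents; since $|\ld|=1\iff|\ld^{-1}|=1$ and $\ld=1\iff\ld^{-1}=1$, no generality is lost. So from now on assume $\e=1$ and that $f$ satisfies \eqref{per-exp} as $t\uparrow\infty$. Write $h=\Prj{2-r}f$, so $h$ is holomorphic on a $\{\infty\}$-excised-type neighbourhood (in fact on $\lhp$ together with a neighbourhood of $\RR\setminus\{\infty\}$, since $f\in\dsv{2-r}\om[\infty]$), and $h(t)=(i-t)^{2-r}f(t)$, $f(t)=(i-t)^{r-2}h(t)$. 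The hypothesis is $h(t)\sim\sum_{n\geq k}b_n\,t^{-n}$ as $t\uparrow\infty$.

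**The periodicity constraint in the asymptotic variable.** The key computation is to translate the relation $f(t+1)=\ld\,f(t)$ into a relation on $h$. We have
\[
h(t+1)\=(i-t-1)^{2-r}f(t+1)\=\ld\,(i-t-1)^{2-r}(i-t)^{r-2}\,h(t)
\=\ld\,\Bigl(1-\tfrac1{i-t}\Bigr)^{2-r}h(t).
\]
As $t\uparrow\infty$ through $\RR$, $\bigl(1-\frac1{i-t}\bigr)^{2-r}$ has a convergent expansion $1+\frac{r-2}{i-t}+\cdots\sim 1+(r-2)(-t)^{-1}+\cdots$, i.e. it is $1+O(1/t)$, and more precisely it is an asymptotic power series in $1/t$ with leading coefficient $1$. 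Now suppose $f\not\equiv0$; then its asymptotic series $h(t)\sim\sum_{n\geq k}b_n t^{-n}$ is not identically zero, so let $m\geq k$ be the smallest index with $b_m\neq0$. Comparing the leading terms of both sides of $h(t+1)=\ld\,(1+O(1/t))\,h(t)$: the left side is $b_m (t+1)^{-m}+\cdots = b_m t^{-m}+O(t^{-m-1})$, the right side is $\ld\,b_m t^{-m}+O(t^{-m-1})$. Hence $b_m=\ld\,b_m$, forcing $\ld=1$. This already disposes of case (c): if the asymptotic series is not zero then $\ld=1$; contrapositively, if $\ld\neq1$ (in particular if $|\ld|=1$ but $\ld\neq1$) then $h(t)\sim0$, i.e. $f(t)\sim0$ as $t\uparrow\infty$.

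**Pinning down the cases when $\ld=1$.** When $\ld=1$, $f$ is genuinely $1$-periodic and holomorphic on $\lhp$ together with a neighbourhood of $\RR$, hence $f$ is a holomorphic function on a full horizontal strip $\{|\im t|<\eta\}\cup\lhp$, periodic of period $1$, so it descends to a holomorphic function of $q=e^{2\pi i t}$ on a punctured disc $\{0<|q|<1\}\cup(\text{neighbourhood of }|q|=1)$; equivalently $f$ has a Laurent expansion $f(t)=\sum_{\nu\in\ZZ}c_\nu e^{2\pi i\nu t}$ convergent on $\lhp$. Here is the crucial point: the hypothesis that $h=\Prj{2-r}f=(i-t)^{2-r}f(t)$ has an asymptotic expansion $\sum_{n\geq k}b_n t^{-n}$ as $t\uparrow\infty$ along $\RR$ forces $f(t)$ to grow at most polynomially as $t\to+\infty$ along $\RR$. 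But a nonzero Fourier mode $c_\nu e^{2\pi i\nu t}$ with $\nu\neq0$ is oscillatory and bounded (not $o(t^{-N})$ and not matching a power series) along $\RR$ — more carefully, along the real axis $|e^{2\pi i\nu t}|=1$, so such a mode contributes a bounded oscillating term that is incompatible with $f$ having an asymptotic expansion in powers of $1/t$ unless its coefficient vanishes; so all $c_\nu=0$ for $\nu\neq0$, and $f$ is constant. (One has to be a little careful here: the asymptotic expansion \eqref{per-exp} is only assumed along the real direction, and one should argue that a sum $\sum_{\nu\neq0}c_\nu e^{2\pi i\nu t}$ tending to an asymptotic power series along $\RR$ must vanish; this follows by subtracting the (necessarily constant, by the $q$-expansion) polynomially-bounded part and noting that a nonzero purely oscillatory real-analytic periodic function cannot be $o(1)$, let alone asymptotic to a power series. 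Alternatively, since $f$ extends holomorphically across $\RR$ and is periodic and polynomially bounded on $\lhp$, Riemann's removable-singularity argument at $q=0$ shows $f$ is a \emph{polynomial} in $q^{-1}=e^{-2\pi i t}$; but such a polynomial is unbounded on $\RR$ unless constant — wait, $e^{-2\pi i t}$ is bounded on $\RR$; the correct statement is that $f$ polynomially bounded on $\lhp$ together with holomorphy across $\RR$ gives a Laurent polynomial in $q$, and then boundedness of $h$'s asymptotics on $\RR$ kills all nonconstant modes.) Once $f$ is constant, say $f\equiv c\neq0$, then $h(t)=c\,(i-t)^{2-r}$, and for this to have an asymptotic expansion $\sum_{n\geq k}b_n t^{-n}$ as $t\uparrow\infty$ we need $(i-t)^{2-r}\sim(-t)^{2-r}(1-i/t)^{2-r}$ to be an honest (single-valued, convergent) power series in $1/t$ with lowest exponent $-k$; $(-t)^{2-r}$ is a power series in $1/t$ iff $2-r\in\ZZ_{\leq0}$, i.e. $r\in\ZZ_{\geq2}$, and then the lowest power of $1/t$ appearing is $t^{-(r-2)}$, so the condition $h(t)\sim\sum_{n\geq k}b_n t^{-n}$ requires $r-2\geq k$, i.e. $r\in\ZZ_{\geq k+2}$. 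This gives (ii)(a) and shows that in (ii)(c) — when $\ld=1$ but $r\notin\ZZ_{\geq k+2}$ — no nonzero constant works either, but wait: if $\ld=1$ and $r\notin\ZZ_{\geq k+2}$, a nonzero constant $f$ does \emph{not} satisfy the hypothesis \eqref{per-exp}, so there is nothing to conclude beyond what the hypothesis gives; the statement (ii)(c) "$f(t)\sim0$" should be read as: the only possibility consistent with the hypothesis, when $\ld=1$ and we are not in case (a), and with $|\ld|=1$ excluded already in (b) — hmm, $\ld=1$ has $|\ld|=1$; so the case split is: (a) covers $\ld=1,\ r\in\ZZ_{\geq k+2}$; (b) covers $|\ld|=1$ with $\ld\neq1$, OR $\ld=1$ with $r\notin\ZZ$ (since then no constant works and the Fourier argument forces $f=0$); and one checks that when $\ld=1$, $r\in\ZZ$, $r<k+2$, the hypothesis on $h$ still forces all Fourier modes including the constant to vanish (the constant would give lowest power $t^{-(r-2)}$ with $r-2<k$, violating $h\sim\sum_{n\geq k}$), so $f=0$ — this is subsumed in (b) under "$|\ld|=1$". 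The remaining case $|\ld|\neq1$ is (c), already handled in the second paragraph. For part (i) one runs the same argument but now with the asymptotic expansion available in \emph{both} directions $t\to\pm\infty$ with the same coefficients; the two-sided information is exactly what pins down, in the $\ld=1,\ r\in\ZZ$ constant case, that $f$ is a \emph{single} constant (the expansions from $+\infty$ and $-\infty$ agreeing forces the branch-of-$(i-t)^{2-r}$ subtleties to collapse), and otherwise forces $f=0$; concretely (i)(a) is $\ld=1$, $r\in\ZZ$, and then $r\geq k+2$ as in (ii)(a), while (i)(b) is everything else.

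**Main obstacle.** The genuinely delicate point — and the one I would write out most carefully — is the claim that a $1$-periodic holomorphic function on $\lhp\cup(\text{nbhd of }\RR)$ whose $\Prj{2-r}$-transform has a one-sided (or two-sided) asymptotic \emph{power-series} expansion in $1/t$ along $\RR$ must have \emph{only} the zero Fourier mode. Polynomial boundedness alone does not immediately kill oscillatory modes along $\RR$ (they are bounded!); what kills them is the rigidity of asymptotic power series — a nonzero $\sum_{\nu\neq0}c_\nu e^{2\pi i\nu t}$ is a nonzero real-analytic function on $\RR$ that is \emph{not} $o(t^{-N})$ and cannot equal a sum $\sum_{k\leq n<N}b_n t^{-n}+O(t^{-N})$ for all $N$, because asymptotic power series are unique and subtracting the tail would leave a bounded oscillating remainder that is $O(t^{-N})$ for all $N$, hence identically $0$ by analyticity, a contradiction. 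I would isolate this as a small lemma (a nonzero $1$-periodic real-analytic function on $\RR$ is not asymptotically $0$, and has no asymptotic power-series expansion at $+\infty$), and then the rest is the bookkeeping of which $(\ld,r,k)$ allow the surviving constant mode, exactly as sketched above.
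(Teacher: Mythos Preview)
Your overall strategy matches the paper's: translate $\ld$-periodicity into a relation on $h=\Prj{2-r}f$, compare asymptotic coefficients, and finish with a Fourier argument. But there are two genuine gaps.

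\textbf{The main gap: Part~(ii)(b) with $\ld\neq1$, $|\ld|=1$.} Your leading-coefficient comparison correctly yields $\ld=1$ or $h\sim0$. But $h\sim0$ only gives $f(t)\sim0$ as $t\uparrow\infty$; the lemma asserts $f=0$ in case~(b), and you never upgrade. The missing step is almost trivial once stated: since $|\ld|=1$, the function $|f|$ is $1$-periodic on~$\RR$, and a continuous periodic function tending to~$0$ at~$+\infty$ is identically~$0$; then $f\equiv0$ by analytic continuation. The paper does essentially this (phrased via Fourier coefficients $c_n=\int_t^{t+1}f_0(s)e^{-2\pi ins}\,ds$ and letting $t\to\infty$), and applies it uniformly to all $|\ld|=1$, not just $\ld=1$. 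Your ``Main obstacle'' paragraph isolates the right lemma but states it only for $1$-periodic functions; extend it to $\ld$-periodic with $|\ld|=1$ and apply it before you ever separate out the $\ld=1$ case.

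\textbf{A second, smaller issue.} The paper compares \emph{two} coefficients: from the relation $h(t+1)=\ld\bigl(1+\frac{r-2}{-t}+\cdots\bigr)h(t)$ one gets $\ld b_m=b_m$ and then $\ld(b_{m+1}-(r-2)b_m)=b_{m+1}-mb_m$, forcing $m=r-2\in\ZZ_{\geq k}$ directly. This identifies the leading term as coming from a constant~$f$ in one stroke, after which one subtracts the constant and reduces to the $f_0\sim0$ situation handled above. You skip this and try to prove ``$f$ constant'' by a direct Fourier argument on~$f$ itself; that route can be made to work, but your write-up goes in circles (the removable-singularity digression, the ``polynomial in $q^{-1}$'' aside). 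The clean path is: subtract the candidate constant first, then the remainder has $\Prj{2-r}f_0\sim0$, hence $f_0\to0$, hence $f_0=0$ by periodicity of~$|f_0|$.

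Finally, your treatment of Part~(i) misidentifies what the two-sided hypothesis buys. There is no ``branch-of-$(i-t)^{2-r}$ subtlety'' when $r\in\ZZ$. The actual role of the two-sided expansion is to kill Fourier terms with $\im n\neq0$ (which can survive a one-sided estimate since $e^{2\pi int}$ decays in one direction), thereby upgrading $f\sim0$ to $f=0$ even when $|\ld|\neq1$. This is why (i)(b) holds without the $|\ld|=1$ restriction that appears in~(ii)(b).
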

\begin{proof}Consider $f\in
\dsv{2-r}\om[\infty]$ that is $\ld$-periodic
and has an expansion \eqref{per-exp} as $t\uparrow\infty$, or
$t\downarrow-\infty$, or both. If the expansion is non-zero it has
the form $b_n \,t^{-n} + b_{n+1}\, t^{-n-1}+\cdots$ where
$b_n\neq 0$. Insertion in
\[(\Prj{2-r}f)(t+1) \= \ld \, \Bigl( \frac{1-(i-1)/t}
{1-i/t}\Bigr)^{2-r}\,(\Prj{2-r}f)(t)\]
gives
\[ \ld \, b_n \= b_n\,,\quad \ld \, \bigl(b_{n+1} - (r-2)\, b_n) \=
b_{n+1}-n\, b_n\,.\]
This is impossible with $b_n\neq 0$ if $\ld\neq 1$. If $\ld=1$ it is
possible if $n= r-2\geq k$, and has solutions corresponding to a
constant function $f(t)=c$, and
\[ (\Prj{2-r} f)(t)\= (i-t)^{2-r}\, c\,.\]
This shows that non-zero expansions occur only in the Case~a) in
Part~i).

In this case we set $f_0(t)=f(t)-c$. We set $f_0=f$ otherwise. Then
$f_0\in \dsv{2-r}\om[\infty]$ is $\ld$-periodic with expansion
$(\Prj{2-r}f_0)(t)\sim 0$. In other words, $(\Prj{2-r}f_0)(t) =
\oh(t^{-N})$ for any order $N\in \ZZ_{\geq 0}$,
and then the same holds for $f_0(t)$. To show that $f=0$ in Case~b)
of~i), we notice that, as a $\ld$-periodic function, $f_0$ has a
Fourier expansion and the estimate $f(t) = \oh(t^{-N})$ holds for
each Fourier term, which is of the form $c_n \, e^{2\pi i n t}$ with
$e^{2\pi i n}=\ld$. For expansion in both directions,
$|t|\rightarrow\infty$, this implies that all Fourier terms vanish,
and hence $f=0$. This finishes the proof of Part~i).

For a one-sided expansion, say $t\rightarrow\infty$, there might be
Fourier terms that satisfy $e^{2\pi i n t} \sim 0$ as $t\uparrow
\infty$, namely if $\im n<0$. This possibility and the same
possibility as $t\downarrow-\infty$ are excluded by the assumption
$|\ld|=1$ in Part~ii)b). Without this assumption, $f(t)\sim 0$.
\end{proof}

\subsection{Construction of solutions}\label{sect-constr-peq}We break
up the Fourier expansion \eqref{Eexp} in three parts, according to
$\re n>0$, $\re n<0$ and $\re n=0$.

\rmrk{Cuspidal case}
\begin{lem}\label{lem-peq-cusp}Suppose that the Fourier expansion
\eqref{Eexp} has the form
\[ E(z)
\= \sum_{n\equiv\al\bmod 1,\; \re n>0} a_n\, e^{2\pi inz}\,,\]
with $\al\in \CC$, $\ld=e^{2\pi i \al}$.
\begin{enumerate}
\item[i)] If $r\in \CC\setminus\ZZ_{\geq 2}$, then there is a unique
$h\in \dsv{2-r}{\om,\infty,\wdg}[\infty]$ satisfying~\eqref{peq}.
\item[ii)]If $r\in \ZZ_{\geq 2}$ then \eqref{peq} has solutions in
$\dsv{2-r}\pol$.
\begin{enumerate}
\item[a)] If $\ld=e^{2\pi i \al}\neq 1$, then there is a unique
solution.
\item[b)] If $\ld=1$, then the solutions of \eqref{peq} in
$\dsv{2-r}\pol$ are unique up to addition of a constant.
\end{enumerate}
\end{enumerate}
\end{lem}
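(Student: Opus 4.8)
The plan is to exhibit the obvious solution and recognise that it is exactly of the type already analysed in Lemma~\ref{lem-inf}. Fix a base point $z_0\in\uhp$ and, for $t\in\lhp$, set
\[ h(t)\;:=\;\int_{z_0}^\infty\om_r(E;t,z)\= \int_{z_0}^\infty (z-t)^{r-2}\,E(z)\,dz\,, \]
the integral taken along a vertical half-line from $z_0$ to $i\infty$. Since every exponent $n$ in \eqref{Eexp} has $\re n>0$ and these real parts are bounded below by a positive constant, $E$ decays exponentially as $\im z\to\infty$ for each fixed value of $\re z$, so the integral converges absolutely and defines a holomorphic function on $\lhp$. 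Substituting $z\mapsto z+1$ and using $E(z+\nobreak1)=\ld\,E(z)$ gives $\ld^{-1}h(t+\nobreak1)=\int_{z_0-1}^\infty(z-t)^{r-2}E(z)\,dz$; as the $1$-form $(z-\nobreak t)^{r-2}E(z)\,dz$ is holomorphic in $z$ and the tails to $i\infty$ are path-independent, subtracting $h(t)$ telescopes the two half-line integrals to the integral over the segment $[z_0-\nobreak1,z_0]$, which is precisely the right-hand side of \eqref{peq}. Thus $h$ solves \eqref{peq}.

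Next I would pin down the module. The hypothesis on $E$ is exactly the exponential-decay hypothesis of Lemma~\ref{lem-inf} (with the cusp taken to be $\infty$ and $g=1$), so that lemma gives at once $h\in\dsv{2-r}{\om,\infty,\wdg}[\infty]$, which proves existence in Part~i). In Part~ii), where $r\in\ZZ_{\geq2}$, the factor $(z-\nobreak t)^{r-2}$ is a polynomial in $t$ of degree $r-2$, and expanding it writes $h(t)=\sum_{j=0}^{r-2}\binom{r-2}{j}(-t)^j\int_{z_0}^\infty z^{r-2-j}E(z)\,dz$ with all integrals convergent; hence $h$ is a polynomial of degree at most $r-2$, i.e. $h\in\dsv{2-r}\pol$ (consistently with Part~i), since $\dsv{2-r}\pol\subset\dsv{2-r}{\om,\infty,\wdg}[\infty]$).

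For uniqueness, the difference $f$ of two solutions satisfies $\ld^{-1}f(t+\nobreak1)-f(t)=0$, so $f$ is $\ld$-periodic. In Part~ii) a $\ld$-periodic polynomial of degree at most $r-2$ must vanish when $\ld\neq1$ (compare leading coefficients) and must be constant when $\ld=1$; conversely, when $\ld=1$ adding a constant to any solution again solves \eqref{peq}, which yields exactly the claimed non-uniqueness. In Part~i), $f\in\dsv{2-r}{\om,\infty,\wdg}[\infty]$ is $\ld$-periodic; being holomorphic on an $\{\infty\}$-excised neighbourhood, translating horizontally by integers and using $\ld$-periodicity extends it holomorphically across the excised strip, so in fact $f\in\dsv{2-r}\om[\infty]$, and the $\infty$-condition supplies the two-sided asymptotic expansion of $\Prj{2-r}f$ at $\infty$ needed to invoke Lemma~\ref{lem-ldpr}(i) with $k=0$. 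That lemma forces $f=0$ except when $\ld=1$ and $r\in\ZZ$ with $r\geq k+\nobreak2=2$; since $r\notin\ZZ_{\geq2}$ here, $f=0$, establishing uniqueness. The analytically delicate step — the boundary asymptotics of this Eichler-type integral of an exponentially decaying periodic function — has already been isolated in Lemma~\ref{lem-inf}, so the main points requiring care are the bookkeeping that lets one apply it at the cusp $\infty$ and the case analysis for uniqueness, in particular noting that in Part~i) integer weights $r\leq1$ are covered because the constant solutions that Lemma~\ref{lem-ldpr} permits a priori would require $r\geq k+2$.
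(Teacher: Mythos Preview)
Your proof is correct and follows essentially the same approach as the paper's: construct the solution as $h(t)=\int_{z_0}^\infty(z-t)^{r-2}E(z)\,dz$, invoke Lemma~\ref{lem-inf} to place it in $\dsv{2-r}{\om,\infty,\wdg}[\infty]$, and handle uniqueness via Lemma~\ref{lem-ldpr}. The minor differences are cosmetic: you verify the functional equation by substitution and telescoping where the paper draws a rectangular contour (same content, since the vanishing of the top edge is exactly the exponential decay you invoke), and for uniqueness in Part~ii) you give a direct polynomial argument rather than routing through Lemma~\ref{lem-ldpr}, which is a small simplification. Your extension step in Part~i)---translating a $\ld$-periodic element of $\dsv{2-r}{\om,\wdg}[\infty]$ across the excised strip---is correct but unnecessary, since $\dsv{2-r}{\om,\wdg}[\infty]\subset\dsv{2-r}\om[\infty]$ already (the excised set $W_\infty$ lies in $\{\im z>\e\}$ and does not touch a neighbourhood of $\lhp\cup\RR$).
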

\begin{proof}Lemma~\ref{lem-inf} states that
\be\label{h0-cusp} h_0(t) \;:=\; \int_{z_0}^\infty (z-t)^{r-2}\,
E(z)\, dz \ee
defines $h_0\in \dsv{2-r}{\om,\infty,\wdg}[\infty]$. If we take a
vertical path of integration, then $h_0$ is a holomorphic function on
$\CC\setminus \left( z_0+i[0,\infty)\right)$.

\twocolwithpictr{
\quad Let us consider $t\in \CC$ with $\im t<\im z_0$. The integral
over the closed path sketched in Figure~\ref{fig-rh} on the right
equals zero for all $a>0$, and due to the exponential decay of~$E$
the limit as $a\rightarrow\infty$ of the integrals over the sides
depending on~$a>0$ exist. Hence we get
\begin{align*} &\int_{z_0-1}^\infty (z-t)^{r-2}\, E(z) \, dz
\\
&\quad\;=\; \int_{z_0-1}^{z_0}(z-t)^{r-2}\, E(z) \, dz + h_0(t)\,.
\end{align*}
}{ \label{fig-rh} \setlength\unitlength{.8cm}
\begin{picture}(5,7)(-1,0)
\put(0,0){\line(1,0){4}}
\put(1,1){\circle*{.1}}
\put(2.5,1){\circle*{.1}}
\put(1,6){\circle*{.1}}
\put(2.5,6){\circle*{.1}}
\put(2.6,6.1){$z_0+ia$}
\put(-.5,6.1){$z_0-1+ia$}
\put(0,.6){$z_0-1$}
\put(2.6,.6){$z_0$}
\thicklines
\put(1,1){\line(1,0){1.5}}
\put(1,1){\line(0,1){5}}
\put(1,6){\line(1,0){1.5}}
\put(2.5,1){\line(0,1){5}}
\end{picture}
}

\noindent
Like in Part~iii) of Lemma~\ref{lem-omprop} this gives
\begin{align*}
\ld^{-1}\, h_0 (t+1)&\= \ld^{-1}\, \int_{z_0}^\infty (z-t-1)^{r-2}\,
E(z)\, dz \= \int_{z_0-1}^\infty (z-t)^{r-2}\, E(z)\, dz\\
&\= h_0 (t) + \int_{z_0-1}^{z_0} (z-t)^{r-2}\, E(z)\, dz \,.
\end{align*}
This relation extends holomorphically to all $t\in \CC$ outside the
region determined by $\re z_0-\nobreak 1\leq \re t\leq \re z_0$ and
$\im t\geq \im z_0$. So $h_0$ is a solution of \eqref{peq} in
$\dsv{2-r}{\om,\infty,\wdg}[\infty]$.

Let $h$ be another solution in $\dsv{2-r}{\om,\infty,\wdg}[\infty]$.
Then $p=h-h_0$ is a $\ld$-periodic function in $\dsv{2-r}\infty$, and
hence $\Prj {2-r}p$ has an expansion as in Lemma~\ref{lem-ldpr}, with
$k\geq 0$. If $r\in \CC \setminus \ZZ_{\geq2}$ then Part~i) a) and
Part~ii) of Lemma~\ref{lem-ldpr} implies that $p=0$ so that we have
proved Part~i) of this lemma.

To prove Part~ii) let $r\in \ZZ_{\geq2}.$ It is clear from the
integral that $h_0\in \dsv{2-r}\pol$ if $r\in \ZZ_{\geq 2}$. If
$\lambda=1 $ it reduces to the case b) in Part~i)
of Lemma~\ref{lem-ldpr}. So $p$ is a non-zero constant. This handles
Part~ii)b) of the present lemma. If $\lambda \neq 1,$ it reduces to
the case a) in Part~ii) of Lemma~\ref{lem-ldpr} so that $p=0.$ This
gives Part a) in Part~ii) of the present lemma.
\end{proof}

\rmrk{Exponentially increasing part}
\begin{lem}\label{lem-expgr}Suppose that the Fourier expansion
\eqref{Eexp} has the form
\be\label{negexp} E(z) \= \sum_{n\equiv\al\bmod 1,\; \re n<0} a_n\,
e^{2\pi inz}\,,\ee
with $\al\in \CC$, $\ld=e^{2\pi i \al}$.
\begin{enumerate}
\item[i)] Equation \eqref{peq} has solutions
$h\in\dsv{2-r}{\om,\wdg}[\infty]$, among which occurs a solution for
which $\Prj {2-r}h(t)$ has an asymptotic expansion as
 $t\uparrow\infty$ of the form $\sum_{n\geq 0}c_n t^{-n}$.
\item[ii)] Let $|\ld|=1$ and $E\neq 0$. For none of these solutions
$h$ do we have an asymptotic expansion of the form $\Prj{2-r}h(t)
\sim \sum_{n\geq k} q_k\, t^{-n}$ valid for $t\uparrow\infty$ and for
$t\downarrow-\infty$ with the same coefficients.
\end{enumerate}
\end{lem}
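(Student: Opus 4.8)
\emph{Overview.} The plan is to adapt the contour‑integral construction used for the cuspidal case in Lemma~\ref{lem-peq-cusp}, with one crucial change of direction: there one integrates toward the cusp because $E$ decays there, whereas here the exponentially increasing Fourier part decays in the \emph{opposite} direction, so one integrates downward into $\lhp$. First note that in \eqref{Eexp} the orders $n\equiv\al\bmod 1$ with $\re n<0$ all lie on one vertical line, so $|\re n|$ is bounded below by a positive constant and tends to $\infty$; since the Fourier coefficients of a holomorphic function on $\uhp$ decay faster than any exponential in $|\re n|$ as $\re n\to-\infty$, the series $\sum a_n e^{2\pi inz}$ converges on all of $\CC$. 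Thus $E$ extends to an entire function with $E(z)=\oh(e^{-c|\im z|})$ as $\im z\to-\infty$, uniformly for $\re z$ in vertical strips. This asymmetric decay is what the construction exploits.

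\emph{Part i).} Put $h_0(t):=\int_{z_0}^{-i\infty}(z-t)^{r-2}E(z)\,dz$, the path descending from $z_0$ into $\lhp$ toward the cusp (the point $-i\infty=\infty\in\proj\CC$), routed to avoid $t$ and to keep its branch slit inside a chimney over $\infty$; convergence is guaranteed by the decay just established. Shifting the path by $1$, substituting, and using $E(z+1)=\ld E(z)$ exactly as in the proof of Lemma~\ref{lem-peq-cusp} (the piece near $-i\infty$ dropping out by the decay of $E$) yields $\ld^{-1}h_0(t+1)-h_0(t)=\int_{z_0-1}^{z_0}(z-t)^{r-2}E(z)\,dz$, so $h_0$ solves \eqref{peq}. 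Since the path meets $\proj\RR$ only near $\infty$, $h_0$ continues holomorphically across $\proj\RR\setminus\{\infty\}$ over a $\{\infty\}$‑excised neighbourhood, so $h_0\in\dsv{2-r}{\om,\wdg}[\infty]$. For the one‑sided expansion, split the path into a bounded part plus the decaying tail and expand the projective‑model integrand $\bigl(\tfrac{z-t}{i-t}\bigr)^{r-2}$ of \eqref{omr-prj} in powers of $1/t$ by Taylor's formula with integral remainder, as in the proof of Lemma~\ref{lem-inf}; the decay of $E$ makes every coefficient integral $\int z^mE(z)\,dz$ converge, giving $\Prj{2-r}h_0(t)\sim\sum_{n\ge 0}c_n t^{-n}$ as $t\uparrow\infty$. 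Every other solution in $\dsv{2-r}{\om,\wdg}[\infty]$ differs from $h_0$ by a $\ld$‑periodic element, which settles~i).

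\emph{Part ii).} Suppose $h\in\dsv{2-r}{\om,\wdg}[\infty]$ solves \eqref{peq} and $\Prj{2-r}h(t)\sim\sum_{n\ge k}q_n t^{-n}$ holds as $t\uparrow\infty$ and as $t\downarrow-\infty$ with identical coefficients. Subtracting $h_0$ from part~i) gives a $\ld$‑periodic $p=h-h_0$ whose projective model has an asymptotic expansion as $t\uparrow\infty$; Lemma~\ref{lem-ldpr}(ii) then forces $p$ to vanish (or, in the exceptional case $\ld=1$, $r\in\ZZ$, to be constant), so $h$ is $h_0$ itself up to a constant. It remains to show $h_0$ does not have a two‑sided matching expansion once $|\ld|=1$ and $E\ne 0$. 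The idea is to compare the expansion of $\Prj{2-r}h_0(t)$ as $t\uparrow\infty$ with that as $t\downarrow-\infty$: the two approaches lie on opposite sides of the branch slit of $h_0$ over $\infty$, so tracking the branch of $(z-t)^{r-2}$ along the portion of the path inside $\lhp$ and using that $E$ is nonzero there produces a genuine discrepancy in the coefficients. Equivalently, run the construction of part~i) a second time with the path routed to the other side, obtaining a solution $h_0'$ with a clean expansion as $t\downarrow-\infty$; a two‑sided matching $h$ would force $h_0-h_0'$ to be a $\ld$‑periodic — hence constant or zero — function, whereas $h_0-h_0'$ is a loop integral in $\lhp$ that collapses onto the slit and evaluates to a nonzero fractional‑type integral of $E$, contradicting $E\ne 0$ together with the constraint $|\ld|=1$ puts on admissible periodic corrections.

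\emph{Main obstacle.} The delicate step is the last part of ii): making the branch bookkeeping of \eqref{omr}/\eqref{omr-prj} precise enough to isolate the leading term of the discrepancy between the two directional expansions of $\Prj{2-r}h_0$, and to verify that its non‑vanishing is governed exactly by $E\ne 0$ — with $|\ld|=1$ ruling out the periodic corrections that could otherwise cancel the mismatch, and the integer‑weight/trivial‑multiplier situations requiring separate care. Everything preceding it is a direct variant of the cuspidal arguments already carried out in Lemmas \ref{lem-peq-cusp} and \ref{lem-inf}.
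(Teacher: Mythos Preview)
Your approach matches the paper's: construct a solution $h_{\mathrm{ri}}$ by integrating from $z_0$ downward toward $-i\infty$ (where $E$ decays), and a second solution $h_{\mathrm{le}}$ with the path routed to the other side; then argue that a two-sided expansion of any solution forces $h_{\mathrm{ri}}-h_{\mathrm{le}}$ to be constant, and derive a contradiction from the loop integral.

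The paper resolves your ``main obstacle'' by an explicit computation rather than a qualitative branch-bookkeeping argument. Deforming the loop onto the vertical half-line through $t$ (first for $\re r>1$, then by analytic continuation in $r$) gives
\[
\bigl(\Prj{2-r}(h_{\mathrm{ri}}-h_{\mathrm{le}})\bigr)(t)
\;=\;\frac{(2\pi)^{2-r}e^{\pi i r/2}}{\Gf(2-r)}\,(i-t)^{r-2}
\sum_{n\equiv\al(1),\ \re n<0}\frac{a_n}{(-n)^{r-1}}\,e^{2\pi i n t}\,,
\]
which is visibly non-constant as soon as some $a_n\neq 0$. Your ``branch-slit discrepancy'' description before the ``Equivalently'' is less sharp than this and would be hard to make rigorous on its own; the two-solution comparison you give afterwards is exactly the right mechanism.
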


\begin{proof}We cannot use the integral in~\eqref{h0-cusp}, since $E$
has exponential growth on $\uhp$.\vskip.2ex
\twocolwithpictl{\label{fig-pd}
\setlength\unitlength{.9cm}
\begin{picture}(5.5,3.5)(0,-2)
\put(0,0){\line(1,0){5}}
\put(.6,.04){$\RR$}
\put(3,1){\circle*{.1}}
\put(3.1,.8){$z_0$}
\thicklines
\put(3,1){\vector(0,-1){3}}
\end{picture}
}{ The convergence of $E(z)$ in $\uhp$ implies good growth for its
Fourier coefficients. This growth then implies that $E(z)$ can be
defined on $\CC$ with exponential decay as $\im z \downarrow-\infty$.
So we use a path of integration as in Figure~\ref{fig-pd}.}
\vskip.6ex

In this way we  obtain  a holomorphic function
$h_{\mathrm{ri}}$ on the region $\re t>\re z_0$ given by
\be h_{\mathrm{ri}}(t) \= \int_{z_0-i[0,\infty)} (z-t)^{r-2}\, E(z)\,
dz\,,
\ee
and satisfying \eqref{peq} for these values of~$t$.

\twocolwithpictl{\label{deform}
\setlength\unitlength{.9cm}
\begin{picture}(5.5,4.5)(0,-2)
\put(0,0){\line(1,0){5}}
\put(.6,.04){$\RR$}
\put(3,1){\circle*{.1}}
\put(3.1,.8){$z_0$}
\thicklines
\put(3,1){\line(0,1){.2}}
\put(2,1.2){\oval(2,2)[t]}
\put(1,1.2){\vector(0,-1){3.6}}
\end{picture}
}{\quad Deforming the integral as in Figure~\ref{deform} we get the
holomorphic continuation of $h_{\mathrm{ri}}$ to a larger region. In
this way we get the continuation to the region $\CC\setminus \bigl(
z_0+i[0,\infty)\bigr)$. By analytic continuation, the
extension satisfies \eqref{peq} on the region $\CC\setminus \bigl(
z_0 + [-1,0]+i[0,\infty)\bigr)$. We normalize the factor $(z-\nobreak
t)^{r-2}$ by requiring that $\frac\pi2\leq \arg(z-t) \leq
\frac{3\pi}2$ if $\re t >\re z_0$ and
$z$ is on the path of integration. }

We have
\be \label{hprj-i}
(\Prj{2-r}h_{\mathrm{ri}})(t) \= \int_{z_0} \Bigl( \frac{z-t}{i-t}
\Bigr)^{r-2}\, E(z)\, dz\,,\ee
over a path of integration starting at $z_0$ going down to $\infty$,
adapted to $t$, and normalized by $\arg \bigl( \frac{z-t}{i-t}\bigr)
\rightarrow 0$ as $t\uparrow \infty$. We
consider the asymptotic behavior of $(\Prj{2-r}h_{\mathrm{ri}})(t)$
as $t\uparrow\infty$ through~$\RR$. The exponential decay of $E$ as
$\im z \downarrow-\infty$ implies that for a fixed large $t$ in $\RR$
the contribution of the integral over $\im z< - \frac12 t$ can be
estimated by $\oh(e^{-\e t})$ as $t\rightarrow\infty$, with $\e>0$
depending on $ E$, $\al$ in \eqref{negexp}, and $z_0$. We insert the
Taylor expansion of order $N$ of $\bigl( \frac{z-t}{i-t}\bigr)^{r-2}$
in $\frac 1t$ into the remaining part of the integral, and find an
expansion starting at $k=0$, but only as $t\uparrow\infty$. In this
way we obtain the second statement in Part~i).

Actually, if we apply the same reasoning to the integral
in~\eqref{hprj-i} for $t\downarrow-\infty$ we get the same expansion,
with the same coefficients. However, that is not an expansion of
$h_{\mathrm{ri}}$, but of another solution $h_{\mathrm{le}}$, which
we can define in the following way.

\twocolwithpictr{\quad An equally sensible choice is the path of
integration sketched in Figure~\ref{fig-pr}. Now the path has to be
chosen so that $t$ is to the left of it, and below it if $\re t>\re
z_0 $. This defines another solution $h_{\mathrm{le}}\in
\dsv{2-r}{\om,\wdg}[\infty]$ of \eqref{peq}. The normalization of the
corresponding  integrand  for $\Prj{2-r}h_{\mathrm{le}}$
is also by $\arg \bigl( \frac{z-t}{i-t }\bigr) \rightarrow 0$ as
$t\uparrow \infty$.

\quad As indicated above, $h_{\mathrm{le}}(t)
$ has an asymptotic expansion as $t\downarrow-\infty$, with the same
coefficients as in the expansion of $h_{\mathrm{ri}}(t)$ as
$t\uparrow\infty$.}{\label{fig-pr}
\setlength\unitlength{.9cm}
\begin{picture}(5.5,6)(0,-3.5)
\put(0,0){\line(1,0){5}}
\put(.6,.04){$\RR$}
\put(3,1){\circle*{.1}}
\put(2.6,.8){$z_0$}
\thicklines
\put(3,1){\line(0,1){.5}}
\put(3.5,1.5){\oval(1,1)[t]}
\put(4,1.5){\vector(0,-1){4.5}}
\end{picture}
}

\twocolwithpictl{\label{fig-2p}
\setlength\unitlength{.9cm}
\begin{picture}(5,5.2)(0,-3)
\put(0,0){\line(1,0){5}}
\put(2,0){\circle*{.15}}
\put(2.1,.1){$t$}
\thicklines
\put(2,1){\line(0,1){.5}}
\put(1.5,1.5){\oval(1,1)[t]}
\put(1,1.5){\vector(0,-1){4.5}}
\put(2.5,1){\oval(1,1)[t]}
\put(3,1){\line(0,-1){4}}
\put(3,-3){\vector(0,1){1}}
\end{picture}
}{ Both $h_{\mathrm{ri}}$ and $h_{\mathrm{le}}$ are solutions of
\eqref{peq} for $E$ as in~\eqref{negexp}. We have for $t\in \RR$
\begin {align*}\bigl(\Prj{2-r}&( h_{\mathrm{ri}}(t)
-h_{\mathrm{le}})\bigr)(t) \\
&\;=\; \int_D \Bigl( \frac{z-t}{i-t} \Bigr)^{r-2}\, E(z)\,
dz\,,\end{align*}
over a path of integration indicated in Figure~\ref{fig-2p}.

This integral is holomorphic in $r\in \CC$. For $\re r>1$ it can be
computed by deforming the path of integration to the vertical
half-line downward from~$t$. This leads to the following result: }
\be\label{hr-hell}\bigl(\Prj{2-r}( h_{\mathrm{ri}}
-h_{\mathrm{le}})\bigr)(t) \= \frac{(2\pi)^{2-r}\, e^{\pi i
r/2}}{\Gf(2-r)}\,
(i-t)^{r-2} \sum_{n\equiv \al\bmod 1,\, \re( n)<0}
\frac{a_n}{(-n)^{r-1}}\, e^{2\pi i n t}\,. \ee
This difference gives a $\ld$-periodic function
$H=h_{\mathrm{ri}}-h_{\mathrm{le}}$. Moreover, the difference is
holomorphic in $r\in \CC$. So the equality is valid for all $r\in
\CC$.

Now let $|\ld|=1$. Suppose that $h\in \dsv{2-r}{\om,\wdg}$ is a
solution of \eqref{peq} with a two-sided asymptotic expansion. We
have $h=h_{\mathrm{ri}} + p_r=h_{\mathrm{le}}+p_\ell$ with
$\ld$-periodic $p_r,p_\ell\in \dsv{2-r}{\om,\wdg}[\infty]$.

Suppose that the difference $p_r=h-h_{\mathrm{ri}}$ has an asymptotic
expansion as $t\uparrow\infty$. Part~ii) of Lemma~\ref{lem-ldpr}
shows that $p_r$ is constant (and zero in most cases). Similarly
$p_\ell=h-h_{\mathrm{le}}$ is constant. So
$h_{\mathrm{ri}}-h_{\mathrm{le}}$ is constant. However, in
\eqref{hr-hell} we see that this implies
$h_{\mathrm{ri}}-h_{\mathrm{le}}=0$ by the assumption $\re n<0$
in~\eqref{negexp}. Then all $a_n$ vanish and $E=0$.
\end{proof}

\rmrk{Remaining Fourier term}We are left with the multiples of $
e^{2\pi i n z}$ with $\re n=0$. So $n=i\,\im \al \equiv\al\bmod 1$.
\begin{lem}\label{lem-pim}
Suppose that $E(z) = e^{-2\pi z n}$ with $\re n=0$, $n\neq 0$. Then
Equation~\eqref{peq} has solutions in $\dsv{2-r}{\om,\wdg}[\infty]$.
\end{lem}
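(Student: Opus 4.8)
The plan is to reduce this remaining case to the two constructions already carried out, in the proofs of Lemmas~\ref{lem-peq-cusp} and~\ref{lem-expgr}, once one singles out the right vertical direction. Since $\re n=0$ and $n\neq0$, write $n=i\mu$ with $\mu\in\RR\setminus\{0\}$, so that $E(z)=e^{-2\pi z n}=e^{-2\pi i\mu z}$ and $|E(x+iy)|=e^{2\pi\mu y}$. Thus $E$ decays exponentially as $\im z\to+\infty$ if $\mu<0$, and as $\im z\to-\infty$ if $\mu>0$; in either case $E$ decays toward one end of every vertical line, and that is exactly the input the earlier arguments used. First I would split the proof into these two subcases.

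If $\mu<0$, then $E(x+iy)=\oh(e^{-ay})$ as $y\to\infty$ with $a=2\pi|\mu|>0$, so Lemma~\ref{lem-inf}, applied with $g=1$, $\ca=\infty$ and $F=E$, shows that
\[ h_0(t)\;:=\;\int_{z_0}^{\infty}(z-t)^{r-2}\,E(z)\,dz \]
(along a vertical half-line) defines an element of $\dsv{2-r}{\om,\infty,\wdg}[\infty]\subseteq\dsv{2-r}{\om,\wdg}[\infty]$. That $h_0$ solves~\eqref{peq} follows exactly as in the proof of Lemma~\ref{lem-peq-cusp}: shifting the contour over the rectangle with vertices $z_0-1,\,z_0,\,z_0+ia,\,z_0-1+ia$ and letting $a\to\infty$ (using the exponential decay of~$E$) gives $\int_{z_0-1}^{\infty}(z-t)^{r-2}E(z)\,dz=\int_{z_0-1}^{z_0}(z-t)^{r-2}E(z)\,dz+h_0(t)$, while the substitution $z\mapsto z-1$ together with $E(z-1)=\ld^{-1}E(z)$ gives $\ld^{-1}h_0(t+1)=\int_{z_0-1}^{\infty}(z-t)^{r-2}E(z)\,dz$; combining the two displays yields $\ld^{-1}h_0(t+1)-h_0(t)=\int_{z_0-1}^{z_0}(z-t)^{r-2}E(z)\,dz$, which is~\eqref{peq}.

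If $\mu>0$, then $E$ is entire and $|E(x+iy)|=e^{2\pi\mu y}\to0$ as $y\to-\infty$, so the construction in the proof of Lemma~\ref{lem-expgr} applies without change: the integral
\[ h_{\mathrm{ri}}(t)\;:=\;\int_{z_0-i[0,\infty)}(z-t)^{r-2}\,E(z)\,dz \]
converges, is holomorphic for $\re t>\re z_0$, satisfies~\eqref{peq} there, and---deforming the downward vertical path---continues holomorphically to $\CC\setminus\bigl(z_0+i[0,\infty)\bigr)$, still solving~\eqref{peq} on the relevant region. Since the excised ray $z_0+i[0,\infty)$ lies inside a set $W_\infty$ of the type in Definition~\ref{dec-en}, the complement $\CC\setminus\bigl(z_0+i[0,\infty)\bigr)$ contains an $\{\infty\}$-excised neighbourhood of $\lhp\cup\proj\RR$, whence $h_{\mathrm{ri}}\in\dsv{2-r}{\om,\wdg}[\infty]$.

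There is no real obstacle here: the whole content is to choose, according to the sign of $\mu$, the vertical direction along which $E$ decays, and then quote the corresponding one of the two arguments already in place. The only point worth noting is that---just as in Lemma~\ref{lem-expgr}, and in contrast with the cuspidal Lemma~\ref{lem-peq-cusp}---one cannot in general assert an asymptotic expansion of $\Prj{2-r}h$ at~$\infty$, which is why the conclusion of the lemma is merely membership in $\dsv{2-r}{\om,\wdg}[\infty]$.
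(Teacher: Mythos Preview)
Your proof is correct for the statement as literally written, but it takes a different route from the paper---and the difference is tied to what looks like a typo in the lemma statement.

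You read $E(z)=e^{-2\pi zn}$ literally; with $n=i\mu$ this gives $|E(x+iy)|=e^{2\pi\mu y}$, so $E$ has exponential decay in one \emph{vertical} direction, and you correctly reduce to the vertical-path constructions of Lemmas~\ref{lem-peq-cusp} (upward, when $\mu<0$) and~\ref{lem-expgr} (downward, when $\mu>0$). That argument is sound.

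The paper, however, integrates along \emph{horizontal} rays: up a bit from $z_0$, then off to the left when $\im n<0$ and to the right when $\im n>0$, then proceeds ``as in the proof of Lemma~\ref{lem-expgr}.'' This only makes sense if $E$ decays horizontally, which it does for the Fourier term $E(z)=e^{2\pi i n z}$ (the object actually under discussion in the sentence preceding the lemma): with $n=i\mu$ one gets $|E(x+iy)|=e^{-2\pi\mu x}$, constant in $y$ but exponentially decaying as $x\to-\infty$ or $x\to+\infty$ according to the sign of $\mu=\im n$. So the intended statement is almost certainly $E(z)=e^{2\pi i n z}$, and for that function your vertical-path reduction would \emph{not} work (no vertical decay), while the paper's horizontal path is exactly what is needed. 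Conversely, for the statement as printed, your argument works and the paper's horizontal paths would not give a convergent integral for general $r$. Either way the underlying idea is the same: pick the one direction in which the exponential decays and run the contour that way.
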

\begin{proof}We can still find a direction in which $E(z)$ decays
exponentially.
\twocolwithpictr{In the case $\im n<0$ we choose a path as indicated
in Figure~\ref{fig-C-}. We can proceed as in the proof of
Lemma~\ref{lem-expgr}. } {\setlength\unitlength{.9cm} \label{fig-C-}
\begin{picture}(5.5,2.5)
\put(0,0){\line(1,0){5}}
\put(.6,.04){$\RR$}
\put(3,1){\circle*{.1}}
\put(3.1,.8){$z_0$}
\thicklines
\put(3,1){\line(0,1){.5}}
\put(2.5,1.5){\oval(1,1)[rt]}
\put(2.5,2){\vector(-1,0){2}}
\end{picture}
}
\twocolwithpictl{\setlength\unitlength{.9cm} \label{fig-C+}
\begin{picture}(5.5,2.3)
\put(0,0){\line(1,0){5}}
\put(.6,.04){$\RR$}
\put(3,1){\circle*{.1}}
\put(3.1,.8){$z_0$}
\thicklines
\put(3,1){\line(0,1){.5}}
\put(3.5,1.5){\oval(1,1)[lt]}
\put(3.5,2){\vector(1,0){1}}
\end{picture}
}{For $\im n>0$ we use the path in Figure~\ref{fig-C+}. Again,we can
proceed as in the proof of Lemma~\ref{lem-expgr}.\qedhere}
\end{proof}

\begin{lem}\label{lem-peq-cst}Let $r\in \CC$, $\ld=1$ and $E(z) = 1$.
\begin{enumerate}
\item[i)] Equation \eqref{peq} has the following solution in
$\dsv{2-r}{\om,\wdg}[\infty]$:
\be\label{h0-expl} h(t) \= \begin{cases}
(1-r)^{-1}\,(z_0-t)^{r-1}&\text{ if }r\neq 1\,,\\
-\log(z_0-t)&\text{ if }r=1\,,
\end{cases}
\ee
where we choose in both cases $-\frac\pi2< \arg(z_0-\nobreak
t)<\frac{3\pi}2$.
\item[ii)] For $r\neq 1$ this is the unique solution in
$\dsv{2-r}{\om,\wdg}[\infty]$ for which $\Prj{2-r}h$ has an
asymptotic expansion valid for $t\uparrow\infty$ and for
$t\downarrow-\infty$.
\item[iii)] If $r=1$ there are no solutions in
$\dsv{2-r}{\om,\wdg}[\infty]$ that have a two-sided asymptotic
expansion at~$\infty$ in the projective model.
\end{enumerate}
\end{lem}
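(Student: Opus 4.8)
The plan is to dispatch the three parts in turn: part~i) by an explicit computation together with a branch-cut analysis, and parts~ii) and~iii) by reducing the ambiguity of a solution to the behaviour of $1$-periodic functions already settled in Lemma~\ref{lem-ldpr}.

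For part~i), I would first check that the function $h$ in \eqref{h0-expl} solves \eqref{peq} on $\lhp$. With $E\equiv1$ the right-hand side of \eqref{peq} is $\int_{z_0-1}^{z_0}(z-t)^{r-2}\,dz$; for $z$ on the segment $[z_0-1,z_0]$ and $t\in\lhp$ one has $\arg(z-t)\in(0,\pi)$, so the branch prescribed in the lemma is simply the continuous one along the path and the fundamental theorem of calculus yields $\tfrac1{r-1}\bigl((z_0-t)^{r-1}-(z_0-1-t)^{r-1}\bigr)$ (with $\log$ in place of the power when $r=1$), which is exactly $h(t+1)-h(t)$ for $h$ as in \eqref{h0-expl}. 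Next, $(z_0-t)^{r-1}$ with the prescribed branch is holomorphic on $\proj\CC$ minus the ray running from $z_0$ straight up to $\infty$, and the factor $(i-t)^{2-r}$ built into $\Prj{2-r}$ is holomorphic off the ray from $i$ straight up to $\infty$; both rays lie in $\uhp$. Choosing the ambient neighbourhood thin enough near the finite reals to avoid $z_0$ and $i$, both rays meet it only inside a box around $\infty$, so $\Prj{2-r}h$ is holomorphic on an $\{\infty\}$-excised neighbourhood in the sense of Definition~\ref{dec-en}, i.e. $h\in\dsv{2-r}{\om,\wdg}[\infty]$ (note $\Prj{2-r}h$ has a pole at $\infty$, and for $r=1$ even a logarithmic singularity there, but $\infty$ is excised). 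Finally the identity \eqref{peq} propagates from $\lhp$ to this neighbourhood by the identity theorem.

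For part~ii), I would first record that the explicit $h$ has the required property: $\Prj{2-r}h=\frac1{1-r}\,\frac{(i-t)^2}{z_0-t}\bigl(\frac{z_0-t}{i-t}\bigr)^{r}$, and as $|t|\to\infty$ the quotient $\frac{z_0-t}{i-t}$ tends to $1$ with a convergent power series in $1/t$ that is the same for $t\uparrow\infty$ and $t\downarrow-\infty$, so $\Prj{2-r}h$ has a two-sided asymptotic expansion with common coefficients. If $h'\in\dsv{2-r}{\om,\wdg}[\infty]$ is another solution with this property, then $p:=h'-h$ is $1$-periodic (because $\ld=1$) and lies in $\dsv{2-r}{\om,\wdg}[\infty]$; since the wedge condition there only relaxes holomorphy at $\infty$, $p$ is genuinely holomorphic across all of $\RR$, hence $p\in\dsv{2-r}\om[\infty]$, and $\Prj{2-r}p$ still has a two-sided asymptotic expansion. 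Lemma~\ref{lem-ldpr}\,i) then forces $p=0$ when $r\notin\ZZ$; when $r\in\ZZ$ it gives only that $p$ is constant, so the uniqueness should be read up to an additive constant in those cases. Part~iii) uses the same setup: for $r=1$ every solution is $h+p$ with $p$ a $1$-periodic element of $\dsv{1}{\om,\wdg}[\infty]$, hence holomorphic on a strip about $\RR$ and in particular bounded on $\RR$; but $\Prj{1}h(t)=(i-t)\bigl(-\log(z_0-t)\bigr)$ grows like $|t|\,\log|t|$ on $\RR$, faster than any power of $t$, and the bounded correction $(i-t)\,p(t)=O(|t|)$ cannot cancel it, so $\Prj{1}(h+p)$ admits no expansion of the form $\sum_{n\ge k}b_n\,t^{-n}$, in either direction.

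The genuinely delicate point is the argument bookkeeping in part~i): one must verify that the single branch $-\tfrac\pi2<\arg(z_0-t)<\tfrac{3\pi}2$ defining $h$ is simultaneously compatible with the branch of $(z-t)^{r-2}$ in \eqref{peq}, with the shift $t\mapsto t+1$, and with the branch of $(i-t)^{2-r}$ in $\Prj{2-r}$, so that all the identities hold without stray factors $e^{2\pi i k r}$ and the branch cut sits exactly along the ray to $\infty$. The logarithmic case $r=1$ must be carried separately throughout, since it is the only case where the projective model is unbounded at $\infty$; granting Lemma~\ref{lem-ldpr}, parts~ii) and~iii) are otherwise routine.
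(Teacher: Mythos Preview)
Your proof is correct and follows essentially the same approach as the paper: direct computation of the integral for part~i), then reduction to Lemma~\ref{lem-ldpr} via the $1$-periodic difference for parts~ii) and~iii). Your treatment is in fact more careful than the paper's in two places: you spell out the branch-cut geometry showing the singularities of $\Prj{2-r}h$ sit in the excised wedge at~$\infty$, and you correctly flag that for $r\in\ZZ$ Lemma~\ref{lem-ldpr}\,i) only gives $p$ constant, so uniqueness in~ii) holds literally only for $r\notin\ZZ$ (the paper's brief proof glosses over this, though the applications exclude integral weights anyway). One small phrasing slip: $|t|\log|t|$ is not ``faster than any power of~$t$'' --- but your actual argument, that a bounded periodic $p$ gives $\Prj_1 p=O(|t|)$ which cannot absorb the $|t|\log|t|$ term, is correct.
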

\begin{proof}Part~i) can be checked by a computation of the integral
in~\eqref{peq}. For $r\neq 1$ it is seen that $(\Prj{2-r}h)(t)$ has a
two-sided asymptotic expansion of the form $(\Prj{2-r}h)(t)\sim
\sum_{n\geq
-1} c_n t^{-n}$. For $r=1$ this solution clearly has no such
expansion.

Any other solution is of the form $h+p$ with a $1$-periodic function
$p$. If it has a two-sided asymptotic expansion it is zero, by
Part~i) of Lemma~\ref{lem-ldpr}. This gives Part~ii) of the present
lemma. For Part~iii) one can check that no $1$-periodic function can
produce logarithmic behavior at~$\infty$.
\end{proof}

\rmrk{Example} Only for a constant function $E$ we have given an
explicit formula for a solution $h$. It is possible to express
solutions for the other cases in terms of sums of incomplete
gamma-functions.

\il{Deta-pe}{Dedekind eta-function}\il{etapow1}{powers of the Dedekind
eta-function}For the powers of the Dedekind eta-function we get for
$\re r>0$ a solution of the form
\badl{etahr} h(t)&\= -i e^{\pi i r/2}\,(2\pi)^{1-r}\sum_{k\geq 0}
p_k(r)\,\bigl(r/12+k)^{1-r}\\
 &\qquad\hbox{} \cdot
 e^{2\pi i (12 k+r)t/12}\, \Gf\Bigl( r-1,2\pi i
 (r/12+k)(t-z_0)\Bigr)\,, \eadl
with the \il{icgf}{incomplete gamma-function}incomplete gamma-function
\be \Gf(a,u) \= \int_u^\infty v^{a-1}\, e^{-v}\, dv \= e^{-u}
\int_{x=0}^\infty (u+x)^{a-1}\, e^{-x}\,dx\,. \ee
The incomplete gamma-function is well defined on $\CC\setminus
(-\infty,0]$. That suffices for~\eqref{etahr} if $\re r>0$ and $t\in
\lhp$.

If $\re r\leq 0$, the same formulas can be used for the terms in the
Fourier expansion with $\re \frac r{12}+k>0$. For the remaining terms
with $k+\frac r{12}\neq 0$ the choices in this subsection lead also
to the same expression with incomplete gamma-functions, but now
interpreted with a choice of a suitable branch of the multivalued
extension. For $k+\frac r{12}=0$ we can use the formula for $r\neq 1$
in Lemma~\ref{lem-peq-cst}.

\subsection{Image of automorphic forms in the analytic cohomology}
Now we can take a step towards the proof of Theorem~\ref{THMac}:
\begin{thm}\label{thm-imafwdg}
For all $r\in \CC$ and all multiplier systems for the weight~$r$:
\be \coh r \om\, A_r(\Gm,v) \;\subset\;
\hpar^1(\Gm;\dsv{v,2-r}\om,\dsv{v,2-r}{\fsn,\wdg})\,. \ee
\end{thm}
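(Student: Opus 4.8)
The plan is to upgrade the cocycle $\ps^{z_0}_F$ of \eqref{psiz0def}, which by Proposition~\ref{prop-cohrom} already represents a class in $H^1(\Gm;\dsv{v,2-r}\om)$, to a \emph{mixed parabolic} cocycle; that is, I must show $\ps^{z_0}_F\in\zpar^1(\Gm;\dsv{v,2-r}\om,\dsv{v,2-r}{\fsn,\wdg})$. By the remark following \eqref{zparb} it is enough to check, for each cusp $\ca$ of $\Gm$ (one per $\Gm$-orbit), that $\ps^{z_0}_{F,\pi_\ca}\in\dsv{v,2-r}{\fsn,\wdg}|_{v,2-r}(\pi_\ca-1)$, i.e. to produce $h_\ca\in\dsv{v,2-r}{\fsn,\wdg}$ with $h_\ca|_{v,2-r}(\pi_\ca-1)=\ps^{z_0}_{F,\pi_\ca}$.

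The first step is to conjugate this to a statement at the cusp $\infty$. Choose $\s_\ca\in G_0$ (see \eqref{G0}) with $\pi_\ca=\s_\ca T\s_\ca^{-1}$ as in \S\ref{sect-af}, put $w_0:=\s_\ca^{-1}z_0\in\uhp$ and $E:=F|_r\s_\ca$. Then $E$ is holomorphic on $\uhp$, and the Fourier expansion \eqref{FexpF-xi} of $F$ at $\ca$ shows $E$ is $\ld$-periodic with $\ld=v(\pi_\ca)=e^{2\pi i\al_\ca}$. Substituting $z=\s_\ca w$ in \eqref{psiz0def} and using Part~i) of Lemma~\ref{lem-omprop} on the $1$-form $\om_r(F;\cdot,z)$, together with $\pi_\ca^{-1}z_0=\s_\ca(w_0-1)$, gives
\[ \ps^{z_0}_{F,\pi_\ca}\=G\,|_{2-r}\s_\ca^{-1}\,,\qquad G(t):=\int_{w_0-1}^{w_0}(w-t)^{r-2}\,E(w)\,dw\,, \]
and $G$ is exactly the right-hand side of the parabolic difference equation \eqref{peq} for $E$. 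Since $T=\matc1101$ has the shape $\matc1x01$ and $\s_\ca\in G_0$, the identities preceding and including \eqref{ncj} apply; hence for any solution $h$ of \eqref{peq} the function $h_\ca:=h|_{2-r}\s_\ca^{-1}$ satisfies $h_\ca|_{2-r}\pi_\ca=(h|_{2-r}T)|_{2-r}\s_\ca^{-1}$, so that
\[ h_\ca|_{v,2-r}(\pi_\ca-1)\=\bigl(\ld^{-1}\,h|_{2-r}T-h\bigr)|_{2-r}\s_\ca^{-1}\=G\,|_{2-r}\s_\ca^{-1}\=\ps^{z_0}_{F,\pi_\ca}\,. \]

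It remains to solve \eqref{peq} with $h$ in a small enough space. Split the Fourier expansion of $E$ into the part with $\re n>0$, the part with $\re n<0$, and the at most one remaining term (with $\re n=0$). Lemma~\ref{lem-peq-cusp} handles the first part (giving a solution in $\dsv{2-r}{\om,\infty,\wdg}[\infty]$, or in $\dsv{2-r}\pol$ if $r\in\ZZ_{\geq2}$), Lemma~\ref{lem-expgr} the second (a solution in $\dsv{2-r}{\om,\wdg}[\infty]$), and Lemma~\ref{lem-pim} (respectively Lemma~\ref{lem-peq-cst}, if that term is a constant) the third; since $\dsv{2-r}\pol\subset\dsv{2-r}{\om,\wdg}[\infty]$, adding these solutions yields $h\in\dsv{2-r}{\om,\wdg}[\infty]$ solving \eqref{peq} for $E$. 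By Parts~i) and~ii) of Proposition~\ref{prop-sasi}, $h_\ca=h|_{2-r}\s_\ca^{-1}\in\dsv{2-r}{\om,\wdg}[\s_\ca\infty]=\dsv{2-r}{\om,\wdg}[\ca]\subset\dsv{2-r}{\fsn,\wdg}$, hence $h_\ca\in\dsv{v,2-r}{\fsn,\wdg}$, and the displayed identity finishes the verification for the cusp~$\ca$. Doing this for all cusps shows $\ps^{z_0}_F\in\zpar^1(\Gm;\dsv{v,2-r}\om,\dsv{v,2-r}{\fsn,\wdg})$, hence $\coh r\om\,A_r(\Gm,v)\subset\hpar^1(\Gm;\dsv{v,2-r}\om,\dsv{v,2-r}{\fsn,\wdg})$. (A posteriori, Lemma~\ref{lem-sing-inv} confirms that a solution $h_\ca$ of the parabolic equation can have a boundary singularity only at $\ca$, so one indeed lands in $\dsv{v,2-r}{\fsn,\wdg}$, and not merely in $\dsv{v,2-r}{\fs,\wdg}$.)

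The genuinely analytic content --- the existence of solutions of \eqref{peq} with controlled behaviour towards $\infty$ --- is already supplied by Lemmas \ref{lem-peq-cusp}--\ref{lem-peq-cst}, so the main obstacle here is bookkeeping: because the operators $|_rg$ do not form a representation of $\SL_2(\RR)$, the identity $\ps^{z_0}_{F,\pi_\ca}=G|_{2-r}\s_\ca^{-1}$ and the passage from a solution of \eqref{peq} to a solution of the parabolic equation must be justified with care for the branch of $(z-t)^{r-2}$ and the argument conventions \eqref{ac} of the automorphy factors. As in the proof of Proposition~\ref{prop-opprj}, I expect each such identity to reduce to a direct check near the identity of $\SL_2(\RR)$, where all branches are unambiguous, followed by analytic continuation over $G_0$ --- which is precisely why one insists on choosing $\s_\ca\in G_0$.
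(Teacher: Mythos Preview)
Your proposal is correct and follows essentially the same approach as the paper: reduce to the parabolic difference equation \eqref{peq} at $\infty$ via conjugation by $\s_\ca$, split the Fourier expansion of $E=F|_r\s_\ca$, and invoke Lemmas~\ref{lem-peq-cusp}--\ref{lem-peq-cst} to solve each piece in $\dsv{2-r}{\om,\wdg}[\infty]$. The only cosmetic difference is that the paper lands the solution in $\dsv{v,2-r}{\fsn,\wdg}$ by appealing to Lemma~\ref{lem-sing-inv} (any solution in $\dsv{v,2-r}{\fs,\wdg}$ automatically has its singularity at~$\ca$), whereas you transport the singularity directly via Proposition~\ref{prop-sasi}; your explicit bookkeeping of the identities \eqref{ncj} and the argument conventions is more detailed than the paper's terse ``by conjugation''.
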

\begin{proof}As explained in \S\ref{sect-pe} we have to solve, for
each cusp $\ca$ of $\Gm$, Equation \eqref{parb-eq} with an element of
$\dsv{v,2-r}{\fsn,\wdg}$. Lemma~\ref{lem-sing-inv} shows that such
solutions always satisfy $\bsing h \subset\{\ca\}$, hence if we have
a solution in $\dsv{v,2-r}{\fs,\wdg}$ it is in
$\dsv{v,2-r}{\fsn,\wdg}$.

By conjugation, the task is equivalent to solving \eqref{peq} with $E$
replaced by $F|_r \sigma_{\ca}$. The Fourier series of $E$, which is
the Fourier expansion of $F$ at the cusp~$\ca$, is split up as a sum
of two or three terms. The existence of solutions is obtained in the
 Lemmas \ref{lem-peq-cusp}--\ref{lem-peq-cst}.
\end{proof}

\subsection{Proof of Theorem~\ref{THMcc}}\label{sect-prfTHMcc}
Assuming that Theorem~\ref{THMac} has been proved, we now prove
Theorem~\ref{THMcc}. We use the results concerning asymptotic
expansions in \S\ref{sect-constr-peq}. There we have seen that we
need $|\ld|=1$ to get satisfactory results. Hence we impose the
assumptions of real weight and unitary multiplier system, which are
the same assumptions as in the Theorem of Knopp and Mawi. See
Theorem~\ref{thm-KM}.

\begin{proof}[Proof of Theorem~\ref{THMcc} on the basis of
Theorem~\ref{THMac}] In Proposition~\ref{prop-cu-inf} we saw that the
space $\coh r \om \, \cusp r(\Gm,v)$ is contained in
$\hpar^1(\Gm;\dsv{v,2-r}\om,\dsv{v,2-r}{\fsn,\infty,\wdg})$ for all
$r\in \CC$. Part~i) of the Lemmas \ref{lem-peq-cusp}
and~\ref{lem-peq-cst} show that $\coh r \om \, M_r(\Gm,v)$ is
contained in
$\hpar^1(\Gm;\dsv{v,2-r}\om,\dsv{v,2-r}{\fsn,\smp,\wdg})$ for $r\in
\CC\setminus\{1\}$, since for $h$ as in~\eqref{h0-expl} the function
$\Prj{2-r}h(t)$ has an asymptotic expansion at~$\infty$ starting at
$t^1$
(``$k=-1$'').

Let $r \in \RR \smallsetminus \ZZ_{\geq 2}$. Under Theorem~\ref{THMac}
any class in $\hpar^1(\Gm;\dsv{v,2-r}\om,\dsv{v,2-r}{\fsn,\wdg})$ is
of the form $\coh r \om F$ for some $F\in A_r(\Gm,v)$. We want to
show that $F$ satisfies the following:
\[\renewcommand\arraystretch{1.4} \begin{array}{|c|c|c|}\hline
&\text{Part~i)}&\text{Part~ii)}\\ \hline
&r\not\in \ZZ_{\geq 2}&r\not\in \ZZ_{\geq 1}\\
\text{wish:}&F\in \cusp r(\Gm,v)&F\in M_r(\Gm,v) \\ \hline
\end{array}
\]

We consider a cusp~$\ca$ of~$\Gm$, and put $E=F|_r
  \s_\ca$.  The assumptions on $F$ imply that there is $h_\ca$ in
  $\dsv{2-r}{\om,\infty,\exc}[\infty]$, respectively
  $\dsv{2-r}{\om,\smp,\exc}[\infty]$ such that
  $$ h_\ca|_{2-r}\bigl(1-v(\pi_\ca)^{-1}\, T\bigr)\,(t)
   = \int_{z_0-1}^{z_0}(z-t)^{r-2} \, E(z)\, dz\,. $$
  we write  $E= E_c +E_0 + E_e$ by taking the Fourier terms with $
  n>0$, $ n=0$ and $ n<0$, respectively. Note that
  $|v(\pi_\ca)|=1$, hence the Fourier term orders are real.

We take $h_c$ provided by Lemma~\ref{lem-peq-cusp}, $h_0$ by
Lemma~\ref{lem-peq-cst}, and $h_e$ by Lemma~\ref{lem-expgr}. Then
$h_\ca|_{2-r}\s_\ca = h_c+h_0+h_e+p$, with a $v(\pi_{\ca})$-periodic
element $p$ in $\dsv{2-r}{\om,\wdg}[\infty]$. Table~\ref{tab-ae}
gives information on the asymptotic behavior, where we use the
 definitions of $\dsv{2-r}{\om,\infty,\wdg}$ and
$\dsv{2-r}{\om,\wdg}$, and Lemmas \ref{lem-peq-cusp},
\ref{lem-expgr}, and~\ref{lem-peq-cst}.
(The $c_n$ in the table depend on the function.)
\begin{table}[ht]
\[ \renewcommand\arraystretch{1.4} \begin{array}{|c|c|c|}\hline
&\text{Part~i)}&\text{Part~ii)}\\ \hline
&r\not\in \ZZ_{\geq 2}&r\not\in \ZZ_{\geq 1}\\
&h_\ca|\s_\ca \in \dsv{2-r}{\om,\infty,\wdg}[\infty]& h_\ca|\s_\ca \in
\dsv{2-r}{\om,\smp,\wdg}[\infty]\\
\hline
\text{$2$-sided:}& \bigl(\Prj{2-r}h_\ca|\s_\ca\bigr)(t) \sim
\sum_{n\geq 0}c_n \, t^{-n}& \bigl(\Prj{2-r}h_\ca|\s_\ca\bigr)(t)
\sim \sum_{n\geq -1}c_n \, t^{-n}\\ \hline
\text{$2$-sided:}&\multicolumn{2}{|c|}{
(\Prj{2-r}h_c)(t)\sim \sum_{n\geq 0}c_n \, t^{-n}} \\ \hline
\text{as }t\uparrow\infty\,:&\multicolumn{2}{|c|}{
(\Prj{2-r}h_e)(t) \sim \sum_{n\geq 0}c_n \, t^{-n} } \\ \hline
v(\pi_\ca)\neq 1 & \multicolumn{2}{|c|}{h_0=0} \\ \hline
v(\pi_\ca)=1& \multicolumn{2}{|c|}{(\Prj{2-r}h_0)(t) \sim \sum_{n\geq
-1} c_n t^{-n}\quad\text{ if }r\neq 1} \\
&\text{ no asymptotic expansion if $r=1$}&\\ \hline
\end{array}
\]
\caption{}\label{tab-ae}
\end{table}

Let $r \in \RR \smallsetminus \ZZ_{\geq 1}$. Note that the
$v(\pi_\ca)$-periodic function $p$ has an asymptotic expansion as
$t\uparrow
\infty$, and is $\oh(t)$.  Part~ii) of Lemma~\ref{lem-ldpr}
implies that either $p=0$ or $p$ is a non-zero constant and $r\in
\ZZ_{\geq 1}$.
Since the latter case is impossible, we deduce that $p=0$.

We  conclude that $h_e=h_\ca|_{2-r}\s_\ca-h_c-h_0$ has a two-sided
expansion. Part~ii) of Lemma~\ref{lem-expgr} shows that $h_e=0$, and
hence $E_e=0$. So $F|_{2-r}\s_\ca=E_c$, and hence $F$ behaves like an
element of $\cusp r(\Gm,v)$ at the cusp~$\ca$. Since $\ca$ was chosen
arbitrarily, this finishes the proof of both parts under the
assumption $r\neq 1$.

For Part~i) we have still to consider $r=1$. If we work modulo
functions with an asymptotic expansion in powers of $t^{-1}$ as
$t\uparrow \infty$, the $v(\pi_\ca)$-periodic function $p$ has to
compensate for the possible logarithmic behavior of $h_0$ given in
Lemma~\ref{lem-peq-cst}. The logarithmic term is growing as
$t\uparrow\infty$ and the periodic function is bounded (since
$|v(\pi_\ca)|=1$), so this is impossible, and $h_0=0$.

Now we proceed as above, with asymptotic expansions starting at~$t^0$.
For $h_e$ this rules out the constant function, and we arrive again
at $h_e$=0, and hence $F|_{2-r}\s_\ca = E_c$.
\end{proof}

\begin{rmk}Since we have used the unitarity of the multiplier
system~$v$ only for $|v(\pi_\ca)|=1$, Theorem~\ref{THMcc} is still
true under the assumption that $|v(\pi)|=1$ for all \emph{parabolic}
$\pi\in \Gm$, without such an assumption concerning hyperbolic
elements.
\end{rmk}

\subsection{Related work}\label{sect-lit3}Pribitkin \cite[Theorem
1]{Pr5} uses integrals along paths like those
in~\S\ref{sect-constr-peq}.

Proposition~\ref{prop-parb*0} is analogous
to~\cite[Proposition~10.3]{BLZm}. Here we use the explicit integrals
in \S\ref{sect-constr-peq}, since we want to handle complex weights.

%%%%%%%%%%%%%%%%%%%%%%%%%%%%%%%%%%%%%%%%%%%%%%%%%%%%%%%%%%%%%%%%%%%%%%%%%%%

\section{One-sided averages}\label{sect-osa}
In \S\ref{sect-pe} we considered the parabolic equation with an
Eichler integral as the given function. We now take the right hand
side to be more general, and use the \il{osa}{one-sided
average}\emph{one-sided averages} given by\ir{avpm}{\av {T,\ld}^+,\;
\av{T,\ld}^-}
\badl{avpm} \bigl( \av{T,\ld} ^+ g\bigr)(t) &\;:=\; \sum_{n\geq 0}
\ld^{-n}\, g(t+n)\,,\\
\bigl( \av{T,\ld} ^- g\bigr)(t) &\;:=\; -\sum_{n\leq -1} \ld^{-n}\,
g(t+n)\, ,\eadl
where $\ld \in \CC^\ast$, and where the subscript $T$ refers to
$T:t\mapsto t+1$.

\subsection{One-sided averages with absolute
convergence}\label{sect-osa-ac}
If one of the series in \eqref{avpm} converges absolutely then
$h=\av{T,\ld}^\pm g$ provides a solution of the equation
\be\label{hfe}
h(t) - \ld^{-1}\, h(t+1) \= g(t)\,.
\ee

\begin{prop}\label{prop-osaa}Let $\ld\in \CC^\ast$, and suppose that
$g$ represents an element of $\dsv{2-r}\om$.
\begin{enumerate}
\item[a)] the one-sided average $\av{T,\ld}^\pm g$ converges
absolutely if one of the following the conditions is satisfied:
\be\label{avc1} \begin{array}{|r|c|c|c|}\hline
\ld & |\ld|>1 & |\ld|=1\text{ and }\re r<1 & |\ld|<1 \\ \hline
\av{T,\ld} ^+ \ph & \text{convergent} & \text{convergent} &
\text{undecided}
\\ \av{T,\ld} ^- \ph & \text{undecided}
&\text{convergent}&\text{convergent}
 \\
 \hline
\end{array}
\ee
\item[b)] The average defines a holomorphic function $\av{T,\ld}^\pm
g$ on a region\ir{Depspm}{D_\e^\pm}
\be\label{Depspm} D_\e^\pm \;:=\; \Bigl\{ z\in \CC\;:\; y<\e\text{ or
}\pm x >\e^{-1} \Bigr\}\,,\ee
for some $\e\in(0,1)$.
\item[c)]$\av{T,\ld}^\pm g$  satisfy  \eqref{hfe}.
\item[d)] $\av{T,\ld}^\pm g$  represent  an element of $
\dsv{2-r}\om[\infty]$.
\end{enumerate}
\end{prop}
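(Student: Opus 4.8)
The plan is to exploit the one property of $\dsv{2-r}\om$ not yet used: for $g\in\dsv{2-r}\om$ the projective model $\Prj{2-r}g$ is holomorphic on some open neighbourhood $U$ of $\lhp\cup\proj\RR$ in $\proj\CC$. From this I would extract two ingredients. Geometric ingredient: since $\infty\in U$, the set $U$ contains $\{z:|z|>R\}\cup\{\infty\}$, and since $[-R,R]\subset\RR\subset U$ is compact, $U$ contains a strip over it; the two together force $U\supset\lhp\cup\{z:0<\im z<\dt\}\cup\{z:|z|>R\}$ for suitable $\dt>0$ and $R>0$. Analytic ingredient: $\Prj{2-r}g$, being holomorphic at $\infty$, is bounded near $\infty$, and $|(i-t)^{2-r}|\asymp|t|^{2-\re r}$ there (the argument of $i-t$ being bounded by the branch convention in~\eqref{Prj}), so $g(t)=\oh(|t|^{\re r-2})$ as $|t|\to\infty$ inside $U$.

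For parts~(a) and~(b): fix $\e\in(0,1)$ with $\e\le\dt$ and $\e^{-1}\ge R$ and take $D_\e^\pm$ as in~\eqref{Depspm}. For $z\in D_\e^+$ and every $n\ge0$ the point $z+n$ lies in $U$ --- if $\im z<\e\le\dt$ then $z+n$ stays in $\lhp\cup\{0<\im w<\dt\}$, while if $\re z>\e^{-1}\ge R$ then $|z+n|>R$ --- so each term $z\mapsto\ld^{-n}g(z+n)$ of $\av{T,\ld}^+g$ is holomorphic on $D_\e^+$. On a compact $K\subset D_\e^+$ the growth bound gives $|\ld^{-n}g(z+n)|\ll_K|\ld|^{-n}(1+n)^{\re r-2}$, and $\sum_{n\ge0}|\ld|^{-n}(1+n)^{\re r-2}$ converges exactly when $|\ld|>1$, or when $|\ld|=1$ and $\re r<1$; these are the cases in which $\av{T,\ld}^+$ is declared convergent in~\eqref{avc1}, and there the Weierstrass $M$-test makes $\av{T,\ld}^+g$ holomorphic on $D_\e^+$. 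The treatment of $\av{T,\ld}^-g$ is identical after writing it as $-\sum_{m\ge1}\ld^mg(t-m)$ and using $D_\e^-$, whose backward $T$-orbits stay in $U$.

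For part~(c), telescoping gives $\bigl(\av{T,\ld}^+g\bigr)(t)-\ld^{-1}\bigl(\av{T,\ld}^+g\bigr)(t+1)=\sum_{n\ge0}\ld^{-n}g(t+n)-\sum_{n\ge1}\ld^{-n}g(t+n)=g(t)$, the regrouping being legitimate by absolute convergence, valid on the strip $\{\im t<\e\}$ where both averages are given by their series; this strip is a neighbourhood of $\lhp\cup\RR$, and symmetrically for the $-$-average, which settles~\eqref{hfe}. For part~(d) I note that $D_\e^\pm$ already contains $\lhp\cup\RR$ and is disjoint from the branch cut $t\in i[1,\infty)\subset\uhp$ of the factor $(i-t)^{2-r}$ appearing in $\Prj{2-r}$; hence $\Prj{2-r}\bigl(\av{T,\ld}^\pm g\bigr)$ is holomorphic on $D_\e^\pm$, which is a neighbourhood of $\lhp\cup(\proj\RR\setminus\{\infty\})$, so by Definition~\ref{def-sav} the function $\av{T,\ld}^\pm g$ lies in $\dsv{2-r}\om[\infty]$.

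Apart from the routine estimates, the step needing genuine care --- and the real crux --- is the uniformity of the geometric ingredient: that one and the same $\e$ works, i.e.\ that the domain $U$ of $g$ contains a full horizontal half-strip $\{0<\im z<\dt\}$ together with a disc-complement $\{|z|>R\}$, so that the forward (resp.\ backward) $T$-orbit of \emph{every} point of $D_\e^+$ (resp.\ $D_\e^-$) stays inside $U$. It is precisely this that turns the pointwise sums of~\eqref{avpm} into honest holomorphic functions on the fixed regions $D_\e^\pm$ rather than only on shrinking neighbourhoods of individual points.
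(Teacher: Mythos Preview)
Your proof is correct and follows essentially the same approach as the paper's: identify the domain of holomorphy via the projective model, derive the decay $g(t)=\oh(|t|^{\re r-2})$ from boundedness of $\Prj{2-r}g$ near $\infty$, and verify that translates $z+n$ for $z\in D_\e^\pm$ stay in the domain so the series converges locally uniformly. One small point of tidiness: when you assert that each $z\mapsto g(z+n)$ is holomorphic on $D_\e^+$ you need $z+n$ to avoid not only the complement of $U$ but also the branch cut $i[1,\infty)$ of $(i-t)^{r-2}$---you check this in part~(d), but it is already needed in parts~(a)--(b); the paper handles this by describing the domain of $g$ itself (rather than of $\Prj{2-r}g$) as $\CC\setminus\bigl([-\e^{-1},\e^{-1}]\times i[\e,\infty)\bigr)$ from the outset.
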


\begin{rmk}\label{2interpr} We can interpret the phrase \emph{$g$
represents an element of $\dsv{2-r}\om$} in two ways, and we will
have reasons to use both interpretations.
\begin{enumerate}
\item[a)] $g$ is a holomorphic function on an $\{\infty\}$-excised
neighbourhood, and $\Prj{2-r}g$ is holomorphic on a neighbourhood of
$\infty$ in $\proj\CC$;
\item[b)] $g \in C^2(\CC)$ has a holomorphic restriction to an
$\{\infty\}$-excised neighbourhood, and $\Prj{2-r}g$ is holomorphic on
a neighbourhood of $\infty$ in $\proj\CC$.
\end{enumerate}
Under the first interpretation, $\av{T,\ld}^\pm g$ is a holomorphic
function on $D_\e^\pm$, where $\e$ depends on the domain of $g$. In
the second interpretation, we have $g\in C^2(\CC)$ such that
$\Prj{2-r}g$ is holomorphic on a neighbourhood of $\lhp\cup\proj\RR$
in $\proj\CC$, and $\av{T,\ld}^\pm g$ is in $C^2(\CC)$ and
holomorphic on $D_\e^\pm$.
\end{rmk}

\begin{proof}[Proof of proposition~\ref{prop-osaa}] Representatives of
the projective model $\Prj{2-r}\dsv{2-r}\om$ are holomorphic on
$\proj\CC\setminus K$ for some compact set $K\subset \uhp$, which is
contained in a set of the form $[-\e^{-1},\e^{-1}]\times
[i\e,i\e^{-1}]$ for some small $\e\in(0,1)$. To get a representative
in the space $\dsv {2-r}\om$ itself, we have to multiply by
$(i-\nobreak t )^{r-2}$. So we work with functions
$g$ that are holomorphic on
\[ \CC \setminus \Bigl([-\e^{-1},\e^{-1}]\times i[\e,\infty)
\Bigr)\,.\]

For $t$ in a compact region $ V $ anywhere in  $ \CC$,
there is a tail of the series with $z+n$ in the region where $g$ is
holomorphic. Moreover, $g(t)= \oh\bigl(|t|^{ \re
r-2}\bigr)$ as $|t|\rightarrow
\infty$. So the tail converges absolutely on $ V$, 
and represents a holomorphic function on the interior of~$ V
$. If $g\in C^2(\uhp)$, then the remaining
terms give a $C^2$ contribution, and $g\in C^2(\CC)$. If we take $V
\subset D_\e^\pm$ the whole series may be taken as
the tail, and we get the holomorphy of the one-sided average on
$D_\e^\pm$.

Note that the form of the set $D_\e^\pm$ implies that $\av{T,\ld}^\pm
g$ represents an element of $\dsv{2-r}\om[\infty]$,
but not necessarily of $\dsv{2-r}{\om,\wdg}[\infty]$.
\end{proof}

\rmrke The relation \eqref{hfe} between $\av{T,\ld}^\pm g$ and $g$
implies this relation for the elements in $\dsv{2-r}\om[\infty]$ and
$\dsv{2-r}\om$ that they represent.

\begin{prop}\label{prop-avT-Fe}Let $r\in \ZZ_{\leq 0}$ and
$\ld=e^{2\pi i \al}$ with $\al\in \RR$. Suppose that $g$ is a
representative of an element of~$\dsv{2-r}\om$ of the type~b) in
Remark~\ref{2interpr}.
\begin{enumerate}
\item[a)]Then\ir{avTdef}{\av{T,\ld}}
\be\label{avTdef}
\av {T,\ld} g \;:=\; \av {T,\ld}^+ g - \av{T,\ld}^- g
\ee
defines a $\ld$-periodic element of $C^2(\CC)$.
\item[b)]There is $\e\in
(0,1)$ such that the function $\av{T,\ld}g$ is holomorphic on two
regions, with Fourier expansions of the following form:
\be \av {T,\ld} g \;(z)\=\begin{cases}\sum_{m\equiv \al(1),\; m>0}
a_m^{\mathrm{up}}\, e^{2\pi i m z}
&\text{ on }\bigl\{ z\in \uhp\;:\; y>\e^{-1}\bigr\}\,,\\
\sum_{m\equiv \al(1),\; m<0}  a_m^{\mathrm{down}}\, e^{2\pi i m z}
&\text{ on }\bigl\{z\in \CC\;:\; y<\e\bigr\}\,.
\end{cases}
\ee
\end{enumerate}
\end{prop}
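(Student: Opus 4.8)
The plan is to construct $\av{T,\ld}g$ out of the two one-sided averages of Proposition~\ref{prop-osaa}. Since $r\in\ZZ_{\leq 0}$ we have $|\ld|=1$ and $\re r<1$, so part~a) of Proposition~\ref{prop-osaa} says that \emph{both} $\av{T,\ld}^+g$ and $\av{T,\ld}^-g$ converge absolutely; under the interpretation~b) of Remark~\ref{2interpr} both lie in $C^2(\CC)$, both are holomorphic on $D_\e^\pm$, and both satisfy the inhomogeneous equation \eqref{hfe}. Subtracting the two instances of \eqref{hfe} shows at once that $\av{T,\ld}g=\av{T,\ld}^+g-\av{T,\ld}^-g$ satisfies $h(t)-\ld^{-1}h(t+1)=0$, i.e. $\av{T,\ld}g(t+1)=\ld\,\av{T,\ld}g(t)$, and it lies in $C^2(\CC)$ as a difference of such functions. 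This is part~a).

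For part~b) I first recall, as in the opening of the proof of Proposition~\ref{prop-osaa}, that a representative $g$ of an element of $\dsv{2-r}\om$ is holomorphic on $\CC\setminus K$ for some compact set $K\subset\uhp$; being bounded, $K$ may be taken inside $\{z:\e_0\leq\im z\leq M_0\}$. Because $2-r\geq 2$ the factor $(i-t)^{2-r}$ is a polynomial, so holomorphy, hence boundedness, of $\Prj{2-r}g$ near $\infty$ gives the decay $g(t)=\oh(|t|^{r-2})$ as $|t|\to\infty$, with $r-2\leq -2$. Now fix $\e\in(0,1)$ with $\e\leq\e_0$ and $\e^{-1}>M_0$. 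On $\{y<\e\}$ both $D_\e^+$ and $D_\e^-$ contain $\{y<\e\}$, so $\av{T,\ld}g$ is holomorphic there. On $\{y>\e^{-1}\}$ every translate $t+n$ with $n\in\ZZ$ has $\im(t+n)=\im t>M_0$, hence lies in $\CC\setminus K$; so each of the half-infinite series $\sum_{n\geq 0}\ld^{-n}g(t+n)$ and $-\sum_{n\leq-1}\ld^{-n}g(t+n)$ is a locally uniformly convergent series of holomorphic functions, the bound $|g(t+n)|=\oh(|t+n|^{r-2})$ making it summable, so that $\av{T,\ld}^\pm g$, and therefore $\av{T,\ld}g=\sum_{n\in\ZZ}\ld^{-n}g(t+n)$, is holomorphic on $\{y>\e^{-1}\}$ too.

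It remains to obtain the Fourier expansions and to discard the unwanted terms. On each of the two regions the $\ld$-periodic holomorphic function $\av{T,\ld}g$ has an expansion $\sum_{m\equiv\al\,(1)}a_m e^{2\pi imz}$, with $a_m=\int_0^1\av{T,\ld}g(x+iy_0)e^{-2\pi im(x+iy_0)}\,dx$ for any admissible level $y_0$. The decisive estimate is that, using $|\ld|=1$, $g(w)=\oh(|w|^{r-2})$ and $(r-2)/2\leq -1$, comparison of $\sum_{n\in\ZZ}|g(t+n)|$ with $\int_{\RR}((x+s)^2+y^2)^{(r-2)/2}\,ds=\oh(|y|^{r-1})$ gives $\av{T,\ld}g(x+iy)=\oh(|y|^{r-1})\to 0$ as $|y|\to\infty$, uniformly in $x$. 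Feeding this into the coefficient integral: on $\{y>\e^{-1}\}$, for $m\leq 0$ we have $e^{2\pi my_0}\leq 1$, hence $|a_m|\leq\oh(y_0^{r-1})\to 0$ as $y_0\to+\infty$, forcing $a_m=0$ and leaving only $m>0$; on $\{y<\e\}$, for $m\geq 0$ we have $e^{2\pi my_0}\to 0$ (or $=1$ when $m=0$) as $y_0\to-\infty$, hence $|a_m|\leq\oh(|y_0|^{r-1})\to 0$, forcing $a_m=0$ and leaving only $m<0$. These are exactly the two expansions claimed.

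The step I expect to be the real obstacle is the holomorphy of $\av{T,\ld}g$ on the upper region $\{y>\e^{-1}\}$: neither one-sided average is holomorphic there on its own, since $D_\e^+$ reaches high only over a right half-plane and $D_\e^-$ only over a left half-plane, so one cannot argue by restriction. The resolution is to observe that the half-infinite defining series are nonetheless locally uniformly convergent series of holomorphic functions once $\im t$ exceeds the \emph{finite} top of the exceptional set $K$, which is why one must use the boundedness of $K\subset\uhp$ together with the $\infty$-decay coming from $\Prj{2-r}g$, rather than only the weaker ``$\{\infty\}$-excised'' holomorphy of $g$. Everything else is routine bookkeeping with the geometric factor $\ld^{-n}$ and the decay bound on $g$.
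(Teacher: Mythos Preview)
Your proof is correct and follows essentially the same approach as the paper: both rely on the key observation that for $r\in\ZZ_{\leq 0}$ the factor $(i-t)^{2-r}$ is a polynomial, so that $g$ itself (not just $\Prj_{2-r}g$) is holomorphic outside a \emph{compact} set $K\subset\uhp$ and decays like $|t|^{r-2}$ at infinity; this is what makes the full two-sided sum holomorphic above a bounded height and gives the $\oh(|y|^{r-1})$ estimate that kills the unwanted Fourier coefficients. One small remark: your appeal to ``the opening of the proof of Proposition~\ref{prop-osaa}'' for the compactness of $K$ is slightly misplaced, since that proof treats general $r$ and only obtains holomorphy of $g$ outside a half-strip $[-\e^{-1},\e^{-1}]\times i[\e,\infty)$; the compactness genuinely uses $r\in\ZZ_{\leq 0}$, as you yourself supply in the next sentence. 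The paper differs cosmetically in that it unfolds the Fourier coefficient of $\av{T,\ld}g$ to the line integral $\int_{\im z=v}e^{-2\pi imz}g(z)\,dz$ before estimating, whereas you bound $|\av{T,\ld}g|$ directly; both routes lead to the same $\oh(|y|^{r-1})$ decay and the same conclusion.
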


\begin{proof}The function $(\Prj{2-r}g)(z)=
(i-\nobreak z)^{2-r}\, g(z)$ represents an element of the projective
model of $\dsv{2-r}\om$. Since $r\in \ZZ_{\leq 0}$ the function $g$
itself is holomorphic on a neighbourhood of $\infty$ in $\proj\CC$,
and has a zero of order at least $2-r$ at~$\infty$. Since $|\ld|=1$,
both series $\av{T,\ld}^+ g$ and $\av{T,\ld}^- g$ converge absolutely
on~$\CC$, by Proposition~\ref{prop-osaa}.

These functions are now holomorphic on a set of the form
\[ \CC\setminus\left( (-\infty,\e^{-1}]\times
i[\e,\e^{-1}]\right)\,,\text{ respectively } \CC\setminus\left(
(-\e^{-1},\infty]\times i[\e,\e^{-1}]\right)\,. \]
So $(\av{T,\ld}g)(z) = \sum_{n\equiv\al(1)} \ld^{-n} g(z+\nobreak
n)$ defines a
$\ld$-periodic function on $\CC$ that is holomorphic on the two
regions in the proposition. On both regions the Fourier coefficients
are given by integral
\[ \int_{\im z=v} e^{-2\pi i m z}\,g(z)\, dz\,,\]
representing the coefficients $a_m^{\mathrm{up}}$ if $v>\e^{-1}$, and
the coefficients $a_m^{\mathrm{down}}$ if $v<\e$. The coefficients
can differ on both regions. The integral is invariant under changes
in $v$ in the corresponding interval. Since $g(z)=\oh\bigl(|z|^{\re
r-2}\bigr)$ as $|z|\rightarrow\infty$ through~$\CC$, the integral
satisfies $a_m = o\bigl( e^{2\pi m v})$ for fixed~$v$. So
$a_m^{\mathrm{up}}=0$ for $m\leq0$, and $a_m^{\mathrm{down}}=0$ for
$m\geq 0$.
\end{proof}

In \S\ref{sect-abg-iw} we will use the following result:
\begin{lem}\label{lem-h-Av+-} Let $r\in \ZZ_{\leq0}$. Suppose that $g$
is a representative of an element of $\dsv {2-r} \om$, of the type~a)
in Remark~\ref{2interpr}. Suppose that  $h$ represents an element of
$\dsv{2-r}{\om,\exc}[\infty]$ such that $h-h|_{2-r}T$ and $g$ represent the same
element of $\dsv {2-r} \om$. 

Then there are $1$-periodic $p_+,p_-\in \hol(\CC)$ such that for all sufficiently
small $\e>0$
\badl{havTp}
h(z) &\= (\av {T,1}^+ g)(z)+p_+(z)&&\text{ if }\im(z)<\e\text{ or }\re (z)>\e^{-1}\,,\\
h(z) &\= (\av {T,1}^- g)(z)+p_+(z)&&\text{ if }\im(z)<\e\text{ or }\re (z)<-\e^{-1}\,.
\eadl
\end{lem}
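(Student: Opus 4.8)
The statement is about uniqueness of the decomposition of a solution $h$ of the parabolic difference equation $h - h|_{2-r}T = g$ into a one-sided average plus a $1$-periodic correction term, in the regime $r\in\ZZ_{\leq 0}$, $\ld=1$. The plan is to reduce to Proposition~\ref{prop-osaa} together with the uniqueness input from Lemma~\ref{lem-ldpr}. First I would note that since $r\in\ZZ_{\leq 0}$ and we are in the absolutely convergent case $|\ld|=1$, $\re r<1$, part~a) of Proposition~\ref{prop-osaa} guarantees that \emph{both} $\av{T,1}^+g$ and $\av{T,1}^-g$ are defined; by part~c) each satisfies the same difference equation as $h$ (note $h-h|_{2-r}T = g$ is exactly \eqref{hfe} with $\ld=1$, since $h|_{2-r}T(t)=h(t+1)$ for the $|_{2-r}$-action of the parabolic $T$). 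By part~b) of that proposition $\av{T,1}^+g$ is holomorphic on $D_\e^+$ and $\av{T,1}^-g$ on $D_\e^-$, and by part~d) both represent elements of $\dsv{2-r}\om[\infty]$; in particular, being holomorphic on $D_\e^\pm\supset\lhp$, they represent elements of $\dsv{2-r}{\om,\exc}[\infty]$ as well (the excised set is $\{\infty\}$ and $D_\e^\pm$ contains an $\{\infty\}$-excised neighbourhood of the appropriate one-sided type).

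The core of the argument is then: set $p_+ := h - \av{T,1}^+g$ on the overlap region where both are holomorphic, and $p_- := h - \av{T,1}^-g$ similarly. Since $h$, $\av{T,1}^+g$ and $\av{T,1}^-g$ all satisfy \eqref{hfe} with $\ld=1$, the differences $p_+$ and $p_-$ satisfy $p_\pm(t) = p_\pm(t+1)$, i.e.\ they are $1$-periodic. It remains to check that $p_\pm$ extend to $1$-periodic holomorphic functions on all of $\CC$: this follows because $h$ represents an element of $\dsv{2-r}{\om,\exc}[\infty]$, hence is holomorphic on an $\{\infty\}$-excised neighbourhood of $\lhp\cup\proj\RR$, while $\av{T,1}^+g$ is holomorphic on $D_\e^+$; on the intersection $p_+$ is holomorphic and $1$-periodic, and $1$-periodicity together with holomorphy on a set of the form $\{\im z<\e\}\cup\{\re z>\e^{-1}\}$ (which, modulo the translation $T$, fills out a full punctured neighbourhood of the cusp $\infty$ on the $\uhp$-side and everything on the $\lhp$-side) lets us propagate holomorphy to all of $\CC$ by applying $T^n$. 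This gives $p_+\in\hol(\CC)$, $1$-periodic, and by construction $h = \av{T,1}^+g + p_+$ on $\{\im z<\e \text{ or } \re z>\e^{-1}\}$; symmetrically for $p_-$ on $\{\im z<\e \text{ or } \re z<-\e^{-1}\}$.

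The main obstacle I anticipate is the bookkeeping around the \emph{regions} rather than any deep difficulty: one must verify that the region on which $h$ is holomorphic (an $\{\infty\}$-excised neighbourhood) genuinely overlaps $D_\e^+$ in a set that is $T$-invariant up to shrinking $\e$, so that the $1$-periodic function $p_+$ really does extend holomorphically across $\CC$. The point is that an $\{\infty\}$-excised neighbourhood contains $\lhp$, a full strip $\{0<\im z<\e'\}$ region near $\RR$ (away from the excised ``chimney'' over $\infty$), and the two half-strips to the far left and far right; intersecting with $D_\e^\pm=\{\im z<\e\text{ or }\pm\re z>\e^{-1}\}$ and then translating by $T^{\ZZ}$ one recovers all of $\CC$ minus possibly a compact set, and the removable-singularities theorem (the periodic function is bounded on the translates of a compact fundamental domain strip, or one simply uses that a $1$-periodic holomorphic function on $\{\im z<\e\}$ together with holomorphy on one far half-strip forces holomorphy everywhere via its Fourier/Laurent expansion in $q=e^{2\pi i z}$) finishes it. A small point to be careful about: the statement as written has ``$p_+$'' appearing in both lines of \eqref{havTp} — this should read $p_-$ in the second line, and the proof naturally produces two a priori different periodic functions, agreeing with $h$ on their respective regions.

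Concretely, the steps in order: (1) invoke Proposition~\ref{prop-osaa}(a) with $\ld=1$, $\re r = r\le 0<1$ to get convergence of both $\av{T,1}^{\pm}g$; (2) invoke parts (b),(c),(d) to record their holomorphy domains $D_\e^\pm$, the difference equation, and membership in $\dsv{2-r}{\om}[\infty]$; (3) define $p_\pm := h - \av{T,1}^{\pm}g$ on the overlap of $D_\e^\pm$ with the domain of $h$, and check $1$-periodicity from \eqref{hfe}; (4) extend $p_\pm$ to $\hol(\CC)$ using $1$-periodicity and the shape of the domains, via expansion in $q=e^{2\pi i z}$; (5) read off \eqref{havTp} on the stated regions. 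No deep new ingredient is needed beyond Proposition~\ref{prop-osaa} and elementary complex analysis of periodic functions.
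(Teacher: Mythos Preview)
Your proposal is correct and follows essentially the same route as the paper: Proposition~\ref{prop-osaa} gives both one-sided averages (we are in the case $|\ld|=1$, $\re r<1$), one sets $p_\pm := h - \av{T,1}^\pm g$ on the overlap, checks $1$-periodicity from the shared difference equation~\eqref{hfe}, and then propagates holomorphy to all of $\CC$ by translating with~$T^n$. The paper records one extra observation you omit---that for $r\in\ZZ_{\le 0}$ the factor $(i-z)^{r-2}$ is rational (no branch cut), so each $g(\cdot+n)$ has \emph{compact} singular set and the averages are holomorphic on a region strictly larger than $D_\e^\pm$---but this sharpening is not actually needed for the extension step, and you are right that the second line of~\eqref{havTp} should read~$p_-$.
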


\begin{proof}
Proposition~\ref{prop-osaa} shows that we are in the domain of
absolute convergence of $\av{T,1}^\pm g$, and that these averages are
holomorphic on a set $D_\e^\pm$ with some $\e\in
(0,1)$. Now the weight is an integer, and the factor $( i -\nobreak
z)^{r-2}$ is non-zero and holomorphic on $\proj\CC
\setminus\{i,\infty\}$. The function $z \mapsto g(z+\nobreak n)$ is
holomorphic outside  the smaller region 
\[ [-\e^{-1}-n,\e^{-1}-n]\times i[\e,\e^{-1}]\,.\]
Hence the averages $\av{T,1}^\pm g$ are holomorphic on
\[ \CC\setminus\left( (-\infty,\e^{-1}]\times
i[\e,\e^{-1}]\right)\,,\text{ respectively } \CC\setminus\left(
[-\e^{-1},\infty)\times i[\e,\e^{-1}]\right)\,. \]

The function $h$ is holomorphic on an $\{\infty\}$-excised neighbourhood. So
after adaptation of $\e>0$ on a region 
\[ \CC \setminus \bigl[-\e^{-1},\e^{-1}\bigr]
\times i \,\bigl[ \e,\infty\bigr)\,.\]

On $0<\im(z)=y<\e$ the  functions $h$, $\av{T,1}^+ g$ and $\av{T,1}^-
g$ satisfy the same relation,
hence there are $1$-periodic $p_+$ and $p_-$ on this region that
satisfy~\eqref{havTp}. The relations between $h$ and the averages
extend by holomorphy to the half-plane $y<\e$ in~$\CC$, and the
$1$-periodic functions $p_+$ and $p_-$ extend holomorphically to
$y<\e$.

The relation $p_+=h-\av{T,1}^+g$ extends $p_+$ to a region
\[ \bigl\{z\in \CC\;:\; \im(z)<\e\bigr\}\,\cup\, \bigl\{z\in \CC\;:\;
\re(x) >\e^{-1}\bigr\}\,.\]
Then by $1$-periodicity $p_+$ has a $1$-periodic holomorphic extension
to~$\CC$. 

The case of $p_-$ and $\av{T,1}^- g$ goes similarly.
\end{proof}

\subsection{Analytic continuation of one-sided averages} To obtain the
one-sided averages with $|\ld|=1$ on representatives of $\dsv
{2-r}\om$ for more values of~$r$, we use that the space of the
projective model $\Prj{2-r}\dsv{2-r}\om$ does not depend on~$r$. The
representatives in the projective model are holomorphic functions $h$
on a neighbourhood $\Om$ of $\lhp\cup\proj\RR$ in $\proj\CC$. For a
fixed $h$ the function $g_r:=\Prj{2-r}^{-1}h$ represents an element
of $\dsv{2-r}\om$ for each $r\in \CC$, ie.
\be\label{reprgr} g_r(t) \= (i-t)^{r-2}\,h(t)\qquad(t\in \Om)\,.
\ee
In this subsection we work with the interpretation~a) in
Remark~\ref{2interpr}.

\begin{lem}\label{lem-osa-Hurw}Let $|\ld|=1$, and let $g_r$ be as
defined in~\eqref{reprgr}. Let $\e\in
(0,1)$ be such that $ \av{T,\ld}^\pm \, g_r$ is holomorphic
on the set $D_\e^\pm$ in~\eqref{Depspm}  for $\re r<1$.
\begin{enumerate}
\item[a)]The function $(r,z)\mapsto \bigl(\av{T,\ld}^\pm g_r\bigr)(z)$
extends as a holomorphic function on $\left( \CC\setminus \ZZ_{\geq
1}\right) \times D_\e^\pm$.
\item[b)] If $h(\infty)=0$ then $(r,z)\mapsto \bigl(\av{T,\ld}^\pm\,
g_r\bigr)(z)$
can be extended holomorphically to the slightly larger region $\left(
\CC\setminus \ZZ_{\geq 2}\right)
\times D_\e^\pm$.
\end{enumerate}
\end{lem}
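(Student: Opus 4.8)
The plan is to prove the $+$ case; the $-$ case is entirely parallel (with the Lerch transcendent $\Ph(\ld,\cdot,\cdot)$ in place of $\Ph(\ld^{-1},\cdot,\cdot)$, the pole locations being unchanged since $|\ld|=1$ forces $\ld=1\Leftrightarrow\ld^{-1}=1$). Fix a compact set $K\subset D_\e^+$. By Proposition~\ref{prop-osaa} (and the discussion around it), for every $z\in D_\e^+$ and $n\geq0$ the point $z+n$ lies in the domain of $h$ and off the fixed branch cut of $(i-\cdot)^{r-2}$, and $D_\e^+$ is stable under $z\mapsto z+n$; hence $(r,z)\mapsto g_r(z+n)$ is holomorphic on $\CC\times D_\e^+$, and so is every finite partial sum $\sum_{n=0}^{N-1}\ld^{-n}g_r(z+n)$. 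Thus everything reduces to the holomorphic continuation in $r$, near $K$, of the tail $T(r,z):=\sum_{n\geq N}\ld^{-n}g_r(z+n)$, for a suitably large $N=N(K)$.

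Since $h$ is holomorphic at $\infty\in\proj\CC$, I would write $h(t)=\sum_{k\geq0}c_kt^{-k}$ for $|t|$ large and re-expand each $t^{-k}$ in powers of $(i-t)^{-1}$; this produces a convergent expansion $g_r(t)=(i-t)^{r-2}h(t)=\sum_{m\geq0}e_m\,(i-t)^{r-2-m}$, in which the coefficients $e_m$ do not depend on $r$ and $e_0=h(\infty)$. For $N=N(K)$ large this expansion is valid at every $t=z+n$ with $z$ near $K$ and $n\geq N$; inserting it and interchanging the two summations (legitimate since for $\re r<1$ everything converges absolutely near $K$) gives $T(r,z)=\sum_{m\geq0}e_m\,\zeta_m(r,z)$, where $\zeta_m(r,z):=\sum_{n\geq N}\ld^{-n}(i-z-n)^{r-2-m}$ converges for $\re r<m+1$. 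The decisive point is that $\zeta_m$ is an elementary multiple of a Hurwitz--Lerch zeta function: the points $i-z-n$ ($n\geq N$, $z$ near $K$) stay in a fixed sector about the negative real axis, so $(i-z-n)^{r-2-m}=e^{i\pi(r-2-m)}(n-i+z)^{r-2-m}$ with the principal branch on the right, and hence
\[ \zeta_m(r,z)\= e^{i\pi(r-2-m)}\,\ld^{-N}\,\Ph\bigl(\ld^{-1},\,m+2-r,\,N-i+z\bigr), \]
where $N-i+z$ has positive real part for $z$ near $K$ (once $N>\sup_{K}|\re z|$). By the classical analytic continuation of the Hurwitz zeta function (and of the Lerch transcendent when $\ld\neq1$), $\Ph(u,s,a)$ with $|u|=1$ is holomorphic in $(s,a)$ on $\CC\times\{\re a>0\}$ if $u\neq1$, and on $\bigl(\CC\setminus\{1\}\bigr)\times\{\re a>0\}$ with at most a simple pole along $s=1$ if $u=1$. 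Therefore $\zeta_m$ extends holomorphically in $(r,z)$ to $\bigl(\CC\setminus\{m+1\}\bigr)\times V_K$ for a suitable neighbourhood $V_K$ of $K$ (to all of $\CC\times V_K$ if $\ld\neq1$), with a pole only possibly at $r=m+1$.

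To finish, fix an integer $M\geq1$ and split $T=\sum_{m=0}^{M-1}e_m\zeta_m+\sum_{m\geq M}e_m\zeta_m$. The finite sum is holomorphic on $\bigl(\{\re r<M+1\}\setminus\{1,\dots,M\}\bigr)\times V_K=\bigl(\{\re r<M+1\}\setminus\ZZ_{\geq1}\bigr)\times V_K$. For the remainder, the geometric decay of the $c_k$ (from convergence of the expansion of $h$ at $\infty$) and the polynomial bound on the binomial coefficients give $|e_m|\ll C^m$ with $C=C(h)$, while $|\zeta_m(r,z)|\ll_{K}N^{-m}$ times a power of $m$ for $r$ in compact subsets of $\{\re r<m+1\}$; so, having also taken $N>C$, the series $\sum_{m\geq M}e_m\zeta_m$ converges absolutely and locally uniformly on $\{\re r<M+1\}\times V_K$ and is holomorphic there. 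Adding the entire finite partial sum shows that $(\av{T,\ld}^+g_r)(z)$ extends holomorphically to $\bigl(\{\re r<M+1\}\setminus\ZZ_{\geq1}\bigr)\times V_K$. Since $M\geq1$ and the compact $K\subset D_\e^+$ are arbitrary, and all these local continuations agree with the convergent series $\sum_{n\geq0}\ld^{-n}g_r(z+n)$ on the connected set $\{\re r<1\}\times D_\e^+$, the identity theorem glues them into a single holomorphic function on $\bigl(\CC\setminus\ZZ_{\geq1}\bigr)\times D_\e^+$. This proves~(a). For~(b): if $h(\infty)=0$ then $e_0=0$, so $\zeta_0$ drops out of $T$ and the only possible poles are at $r=m+1$ with $m\geq1$, i.e.\ at $r\in\ZZ_{\geq2}$; the identical argument yields holomorphy on $\bigl(\CC\setminus\ZZ_{\geq2}\bigr)\times D_\e^+$.

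I expect the main obstacle to be the analytic-continuation heart of the argument: recognizing the oscillatory lattice sum $\zeta_m$ as a genuine Hurwitz--Lerch zeta function and pinning down precisely which $r$ give poles, together with the branch bookkeeping for $(i-z-n)^{r-2-m}$ and the (routine but necessary) verification that $D_\e^\pm$ is stable under $z\mapsto z\pm n$. Once these are in place, the surrounding estimates and the gluing by the identity theorem are standard.
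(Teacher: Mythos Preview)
Your proof is correct and follows essentially the same route as the paper's: expand $g_r$ near $\infty$ in powers of $(i-t)^{-1}$ (equivalently $(z-i)^{-1}$), recognise the resulting oscillatory sums as Hurwitz--Lerch zeta functions, invoke their joint analytic continuation in $(s,a)$ to locate the only possible poles at $r=m+1$, and observe that $e_0=h(\infty)$ controls the pole at $r=1$. The only organisational difference is that the paper truncates the Taylor expansion of $g_r$ at order $N$ and treats the remainder $g_{r,N}(z)=\oh(z^{r-2-N})$ as a single function whose one-sided average converges absolutely for $\re r<N+1$, whereas you keep the full series and truncate at the index $m=M$, estimating $\sum_{m\geq M}e_m\zeta_m$ directly; both lead to the same conclusion.
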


\begin{proof}We use the \il{HuLe}{Hurwitz-Lerch
zeta-function}\emph{Hurwitz-Lerch zeta-function}\ir{HLzt}{H(s,a,z)}
\be \label{HLzt} H(s,a,z) \= \sum_{n\geq 0} e^{2\pi i an}\,
(z+n)^{-s} \,,\ee
which converges absolutely and is holomorphic in $(s,z)$ for $\im
a\leq 0$, $z\in \CC\setminus \ZZ_{\leq 0}$ and $\re s>1$. Kanemitsu,
Katsurada and Yoshimoto \cite[Theorem 1*]{KKY} give the holomorphy of
$(s,z)\mapsto H(s,a,z)$ on $\left(\CC\setminus\{1\}\right)
\times \{z\in\CC\;:\; \re z>0\}$ with a first order singularity at
$s=1$ if $a\in \ZZ$, and no singularity in $s$ at all otherwise. With
\[ H(s,a,z) \= \sum_{n=0}^{m-1} e^{2\pi i an}\, (z+n)^{-s} +
H(s,a,z+m)\]
for each $m\in \ZZ_{\geq 1}$, we obtain holomorphy in $z\in
\CC\setminus
(-\infty,0]$. (Lagarias and Li \cite{LaLi} study the continuation in
three variables. Here we need only the continuation in $(s,z)$.)

The function $h$  in \eqref{reprgr}  is holomorphic on a
neighbourhood of $\proj\RR$ in~$\proj\CC$, and hence has a convergent
power series expansion on a neighbourhood of~$\infty$:
\be h(z) = \sum_{k=0}^\infty \tilde a_k \,{z}^{-k}\,.
\ee
This implies that we have for $z\in D_\e^\pm$
\be\label{phi-e1} g_r(z) \= \sum_{k=0}^{N-1} \, a_k(r) \,
(z -i)^{r-2-k} + g_{r,N}(z)\,, \ee
with $g_{r,N}(z) = \oh(z^{r-2-N})$ as $ z \rightarrow\infty$
through~$D_\e^\pm$, uniformly for $r$ in compact sets in~$\CC$. The
$a_k(r)$ are polynomials in~$r$. We take $\arg
(z-\nobreak i) \in \bigl(-\frac {3\pi}2,\frac\pi 2\bigr)$. The
one-sided averages of $g_{r, N }$ converge absolutely, and provide
holomorphic functions in $(r,z)$ on $\re r<N+1 $ and $z\in D^\pm_\e$.
For the remaining finitely many terms we have a sum of
\begin{align*}
 a_k(r)\, &\ld^{-n}\, (z+n-i)^{r-2-k} \\
 &\= a_k(r)\, e^{-2\pi i n \al}\,
\begin{cases}
(z-i+n)^{r-2-k}&\text{ for }n\geq 0\text{ and } \re z>\e^{-1}\,,\\
e^{-\pi i(r-2-k)}\,(i-z+|n|)^{r-2-k}& \text{ for }n\leq -1\text{ and
}\re z<-\e^{-1}\,,
\end{cases}
\end{align*}
where $\al\in \CC$ has been chosen so that $\ld=e^{2\pi i \al}$. In
this way we obtain:
\badl{Av-Hurw} \bigl(\av{T,\ld}^+ g_r\bigr)(z) &\= \sum_{k=0}^{N-1}
a_k(r)\, H(k+2-r,-\al,z-i)
+ \bigl(\av{T,\ld}^+ g_{r,N}\bigr)(z)\,,\\
\bigl(\av{T,\ld}^- g_r\bigr)(z) &\= \sum_{k=0}^{N-1} a_k(r)\, \ld\,
e^{\pi i(k-r)}\, H(k+2-r,\al,1+i-z)\\
&\qquad\hbox{}
+ \bigl(\av{T,\ld}^- g_{r,N}\bigr)(z)\,. \eadl
The function $(r,z)\mapsto H(k+2-r,-\al,z-i)$ is meromorphic on the
region $r\in \CC$, $z\in i + \CC\setminus
(-\infty,0]$, with a singularity  at  $r=k+1$. The function
$(r,z)\mapsto
(\av {T,\ld}^+ g_{r,N})(z)$ is holomorphic on the region $\re r<1+N$,
$z\in D_\e^+$. So $(r,z)\mapsto (\av {T,\ld}^+ g_r)(z)$ extends
meromorphically to the region $\re r< 1 +N$, $z\in
D_\e^+$, and its singularities can occur only  at  $r=a$ with $a\in
\{1,\ldots,N\}$. The case of $\av{T,\ld}^- g_r$ goes similarly.
\end{proof}

\begin{prop}\label{prop-osa1}Let $r\in \CC$, $\ld \in \CC^\ast$, and
let $g$ represent an element of $\dsv{2-r}\om$.
\begin{enumerate}
\item[i)] There are well-defined one-sided averages \il{osa1}{$\av
{T,\ld}^\pm$}$\av {T,\ld}^\pm g$ holomorphic on $D_\e^\pm$, as in
\eqref{Depspm}, for some $\e\in(0,1)$ depending on~$g$, under the
following conditions
\be
\begin{array}{|c|c|c|c|c|}\hline
 & |\ld|>1 &\multicolumn{2}{|c|}{  |\ld|=1 } & |\ld|<1\\
 && (\Prj{2-r}g)(\infty)=0&(\Prj{2-r}g)(\infty)\neq 0& \\
 \hline
 \av{T,\ld}^+ g& r\in \CC & r\in \CC\setminus \ZZ_{\geq 2} & r\in
 \CC\setminus\ZZ_{\geq 1} & \\
\av{T,\ld}^- g & & r\in \ZZ_{\geq 2}& r\in \CC\setminus 
\ZZ_{\geq 1} & r\in \CC \\ \hline
\end{array}
\ee
\item[ii)] These one-sided averages satisfy $\av{T,\ld}^\pm g
|_{2-r}(1-\ld^{-1}\,T) = g$.
\item[iii)] If $g=g_r=\Prj{2-r}^{-1}h$ as in \eqref{reprgr}, and
$|\ld|^{\pm 1}\geq 1$, then $r\mapsto \av{T,\ld}^\pm g_r$ is a
meromorphic function on~$\CC$.
\end{enumerate}
\end{prop}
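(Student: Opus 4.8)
The plan is to establish the three parts of the proposition by splitting into the regimes $|\ld|>1$, $|\ld|<1$, $|\ld|=1$ for Part~i), and then extracting Parts~ii) and~iii) by a telescoping identity together with rigidity in $r$. When $|\ld|>1$ the series defining $\av{T,\ld}^+g$ converges absolutely for every $r\in\CC$ by Proposition~\ref{prop-osaa}, which moreover records that the result is holomorphic on $D_\e^\pm$ for a suitable $\e$, satisfies \eqref{hfe}, and represents an element of $\dsv{2-r}\om[\infty]$; the regime $|\ld|<1$ for $\av{T,\ld}^-g$ is symmetric. This already gives Part~i) in those columns and the corresponding entries of Part~ii). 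For Part~iii) in these two cases I would write $g=g_r=\Prj{2-r}^{-1}h$ as in \eqref{reprgr}, note that $g_{r'}(t+n)=\oh\bigl(n^{\re r'-2}\bigr)$ holds locally uniformly for $r'$ in compacta of $\CC$ while $|\ld^{-n}|$ decays geometrically, conclude that the series converges locally uniformly in $(r',z)$, and hence that $r'\mapsto\av{T,\ld}^\pm g_{r'}$ is holomorphic, a fortiori meromorphic, on all of $\CC$.

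The substantive regime is $|\ld|=1$, and here most of the work has been done in Lemma~\ref{lem-osa-Hurw}. First, Proposition~\ref{prop-osaa} gives absolute convergence of both $\av{T,\ld}^\pm g$ when $\re r<1$. Using that the projective model $\Prj{2-r}\dsv{2-r}\om$ does not depend on $r$, I would set $h:=\Prj{2-r}g$, holomorphic on a neighbourhood of $\proj\RR$ in $\proj\CC$, and $g_{r'}:=\Prj{2-r'}^{-1}h$, so that $(r',g_{r'})$ is a holomorphic family with $g_r=g$. Lemma~\ref{lem-osa-Hurw}(a) then continues $(r',z)\mapsto(\av{T,\ld}^\pm g_{r'})(z)$ holomorphically to $(\CC\setminus\ZZ_{\geq1})\times D_\e^\pm$, and part~(b) continues it to $(\CC\setminus\ZZ_{\geq2})\times D_\e^\pm$ once $h(\infty)=(\Prj{2-r}g)(\infty)=0$; specialising to $r'=r$ produces $\av{T,\ld}^\pm g$ on exactly the ranges listed in Part~i) and, as a by-product, yields Part~iii) in the case $|\ld|=1$, with poles confined to positive integers. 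One small check remains: the continued function again represents an element of $\dsv{2-r}\om[\infty]$. This is automatic, because it is holomorphic on $D_\e^\pm$, and $D_\e^\pm$ contains $\lhp$ together with a complex neighbourhood of $\RR$, so its image under $\Prj{2-r}$ is holomorphic on a neighbourhood of $\lhp\cup\RR$ in $\proj\CC$.

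For Part~ii) in the analytically continued range I would argue by analyticity in $r$: for $g=g_{r'}$ the identity $\av{T,\ld}^\pm g\,|_{2-r'}(1-\ld^{-1}T)=g$ is an equality of functions holomorphic in $(r',z)$; it holds for $\re r'<1$ by the telescoping computation behind \eqref{hfe} (the remark following Proposition~\ref{prop-osaa}), hence on the whole domain of holomorphy by the identity theorem, in particular at $r'=r$. The step I expect to need the most care — and, since Lemma~\ref{lem-osa-Hurw} already supplies the analytic continuation, the only genuine obstacle — is precisely this verification that the $r$-continued objects still deserve the name ``one-sided average'': outside the region of convergence the series $\sum_n\ld^{-n}g(t+n)$ is meaningless, so one must be sure that the three properties making $\av{T,\ld}^\pm g$ useful (holomorphy on $D_\e^\pm$, membership in $\dsv{2-r}\om[\infty]$, and the functional equation \eqref{hfe}) all propagate along the continuation in $r$, which is exactly what the arguments above arrange.
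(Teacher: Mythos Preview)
Your proposal is correct and follows essentially the same approach as the paper: Proposition~\ref{prop-osaa} handles the absolutely convergent cases $|\ld|\neq 1$ (and $|\ld|=1$ with $\re r<1$), Lemma~\ref{lem-osa-Hurw} supplies the meromorphic continuation in $r$ when $|\ld|=1$, and the functional equation in Part~ii) propagates by analytic continuation in~$r$. Your extra verification that the continued object still lies in $\dsv{2-r}\om[\infty]$ is not required by the proposition as stated (only holomorphy on $D_\e^\pm$ is asserted), but it does no harm.
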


\begin{proof}
Each representative $g$ of an element of $\dsv{2-r}\om$ is of the form
$g_r=\Prj{2-r}h$ for some holomorphic function on a neighbourhood of
$\lhp\cup\proj\RR$ in $\proj\CC$. If $|\ld|\neq 1$,
Proposition~\ref{prop-osaa} gives the convergence of one of the
averages and the relation in Part~ii). The convergence is
sufficiently quick to have holomorphy in~$r$.

Let $|\ld|=1$. Proposition~\ref{prop-osaa} gives convergence of both
averages for $\re r<1$, and Lemma~\ref{lem-osa-Hurw} provides the
 meromorphic continuation to $\CC$, with singularities only in the
 points indicated in Part~i). The relation in Part~ii) stays valid by
 analytic continuation.
\end{proof}

\begin{rmk}If $|\ld|=1$, $\ld\neq 1$, the proof of
Lemma~\ref{lem-osa-Hurw} can be adapted to give holomorphy of
$\av{T,\ld}^\pm\, g_r$ in $r\in \CC$. We can strengthen the
statements in Cases i)
and~iii) of Proposition~\ref{prop-osa1} as well.
\end{rmk}

\rmrk{Asymptotic behavior}To get the asymptotic behavior of $
 \av{T,\ld }^\pm g_r(t)$ as $\pm \re t\rightarrow\infty$, we use the
 following result:

\begin{prop}\label{prop-Ka}\il{HLzt-as}{Hurwitz-Lerch zeta-function,
asymptotic behavior} \emph{(Katsurada, \cite{Ka98}) } Let $s,a\in
\CC$, $\im a\leq 0$. There are $b_k(\ld,s)\in \CC$ such that for each
$K\in \ZZ_{\geq 0}$ we have as $|z|\rightarrow\infty$ on any region
$\dt-\pi \leq \arg z \leq \pi-\dt$ with $\dt>0$
\be\label{H-as} H\bigl(s,a,\frac12+z\bigr) \= \frac{\e(\ld)}{1-s} \,
z^{1-s} + \sum_{k=0}^{K-1} \, b_k(\ld,s)\, z^{-k-s} + \oh\bigl(
|z|^{-\re s-K}\bigr)\,, \ee
with $\ld=e^{2\pi i a}$, $\e(\ld)=1$ if $\ld=1$ and $\e(\ld)=0$
otherwise. The coefficients $b_k$ satisfy
\be\label{bk-rel} \ld^{-1}\, b_k(\ld^{-1},s) \= (-1)^{k+1}\,
b_k(\ld,s)\,. \ee
The first three coefficients are as follows:
\be \renewcommand\arraystretch{1.3}
\begin{array}{|c|c|c|}\hline
& \ld=1 & \ld \neq 1 \\ \hline
b_0(\ld ,s) &0 &\frac 1{1-\ld}
\\
b_1(\ld,s) & - \frac s{48} & - \frac s4 \frac{1+\ld}{(1-\ld)^2}\\
b_2(\ld,s) & 0 & \frac{s(s+1)(1+6\ld+\ld^2)}{48(1-\ld^3)}
\\ \hline
\end{array}
\ee
\end{prop}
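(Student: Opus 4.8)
The plan is to recognize \eqref{H-as} as Katsurada's complete asymptotic expansion \cite{Ka98}, restated in the normalization used here, and then to derive the two auxiliary assertions --- the symmetry \eqref{bk-rel} and the list of leading coefficients --- from an elementary generating function. The substantive analytic work is therefore already contained in \cite{Ka98}; what remains is a translation of notation together with a short computation.

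First I would start from the Laplace-type representation
\[
H(s,a,z) \= \frac1{\Gf(s)}\int_0^\infty \frac{t^{s-1}\,e^{-zt}}{1-\ld\,e^{-t}}\,dt
\qquad\bigl(\re s>1,\ \re z>0\bigr)\,,
\]
with $\ld=e^{2\pi i a}$, which follows from $\Gf(s)(z+n)^{-s}=\int_0^\infty t^{s-1}e^{-(z+n)t}\,dt$ and summation of the geometric series in~$n$; the hypothesis $\im a\le0$ gives $|\ld|\le1$, so the kernel has no pole on $(0,\infty)$. After the substitution $z\mapsto\tfrac12+z$ the kernel becomes $\Phi_\ld(t):=\bigl(e^{t/2}-\ld\,e^{-t/2}\bigr)^{-1}$, holomorphic at $t=0$ when $\ld\neq1$ and with a simple pole there when $\ld=1$; writing $\Phi_\ld(t)=\e(\ld)\,t^{-1}+\sum_{k\ge0}c_k(\ld)\,t^k$ near $t=0$, one splits $\int_0^\infty=\int_0^1+\int_1^\infty$, rotates the contour appropriately as $\arg z$ ranges over $(\dt-\pi,\pi-\dt)$ so that the tail $\int_1^\infty$ is exponentially small and uniform on the sector, and inserts the expansion of $\Phi_\ld$ into $\int_0^1$. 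Completing each resulting integral to $[0,\infty)$ gives the leading $z^{1-s}$ term coming from the pole (hence present only for $\ld=1$) together with the series $\sum_k c_k(\ld)\,\Gf(s+k)\Gf(s)^{-1}z^{-k-s}$, so that $b_k(\ld,s)$ is $c_k(\ld)$ times the $\ld$-independent factor $\Gf(s+k)/\Gf(s)$, a degree-$k$ polynomial in~$s$. Turning this into a genuine asymptotic expansion, uniform on a sector reaching towards the negative real axis, and controlling the dependence on $s$ (the only pole being at $s=1$, through $\Gf(s-1)$, and only when $\ld=1$), is precisely Katsurada's analysis in \cite{Ka98}, which I would invoke rather than reproduce.

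Granting this, \eqref{bk-rel} is immediate from the identity $\Phi_\ld(-t)=-\ld^{-1}\,\Phi_{\ld^{-1}}(t)$, which is visible directly from the definition of $\Phi_\ld$: comparing Taylor coefficients yields $(-1)^k c_k(\ld)=-\ld^{-1}c_k(\ld^{-1})$, and multiplying by the $\ld$-independent factor $\Gf(s+k)/\Gf(s)$ turns this into $\ld^{-1}b_k(\ld^{-1},s)=(-1)^{k+1}b_k(\ld,s)$. The table is then read off by expanding $\Phi_\ld$ to order $t^2$: for $\ld\neq1$ one inverts $e^{t/2}-\ld e^{-t/2}=(1-\ld)+\tfrac12(1+\ld)t+\tfrac18(1-\ld)t^2+\cdots$, obtaining $c_0(\ld)=(1-\ld)^{-1}$ and the $c_1(\ld),c_2(\ld)$ producing the third column; for $\ld=1$ one uses $\Phi_1(t)=\bigl(2\sinh(t/2)\bigr)^{-1}=t^{-1}-\tfrac1{24}t+\cdots$, whose parity forces $b_0(1,s)=b_2(1,s)=0$. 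I do not expect a genuine obstacle here: the only non-trivial ingredient --- the uniform sectorial asymptotics together with the meromorphic continuation in $s$ --- is furnished by \cite{Ka98}. The single point demanding care is the reconciliation of the present normalization of $H(s,a,z)$ and of the $b_k$ with Katsurada's conventions, so that the precise constants in the leading term and in the table (signs and rational factors) come out exactly as stated.
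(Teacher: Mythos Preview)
Your proposal is correct and follows essentially the same route as the paper. The paper likewise cites \cite{Ka98} directly for the asymptotic expansion (applying Katsurada's Theorem~1 with $\alpha=\tfrac12$), identifies the coefficients as $b_k(\ld,s)=\frac{(-1)^{k+1}}{(k+1)!}B_{k+1}(\tfrac12,\ld)\,(s)_k$ in terms of the generalized Bernoulli polynomials with generating function $\frac{ze^{z/2}}{\ld e^z-1}=\frac{z}{\ld e^{z/2}-e^{-z/2}}$, and then derives \eqref{bk-rel} from the identity $\frac{z}{ye^{z/2}-e^{-z/2}}=y^{-1}\frac{-z}{y^{-1}e^{-z/2}-e^{z/2}}$ --- which is your observation $\Phi_\ld(-t)=-\ld^{-1}\Phi_{\ld^{-1}}(t)$ repackaged, since $t\,\Phi_\ld(t)$ is (up to the $\ld\leftrightarrow\ld^{-1}$ swap) precisely this Bernoulli generating function.
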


\begin{proof}This is a direct consequence of~\cite[Theorem 1]{Ka98},
applied with $\al=\frac12$.

We have $H(s,a,z)=\Phi(a,z,s)$ with Katsurada's $\Phi$. This
gives~\eqref{H-as} with
\[ b_k(\ld,s) \= \frac{(-1)^{k+1}}{(k+1)!}\, B_{k+1}\bigl( \tfrac 12,
\ld\bigr)\; (s)_k\,,\]
where the $B_k$ are generalized Bernoulli polynomials, given by
\[ \sum_{k\geq 0} B_k(x,y) \,\frac{z^k}{k!} \=
\frac{ze^{xz}}{ye^z-1}\,.\]
Relation \eqref{bk-rel} follows from
\[ \frac {z e^{ z/2}}{y e^z-1} \= \frac z {ye^{z/2}- e^{-z/2}} \= y^{-1}\,
\frac{-z}{y^{-1}e^{-z/2}-e^{z/2}}\,. \qedhere\]
\end{proof}

\begin{prop}\label{prop-av-as}\il{1sa-as}{one-sided average,
asymptotic behavior}Let $r\in \CC$, and let $g$ be a
representative of an element of $\dsv{2-r}\om$.
\begin{enumerate}
\item[i)]  Let $|\ld|=1$ and suppose that $r\in \CC$ is such that
$\av{T,\ld}^+\, g$ and $\av{T,\ld}^-\, g$ exist.
 \begin{enumerate}
\item[a)]There are coefficients $c_k$ depending on $\ld$, $r$ and on
the coefficients of the expansion of $\Prj{2-r}g$ at~$\infty$, such
that for each $M\in \ZZ_{\geq 0}$ we have:
\be\label{avtexp}
(\av{T,\ld}^ \pm\, g)(t) \= (it)^{r-2} \, \sum_{k=-1}^{M-1} c_k \,
t^{-k}
+ \oh\bigl( |t|^{r-2-M}\bigr)\ee
as $|t|\rightarrow\infty$ with $\pm \re t\geq 0$, $\im t\leq 0$.
\item[b)] If $g(t) = (it)^{r-2}\, \bigl( a_0+a_1\,
t^{-1}+\cdots\bigr)$
near~$\infty $, then
\be\renewcommand\arraystretch{1.3}
\begin{array}{|c|c|c|}\hline
& \ld=1& \ld \neq 1 \\ \hline
c_{-1} &\frac {a_0}{r-1} & 0
\\ c_0 & \frac {a_1}{r-2} & \frac{ \ld\, a_0}{\ld-1}\\
c_1 & \frac{ a_2}{r-3}+\frac{(r-2)\,a_0}{48} & \frac{\ld \,
a_1}{\ld-1}
 + \frac{(r-2)\,\ld\,(\ld+1)\, a_0}{4(\ld-1)^2}
\\\hline
\end{array}
\ee
\end{enumerate}
\item[ii)]
Let $|\ld|\neq 1$. If $\pm$ is such that $|\ld|^{\pm 1}>1$, then
there are coefficients $c_k$ such that for each $M\in \ZZ_{\geq 0}$
\be\label{avtexp1}
(\av{T,\ld}^ \pm\, g)(t) \= (it)^{r-2} \, \sum_{k=0}^{M-1} c_k \,
t^{-k}
+ \oh\bigl( |t|^{r-2-M}\bigr)\ee
as $|t|\rightarrow\infty$ with $\pm \re t\geq 0$, $\im t\leq 0$.
\end{enumerate}
\end{prop}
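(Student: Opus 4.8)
The plan is to reduce the asymptotics of $\av{T,\ld}^\pm g$ to the asymptotics of the Hurwitz--Lerch zeta-function, using the term-by-term identity already established in the proof of Lemma~\ref{lem-osa-Hurw} together with Proposition~\ref{prop-Ka}. So first I would write $g=g_r=\Prj{2-r}^{-1}h$ with $h$ holomorphic near $\lhp\cup\proj\RR$, expand $h$ at $\infty$, and obtain for each $N$ the decomposition \eqref{phi-e1},
\[ g_r(z) \= \sum_{k=0}^{N-1} a_k(r)\,(z-i)^{r-2-k} + g_{r,N}(z)\,,\qquad g_{r,N}(z)=\oh\bigl(|z|^{\re r-2-N}\bigr)\text{ on }D_\e^\pm\,, \]
where the $a_k(r)$ are polynomials in $r$, with $a_0(r)=i^{r-2}a_0$ and $a_1(r)$ linear in $r$, read off from the power series of $h$. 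Applying $\av{T,\ld}^\pm$ and invoking \eqref{Av-Hurw} then writes $\av{T,\ld}^\pm g_r$ as a finite linear combination of Hurwitz--Lerch functions $H(k+2-r,\cdot,\cdot)$ (with the explicit prefactors from \eqref{Av-Hurw}) plus the one-sided average of the tail $g_{r,N}$.

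The second step feeds each $H$-term into Proposition~\ref{prop-Ka}: after writing its argument in the form $\tfrac12+z$, Katsurada's formula produces $\tfrac{\e(\ld)}{r-1-k}\,z^{r-1-k}+\sum_{j}b_j(\ld,k+2-r)\,z^{r-2-k-j}+\oh(|z|^{\re r-2-k-K})$. Only the $k=0$ term can contribute the power $z^{r-1}$, and only when $\ld=1$, since $\e(\ld)=0$ otherwise; this accounts both for the expansion in part~i) starting at $k=-1$ and for the absence of that power in part~ii). Next I would re-expand each shifted power by the binomial series, $(z-c)^{r-2-m}=z^{r-2-m}\sum_{\ell\ge0}\binom{r-2-m}{\ell}(-c/z)^\ell$, valid as $|z|\to\infty$ in the sectors permitted by the constraints $\im t\le0$ and $\pm\re t\ge0$, and collect the coefficient of each power $z^{r-2-\ell}$. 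Dividing by $(iz)^{r-2}$ and matching branches — this is where the factors $e^{\pi i(k-r)}$ occurring in the $\av{T,\ld}^-$ part of \eqref{Av-Hurw} get used — yields the asserted expansion~\eqref{avtexp}.

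The third step controls the error. Since $g_{r,N}(z)=\oh(|z|^{\re r-2-N})$ on $D_\e^\pm$, comparing $\sum_{n\ge0}\ld^{-n}g_{r,N}(z+n)$ with $\int_0^\infty|z+x|^{\re r-2-N}\,dx$ for $\re z>\e^{-1}$ (and symmetrically for $\av{T,\ld}^-$) gives $\av{T,\ld}^\pm g_{r,N}(z)=\oh(|z|^{\re r-1-N})$; choosing $N\ge M+1$ and $K$ large then makes every error term $\oh(|z|^{\re r-2-M})$. For part~b) one simply tracks the contributions of the $k=0$ and $k=1$ summands to $c_{-1},c_0,c_1$, using the explicit $b_0,b_1$ from the table in Proposition~\ref{prop-Ka} and the values $a_0(r),a_1(r)$. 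For part~ii), $|\ld|\ne1$: here the relevant one-sided average already converges absolutely by Proposition~\ref{prop-osaa}, so one may either rerun the same argument (now with $\e(\ld)=0$, and with the geometric decay improving all tail bounds) or argue directly; either way the leading power is $z^{r-2}$ and \eqref{avtexp1} follows.

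The main obstacle I anticipate is the branch- and shift-bookkeeping: one must verify that the argument of every fractional power lands in a range where Proposition~\ref{prop-Ka} applies (shifting by $-i-\tfrac12$ for $\av{T,\ld}^+$, and by $\tfrac12+i-z$ for $\av{T,\ld}^-$ so that the argument tends to $+\infty$ inside a sector $\dt-\pi\le\arg z\le\pi-\dt$), and that after the binomial re-expansion all the pieces recombine into a single clean expansion normalised by $(it)^{r-2}$. The meromorphy of $r\mapsto\av{T,\ld}^\pm g_r$ on $\CC$ from Proposition~\ref{prop-osa1}(iii) is a convenient consistency check, since the coefficients $c_k$ must then be meromorphic in $r$ with poles only at the integers excluded there.
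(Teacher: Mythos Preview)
Your approach is essentially the paper's: reduce to the Hurwitz--Lerch asymptotics via Proposition~\ref{prop-Ka}. Two differences are worth noting.

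First, the paper centres at $t-\tfrac12$ rather than $t-i$, and uses the basis $g_j(t)=(it)^{r-2}t^{-j}$ rather than $(z-i)^{r-2-k}$. With this choice one gets directly
\[
(\av{T,\ld}^+ g_j)(\tfrac12+t)\;\equiv\; -e^{\pi i r/2}\,H(2+j-r,-\al,\tfrac12+t),\qquad
(\av{T,\ld}^- g_j)(\tfrac12+t)\;\equiv\; e^{-\pi i r/2}(-1)^j\ld\,H(2+j-r,\al,\tfrac12-t),
\]
so Katsurada's expansion applies without the binomial re-expansion step you propose. Your route works, but the bookkeeping you flag as ``the main obstacle'' is entirely avoided by this shift.

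Second, and more substantively: the proposition asserts a \emph{single} family $c_k$ valid for both signs of $\pm$. You allude to ``matching branches'' but do not say how the two expansions are reconciled. In the paper this is the computational heart: one writes out $c_\ell^+$ and $c_\ell^-$ explicitly as sums over the $b_k(\ld^{\mp1},\cdot)$ from Proposition~\ref{prop-Ka}, and then the identity \eqref{bk-rel}, namely $\ld^{-1}b_k(\ld^{-1},s)=(-1)^{k+1}b_k(\ld,s)$, forces $c_\ell^+=c_\ell^-$. Without invoking \eqref{bk-rel} you have two a~priori different asymptotic expansions, and the statement is not yet proved. Make this step explicit.
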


\rmrke It is remarkable that if both one-sided averages exist, then
the coefficients in both expansions are the same, although
$\av{T,\ld}^+\, g$ and $\av{T,\ld}^ -\, g$ have
in general no reason to be equal.

\begin{proof}
It suffices to consider large values of~$M$. We take $M >
\re r +1$. If $g(t) = \oh \bigl( |t|^{r-3-M}\bigr)$ we have
$(\av{T,\ld}^\pm\, g)(t) = \oh\bigl( |t|^{r-2-M}\bigr)$, which only
influences the error term. So the explicit terms in the asymptotic
expansion are determined by the part
\[ (it)^{r-2} \sum_{j=0}^M a_j\, t^{-j}\]
of the expansion of $g$ at~$\infty$. We consider for 
$0\leq j \leq M$
functions $g_j$ representing elements of $\dsv{2-r}\om$ for which
$g_j(t) =
(it)^{r-2}\, t^{-j} + \oh \bigl( |t|^{r-3-M}\bigr)$ as
$t\rightarrow\infty$.

In Part~i) we have $|\ld|=1$. 
In~\eqref{phi-e1} we took $t-i$ as the variable. Now $t-\frac12$ is
more convenient. We put $\ld=e^{2\pi i \al }$, and have, modulo terms
that can be absorbed into the error term:
\begin{align*} ( \av{T,\ld}^+ \, g_j)(1/2+t)&\;\equiv\; -e^{\pi
ir/2}\, H(2+j-r,-\al,1/2+t)\,,\\
(\av{T,\ld}^ -\, g_j)(1/2+t) &\;\equiv\; e^{-\pi i r/2} \, (-1)^j
\,\ld\, H(2+j-r, \al ,1/2-t)\,.
\end{align*}
With Proposition~\ref{prop-Ka} this gives
\begin{align*}
(\av{T,\ld}^ +\, g_j)&(1/2+t) \;\equiv\; (it)^{r-2}\, \Bigl(
\frac{\e(\ld)}{r-j-1}\, t^{1-j} + \sum_{k=0}^{M-j}
b_k(\ld^{-1},2+j-r) \, t^{-k-j}\Bigr)\,,
\displaybreak[0]\\
(\av{T,\ld}^ -\, g_j)&(1/2+t) \;\equiv\; e^{-\pi i r/2}(-1)^j \ld
\;\Bigl( \frac{\e(\ld)}{r-1-j}\, (-t)^{r-1-j} \\
&\qquad\hbox{} + \sum_{k=0}^{M-j} b_k(\ld,2+j-r)\, (-t)^{-k-j+r-2}
\Bigr)
\displaybreak[0]\\
&\= (it)^{r-2} \; \Bigl( \frac{\e(\ld)}{r-1-j} \, t^{1-j} -
\sum_{k=0}^{M-j} (-1)^k \, \ld\, b_k(\ld,2+j-r)\, t^{-k-j} \Bigr)\,.
\end{align*}
(In the last step we have used that $\ld=1$ if $\e(\ld)\neq 0$.)

For $g$ with expansion $(it)^{r-2}\sum_{j\geq 0} a_j \, t^{-j}$
near~$\infty$ this leads to an expansion as in~\eqref{avtexp}, with
coefficients $c_\ell^\pm$ of the form
\[ c^\pm_{-1} \= \frac{\e(\ld)}{r-1}\, a_0\,,\]
and for $\ell\geq 0$
\begin{align*}
c_\ell^+ &\= \frac{\e(\ld)}{r-\ell-2} a_{\ell+1} + \sum_{j=0}^\ell
b_{\ell-j}(\ld^{-1},2+j-r)\, a_j\,,
\displaybreak[0]
\\
c_\ell^- &\=\frac{\e(\ld)}{r-\ell-2} a_{\ell+1} - \sum_{j=0}^\ell
(-1)^{\ell-j} \, \ld\, b_{\ell-j}(\ld,2+j-r)\, a_j\,.
\end{align*}
Relation~\eqref{bk-rel} shows that $c_\ell^+ = c_\ell^-$.

In part~ii) the factor $\ld^{-n}$ takes care of the convergence
of the one-sided average.
The asymptotic behavior  follows directly
from the behavior of $g(t)$ near~$\infty$.
\end{proof}

\begin{lem}\label{lem-osa-pe}Let $|\ld|=1$.
\begin{enumerate}
\item[i)] Let $r\in \RR\setminus \ZZ_{\geq 1}$. 
The following statements concerning $\ph \in
\dsv{2-r}\om$ are equivalent:
\begin{enumerate}
\item[a)] There is a representative $g$ of $\ph$ for which
$\av{T,\ld}^+ g $ and $\av{T,\ld}^- g$ represent the same element of
$\dsv{2-r}\om[\infty]$.
\item[b)] There is a function $h$ representing an element of
$\dsv{2-r}{\om,\smp}[\infty]$ such that
$h|_{2-r}(1-\nobreak\ld^{-1}\, T)$ represents~$\ph$.
\end{enumerate}
If these statements hold, then $\av{T,\ld}^+g$, $\av{T,\ld}^- g$ and
$h$ represent the same element of~$\dsv{2-r}{\om,\smp}[\infty]$.
\item[ii)] Let $r\in \RR\setminus \ZZ_{\geq 2}$. The following
statements concerning $\ph \in \dsv{2-r}\om$ are equivalent:
\begin{enumerate}
\item[a)] There is a representative $g$ of $\ph$ such that
$\Prj{2-r}g(\infty)=0$, and for which $\av{T,\ld}^+ g $ and
$\av{T,\ld}^- g$ represent the same element of
$\dsv{2-r}\om[\infty]$.
\item[b)] There is a function $h$ representing an element of
$\dsv{2-r}{\om,\infty}[\infty]$ such that
$h|_{2-r}(1-\nobreak\ld^{-1}\, T)$ represents~$\ph$.
\end{enumerate}
If these statements hold, then $\av{T,\ld}^+g$, $\av{T,\ld}^- g$ and
$h$ represent the same element of~$\dsv{2-r}{\om,\infty}[\infty]$.
\end{enumerate}
\end{lem}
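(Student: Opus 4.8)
The plan is to realise the solutions of the parabolic difference equation $f|_{2-r}(1-\ld^{-1}T)=g$ explicitly as one-sided averages and then pin them down using asymptotics. Under the hypotheses ($|\ld|=1$; $r\in\RR\setminus\ZZ_{\geq1}$ in (i); $r\in\RR\setminus\ZZ_{\geq2}$ together with $\Prj{2-r}g(\infty)=0$ in (ii)), Proposition~\ref{prop-osa1} guarantees that for any representative $g$ of $\ph$ both averages $\av{T,\ld}^+g$ and $\av{T,\ld}^-g$ exist, are holomorphic on $D_\e^+$ resp.\ $D_\e^-$, and satisfy $\av{T,\ld}^\pm g\,|_{2-r}(1-\ld^{-1}T)=g$. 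Since $D_\e^+\cap D_\e^-$ contains $\lhp$, the phrase ``$\av{T,\ld}^+g$ and $\av{T,\ld}^-g$ represent the same element of $\dsv{2-r}\om[\infty]$'' means precisely that they agree on $\lhp$; when they do, they glue to a single holomorphic function $h$ on $D_\e^+\cup D_\e^-$, which contains a neighbourhood of $\lhp\cup\RR$, so $h$ represents an element of $\dsv{2-r}\om[\infty]$ with $h|_{2-r}(1-\ld^{-1}T)=g$.

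For (a)$\Rightarrow$(b) it remains to read off the behaviour of $h$ at $\infty$. By Proposition~\ref{prop-av-as}(i) both $\av{T,\ld}^+g$ and $\av{T,\ld}^-g$ have asymptotic expansions $(it)^{r-2}\sum_{k\geq-1}c_k t^{-k}$ as $\pm\re t\to\infty$, $\im t\leq0$, \emph{with the same coefficients $c_k$}; in case (ii) the condition $\Prj{2-r}g(\infty)=0$ (equivalently $a_0=0$ when $g(t)=(it)^{r-2}(a_0+a_1t^{-1}+\cdots)$ near $\infty$) forces $c_{-1}=0$ by the explicit formula for $c_{-1}$ in Proposition~\ref{prop-av-as}(i)(b). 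Gluing on $\lhp$ gives $h$ the same two-sided expansion, and since $(\Prj{2-r}h)(t)=(i-t)^{2-r}h(t)=\left(\frac{it}{i-t}\right)^{r-2}\cdot(\cdots)$ with the first factor holomorphic at $\infty$, $\Prj{2-r}h$ acquires a two-sided asymptotic expansion $\sum_{n\geq-1}d_nt^{-n}$ in case (i), resp.\ $\sum_{n\geq0}d_nt^{-n}$ in case (ii). Thus $t^{-1}(\Prj{2-r}h)(t)$ is smooth at $\infty$ in case (i) and $\Prj{2-r}h$ itself is smooth at $\infty$ in case (ii); combined with holomorphy of $h$ near the finite real points this gives $h\in\dsv{2-r}{\om,\smp}[\infty]$, resp.\ $h\in\dsv{2-r}{\om,\infty}[\infty]$.

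For (b)$\Rightarrow$(a), start from the solution $h$ in the relevant space and set $g:=h|_{2-r}(1-\ld^{-1}T)$, which represents $\ph$ (and in case (ii) the smoothness of $\Prj{2-r}h$ at $\infty$ yields $\Prj{2-r}g(\infty)=0$, so the averages below are available). Both $h$ and $\av{T,\ld}^+g$ solve $f|_{2-r}(1-\ld^{-1}T)=g$, so $p_+:=h-\av{T,\ld}^+g$ is $\ld$-periodic; by the regularity of $h$ at $\infty$ and Proposition~\ref{prop-av-as}, $\Prj{2-r}p_+$ has a one-sided asymptotic expansion as $t\uparrow\infty$. Lemma~\ref{lem-ldpr}(ii) then forces $p_+=0$: the only escape is $\ld=1$ with $r\in\ZZ_{\geq k+2}$, excluded here because $|\ld|=1$ and, with the relevant $k$ ($k=-1$ in (i), $k=0$ in (ii)), the weight hypothesis says $r\notin\ZZ_{\geq1}$, resp.\ $r\notin\ZZ_{\geq2}$. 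Symmetrically $p_-:=h-\av{T,\ld}^-g=0$ using $t\downarrow-\infty$. Hence $\av{T,\ld}^+g=\av{T,\ld}^-g=h$, which is (a) and simultaneously gives the last assertion that all three represent one and the same element of $\dsv{2-r}{\om,\smp}[\infty]$, resp.\ $\dsv{2-r}{\om,\infty}[\infty]$.

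The main obstacle is the asymptotic bookkeeping: establishing that the two one-sided averages share their asymptotic coefficients (this is the content of Proposition~\ref{prop-av-as}, via the Hurwitz--Lerch asymptotics of Proposition~\ref{prop-Ka}) and translating this into exactly the right boundary regularity of $h$ in the projective model, while keeping precise track of which terms of the expansion are present. In particular one must see that the exceptional integer-weight cases of Lemma~\ref{lem-ldpr} are exactly the ones that have to be excluded, which is why the hypotheses read $\RR\setminus\ZZ_{\geq1}$ and $\RR\setminus\ZZ_{\geq2}$, and why in (ii) the normalisation $\Prj{2-r}g(\infty)=0$ is imposed.
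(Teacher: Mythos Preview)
Your proof is correct and follows essentially the same approach as the paper's: glue the two one-sided averages using Proposition~\ref{prop-av-as} for the (a)$\Rightarrow$(b) direction, and for (b)$\Rightarrow$(a) show that the $\ld$-periodic differences $h-\av{T,\ld}^\pm g$ vanish by Lemma~\ref{lem-ldpr}(ii). Your identification of the excluded integer-weight cases with the relevant value of~$k$ is exactly the mechanism the paper relies on.
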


\begin{proof}Let $g$ be a representative of $\ph$ as in one of the
statements~a). Then $\av{T,\ld}^+ g(z) = \av{T,\ld}^-g(z)$ for $y<\e$
for some $\e\in
(0,1)$. Let us call this function $f$. It is holomorphic on a
neighbourhood of $\lhp\cup\RR$ in~$\CC$, and
Proposition~\ref{prop-av-as} shows $(\Prj{2-r}f)(z)$ has an
asymptotic expansion as $z\rightarrow\infty$ through $\lhp\cup\RR$
required in Definition~\ref{sav-ss} for representatives of elements
of $\dsv{2-r}{\om,\smp}[\infty]$. This gives b) in Part~i). If we
have the additional condition $(\Prj{2-r}g)(\infty)=0$, the
asymptotic expansion starts at $k=0$ instead of $k=-1$, and we
conclude that $f$ represents an element of
$\dsv{2-r}{\om,\infty}[\infty]$. This concludes the proof of
a)$\Rightarrow$b) in both parts.

Let $h$ as b) be given. With any representative $g$ of $\ph$, we have
also $\av{T,\ld}^+ g$ and $\av{T,\ld}^- g$ in $\dsv{2-r}\om[\infty]$
satisfying the same relation. So $h-\av{T,\ld}^\pm g$ is a
$\ld$-periodic function on a neighbourhood of~$\RR$, with a one-sided
asymptotic expansion of the type \eqref{avtexp} as $\pm\re
z\rightarrow \infty$. Hence this $\ld$-periodic function is zero 
by Lemma~\ref{lem-ldpr}, and the three functions $h$, $\av{T,\ld}^+
g$ and $\av{T,\ld}^- g$ are
the same on a neighbourhood of $\lhp\cup\RR$ in~$\CC$, and represent
the same element of $\dsv{2-r}\om[\infty]$. That gives a) in Part~i).
For Part~ii) we note that the fact that $h$ represents an element of
$\dsv{2-r}{\om,\infty}[\infty]$ implies $(\Prj{2-r}g)(\infty)=0$.
\end{proof}

\subsection{Parabolic cohomology groups} With the one-sided averages
we can prove some of the isomorphisms in Theorem~\ref{THMiso} on
page~\pageref{THMiso}.

\begin{prop}\label{prop-rid-of-exc}Let $r\in \RR$, and let $v$ be a
unitary multiplier system.
\begin{enumerate}
\item[i)] If $r\not\in \ZZ_{\geq 2}$ then
\[ \hpar^1(\Gm;\dsv{v,2-r}\om,\dsv{v,2-r}{\fsn,\infty,\wdg}) \=
\hpar^1(\Gm;\dsv{v,2-r}\om,\dsv{v,2-r}{\fsn,\infty})\,.\]
\item[ii)]The codimension of
$\hpar^1(\Gm;\dsv{v,r}\om,\dsv{v,r}{\fsn,\smp,\wdg})$ in
$\hpar^1(\Gm;\dsv{v,r}\om,\dsv{v,r}{\fsn,\smp})$ is finite if $r=1$,
and zero if $r\not\in \ZZ_{\geq 1}$.
\end{enumerate}
\end{prop}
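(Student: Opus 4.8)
The plan is to reduce both statements to the cusp-by-cusp analysis of the parabolic equation \eqref{parb-eq}, using the one-sided averages of \S\ref{sect-osa-ac}--\S\ref{sect-osa-ac} to remove the ``excised'' condition $\wdg$ at each cusp. Recall that a cocycle $\ps\in \zpar^1(\Gm;\dsv{v,2-r}\om,\dsv{v,2-r}{\fsn,\infty})$ satisfies, for each cusp~$\ca$, an equation $\ps_{\pi_\ca}=h_\ca|_{v,2-r}(\pi_\ca-1)$ with $h_\ca\in \dsv{v,2-r}{\fsn,\infty}$; by Lemma~\ref{lem-sing-inv} one automatically has $\bsing h_\ca\subset\{\ca\}$. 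After conjugating by $\s_\ca$ and writing $\ld=v(\pi_\ca)$ (with $|\ld|=1$ by unitarity), this becomes exactly Equation~\eqref{peq}-type data: we must decide when the given element $\ph=\ps_{\pi_\ca}|_{2-r}\s_\ca\in\dsv{2-r}\om$ admits a solution of the difference equation that is \emph{also} in the excised class $\dsv{2-r}{\om,\wdg}[\infty]$, given that it already admits one in $\dsv{2-r}{\om,\infty}[\infty]$ (resp.\ $\dsv{2-r}{\om,\smp}[\infty]$).

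For Part~i), with $r\notin\ZZ_{\geq 2}$ and $|\ld|=1$: by Part~ii) of Lemma~\ref{lem-osa-pe}, the existence of $h\in\dsv{2-r}{\om,\infty}[\infty]$ solving $h|_{2-r}(1-\ld^{-1}T)=\ph$ is equivalent to the existence of a representative $g$ of $\ph$ with $(\Prj{2-r}g)(\infty)=0$ for which $\av{T,\ld}^+g$ and $\av{T,\ld}^-g$ represent the same element of $\dsv{2-r}\om[\infty]$; moreover in that case $\av{T,\ld}^\pm g$ \emph{equals} $h$ as an element of $\dsv{2-r}{\om,\infty}[\infty]$. But $\av{T,\ld}^\pm g$ is, by Proposition~\ref{prop-osa1}, holomorphic on the region $D_\e^\pm$ of \eqref{Depspm}, which is an $\{\infty\}$-excised neighbourhood on the appropriate side; the agreement of the two one-sided averages on $\{\,y<\e\,\}$ then glues them into a function holomorphic on a full $\{\infty\}$-excised neighbourhood, i.e.\ an element of $\dsv{2-r}{\om,\wdg}[\infty]$. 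Hence $h\in\dsv{2-r}{\om,\infty,\wdg}[\infty]$, so $\ps_{\pi_\ca}\in\dsv{v,2-r}{\fsn,\infty,\wdg}|_{v,2-r}(\pi_\ca-1)$ for every cusp, and the cocycle lies in $\zpar^1(\Gm;\dsv{v,2-r}\om,\dsv{v,2-r}{\fsn,\infty,\wdg})$. This proves the inclusion $\hpar^1(\cdots,\dsv{v,2-r}{\fsn,\infty})\subset\hpar^1(\cdots,\dsv{v,2-r}{\fsn,\infty,\wdg})$; the reverse inclusion is trivial since $\dsv{v,2-r}{\fsn,\infty,\wdg}\subset\dsv{v,2-r}{\fsn,\infty}$. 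Part~ii) runs the same way using Part~i) of Lemma~\ref{lem-osa-pe} in place of Part~ii): for $r\notin\ZZ_{\geq 1}$ one does not need the vanishing condition at~$\infty$ on the representative (the averages $\av{T,\ld}^\pm g$ exist and satisfy Proposition~\ref{prop-osa1}ii) in the range $r\in\CC\setminus\ZZ_{\geq 1}$), so the same gluing argument places $h$ in $\dsv{2-r}{\om,\smp,\wdg}[\infty]$, giving codimension zero.

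The remaining case is $r=1$ in Part~ii), where one only claims \emph{finite} codimension. Here the obstruction is genuine: the one-sided averages $\av{T,\ld}^\pm g$ are guaranteed to exist only for $r\notin\ZZ_{\geq 1}$, so at $r=1$ the above mechanism can fail, and indeed Lemma~\ref{lem-peq-cst}(iii) shows logarithmic behaviour at $\infty$ can occur when $\ld=1$. The plan is to argue that the failure is controlled by finitely many cusps and a finite-dimensional space of ``defects'': for each $\Gm$-orbit of cusps with $v(\pi_\ca)=1$, the difference $\av{T,\ld}^+g-\av{T,\ld}^-g$ (extended meromorphically in $r$ via Lemma~\ref{lem-osa-Hurw}) contributes at worst a one-dimensional discrepancy, measured by a leading coefficient analogous to $c_{-1}$ in Proposition~\ref{prop-av-as}; since $\Gm$ has finitely many cusp orbits, the quotient $\hpar^1(\Gm;\dsv{v,r}\om,\dsv{v,r}{\fsn,\smp})/\hpar^1(\Gm;\dsv{v,r}\om,\dsv{v,r}{\fsn,\smp,\wdg})$ injects into a finite-dimensional space. \textbf{The main obstacle} is precisely this $r=1$ bookkeeping: making rigorous that the ``logarithmic defect'' at each cusp depends linearly on the cocycle modulo coboundaries and lands in a finite-dimensional space, rather than hand-waving that it is ``finitely many bad terms''. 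The cleaner route is probably to identify the relevant defect with the value of a linear functional on $\dsv{v,r}\om$ attached to each cusp (via the residue-type coefficient in the asymptotic expansion), observe these functionals span a space of dimension at most the number of cusp orbits, and conclude the codimension is bounded by that number.
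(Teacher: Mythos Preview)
Your approach is essentially the paper's own, and Part~i) together with the $r\notin\ZZ_{\geq 1}$ case of Part~ii) are handled correctly: conjugate to $\infty$, invoke Lemma~\ref{lem-osa-pe} to identify $h$ with the coinciding one-sided averages, and observe that $\av{T,\ld}^+g$ and $\av{T,\ld}^-g$ are holomorphic on $D_\e^+$ and $D_\e^-$ respectively, whose union is an $\{\infty\}$-excised neighbourhood. That is exactly what the paper does.

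For $r=1$ you have the right shape of argument but you are over-complicating the obstruction. The relevant issue is \emph{not} the logarithmic solution of Lemma~\ref{lem-peq-cst} (that lemma concerns integrals against automorphic forms, not the inverse problem here). The actual obstruction is purely the existence of the one-sided averages: by Proposition~\ref{prop-osa1} (and the remark after it), for $|\ld|=1$ the averages $\av{T,\ld}^\pm g$ at $r=1$ are defined whenever $\ld\neq 1$, and when $\ld=1$ they are defined precisely if $(\Prj_1 g)(\infty)=0$. So the ``defect functional'' you are looking for is nothing more exotic than $\ps\mapsto (\Prj_1\,\ps_{\pi_\ca})(\infty)$ (after conjugation), one such condition for each cusp orbit with $v(\pi_\ca)=1$. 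On the subspace of cocycles where these finitely many linear functionals vanish, the same gluing argument as before goes through; hence the codimension is at most the number of such cusp orbits. This is the paper's argument, stated in one line rather than via meromorphic continuation of the averages in~$r$.
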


\begin{proof}Let  $\ps \in
Z^1(\Gm;\dsv{v,2-r}\om,\dsv{v,2-r}{\fsn,\infty})$. For
cusps $\ca$ in a (finite) set of representatives of the $\Gm$-orbits
of cusps we consider $h_\ca\in \dsv{v,2-r}{\fsn,\infty}$ such that
$h_\ca |_{v,2-r}(1-\nobreak\pi_\ca) =
\ps_{\pi_\ca}\in\dsv{v,2-r}\om$. After conjugation, we are in the
situation of Part~ii)b) of Lemma~\ref{lem-osa-pe} with
$\ld=v(\pi_\ca)$. Since the conditions on~$r$ and $\ld$ in that lemma
are satisfied, we have
$h=\av{T,\ld}^+\ps_{\pi_\ca}=\av{T,\ld}^-\ps_{\pi_\ca}$ near
$\lhp\cup\RR$. Since $\av{T,\ld}^\pm \ps_{\pi_\ca}$ is holomorphic on
$D_\e^+ \cup D_\e^-$ for some $\e>0$, with $D_\e^\pm$ as
in~\eqref{Depspm}, the function $h$ is holomorphic on a
$\{\infty\}$-excised neighbourhood, hence $h\in
\dsv{2-r}{\om,\infty,\wdg}[\infty]$, and $h_\ca \in
\dsv{2-r}{\om,\infty,\wdg}[\ca]\subset \dsv{2-r}{\fsn,\infty,\wdg}$.

The other case goes similarly, except if $r=1$ and $v(\pi_\ca)=1$. If
$\Prj{1}(\ps_{\pi_\ca})(\infty)=0$ then Proposition~\ref{prop-av-as}
implies that the starting term of the asymptotic expansion
\eqref{avtexp} satisfies $k\geq 0$, and $h_\ca$ is in
$\dsv{v,1}{\om,\infty}[\ca]$, and the same reasoning applies. Since
the number of cuspidal orbits is finite, this imposes conditions on
the cocycles determining a subspace of finite codimension.
\end{proof}

\begin{prop}\label{prop-acpc}If $r\in \CC\setminus \ZZ_{\geq 1}$, then
\be
\hpar^1(\Gm;\dsv{v,2-r}\om,\dsv{v,2-r}\fs)\=H^1(\Gm;\dsv{v,2-r}\om)
\,. \ee

If $r=1$, then the space $\hpar^1(\Gm;\dsv{v,1}\om,\dsv{v,1}\fs)$ has
finite codimension in the space $H^1(\Gm;\dsv{v,1}\om)$.
\end{prop}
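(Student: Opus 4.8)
The plan is to reduce the statement about $H^1(\Gm;\dsv{v,2-r}\om)$ to a statement about parabolic cocycles, using the one-sided averages developed in this section to remove the ``excised'' and ``semi-analytic at cusps'' conditions one cusp at a time. Concretely, let $\ps\in Z^1(\Gm;\dsv{v,2-r}\om)$ be an arbitrary cocycle. For each cusp $\ca$ of $\Gm$ (it suffices to treat a finite set of representatives of the $\Gm$-orbits of cusps) we must produce $h_\ca\in \dsv{v,2-r}\fs$ with $h_\ca|_{v,2-r}(1-\nobreak\pi_\ca)=\ps_{\pi_\ca}$, so that, after subtracting a suitable coboundary, $\ps$ becomes a mixed parabolic cocycle in $\zpar^1(\Gm;\dsv{v,2-r}\om,\dsv{v,2-r}\fs)$. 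By Proposition~\ref{prop-parb*0} we may then further replace $\fs$ by $\fsn$, but for the present statement $\fs$ is all we need.

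First I would conjugate to the standard situation $\pi_\ca = \s_\ca T\s_\ca^{-1}$, so that the equation becomes $h|_{2-r}(1-\nobreak\ld^{-1}\,T) = g$ with $\ld=v(\pi_\ca)$ and $g=\ps_{\pi_\ca}|_{2-r}\s_\ca$ a representative of an element of $\dsv{2-r}\om$. Here is where $r\not\in\ZZ_{\geq 1}$ enters: Part~i) of Proposition~\ref{prop-osa1} guarantees that at least one of the one-sided averages $\av{T,\ld}^\pm g$ is well-defined (it is unconditionally available when $|\ld|\neq 1$; when $|\ld|=1$ the condition $r\in\CC\setminus\ZZ_{\geq 1}$ puts us in the column where $\av{T,\ld}^+ g$ exists, and likewise $\av{T,\ld}^- g$), and Part~ii) of that proposition gives $\av{T,\ld}^\pm g\,|_{2-r}(1-\nobreak\ld^{-1}\,T)=g$. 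By Proposition~\ref{prop-osaa}(b),(d) (extended via Proposition~\ref{prop-osa1}) the average is holomorphic on a region $D_\e^\pm$ and represents an element of $\dsv{2-r}\om[\infty]$, hence after conjugating back we obtain $h_\ca$ representing an element of $\dsv{v,2-r}\om[\ca]\subset\dsv{v,2-r}\fs$. Doing this for each cuspidal orbit and subtracting the corresponding coboundaries yields a cocycle in $\zpar^1(\Gm;\dsv{v,2-r}\om,\dsv{v,2-r}\fs)$ cohomologous to $\ps$; this proves the inclusion $H^1\subseteq \hpar^1$, and the reverse inclusion is trivial since $\hpar^1(\Gm;\dsv{v,2-r}\om,\dsv{v,2-r}\fs)\subseteq H^1(\Gm;\dsv{v,2-r}\om)$ by definition.

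For the case $r=1$, the same scheme works for every cusp $\ca$ with $v(\pi_\ca)\neq 1$, and also for $\ca$ with $v(\pi_\ca)=1$ provided $(\Prj{1}\ps_{\pi_\ca})(\infty)=0$, by the $(\Prj{2-r}g)(\infty)=0$ entry in the table in Proposition~\ref{prop-osa1}(i). The only genuine obstruction is therefore a cusp $\ca$ with $v(\pi_\ca)=1$ and $(\Prj{1}\ps_{\pi_\ca})(\infty)\neq 0$; the failure is visible in the asymptotic expansion~\eqref{avtexp} starting at $t^1$, which is incompatible with membership of $\dsv{1}\fs$ for a $1$-periodic correction (Lemma~\ref{lem-ldpr}). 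But the locus of $\ps$ for which this happens at one of the finitely many cuspidal orbits is cut out by finitely many linear conditions on the cocycle (namely the vanishing of the leading coefficient of $\Prj{1}\ps_{\pi_\ca}$ at $\infty$ for each of these bad cusps), so $\hpar^1(\Gm;\dsv{v,1}\om,\dsv{v,1}\fs)$ is the image in $H^1$ of a finite-codimension subspace of $Z^1$, hence has finite codimension in $H^1(\Gm;\dsv{v,1}\om)$.

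The step I expect to be the main obstacle is not any single estimate — those are all packaged in Propositions~\ref{prop-osaa}, \ref{prop-osa1} and~\ref{prop-av-as} — but rather the bookkeeping needed to see that subtracting the cuspidal coboundaries $\gm\mapsto h_\ca|_{v,2-r}(\gm-\nobreak1)$ simultaneously for a full set of representatives does not reintroduce singularities at other cusps: one uses that $h_\ca$ has $\bsing\subseteq\{\ca\}$ together with the invariance statement $\bsing(\ph|_{2-r}g)=g^{-1}\bsing\ph$ (Proposition~\ref{prop-sasi}(iii)) to control where the translated singularities land, and Lemma~\ref{lem-sing-inv} to confirm that the resulting parabolic values stay in the correct one-point-singularity spaces. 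The rest is a matter of assembling the conjugation identities~\eqref{ncj} and the basic cocycle manipulations.
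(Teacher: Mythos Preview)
Your approach is essentially the same as the paper's: for each cuspidal orbit representative $\ca$, conjugate $\pi_\ca$ to $T$ and use a one-sided average $\av{T,\ld}^\pm$ from Proposition~\ref{prop-osa1} to produce $h$ with $h|_{2-r}(1-\ld^{-1}T)=\ps_{\pi_\ca}|_{2-r}\s_\ca$; the table in Proposition~\ref{prop-osa1}(i) guarantees existence for $r\notin\ZZ_{\geq 1}$, and leaves a one-dimensional obstruction per cusp when $r=1$ and $\ld=1$. The paper packages exactly this argument into Lemma~\ref{lem-*solve}.

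However, you have overcomplicated the bookkeeping. Look again at Definition~\ref{mpcg}: a cocycle $\ps\in Z^1(\Gm;\dsv{v,2-r}\om)$ lies in $\zpar^1(\Gm;\dsv{v,2-r}\om,\dsv{v,2-r}\fs)$ as soon as, for each parabolic $\pi$, there \emph{exists} some $h\in\dsv{v,2-r}\fs$ with $\ps_\pi = h|_{v,2-r}(\pi-1)$. You do not modify $\ps$ by a coboundary; you merely verify the existence condition cusp by cusp. So the entire last paragraph --- the worry about ``subtracting the cuspidal coboundaries simultaneously'' and controlling where their singularities land --- addresses a phantom obstacle. Once each $h_\ca$ is constructed, $\ps$ itself already lies in $\zpar^1$, and hence $[\ps]\in\hpar^1(\Gm;\dsv{v,2-r}\om,\dsv{v,2-r}\fs)$. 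The proof is then two lines.
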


\rmrke So for all $r\not\in \ZZ_{\geq 2}$ the space
$\hpar^1(\Gm;\dsv{v, 2-r}\om,\dsv{v, 2-r}\fs)$ has finite codimension
in $H^1(\Gm;\dsv{v, 2-r}\om)$.
\smallskip

We prepare the proof of Proposition~\ref{prop-acpc} by a lemma.
\begin{lem}\label{lem-*solve}Let $r\in \CC$ and $\ld\in \CC^\ast$.
Then
\bad
&\dsv{2-r}\om \,\subset\,
\dsv{2-r}\om[\infty]|_{2-r}(1-\ld^{-1}T)&\text{ if }& r\not\in
\ZZ_{\geq 1}\,,\\
&\dim \biggl( \dsv 1 \om \bigm/ \Bigl( \dsv 1 \om \cap \bigl( \dsv 1
\om[\infty]|_1(1-\ld^{-1}T) \bigr) \Bigr) \biggr)\;\leq\; 1&\text{ if
}&r=1\,. \ead
In the case $r=1$ an element $\ph\in \dsv 1 \om$ is in $\dsv 1
\om[\infty]|_1(1-\nobreak\ld^{-1}T)$ if $\ld\neq 1$ or if
$\Prj1\ph(\infty)=0$.
\end{lem}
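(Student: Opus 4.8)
The plan is to solve, for an arbitrary $\ph\in\dsv{2-r}\om$ with holomorphic representative $g$, the difference equation $h|_{2-r}(1-\ld^{-1}T)=\ph$ --- which spelled out reads $h(t)-\ld^{-1}h(t+1)=g(t)$, i.e. \eqref{hfe} with right-hand side $g$ --- by taking $h$ to be one of the one-sided averages $\av{T,\ld}^{\pm}g$ of Section~\ref{sect-osa}. Two facts do the work. First, by Proposition~\ref{prop-osa1}(ii), whenever $\av{T,\ld}^{\pm}g$ is defined it satisfies $\av{T,\ld}^{\pm}g\,|_{2-r}(1-\ld^{-1}T)=g$, so any available one-sided average is already a solution; and since $g$ is holomorphic at $\infty$, the resulting $h|_{2-r}(1-\ld^{-1}T)$ represents $\ph$ itself in $\dsv{2-r}\om$, not merely a function agreeing with it near $\RR$. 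Second, by Proposition~\ref{prop-osa1}(i) together with Proposition~\ref{prop-osaa}(d) (and the analogous observation at the end of the proof of Proposition~\ref{prop-osaa} for the analytically continued averages), such an average is holomorphic on a region $D_\e^{\pm}$ with $\e\in(0,1)$, hence on the neighbourhood $\{y<\e\}\supset\lhp\cup\RR$ of $\lhp\cup\RR$ in $\proj\CC$, on which $(i-t)^{2-r}$ is also holomorphic; so $\av{T,\ld}^{\pm}g$ represents an element of $\dsv{2-r}\om[\infty]$. Thus everything reduces to the existence of some $\av{T,\ld}^{\pm}g$ for the given $\ld$ and $r$.

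Next I would run through the existence table in Proposition~\ref{prop-osa1}(i). If $|\ld|>1$ take $h=\av{T,\ld}^{+}g$, and if $|\ld|<1$ take $h=\av{T,\ld}^{-}g$; these exist for every $r\in\CC$, settling both displayed statements and the final assertion in that range. If $|\ld|=1$ and $\ld\neq1$, take $h=\av{T,\ld}^{+}g$, which exists for all $r\in\CC$ by the remark following Proposition~\ref{prop-osa1} (and already for $r\notin\ZZ_{\geq1}$ from the table). The remaining case is $\ld=1$: here $(\Prj{2-r}g)(\infty)$ is a well-defined number $c(\ph)$, since the representatives of an element of $\dsv{2-r}\om$ are holomorphic near $\infty$ and agree there, and Proposition~\ref{prop-osa1}(i) produces $\av{T,1}^{+}g$ as soon as $r\notin\ZZ_{\geq2}$ when $c(\ph)=0$, and as soon as $r\notin\ZZ_{\geq1}$ when $c(\ph)\neq0$. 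Hence for $r\notin\ZZ_{\geq1}$ one always obtains $h=\av{T,1}^{+}g$, which is the first displayed inclusion; and for $r=1$ (so $2-r=1$) one obtains such an $h$ exactly when $(\Prj1\ph)(\infty)=c(\ph)=0$.

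Finally, for $r=1$ the remaining assertions follow formally: $\ph\mapsto(\Prj1\ph)(\infty)$ is a linear functional on $\dsv1\om$ whose kernel is, by the previous paragraph, contained in $\dsv1\om\cap\bigl(\dsv1\om[\infty]|_{1}(1-\ld^{-1}T)\bigr)$, and the kernel of a single linear functional has codimension at most $1$ --- this is the stated dimension bound; combining it with the cases where $|\ld|\neq1$, or $|\ld|=1$ and $\ld\neq1$, gives the closing sentence of the lemma. The only genuinely delicate point is the integrality bookkeeping: one must verify that the exclusion ``$r\notin\ZZ_{\geq2}$'' coming from $\av{T,1}^{+}$ in the $c(\ph)=0$ branch of $\ld=1$ still includes $r=1$ (it does), so that no further obstruction appears in that case, and that in the $\ld=1$, $c(\ph)\neq0$, $r=1$ situation one really does not have a solution in general --- which is why the dimension statement is phrased as an inequality rather than an equality.
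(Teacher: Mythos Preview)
Your proof is correct and follows essentially the same approach as the paper's: both reduce the lemma to the existence of a one-sided average via Proposition~\ref{prop-osa1}, with the paper's proof simply citing that proposition while you spell out the case analysis on $|\ld|$ and the role of $(\Prj_{2-r}\ph)(\infty)$ more carefully. Your added remarks about the codimension-one linear functional and the $r=1$ bookkeeping are accurate elaborations of what the paper compresses into a single sentence.
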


\begin{proof}Proposition~\ref{prop-osa1} shows that if $r\not\in
\ZZ_{\geq 1}$ or if $\ld\neq 1$, we can use at least one of the
one-sided averages to show that $\dsv{2-r}\om$ is contained in
$\dsv{2-r}\om[\infty]|_{2-r}(1-\nobreak\ld^{-1}T)$. If $r=1$ and
$\ld=1$ we have to restrict ourselves to a subspace of
$\dsv1\om[\infty]$ of codimension~$1$.
\end{proof}

\begin{proof}[Proof of Proposition~\ref{prop-acpc}.]The inclusion
$\subset$ follows from the definition of parabolic cohomology. To
prove the other inclusion we consider a cocycle $\ps\in
Z^1(\Gm;\dsv{v,2-r}\om)$ and need to show that for a representative
$\ca$ of each $\Gm$-orbit of cusps there is $h\in \dsv{v,2-r}\fs$
 such that $h|_{v,2-r}(1-\nobreak\pi_\ca)=\ps_{\pi_\ca}$. By
 conjugation this can be brought to $\infty$, into the situation
 considered in Lemma~\ref{lem-*solve}. Since there are only finitely
 many cuspidal orbits, we get for $r=1$ a subspace of finite
 codimension.
\end{proof}

\subsection{Related work}\label{sect-lit4}
Knopp uses one-sided averages in \cite[Part IV]{Kn74}, attributing the
method to B.A.\,Taylor (non-published). In \cite{BLZm} the one-sided
averages are an important tool, defined in Section~4, and used in
Sections 9 and~12.

%%%%%%%%%%%%% Part II %%%%%%%%%%%%%%%%%%%%%%%%%%%%%%%%%%%%%%%%

\part{Harmonic functions}

%%%%%%%%%%%%%%%%%%%%%%%%%%%%%%%%%%%%%%%%%%%%%%%%%%%%%%%%%%%%%
\section{Harmonic functions and cohomology}\label{sect-hf}

\subsection{The sheaf of harmonic functions}\label{sect-shf}
By associating to open sets $U\subset\uhp$ the vector space
$\sharm_r(U)$ of $r$-harmonic functions on~$U$ (as defined in
Definition~\ref{haf-def}) we form the \il{shf}{sheaf of $r$-harmonic
functions}\il{sharm}{$\sharm_r$}\emph{sheaf $\sharm_r$ of
$r$-harmonic functions} on~$\uhp$.

The \il{shad1}{shadow operator}shadow operator \il{sh1}{$\shad_r$}$\shad_r$
in~\eqref{shad} determines a morphism of sheaves $\sharm_r
\rightarrow \hol_\uhp$, where \il{holuhp}{$\hol_\uhp$}$\hol_\uhp$
denotes the sheaf of holomorphic functions on~$\uhp$, and leads to an
exact sequence
\be\label{shad-seq}
0\rightarrow \hol_\uhp \rightarrow \sharm_r
\stackrel{\shad_r}\rightarrow \hol_\uhp \rightarrow 0\,.\ee
The maps $\shad_r: \sharm_r(U) \rightarrow \hol(U)$ are antilinear for
the structure of vector spaces over~$\CC$. The surjectivity of $\shad
_r$ follows from classical properties of the operator $\partial_{\bar
z}$. (It suffices to solve locally $\partial_{\bar z} h = \ph$ for
given holomorphic $\ph$. See, eg., H\"ormander
\cite[Theorem~1.2.2]{Ho}.)

\rmrk{Actions}Let $r\in \CC$. For each $g\in \SL_2(\RR)$ the operator
$|_r g$ gives bijective linear maps $\sharm_r (U)
\rightarrow \sharm_r
(g^{-1}U)$ and $\hol_\uhp(U)
\rightarrow\hol_\uhp(g^{-1}U)$. For sections $F$ of $\sharm_r$
\be \Dt_r \bigl( F|_r g\bigr) \= (\Dt_r F)|_r g\,,\qquad \shad_r\bigl(
F|_r g\bigr) \= \bigl(\shad_r F \bigr)|_{2-\bar r} g\,. \ee

If $v$ is a multiplier system for $\Gm$ for the weight~$r$, then we
have also the actions $|_{v,r}$ of $\Gm$ on $\sharm_r$ and $|_{\bar
v,2-\bar r}$ on $\hol$. With these actions $\sharm_r$ and $\hol$ are
$\Gm$-equivariant sheaves.

\subsection{Harmonic lifts of automorphic forms}\label{sect-prfhl}In
this subsection we will prove Theorem~\ref{THMhl}.

\rmrk{Example} The image in $
H^1\bigl(\Gm(1);\dsv{1,2}{\fsn,\exc}\bigr)$ of $\coh0\om 1\in
H^1\bigl(\Gm(1);\dsv{1,2}\om\bigr)$ can be represented by the cocycle
$\tilde \ps$ in~\eqref{ps-r=0}, given by
$\tilde\ps_\gm(t)=\frac{-c}{ct+d}$. The cocycle $\Ci \tilde\ps$ in
the functions on $\uhp$, obtained with the involution $\Ci$
in~\eqref{Ci} can be written as $\gm\mapsto h|_{1,2}(1-\gm)$, with
the $2$-harmonic function $h(z) = \frac i{2y}$.

The \il{holEis}{holomorphic Eisenstein series in weight
$2$}holomorphic Eisenstein series of weight $2$\ir{E2}{E_2,\;
E_2^\ast}
\be\label{E2} E_2(z) \= 1-24\sum_{n\geq 1}\sigma_1(n)\, e^{2\pi i n z}
\ee
is not a modular form. Adding a multiple of $h$ we get $E_2^\ast =
\frac{6i}\pi\, h + E_2 $, which is a harmonic modular form in $
\harm_2\bigl(\Gm(1),1\bigr)$. The function $E_2^\ast$ is a
$2$-harmonic lift of the constant function $\frac 3\pi$. See
Definition~\ref{hldef}. Furthermore, we have
\[ \Ci\tilde\ps_\gm \= \frac\pi{6i}\, E_2|_{1,2}(\gm-1)\,.\]
Since $E_2$ has polynomial growth near the boundary $\proj\RR$ of
$\uhp$, we conclude that with $b=\frac{\pi i}6\, \Ci E_2\in
\dsv{1,2}{-\infty}$ we obtain $b|_{1,2}(\gm-\nobreak
1)=\tilde\ps_\gm$. So the class $\coh0\om 1$ becomes trivial under
the natural map to $H^1\bigl(\Gm(1);\dsv{1,2}{-\infty}\bigr)$.

\rmrk{Alternative description of cocycles} Generalizing this example,
we first use the differential form $\om_r(F;\cdot,\cdot)$
in~\eqref{omr} to describe the cocycle $\ps^{z_0}_F$
in~\eqref{psiz0def} in an alternative way.  We recall the
involution $\Ci $ in~\eqref{Ci}.

\begin{lem}Let $r\in \CC$ and $F\in A_r(\Gm,v)$. We put for $t\in
\lhp$:\ir{QF}{Q_F}
\be\label{QF} Q_F(t) \;:=\; \int_{z_0}^{\bar t} \om_r(F;t,z)\,.\ee
\begin{enumerate}
\item[i)] The function $Q_F$ on $\lhp$ satisfies
\be \label{QF-psi}
Q_F|_{v,2-r}(\gm-1) \= \ps_{F,\gm}^{z_0}\qquad\text{ for each }\gm\in
\Gm. \ee
\item[ii)] The corresponding function $\Ci Q_F$ on $\uhp$ is
$(2-\nobreak\bar r)$-harmonic, and
\be\label{shad Q} \shad_{2-\bar r}\, \Ci \, Q_F (z) \= 2^{r-1}\,
e^{\pi i (r-1)/2}\, F(z)\,. \ee
\end{enumerate}
\end{lem}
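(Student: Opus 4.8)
The plan is to prove the two parts essentially by unravelling the definition \eqref{QF} of $Q_F$ and invoking results already established. For part~i), the key observation is that $Q_F(t) = \int_{z_0}^{\bar t}\om_r(F;t,z)$ is, up to the base-point contribution, an antiderivative of the Eichler-type differential form, evaluated on a path from $z_0$ to $\bar t$. First I would write, for $\gm\in\Gm$,
\[
Q_F|_{v,2-r}\gm\,(t) \= \int_{z_0}^{\overline{\gm^{-1}t}}\, \om_r(F;\cdot,z)|_{v,2-r}\gm\,(t)
\= \int_{\gm z_0}^{\bar t}\, \om_r(F;t,z)\,,
\]
using Part~iii) of Lemma~\ref{lem-omprop} to move the $|_{v,2-r}\gm$ onto the form at the price of applying $\gm$ to the endpoints of the path (and noting $\gm\,\overline{\gm^{-1}t}=\bar t$ since $\gm\in\SL_2(\RR)$ acts on $\lhp$ by the conjugate Möbius transformation, so it commutes with $t\mapsto\bar t$). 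Then
\[
Q_F|_{v,2-r}(\gm-1)\,(t) \= \int_{\gm z_0}^{\bar t}\om_r(F;t,z) - \int_{z_0}^{\bar t}\om_r(F;t,z)
\= \int_{\gm z_0}^{z_0}\om_r(F;t,z) \= \int_{\gm^{-1}z_0}^{z_0}\,(z-t)^{r-2}F(z)\,dz\,,
\]
where the last equality reindexes the path using $F|_{v,r}\gm=F$ together with Lemma~\ref{lem-omprop}(iii) once more; this is exactly $\ps^{z_0}_{F,\gm}$ of \eqref{psiz0def}. The integral is path-independent because $\om_r(F;t,\cdot)$ is a closed (holomorphic) $1$-form in $z$.

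For part~ii), the $(2-\bar r)$-harmonicity of $\Ci Q_F$ is most cleanly checked via the criterion \eqref{harm-detect}: a $C^2$ function is $(2-\bar r)$-harmonic if and only if its image under the shadow operator $\shad_{2-\bar r}$ is holomorphic. So it suffices to compute $\shad_{2-\bar r}\,\Ci\,Q_F$ directly and observe it is holomorphic — and in fact this computation simultaneously yields \eqref{shad Q}. I would proceed by computing $\partial_{\bar z}$ of $\Ci Q_F(z)=\overline{Q_F(\bar z)}$. Writing $w=\bar z\in\lhp$, we have $\Ci Q_F(z)=\overline{Q_F(w)}$, so $\partial_{\bar z}\Ci Q_F = \partial_w\overline{Q_F(w)}$-type expression; the point is that $Q_F$ as a function of its single variable $t$ and via the upper limit $\bar t$ is differentiated using the fundamental theorem of calculus in the upper limit plus differentiation under the integral sign in the explicit $t$-dependence of $\om_r(F;t,z)=(z-t)^{r-2}F(z)\,dz$. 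The upper-limit term gives (by the fundamental theorem of calculus, with the path ending at $z=\bar t$) a factor $(\bar t - t)^{r-2}F(\bar t)\cdot\frac{d\bar t}{\,\cdot\,}$, which after the conjugation $\Ci$ and evaluation $t=\bar z$ produces precisely $(2iy)^{r-2}$ times $F(z)$ (using $\bar t - t = -2i\,\mathrm{Im}\,t$ with the branch convention $-\frac\pi2<\arg(z-t)<\frac{3\pi}2$, which on the diagonal forces $\arg(\bar t-t)=-\frac\pi2$, i.e. $(\bar t-t)^{r-2}=(2|{\rm Im}\,t|)^{r-2}e^{-\pi i(r-2)/2}$). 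The differentiation of the explicit $(z-t)^{r-2}$ in $t$ contributes a term proportional to $\int_{z_0}^{\bar t}(z-t)^{r-3}F(z)\,dz$, which is holomorphic in $t$ and therefore, after $\Ci$, contributes $0$ to $\partial_{\bar z}$; hence $\partial_{\bar z}\Ci Q_F$ depends only on the upper-limit term, giving a holomorphic function of $z$ (namely a constant multiple of $F(z)\,y^{r-2}$ times a power of $y$ — and the shadow operator's $y^{\overline{2-\bar r}}=y^{2-r}$ weight factor cancels the $y$-powers to leave exactly $2^{r-1}e^{\pi i(r-1)/2}F(z)$ after collecting constants). That this is holomorphic is exactly harmonicity by \eqref{harm-detect}.

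The main obstacle is bookkeeping the branch of $(z-t)^{r-2}$ and the various powers of $i$ and of $y=\mathrm{Im}\,z$ so that the constant in \eqref{shad Q} comes out precisely as $2^{r-1}e^{\pi i(r-1)/2}$ rather than some related constant; this requires careful use of the argument convention $-\frac\pi2<\arg(z-t)<\frac{3\pi}2$ stated just after \eqref{psiz0def}, restricted to the diagonal locus $t=\bar z$ where $z-t=2iy$ with $y>0$, so that $\arg(z-t)=\frac\pi2$ and hence $(z-t)^{r-2}=(2y)^{r-2}e^{\pi i(r-2)/2}$, and then tracking how the antilinear involution $\Ci$ conjugates this. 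A secondary, easier point to get right is the chain rule through the upper limit $\bar t$ when differentiating in $z$ (since $t=\bar z$ means the upper limit is $\bar t = z$), and confirming that the path-dependence genuinely drops out so that $Q_F$ is well-defined on all of $\lhp$ despite $\bar t$ ranging over $\uhp$; this follows since $z_0\in\uhp$ and $\bar t\in\uhp$ can be joined within $\uhp$, where $\om_r(F;t,\cdot)$ is holomorphic.
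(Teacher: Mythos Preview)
Your overall strategy matches the paper's: for Part~i) invoke Lemma~\ref{lem-omprop}(iii), and for Part~ii) compute the shadow directly and appeal to the holomorphy criterion~\eqref{harm-detect}. However, your execution of Part~i) contains a direction error. The action $|_{v,2-r}\gm$ evaluates $Q_F$ at $\gm t$, not at $\gm^{-1}t$, so the upper limit of the integral is $\overline{\gm t}=\gm\bar t$ (real matrix entries commute with complex conjugation). A \emph{single} application of Lemma~\ref{lem-omprop}(iii), together with $F|_{v,r}\gm=F$, then gives
\[
Q_F|_{v,2-r}\gm\,(t)\;=\;\int_{z_0}^{\gm\bar t}\om_r(F;\cdot,z)|_{v,2-r}\gm\,(t)\;=\;\int_{\gm^{-1}z_0}^{\bar t}\om_r(F;t,z)\,,
\]
and subtracting $Q_F(t)$ yields $\int_{\gm^{-1}z_0}^{z_0}=\ps^{z_0}_{F,\gm}$ directly. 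Your chain instead reaches $\int_{\gm z_0}^{z_0}$, and the subsequent ``reindexing'' by a second use of Lemma~\ref{lem-omprop}(iii) is not legitimate: that lemma always carries the operator $|_{v,2-r}\gm$ along with the change of endpoints, so it cannot convert $\int_{\gm z_0}^{z_0}\om_r(F;t,z)$ into $\int_{\gm^{-1}z_0}^{z_0}\om_r(F;t,z)$ without leaving a residual action behind. The right answer emerges only because two sign slips happen to cancel.

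For Part~ii) your idea is correct, but the computation is tidier if (as the paper does) you pass to $\overline{\Ci Q_F(z)}=\int_{z_0}^{z}(\tau-\bar z)^{r-2}F(\tau)\,d\tau$ and differentiate in~$z$: the integrand depends only on $\bar z$, so $\partial_z$ hits only the upper limit, producing $(z-\bar z)^{r-2}F(z)$ in a single step. This bypasses your separate treatment of an ``integrand term'' that you then have to argue is killed after~$\Ci$. One small inconsistency: on the diagonal one has $\arg(z-t)=\arg(2iy)=\pi/2$, as you correctly state in your final paragraph, but earlier you write $\arg(\bar t-t)=-\pi/2$; only the former is compatible with the branch convention and with the constant $2^{r-1}e^{\pi i(r-1)/2}$.
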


\begin{proof}For Part~i) we use Part~iii) of Lemma~\ref{lem-omprop}.
For $\gm=\matc abcd \in \Gm$
\begin{align*} Q_F|_{v,2-r} \gm\,(t) &\= v(\gm)^{-1}\, (c t+d)^{r-2}\,
Q_F(\gm t)
\\
&\= \int_{z_0}^{\gm \bar t} \om_r(F;\cdot,z)
|_{v,2-r}\gm\,(t) \= \int_{\gm^{-1}z_0}^{\bar t}
\om_r(F;t,z)\,.\end{align*}

For Part~ii) we note that the function $\Ci Q_F$ on $\uhp$ satisfies
\begin{align*}
 \overline{\Ci Q_F (z)} &\= \int_{\tau= z_0}^z (\tau-\bar z)^{r-2}\,
 F(\tau)\, d\tau\,,
 \displaybreak[0]\\
\shad_{2-\bar r}\, \Ci Q_F(z) &\= 2i\, y^{2- r} \,
\frac\partial{\partial
z} \overline{ \Ci Q_F(z) } \= 2i \, y^{2- r} \, (z-\bar z)^{r-2}\,
F(z)\\
&\= 2^{r-1}\, e^{\pi i (r-1)/2}\, F(z)\,.
\end{align*}
Since the image of $\Ci Q_F$ under the shadow operator is holomorphic,
the function $\Ci Q_F$ is in $\sharm_{2-\bar r}(\uhp)$.
\end{proof}

\begin{proof}[Proof of Theorem~\ref{THMhl}] The theorem states the
equivalence of two statements concerning an automorphic form $F\in
A_r(\Gm,v)$. Statement a) means that the cocycle $\ps^{z_0}_F$ is a
coboundary in $B^1(\Gm;\dsv{v,2-r}{-\om})$. This is equivalent to
\begin{enumerate}
\item[a')] $\exists \Ph \in \hol(\lhp)\; \forall \gm\in \Gm\;:\;
\Ph|_{v,2-r}(\gm-1)
\= \ps_{F,\gm}^{z_0}$.
\end{enumerate}
Statement b) amounts to the existence of a $(2-\nobreak\bar
r)$-harmonic lift of~$F$, and is equivalent to
\begin{enumerate}
\item[b')] $\exists H \in \harm_{2-\bar r}(\Gm,\bar v)\;:\; \shad_r H
= F$.
\end{enumerate}
We relate these two statements by a chain of intermediate equivalent
statements s1)--s6).

\begin{enumerate}
\item[s1)] $\exists \Ph \in \hol(\lhp)\; \forall \gm\in \Gm\;:\;
\Ph|_{v,2-r}(\gm-1)
\= Q_F|_{v,2-r}(\gm-1)$.
\end{enumerate}
Relation~\eqref{QF-psi} implies the equivalence of a') and~s1).

We rewrite s1) as follows:
\begin{enumerate}
\item[s2)] $\exists \Ph \in \hol(\lhp)\; \forall \gm\in \Gm\;:\;
(\Ph-Q_F)|_{ v,2- r}\gm = \Ph-Q_F$.
\end{enumerate}

Functions on $\lhp$ and $\uhp$ are related by the involution $\Ci$
in~\eqref{Ci}, which preserves holomorphy. So s2) is equivalent to
the following statement:
\begin{enumerate}
\item[s3)] $\exists M \in \hol(\uhp)\; \forall \gm\in \Gm\;:\;
(M-\Ci Q_F)|_{\bar v,2-\bar r}\gm = M-\Ci Q_F$.
\end{enumerate}

The holomorphy of $M$ is equivalent to the vanishing of $\shad_{2-\bar
r} M$. Hence s3) is equivalent to the following statement:
\begin{enumerate}
\item[s4)] There is a function $M$ on $\uhp$ such that $\forall \gm\in
\Gm\;:\;
(M-\Ci Q_F)|_{\bar v,2-\bar r}\gm = M-\Ci Q_F$ and $\shad _ {2-\bar r}
M=0$.
\end{enumerate}

Now we relate functions $H$ and $M$ on $\uhp$ by $H=M-\Ci Q_F$. With
\eqref{shad Q} this shows that s4) is equivalent to the following
statement:
\begin{enumerate}
\item[s5)] There is a function $H$ on $\uhp$ such that $\forall \gm\in
\Gm\;:\; H|_{\bar v2-\bar r}\gm = H$ and $\shad _{2-\bar r} H = -
2^{r-1} e^{\pi i(r-1)/2} F$.
\end{enumerate}

The statement that $\shad_r H$ is holomorphic is equivalent to the
statement that $H$ is $(2-\nobreak\bar r)$-harmonic. Hence we get the
equivalent statement:
\begin{enumerate}
\item[s6)] $\exists H\in \harm_{2-\bar r}(\Gm,\bar v)\;:\;
\shad_{2-\bar r}H =- 2^{r-1} e^{\pi i(r-1)/2} F$.
\end{enumerate}

Up to replacing $H$ by a non-zero multiple, statement~s6) is
equivalent to statement~b').
\end{proof}

\rmrke \ The $r$-harmonic function $Q_F$ in~\eqref{QF} describes
the cocycle $\ps_F^{z_0}$ by the relation~\eqref{QF-psi}.
Theorem~\ref{THMhl} relates the existence of a holomorphic function
also describing $\ps_F^{z_0}$ to the existence of a $r$-harmonic
lift. One may call such holomorphic functions \il{ai}{automorphic
integral}\emph{automorphic integrals}. In the work of Knopp
\cite{Kn74} and others there is the additional requirement that
automorphic integrals are invariant under~$T$.

\rmrk{Consequences} Kra's result \cite[Theorem 5]{Kra69a} is
equivalent to the statement that $H^1(\Gm;\dsv{1,2-r}{-\om})=\{0\}$
for even weights~$r$. So we have the following direct consequence of
Theorem~\ref{THMhl}.

\begin{cor}\label{cor-Kra}Let $r\in 2\ZZ$ and let $v$ be the
trivial multiplier system. Then each automorphic form in $A_r(\Gm,1)$
has a harmonic lift in $\harm_{2-r}(\Gm,1)$.
\end{cor}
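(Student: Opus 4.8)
The plan is to obtain the corollary as an immediate consequence of Theorem~\ref{THMhl} combined with the reformulation of Kra's vanishing result quoted just above. First I would specialize Theorem~\ref{THMhl} to the pair consisting of the weight $r \in 2\ZZ$ and the trivial multiplier system $v \equiv 1$. Since $r$ is real, $\bar r = r$, and since $v$ is trivial, $\bar v = v \equiv 1$; hence the theorem asserts the equivalence, for $F \in A_r(\Gm,1)$, of (a) the vanishing of the image of $\coh r \om F$ under the natural map $H^1(\Gm;\dsv{1,2-r}\om) \to H^1(\Gm;\dsv{1,2-r}{-\om})$, and (b) the existence of a harmonic lift of $F$ in $\harm_{2-\bar r}(\Gm,\bar v) = \harm_{2-r}(\Gm,1)$.

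Next I would invoke the stated reformulation of Kra's Theorem~5 in \cite{Kra69a}, namely that $H^1(\Gm;\dsv{1,2-r}{-\om}) = \{0\}$ for every even weight~$r$. Because the group into which $\coh r \om F$ is pushed forward is then the zero group, condition (a) is vacuously satisfied for every $F \in A_r(\Gm,1)$. The equivalence in Theorem~\ref{THMhl} therefore yields condition (b), which is exactly the assertion of the corollary.

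The only step that needs a moment's attention is the compatibility of indices: one must confirm that the coefficient module in the natural map of Theorem~\ref{THMhl} is literally $\dsv{1,2-r}{-\om}$ with the same even weight $2-r$ to which Kra's vanishing applies. As $r$ and hence $2-r$ lie in $2\ZZ$ and the multiplier system stays trivial under conjugation, there is no shift or twist to reconcile, so this is routine. Consequently I do not expect any genuine obstacle: the substance of the corollary is entirely contained in Theorem~\ref{THMhl} together with the translation of Kra's result into the language of $\dsv{1,2-r}{-\om}$-cohomology, both of which are already available.
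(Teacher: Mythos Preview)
Your proposal is correct and follows essentially the same approach as the paper: specialize Theorem~\ref{THMhl} to $r\in 2\ZZ$ with trivial multiplier system, then invoke the reformulation of Kra's vanishing result $H^1(\Gm;\dsv{1,2-r}{-\om})=\{0\}$ to conclude that condition~(a) holds automatically, whence condition~(b) gives the harmonic lift.
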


A bit more work is needed for the following consequence of
Theorem~\ref{THMhl}.

\begin{thm}
\label{thm-c-hl}Let $v$ be a unitary multiplier system for the weight
$r\in \RR$. If each cusp form in $\cusp r
(\Gm,v)$ has a $(2-\nobreak r)$-harmonic lift, then each unrestricted
holomorphic automorphic form in $ A_r(\Gm,v)$ has a $(2- r)$-harmonic
lift.
\end{thm}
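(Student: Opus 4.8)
The plan is to turn the statement into the vanishing of a cohomology class by means of Theorem~\ref{THMhl}, and then to import the cuspidal case from the theorem of Knopp and Mawi. Write $\kappa:H^1(\Gm;\dsv{v,2-r}\om)\to H^1(\Gm;\dsv{v,2-r}{-\om})$ for the natural map induced by the inclusion of $\Gm$-modules. Since $r$ is real, $2-\bar r=2-r$, so a $(2-r)$-harmonic lift of a form in $A_r(\Gm,v)$ is precisely a harmonic lift in the sense of Definition~\ref{hldef} and Theorem~\ref{THMhl}; that theorem says that $F\in A_r(\Gm,v)$ has such a lift in $\harm_{2-r}(\Gm,\bar v)$ if and only if $\kappa\bigl(\coh r\om F\bigr)=0$. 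Hence it suffices to show that $\kappa\circ\coh r\om$ annihilates every $F\in A_r(\Gm,v)$, given that (by hypothesis together with Theorem~\ref{THMhl}) it annihilates every $F\in\cusp r(\Gm,v)$.

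Next I would factor $\kappa$ through the distribution module: the inclusions $\dsv{v,2-r}\om\subset\dsv{v,2-r}{-\infty}\subset\dsv{v,2-r}{-\om}$ give $\kappa=\rho'\circ\rho$ with $\rho:H^1(\Gm;\dsv{v,2-r}\om)\to H^1(\Gm;\dsv{v,2-r}{-\infty})$ and $\rho':H^1(\Gm;\dsv{v,2-r}{-\infty})\to H^1(\Gm;\dsv{v,2-r}{-\om})$. The essential input is the theorem of Knopp and Mawi, Theorem~\ref{thm-KM}: because $v$ is unitary and $r\in\RR$, the composite $\rho\circ\coh r\om$, restricted to $\cusp r(\Gm,v)$, is an isomorphism onto $H^1(\Gm;\dsv{v,2-r}{-\infty})$. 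For $r\notin\ZZ_{\geq 2}$ this is the content of the commuting diagram~\eqref{KM-diag}; for $r\in\ZZ_{\geq 2}$ it follows in the same way once one notes that Knopp's cocycle $\Ci\ps^\infty_F$ of a cusp form $F$ represents, after transport to the lower half-plane, the class $\rho\bigl(\coh r\om F\bigr)$. In particular $\rho\circ\coh r\om$ maps $\cusp r(\Gm,v)$ onto $H^1(\Gm;\dsv{v,2-r}{-\infty})$.

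Now fix an arbitrary $F\in A_r(\Gm,v)$. Its class $\rho\bigl(\coh r\om F\bigr)$ lies in $H^1(\Gm;\dsv{v,2-r}{-\infty})$, so by the surjectivity just recorded there is $G\in\cusp r(\Gm,v)$ with $\rho\bigl(\coh r\om G\bigr)=\rho\bigl(\coh r\om F\bigr)$. From linearity of $\coh r\om$ (Proposition~\ref{prop-cohrom}) and of $\rho$ we get $\rho\bigl(\coh r\om(F-G)\bigr)=0$, hence $\kappa\bigl(\coh r\om(F-G)\bigr)=\rho'(0)=0$. On the other hand $G$ is a cusp form, so by hypothesis it has a $(2-r)$-harmonic lift, and Theorem~\ref{THMhl} gives $\kappa\bigl(\coh r\om G\bigr)=0$. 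Adding these and using linearity of $\kappa\circ\coh r\om$ yields $\kappa\bigl(\coh r\om F\bigr)=0$, and one last application of Theorem~\ref{THMhl} produces the required $(2-r)$-harmonic lift of $F$ in $\harm_{2-r}(\Gm,\bar v)$.

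The step that demands real care is the surjectivity assertion in the second paragraph: this is where unitarity of $v$ genuinely enters --- through $|v(\pi_\ca)|=1$ at every cusp, the standing hypothesis in Knopp and Mawi's work --- and where one must make sure that the Knopp and Mawi isomorphism is the one induced by $\coh r\om$ followed by the module maps, rather than some a priori different cocycle; the integral-weight case uses in addition the identification of $\Ci\ps^\infty_F$ with $\rho\bigl(\coh r\om F\bigr)$ mentioned above. The remaining ingredients --- the factorisation of $\kappa$, the linearity bookkeeping, and the repeated use of Theorem~\ref{THMhl} --- are purely formal.
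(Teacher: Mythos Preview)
Your argument is correct and is essentially the paper's own proof, only phrased slightly differently: the paper writes $A_r(\Gm,v)=\cusp r(\Gm,v)\oplus X$ with $X=\ker(\rho\circ\coh r\om)$, whereas you pick $G\in\cusp r(\Gm,v)$ with $\rho(\coh r\om G)=\rho(\coh r\om F)$ and work with $F-G\in X$; both rely on the surjectivity half of Knopp--Mawi and on Theorem~\ref{THMhl} in the same way. Your care about the integral-weight case is well placed; the needed compatibility $[\ps^\infty_F]=\rho(\coh r\om F)$ is exactly Proposition~\ref{prop-cu-inf}\,(iv), which holds for all $r\in\CC$.
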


\begin{proof}Comparing our results with the Theorem of Knopp and Mawi
\cite{KM10}, reformulated as Theorem~\ref{thm-KM} above, we noted
that the diagram \eqref{KM-diag} shows that for real $r$ and unitary
$v$ we can decompose $A_r(\Gm,v)
= \cusp r(\Gm,v)\oplus X$, where $X$ is the kernel of the composition
\[ A_r(\Gm,v) \stackrel{\coh r \om}\rightarrow
H^1(\Gm;\dsv{v,2-r}\om)\rightarrow H^1(\Gm;\dsv{v,2-r}{-\infty})\,.\]
So all elements of this space $X$ have $(2-\nobreak \bar r)$-harmonic
lifts, which are $(2-\nobreak r)$-harmonic lifts, since here the
weight is real. So if one can lift cusps forms, one can lift all
elements of $A_r(\Gm,v)$.
\end{proof}

\subsection{Related work}\label{sect-lit5}
Knopp \cite[\S V.2]{Kn74} discussed the question how far the module has
to be extended before a cocycle attached to an automorphic forms
becomes a coboundary.

The relation between harmonic automorphic forms, automorphic integrals
and cocycles for the shadow is mentioned by Fay on p.~145 of
\cite{Fay77}.

Bruinier and Funke \cite{BF4} explicitly considered the shadow
operator and the question whether
\il{hlc}{harmonic lift}harmonic lifts exist. Existence of harmonic
lifts is often shown with help of real-analytic Poincar\'e series
with exponential growth, introduced by Niebur \cite{Ni73}. For
instance, Bringmann and Ono \cite{BO7} (cusp forms for $\Gm_0(N)$
weight $\frac 12$), Bruinier, Ono and Rhoades \cite{BOR8}
(integral weights at least $2$), Jeon, Kang and Kim \cite{JKK}
(weight $\frac32$, exponential growth), Duke, Imamo\ovln{g}lu,\ and
T\'oth \cite{DIT14} (weight~$2$). The approach in \cite{Br-hl}
(modular forms of complex weight with at most exponential growth) is
similar; it uses no Poincar\'e series but similar meromorphic
families. Bruinier and Funke \cite[Corollary 3.8]{BF4} use Hodge
theory for the the existence of $r$-harmonic lifts, and Bringmann,
Kane and Zwegers \cite[\S3, \S5]{BKZ} explain how to employ
holomorphic projection for this purpose.

The harmonic lifts are related to ``mock automorphic forms''. For a
given unrestricted holomorphic automorphic form $F\in A_r(\Gm,v)$ it
is relatively easy to write down a harmonic function $C$ such that
$\shad_{2-\bar r}\,C=F$. The function $\Ci Q_F$ in~\eqref{QF} is an
example. Any holomorphic function $M$ such that $M+C$ is a harmonic
automorphic form may be called a \il{maf}{mock automorphic form}mock
automorphic form.  The function $E_2$ in \eqref{E2} is a well
known example.
In the last ten years a vast literature on mock automorphic
forms has arisen. For an overview we mention \cite{Fo10, Za09}.

It should be stressed that our Theorem~\ref{THMhl} concerns the
existence of harmonic lifts and of automorphic integrals. An
enjoyable aspect of the theory is the large number of mock modular
forms with a explicit, number-theoretically nice description, often
with weights $\frac12$ and $\frac32$, related to functions on the
Jacobi group. See, for instance, \cite{CL10, CK12, CL12}). This leads
to explicit harmonic lifts, and via Theorem~\ref{THMhl} to the
explicit description of cocycles as coboundaries.

%%%%%%%%%%%%%%%%%%%%%%%%%%%%%%%%%%%%%%%%%%%%%%%%%%%%%%%%%%%%%%%%

\section{Boundary germs}\label{sect-bg}

To complete the proof of Theorem~\ref{THMac} we have to show that each
cohomology class in
$\hpar^1(\Gm;\dsv{v,2-r}\om,\dsv{v,2-r}{\fsn,\wdg})$ is of the form
$\coh r \om F$ for some unrestricted holomorphic automorphic form. To
do this, we use the spaces of ``analytic boundary germs'', in
Definition~\ref{sharmbdef}. This allows us to define, for $r \in \CC
\smallsetminus \ZZ_{\geq 2}$, $\Gm$-modules isomorphic to
$\dsv{v,2-r}\om$ and $\dsv{v,2-r}{\fsn,\wdg}$, consisting of germs of
functions. These germs are sections of a sheaf on the common boundary
$\proj\RR$ of $\lhp$ and $\uhp$. Using these isomorphic modules we
will be able, in Section~\ref{sect-caf}, to complete the proof of
Theorem~\ref{THMac}.

\subsection{Three sheaves on the real projective line}
\subsubsection{The sheaf of real-analytic functions on $\proj\RR$ }
Recall that $\hol$ denotes the sheaf of holomorphic functions
on~$\proj\CC$.

\begin{defn}\label{Vom}
For each open set $I\subset \proj\RR$ we define the sheaf $\V{2-r}\om$
by\ir{Vtmrom}{\V{2-r}\om}
\be \label{Vtmrom}
\V{2-r}\om(I) \;:=\; \indlim \hol(U)\,, \ee
where $U$ runs over the open neighbourhoods of $I$ in $\proj\CC$. The
 operator $|^\prj_{2-r}g$ in~\eqref{prjact} gives a linear bijection
 $\V{2-r}\om(I)
 \rightarrow \V{2-r}\om(g^{-1}I)$ for each $g\in \SL_2(\RR)$.
\end{defn}

The sections in $\V{2-r}\om(I)$ for $I$ open in $\proj\RR$ are
holomorphic on some neighbourhood of $I$ in~$\proj\CC$, and hence have
a real-analytic restriction to~$I$. Conversely, any real-analytic
function on $I$ is locally given by a convergent power series, and
hence extends as a holomorphic function to some neighbourhood of~$I$.
So we can view $\V{2-r}\om$ for each $r$ as the \il{srafP}{sheaf of
real-analytic functions}sheaf of real-analytic functions
on~$\proj\RR$, provided with the operators $|^\prj _{2-r}g$ with
$g\in \SL_2(\RR)$.

The space of global sections $\V {2-r}\om(\proj\RR)$ contains a copy
of $\dsv{2-r}\om$. Indeed, the map
\il{Prj1}{$\Prj{2-r}$}$(\Prj{2-r}\ph)(t)
= (i-\nobreak t)^{2-r}\, \ph(t)$ induces the injection
$$\Prj{2-r}:\dsv{2-r}\om \rightarrow \V{2-r}\om(\proj\RR)$$ that
intertwines the operators $|_{2-r}g$ and $|^\prj_{2-r}g$ for $g\in
\SL_2(\RR)$. It further induces a morphism of $\Gm$-modules
$\Prj{2-r}:\dsv{v,2-r}\om \rightarrow \V{v,2-r}\om(\proj\RR)$ and an
injective map from $\dsv{2-r}\om[\xi_1,\ldots,\xi_n]$ into
$\V{2-r}\om \left( \proj\RR\setminus \{\xi_1,\ldots,\xi_n\}\right)$.

%%%%%%%%%%%%%%%%%%%%%%%%%%%%%%%%%%%%%%%%%%%%%%%%%%%%%%%%

\subsubsection{The sheaf of harmonic boundary germs } We recall that
$\sharm_r$ denotes the sheaf of $r$-harmonic functions on
$\uhp$.\il{bg}{boundary germ}

\begin{defn}\label{hbg} For open $I\subset\proj\RR$\ir{bdg}{\bg_r}
\be\label{bdg} \bg_r(I)\;:=\; \indlim \sharm_r(U\cap\uhp)\,,\ee
where $U$ runs over the open neighbourhoods of $I$ in $\proj\CC$. The
induced sheaf $\bg_r$ on $\proj\RR$ is called the sheaf of
\il{hbg}{$r$-harmonic boundary germs}\emph{$r$-harmonic boundary
germs}.

 The operator $|_r g$ with $g\in \SL_2(\RR)$ induces linear bijections
$\bg_r(I) \rightarrow \bg_r(g^{-1}I)$ for open $I\subset\proj\RR$.
\end{defn}
We identify $\sharm_r(\uhp)$ with its image in~$\bg_r(\proj\RR)$.

\subsubsection{The sheaf of analytic boundary germs}We now turn to the
boundary germs that are most useful for the purpose of this paper.
\begin{defn}\label{sharmbdef}
Let $r\in \CC$.
\begin{enumerate}
\item[i)] Consider the real-analytic function $f_r$ on
$\uhp\setminus\{i\}$ given by\ir{frdef}{f_r}
\be\label{frdef}
f_r(z) \;:=\; \frac{2i}{z-i}\, \Bigl(\frac{\bar z-i}{\bar
z-z}\Bigr)^{r-1} \,. \ee
\item[ii)] For open $U\subset \proj\CC$ we define \ir{sharmb}{\sharmb
r}
\be\label{sharmb}
\sharmb r(U):=\bigl\{F\in \sharm_r(U\cap\uhp) \;:\; F/f_r \text{ has a
real-analytic continuation to }U\bigr\}\,.\ee
\item[iii)] For open sets $I\subset\proj\RR$ we define\ir{Wdef}{\W r
\om}
\be\label{Wdef} \W r \om(I)\;:=\; \indlim \sharmb r(U)\,,\ee
where $U$ runs over the open neighbourhoods of $I$ in~$\proj\CC$. This
defines a subsheaf $\W r \om$ of~$\bg_r$, called the sheaf of
\il{abg}{analytic boundary germs}\emph{analytic boundary germs}.
\end{enumerate}
\end{defn}

\begin{rmk}The function $f_r$ is analogous to the function $z\mapsto
\bigl( \frac{4y}{|z+i|^2}\bigr)^s$ (or to $w\mapsto
(1-\nobreak |w|^2\bigr)^s$ in the disk model) in \cite{BLZ13},
Definition 5.2 and the examples after Equation~(5.9).

The function $f_r$ can be written as $c q(z)^{1-r}\, \frac{z+i}{z-i}\,
(z+\nobreak i)^{2-r}$, where
\il{q}{$q(\cdot)$}$q(z)= \frac
y{|z+i|^2}$ is a real-valued real-analytic function on
$\proj\CC\setminus \{-i\}$ with $\proj\RR$ as its zero set, and $c$
is some factor in~$\CC^\ast$.

The motivation for our choice of $f_r$ is that it is the right choice
to make Proposition~\ref{prop-restr} below work.
\end{rmk}

\begin{lem}\label{lem-WactG}
Let $r\in \CC$. For each $g\in \SL_2(\RR)$ the operator $|_r g$
induces a linear bijection $\W r \om(I)
\rightarrow \W r \om(g^{-1}I)$.
\end{lem}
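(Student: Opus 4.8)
The plan is to deduce the statement from the fact, already recorded in Definition~\ref{hbg}, that $|_r g$ induces a bijection $\bg_r(I)\to\bg_r(g^{-1}I)$, together with one explicit computation involving $f_r$. Since $\W r\om$ is a subsheaf of $\bg_r$ and since by \S\ref{sect-actions} the operators $|_r g$ and $|_r g^{-1}$ are mutually inverse on functions, it suffices to prove that $|_r g$ carries every section of $\W r\om$ over $I$ to a section of $\W r\om$ over $g^{-1}I$ (then applying this to both $g$ and $g^{-1}$ yields the inverse map). So I would start from a representative $F\in\sharmb r(U)$ of a germ in $\W r\om(I)$, where $U$ is an open neighbourhood of $I$ in $\proj\CC$, and write $\ph:=F/f_r$ for its real-analytic extension to $U$; the goal is to exhibit a neighbourhood of $g^{-1}I$ on which $(F|_r g)/f_r$ continues real-analytically.

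First I would record the elementary factorisation $(\phi\cdot H)|_r g=(\phi\circ g)\cdot(H|_r g)$, valid for any functions $\phi,H$ on $\uhp$, where $\phi\circ g$ denotes $z\mapsto\phi(gz)$; it holds because the automorphy factor $(cz+d)^{-r}$ only involves $H$. Applying it to $F=\ph\cdot f_r$ gives, on $(g^{-1}U)\cap\uhp$,
\[
 (F|_r g)(z)=\ph(gz)\,(f_r|_r g)(z),\qquad\text{hence}\qquad \frac{F|_r g}{f_r}=(\ph\circ g)\cdot h_g,\quad h_g:=\frac{f_r|_r g}{f_r}\,.
\]
Since $\ph\circ g$ is real-analytic on all of $g^{-1}U$, the whole matter reduces to showing that $h_g$, a priori real-analytic on $\uhp\setminus\{i,g^{-1}i\}$, extends real-analytically across $\proj\RR$.

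The heart of the argument is then an explicit formula for $h_g$. Writing $g=\matc abcd$ and using $ad-bc=1$, I would check the identities $gz-i=(a-ic)(z-g^{-1}i)/(cz+d)$, $\overline{gz}-i=(a-ic)(\bar z-g^{-1}i)/(c\bar z+d)$ and $\overline{gz}-gz=(\bar z-z)/((c\bar z+d)(cz+d))$ (all three by direct computation, the last using $\overline{gz}=g\bar z$). Substituting these into the definition \eqref{frdef} of $f_r(gz)$ and into $(f_r|_r g)(z)=(cz+d)^{-r}f_r(gz)$, the powers of $cz+d$ cancel, leaving
\[
 h_g(z)=(a-ic)^{r-2}\,\frac{z-i}{z-g^{-1}i}\,\Bigl(\frac{\bar z-g^{-1}i}{\bar z-i}\Bigr)^{r-1},
\]
with the branch of the $(r-1)$-st power inherited from the one fixed in $f_r$. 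Because $g$ preserves $\uhp$ one has $g^{-1}i\in\uhp$ and $\overline{g^{-1}i}\in\lhp$, so $\frac{z-i}{z-g^{-1}i}$ is holomorphic and $\frac{\bar z-g^{-1}i}{\bar z-i}$ is antiholomorphic, nonzero and finite, on a neighbourhood of $\proj\RR$ obtained by fattening $\overline\uhp\cup\proj\RR$ slightly and deleting small discs around $i$ and $g^{-1}i$; such a region is contained in a topological disc, so the single-valued branch of $(\cdot)^{r-1}$ used on $\uhp$ continues real-analytically across $\proj\RR$ without monodromy obstruction. Hence $h_g$ extends to a real-analytic function on a neighbourhood $V$ of $\proj\RR$, and $(\ph\circ g)\cdot h_g$ is real-analytic on the neighbourhood $U':=(g^{-1}U)\cap V$ of $g^{-1}I$, agreeing with $(F|_r g)/f_r$ on $U'\cap\uhp$. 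Thus $F|_r g\in\sharmb r(U')$, giving the desired section of $\W r\om(g^{-1}I)$, and $|_r g^{-1}$ provides the inverse. The only real work is the displayed formula for $h_g$; the one subtlety is to carry out the branch bookkeeping on simply connected neighbourhoods of $\overline\uhp$ rather than on annular neighbourhoods of the circle $\proj\RR$.
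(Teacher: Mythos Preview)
Your proof is correct and follows essentially the same route as the paper: both arguments compute the ratio $h_g=(f_r|_r g)/f_r$ explicitly, observe that it extends real-analytically across $\proj\RR$, and conclude that $(F|_r g)/f_r=(\ph\circ g)\cdot h_g$ is real-analytic near $g^{-1}I$. The paper verifies the formula for $h_g$ by checking it at $g=\mathrm{Id}$ and then invoking real-analyticity in $(z,g)$ on $\uhp\times G_0$ together with a limit check at $\partial G_0$, whereas you compute it directly from the identities for $gz-i$, $\overline{gz}-i$, $\overline{gz}-gz$; your treatment of the branch (single-valued continuation over a simply connected neighbourhood of $\proj\RR\cup\overline\uhp$ minus two points of $\uhp$) is a clean substitute for the paper's continuation-in-$g$ argument. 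Incidentally, your displayed formula for $h_g$ agrees with the version the paper uses later in the proof of Proposition~\ref{prop-restr}, while the exponent in the paper's \eqref{frquot} is inverted; either form is real-analytic near $\proj\RR$, so the lemma is unaffected.
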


\begin{proof}Let $g=\matc abcd\in\SL_2(\RR)$. We have
\be\label{frquot}
 (cz+d)^{-r} \frac{f_r(gz)}{f_r(z)} \=(a-ic)^{r-2}\;
\frac{z-i}{z-g^{-1}i}\;\Bigl( \frac{\bar z-i}{\bar
z-g^{-1}i}\Bigr)^{r-1}\,.
\ee
To see this up to a factor depending on the choice of the arguments is
just a computation. Both sides of the equality are real-analytic in
$z\in\uhp \setminus\{i,g^{-1}i\}$ and in $g$ in the dense open set
$G_0\subset\SL_2(\RR)$, defined in~\eqref{G0}. The equality holds for
$g=\mathrm{I}$, hence for $g\in G_0$. Elements in
$\SL_2(\RR)\setminus G_0$ are approached with $c\downarrow 0$, and
$a$ and $d$ tending to negative values. The argument conventions
\S\ref{sect-actions} and Proposition~\ref{prop-opprj} are such that
both $(cz+\nobreak d)^{-r}$ and $(a-\nobreak ic)^{r-2}$ are
continuous under this approach.

Suppose that $F\in \sharmb r(U)$ represents a section in $\W r
\om(I)$, with $I=U\cap\proj\RR$ and $A=F/f_r$ real-analytic on
$U\subset \proj\RR$. Then we have
\[F|_r g ( z) = f_r(z)\;(cz+d)^{-r}\, \frac{f_r(gz)}{f_r(z)}\,
A(gz)\,,\]
which is of the form $f_r$ times a real-analytic function 
near~$g^{-1}I$.
\end{proof}

\rmrk{Examples}
\itmi Consider the $r$-harmonic function
\be \label{yex}
F(z)\=y^{1-r}\ee
on $\uhp$. Since
\[ A(z) = \frac{F(z)}{f_r(z)} \= \frac{ z-i}{2i}\, \Bigl( \frac{\bar
z-i}{-2i} \Bigr)^{1-r}\]
extends as a real-analytic function to $U=\CC\setminus \{i\}$, the
function $F$ is in $\sharmb r \left( \CC\setminus \{i\}\right)$. It
is not in $\sharmb r\left(\proj\CC\setminus \{i\}\right)$, since $A$
is not given by a convergent power series in $1/z$ and $1/\bar z$ on
a neighbourhood of $\infty$ in~$\proj\CC$. So the function $F$
represents an element of $\W r \om(\RR)$.

\itm For $r \in \CC \smallsetminus \ZZ_{\geq 2}$ and $\mu \in
\ZZ_{\geq 0}$
 \ir{mex}{\M{r,\mu}}
\be \label{mex} \M{r,\mu}(z) \;:=\;f_r(z)\, \Bigl(
\frac{z-i}{z+i}\Bigr)^{\mu+1}\, \hypg21\Bigl(
1+\mu,1-r;2-r;\frac{4y}{|z+i|^2} \Bigr)\,. \ee
At this moment we only state that $\M{r,\mu}$ is $r$-harmonic on
$\uhp\setminus\{i\}$, and postpone giving arguments for this
statement till~\S\ref{sect-pol}. The function $z\mapsto \frac
{4y}{|z+i|^2} = 1-\Bigl|\frac{z-i}{z+i} \Big |^2$ is real-analytic on
$\proj\CC\setminus \{-i\}$, with value $0$ on~$\proj\RR$. Since the
hypergeometric function is holomorphic on the unit disk in~$\CC$,
this implies that $\Bigl(
\frac{z-i}{z+i}\Bigr)^{-\mu-1}\,\M{r,\mu}(z)$ is real-analytic on
$\proj\CC\setminus\{i,-i\}$, and hence $\M{r,\mu}$ is in $\sharmb
r\left(\proj\CC\setminus\{i,-i\}\right)$, and represents an element
of $\W r \om(\proj\RR)$.

\itm Let $\re r>0$. Then $\eta(z)^{2r} = e^{2r\log\eta(z)}$ is a cusp
form of weight~$r$ for the modular group $\Gmod$. The
function\ir{hmodint}{ $powers of the Dedekind eta-function$ }
\be\label{hmodint} \Ph(z) \= \int_0^{i\infty} \eta^{2r}(\tau)\,
\frac{2i}{z-\tau}\,\Bigl(\frac{\bar z-\tau}{\bar
z-z}\Bigr)^{r-1}\,d\tau\,,\ee
defines an $r$-harmonic function on $\uhp\setminus i(0,\infty)$, if we
take the path of integration along the geodesic from $0$ to~$\infty$.
(To check the harmonicity one may apply $\xi_r$; this gives a
holomorphic function.)
Deforming the path of integration leads to other domains. Such a
change in the function does not change the $r$-harmonic boundary germ
in $\W r \om(\RR)$ it represents.

%%%%%%%%%%%%%%%%%%%%%%%%%%%%%%%%%%%%%%%%%%%%%%%%%%%

\subsection{Relation between the sheaves of harmonic boundary and
analytic boundary germs} The sheaf $\W r \om$ is related to the
simpler sheaf $\V{2-r}\om$ by the important \it restriction morphism
\rm that we will define now.
\begin{prop}\label{prop-restr}
 Let $r\in \CC$. There is a unique morphism of sheaves
\il{restr}{$\rsp_r$}$\rsp_r : \W r \om \rightarrow \V{2-r}\om$ with
the following property: If $f\in \W r \om(I)$ for an open set
$I\subset\proj\RR$ is represented by $F\in \sharmb r(U)$, and $\rsp_r
f\in \V {2-r}\om(I)$ is represented by $\ph \in
\hol(U_0)$ for some open neighbourhood $U_0$ of $I$ in~$\proj\CC$,
then the real-analytic function $F/f_r$ on $U$ is related to $\ph$ by
\[ \bigl( F/f_r\bigr)(t) \= \ph(t)
\qquad \text{ for }t\in I\,.\]

This is called the
\il{rm}{restriction morphism}\emph{restriction morphism} and is
compatible with the actions of $\SL_2(\RR)$:
\be\label{res-g} \rsp_r(f|_r g) = (\rsp_r f)|^\prj_{2-r}
g\qquad\text{for }f\in \W r \om(I)\text{ and }g\in \SL_2(\RR)\,. \ee
\end{prop}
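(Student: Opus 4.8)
The plan is to construct $\rsp_r$ directly from the definition of the sheaf $\W r\om$ and then verify uniqueness, sheaf-functoriality, and equivariance. Fix an open $I\subset\proj\RR$ and a germ $f\in\W r\om(I)$, and choose a representative $F\in\sharmb r(U)$ with $U$ an open neighbourhood of $I$ in $\proj\CC$. By Definition~\ref{sharmbdef} the quotient $A:=F/f_r$, a priori only real-analytic on $U\cap\uhp$, extends uniquely to a real-analytic function on all of $U$; in particular $A|_I$ is real-analytic on the open subset $I$ of $\proj\RR$, hence locally the restriction of a convergent power series, and so extends to a holomorphic function $\ph$ on some open neighbourhood $U_0$ of $I$ in $\proj\CC$. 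Any two such holomorphic extensions agree near $I$ (they have the same Taylor expansion at each point of $I$), so $\ph$ determines a well-defined germ in $\V{2-r}\om(I)$, and we set $\rsp_r f$ equal to it. The asserted relation $\bigl(F/f_r\bigr)(t)=\ph(t)$ for $t\in I$ then holds by construction.

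First I would check independence of the representative. If $(U_1,F_1)$ and $(U_2,F_2)$ represent the same $f\in\W r\om(I)$, then $F_1=F_2$ on $U'\cap\uhp$ for some neighbourhood $U'\subset U_1\cap U_2$ of $I$, whence $A_1=A_2$ on $U'\cap\uhp$; for each $\xi\in I$ a small ball $B\subset U'$ about $\xi$ meets $\uhp$ in a nonempty open set on which the real-analytic functions $A_1,A_2$ agree, so $A_1=A_2$ on $B$ by the identity theorem, and therefore $A_1=A_2$ on $I$, giving the same holomorphic extension-germ. The same comparison shows $\rsp_r$ commutes with the restriction maps $\V{2-r}\om(I)\to\V{2-r}\om(I')$ for $I'\subset I$, so it is a morphism of sheaves. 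Uniqueness of $\rsp_r$ is immediate: the stated property fixes $\rsp_r f$ on $I$, and a section of $\V{2-r}\om$ near a point of $\proj\RR$ is determined by its restriction to $\proj\RR$ (a holomorphic function vanishing on a real interval vanishes identically near it), so any morphism enjoying the property coincides with $\rsp_r$.

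The one substantive point is the $\SL_2(\RR)$-equivariance \eqref{res-g}. Let $g=\matc abcd\in\SL_2(\RR)$ and let $F\in\sharmb r(U)$ represent $f$ with $A=F/f_r$; by \eqref{SL2op},
\[ \frac{F|_r g}{f_r}(z) \= (cz+d)^{-r}\,\frac{f_r(gz)}{f_r(z)}\;A(gz)\,, \]
and $F|_r g\in\sharmb r(g^{-1}U)$ represents $f|_r g\in\W r\om(g^{-1}I)$ by Lemma~\ref{lem-WactG}. Now invoke the transformation formula \eqref{frquot} for $(cz+d)^{-r}f_r(gz)/f_r(z)$: upon restricting $z$ to a real point $t\in g^{-1}I$ it degenerates precisely to the automorphy factor $(a-ic)^{r-2}\bigl(\tfrac{t-i}{t-g^{-1}i}\bigr)^{2-r}$ of the operator $|^\prj_{2-r}g$ from \eqref{prjact}. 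Hence $\bigl(F|_r g/f_r\bigr)(t)=(a-ic)^{r-2}\bigl(\tfrac{t-i}{t-g^{-1}i}\bigr)^{2-r}A(gt)=\bigl((\rsp_r f)|^\prj_{2-r}g\bigr)(t)$ for all $t\in g^{-1}I$, and by the uniqueness of holomorphic extension across $\proj\RR$ this promotes to the germ-level identity $\rsp_r(f|_r g)=(\rsp_r f)|^\prj_{2-r}g$.

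I expect the main obstacle to be purely the bookkeeping of branches in this last step: one must make sure that the power $\bigl(\tfrac{\bar z-i}{\bar z-z}\bigr)^{r-1}$ defining $f_r$, together with the argument conventions of \S\ref{sect-actions} and Proposition~\ref{prop-opprj} (notably $\arg(a-ic)\in[-\pi,\pi)$ and the split conventions for $(cz+d)^{-r}$ on $\uhp$ and $\lhp$), combine so that specialising $z$ to the real axis genuinely yields the branch of $\bigl(\tfrac{t-i}{t-g^{-1}i}\bigr)^{2-r}$ prescribed in \eqref{prjact}. Concretely this amounts to establishing \eqref{frquot} with its prescribed arguments (as in the proof of Lemma~\ref{lem-WactG}: check it for $g$ near the identity, extend by analyticity over $\uhp\times G_0$, and control the passage to $\SL_2(\RR)\setminus G_0$) and then setting $z=t\in\RR$. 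Everything else — the sheaf-theoretic bookkeeping and the appeals to the identity theorems — is routine.
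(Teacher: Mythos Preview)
Your proposal is correct and follows essentially the same approach as the paper: define $\rsp_r f$ as the germ in $\V{2-r}\om(I)$ of the real-analytic restriction $(F/f_r)|_I$, check independence of representative by the identity principle, and for equivariance apply \eqref{frquot} to $(F|_r g)/f_r$ and specialise $z=\bar z=t\in\proj\RR$ to recover the factor in \eqref{prjact}. Your cautionary remarks about branch bookkeeping match exactly what the paper handles by citing the proof of Lemma~\ref{lem-WactG}; note that the exponent appearing after the specialisation is $2-r$ (the paper's proof of Proposition~\ref{prop-restr} writes the $\bar z$-factor with exponent $1-r$, so that combined with the linear $\tfrac{z-i}{z-g^{-1}i}$ one obtains $\bigl(\tfrac{t-i}{t-g^{-1}i}\bigr)^{2-r}$ on the real line).
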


\begin{proof} Let $F\in \sharmb r(U)$ for some neighbourhood $U$ of $I$
in~$\proj\CC$. Then $A:=F/f_r$ on $U\cap\uhp$ extends as a
real-analytic function to $U$. If we replace $F$ by another
representative $F_1\in \sharmb r
(U_1)$ of the same element of $\W r \om(I)$, then $F_1$ and $F$ have
the same restriction to $U_2\cap \uhp$ for a connected neighbourhood
$U_2\subset U\cap U_1$ of $I $ in~$\proj\CC$. Since $U_2$ is
connected, the functions $A$ and $A_1$ extend uniquely to~$U_2$, and
hence to $I\subset U_2$. We thus obtain a well-defined function on
$I$ which has further a holomorphic extension to some neighbourhood
$U_0$ of $I$ in~$\proj\CC$ since it is real-analytic on~$I$. Hence it
represents an element of $\V {2-r}\om(I)$ that is uniquely determined
by the element $f\in \W r \om(I)$ represented by~$F$.

This defines $\rsp_r : \W r \om(I)\rightarrow \V {2-r}\om(I)$. We have
compatibility with the restriction maps associated to $I_1\subset I$,
and hence obtain a morphism of sheaves.

Let $g = \matc abcd\in \SL_2(\RR)$. {}From relation \eqref{frquot} we
see that the map $\W r \om(I)
\rightarrow \W r \om(g^{-1}I)$ determined by $F\mapsto F|_r g$ sends
 $F/f_r$ to the real-analytic function
\begin{align*} \Bigl( \frac{F|_r g}{f_r}\Bigr)(z) &\=
\frac{(cz+d)^{-r}\, F(gz)}{f_r(z)} \=
(cz+d)^{-r}\,\frac{f_r(gz)}{f_r(z)}\,\bigl(F/f_r)(gz)\\
&\= (a-ic)^{r-2}\, \frac{z-i}{z-g^{-1}i}\,\Bigl( \frac{\bar z-i}{\bar
z-g^{-1} i} \Bigr)^{1-r} \, (F/f_r)(gz)
\,.\end{align*}
For $z=t\in I$, this equals $((F/f_r)|_{2-r}^\prj g)(t)$ by
\eqref{prjact}. Thus, we obtain~\eqref{res-g}.
\end{proof}

\rmrk{Illustration}Let $I$ be an interval in $\proj\RR$, and let $U$
be an open neighbourhood of $I$ in $\proj\CC$. For representatives $F$
of $f\in \W r \om(I)$ a representative $\ph$ of the image $\rsp_r
f\in \V{2-r}\om$ is obtained by a sequence of extensions and
restrictions. See Figure~\ref{fig-ext-res}.
\begin{figure}[ht]
{\small\[\setlength\unitlength{.7cm}\begin{array}{ccccccc}
F & \stackrel{\mathrm {ext}}\rightarrow& F/f_r & \stackrel{\mathrm
{res}}\rightarrow&
(F/f_r)|_I = \ph|_I& \stackrel{\mathrm {ext}}\rightarrow & \ph\\
\begin{picture}(2.5,2.5)(-1.25,-1.25)
\put(-1.25,0){\line(1,0){2.5}}
\put(0,0){\oval(2,2)[b]}
\thicklines
\put(0,0){\oval(2,2)[t]}
\put(-1,0){\line(1,0){2}}
\end{picture}&&
\begin{picture}(2.5,2.5)(-1.25,-1.25)
\put(-1.25,0){\line(1,0){2.5}}
\put(.3,.3){\vector(0,-1){.6}}
\put(-.3,.3){\vector(0,-1){.6}}
\thicklines
\put(0,0){\oval(2,2)[b]}
\put(0,0){\oval(2,2)[t]}
\end{picture}&&
\begin{picture}(2.5,2.5)(-1.25,-1.25)
\put(-1.25,0){\line(1,0){2.5}}
\put(0,0){\oval(2,2)[b]}
\put(0,0){\oval(2,2)[t]}
\put(-.3,.6){\vector(0,-1){.4}}
\put(.3,.6){\vector(0,-1){.4}}
\put(-.3,-.6){\vector(0,1){.4}}
\put(.3,-.6){\vector(0,11){.4}}
\thicklines
\put(-1,0){\line(1,0){2}}
\end{picture}&&
\begin{picture}(2.5,2.5)(-1.25,-1.25)
\put(-1.25,0){\line(1,0){2.5}}
\put(0,0){\vector(0,1){.4}}
\put(0,0){\vector(0,-1){.4}}
\thicklines
\put(-1,0){\line(1,1){1}}
\put(-1,0){\line(1,-1){1}}
\put(1,0){\line(-1,1){1}}
\put(1,0){\line(-1,-1){1}}
\end{picture}\\
\text{$r$-harmonic}&& \text{real-analytic}&& \text{real-analytic}&&
\makebox[0cm][c]{holomorphic on}\\
\text{on $U\cap\uhp$}&& \text{on $U$}&& \text{on $I$}&&
\makebox[0cm][c]{some neighbourhood}
\end{array}\]}
\caption{$\rsp_r$ as a sequence of extensions and
restrictions.}\label{fig-ext-res}
\end{figure}

\rmrk{Examples}
\itmi The restriction of $F(z) = y^{1-r}$ in~\eqref{yex} is \[(\rsp_r
F)(t)= \frac{t-i}{2i}\Bigl(\frac{t-i}{-2i}\Bigr)^{1-r} =-\Bigl(
\frac{t-i}{-2i}\Bigr)^{ 2-r}\,.\]

\itm For $\M{r,\mu}$ in~\eqref{mex} we use that $\frac{4 y
}{|z+i|^2}=0$ on $\proj\RR$ and that $\hypg21(\cdot,\cdot;\cdot;0)=1$
to obtain
\be \label{rspM} (\rsp _r \M {r,\mu})(t) =
\Bigl(\frac{t-i}{t+i}\Bigr)^{\mu+1}\,.
\ee

%%%%%%%%%%%%%%%%%%%%%%%%%%%%%%%%%%%%%%%%%%%%%%%%%%%%%%

\subsection{Kernel function for the map from automorphic forms to
boundary germ
cohomology}\label{sect-Kr}\begin{prop}\label{prop-Kr}For $r\in \CC$
let $K_r$ be the \il{Krkf}{kernel function}function on
$\left(\uhp\times\uhp\right)\setminus(\text{diagonal})$ given
by:\ir{Kq}{K_r(\cdot;\cdot)}
\be\label{Kq}
K_r(z;\tau) \;:=\; \frac{2i}{z-\tau}\,\Bigl( \frac{\bar z-\tau}{\bar
z-z}\Bigr)^{r-1}\,. \ee

For each $z\in \uhp$ the function $K_r(z;\cdot)$ is holomorphic on
$\uhp\setminus \{z\}$, and for each $\tau\in \uhp$ the function
$K_r(\cdot;\tau)$ is $r$-harmonic on $\uhp\setminus\{ \tau\}$ and
represent an element of $\W r \om(\proj\RR)$ with restriction
\be \label{res-Kr}
\bigl(\rsp_r K_r(\cdot;\tau)\bigr)(t) \= \Bigl(
\frac{\tau-t}{i-t}\Bigr)^{r-2}\,. \ee
For each $g\in \SL_2(\RR)$ it satisfies
\be\label{Kq-inv} K_r(\cdot;\cdot)\, |_r g \otimes |_{2-r} g \= K_r\,.
\ee
\end{prop}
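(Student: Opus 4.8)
The plan is to prove all three assertions by direct computation, the only real subtlety being the tracking of branches of the powers occurring in $K_r$, which I would settle by analytic continuation exactly as in the proof of Proposition~\ref{prop-opprj}.

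\emph{Holomorphy in the second variable.} Fix $z\in\uhp$ and write
\[ K_r(z;\tau) \= 2i\,(z-\tau)^{-1}\,(\bar z-\tau)^{r-1}\,(\bar z-z)^{1-r}\,. \]
The factor $(z-\tau)^{-1}$ is meromorphic in $\tau$ with its only pole at $\tau=z$, the factor $(\bar z-z)^{1-r}$ is constant in $\tau$, and for $\tau\in\uhp$ the point $\bar z-\tau$ stays in the lower half-plane, so $(\bar z-\tau)^{r-1}$, and hence $K_r(z;\cdot)$, is holomorphic on $\uhp\setminus\{z\}$.

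\emph{Harmonicity, analytic boundary germ, and the restriction formula.} Fix $\tau\in\uhp$. Differentiating $K_r(z;\tau)$ in $\bar z$ and using $(\bar z-z)-(\bar z-\tau)=\tau-z$ to cancel the pole $(z-\tau)^{-1}$ yields the clean identity
\[ \frac{\partial}{\partial\bar z}\,K_r(z;\tau) \= -2i(r-1)\,(\bar z-\tau)^{r-2}\,(\bar z-z)^{-r}\,. \]
Taking the complex conjugate as required by \eqref{shad}, the powers of $y=\im z$ cancel, and $\shad_r K_r(\cdot;\tau)$ comes out to be a constant multiple of $(z-\bar\tau)^{\bar r-2}$; since $z-\bar\tau\in\uhp$ whenever $z\in\uhp$, this is holomorphic on $\uhp$, so by the criterion \eqref{harm-detect} the function $K_r(\cdot;\tau)$ is $r$-harmonic on $\uhp\setminus\{\tau\}$, its sole singularity being $\tau$. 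For the boundary germ assertion, note $f_r(z)=K_r(z;i)$ by \eqref{frdef}, so that
\[ \frac{K_r(z;\tau)}{f_r(z)} \= \frac{z-i}{z-\tau}\,\Bigl(\frac{\bar z-\tau}{\bar z-i}\Bigr)^{r-1}, \]
a product of a holomorphic and an antiholomorphic function of $z$ whose only singularities, $z=\tau$ and $z=i$, lie in $\uhp$, and which extends across $\infty$ with value $1$; hence $K_r(\cdot;\tau)\in\sharmb r(U)$ for a neighbourhood $U$ of $\proj\RR$ in $\proj\CC$, and it represents a section of $\W r \om(\proj\RR)$. Restricting the last display to $t\in\proj\RR$, where $\bar t=t$, collapses it to $\bigl(\frac{t-\tau}{t-i}\bigr)^{r-2}=\bigl(\frac{\tau-t}{i-t}\bigr)^{r-2}$; a single check of branches (at a convenient real point, or along a path approaching the boundary) confirms \eqref{res-Kr}.

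\emph{Equivariance.} For $g=\matc abcd\in\SL_2(\RR)$, using $\overline{gz}=g\bar z$ together with the cocycle identities $gz-g\tau=\frac{z-\tau}{(cz+d)(c\tau+d)}$, $\;g\bar z-gz=\frac{\bar z-z}{(c\bar z+d)(cz+d)}$ and $g\bar z-g\tau=\frac{\bar z-\tau}{(c\bar z+d)(c\tau+d)}$, a short computation gives $K_r(gz;g\tau)=(cz+d)^{r}(c\tau+d)^{2-r}\,K_r(z;\tau)$ modulo branch, which is exactly \eqref{Kq-inv} (and generalizes relation \eqref{frquot}). To pin down the branches I would argue as for Proposition~\ref{prop-opprj}: with the argument conventions of \S\ref{sect-actions} the identity holds for $g$ near the identity by inspection, it persists for $g$ in the dense open set $G_0$ by real-analyticity in $(z,\tau,g)$, and the extension to $g\in\SL_2(\RR)\setminus G_0$ follows because those conventions make $(cz+d)^{-r}$, $(c\tau+d)^{r-2}$ and the powers occurring in $K_r(gz;g\tau)$ jointly continuous as $c\downarrow0$ with $a,d$ tending to negative values. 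The computations throughout are routine; the main obstacle is precisely this branch bookkeeping in \eqref{res-Kr} and \eqref{Kq-inv}, which is handled by the continuity and analytic-continuation arguments just described.
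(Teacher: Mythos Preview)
Your approach is essentially the same as the paper's: compute the shadow to verify $r$-harmonicity, divide by $f_r$ to exhibit the boundary germ and read off the restriction, and check equivariance by direct computation with the cocycle identities, settling branches by analytic continuation from $g$ near the identity.

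One slip to correct: the singularities of $K_r(z;\tau)/f_r(z)=\frac{z-i}{z-\tau}\bigl(\frac{\bar z-\tau}{\bar z-i}\bigr)^{r-1}$ are not at $z=\tau$ and $z=i$. The rational factor is singular only at $z=\tau\in\uhp$, while the antiholomorphic factor, as a function of $\bar z$, has its branch points at $\bar z=\tau$ and $\bar z=i$, i.e.\ at $z=\bar\tau$ and $z=-i$; so its branch cut can be taken along a path in $\lhp$ from $-i$ to $\bar\tau$ (exactly as the paper notes). Your conclusion is unaffected, since neither the point $\tau\in\uhp$ nor this path in $\lhp$ meets $\proj\RR$, so the quotient is indeed real-analytic on a neighbourhood of $\proj\RR$ in $\proj\CC$.
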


\rmrke In \eqref{Kq-inv} we use $K_r(\cdot;\cdot)|_r g \otimes
|_{2-r}g (z,\tau)=
(cz+d)^{-r}\,(c\tau+d)^{r-2}\, K_r(gz;g\tau)$ for $g=\matc abcd$.

\begin{proof}The shadow operator, defined in \eqref{shad}, gives
\be \label{shad-Kr}
(\shad_r K_r(\cdot;\tau)\bigr)(z) \= (\bar r-1) \, \Bigl( \frac{z-\bar
\tau}{2i} \Bigr)^{\bar r-2}\,. \ee
The result is holomorphic in $z$, hence $z\mapsto K_r(z;\tau)$ is
$r$-harmonic.

The quotient
\be\label{Kr/fr}
K_r(z;\tau)/f_r(z) \= \frac{i-z}{\tau-z}\, \Bigl( \frac{\tau-\bar
z}{i-\bar z}\Bigr)^{r-1} \,. \ee
extends real-analytically in $z$ across $\proj\RR$, \ to
$U_\tau=\proj\CC \setminus\left( \{\tau\} \cup p \right)$, where $p$
is a path in $\lhp$ from $-i$ to $\bar \tau$. So $K_r(\cdot;\tau)\in
\sharmb r(U_\tau)$ represents an analytic boundary germ on $\proj\RR$.
On $\proj\RR$ the values of $z$ and $\bar z$ coincide, and we find
the restriction in~\eqref{res-Kr}.

For the equivariance in \eqref{Kq-inv} we check by a computation
similar to the computation in the proof of Lemma~\ref{lem-WactG} that
\[ (cz+d)^{-r}\,(c\tau+d)^{r-2}\, K_r(gz;g\tau) \=
K_r(z;\tau)\,.\qedhere\]
\end{proof}

\rmrke The restriction $\rsp_r K_r(\cdot;\tau)$ in \eqref{res-Kr}
gives a function in $\Prj{2-r}\dsv{2-r}\om$. Since it is convenient
to work with $\dsv{2-r}\om$ itself, we introduce the following
operator:

\begin{defn}
We set \ir{rsl}{\rs_r}
\be\label{rsl} \rs_r \;:=\; \Prj{2-r}^{-1}\,\rsp_r \ee
\end{defn}
So $(\rs_r f)(t)\=(i-\nobreak t)^{r-2}\,
(\rsp_r f)(t)$, and we find
\be \bigl(\rs_r K_r(\cdot;\tau)\bigr)(t) \= (\tau-t)^{r-2}\,.
\ee
This shows that the kernel $K_r(\cdot;\cdot)$ is analogous to the
 kernel function $(z,t)\mapsto
(z-\nobreak t)^{r-2}$ in the Eichler integral. For fixed $\tau\in
\uhp$ the representative
\[ z\mapsto K_r(z;\tau)\]
of an element of $\W r \om(\proj\RR)$ is sent by the restriction map
to the representative
\[ t\mapsto (z-t)^{r-2}\]
of an element of $\dsv{2-r}\om$.

\begin{defn}\label{def-cz0}Let $F\in A_r(\Gm,v)$. We put for $z_0\in
\uhp$ and $\gm\in \Gm$:\ir{cz0-def}{c^{z_0}_F}
\be\label{cz0-def} c^{z_0}_{F,\gm}(z) \;:=\; \int_{\gm^{-1}z_0}^{z_0}
K_r(z;\tau)\, F(\tau)\, d\tau\,. \ee
\end{defn}

\begin{prop}\label{prop-bcoh}
 Let $r\in \CC$, $z_0\in \uhp$, and $F\in A_r(\Gm,v)$.
\begin{enumerate}
\item[i)] The map $\gm\mapsto c^{z_0}_{F,\gm}$ defines a cocycle
$c^{z_0}_F \in Z^1\bigl( \Gm;\W r \om(\proj\RR)\bigr)$.
\item[ii)] The cohomology class \il{bcoh}{$\bcoh r \om$}$\bcoh r\om F
:= \bigl[c^{z_0}_F \bigr]$ in $H^1\bigl(
\Gm;\W{v,r}\om(\proj\RR)\bigr)$ does not depend on the base
point~$z_0\in \uhp$.
\item[iii)] With the natural map $ H^1(\Gm; \dsv{v,2-r} \om)
\rightarrow H^1\bigl(\Gm;\V {2-r}\om
(\proj\RR)\bigr)$ corresponding to $\Prj{2-r}:\dsv{v,2-r}\om
\rightarrow \V{v,2-r}\om$, the following diagram commutes:
\bad \xymatrix{ A_r(\Gm,v) \ar[r]^(.45){\coh r \om}
\ar[drr]^(.65){\bcoh r \om}
& H^1(\Gm; \dsv{v,2-r} \om) \ar[r]^(.45){ \Prj{2-r}}
& H^1\bigl(\Gm;\V {v,2-r}\om (\proj\RR)\bigr)
\\
&& H^1\bigl(\Gm;\W {v,r} \om(\proj\RR)\bigr)\ar[u]_{\rsp_r} } \ead
\end{enumerate}
\end{prop}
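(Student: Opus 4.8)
The plan is to prove the three parts in order, reusing the structure of the proof of Proposition~\ref{prop-cohrom} but with the kernel $K_r$ in place of the Eichler kernel $(z-t)^{r-2}$, and then to check commutativity of the diagram on the level of cocycles via the restriction morphism.

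\emph{Part i).} First I would observe that for fixed $z_0\in\uhp$ and $\gm\in\Gm$ the integration in~\eqref{cz0-def} runs over a compact path in $\uhp$, and by Proposition~\ref{prop-Kr} each $z\mapsto K_r(z;\tau)$ represents an element of $\W r\om(\proj\RR)$; moving $\tau$ along a compact path stays within a common neighbourhood of $\proj\RR$ on which all the relevant functions $K_r(\cdot;\tau)/f_r$ are real-analytic (uniformly), so the integral defines a section $c^{z_0}_{F,\gm}\in\W r\om(\proj\RR)$. For the cocycle relation I would mimic the computation in Proposition~\ref{prop-cohrom}(i): using the substitution $\tau\mapsto\dt\tau$, the invariance~\eqref{Kq-inv} of $K_r$ under $|_r g\otimes|_{2-r}g$, and the transformation law $F|_{v,r}\dt=F$, one gets
\[
c^{z_0}_{F,\gm\dt}-c^{z_0}_{F,\dt}
\= \int_{\dt^{-1}\gm^{-1}z_0}^{\dt^{-1}z_0}K_r(z;\tau)\,F(\tau)\,d\tau
\= \bigl(c^{z_0}_{F,\gm}\bigr)\big|_{v,r}\dt\,,
\]
where the middle equality is the additivity of the path integral and the last one uses~\eqref{Kq-inv} together with $v(\dt)^{-1}F(\dt\tau)(c\tau+d)^{r}=F(\tau)$. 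Hence $c^{z_0}_F\in Z^1(\Gm;\W{v,r}\om(\proj\RR))$, where the module structure on $\W r\om(\proj\RR)$ is the one induced by $|_{v,r}$ (Lemma~\ref{lem-WactG} shows $|_r g$ preserves $\W r\om$, and twisting by $v$ gives the $\Gm$-action).

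\emph{Part ii).} For independence of $z_0$ I would again follow Proposition~\ref{prop-cohrom}(ii): with two base points $z_0,z_1$,
\[
c^{z_0}_{F,\gm}-c^{z_1}_{F,\gm}
\= \Bigl(\int_{\gm^{-1}z_0}^{\gm^{-1}z_1}-\int_{z_0}^{z_1}\Bigr)K_r(z;\tau)F(\tau)\,d\tau
\= b\big|_{v,r}\gm - b\,,
\]
with $b(z)=\int_{z_0}^{z_1}K_r(z;\tau)F(\tau)\,d\tau\in\W r\om(\proj\RR)$ (finite path, so again a legitimate analytic boundary germ), using~\eqref{Kq-inv} and the automorphy of $F$ for the first step. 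So the two cocycles differ by a coboundary and $\bcoh r\om F=[c^{z_0}_F]$ is well defined.

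\emph{Part iii).} The point is that the restriction morphism $\rsp_r\colon\W r\om\to\V{2-r}\om$ of Proposition~\ref{prop-restr} sends $c^{z_0}_{F,\gm}$ to the image under $\Prj{2-r}$ of $\ps^{z_0}_{F,\gm}$. Indeed, $\rsp_r$ is $\SL_2(\RR)$-equivariant by~\eqref{res-g}, so it induces the vertical map $H^1(\Gm;\W{v,r}\om(\proj\RR))\to H^1(\Gm;\V{v,2-r}\om(\proj\RR))$ in the diagram, and it is $\CC$-linear, hence commutes with integration over a compact path. Applying it inside~\eqref{cz0-def} and using~\eqref{res-Kr}, $\bigl(\rsp_r K_r(\cdot;\tau)\bigr)(t)=\bigl(\tfrac{\tau-t}{i-t}\bigr)^{r-2}$, which by Lemma~\ref{lem-om-sp} (cf.~\eqref{omr-prj}) is exactly $\bigl(\Prj{2-r}\om_r(F;\cdot,\tau)\bigr)(t)/F(\tau)\cdot F(\tau)$ up to the $dz$; more precisely $\rsp_r c^{z_0}_{F,\gm}=\Prj{2-r}\ps^{z_0}_{F,\gm}$, i.e. $\rs_r c^{z_0}_{F,\gm}=\ps^{z_0}_{F,\gm}$ in the notation of~\eqref{rsl}. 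This gives commutativity of the upper triangle $\rsp_r\circ\bcoh r\om=\Prj{2-r}\circ\coh r\om$ at the cocycle level, hence in cohomology; the lower-left edge $A_r(\Gm,v)\to H^1(\Gm;\W{v,r}\om(\proj\RR))$ is by definition $\bcoh r\om$, so the whole diagram commutes.

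I expect the only genuinely delicate point to be the verification that the integrals in Parts~i) and~ii) actually land in $\W r\om(\proj\RR)$ rather than merely in $\bg_r(\proj\RR)$: one must check that, as $\tau$ ranges over the (compact) path of integration, the functions $K_r(\cdot;\tau)/f_r$ extend real-analytically across $\proj\RR$ to a \emph{common} open neighbourhood of $\proj\RR$ in $\proj\CC$, so that the $\tau$-integral of analytic boundary germs is again an analytic boundary germ. This follows from the explicit formula~\eqref{Kr/fr}, which shows the singular set of $K_r(\cdot;\tau)/f_r$ is $\{\tau\}$ together with a path in $\lhp$ from $-i$ to $\bar\tau$; for $\tau$ in a compact subset of $\uhp$ these stay uniformly bounded away from $\proj\RR$, giving the required uniform neighbourhood and continuity of the integrand in the inductive-limit topology. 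Everything else is a direct transcription of the arguments already carried out for $\coh r\om$, with $K_r$ and its invariance~\eqref{Kq-inv} replacing the factor $(z-t)^{r-2}$ and Lemma~\ref{lem-omprop}.
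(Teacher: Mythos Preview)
Your proof is correct and follows exactly the approach the paper takes: the paper's own proof is a one-sentence reference back to Proposition~\ref{prop-cohrom}, noting that $K_r(z;\tau)\,F(\tau)\,d\tau$ has the same formal properties as $\om_r(F;t,\tau)$ via Proposition~\ref{prop-Kr}, and then pointing to~\eqref{res-Kr} for Part~iii). You have simply written out those computations in full, and your extra paragraph verifying that the integral lands in $\W r\om(\proj\RR)$ (via the uniform neighbourhood argument from~\eqref{Kr/fr}) addresses a point the paper leaves implicit.
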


\begin{proof}Proposition~\ref{prop-Kr} shows that the differential
form $K_r(z;\tau)\, F(\tau)\, d\tau$ has properties analogous to
those of $\om_r(F;t,\tau)$ in \S\ref{sect-af-coh}. The proof of Parts
i)
and~ii) goes along the same lines as the proof of
Proposition~\ref{prop-cohrom}. For Part~iii) use~\eqref{res-Kr}.
\end{proof}

Thus, we see that the map $\coh r \om$ to cohomology in
Theorem~\ref{THMac} is connected to the map $\bcoh r \om$ to boundary
germ cohomology by the restriction map $\rsp r$. However, in
Theorem~\ref{THMac} the basic module is $\dsv{v,2-r}\om$ and not the
larger module $\V{2-r}\om(\proj\RR)$. We need to study the boundary
germs more closely, in order to identify inside
$\W{v,r}\om(\proj\RR)$ a smaller module that can play the role
of~$\dsv{v,2-r}\om$.

\subsection{Local study of the sheaf of analytic boundary germs}

\subsubsection{Positive integral weights}At many places in this
section positive integral weights require separate treatment. A
reader wishing to avoid these complications may want to concentrate
on the general case of weights in $\CC\setminus \ZZ_{\geq 1}$.

The next definition will turn out to be relevant for weights in
$\ZZ_{\geq 1}$ only.

\begin{defn}\label{defn-sharmh}For $U$ open in $\proj\CC$
let\ir{sharh}{\sharmh r(U)}
\badl{sharh}\sharmh r(U)&\;:=\; \bigl\{ F\in \hol(U\cap\uhp)\cap
\sharmb r(U)\;:\;\\
&\quad F \text{ has a holomorphic extension to } U \bigr\}\,.\eadl
\end{defn}

\begin{lem}\label{lem-hW}Let $r\in \ZZ_{\geq 1}$. For open sets
$U\subset \proj\RR$ such that $U\cap\proj\RR\neq\emptyset$ and
$-i\not\in U$ the restriction to $U\cap\uhp$ of $F\in \hol(U)$ is in
$\sharmh r(U)$ if one of the following conditions is satisfied:
\begin{enumerate}
\item[a)] $U\subset \CC$,
\item[b)] $\infty\in U$ and $F$ has at $\infty$ a zero of order at
least~$r$.
\end{enumerate}
\end{lem}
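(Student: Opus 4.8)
The plan is to verify directly the two defining properties of an element of $\sharmh r(U)$ and to reduce everything to the explicit shape of $f_r$. Since $F\in\hol(U)$, it is already holomorphic on $U\cap\uhp$ and is its own holomorphic extension to $U$, so the only substantive content is that $F|_{U\cap\uhp}$ represents an element of $\sharmb r(U)$, i.e. that $F$ is $r$-harmonic on $U\cap\uhp$ and that $F/f_r$ continues real-analytically to $U$. The harmonicity is immediate and weight-independent: a holomorphic $F$ has $\partial_{\bar z}F=0$, hence $\shad_rF=0$, which is holomorphic, so $F$ is $r$-harmonic by~\eqref{harm-detect} (equivalently $\Dt_rF=0$ directly from~\eqref{Dtr}). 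Thus the real work is the real-analytic continuation of $F/f_r$, and the hypothesis $r\in\ZZ_{\geq1}$ enters only there.

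For that, I would invert~\eqref{frdef} to get
\[
\frac{1}{f_r(z)}=\frac{z-i}{2i}\left(\frac{\bar z-z}{\bar z-i}\right)^{r-1},
\]
the decisive point being that $r-1\in\ZZ_{\geq0}$, so this is an honest $(r-1)$-st power with no branch ambiguity. Each factor on the right is real-analytic on $\CC\setminus\{-i\}$: the factors $z-i$ and $\bar z-z=-2iy$ are polynomials in $z,\bar z$, and $\bar z-i$ vanishes only at $z=-i$, so $(\bar z-i)^{-(r-1)}$ is real-analytic there. Hence $1/f_r$ is real-analytic on $\CC\setminus\{-i\}$. In Case~(a), where $U\subset\CC$ and $-i\notin U$, the function $A:=F\cdot(1/f_r)$ is then a product of a holomorphic and a real-analytic function on $U$, hence real-analytic on $U$, and it agrees with $F/f_r$ on $U\cap\uhp$ (at $z=i$ it simply extends across, since $1/f_r$ vanishes there). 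So $A$ is the required continuation and $F|_{U\cap\uhp}\in\sharmh r(U)$.

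Case~(b) is where the argument has to do something, and I expect the pole-order bookkeeping at $\infty$ to be the main (though modest) obstacle, precisely because $1/f_r$ is \emph{not} real-analytic at $\infty$ and the order of vanishing there must be tracked in the local coordinate $w=1/z$ rather than holomorphically. Passing to $w=1/z$, $\bar w=1/\bar z$, a short computation turns the displayed formula into
\[
\frac{1}{f_r}=\frac{(1-iw)\,(w-\bar w)^{r-1}}{2i\,w^{r}\,(1-i\bar w)^{r-1}},
\]
whose numerator vanishes to order $r-1$ at $w=0$ (from $(w-\bar w)^{r-1}=(2i\,\im w)^{r-1}$) while the denominator vanishes to order $r$ (from $w^{r}$, as $(1-i\bar w)^{r-1}\to1$), so $1/f_r$ has a simple pole at $\infty$. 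Using the hypothesis that $F$ has a zero of order at least $r$ at $\infty$, I would write $F=w^{r}G(w)$ near $w=0$ with $G$ holomorphic; the $w^{r}$ factors cancel and
\[
\frac{F}{f_r}=\frac{G(w)\,(1-iw)\,(w-\bar w)^{r-1}}{2i\,(1-i\bar w)^{r-1}},
\]
which is real-analytic near $w=0$ because the denominator is nonzero there. Combining Case~(a) applied to $U\cap\CC$ with this computation near $\infty$ exhibits $A=F\cdot(1/f_r)$, suitably valued at $\infty$, as a single real-analytic function on all of $U$ restricting to $F/f_r$ on $U\cap\uhp$, which completes the proof.
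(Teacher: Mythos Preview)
Your proof is correct and follows essentially the same approach as the paper: harmonicity is immediate from holomorphy, and the real-analytic continuation of $F/f_r$ is verified by writing out $1/f_r$ explicitly and observing that for $r\in\ZZ_{\geq1}$ all exponents are nonnegative integers. Near $\infty$ the paper factors out $z^r$ from $F$ and rewrites the remaining factors in terms of $1/z,1/\bar z$, whereas you pass to the coordinate $w=1/z$ and cancel $w^r$; these are the same computation in different notation, and your version is if anything more transparent about the pole-order bookkeeping.
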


\begin{proof}$F$ is holomorphic on $U\cap\uhp$, hence $r$-harmonic on
$U\cap\uhp$. For $z\in U\cap\uhp$:
\begin{align}
F(z)/f_r(z) &\= \frac1{2i}\, F(z)\, (z-i)\,(\bar z-i)^{1-r}\,(\bar
z-z)^{r-1} \label{hw1}
\\
&\= \frac1{2i}\,\bigl( z^r F(z) \bigr)\,(1-i/z)\, (1-i/\bar z)^{1-r}\,
(1/2-1/\bar z)^{r-1}\,. \label{hw2}
\end{align}
Equality~\eqref{hw1} shows that $F/f_r$ is real-analytic on $U
\setminus \{\infty,-i\}=U\setminus\{\infty\}$. If $\infty\in U$ then
\eqref{hw2} shows that it is also real-analytic on some neighbourhood
of~$\infty$.
\end{proof}

\subsubsection{Local structure} We return to the sheaves $\V {2-r}\om$
and $\W r \om$, in Definitions \ref{Vom} and~\ref{Wdef}.

The sections of $\V{r-2}\om$ are holomorphic on neighbourhoods of open
sets $I\subset \proj\RR$, and are locally at $x\in \RR$ given by a
power series expansion in $z-x$ converging on some open disk with
center~$x$. At $\infty$ we have a power series in $z^{-1}$. The
real-analytic functions $A={F/f_r}$ corresponding to representatives
of sections of $\W r \om$ are also given by a power series near $x\in
I$, now in two variables, $z-x$ and $\bar z-x$, which also
converges on a disk around $x$. At $\infty$ we have a power series
expansion in $1/z$ and $1/\bar z$.

With the operators $|_r g$ for $g\in \SL_2(\RR)$ we can construct
isomorphisms between the stalks of $\W r \om$. So for a local study
it suffices to work with a disk around~$0$. A problem is that the
points $i$ and $-i$ play a special role in the function $f_r$. Hence
it is better not to use arbitrary elements of $\SL_2(\RR)$ to
transport points of $\proj\RR$ to~$0$, but to use
\il{k(.)}{$k(\th)=\matr{\cos\th}{\sin\th}{-\sin\th}{\cos\th}$}$k(\th)
=\matr{\cos\th}{\sin\th}{-\sin\th}{\cos\th}\in \SO(2)$.

We denote disks around $0$ by\ir{Dp}{D_p}
\be\label{Dp} D_p \= \bigl\{ z\in \CC\;:\; |z|<p\bigr\}\,, \ee
where we take $p\in (0,1)$ to have $\pm i \not\in D_p$. All points of
$\proj\RR$ are uniquely of the form $k(\th)\,0=\tan\th$ with $\th \in
\RR\bmod \pi\ZZ$. All $k\, D_p\subset\proj\CC$ with $k\in \SO(2)$
do not contain $\pm i$; in general they are Euclidean disks in~$\CC$.
The sets $k\, D_p$ are invariant under complex conjugation.

\begin{prop}\label{prop-tPloc} Suppose that the set $U\subset\proj\CC$
 is of the form $U=k D_p$ with $k\in \SO(2)$, $0<p<1$.
\begin{enumerate}
\item[i)] {\rm Restriction. } Let $r\in \CC$. If $F\in \sharm_r^b(U)$
then the restriction $\rsp_r F$ extends as a holomorphic function
on~$U$.
\item[ii)]
\begin{enumerate}
\item[a)] If $r\in \CC\setminus \ZZ_{\geq 1}$ then $\sharmh r(U)
=\{0\}$.
\item[b)] If $ r=1$, then $\sharmb 1(U)= \sharmh 1(U)$.
\item[c)] If $r\in \ZZ_{\geq 1}$, then $\sharmh r(U) \subset \sharmb
 r(U)$.
\end{enumerate}
\item[iii)]\begin{enumerate}
\item[a)]If $r\in \CC\setminus \ZZ_{\geq 2}$, then the restriction map
\il{res1}{$\rsp_r$}$\rsp_r:\sharm_r^b(U)
\rightarrow \hol(U)$ is bijective.
\item[b)] If $r\in \ZZ_{\geq 2}$ then the following sequence is exact:
\[ 0 \rightarrow \sharmh r(U) \rightarrow \sharmb r ( U) \stackrel
{\rsp_r} \rightarrow \Prj{2-r}\dsv{2-r}\pol\rightarrow 0\,, \]
where the last space has to be interpreted as the space of functions
on $U$ that extend to $\proj\CC\setminus \{i\}$ as elements of the
projective model of $\dsv{2-r}\pol$.
\end{enumerate}
\item[iv)] {\rm Shadow operator. } If $F\in \sharm_r^b(U)$, then the
holomorphic function $ \xi_r F \in \hol(U\cap\uhp)$, defined in
\eqref{shad}, extends holomorphically to $U$ and satisfies
\be\label{ah-rc-coupl} (\shad_r F) (z) \= (\bar r-1)\, \Bigl(
\frac{z+i}{2i}\Bigr)^{\bar r-2}\, \overline{\rsp_r F(\bar z)}
\qquad(z\in U\cap\uhp)\,. \ee
\end{enumerate}
\end{prop}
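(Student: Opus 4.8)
The statement collects several local facts about the sheaf $\W r\om$ over a rotated disk $U=kD_p$. The unifying strategy is to normalize: since $|_r k$ for $k\in\SO(2)$ gives sheaf isomorphisms intertwining $\rsp_r$ with $|^\prj_{2-r}k$ (Lemma~\ref{lem-WactG} and Proposition~\ref{prop-restr}), and each $k$ fixes neither $\pm i$ but maps $kD_p$ to a conjugation-invariant disk, it suffices to prove everything for $U=D_p$, a genuine disk around~$0$ not containing $\pm i$. On such a $U$ a function in $\sharmb r(U)$ is, by definition, $f_r$ times a real-analytic function $A$ on $U$, which has a convergent double power series in $z-0$ and $\bar z-0$.

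First I would prove~i). Given $F\in\sharmb r(U)$, write $F=f_r\cdot A$ with $A$ real-analytic on $U$; the restriction $\rsp_r F$ is (by Proposition~\ref{prop-restr}) represented by $A|_{U\cap\proj\RR}$, extended holomorphically. But $A$ is already real-analytic on all of $U$, hence extends holomorphically to a neighbourhood in $\CC^2$ of the image of $U$ under $z\mapsto(z,\bar z)$; restricting that extension to the anti-diagonal slice $\bar z = z$ over $U$ — which is legitimate since $U$ is conjugation-invariant and connected — gives a holomorphic extension of $\rsp_r F$ to $U$ itself. This is the key mechanism and it is essentially the content of Figure~\ref{fig-ext-res} specialized to a disk.

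Next, part~iv): for $F\in\sharmb r(U)$, $\shad_r F$ is holomorphic on $U\cap\uhp$ by~\eqref{harm-detect}; I would compute $\shad_r(f_r\cdot A)$ using the product rule for $\partial_{\bar z}$ together with the explicit form of $f_r$ in~\eqref{frdef}, obtaining a clean expression in which the real-analytic factor $A$ appears only through its value on $\proj\RR$ — i.e.\ through $\rsp_r F$. The model computation is~\eqref{shad-Kr}/\eqref{Kr/fr}: there $K_r(\cdot;\tau)=f_r\cdot(K_r/f_r)$ and $\shad_r K_r(\cdot;\tau)(z)=(\bar r-1)\bigl(\frac{z-\bar\tau}{2i}\bigr)^{\bar r-2}$, which is exactly formula~\eqref{ah-rc-coupl} with $\rsp_r K_r(\cdot;\tau)(t)=\bigl(\frac{\tau-t}{i-t}\bigr)^{r-2}$ — one checks $\overline{\rsp_r F(\bar z)}$ reproduces this. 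Since the right-hand side of~\eqref{ah-rc-coupl} is manifestly holomorphic on $U$ (using $\rsp_r F\in\hol(U)$ from~i) and that $-i\notin U$), this simultaneously shows $\shad_r F$ extends holomorphically to $U$. I would establish~\eqref{ah-rc-coupl} for a dense set of $F$ — say the $K_r(\cdot;\tau)$, whose restrictions span a dense subspace — and extend by continuity/linearity, or else verify it directly from the product rule.

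For parts~ii) and~iii) I would analyze the restriction map $\rsp_r:\sharmb r(U)\to\hol(U)$. Injectivity: if $\rsp_r F=0$ then $A$ vanishes on $U\cap\proj\RR$, hence (being real-analytic on the connected conjugation-invariant $U$) vanishes identically, so $F=0$ — giving injectivity for all $r$, and in particular $\sharmh r(U)\cap\ker\rsp_r=0$. Surjectivity for $r\notin\ZZ_{\ge2}$: given $\ph\in\hol(U)$, I would exhibit a preimage by solving the harmonic equation $\Dt_r F=0$ with prescribed boundary germ; concretely, using the kernel $K_r$ one writes a candidate, or one uses the power series: expand $\ph$, match coefficients of a double series for $A=F/f_r$ against the constraint that $\Dt_r(f_r A)=0$, which becomes a recursion solvable precisely when $r-1\notin\ZZ_{\ge1}$ (the obstruction being that $f_r$ itself, whose restriction is a nonzero element of $\Prj{2-r}\dsv{2-r}\pol$ when $r\in\ZZ_{\ge2}$, together with holomorphic $F\in\sharmh r$, exhaust the kernel/cokernel in that range). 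For $r\in\ZZ_{\ge2}$ I would identify $\ker\rsp_r$ with $\sharmh r(U)$ — a holomorphic $F$ on $U\cap\uhp$ lying in $\sharmb r$ with $A$ real-analytic on $U$ forces, via~\eqref{hw1}, that $F$ extends holomorphically to $U$, and conversely Lemma~\ref{lem-hW} gives membership — and identify the image with $\Prj{2-r}\dsv{2-r}\pol$ by noting that $\rsp_r f_r$ and its $|_r g$-translates are exactly the degree-$\le r-2$ functions in the projective model (compare $(\rsp_r F)(t)=-\bigl(\frac{t-i}{-2i}\bigr)^{2-r}$ for $F=y^{1-r}$ after translating). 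The case $r=1$ in~ii)(b) is the degenerate one where $f_1$ is holomorphic and $\sharmb1=\sharmh1$; for $r\notin\ZZ_{\ge1}$, ii)(a), any $F\in\sharmh r(U)$ is holomorphic with $F/f_r$ real-analytic, but $f_r^{-1}$ has a branch point forcing $F=0$. The main obstacle is organizing the $r\in\ZZ_{\ge2}$ exactness in~iii)(b): one must pin down precisely that $\operatorname{im}\rsp_r$ is the $(r-1)$-dimensional space $\Prj{2-r}\dsv{2-r}\pol$ and not larger, which I would do by the shadow coupling~\eqref{ah-rc-coupl} — $\shad_r$ kills $\sharmh r(U)$ and maps $\sharmb r(U)$ onto holomorphic functions whose form is dictated by $\rsp_r F$, so a dimension/degree count on the polynomial obstruction closes the argument.
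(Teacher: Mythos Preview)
Your reduction to $U=D_p$ via the $\SO(2)$-action is fine and matches the paper. But two of your core arguments have genuine gaps, both stemming from the same oversight: you never actually use the $r$-harmonicity of $F$, only the real-analyticity of $A=F/f_r$.

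\textbf{Part i).} Real-analyticity of $A$ on $D_p$ gives a holomorphic extension $\tilde A(z,w)$ only on a neighbourhood of the anti-diagonal $\{(z,\bar z):z\in D_p\}\subset\CC^2$, which may be thin. The restriction $\rsp_r F$ is the function $t\mapsto\tilde A(t,t)$, and you need this to be holomorphic on \emph{all} of $D_p$; but the diagonal $\{(t,t):t\in D_p\}$ meets the anti-diagonal only along the real axis, so a priori you only get holomorphy on some smaller disk $D_{p_1}$. The paper closes this gap by exploiting harmonicity: writing $F=y^{1-r}B$ with $B(z)=\sum b_{n,m}z^n\bar z^m$, the condition $\Dt_r F=0$ is equivalent to antiholomorphy of $b=2iy\,B_{\bar z}+(r-1)B$, which forces the recursion $b_{n,m}=\frac{(1-r)_m}{m!}b_{n+m,0}$. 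Since $b$ is antiholomorphic on all of $D_p$ (not just $D_{p_1}$), its power series \eqref{aexp} converges on $D_p$, and comparison with \eqref{ph1exp} then pushes the convergence of $\rsp_r F$ out to $D_p$.

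\textbf{Injectivity in iii).} Your argument ``$A$ vanishes on $U\cap\RR$, hence (being real-analytic on the connected $U$) vanishes identically'' is simply false: $A(z)=\operatorname{Im} z$ is real-analytic on $D_p$, vanishes on $\RR$, and is not zero. You have confused real-analytic with holomorphic. Again harmonicity is what rescues injectivity for $r\notin\ZZ_{\ge2}$: from $\rsp_r F=0$ one gets $b_{\ell,0}=0$ for all $\ell$ via \eqref{ph1exp}, and the recursion then kills all $b_{n,m}$. For $r\in\ZZ_{\ge2}$ injectivity genuinely fails---the kernel is $\sharmh r(U)$, not $\{0\}$---so your claim of ``injectivity for all $r$'' contradicts the very exact sequence in iii)(b) you are trying to prove. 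The paper identifies this kernel by showing that when the relevant Pochhammer symbols $(2-r)_\ell$ vanish, the surviving freedom in the $b_{n,m}$ corresponds exactly to holomorphic functions on~$D_p$.

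Your sketch for iv) via density of the $K_r(\cdot;\tau)$ is not justified in the paper and would require work; the direct product-rule computation you mention as an alternative is what the paper does, and is straightforward once you write $\shad_r F$ in terms of $A$ and $A_{\bar z}$ and then restrict to the real axis.
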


\rmrks \itmi Part~i) shows that for small disks $U$ the restriction of
an element of $\sharmb r(U)$ is represented by a function defined on
the whole disk~$U$, not just on some unspecified neighbourhood of
$U\cap\proj\RR$.

\itm The shadow operator and the restriction morphism were defined in
different ways. Part~iv) shows that they are related.

\begin{proof}We start with proving the statements for $U=D_p$, and
will at the end derive the general case.

\rmrk{ Part i), $r\in \CC\setminus \ZZ_{\geq 1}$} First we consider
the case when $r \in \CC \smallsetminus \ZZ_{\geq 1}$. We'll show
that for any $F\in \sharmb r(D_p)
 \subset \sharm_r(D_p\cap\nobreak\uhp)$
(a representative of) $\rsp_r F$ is holomorphically continuable to
$D_p.$

We note that we have $F(z)=y^{1-r}\,B(z)$ where
$$B(z)=\frac{2i}{z-i}\allowbreak\, \Bigl( \frac{\bar z-i}{-2i}
\Bigr)^{r-1}\allowbreak\, \frac{F(z)}{f_r(z)}.$$
Since $F\in \sharm_r^b(D_p)$, $B$ is real-analytic on $D_p$ and there
are coefficients $b_{n,m}$ such that
\be\label{Bexp} B(z) \= \sum_{n,m\geq 0} b_{n,m} z^n \bar z^m \ee
converges absolutely on a disk $D_{p_1}$ where $0<p_1\leq p$. We
define
\[b(z)\;:=\; 2i y^r \partial_{\bar z}\bigl(y^{1-r} B(z)\bigr)\= 2i\, y
\, B_{\bar z}(z)+(r-1)\, B(z)\quad\text{ on }D_p\,.\]
The condition of $r$-harmonicity on $F$ is equivalent to the
antiholomorphicity of $b.$ On $D_{p_1}$ we have, by~\eqref{Bexp}, the
expansion
\bad b(z) &\= \sum_{m\geq 0} (r-1-m)\, b_{0,m}\, \bar z^m
\\
&\qquad\hbox{}
+ \sum_{m\geq 0,\,n\geq 1} \Bigl( (m+1)\, b_{n-1,m+1} + (r-1-m)\,
b_{n,m} \Bigr)\, z^n \bar z^m\,. \ead
The antiholomorphy implies that for each $n\geq 1$ the coefficient of
$z^n\bar z^m$ has to vanish. If $r\not\in \ZZ_{\geq 1}$ this gives
the relation
\be\label{bnm} b_{n,m} \= \frac{(1-r)_m}{m!}\, b_{n+m,0}
\ee
(with the \il{Poch}{Pochammer symbol}Pochhammer symbol
\il{Ps}{$(a)_m$}$(a)_m = \prod_{j=0}^{m-1}
(a+j)$), and
\be\label{aexp} b(z) \= -\sum_{m\geq 0}\frac{(1-r)_{m+1}}{m!}\,
b_{m,0}\, \bar z^m\qquad(z\in D_{p_1})\,. \ee
This is the power series of an antiholomorphic function on~$D_p$,
hence it converges absolutely on $D_p$.

Now, if $\phi$ is a representative of $\rsp_r F$, then, for $t\in
(-p,p)$,
\begin{eqnarray}\label{ph1exp} \ph(t) \=
-\Bigl(\frac{t-i}{-2i}\Bigr)^{2-r} B(t)&=&
-\Bigl(\frac{t-i}{-2i}\Bigr)^{2-r} \sum_{n,m\geq 0} \frac{(1-r)_m}{m!}
\, b_{n+m,0}\, t^{n+m} \nonumber \\
&=& -\Bigl(\frac{t-i}{-2i}\Bigr)^{2-r} \sum_{\ell\geq 0}
\frac{(2-r)_\ell}{\ell!}\, b_{\ell,0} t^\ell \,. \end{eqnarray}
Here we use the well-known formula $\sum_{m=0}^\ell \frac{(1-r)_m}{m!}
= \frac{(2-r)_\ell}{\ell!}$. A comparison of the absolutely
convergent series \eqref{aexp} and \eqref{ph1exp} shows that
\eqref{ph1exp} also converges absolutely on $D_p$. This implies that
the restriction gives a map $\sharm_r^b(D_p)
\rightarrow \hol(D_p)$.

\rmrk{Part i), $r\in \ZZ_{\geq 1}$} Secondly we show that in the case
$r\in \ZZ_{\geq 1}$ the same conclusion holds. In this case
 \eqref{bnm} is valid for $m \leq r-1$. For $m\geq r$ we get
 successively $b_{n,m}=0$. Since the corresponding Pochhammer symbols
vanish, expansion~\eqref{aexp} stays valid. We get the same estimate
for $b_{n+m,0}$ and arrive at
\be \label{ph1expi}
\ph(t) \= -(\frac{t-i}{-2i})^{2-r} \begin{cases}
\sum_{\ell= 0}^{r-2} (1-r)_\ell\; (\ell!)^{-1}\, b_{\ell,0}\,
t^\ell&\text{ if }r\geq 2\,,\\
\sum_{n=0}^\infty b_{n,0}\,(n!)^{-1}\, t^n&\text{ if }r=1\,,
\end{cases}
\ee
on~$D_p$. This completes the proof of Part~i) for $U=D_p$.

\rmrk{ Part ii) a) and c)} Let $F\in \hol(U)$, and suppose that its
restriction to $U\cap\uhp$ is in $\sharm_r(U)$, then
\[ F(z)/f_r(z) \= \frac{z-i}{2i} \, \Bigl(\frac{\bar z-i}{-2i} \Bigr)
F(z)\, y^{r-1}\,.\]
If $r \in \CC \smallsetminus \ZZ_{\geq 1}$, then the presence of the
factor $y^{1-r}$ shows that this can be real-analytic near $0$ only
if $F=0$. This implies Part~a). If $r\in \ZZ_{\geq 2}$, all factors
are real-analytic. This implies Part~c).

\rmrk{Part ii) b)} If $r=1$, all $b_{n,m}$ with $m\geq 1$ vanish.
Hence $B(z)$ and $F(z)=y^{1-1}\, B(z)$ are holomorphic. This gives
Part~ii)b).

\rmrk{Part iii) a), surjectivity} In the case when $r \in \CC
\smallsetminus \ZZ_{\geq 2}$, take $\ph \in \hol(D_p)$, which is
represented by an absolutely convergent power series
\be \ph(t) \= \sum_{\ell\geq 0} a_\ell\, t^\ell\,.\ee
Hence $a_\ell= \oh(c^{-\ell})$ for all $c\in
(0,p)$. We put
\be\label{bnm1} b_{n,m} \= \frac{(1-r)_m\, (n+m)!}{m!\, (2-r)_{n+m}}\,
a_{n+m}\,, \ee
and define $B$ by \eqref{Bexp} with these coefficients~$b_{n,m}$. The
factor $\frac{(1-r)_m\, (n+m)!}{m!\,
(2-r)_{n+m}}$ has polynomial growth in $m$ and $n$. We arrive at
absolute convergence of the power series \eqref{Bexp} on $|z|<c$ for
all $c<p$. Hence $B$ is real-analytic on~$D_p$. The structure of the
$b_{n,m}$ shows that $b(z)=2i \, y B_{\bar z} +
(r-\nobreak 1)\, B$ is antiholomorphic, hence $F:=y^{1-r}\,B$ on
$D_p\cap \uhp$ is in $\sharm_r^b(D_p)$ and $\rsp_r F
(t) = - \Bigl( \frac{t-i}{-2i}\Bigr)^{r-2}\allowbreak\, \ph(t)$. This
shows the surjectivity if $r\not\in \ZZ_{\geq 2}$.

\rmrk{Part iii) b), surjectivity}In the case of $r\in \ZZ_{\geq 2}$,
Equation~\eqref{ph1expi} shows that the restriction map has the
 projective model of $\dsv{2-r}\pol$ as its image, since we can freely
choose the $b_{\ell,0}$ with $\ell\leq r-2$. This gives the
surjectivity in the exact sequence in Part~iii)b).

\rmrk{Part iii) a),b) injectivity} We suppose that $\rsp_r F=0$ for
$F\in \sharm_r^b(D_p)$. Then $\ph=0$, which by \eqref{ph1exp} implies
$F=0$ if $r \in \CC \smallsetminus \ZZ_{\geq 2}$. Thus we have the
injectivity in Part~iii)a), which completes the proof of~iii)a).

For $r\in \ZZ_{\geq 2}$ we have $b_{\ell,0}=0$ for $\ell \leq r-2$. So
\[ B(z) \= \sum_{\ell\geq r-1}b_{\ell,0}\, z^\ell \sum_{m=0}^{r-1}
\frac{(1-r)_m}{m!} (\bar z/z)^m \= \sum_{\ell\geq r-1}
b_{\ell,0}z^{\ell+1+1-r} (2iy)^{r-1} \,. \]
So the kernel consists of the functions $F$ on $D_p\cap \uhp$ with
expansion
\[ (2i)^{r-1} \sum_{\ell\geq 0} b_{\ell+r-1,0} \, z^\ell\,,\]
which are just the restrictions to $U\cap\uhp$ of the holomorphic
functions on~$D_p$. This completes also the proof of Part~iii)b).

\rmrk{Part iv)} We find by a direct computation for $z\in U\cap \uhp$
\[ (\shad_r F)(z) \= \frac4{\bar z+i}\, \Bigl(
\frac{z+i}{2i}\Bigr)^{\bar r-1}\,\biggl( \frac{1-\bar
r}{2i}\,\frac{\bar z+i}{z+i}\,\overline{A(z)} + y\, \overline{A_{\bar
z}(z)} \biggr)\,,\]
where $A(z)=F(z)/f_r(z)$, analytically continued. 
This shows that $\shad_r F$ extends as a real-analytic function
to~$U$. We know that it is holomorphic on $U\cap\uhp$, hence on~$U$.
It is determined by its values for $x\in U\cap\RR$:
\begin{align*} (\shad_r F)(x) &\= -2i \Bigl(
\frac{x+i}{2i}\Bigr)^{\bar r-2}\, \biggl( \frac{1-\bar r}{2i}\,
\overline{A(x)} + 0 \biggr)\\
&\= (\bar r-1) \, \Bigl( \frac{ x+i}{2i} \Bigr)^{\bar r-2}\,
\overline{A(x)}\,.
\end{align*}
On $U\cap\RR$ we have $A = \rsp_r F$, hence
\[ (\shad_r F)(x) \= (\bar r-1) \, \Bigl( \frac{x+i}{2i} \Bigr)^{\bar
r-2}\, \overline{(\rsp_r F)(x)}\,,\]
which extends to an equality of holomorphic functions on~$U$, which
is~\eqref{ah-rc-coupl}.
\smallskip

\rmrk{Shifted disks} To prove the proposition for $kD_p$ with general
$k=k(\th)$ ($\th\in \bigl( - \frac\pi2,\frac\pi 2\bigr]$) we note
that the bijective operator $|^\prj_{2-r}k : \hol(D_p) \rightarrow
\hol(k^{-1}D_p)$ preserves holomorphy. This together with the
bijective operator $|_r k : \sharmb r(D_p)\rightarrow \sharmb
r(k^{-1}D_p)$, and Relation~\eqref{res-g}
imply Parts i), ii) and~iii).

To prove (iv) for general $kD_p$, we first apply (iv) to $F|_r k \in
\sharmb r(D_p)$ to get, for $z \in D_p\cap\CC$:
\be\label{ah-rc-coupla} (\shad_r (F|_r k)) (z) \= (\bar r-1)\, \Bigl(
\frac{z+i}{2i}\Bigr)^{\bar r-2}\, \overline{\rsp_r (F|_r k)(\bar z)}.
\ee
Upon an application of \eqref{xi-g} and \eqref{res-g}, this becomes,
with $k=\matr a{-c}ca$:
\badl{ah-rc-couplb}
(\shad_r F) (kz) &\=(cz+a)^{2-\bar r}\, (\bar r-1)\, \Bigl(
\frac{z+i}{2i}\Bigr)^{\bar r-2}\, \overline{\rsp_r (F|_r k)(\bar
z)}\\
&\= (cz+a)^{2-\bar r}\, (\bar r-1)\, \Bigl( \frac{z+i}{2i}\Bigr)^{\bar
r-2}\, \overline{ (\rsp_r F)|^\prj_{2-r}k \; (z) }\\
&\= (cz+a)^{2-\bar r}\, (\bar r-1)\, \Bigl( \frac{z+i}{2i}\Bigr)^{\bar
r-2}\, (a+ic)^{\bar r-2}\\
&\qquad\hbox{} \cdot
 \overline{\Bigl(\frac{\bar z-i}{\bar
 z-k^{-1}i}\Bigr)^{2-r}}\,\overline{ (\rsp_r F)(k\bar z)}\\
&\= (\bar r-1)\, \Bigl( \frac{kz+i}{2i}\Bigr)^{\bar r-2}
\,\,\overline{ (\rsp_r F)(k\bar z)}\,. \eadl
We used that $k^{-1}i=i$, and that $(a+ic)\,(z+i)=(cz+a)(kz+i)$. Since
the conjugate of a holomorphic function on $\lhp$ is holomorphic on
$\uhp$, this proves the statement.
\end{proof}

Proposition~\ref{prop-tPloc} gives a rather precise description of the
local relation between the sheaves $\V{2-r}\om$ and $\W r \om$. The
next theorem ties this together to a global statement, which will
turn out to be crucial in Sections \ref{sect-hws} and~\ref{sect-caf}.

\begin{thm}\label{thm-VWiso}
\begin{enumerate}
\item[i)] If $r\in \CC\setminus \ZZ_{\geq 2}$ the morphism of sheaves
$\rsp_r : \W r \om \rightarrow\V{2-r}\om$ is an isomorphism.
\item[ii)] For $r\in \ZZ_{\geq 1}$ we define the subsheaf $\Whol r
\om$ of $\W r \om$ by\ir{Whol}{\Whol r \om}
\be\label{Whol}
\Whol r \om(I) \;:=\;
\indlim \sharmh r(U) \,, \ee
where $U$ runs over the open neighbourhoods in $\proj\CC$ of open sets
$I$ in~$\proj\RR$.
\begin{enumerate}
\item[a) ] If $r=1, \Whol 1 \om = \W 1 \om$ .
\item[b)] If $r\in \ZZ_{\geq2}$, the following sequence is exact:
\be\label{es-VWD} 0 \rightarrow \Whol r \om \rightarrow \W r \om
\stackrel{\rs_r}\rightarrow \dsv{2-r}\pol \rightarrow 0\,. \ee
The space $\dsv{2-r}\pol$ is interpreted as a constant sheaf
on~$\proj\RR$.
\end{enumerate}
\end{enumerate}
\end{thm}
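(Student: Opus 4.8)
The plan is to verify both assertions stalkwise. The sheaves $\W r \om$, $\Whol r \om$ and $\V{2-r}\om$ all live on $\proj\RR$, and to say that $\rsp_r$ is an isomorphism of sheaves, respectively that the sequence \eqref{es-VWD} is exact, is to say that this holds on every stalk. So I would fix $\xi\in\proj\RR$, write $\xi=k(\th)\cdot 0$ with $k(\th)\in\SO(2)$ as in the discussion preceding \eqref{Dp}, and use that the disks $k(\th)D_p$, $0<p<1$, are images of the $D_p$ under the homeomorphism $k(\th)$ of $\proj\CC$ and hence form a cofinal family of open neighbourhoods of $\xi$ in $\proj\CC$ as $p\downarrow 0$ (and, by the remark after \eqref{Dp}, none of them meets $\{\pm i\}$). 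The stalks may therefore be computed along this cofinal subfamily:
\[ (\W r \om)_\xi \= \indlim \sharmb r(k(\th)D_p)\,,\quad (\Whol r \om)_\xi \= \indlim \sharmh r(k(\th)D_p)\,,\quad (\V{2-r}\om)_\xi \= \indlim \hol(k(\th)D_p)\,, \]
with $p\downarrow 0$ throughout, while the stalk at $\xi$ of the constant sheaf $\dsv{2-r}\pol$ is $\dsv{2-r}\pol$ itself.

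For Part~i), with $r\in\CC\setminus\ZZ_{\geq 2}$, Part~iii)a) of Proposition~\ref{prop-tPloc} says that $\rsp_r:\sharmb r(k(\th)D_p)\to\hol(k(\th)D_p)$ is bijective for each such disk, and these bijections commute with the transition maps of the inductive system since they are restrictions of the one morphism of sheaves $\rsp_r$ of Proposition~\ref{prop-restr}; passing to the colimit gives that $\rsp_r$ is bijective on every stalk, hence an isomorphism of sheaves. For Part~ii)(a), with $r=1$, Part~ii)b) of Proposition~\ref{prop-tPloc} gives $\sharmb 1(k(\th)D_p)=\sharmh 1(k(\th)D_p)$, so the inclusion $\Whol 1 \om\hookrightarrow\W 1 \om$ is the identity on every stalk and is therefore an isomorphism of sheaves.

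For Part~ii)(b), with $r\in\ZZ_{\geq 2}$, Part~iii)b) of Proposition~\ref{prop-tPloc} gives for each disk $k(\th)D_p$ a short exact sequence
\[ 0 \;\longrightarrow\; \sharmh r(k(\th)D_p) \;\longrightarrow\; \sharmb r(k(\th)D_p) \;\stackrel{\rsp_r}{\longrightarrow}\; \Prj{2-r}\dsv{2-r}\pol \;\longrightarrow\; 0\,, \]
whose third term is a fixed space of functions holomorphic on $\proj\CC\setminus\{i\}$, namely the projective model of the polynomials of degree at most $r-2$; in particular $\rs_r=\Prj{2-r}^{-1}\rsp_r$ is a well-defined morphism of $\W r \om$ into the constant sheaf $\dsv{2-r}\pol$, and the transition maps of the inductive system act as the identity on this third term. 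Since filtered colimits of vector spaces are exact, taking $\indlim$ with $p\downarrow 0$ and applying $\Prj{2-r}^{-1}$ to the third term yields the exact stalk sequence $0\to(\Whol r \om)_\xi\to(\W r \om)_\xi\stackrel{\rs_r}{\to}\dsv{2-r}\pol\to 0$, which is the stalk at $\xi$ of \eqref{es-VWD}; as $\xi$ was arbitrary, \eqref{es-VWD} is exact.

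I expect the only point needing genuine (but routine) care to be the cofinality claim — that the neighbourhood filter of each $\xi\in\proj\RR$ in $\proj\CC$ can be replaced by the subfamily $\{k(\th)D_p\}_{0<p<1}$ when forming the stalks, including the case $\xi=\infty$, where $k(\th)=k(\pi/2)$ and $k(\pi/2)D_p$ is still a disk-neighbourhood of $\infty$. Everything else is formal: the analytic content is already packaged in Proposition~\ref{prop-tPloc}, and the rest is the exactness of filtered colimits together with the standard fact that a morphism of sheaves which is an isomorphism on all stalks is an isomorphism.
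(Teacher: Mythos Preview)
Your proof is correct and takes essentially the same approach as the paper: reduce to stalks and invoke Proposition~\ref{prop-tPloc} on the cofinal family of disks $k(\th)D_p$. The paper's proof is a one-line appeal to that proposition; you have simply made explicit the cofinality and filtered-colimit exactness that the paper leaves to the reader.
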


\begin{proof}
Proposition~\ref{prop-tPloc} gives the corresponding statements on
sets $U$ near all points of $\proj\RR$, giving all statements in the
theorem on the level of stalks.
\end{proof}

\subsection{Related work}\label{sect-lit6}In \cite{BLZm} the analytic
boundary germs form the essential tool to prove the surjectivity of
the map from Maass forms of weight zero to cohomology considered
there. This gave the motivation to study these boundary germs 
for  themselves, in the paper~\cite{BLZ13}. In the
introduction of~\cite{BLZ13} (``Further remarks'', p.~111) it is
indicated that the boundary germs have been studied in the much wider
context of general symmetric spaces.

One finds the isomorphism analogous to the isomorphism in Part~i) of
Theorem~\ref{thm-VWiso} in \cite[\S5.2]{BLZ13}. There the isomorphism
is approached in two ways: by power series expansions and by
integrals. In the proof of Proposition~\ref{prop-tPloc} we have used
only power series.

%%%%%%%%%%%%%%%%%%%%%%%%%%%%%%%%%%%%%%%%%%%%%%%%%%%%%%%%%

\section{Polar harmonic functions}\label{sect-pol}
The subject of this section may seem slightly outside the line of
thought of the previous sections. It has its interest on itself, and
it provides more examples of $r$-harmonic function that do or do not
represent analytic boundary germs. The main reason to discuss it is
in the case $r\in \ZZ_{\geq 2}$. Though Theorem~\ref{thm-VWiso} leads
directly to spaces of analytic boundary germs isomorphic to the
spaces $\dsv{2-r}\om$ for $r\in \CC\setminus\ZZ_{\geq 2}$, the
situation is \ less clear for $r\in \ZZ_{\geq 2}$. With polar
harmonic functions we will arrive in \S\ref{sect-def-E} at a
satisfactory definition for all $r\in \CC$.

\subsection{Polar expansion}\label{sect-pol-exp}
The map \il{pc}{$w=\frac{z-i}{z+i}$}$z\mapsto w(z) :=\frac{z-i}{z+i}$
with inverse $w\mapsto z(w) :=i\frac{1+w}{1-w}$ gives a bijection
between the upper half-plane $\uhp$ and the unit disk in~$\CC$. We
write a continuous function $F$ on~$\uhp$ in the form $F(z)
= \Bigl( \frac{2i}{z+i}\Bigr)^r \,P\bigl(w(z)\bigr)$. This has the
advantage that the transformation $F \mapsto F|_r
\matr{\cos\th}{\sin\th}{-\sin\th}{\cos\th}$ with
$-\frac\pi2<\th<\frac\pi2$ corresponds to sending $P$ to the function
$w\mapsto e^{i r \th}\,\allowbreak P\bigl(e^{2i\th}w\bigr)$.

We put\ir{pol-term}{F(\mu;z)}
\badl{pol-term} F(\mu;z)& \;:=\; \Bigl(
\frac{2i}{z+i}\Bigr)^r\,\frac1\pi \int_{-\pi/2}^{\pi/2} e^{-2i\mu\th}
P(e^{2i\th}w)\, d\th\\
&\= \frac1\pi \int_{-\pi/2}^{\pi/2} e^{-i(2\mu+r)\th}\, \Bigl( F|_r
\matr{\cos\th}{\sin\th}{-\sin\th}{\cos\th} \Bigr)(z)\, d\th\qquad
(\mu\in \ZZ)\,. \eadl
In the first expression we see a coefficient of the Fourier expansion
of the function $\th \mapsto P(e^{2i\th}w)$. Thus we have a
convergent representation
\be F(z) \= \sum_{\mu\in \ZZ} F(\mu;z)\,,\ee
the \il{polexp}{polar expansion}\emph{polar expansion}. If we do not
work on the whole of $\uhp$, but on an annulus
$c_1<\Bigl|\frac{z-i}{z+i}\Bigr|<c_2$, we can proceed similarly.

We use this in particular for $r$-harmonic functions $F$. {}From the
second expression in \eqref{pol-term} we see that $F(\mu;\cdot)$ is
$r$-harmonic, since the operators $|_r g$ with $g\in \SL_2(\RR)$
preserve $r$-harmonicity and we can exchange the order of
differentiation and integration. The terms $F(\mu;\cdot)$
can be written in the form $ F(\mu,z) \= \Bigl( 2i/(z+\nobreak
i)\Bigr)^r\allowbreak \,\bigl(w/\bar w\bigr)^\mu\allowbreak\,
p_\mu\bigl( |w|^2\bigr)$, for some function $p_\mu$ on $[0,\infty)$.
With some computations one can obtain an ordinary differential
equation the $p_\mu$, which turns out to be related to the
hypergeometric differential equation, with a two-dimensional solution
space. This leads to the following $r$-harmonic functions, all
depending holomorphically on $r$ in a large subset
of~$\CC$.\ir{Prmu}{\P{r,\mu}}
\ir{Mrmu}{\M{r,\mu}}\ir{Hrmu}{\H{r,\mu}}
\begin{align}
\label{Prmu}
\P{r,\mu}(z) &\= \Bigl( \frac{2i}{z+i}\Bigr)^r \, w^\mu \= \Bigl(
\frac{2i}{z+i}\Bigr)^r \, \Bigl( \frac{z-i}{z+i}\Bigr)^\mu \,,
\quad\mu\in \ZZ\,,\\
\label{Mrmu}
\M{r,\mu}(z) &\= \begin{cases}
f_r(z)\, \Bigl( \frac{z-i}{z+i}\Bigr)^{\mu+1}\,
\hypg21\Bigl(1+\mu,1-r;2-r;\frac{4y}{|z+i|^2}\Bigr)
&\text{ if }\mu\geq 0\,,
\\
f_r(z) \;\frac{z-i}{z+i}\;\Bigl( \frac{\bar z+i}{\bar z-i}
\Bigr)^{-\mu}\, \hypg21\Bigl( 1-\mu-r,1;2-r;\frac{4y}{|z+i|^2}\Bigr)
&\text{ if }\mu\leq 0\,,
\end{cases}
\\
\label{Hrmu}
\H{r,\mu}(z) &\= f_r(z)\,\frac{z-i}{z+i}\, \Bigl( \frac{\bar z+i}{\bar
z-i} \Bigr)^{-\mu}\, \hypg21\Bigl( 1-\mu-r,1;1-\mu;
\Bigl|\frac{z-i}{z+i}\Bigr|^2\Bigr), \quad \mu\leq -1.
\end{align}
The function $\P{r,\mu}$ is holomorphic, hence $r$-harmonic. Checking
the $r$-harmonicity of $\M{r,\mu}$ and $\H{r,\mu}$ requires work, for
which there are several approaches:
\begin{enumerate}
\item[a)] Carry out the computation for the differential equation for
$p_\mu$, transform it to a hypergeometric differential equation, and
check that the hypergeometric functions in \eqref{Mrmu}
and~\eqref{Hrmu} are solutions.
\item[b)] Check by a direct computation (for instance with formula
manipulation software like Mathematica) that the shadow operator
sends the functions to the holomorphic functions indicated in
Table~\ref{tab-PMH}, thus establishing $r$-harmonicity.
\item[c)] Transform the problem to the universal covering group of
$\SL_2(\RR)$, and use the remarks in~\S\ref{app-pol-fcts} in the
Appendix.
\end{enumerate}

\begin{table}[ht]
{\small \renewcommand\arraystretch{1.6}
\[\begin{array}{|c|c|c|c|}\hline
f={}& \P{r,\mu}& \M{r,\mu} & \H{r,\mu} \\ \hline
f\in&\sharm_r(\uhp) \;\; (\mu \geq 0)& \sharm_r(\uhp\setminus\{i\})&
\\
& \sharm_r(\uhp\setminus \{i\})\;\;(\mu<0)&
& \sharm_r(\uhp)\;\;(\mu \leq -1) \\ \hline
\shad_r f & 0 & (\bar r-1)\Bigl( \frac{2i}{z+i}\Bigr)^{2-\bar r}
\Bigl( \frac{z-i}{z+i}\Bigr)^{-\mu-1}&
-\mu \Bigl( \frac{2i}{z+i}\Bigr)^{2-\bar r} \Bigl(
\frac{z-i}{z+i}\Bigr)^{-\mu-1}\\
\hline
f\text{ reprs.~elt.}&\text{if }r\in\ZZ_{\geq 1}& \text{if }r\in
\CC\smallsetminus \ZZ_{\geq2}\text{ or if}&
\\
\text{of } \W r \om(\proj\RR)&
&r\in \ZZ_{\geq 2}\text{ and }1-r\leq \mu \leq -1
 &
\\ \hline
\rsp_r f & 0\;\;(r\in \ZZ_{\geq 1})& \Bigl(
\frac{t-i}{t+i}\Bigr)^{\mu+1}&
\\ \hline
\end{array}
\]} \caption{Properties of polar $r$-harmonic functions.}
\label{tab-PMH}
\end{table}
Most of these facts follow directly from the formulas, and the
properties of the hypergeometric function. We note the following:
\begin{itemize}
\item The factor $\frac{4y}{|z+i|^2}$ is real-analytic on
$\proj\CC\setminus \{-i\}$ with zero set $\proj\RR$. It has values
 between $0$ and $1$ on $\uhp$, reaching the value~$1$ only at $z=i$.
Since the hypergeometric functions are holomorphic on $\CC\setminus
[1,\infty)$, the definition shows that $\M{r,\mu}\in \sharmb r \bigl(
\proj\CC\setminus\nobreak\{i,-i\}\bigr)$. To investigate the
behavior of $\M{r,\mu}$ at $i$ we note that its shadow has a
singularity at $z=i$ if $\mu\geq 1$, so $\M{r,\mu}$ cannot be
 real-analytic at~$i$ for $\mu\geq 0$.
\item The functions $\P{r,\mu}$ and $\M{r,\mu}$ are linearly
independent for $r \in \CC \smallsetminus \ZZ_{\geq 1}$.
\item The Kummer relation \cite[\S2.9, (33)]{EMOT} implies
\be\label{HMP} \H{r,\mu} \= \frac\mu{1-r}\M{r,\mu}
+\frac{|\mu|!}{(1-r)_{|\mu|}}\, \P{r,\mu} \quad (\mu\leq -1) \,.\ee
{}From the singularity of $\P{r,\mu}$ at $i$ and the fact that
$\H{r,\mu}\in \sharm_r(\uhp)$ we see that $\M{r,\mu}$ has a
singularity at $i$ for $\mu\leq
-1$ as well.
\item If $r_0\in \ZZ_{\geq 2}$ the meromorphic function $r\mapsto
\M{r,\mu}$ has in general a first order singularity at $r=r_0$ with a
non-zero multiple of $\P{r_0,\mu}$ as the residue. However, if
$1-r_0\leq \mu \leq
-1$ it turns out to be holomorphic at $r=r_0$. So under these
conditions $\M{r_0,\mu}$ is well defined.
\end{itemize}

\begin{prop}\label{prop-Ei}
$\sharm_r(\uhp)\,\cap\, \W r \om(\proj\RR) \= \{0\}$ for $r\in
\CC\setminus \ZZ_{\geq 1}$.
\end{prop}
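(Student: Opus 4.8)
The plan is to reduce the statement to the polar $r$-harmonic functions of \S\ref{sect-pol-exp} and then read off the answer from Table~\ref{tab-PMH}. An element of $\sharm_r(\uhp)\cap\W r\om(\proj\RR)$ is a boundary germ along $\proj\RR$ represented both by a function that is $r$-harmonic on all of $\uhp$ and by a function of the form $f_r\cdot(\text{real-analytic near }\proj\RR)$; since the two representatives agree near $\proj\RR$, I would fix a single $F$, $r$-harmonic on $\uhp$, whose quotient $A:=F/f_r$ --- automatically real-analytic on $\uhp\setminus\{i\}$ --- extends real-analytically across $\proj\RR$. Shrinking, I may assume $A$ is real-analytic on the $\SO(2)$-invariant open set
\[ V\= \Bigl\{\, z\in\proj\CC \;:\; 1-\e < \Bigl|\tfrac{z-i}{z+i}\Bigr| < 1+\e \,\Bigr\}\,, \]
which avoids $i$ and $-i$. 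It then suffices to prove $F=0$.

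First I would invoke the polar expansion $F=\sum_{\mu\in\ZZ}F(\mu;\cdot)$ of~\eqref{pol-term}, each summand again $r$-harmonic on $\uhp$, and show that each $F(\mu;\cdot)$ likewise represents an element of $\W r\om(\proj\RR)$. For this I feed $g=k(\th)=\matr{\cos\th}{\sin\th}{-\sin\th}{\cos\th}$ (which fixes $i$ and lies in $G_0$ for $|\th|<\tfrac\pi2$) into~\eqref{frquot}: since $k(\th)^{-1}i=i$, the right-hand side collapses to $e^{i(r-2)\th}$, so $(F|_r k(\th))(z)/f_r(z)=e^{i(r-2)\th}A(k(\th)z)$, and therefore, by~\eqref{pol-term},
\[ \frac{F(\mu;z)}{f_r(z)} \= \frac1\pi\int_{-\pi/2}^{\pi/2} e^{-i(2\mu+2)\th}\,A\bigl(k(\th)z\bigr)\,d\th\,. \]
Because $V$ is $\SO(2)$-invariant, $A$ is real-analytic on $V$, and $\th$ ranges over a compact interval, this integral is real-analytic in $z\in V$; hence $F(\mu;\cdot)/f_r$ extends real-analytically across $\proj\RR$. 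The real-analyticity of the averaged integrand on $V$ is the one point that needs a genuine argument: complexify $z$ and $\bar z$ separately, use that $k(\th)\in\SL_2(\RR)$ commutes with conjugation and is entire in $\th$, and differentiate under the integral sign.

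Next I would use the structure of polar $r$-harmonic functions: the radial part of an $r$-harmonic function of polar type $\mu$ solves a second-order hypergeometric-type equation, and the solution regular at $z=i$ is, up to a scalar, $\P{r,\mu}$ when $\mu\geq0$ and $\H{r,\mu}$ when $\mu\leq-1$ --- the companion solution $\M{r,\mu}$ being singular at $i$, and $\P{r,\mu},\M{r,\mu}$ being linearly independent for $r\in\CC\setminus\ZZ_{\geq1}$, so the space of $r$-harmonic functions on $\uhp$ of polar type $\mu$ is one-dimensional. Thus $F(\mu;\cdot)=c_\mu\P{r,\mu}$ if $\mu\geq0$ and $F(\mu;\cdot)=c_\mu\H{r,\mu}$ if $\mu\leq-1$, and it remains to kill the $c_\mu$ for $r\notin\ZZ_{\geq1}$. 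The quotient $\P{r,\mu}/f_r$ equals a nonvanishing real-analytic factor times $(\bar z-z)^{r-1}=(-2iy)^{r-1}$, which is not real-analytic across $\proj\RR$ precisely because $r-1\notin\ZZ_{\geq0}$; so $\P{r,\mu}$ never represents an element of $\W r\om(\proj\RR)$, forcing $c_\mu=0$ for $\mu\geq0$. For $\mu\leq-1$, relation~\eqref{HMP} writes $\H{r,\mu}$ as a combination of $\M{r,\mu}$ --- which does represent an element of $\W r\om(\proj\RR)$ since $r\notin\ZZ_{\geq2}$ (Table~\ref{tab-PMH}) --- and a \emph{nonzero} multiple of $\P{r,\mu}$, the factor $(1-r)_{|\mu|}=\prod_{j=1}^{|\mu|}(j-r)$ being nonzero; hence $\H{r,\mu}$ would be a boundary germ only if $\P{r,\mu}$ were, which it is not, so $c_\mu=0$ again. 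Therefore all polar components vanish and $F=0$.

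The step I expect to be the main obstacle is the second one: checking that averaging over the rotation subgroup preserves membership in $\W r\om$, i.e.\ the real-analyticity on $V$ of $z\mapsto\int_{-\pi/2}^{\pi/2}e^{-i(2\mu+2)\th}A(k(\th)z)\,d\th$. Everything else is either bookkeeping with the polar expansion or a direct appeal to the facts about $\P{r,\mu}$, $\M{r,\mu}$, $\H{r,\mu}$ already assembled in \S\ref{sect-pol}.
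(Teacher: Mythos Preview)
Your proof is correct and follows essentially the same approach as the paper: reduce to the polar Fourier components via~\eqref{pol-term} and~\eqref{frquot}, observe that each $F(\mu;\cdot)$ inherits both properties (harmonic on all of $\uhp$ and in $\W r\om(\proj\RR)$), and then use the classification of polar $r$-harmonic functions in Table~\ref{tab-PMH} to force each component to vanish. The paper phrases the endgame slightly differently---it says ``in $\W r\om$ forces a multiple of $\M{r,\mu}$'' while ``harmonic on $\uhp$ forces a multiple of $\P{r,\mu}$ or $\H{r,\mu}$'', hence zero---but this is logically equivalent to your direct verification that $\P{r,\mu}$ and $\H{r,\mu}$ fail to lie in $\W r\om(\proj\RR)$ when $r\notin\ZZ_{\geq1}$.
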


\begin{proof}Let $F\in\sharm_r(\uhp)\,\cap\, \W r \om(\proj\RR)$. With
\eqref{frquot}  and the second line of \eqref{pol-term} we have
\[ F(\mu;z) \=\int_{-\pi/2}^{\pi/2}\, A(\th,z)\, d\th\,,\]
with a function $A$ that is real-analytic in $(\th,z)$ with
$\th$ in a neighbourhood of $[-\frac\pi2,\frac\pi2]$ in $\CC$, and $z$
in a neighbourhood of $\proj\RR$ in~$\proj\CC$. So all terms
$F(\mu;z)$ in the polar expansion of $F$ also represent elements of
$\W r \om(\proj\RR)$.  Consulting Table~\ref{tab-PMH} we conclude
for $r\in \CC \smallsetminus \ZZ_{\geq 1}$ that $F(\mu;\cdot)$ is a
multiple of $\M{r,\mu}$.

On the other hand, as mentioned above, since $F$ is $r$-harmonic in
$\uhp$, so is $F(\mu; z)$ for each $\mu$. Again from table 2 we see
that $F(\mu, \cdot)$
should be a multiple of $\P{r,\mu}$ for $\mu\geq 0$ and a multiple of
$\H{r,\mu}$ if $\mu\leq
-1$.

Hence all terms in the polar expansion of $F$ vanish, and $F=0$.
\end{proof}

\subsection{Polar expansion of the kernel function} The kernel
function $K_r(z;\tau)=\frac{2i}{z-\tau} \allowbreak\,\Bigl(\frac{\bar
z-\tau}{\bar z-z} \Bigr)^{r-1}$ in \S\ref{sect-Kr} gives, for a fixed
$\tau\in \uhp$, rise to two polar expansions in $z$, on the disk
$\Bigl|\frac{z-i}{z+i}\Bigr|<\Bigl|\frac{\tau-i}{\tau+i}\Bigr|$ and
on the annulus
$1>\Bigl|\frac{z-i}{z+i}\Bigr|>\Bigl|\frac{\tau-i}{\tau+i}\Bigr|$.

\begin{prop}\label{prop-polexp}\il{Kr1}{$K_r(z;\tau)$}
\begin{enumerate}
\item[i)] Consider $z,\tau$ satisfying
$\bigl|\frac{z-i}{z+i}\bigr|>\bigl|\frac{\tau-i}{\tau+i}\bigr|$.
\begin{enumerate}
\item[a)] If $r\in \CC\setminus \ZZ_{\geq 2}$, then
\be K_r(z;\tau)
\= \sum_{\mu \leq -1} \frac{(2-r)_{-\mu-1}}{(-\mu-1)!}\,
\P{2-r,-\mu-1}(\tau)\, \M{r,\mu}(z)\,. \ee
\item[b)] If $r\in \ZZ_{\geq 2}$, then
\be \label{iwK} K_r(z;\tau)
\= \sum_{\mu=1-r}^{-1} (-1)^{-\mu-1}\, \binom{r-2}{-\mu-1}\,
\P{2-r,-\mu-1}(\tau)\, \M{r,\mu}(z)+p_r(z;\tau)\,, \ee
with\ir{prkernel}{p_r(z;\tau)}
\be \label{prkernel}
p_r(z;\tau) \;:=\; \frac{2i}{z-\tau}\, \Bigl(\frac{
\tau-i}{z-i}\Bigr)^{r-1}\,. \ee
\end{enumerate}
\item[ii)] Consider $z,\tau$ satisfying
$\bigl|\frac{z-i}{z+i}\bigr|<\bigl|\frac{\tau-i}{\tau+i}\bigr|$. For
all $r\in \CC$:
\be K_r(z;\tau) \= -\sum_{\mu\leq -1} \frac{(1-r)_{-\mu}}{(-\mu)!}\,
\P{2-r,-\mu-1}(\tau)\, \H{r,\mu}(z)- \sum_{\mu\geq 0}\,
\P{2-r,-\mu-1}(\tau)\, \P{r,\mu}(z)
\,. \ee
\item[iii)] For $r\in \ZZ_{\geq 2}$:
\bad y^{1-r} &\= \sum_{\mu=1-r}^1 \frac{(1-r)_{-\mu}}{(-\mu)!}\,
\H{r,\mu}(z) + \Bigl( \frac{2i}{z+i}\Bigr)^{r-1}\\
&\= - \sum_{\mu=1-r}^{-1} \frac{(2-r)_{-\mu-1}}{(-\mu-1)!}\,
\M{r,\mu}(z)+ \Bigl( \frac{2i}{z-i} \Bigr)^{r-1} \,. \ead
\end{enumerate}
\end{prop}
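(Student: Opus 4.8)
The statement to prove is Proposition~\ref{prop-polexp}, which gives three polar expansions of the kernel function $K_r(z;\tau)$ together with two expansions of $y^{1-r}$ in weight $r\in\ZZ_{\geq2}$. The unifying idea is that, for a fixed $\tau\in\uhp$, the function $z\mapsto K_r(z;\tau)$ is $r$-harmonic on $\uhp\setminus\{\tau\}$ (Proposition~\ref{prop-Kr}), so each term $K_r(\mu;\cdot)$ in its polar expansion \eqref{pol-term} is again $r$-harmonic, and the function $p_\mu$ in $K_r(\mu,z)=\bigl(2i/(z+i)\bigr)^r (w/\bar w)^\mu\,p_\mu(|w|^2)$ satisfies the second-order ODE mentioned in \S\ref{sect-pol-exp}. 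Its solution space is two-dimensional and spanned (depending on the sign of $\mu$ and on whether we are on the disk or the annulus, i.e.\ on which boundary behaviour is allowed) by the functions $\P{r,\mu}$, $\M{r,\mu}$, $\H{r,\mu}$ from \eqref{Prmu}--\eqref{Hrmu}. Thus the content of the proposition is: (1) identify which pair of solutions is relevant in each region, (2) determine the coefficients as functions of $\tau$, and (3) handle the integral-weight degeneration.

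\textbf{Key steps.} First, for Part~i) with $\bigl|\tfrac{z-i}{z+i}\bigr|>\bigl|\tfrac{\tau-i}{\tau+i}\bigr|$: here $z$ ranges over an annulus whose outer boundary touches $\proj\RR$. Since $K_r(\cdot;\tau)$ represents an element of $\W r \om(\proj\RR)$ (Proposition~\ref{prop-Kr}), each polar term must too; consulting Table~\ref{tab-PMH} this forces $K_r(\mu;\cdot)$ to be a multiple of $\M{r,\mu}$ (for $r\notin\ZZ_{\geq2}$), which moreover is only nonzero for $\mu\leq-1$ by inspecting the exponents. To pin down the coefficient I would compute the restriction: apply $\rsp_r$ to both sides using \eqref{res-Kr}, which gives $\bigl(\rsp_r K_r(\cdot;\tau)\bigr)(t)=\bigl(\tfrac{\tau-t}{i-t}\bigr)^{r-2}$, i.e.\ in the projective model $\bigl(\tfrac{\tau-i}{i-t}\cdot\tfrac{i-t}{\tau-t}\bigr)^{\,?}$—more precisely I would use $\rs_r K_r(\cdot;\tau)=(\tau-t)^{r-2}$ and expand this (in the $w$-variable for $\tau$) as $\sum_{\mu\leq-1}\tfrac{(2-r)_{-\mu-1}}{(-\mu-1)!}\P{2-r,-\mu-1}(\tau)\cdot(\text{restriction of }\M{r,\mu})$, matching against $\rsp_r\M{r,\mu}(t)=\bigl(\tfrac{t-i}{t+i}\bigr)^{\mu+1}$ from \eqref{rspM}. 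The coefficients $\tfrac{(2-r)_{-\mu-1}}{(-\mu-1)!}$ then emerge from the binomial/hypergeometric expansion of $(\tau-t)^{r-2}$ in the disk model. For Part~ii), where $z$ lies in a disk not touching the real line, boundary behaviour imposes no constraint, so one simply expands the generating function $K_r(z;\tau)$ directly as a Laurent/Taylor series in $w(z)$; the terms with $\mu\geq0$ (holomorphic part) must be multiples of $\P{r,\mu}$, the terms $\mu\leq-1$ must be the regular-at-$i$ solution $\H{r,\mu}$, and again the coefficients come out by matching power series, using $\H{r,\mu}\in\sharm_r(\uhp)$.

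\textbf{Integral weights and the $y^{1-r}$ identity.} For Part~i)b), $r\in\ZZ_{\geq2}$, the function $r\mapsto\M{r,\mu}$ has a simple pole at such $r$ unless $1-r\leq\mu\leq-1$, so the naive $\mu\leq-1$ sum in i)a) breaks. I would take the $\mu\leq-1$ expansion of i)a), split off the finitely many terms $1-r\leq\mu\leq-1$ (which survive, with coefficients $(-1)^{-\mu-1}\binom{r-2}{-\mu-1}$ obtained by specializing $\tfrac{(2-r)_{-\mu-1}}{(-\mu-1)!}$), and identify the remaining tail $\sum_{\mu\leq-r}$ as the elementary rational function $p_r(z;\tau)=\tfrac{2i}{z-\tau}\bigl(\tfrac{\tau-i}{z-i}\bigr)^{r-1}$—this is most cleanly verified by directly summing the geometric-type series $\sum_{\mu\leq-r}\P{2-r,-\mu-1}(\tau)\,\M{r,\mu}(z)$, or by checking $p_r$ is holomorphic in $z$, has the right expansion, and that $K_r-p_r-(\text{finite sum})$ vanishes. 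Finally, Part~iii) is the special case $\tau\to$ limit or rather a direct polar expansion of $y^{1-r}=\bigl(\tfrac{2i}{z+i}\bigr)^{r-1}\cdot\bigl(\tfrac{z+i}{2i}\bigr)^{r-1}y^{1-r}$; noting $y^{1-r}$ is $r$-harmonic on $\uhp$, its polar terms are multiples of $\P{r,\mu}$ and $\H{r,\mu}$ (and of $\M{r,\mu}$ in the boundary-germ sense), and one reads off the coefficients $\tfrac{(1-r)_{-\mu}}{(-\mu)!}$ and $\tfrac{(2-r)_{-\mu-1}}{(-\mu-1)!}$ from the binomial expansion of $(1-w\bar w)^{1-r}$ or from Proposition~\ref{prop-tPloc}(iv) applied to $\rsp_r$ of $y^{1-r}$ computed in the Examples after Proposition~\ref{prop-restr}.

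\textbf{Main obstacle.} The conceptual steps (which solution is relevant where, and that polar terms inherit membership in $\W r\om$) are straightforward given Table~\ref{tab-PMH} and Proposition~\ref{prop-Ei}. The real work is the bookkeeping: correctly normalizing the hypergeometric functions in $\M{r,\mu},\H{r,\mu}$, tracking the Pochhammer factors through the binomial expansion of $(\tau-t)^{r-2}$ and $(1-|w|^2)^{1-r}$, and getting the signs and the branch of $\bigl(\tfrac{z-i}{z+i}\bigr)^{\mu}$ right so that the three expansions glue consistently on the overlap. I expect the identification of the remainder term $p_r(z;\tau)$ in \eqref{prkernel} as the sum of the divergent-at-$r\in\ZZ_{\geq2}$ tail to be the most delicate point, best handled by analytic continuation in $r$ combined with a direct closed-form summation.
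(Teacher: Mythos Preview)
Your structural plan is correct and matches the paper: identify the relevant basis element ($\M{r,\mu}$ on the annulus via membership in $\W r\om(\proj\RR)$, and $\H{r,\mu}$ or $\P{r,\mu}$ on the disk via regularity at $i$), then determine the scalar coefficients. Where you diverge from the paper is in the method for the coefficients. The paper does \emph{not} use the restriction map $\rsp_r$; instead it applies the \emph{shadow operator} $\shad_r$ to both sides, using $(\shad_r K_r(\cdot;\tau))(z)=(\bar r-1)\bigl(\frac{z-\bar\tau}{2i}\bigr)^{\bar r-2}$ from~\eqref{shad-Kr} together with the shadows of $\M{r,\mu}$ and $\H{r,\mu}$ from Table~\ref{tab-PMH}. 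This yields $d_\mu^+(r)=\frac{(2-r)_{-\mu-1}}{(-\mu-1)!}$ and $d_\mu^-(r)=-\frac{(1-r)_{-\mu}}{(-\mu)!}$ for $\mu\leq-1$ in a single stroke, uniformly on both regions. Your restriction approach for Part~i) is a legitimate alternative (expanding $\bigl(\frac{\tau-t}{i-t}\bigr)^{r-2}=(1-\xi)^{2-r}(1-\xi/u)^{r-2}$ in $u=\frac{t-i}{t+i}$ and matching against $\rsp_r\M{r,\mu}=u^{\mu+1}$ does give the claimed coefficients), but it does not extend to Part~ii): neither $\H{r,\mu}$ nor $\P{r,\mu}$ lies in $\W r\om(\proj\RR)$, so restriction gives you nothing there. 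Your fallback of ``matching power series'' would require the explicit Taylor expansion of $\H{r,\mu}$ at $w=0$, which is where the hypergeometric bookkeeping you flag as the obstacle actually bites. The shadow trick avoids this entirely, since $\shad_r\H{r,\mu}$ is elementary.

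Two further points. For Part~i)b) the paper does take the limit in $r$ as you suggest, but identifies the tail $\sum_{\mu\leq-r}$ with $p_r(z;\tau)$ via the Kummer relation~\eqref{HMP} rewriting $\M{r,\mu}$ in terms of $\H{r,\mu}$ and $\P{r,\mu}$, then summing the surviving $\P{r,\mu}$-series geometrically---this is cleaner than a direct closed-form summation. For Part~iii) the paper does \emph{not} expand $y^{1-r}$ directly; it takes the identity from Part~ii), divides by $(1-\xi)^{2-r}$, and sends $\xi\to1$ (i.e.\ $\tau\to\infty$), which collapses the infinite sums to the stated finite ones. This is both shorter and avoids a second round of coefficient-matching.
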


\begin{proof}
We use the coordinates $w=\frac{z-i}{z+i}$ and
$\xi=\frac{\tau-i}{\tau+i}$, and put
\[ X_+ \;:=\;\bigl\{(z,\tau)\in \uhp^2 \;:\;|\xi|<|w|<1\bigr\}\,, X_-
\;:=\; \bigl\{(z,\tau)\in \uhp^2 \;:\;|w|<|\xi|<1 \bigr\}\,.\]

\rmrk{General expansion}  The function $K_r(\cdot;\cdot)$ has
polar expansions on both regions, that have the following form
on~$X_\pm$:
\[ K_r(z;\tau) \= \sum_{\mu\in \ZZ} A_\mu^\pm(r,\tau)\,
F_\mu^\pm(r,z)\,.\]
We consider this first for $r\in \CC\setminus \ZZ_{\geq 2}$. The 
fact that $K_r(\cdot;\tau)$ on $X_+$ represents an element
of $\W r \om(\proj\RR)$ implies that we can take $F_\mu^+ =
\M{r,\mu}$ and where the fact that $K_r(\cdot;\tau)$ has no
singularity on $X_-$ implies that we can take $F_\mu^-=\H{r,\mu}$ for
$\mu \leq -1$ and $F_\mu^- = \P{r,\mu}$ for $\mu \geq 0$.

The invariance relation
\be \label{Kr-inv}K_r(\cdot;\cdot)|_r g \otimes |_{2-r} g \= K_r
\quad\text{ for each }g\in \SL_2(\RR)\ee
in \eqref{Kq-inv}, applied with $g=k(\th)$ for small $\th$ implies
that $A_\mu^\pm(r,\tau)$ transforms under $|_r k(\th)$ by
$e^{-i(r+2\mu)\th}$, hence it has the form $A_\mu^\pm(r,\tau)
= d_\mu^\pm(r)\, \P{2-r,-\mu-2}(\tau)$, for some quantity
$d_\mu^\pm(r)$.

For a given $z\in \uhp$ the function $K_r(z;\cdot)$ has only a
singularity in the upper half-plane at $\tau=z$. Since
$\P{2-r,-\mu-1}(\tau)$ has a singularity at $\tau=i$ if $-\mu-1\leq
0$, we have $d^+_\mu(r)=0$ for $\mu \geq 0$. Thus, we have the
following:
\bad \text{on $X_+$:}& & K_r(z;\tau) &\= \sum_{\mu\leq -1}d^+_\mu(r)
\, \P{2-r,-\mu-1}(\tau)\, \M{r,\mu}(z)\,,\\
\text{on $X_-$:}&& K_r(z;\tau) &\= \sum_{\mu\leq -1}d^-_\mu(r) \,
\P{2-r,-\mu-1}(\tau)\, \H{r,\mu}(z) \\
&&&\qquad\hbox{}
+ \sum_{\mu\geq0}d^-_\mu(r) \, \P{2-r,-\mu-1}(\tau)\, \P{r,\mu}(z)\,.
\ead
To both sides of both equations we apply the shadow operator~$\xi_r$.
With  \eqref{shad-Kr},  Table~\ref{tab-PMH}, 
and the fact that $\P{r,\mu}$ is holomorphic, we get
\begin{align*}
(\bar r-1)\,& \Bigl( \frac{z-\bar \tau}{2i}\Bigr)^{\bar r-2} \\
&\= \sum_{\mu\leq -1} \overline{ \P{2-r,-\mu-1}(\tau)}\, \Bigl(
\frac{2i}{z+i}\Bigr)^{2-\bar r}\, \Bigl( \frac{z-i}{z+i}
\Bigr)^{-\mu-1} \cdot
\begin{cases}
\overline{d_\mu^+(r)}\, (\bar r-1)&\text{ on }X_+\,,\\
\overline{d_\mu^-(r)}\, (-\mu)&\text{ on }X_-\,.
\end{cases}
\end{align*}
We consider this for $(z,\tau)\in X_+$ with $\xi$ near~$0$, and for
$(z,\tau)\in X_-$ with $w$ near~$0$. Then we can rewrite the 
left hand side of the equation so that the main factor is
\begin{align*} (\bar r-1)\, & (1-w\bar \xi)^{\bar r-2} \= (\bar r-1)\,
\sum_{a\geq 0} \frac{(2-\bar r)_a}{a!}\, w^a \, \bar \xi^a\\
&\= (\bar r-1)\, \sum_{\mu\leq -1} \frac{(2-\bar
r)_{-\mu-1}}{(-\mu-1)!}\, \Bigl( \frac{\bar \tau+i}{\bar
\tau-i}\Bigr)^{-\mu-1}\, \Bigl( \frac{z-i}{z+i}\Bigr)^{-\mu-1}\,.
\end{align*}
So for $\mu \leq -1$ we have
\be d_\mu^+(r) \= \frac{(2-r)_{-\mu-1}}{(-\mu-1)!}\,,\qquad d_\mu^-(r)
\= - \frac{(1-r)_{-\mu}}{(-\mu)!}\,. \ee

\rmrkn{Part i)a) }is now clear.

\rmrk{Part i)b)} The integrals in \eqref{pol-term} with
$F(z) = K_r(z;\tau)$ show that each term
$A_\mu^\pm(r,\tau)\, F_\mu^\pm(r,z)$ in the polar expansion is
holomorphic in~$r$. So we can handle the case $r_0\in \ZZ_{\geq 2}$ 
by a limit argument. For $1-r_0\leq \mu \leq
-1$ with $r_0\in \ZZ_{\geq 2}$, the function $\M{r,\mu}$ has a
holomorphic extension to $r=r_0$ (remarks to Table~\ref{tab-PMH}).
This gives the sum in \eqref{iwK}. With \eqref{HMP} the terms with
$\mu \leq -r_0$ can be written as
\[ \frac{(1-r)\, (1-r)_{-\mu-1}}{\mu\, (-\mu-1)!}\,
\P{2-r,-\mu-1}(\tau) \, \H{r,\mu}(z) + \P{2-r,-\mu-1}(\tau)\,
\P{r,\mu}(z)\,.\]
The first of these terms has limit $0$ as $r\rightarrow r_0$. The
second term leads to a series with $p_r(z;\tau)$ as its sum.

\rmrk{Part ii)} On $X_-$ we have
\[ K_r(z;\tau) \= -\sum_{\mu\leq 0} \frac{(1-r)_{-\mu}}{(-\mu)!}\,
\P{2-r,-\mu-1}(\tau)\, \H{r,\mu}(z)
+ \sum_{\mu\geq 0} d_\mu^-(r) \, \P{2-r,-\mu-1}(\tau)\,
\P{r,\mu}(z)\,, \]
with still unknown $d_\mu^-(r) $ for $\mu\geq 0$. In the coordinates
$w$ and $\xi$ this becomes:
\badl{ieq} &\frac{(1-w)^r \,(1-\xi)^{2-r}\,
(1-\bar w \xi)^{r-1} }{(w-\xi)\,(1-|w|^2)^{r-1}}\\
&\quad\= - \sum_{\mu \leq -1} \frac{(1-r)_{-\mu}}{(-\mu)!}\,
(1-\xi)^{2-r}\, \xi^{-\mu-1}\, \H{r,\mu}(z) \\
&\quad\hbox{} + \sum_{\mu\geq 0} d_\mu^-(r)\,
(1-\xi)^{2-r}\,\xi^{-\mu-1}\, (1-w)^r \, w^\mu\,. \eadl
We divide by $(1-\nobreak \xi)^{2-r}$. The remaining quantity on the
left has the following expansion:
\[ \frac{(1-w)^r }{(1-|w|^2)^{r-1}} \sum_{a,b\geq 0} (-1) w^a
\xi^{-a-1} \, \binom{r-1}b\, (-1)^b \bar w^b \xi^b\,. \]
Let $n\geq 1$. The coefficient of $\xi^{-n}$ in this expansion is
\[
-\sum_{b\geq 0} \,w^{b+n-1} \, \binom{r-1}{b} (-\bar w )^b \= -w^{n-1}
\,(1-|w|^2)^{r-1}\,. \]
Hence $d_\mu^{-1}(r)=-1$ for $\mu\geq 0$. By holomorphic
extension from $r\in \CC\setminus\ZZ_{\geq 2}$ to $r\in \CC$, this
gives Part~ii).

\rmrk{Part iii)} Let $r\in \ZZ_{\geq 2}$. The equality \eqref{ieq}
  divided by $(1-\nobreak\xi)^{2-r}$ becomes
\begin{align*} \frac{(1-w)^r\,(1-\bar w
\xi)^{r-1}}{(w-\xi)\,(1-|w|^2)^{r-1}} & \= -
\sum_{\mu=1-r}^{-1}\frac{(1-r)_{-\mu}}{(-\mu)!} \, \xi^{-\mu-1}\,
\H{r,\mu}(z)
- (1-w)^r \xi^{-1} \,(1-w/\xi)^{-1}\,.
\end{align*}
Now we let $\xi$ tend to $1$. We obtain:
\[ -(1-w)^{r-1}\, (1-\bar w)^{r-1}\,(1-|w|^2)^{1-r}\=
- \sum_{\mu=1-r}^{-1}\frac{(1-r)_{-\mu}}{(-\mu)!} \, \H{r,\mu}(z) -
(1-w)^{r-1}\,.\]
In terms of the coordinate $z$ this is
\[ - y^{1-r} \= - \sum_{\mu=1-r}^{-1}\frac{(1-r)_{-\mu}}{(-\mu)!} \,
\H{r,\mu}(z) - \Bigl(\frac{2i}{z+i}\Bigr)^{r-1}\,,\]
which gives the first expression for $y^{1-r}$ in Part~iii). We use
\eqref{HMP} to obtain the second expression.\end{proof}

%%%%%%%%%%%%%%%%%%%%%%%%%%%%%%%%%%
\subsection{Related work}\label{sect-lit7}
The polar expansion generalizes the power series expansion in
$w=\frac{z-i}{z+i}$ for holomorphic functions on the upper
half-plane. When dealing with $r$-harmonic functions a
straightforward generalization leads to the functions in
Table~\ref{tab-PMH}. Proposition~\ref{prop-polexp} is analogous
to~\cite[Proposition 3.3]{BLZ13}.

%%%%%%%%%%%%% Part III %%%%%%%%%%%%%%%%%%%%%%%%%%%%%%%%%%

\part{Cohomology with values in analytic boundary germs} We turn to
the proof of the surjectivity in Theorem~\ref{THMac} and the proof of
Theorem~\ref{THMaci}, by relating cohomology with values in
$\dsv{v,2-r}\om$ to cohomology in modules $\esv{v,r}\om \subset \W
{v,r}\om(\proj\RR)$. Section~\ref{sect-hws} gives the definition of
these modules.

We use a description of cohomology that turned out to be useful in the
analogous result for Maass forms, in~\cite{BLZm}. This description of
cohomology is based on a tesselation of the upper half-plane. See
Section~\ref{sect-tesscoh}.

Theorem~\ref{thmbg} describes the relation between holomorphic
automorphic form and boundary germ cohomology. This theorem
immediately implies the surjectivity in Theorem~\ref{THMac}. For the
 weights in $\ZZ_{\geq 2}$ work has to be done, in
Section~\ref{sect-abg-iw}, to prove Theorem~\ref{THMaci}.

%%%%%%%%%%%%%%%%%%%%%%%%%%%%%%%%%%%%%%%%%%%%%%%%%%%%%%%%%

\section{Highest weight spaces of analytic boundary
germs}\label{sect-hws} This section serves to define the modules
$\esv{v,r}\om$ to take the place of the modules $\dsv{v,2-r}\om$.

\subsection{Definition of highest weight space}\label{sect-def-E}The
cases $r\in \ZZ_{\geq 2}$, and $r\in\CC\setminus \ZZ_{\geq 2}$ are
dealt with separately.

\subsubsection{Weight in $\CC\setminus \ZZ_{\geq 2}$} Part~i) of
Theorem~\ref{thm-VWiso} points the way how to treat this case. It
states that $\rsp_r: \W r \om(\proj\RR) \rightarrow \V r
\om(\proj\RR)$ is bijective. For $\ph\in\dsv{2-r}\om$
\be\label{rs-1} \rs_r^{-1} \ph \= (\rsp_r )^{-1} \Prj{2-r}\ph \;\in\;
\W r \om(\proj\RR)\,,
\ee
where we use that $(\rs_r f)(t) = (i-t)^{r-2}\,(\rsp_r f)(t)$.
See~\eqref{rsl}. For $\ph \in \dsv{2-r}\om[\xi_1,\ldots,\xi_n]$ we
can proceed similarly to get $\rs_r^{-1}\ph\in \W r
  \om[\xi_1,\ldots,\xi_n]$.

\begin{defn}\label{esv-def-gen}For $r\in \CC\setminus \ZZ_{\geq 2}$ we
define\ir{esv-gen}{\esv r \om}
\be\label{esv-gen} \esv r \om \;:=\; \rs_r^{-1}\dsv{2-r}\om,\quad \esv
r{\om,\wdg}[\xi_1,\ldots,\xi_n] \;:=\;
\rs_r^{-1}\dsv{2-r}{\om,\wdg}[\xi_1,\ldots,\xi_n]
\ee
\il{esv-gen[]}{$ \esv r {\om,\wdg}[\xi_1,\ldots,\xi_n]$}for each
finite set $\{\xi_1,\ldots,\xi_n\}\subset\proj\RR$. 
\end{defn}

\rmrk{Weight $1$} The case $r=1$ is special. The restriction morphism
is given by $(\rsp_1 F)(t)=\frac1{2i}\,(t-\nobreak i)\, F(t)$, and
hence $(\rs _1 F)(t)=\frac i2 \, F(t)$. This gives the following
equalities:
\be \label{wt1ED}
\esv 1 \om \= \dsv 1 \om\,,\qquad \esv 1
{\om,\wdg}[\xi_1,\ldots,\xi_n] = \dsv 1 {\om,\wdg}[\xi_1,\
\ldots,\xi_n]\,.
\ee

\rmrk{Characterization with series}The projective model
$\Prj{2-r}\dsv{2-r}\om$ consists of the holomorphic functions on some
neighbourhood of $\lhp\cup\proj\RR$ in $\CC$. So it consists of the
functions
\[ t\mapsto \sum_{\mu\leq 0} c_\mu\, \Bigl(
\frac{t-i}{t+i}\Bigr)^\mu\]
with coefficients that satisfy $c_\mu= \oh\bigl(e^{-a|\mu|}\bigr)$ as
$|\mu|\rightarrow \infty$, for some $a>0$ depending on the domain of
the function. Table~\ref{tab-PMH} in \S\ref{sect-pol-exp} gives
$\bigl( \frac{t-i}{t+i}\bigr)^\mu =\rsp_r M_{r,\mu-1}$. Hence we have
for $r\in \CC\setminus\ZZ_{\geq 2}$
\be\label{esv-pe-gw} \esv r\om \= \biggl\{ \sum_{\mu \in \ZZ_{\leq
-1}} c_\mu \, \M{r,\mu} \;:\; c_\mu = \oh\bigl(e^{-a|\mu|}\bigr)\text{
for some }a>0\biggr\}\,. \ee

\rmrk{Highest weight spaces}We call $\dsv{2-r}\om$ and $\esv r \om$
\emph{highest weight spaces}. The use of this terminology is
explained in \S\ref{sect-hwssp} in the Appendix.
\medskip

\subsubsection{Case $r\in \ZZ_{\geq 2}$} We note that representatives
of elements of $\dsv r\om$ are holomorphic functions on a
neighbourhood of $\lhp\cup\proj\RR$ in $\proj\CC$ that have at
$\infty$ a zero of order at least~$r$, since \eqref{Prj} shows that
the elements of $\dsv r \om$ are of the form $t\mapsto (i-\nobreak
t)^{-r}\cdot(\text{holo.\ at $\infty$})$.
These functions represent also elements of $\W r \om(\proj\RR)$.

\begin{defn}\label{esv-def-iw}For $r\in \ZZ_{\geq 2}$ we
define:\ir{esv-iw}{\esv r \om}\il{esv-iw[]}{$ \esv r
{\om,\wdg}[\xi_1,\ldots,\xi_n]$}
\badl{esv-iw} \esv r \om &\;:=\; \dsv r \om + \sum_{\mu=1-r}^{-1}\CC\;
\M{r,\mu} \,,\\
\esv r {\om,\wdg}[\xi_1,\ldots,\xi_n]&\;:=\; \dsv r
{\om,\wdg}[\xi_1,\ldots,\xi_n]+ \sum_{\mu=1-r}^{-1}\CC\; \M{r,\mu}
\,, \eadl
for finite sets $\{\xi_1,\ldots,\xi_n\}\subset\proj\RR$. 
\end{defn}

\rmrke This defines $\esv r \om$ as a subspace of $\W r
\om(\proj\RR)$, and $\esv r{\om,\wdg}[\xi_1,\ldots,\xi_n]$ as a
subspace of $\W r \om \left(
\proj\RR\setminus\{\xi_1,\ldots,\xi_n\}\right)$.

\rmrk{Comparison with weight $1$} If we apply the formulas in
\eqref{esv-iw} with $r=1$, the sum over $\mu$ is empty, and we get
back~\eqref{wt1ED}.

\rmrk{Characterization with series}The elements of the projective
model $\Prj{2-r}\dsv r \om$ are the functions of the form
$h(t)=\sum_{\mu\leq 0} d_\mu \,\Bigl(\frac{t-i}{t+i}\Bigr)^\mu$ with
$d_\mu = \oh\bigl( e^{-a|\mu|}\bigr)$ for some $a>0$. In view of
\eqref{Prj} and \eqref{Prmu} $f=\Prj r^{-1}h$ has an expansion of the
form
\[ (z-i)^{-r} \sum_{\mu\leq 0}d_\mu \,\Bigl(\frac{t-i}{t+i}\Bigr)^\mu
\= \sum_{\mu \leq -r} \, c_\mu \, \P{r,\mu}(z)\,,\]
with the $c_\mu$ satisfying the same estimate. This leads to
\be\label{esv-pe-iw}
\esv r \om \=\biggl\{ \sum_{\mu \leq -r} c_\mu\, \P{r,\mu}(z) +
\sum_{\mu=1-r}^{-1} c_\mu \, \M{r,\mu}\;:\; c_\mu = \oh\bigl(
e^{-a|\mu|}\bigr) \text{ for some }a>0\biggr\}\,, \ee
which is similar to~\eqref{esv-pe-gw}.
\smallskip

In the following result we use the subsheaf
\il{Whol1}{$\Whol r \om$}$\Whol r \om$, defined in~\eqref{Whol}. Its
sections are represented by holomorphic functions, contained in the
kernel of the restriction morphism~$\rs_r$.

\begin{prop}\label{prop-iw-esv}Let $r\in \ZZ_{\geq 2}$.
\begin{enumerate}
\item[i)] $\dsv r \om$ is a subspace of $\Whol r \om(\proj\RR)$
invariant under the operators $|_r g$ with $g\in \SL_2(\RR)$, and
$\dsv r {\om,\wdg}[\xi_1,\ldots,\xi_n]|_r g = \dsv r
{\om,\wdg}[g^{-1}\xi_1\ldots, g^{-1}\xi_n]$ for all $g\in
\SL_2(\RR)$.
\item[ii)] $\esv r \om$ is a subspace of $\W r \om(\proj\RR)$
invariant under the operators $|_r g$ with $g\in \SL_2(\RR)$, and
$\esv r {\om,\wdg}[\xi_1,\ldots,\xi_n]|_r g = \esv r
{\om,\wdg}[g^{-1}\xi_1.\ldots, g^{-1}\xi_n]$ for all $g\in
\SL_2(\RR)$.
\item[iii)] The following sequences are exact:
\bad &0 \rightarrow \dsv r \om \rightarrow \esv r \om
\stackrel{\rs_r}\rightarrow \dsv{2-r}\pol\rightarrow0\,,\\
 &0 \rightarrow \dsv r {\om,\wdg}[\xi_1,\ldots,\xi_n] \rightarrow \esv
 r{\om,\wdg}[\xi_1,\ldots,\xi_n] \stackrel{\rs_r}\rightarrow
\dsv{2-r}\pol\rightarrow0\,. \ead
\end{enumerate}
\end{prop}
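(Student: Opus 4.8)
The plan is to establish the three parts in turn, using Lemma~\ref{lem-hW}, Theorem~\ref{thm-VWiso}(ii), the invariance statements of Propositions~\ref{prop-inv} and~\ref{prop-sasi} (whose weight parameter we specialize to $2-r$, so that the space and operator occurring there become $\dsv r\ast$ and $|_r g$), the kernel function $K_r(\cdot;\cdot)$ of Proposition~\ref{prop-Kr}, and its polar expansion \eqref{iwK}.

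\emph{Part~i)}. In the function model an element of $\dsv r\om$ is holomorphic on a connected neighbourhood $V$ of $\lhp\cup\proj\RR$ in $\proj\CC$ (we may assume $i\notin V$), and has a zero of order at least $r$ at $\infty$ (see the text preceding Definition~\ref{esv-def-iw}). Covering $\proj\RR$ by small disks and applying Lemma~\ref{lem-hW}---case~(a) on disks contained in $\CC$, case~(b) on a disk around $\infty$---shows that the restriction of such a function to $\uhp$ is, near every point of $\proj\RR$, a section of $\Whol r\om$; these local sections glue (they all come from one global function) to an element of $\Whol r\om(\proj\RR)$, and the identity theorem on $V$ makes the map $\dsv r\om\to\Whol r\om(\proj\RR)$ injective; likewise each $\dsv r{\om,\wdg}[\xi_1,\ldots,\xi_n]$ is carried into $\Whol r\om(\proj\RR\setminus\{\xi_1,\ldots,\xi_n\})$. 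The action $|_r g$ on $\bg_r$, hence on $\W r\om$ and $\Whol r\om$, is given on holomorphic sections by the usual formula $(F|_r g)(z)=(cz+d)^{-r}F(gz)$, so on these subspaces it coincides with the operator of Propositions~\ref{prop-inv} and~\ref{prop-sasi}; those give $\dsv r\om|_r g=\dsv r\om$ and $\dsv r{\om,\wdg}[\xi_1,\ldots,\xi_n]|_r g=\dsv r{\om,\wdg}[g^{-1}\xi_1,\ldots,g^{-1}\xi_n]$. One must use only the inclusion $\dsv r\om\subseteq\Whol r\om(\proj\RR)$, never an equality: for instance $z\mapsto(z+i)^{-r}$ represents a section of $\Whol r\om(\proj\RR)$ not lying in $\dsv r\om$.

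\emph{Part~ii)}. By Proposition~\ref{prop-Kr}, $K_r(\cdot;\tau)$ represents an element of $\W r\om(\proj\RR)$ for every $\tau\in\uhp$, and \eqref{iwK} shows that near $\proj\RR$ this boundary germ equals $\sum_{\mu=1-r}^{-1}\lambda_\mu(\tau)\,\M{r,\mu}$, with $\lambda_\mu(\tau)=(-1)^{-\mu-1}\binom{r-2}{-\mu-1}\P{2-r,-\mu-1}(\tau)$, plus the function $p_r(\cdot;\tau)$ of \eqref{prkernel}. The latter is holomorphic on $\proj\CC\setminus\{\tau,i\}$ with a zero of order $r$ at $\infty$, and since $\tau\in\uhp$ and $i$ both avoid $\lhp\cup\proj\RR$ one has $p_r(\cdot;\tau)\in\dsv r\om$, whence $K_r(\cdot;\tau)\in\esv r\om$. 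As the $r-1$ functions $\tau\mapsto\P{2-r,-\mu-1}(\tau)$, $1-r\leq\mu\leq-1$, are linearly independent on $\uhp$, the coefficient vectors $(\lambda_\mu(\tau))_\mu$ span $\CC^{r-1}$ as $\tau$ varies, so each $\M{r,\mu}$ lies in $\dsv r\om+\sum_{\tau\in\uhp}\CC\,K_r(\cdot;\tau)$; therefore
\[\esv r\om\;=\;\dsv r\om+\sum_{\tau\in\uhp}\CC\,K_r(\cdot;\tau)\,,\qquad \esv r{\om,\wdg}[\xi_1,\ldots,\xi_n]\;=\;\dsv r{\om,\wdg}[\xi_1,\ldots,\xi_n]+\sum_{\tau\in\uhp}\CC\,K_r(\cdot;\tau)\,.\]
(That each $\M{r,\mu}$, $1-r\leq\mu\leq-1$, represents a global analytic boundary germ is recorded in Table~\ref{tab-PMH}, giving $\esv r\om\subseteq\W r\om(\proj\RR)$ via Part~i).) Now the equivariance \eqref{Kq-inv} reads $K_r(\cdot;\tau)|_r g=(c\,g^{-1}\tau+d)^{2-r}\,K_r(\cdot;g^{-1}\tau)$ for $g=\matc abcd\in\SL_2(\RR)$, and $g^{-1}\uhp=\uhp$; combined with the invariance and cusp-tracking of $\dsv r\om$ and $\dsv r{\om,\wdg}[\cdots]$ from Part~i), this yields the asserted statements for $\esv r\om$ and $\esv r{\om,\wdg}[\cdots]$.

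\emph{Part~iii)}. By Proposition~\ref{prop-restr} the morphism $\rs_r=\Prj{2-r}^{-1}\rsp_r$ intertwines $|_r g$ with $|_{2-r}g$, and by Theorem~\ref{thm-VWiso}(ii)(b) the sheaf sequence $0\rightarrow\Whol r\om\rightarrow\W r\om\stackrel{\rs_r}\rightarrow\dsv{2-r}\pol\rightarrow0$ is exact; taking sections over the connected space $\proj\RR$ gives $\Whol r\om(\proj\RR)=\ker\bigl(\rs_r\colon\W r\om(\proj\RR)\to\dsv{2-r}\pol\bigr)$, so by Part~i) $\rs_r$ annihilates $\dsv r\om$ (and, on the excised neighbourhoods, $\dsv r{\om,\wdg}[\xi_1,\ldots,\xi_n]$). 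From $\rsp_r\M{r,\mu}(t)=\bigl(\frac{t-i}{t+i}\bigr)^{\mu+1}$ (Table~\ref{tab-PMH}) one computes $\rs_r\M{r,\mu}(t)=(-1)^{r-2}(t-i)^{r-1+\mu}(t+i)^{-\mu-1}$; for $1-r\leq\mu\leq-1$ these are $r-1$ polynomials of degree $r-2$ with pairwise distinct orders of vanishing at $i$, hence a basis of $\dsv{2-r}\pol$. This gives at once the surjectivity of $\rs_r\colon\esv r\om\to\dsv{2-r}\pol$; and writing $f\in\esv r\om$ as $f=\varphi+\sum_\mu c_\mu\M{r,\mu}$ with $\varphi\in\dsv r\om$, the relation $\rs_r f=0$ forces $\sum_\mu c_\mu\rs_r\M{r,\mu}=0$, hence all $c_\mu=0$ and $f=\varphi\in\dsv r\om$, so $\ker(\rs_r|_{\esv r\om})=\dsv r\om$. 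The identical argument with $\dsv r{\om,\wdg}[\xi_1,\ldots,\xi_n]$ in place of $\dsv r\om$ proves exactness of the second sequence. The chief obstacle is the $\SL_2(\RR)$-invariance of $\esv r\om$ in Part~ii): the natural splitting $\esv r\om=\dsv r\om\oplus\sum_\mu\CC\,\M{r,\mu}$ is not equivariant (the extension is non-split), and only the detour through the invariant kernel function $K_r(\cdot;\cdot)$ makes the invariance transparent; a secondary point, easy to overlook, is that $\Whol r\om(\proj\RR)$ is strictly larger than $\dsv r\om$, so in the exactness arguments one must work with $\Whol r\om=\ker\rs_r$ and not with $\dsv r\om$ itself.
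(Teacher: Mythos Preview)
Your proof is correct and follows essentially the same approach as the paper: Part~i) via the definitions and Lemma~\ref{lem-hW}, Part~ii) via inverting relation~\eqref{iwK} to express each $\M{r,\mu}$ through the kernels $K_r(\cdot;\tau)$ and then invoking the equivariance \eqref{Kq-inv}, and Part~iii) via the explicit restrictions $\rs_r\M{r,\mu}$. Your treatment is more explicit in places (the computation that the $\rs_r\M{r,\mu}$ form a basis of $\dsv{2-r}\pol$ by their distinct orders of vanishing at~$i$, and the observation that $\dsv r\om\subsetneq\Whol r\om(\proj\RR)$), but the line of argument is the same.
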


\begin{proof}For Part~i) we use the definitions in~\S\ref{sect-sav},
applied with $r$ instead of $2-r$. In particular, elements of $\dsv r
\om$ are represented by holomorphic functions with at $\infty$ a zero
of order at least~$r$. In that way we see that all elements of $\dsv
r \om$ are sections in $\Whol r \om$, and similarly for $\dsv r
{\om,\exc}[\xi_1,\ldots,\xi_n]$.

For Part~ii) there remains to show that $\M{r,\mu}|_r g \in \esv r
\om$. 
Relation~\eqref{iwK} in Proposition~\ref{prop-polexp} expresses
$K_r(\cdot;\tau)$ as a linear combination of the $M_{r,\mu}$ in $\esv
r \om$ and an explicit kernel $p_r(\cdot;\tau)$. Since
$P_{2-r,-\mu-1}(\tau)$ is essentially equal to $\Bigl(
\frac{\tau-i}{\tau+i}\Bigr)^{-\mu-1}$, we can invert the relation,
and express each $\M{r,\mu}$ with $1-r\leq \mu\leq -1$ as a linear
combination of $K_r(\cdot;\tau_i)-p_r(\cdot;\tau_i)$ for $r-1$
elements $\tau_i\in \uhp$. The invariance relation \eqref{Kr-inv}
implies that $K_r (\cdot;\tau_i)|_r
g$ is a multiple of $K_r(\cdot;g^{-1}\tau_i)$, which is in $\esv
r \om$ by an application
of~\eqref{iwK}. The contribution of $p_r(\cdot;\tau_i)$ is in $\dsv
r \om $, which is invariant under the operators $|_r g$.

The exactness of the sequences in Part~iii) follows directly from the
fact that $\rs_r$ vanishes on $\dsv r
\om$ and the relations   $\bigl(\rs _r \M{r,\mu}\bigr)(t)=(i-\nobreak
t)^{r-2}\,\allowbreak \Bigl(
\frac{t-i}{t+i}\Bigr)^{\mu+1}$, with \eqref{rspM} and~\eqref{rs-1}. 
\end{proof}

\subsection{General properties of highest weight spaces of analytic
boundary germs} In the previous subsection we have chosen spaces
$\esv r {\om,\wdg}[\xi_1,\ldots,\xi_n]$ of boundary germs for all
finite subsets $\{\xi_1,\ldots,\xi_n\}$ of $\proj\RR$, and the space
$\esv r \om$, which we call also
\il{esv[]}{$\esv r{\om,\wdg}[]$}$\esv r{\om,\wdg}[]$. The following
proposition lists properties that these system have in common for all
$r\in \CC$. In Section~\ref{sect-caf} we shall work on the basis of
these properties.

\begin{prop}\label{prop-Esys}
The systems of spaces in Definitions \ref{esv-def-gen}
and~\ref{esv-def-iw} have the following properties:
\begin{enumerate}
\item[i)] $\esv r{\om,\wdg}[\xi_1,\ldots,\xi_n]\subset \W r
\om\bigl(\proj\RR\setminus \{\xi_1,\ldots,\xi_n\}\bigr)$ consists of
boundary germs represented by functions in $\sharmb r (U)$ where $U$
is open in $\proj\CC$ so that $U\cup\lhp$ is a
$\{\xi_1,\ldots,\xi_n\}$-excised neighbourhood.
\item[ii)]\begin{enumerate}
\item[a)]If $\{\xi_1,\ldots,\xi_n\}\subset \{\eta_1,\ldots,\eta_m\}$,
then
\[\esv r{\om,\wdg}[\xi_1,\ldots,\xi_n]\subset \esv
r{\om,\wdg}[\eta_1,\ldots,\eta_m]\,.\]
\item[b)] If
$\{\xi_1,\ldots,\xi_n\}\cap\{\eta_1,\ldots,\eta_m\}=\emptyset$, then
\[\esv r{\om,\wdg}[\xi_1,\ldots,\xi_n]\cap \esv
r{\om,\wdg}[\eta_1,\ldots,\eta_m]\=\esv r \om\,.\]
\end{enumerate}
\end{enumerate}
{\rm With the inclusion relation ii)a) we define\ir{esv*}{\esv
r{\fs,\wdg}}
\be\label{esv*} \esv r{\fs,\wdg} := \indlim \esv
r{\om,\wdg}[\xi_1,\ldots,\xi_n]\,,
\ee
where $\{\xi_1,\ldots,\xi_n\}$ runs over the finite subsets
of~$\proj\RR$.}
\begin{enumerate}
\item[iii)] $\esv r{\om,\wdg}[\xi_1,\ldots,\xi_n]|_r g = \esv r
{\om,\wdg}[g^{-1}\xi_1,\ldots,g^{-1}\xi_n]$ for each $g\in
\SL_2(\RR)$.
\item[iv)] The function $z\mapsto
\int_{z_1}^{z_2}K_r(z;\tau)\,f(\tau)\, d\tau$ represents an element
of $\esv r \om$ for all $z_1,z_2\in \uhp$ and each holomorphic
function $f$ on~$\uhp$.
\item[v)] If $F\in \sharm_r(\uhp)$ represents an element of $\esv r
\om$, then $F=0$.
\item[vi)] If $F\in \sharmb r
(U)$ represents an element of $\esv r {\om,\wdg}[\xi_1,\ldots,\xi_n]$
then its shadow $\shad_r F\in \hol(U\cap\uhp)$ extends
holomorphically to~$\uhp$.
\item[vii)] Let $\ld\in \CC^\ast$. Suppose that $f\in \esv
r{\om,\wdg}[\infty]$ has a representative $F$ that satisfies:
\begin{enumerate}
\item[a)] $F\in \sharm _r(U\cap\uhp)$ for some neighbourhood $U$ of
$\proj\RR$ in $\proj\CC$,
\item[b)] the function $z\mapsto \ld^{-1}\, F(z+\nobreak 1)-F(z)$ on
$\uhp\cap U\cap T^{-1}U$ represents an element of~$\esv r \om$,
\end{enumerate}
then $f=p+g$ with an element $g\in \esv r \om$ and a $\ld$-periodic
element $p\in \esv r{\om,\wdg}[\infty]$.
\end{enumerate}
\end{prop}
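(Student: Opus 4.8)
The plan is to prove the proposition by a case split on whether $r\in\ZZ_{\geq2}$, and in each case to read off every item from material already in place: Theorem~\ref{thm-VWiso} and the local analysis of Proposition~\ref{prop-tPloc}, the polar‑expansion identities of \S\ref{sect-pol} (in particular Proposition~\ref{prop-polexp} and Table~\ref{tab-PMH}), and, for item~(vii), the one‑sided averages of \S\ref{sect-osa}. Suppose first $r\in\CC\setminus\ZZ_{\geq2}$. By Definition~\ref{esv-def-gen} the space $\esv r{\om,\wdg}[\xi_1,\ldots,\xi_n]$ is the image of $\dsv{2-r}{\om,\wdg}[\xi_1,\ldots,\xi_n]$ under the inverse of the sheaf isomorphism $\rs_r$ of Theorem~\ref{thm-VWiso}(i), which intertwines $|_rg$ with $|_{2-r}g$; so (i), (ii) and (iii) are transcriptions of the corresponding facts about $\dsv{2-r}{\om,\wdg}[\cdots]$ from \S\ref{sect-sav} (the excised‑neighbourhood description, the monotonicity, the equality of the intersection over disjoint singular sets with $\dsv{2-r}\om$ — the union of the two excised neighbourhoods contains a full neighbourhood of $\lhp\cup\proj\RR$ — and Proposition~\ref{prop-sasi}(i) for equivariance). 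For (v) one combines $\esv r\om\subset\W r\om(\proj\RR)$ with Proposition~\ref{prop-Ei}; the residual case $r=1$, where $\esv1\om=\dsv1\om$, is immediate since such an $F$ is holomorphic near $\proj\RR$ and $1$‑harmonic on $\uhp$, hence holomorphic on $\uhp$, hence (gluing with the $\dsv1\om$‑representative) holomorphic on $\proj\CC$, hence a constant that must vanish because $\Prj1F$ is holomorphic at $\infty$. For (vi), Proposition~\ref{prop-tPloc}(iv) gives $(\shad_rF)(z)=(\bar r-1)\bigl(\tfrac{z+i}{2i}\bigr)^{\bar r-2}\overline{(\rsp_rF)(\bar z)}$ near $\proj\RR$, and since $\rsp_rF$ represents an element of $\dsv{2-r}{\om,\wdg}[\xi_1,\ldots,\xi_n]$ its projective model is holomorphic on a $\{\xi_i\}$‑excised neighbourhood of $\lhp\cup\proj\RR$ whose image under $z\mapsto\bar z$ contains all of $\uhp$, so the right‑hand side gives the claimed holomorphic extension. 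For (iv), take $z$ so close to $\proj\RR$ that $|w(z)|>|w(\tau)|$ for all $\tau$ on the compact segment $[z_1,z_2]$, expand $K_r(z;\tau)=\sum_{\mu\le-1}\tfrac{(2-r)_{-\mu-1}}{(-\mu-1)!}\,\P{2-r,-\mu-1}(\tau)\,\M{r,\mu}(z)$ by Proposition~\ref{prop-polexp}(i), integrate termwise, and observe that the coefficients decay exponentially in $|\mu|$ because $|\P{2-r,-\mu-1}(\tau)|\ll|\tfrac{\tau-i}{\tau+i}|^{-\mu-1}$ is exponentially small on compacta of $\uhp$; then \eqref{esv-pe-gw} gives membership in $\esv r\om$.

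For $r\in\ZZ_{\geq2}$ the same items are largely Proposition~\ref{prop-iw-esv}: invariance under $|_rg$ and the excised‑neighbourhood description (the $\dsv r\om$‑part via Lemma~\ref{lem-hW}, the finitely many $\M{r,\mu}$ being sections over $\proj\CC\setminus\{i,-i\}$), the monotonicity, and the exact sequence $0\to\dsv r\om\to\esv r\om\xrightarrow{\rs_r}\dsv{2-r}\pol\to0$. For (ii)(b) one applies $\rs_r$ to a common element: the two finite polar tails then have equal, hence vanishing, shadows (Table~\ref{tab-PMH}), which forces them to coincide, and the intersection property for $\dsv r{\om,\wdg}[\cdots]$ finishes it. For (iv) one uses Proposition~\ref{prop-polexp}(i)(b): the sum over $1-r\le\mu\le-1$ lands in $\sum\CC\M{r,\mu}$ and the extra kernel contributes $z\mapsto(z-i)^{1-r}\int_{z_1}^{z_2}\tfrac{2i(\tau-i)^{r-1}f(\tau)}{z-\tau}\,d\tau$, holomorphic near $\lhp\cup\proj\RR$ with a zero of order $r$ at $\infty$, hence in $\dsv r\om$. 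Item~(v) is again Proposition~\ref{prop-Ei}, and (vi) holds because the $\dsv r{\om,\wdg}$‑part has zero shadow while the shadows of the $\M{r,\mu}$ are holomorphic on all of $\uhp$ by Table~\ref{tab-PMH}.

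The substantial item, and the one I expect to be the main obstacle, is (vii). Write $G$ for a representative of the $\esv r\om$‑element associated to $z\mapsto\ld^{-1}F(z+1)-F(z)$; using hypothesis~(a) and real‑analytic continuation of harmonic functions one may take $G=\ld^{-1}F(\cdot+1)-F$ on a connected neighbourhood of $\proj\RR$, $\infty$ included. It then suffices to produce a representative $H$ of an element of $\esv r\om$ with $\ld^{-1}H(\cdot+1)-H=G$ near $\proj\RR$, for then $p:=F-H$ is $\ld$‑periodic and represents $f$ minus an element $g\in\esv r\om$, so $f=p+g$. To find $H$ one applies $\rs_r$: since $\rs_rF$ represents an element of $\dsv{2-r}{\om,\wdg}[\infty]$ (for $r\notin\ZZ_{\geq2}$) or of $\dsv{2-r}\pol$ (for $r\in\ZZ_{\geq2}$, by Proposition~\ref{prop-iw-esv}(iii)), the function $\rs_rG=\ld^{-1}(\rs_rF)(\cdot+1)-\rs_rF$ lies in $\dsv{2-r}\om$, and one of the one‑sided averages $\av{T,\ld}^{\pm}(\rs_rG)$ exists by Proposition~\ref{prop-osa1}; the one configuration not immediately covered there, namely $\ld=1$ with $r\in\ZZ_{\geq2}$, is in fact fine because $h\mapsto h(\cdot+1)-h$ lowers the degree of $\rs_rF$, so $(\Prj{2-r}\rs_rG)(\infty)=0$ and $\av{T,\ld}^{-}$ applies. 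Transporting this average back through $\rs_r$ — injectively when $r\notin\ZZ_{\geq2}$, and up to $\ker\rs_r=\dsv r\om$ otherwise, the residual purely holomorphic discrepancy being removed by a one‑sided average of holomorphic functions as in Lemma~\ref{lem-h-Av+-} — and then discarding, via Lemma~\ref{lem-ldpr}, any $\ld$‑periodic summand carrying a one‑sided asymptotic expansion, yields the required $H$. The delicate points, both modeled on Lemma~\ref{lem-osa-pe} and Proposition~\ref{prop-rid-of-exc}, are that the one‑sided average, a priori holomorphic only on a region $D_\e^{\pm}$, nonetheless represents an analytic boundary germ in $\esv r{\om,\wdg}[\infty]$ (it is holomorphic on $D_\e^+\cup D_\e^-$ minus a bounded set, i.e.\ on a $\{\infty\}$‑excised neighbourhood), and that passing to $\rs_r$ and back respects the harmonic boundary‑germ structure.
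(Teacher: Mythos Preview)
Your handling of items (i)–(iv) and (vi) is essentially correct and close in spirit to the paper; a harmless difference is that for (iv) with $r\notin\ZZ_{\geq2}$ the paper argues more directly that integration over a compact arc commutes with $\rs_r$ and lands in $\dsv{2-r}\om$.

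There are two real gaps. First, for (v) with $r\in\ZZ_{\geq2}$ you invoke Proposition~\ref{prop-Ei}, but that result is stated and proved only for $r\in\CC\setminus\ZZ_{\geq1}$, and is in fact false for $r\in\ZZ_{\geq1}$: the functions $\P{r,\mu}$ with $\mu\geq0$ lie in $\sharm_r(\uhp)\cap\W r\om(\proj\RR)$ (Table~\ref{tab-PMH}). The paper supplies a separate argument: writing $F=F_0+\sum_{\mu=1-r}^{-1}c_\mu\M{r,\mu}$ with $F_0$ representing an element of $\dsv r\om$, the harmonicity of $F$ on $\uhp$ and of the $\M{r,\mu}$ on $\uhp\setminus\{i\}$ force $F_0$ holomorphic on $\proj\CC\setminus\{i\}$ with a zero of order $\geq r$ at $\infty$; the Kummer relation~\eqref{HMP} then replaces each $\M{r,\mu}$ by $\H{r,\mu}\in\sharm_r(\uhp)$ plus a multiple of $\P{r,\mu}$, so the singularity at $i$ is carried by an explicit rational $F_1$, and a zero/pole count for the holomorphic function $F_0+F_1$ on $\proj\CC\setminus\{-i\}$ yields $F_0+F_1=0$ and then $c_\mu=0$ for all~$\mu$.

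Second and more seriously, your approach to (vii) does not close. The one-sided average $h=\av{T,\ld}^{\pm}(\rs_rG)$ lies only in $\dsv{2-r}\om[\infty]$, not in $\dsv{2-r}\om$; transporting back through $\rs_r$ gives a section of $\W r\om$ over $\proj\RR\setminus\{\infty\}$, not an element of $\esv r\om$. The $\ld$-periodic difference $\rs_rf-h$ carries no asymptotic control at $\infty$ (membership in $\dsv{2-r}{\om,\wdg}[\infty]$ imposes none), so Lemma~\ref{lem-ldpr} cannot be invoked to discard it; and when $|\ld|\neq1$ only one of $\av^{\pm}$ is available, so the domain is genuinely one-sided and not an excised neighbourhood. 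The underlying problem is that you use hypothesis~(a) only to make $G$ globally defined; but (a) is the substantive input---it says $F$ defines a germ in $\bg_r(\proj\RR)$, including at~$\infty$. The paper's proof for $r\in\CC\setminus\ZZ_{\geq1}$ exploits exactly this through Theorem~\ref{thm-bgdecomp} (established with the resolvent kernel and Green's form, Proposition~\ref{prop-psC}): the unique splitting $f=P+g$ with $P\in\sharm_r(\uhp)$ and $g\in\W r\om(\proj\RR)$, combined with directness, forces $P$ to be $\ld$-periodic, Lemma~\ref{lem-per-harm}(iii) then gives $P\in\esv r{\om,\wdg}[\infty]$, and hence $g=f-P\in\esv r{\om,\wdg}[\infty]\cap\W r\om(\proj\RR)=\esv r\om$. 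For $r\in\ZZ_{\geq1}$, where Theorem~\ref{thm-bgdecomp} is unavailable, the paper uses instead a Cauchy-integral construction (Lemma~\ref{lem-Gz}) to split off a $\ld$-periodic holomorphic piece, together (for $r\geq2$) with the identity for $y^{1-r}$ in Proposition~\ref{prop-polexp}(iii).
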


\rmrks
\itmi In Property~a) it is not always possible to choose the
representative so that the set $U$ contains~$\lhp$.
Moreover, the property does not state that all functions in $\sharmb
r(U)$ with $U$ as indicated represent elements of $\esv r
{\om,\wdg}[\xi_1,\ldots,\xi_n]$.

\itm Condition~a) in Part~vii) is strong. In general representatives
of an element of $\esv r {\om,\wdg}[\infty]$ are $r$-harmonic only on
an $\{\infty\}$-excised neighbourhood.

\begin{proof}We consider the various parts of the theorem, often
separately for the general case $r\in \CC\setminus \ZZ_{\geq 2}$ and
the special case $r\in \ZZ_{\geq 2}$.

\rmrk{{\rm a.} Part~i)} Let $r \in \CC \smallsetminus \ZZ_{\geq 2}$.
  An element $F\in \esv r{\om,\wdg}[\xi_1,\ldots,\xi_n]$ is determined
by $h=\rsp_r F$ in the projective model of $\dsv
{2-r}{\om,\wdg}[\xi_1,\ldots,\xi_n]$. So $h$ is holomorphic on a
$\{\xi_1,\ldots,\xi_n\}$-excised neighbourhood~$U_0$. Each point
$\xi\in \proj\RR\setminus \{\xi_1,\ldots,\xi_n\}$ is of the form
$k_\xi\cdot0$ with $k\in \SO(2)$. We choose $p(\xi)\in (0,1)$ such
that $k_\xi\; D_{p(\xi)} \subset U_0$. Then Part~iii)a) of
Proposition~\ref{prop-tPloc} implies that
$\rsp_r  \sharmb r(k_\xi\, D_{p(\xi)})  = \hol(k_\xi\, D_{p(\xi)})$.
So $F\in \sharmb r (U)$, with 
\[ U \;:=\; \bigcup_{\xi\in \proj\RR\setminus\{\xi_1,\cdots,\xi_n\}}
k_\xi\; D_{p(\xi)}\,.\]
However, not for all choices of the $p_\xi$ the set
$U\cup\lhp$ is an excised neighbourhood.\smallskip

We return to the choice of the $p_\xi \in (0,1)$. By conjugation
by an element of $\SO(2)$ we arrange that all $\xi_j$ are in~$\RR$.
\vskip.3em
\twocolwithpictr{\quad We recall that near each of the points $\xi_j$
a $\{\xi_1,\ldots,\xi_n\}$-excised neighbourhood looks like a full
neighbourhood of $\xi_j$ minus the sector between two geodesic
half-lines with end-point $\xi_j$. }{
\setlength\unitlength{1cm}
\begin{picture}(6,2)(0,-1)
\put(0,0){\line(1,0){6}}
\put(1.85,-.5){$\xi_j$}
\put(4.85,-.5){$\xi_{j'}$}
\put(.4,.2){$U_0$}
\put(3.2,.2){$U_0$}
\put(5.7,.2){$U_0$}
\put(3.8,-.6){$U_0$}
\thicklines
\put(2,0){\line(0,1){1}}
\qbezier(2,0)(2,.6)(2.5,.8)
\qbezier(5,0)(5,.6)(4.5,.8)
\qbezier(2.5,.8)(3.5,.6)(4.5,.8)
\qbezier(5,0)(5,.6)(5.5,.8)
\end{picture}
}\vskip.8ex
\twocolwithpictl{
\setlength\unitlength{1cm}
\begin{picture}(6,2)(0,-1)
\put(0,0){\line(1,0){6}}
\put(1.85,-.5){$\xi_j$}
\put(4.85,-.5){$\xi_{j'}$}
\put(2.3,0){\circle{.6}}
\put(2.5,0){\circle{1}}
\put(1.5,0){\circle{1}}
\put(1.7,0){\circle{.6}}
\put(1.4,0){\circle{1.2}}
\thicklines
\put(2,0){\line(0,1){1}}
\qbezier(2,0)(2,.6)(2.5,.8)
\qbezier(5,0)(5,.6)(4.5,.8)
\qbezier(2.5,.8)(3.5,.6)(4.5,.8)
\qbezier(5,0)(5,.6)(5.5,.8)
\end{picture}
}{Since $\xi_j\in \RR$ those geodesic half-lines are
parts of euclidean circles with their center on $\RR$, or vertical
euclidean lines. This implies that there is a small $\e>0$ such that
for all $\xi\in \RR$ with $0<|\xi-\nobreak\xi_j|<\e$ the open
euclidean disk around $\xi$ with radius $|\xi-\nobreak \xi_j|$ is
contained in $U_0$. \hspace{\fill}If $\e$ is
}\vskip.3em%
\noindent
sufficiently small these euclidean disks are of the form
$k_{\xi'}\, D_{p(\xi')}$, with in general $\xi'\neq\xi$.

In this way we can choose for  all $\xi$
sufficiently near to~$\xi_j$  the value 
$p_\xi\in(0,1)$ in such a way that $\xi_j$ is in the closure of
$k_\xi\, D_{p(\xi)}$.  We see
that $U$ is near $\xi_j$ a full neighbourhood of $\xi_j$ minus
the sectors between two geodesics half-lines at $\xi_j$ in the upper
and the lower half-plane. So $\lhp\cup U$ is an excised
neighbourhood provided we take the $p_\xi$ appropriately.

\rmrk{{\rm b.} Part~i) for $r\in \ZZ_{\geq 2}$}Elements of $\dsv r
{\om,\wdg}[\xi_1,\ldots,\xi_n]$ are already represented by functions
of the desired form. The functions $\M{r,\mu}$ are in $\sharmb
r\bigl( \proj\CC\setminus \nobreak\{i,-i\}\bigr)$.

\rmrk{{\rm c.} Part~ii)}Immediate from the corresponding property of
$\dsv p {\om,\wdg}$, with $p\in \{r,2-\nobreak r\}$.

\rmrk{{\rm d.} Part~iii)}Immediate from Part~i) of
Proposition~\ref{prop-sasi} and \eqref{res-g} if $r\not\in \ZZ_{\geq
2}$, and from Proposition~\ref{prop-iw-esv}, Part~ii), if $r\in
\ZZ_{\geq 2}$.

\rmrk{{\rm e.} Part~iv) for $r\not\in \ZZ_{\geq 2}$}Integration over
$\tau$ in a compact set in~$\uhp$ preserves the property that
$K_r(\cdot;\tau)$ represents an element of $\W r \om(\proj\RR)$, and
commutes with taking the restriction. Applying $\rs _r$ gives the
integral $\int_{z_1}^{z_2}
(z-\nobreak \tau)^{r-2}\, F(\tau)\, d\tau$, which has a value in
$\dsv{2-r}\om$.

\rmrk{{\rm f.} Part~iv) for $r\in \ZZ_{\geq 2}$}Equation \eqref{iwK}
in Proposition~\ref{prop-polexp} expresses $K_r(\cdot;\tau)$
as a linear combination of the $\M{r,\mu}$ in~$\esv r \om$ and an
explicit kernel $p_r(\cdot;\tau)$.
In the integral of the terms with $\M{r,\mu}$ only the coefficient
depends on $\tau$. Hence the result is a multiple of $\M{r,\mu}$. The
kernel $p_r(\cdot;\tau)$ is in $\dsv r \om$ by the description in
Part~i) of Proposition~\ref{prop-polexp}, and it stays there under
integration with respect to~$\tau$.

\rmrk{{\rm g.} Part~v) for $r\not\in \ZZ_{\geq 1}$}See
Proposition~\ref{prop-Ei}.

\rmrk{{\rm h.} Part~v) for $r=1$}Let $F\in \sharm_1(\uhp)$ represent
an element of $\esv 1 \om(\proj\RR)$. So $\rs_1 F(z)=\frac{z-i}{2i}\,
F(z)$ is holomorphic on a neighbourhood $U$ of $\lhp\cap\proj\RR$ in
$\proj\CC$. Then $F$ itself is holomorphic on $\uhp\cap U
\setminus\{i\}$, hence $F$ is holomorphic on $\uhp$ since it is
already real-analytic. Thus, $F\in \hol(\proj\CC)$ with a zero
at~$\infty$, hence $F=0$.

\rmrk{{\rm i.} Part~v) for $r\in \ZZ_{\geq 2}$} For $r\in \ZZ_{\geq
2}$, we consider $F=F_0+\sum_{\mu=1-r}^{-1} c_\mu \,\M{r,\mu}\in
\sharm_r(\uhp)$ with $F_0$ representing an element of $\dsv r \om$
and $c_\mu \in \CC$. Since the $\M{r,\mu}$ are $r$-harmonic on
$\uhp\setminus \{i\}$ the function $F_0$ is holomorphic on
$\proj\CC\setminus\{i\}$ with at $\infty$ a zero of order at least
$r$.

To investigate the singularity of $F_0$ at $i$ we use Kummer
  relation~\eqref{HMP}, which relates $\M{r,\mu}$, $\H{r,\mu}$ and
  $\P{r,\mu}$. Since $\H{r,\mu}$ is $r$-harmonic on $\uhp$, the
singularity at $i$ of $\sum_{\mu=1-r}^{-1} c_\mu \M{r,\mu}$ is the
same as that of
\badl{F1f} F_1(z) &\= \sum_{\mu=1-r}^{-1}
\frac{(-\mu-1)!}{(2-r)_{-\mu-1}}\, c_\mu \,\P{r,\mu}\\
& \= \sum_{\mu=1-r}^{-1} \frac{(-\mu-1)!}{(2-r)_{-\mu-1}}\, c_\mu \,
\Bigl( \frac{2i}{z+i}\Bigr)^r\, \Bigl( \frac{z-i}{z+i}\Bigr)^\mu
\,.\eadl
This leads to a holomorphic function $F_0+F_1$ on
$\proj\CC\setminus\{-i\}$, with at $z=-i$ a pole of order at most
$r-1$. At $\infty$ the function $F_0$ has a zero of order at least
$r$. The same holds for $F_1$ by the factor $(z+\nobreak i)^{-r}$ in
\eqref{F1f}. So the number of zeros is larger than the number of
poles, and we conclude that $F_0+F_1=0$. However, $F_1=-F_0$ has to
be in $\dsv r \om$, in particular, it has to be holomorphic at
$z=-i$. Inspection of \eqref{F1f} shows that successively $c_{1-r},
c_{2-r},\ldots$ have to vanish. So $F=0$.

 \rmrk{{\rm j. }Part~vi)} The representative $F$ is defined on $U\cap
 \uhp$, where the open set $U \subset \proj\RR$ contains
 $\proj\RR\setminus \{\xi_1,\ldots,\xi_n\}$. Part~iv) of
Proposition~\ref{prop-tPloc} implies that there is an open set $U_1
\subset U$, still containing
$\proj\RR\setminus\{\xi_1,\ldots,\xi_n\}$ (obtained as the union of
sets $k \; D_p$) such that on $U_1$ the shadow $\shad _r F (z)
$ is a holomorphic multiple of $a(z) = \overline{(\rsp_r F)(\bar z)}$.
  So the domain of $a$ is some neighbourhood $U_2$ of
  $\proj\RR\setminus\{\xi_1,\ldots,\xi_n\}$ in $\proj\CC$. Since $\rs_r
F \in \dsv{v,2-r}\fs$ the functions $\rs_r F$  and $\rsp_r
F$ are holomorphic on~$\lhp$. Hence the domain of the holomorphic
function~$a$ contains $\overline{\lhp}=\uhp$.

\rmrk{{\rm k. }Part~vii)}See \S\ref{sect-cpEsys}.\qedhere\end{proof}

\subsection{Splitting of harmonic boundary germs, Green's form} We
discuss now a splitting of the space of global sections of
$r$-harmonic boundary germs. We shall use this to prove Part~vii) in
Proposition~\ref{prop-Esys} in the case that $r \in \CC
\smallsetminus \ZZ_{\geq 1}$. To obtain the
\il{splbg}{splitting of harmonic boundary germs}splitting we use the
Green's form for harmonic functions and the resolvent kernel.

\begin{thm}\label{thm-bgdecomp}If $r\in \CC\setminus \ZZ_{\geq 1}$
then\ir{bg-decomp}{\bg_r}
\be\label{bg-decomp} \bg_r(\proj\RR) \= \sharm_r(\uhp)\;\oplus\; \W r
\om(\proj\RR)\,. \ee
\end{thm}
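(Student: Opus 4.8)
The plan is to reduce the statement to a surjectivity assertion and then construct the decomposition by an integral formula, in the spirit of the weight-zero situation treated in \cite{BLZm}. The sum is direct with no extra work: by Proposition~\ref{prop-Ei} one has $\sharm_r(\uhp)\cap\W r\om(\proj\RR)=\{0\}$ for $r\in\CC\setminus\ZZ_{\geq1}$. So it suffices to prove $\bg_r(\proj\RR)=\sharm_r(\uhp)+\W r\om(\proj\RR)$; any projection $P\colon\bg_r(\proj\RR)\to\sharm_r(\uhp)$ witnessing this is then automatically the identity on $\sharm_r(\uhp)$, since for $F\in\sharm_r(\uhp)$ the difference $F-PF$ lies in $\sharm_r(\uhp)\cap\W r\om(\proj\RR)=\{0\}$. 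I expect the surjectivity part to work for all $r\in\CC$; the hypothesis $r\notin\ZZ_{\geq1}$ is needed only to turn it into a direct-sum decomposition.

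To build $P$ I would first put in place the two analytic tools governing $\Dt_r$, modelled on their weight-zero analogues in \cite{BLZm} (see also \cite{BLZ13}): the \emph{Green's form} $G\{u,v\}$, a $1$-form bilinear in $u$ and $v$ whose exterior derivative is a multiple of $\bigl(v\,\Dt_r u-u\,\Dt_r^{\dagger}v\bigr)$ times the area form — so that $G\{u,v\}$ is a closed $1$-form on every open subset of $\uhp$ where $u$ and $v$ are both $r$-harmonic — and the \emph{resolvent kernel} $\mathcal{R}_r(z,z')$: the $r$-harmonic function of $z$ on $\uhp\setminus\{z'\}$ with the appropriate logarithmic singularity at $z=z'$, normalized so that $\oint_{\gamma}G\{u,\mathcal{R}_r(\cdot,z')\}=c\,u(z')$ for every $r$-harmonic $u$, every small positively oriented loop $\gamma$ around $z'$, and a fixed nonzero constant $c$, and normalized further so that $\mathcal{R}_r(z,z')/f_r(z')$ continues real-analytically across $\proj\RR$ in the variable $z'$ (for $z,z'$ on opposite sides of a curve the kernel is $r$-harmonic separately in each variable). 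Now let $f\in\bg_r(\proj\RR)$ be represented by an $r$-harmonic $F$ on $U\cap\uhp$, with $U$ a neighbourhood of $\proj\RR$ in $\proj\CC$. For $z'\in\uhp$ I would set
\[
(PF)(z')\;:=\;\frac{1}{c}\int_{C}G\{F,\mathcal{R}_r(\cdot,z')\},
\]
where $C$ is any simple closed curve in $U\cap\uhp$ separating $z'$ from $\proj\RR$; such a $C$ exists for every $z'\in\uhp$ because $U\cap\uhp$ contains a collar of $\proj\RR$. Since the integrand is closed on $(U\cap\uhp)\setminus\{z'\}$, the integral is unchanged under deformations of $C$ that keep $z'$ on its interior side, so $PF$ is well defined on all of $\uhp$; differentiating under the integral sign shows it is $r$-harmonic, i.e.\ $PF\in\sharm_r(\uhp)$, and pushing $C$ towards $\proj\RR$ shows that $PF$ depends only on the germ $f$.

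It remains to check that $F-PF$ represents an element of $\W r\om(\proj\RR)$. For $z'$ in a collar of $\proj\RR$ pick, inside $U\cap\uhp$, a simple closed curve $C$ on the interior side of $z'$ and a simple closed curve $C'$ between $z'$ and $\proj\RR$. Green's identity applied to the annulus bounded by $C$ and $C'$, which contains the single singularity $z'$, together with the residue normalization of $\mathcal{R}_r$, gives
\[
F(z')\;=\;(PF)(z')\;+\;(QF)(z'),\qquad (QF)(z')\;:=\;\frac{\varepsilon}{c}\int_{C'}G\{F,\mathcal{R}_r(\cdot,z')\},
\]
with a sign $\varepsilon$ fixed by the orientations. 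The $z'$-dependence of $(QF)(z')$ enters only through the values $\mathcal{R}_r(z,z')$ with $z$ on the compact curve $C'$; dividing by $f_r(z')$ and invoking the boundary normalization of the resolvent kernel, the integrand — hence $(QF)/f_r$ — continues real-analytically across $\proj\RR$, so that $QF$ represents an element of $\W r\om(\proj\RR)$. Therefore $f=[PF]+[QF]$ with $[PF]\in\sharm_r(\uhp)$ and $[QF]\in\W r\om(\proj\RR)$, which is the asserted decomposition.

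The main obstacle is the construction step: making the Green's form and the resolvent kernel for $\Dt_r$ precise for arbitrary complex $r$ — in particular identifying the companion operator $\Dt_r^{\dagger}$ so that $G\{\cdot,\cdot\}$ is genuinely closed on pairs of $r$-harmonic functions (holomorphic functions, and the kernels $K_r(z;\tau)$ and $\mathcal{R}_r(\cdot,z')$, should all end up in the right class), and establishing both normalizations of $\mathcal{R}_r$, namely the residue formula and the real-analytic continuation of $\mathcal{R}_r(z,\cdot)/f_r$ across $\proj\RR$. Once these are in hand, the contour deformations and the jump computation are bookkeeping of the kind already carried out in \cite{BLZm}.
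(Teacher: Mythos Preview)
Your proposal is correct and follows essentially the same route as the paper: the directness comes from Proposition~\ref{prop-Ei}, and the splitting is produced by a Cauchy-like formula built from the Green's form $[\,\cdot\,,\,\cdot\,]_r$ in~\eqref{Gr-f} and the resolvent kernel $Q_r$ in~\eqref{Qr-def}, exactly the two tools you flag as the main obstacle. The paper carries out precisely these constructions (with $\Dt_r^{\dagger}$ being the operator in~\eqref{nhe}) and then runs the contour-deformation argument you outline, with Proposition~\ref{prop-psC} supplying the residue normalization.
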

Since we have already Proposition~\ref{prop-Ei}, we need only to show
that $\bg_r(\proj\RR)
= \sharm_r(\uhp)+ \W r \om(\proj\RR)$.

\rmrk{Resolvent kernel}We put\ir{Qr-def}{Q_r(\cdot,\cdot)}
\be \label{Qr-def} Q_r(z_1,z_2)
\= \M{r,0}\Bigl( \frac{z_2- \re z_1}{\im z_1} \Bigr)\,, \ee
with the $r$-harmonic function $\M{r,0}\in
\sharm_r\bigl(\uhp\setminus\{i\}\bigr)$ in \eqref{Mrmu}. So
$Q_r(z_1,z_2)$ is defined on $\uhp\times\uhp\setminus
(\text{diagonal})$. It is called the \il{fsresk}{free space resolvent
kernel}\emph{free space resolvent kernel} \il{rk}{resolvent kernel}.
It is a special case of the resolvent kernel that inverts the
differential operator $\Dt_r-\ld$ on suitable functions. See, eg.,
\cite[\S3, Chap.~XIV]{La75}.

The following properties can be checked by a computation, but are more
easily seen on the universal covering group, as we explain
in~\S\ref{app-rk}.
\begin{align}
\label{qh2}
\Dt_r Q_r(z_1,\cdot) &\= 0\,,\\
\label{nhe} 4 y^2 \partial_z\partial_{\bar z} Q_r(\cdot,z_2) +
2iry\,\partial_{\bar z}Q_r(\cdot,z_2) + r\, Q_r(\cdot,z_2) &\= 0\,,\\
\label{Qinv}
\text{for }\matc abcd \in \SL_2(\RR):\quad (cz_1+d)^r\,(c
z_2+d)^{-r}\, Q_r(g \, z_1,g\, z_2)&\= Q_r(z_1,z_2)\,.
\end{align}
The $r$-harmonic function $z_2\mapsto Q_r(z_1,z_2)$ represents an
element of $\W r \om(\proj\RR)$.

\rmrk{Green's form}For $f_1,f_2\in C^\infty(U)$, with $U\subset \uhp$,
we define the \il{Gf}{Green's form}\emph{Green's
form}\ir{Gr-f}{[\cdot,\cdot]_r}
\be\label{Gr-f} \bigl[f_1,f_2\bigr]_r \= \Bigl( \partial_z f_1 + \frac
r{z-\bar z} f_1\Bigr) f_2 \, dz + f_1 (\partial_{\bar z} f_2)\, d\bar
z\,. \ee
This is a $1$-form on~$U$, which satisfies
\be \bigl[f_1|_r g, f_2|_{-r} g\bigr]_r \= [f_1,f_2]_r \circ g
\quad\text{on }g^{-1}U
\qquad \text{for }g\in \SL_2(\RR)\,. \ee
If $f_1$ is $r$-harmonic on $U$ and if $f_2$ satisfies the
differential equation in~\eqref{nhe} on~$U$, then $[f_1,f_2]_r$ is a
closed differential form on~$U$.
(These results can be checked by some computations.)

\rmrk{Cauchy-like integral formula}
\begin{prop}\label{prop-psC}Let $r\in \CC\setminus\ZZ_{\geq 1}$. Let
$U$ be an open set in~$\uhp$, and let $C$ be a positively oriented
simple closed curve in $U$ such that the region~$V$ enclosed by $C$
is contained in~$U$. Then for each $F\in \sharm_r(U)$:
\[ \int_C\bigl[F,Q_r(\cdot,z')\bigr]_r \=
\begin{cases} 2\pi i(1-r) F(z')&\text{ if }z'\in V\,,\\
0&\text{ if }z'\in \uhp\setminus(C\cup V)\,.\end{cases}\]
\end{prop}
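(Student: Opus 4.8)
The statement is a Cauchy-type integral formula for $r$-harmonic functions, with the free space resolvent kernel $Q_r(\cdot,z')$ playing the role of the Cauchy kernel and the Green's form $[\cdot,\cdot]_r$ playing the role of $f(z)\,dz/(z-z')$. The overall strategy is the standard one for such formulas: use the closedness of the Green's form away from the singularity, isolate the singularity $z=z'$ by a small circle, and compute the residual contribution explicitly.

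First I would treat the easy case $z'\in\uhp\setminus(C\cup V)$. Here $Q_r(\cdot,z')$ satisfies the differential equation in~\eqref{nhe} on all of $V$ (the singularity of $z_2\mapsto Q_r(z_1,z_2)$ sits on the diagonal, so $z\mapsto Q_r(z,z')$ is singular only at $z=z'\notin V$), and $F$ is $r$-harmonic on $U\supset V\cup C$. By the remark after~\eqref{Gr-f}, $[F,Q_r(\cdot,z')]_r$ is then a closed $1$-form on a neighbourhood of $\overline V$, so Stokes' theorem gives $\int_C[F,Q_r(\cdot,z')]_r=0$.

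For the main case $z'\in V$, the plan is: deform $C$ (via Stokes applied to the annular region between $C$ and a small circle $C_\e$ of radius $\e$ around $z'$, where the form is closed) to reduce the integral to $\lim_{\e\downarrow0}\int_{C_\e}[F,Q_r(\cdot,z')]_r$. Then I would expand near $z'$. Write $z=z'+\e e^{i\theta}$; the function $F$ is smooth, so $F(z)=F(z')+O(\e)$, while $Q_r(\cdot,z')=\M{r,0}\!\bigl(\tfrac{\,\cdot\,-\re z'}{\im z'}\bigr)$ has, near its singularity, the same leading behaviour as the relevant solution of the hypergeometric equation; using \eqref{HMP} (or directly the definition \eqref{Mrmu}) one extracts a logarithmic/polar singularity whose ``residue'' against the Green's form is a universal constant. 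Carrying out the $\theta$-integration of the singular part of $[F,Q_r(\cdot,z')]_r$ over $C_\e$ picks out $2\pi i(1-r)F(z')$; the regular part contributes $O(\e)\to0$. By the $\SL_2(\RR)$-equivariance \eqref{Qinv} of $Q_r$ together with $[f_1|_rg,f_2|_{-r}g]_r=[f_1,f_2]_r\circ g$, it suffices to verify the constant for one convenient choice, e.g.\ $z'=i$, where $Q_r(z_1,i)$ is essentially $\M{r,0}$ itself and the local expansion is cleanest; the case $r\in\CC\setminus\ZZ_{\geq1}$ is exactly the range in which $\M{r,0}$ is well defined with the expected logarithmic behaviour at $i$ and no competing holomorphic correction.

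The main obstacle is the explicit local analysis of $Q_r$ at its singularity: one must pin down the precise singular expansion of $\M{r,0}$ near $z=i$ (a confluence point $\tfrac{4y}{|z+i|^2}\to1$ of the hypergeometric function), verify that exactly the coefficient $1-r$ emerges from integrating the Green's form around the singularity, and check that the remaining, less singular terms give no contribution in the limit $\e\downarrow0$. This is a finite computation using standard connection formulae for $\hypg21$ near $1$ (the relevant Kummer relation already appears as \eqref{HMP}), but it is where all the real work is; everything else is Stokes' theorem and equivariance. I would also note in passing that the normalization is consistent with the kernel $K_r$ used elsewhere via \eqref{shad-Kr}, which provides a useful sanity check on the constant.
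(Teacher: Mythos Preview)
Your proposal is correct and follows essentially the same route as the paper: closedness of $[F,Q_r(\cdot,z')]_r$ handles the exterior case and reduces the interior case to a residue computation on a small circle, equivariance reduces to $z'=i$, and the logarithmic singularity of $\hypg21(1,1-r;2-r;x)$ at $x=1$ (the case $c=a+b$) produces the constant $2\pi i(1-r)$. One minor correction: the Kummer relation~\eqref{HMP} concerns $\mu\le-1$ and is not the relevant tool here; the paper simply reads off $Q_r(z,i)=2(r-1)\log|z-i|+O(1)$ and $\partial_{\bar z}Q_r(z,i)=\frac{r-1}{\bar z+i}+O(\log|z-i|)$ directly from the hypergeometric definition, and then the $d\bar z$ part of the Green's form carries the entire residue while the $dz$ part is $O(\e\log\e)$.
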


\begin{proof}In this result the kernel $Q_r$ and the Green's form are
combined to give for $r$-harmonic functions $F\in \sharm_r(\uhp)$ the
closed differential form
\[ \bigl[ F, Q_r(\cdot,z')\bigr]_r(z)\]
on $\uhp\times\uhp\setminus(\text{diagonal})$. It satisfies for $g=
\matc abcd \in \SL_2(\RR)$
\be (c z'+d)^{-r} \bigl[F,Q_r(\cdot,gz')\bigr]_r(gz) \= \bigl[F|_r
g,Q_r(\cdot,z')\bigr]_r(z)\,.
\ee
Hence it suffices to establish the relation for $z'=i$. The proof
proceeds along the same lines as the proof of~\cite[Theorem
3.1]{BLZ13}.

We have
\bad Q_r(z,i) &\= \frac{2iy}{i-\bar z}\, \Bigl(
\frac{z+i}{2i}\Bigr)^{r-1} \, \hypg21
\Bigl(1,1-r;2-r;\frac{4y}{|z+i|^2} \Bigr)
\\
&\= 2(r-1)\,\log|z-i|+\oh(1)\qquad\text{ as }z\rightarrow i\,,\\
\partial_{\bar z} Q_r(z,i)
&\= \frac {r-1}{\bar z+i} + \oh(\log |z-i|)
\qquad \text{ as }z\rightarrow i\,. \ead
The integral of the term with $dz$ in $\bigl[F,Q_r(\cdot,i)]$ over a
circle around $i$ with radius $\e$ is $\oh(\e \,\log\e) = o(1)$ as
$\e\downarrow0$. The other term gives
\[ \int_{\ph=0}^{2\pi} \, \bigl( F(i) + \oh(\e) \bigr)\, \Bigl(\frac
{r-1} {\e \, e^{-i\ph}} + \oh(\log\e)\Bigr)\, (-i\e\, e^{-i\ph})\,
d\ph \= 2\pi i (1-r) \, F(i) + o(1)\,.\qedhere \]
\end{proof}

We first illustrate a possible use of Proposition~\ref{prop-psC} in an
example, and will after that complete the proof of
Theorem~\ref{thm-bgdecomp}.\vskip.4ex

\twocolwithpictr{\quad Let $r \in \CC \smallsetminus \ZZ_{\geq 1}$.In
the situation sketched in Figure~\ref{fig-UC}, the integral
\[\frac1{2\pi i(1-r)} \int_C \bigl[ F,Q_r(\cdot,z')\bigr]\]
represents a function of $z'$ on the regions inside and outside the
simple positively oriented closed path~$C$. According to
Proposition~\ref{prop-psC} the resulting function inside $C$ is equal
to $F$, and outside $C$ we get the zero function. }{
\setlength\unitlength{1cm}\label{fig-UC}
\begin{picture}(6,3)
\put(0,0){\line(1,0){6}}
\put(3,1.5){\oval(4,2)}
\put(4,2){$U$}
\put(2.8,2.2){$C$}
\put(1.68,1.9){\vector(0,-1){.4}}
\thicklines
\put(2.3,1.8){\circle{1.1}}
\end{picture}
}
\twocolwithpictl{\label{hole}\setlength\unitlength{1cm}
\begin{picture}(6,3)
\put(0,0){\line(1,0){6}}
\put(3,1.5){\oval(4,2)}
\put(4,2){$U$}
\put(2.3,1.4){\circle{.6}}
% dots %%%%%%
\put(2.1,1.3){\circle*{.01}}
\put(2.1,1.4){\circle*{.01}}
\put(2.1,1.5){\circle*{.01}}
\put(2.2,1.2){\circle*{.01}}
\put(2.2,1.3){\circle*{.01}}
\put(2.2,1.4){\circle*{.01}}
\put(2.2,1.5){\circle*{.01}}
\put(2.2,1.6){\circle*{.01}}
\put(2.3,1.2){\circle*{.01}}
\put(2.3,1.3){\circle*{.01}}
\put(2.3,1.4){\circle*{.01}}
\put(2.3,1.5){\circle*{.01}}
\put(2.3,1.6){\circle*{.01}}
\put(2.4,1.2){\circle*{.01}}
\put(2.4,1.3){\circle*{.01}}
\put(2.4,1.4){\circle*{.01}}
\put(2.4,1.5){\circle*{.01}}
\put(2.4,1.6){\circle*{.01}}
\put(2.5,1.3){\circle*{.01}}
\put(2.5,1.4){\circle*{.01}}
\put(2.5,1.5){\circle*{.01}}
%%%%%%%%%%%%%
\put(3.5,1){$C$}
\thicklines
\put(1.6,.8){\line(1,0){2}}
\put(1.6,.8){\vector(1,0){1}}
\put(3.6,.8){\line(-1,1){1.5}}
\put(2.1,2.3){\line(-1,-1){.5}}
\put(1.6,.8){\line(0,1){1}}
\end{picture}
}{\quad
The situation is different if we let $C$ run around a hole in $U$. Now
the integral defines an $r$-harmonic function $F_{\!i}$ on the region
inside $C$ (including the hole), and a function $F_{\!o}$ outside
$C$.

\quad Since the differential form is closed, we can deform the path of
integration inside $U$, thus obtaining extensions of $F_{\!i}$ and
$F_{\!o}$ to overlapping regions inside $U$. On the intersection of
the domains Proposition~\ref{prop-psC} implies $F_{\!i}-F_{\!o}=F$. }

\begin{proof}[Completion of the proof of Theorem~\ref{thm-bgdecomp}]
Let $F$ represent an element of $\bg_r(\proj\RR)$. So $F\in
\sharm_r(U)$ for any open $U\subset \uhp$ that contains a region of
the form $1-\e<\Bigl|\frac{z-i}{z+i}\Bigr|<1$. The disk
$\Bigl|\frac{z-i}{z+i}\Bigr|\leq 1-\e$ will play the role of the
hole in Figure~\ref{hole}.

For a positively oriented simple closed curve $C$ in $U$ we have two
$r$-harmonic functions:
\bad F_{\!i}(z') &= \frac{1}{2\pi i(1-r)} \int_C
\bigl[F,Q_r(\cdot,z')\bigr]_r&&\text{ for }z'\in \uhp\text{ inside
}C\,,\\
F_{\!o}(z')&= \frac{1}{2\pi i(1-r)}\int_C
\bigl[F,Q_r(\cdot,z')\bigr]_r&&\text{ for }z'\in \uhp \text{ outside
}C\,. \ead
By moving the path closer and closer to the boundary $\proj\RR$ of
$\uhp$ we obtain that $F_{\!i}\in\sharm_r(\uhp)$. Further $F_{\!o}$
is $r$-harmonic on a region $U'\subset U$ that contains the
intersection with $\uhp$ of a neighbourhood of $\proj\RR $ in
$\proj\CC$. The function $Q_r(z,\cdot)$ represents an element of
$\W r \om(\proj\RR)$ for each $z\in \uhp$. This property is preserved
under integration. Hence $F_{\!o}$ represents an element of $\W r
\om(\proj\RR)$.

If $z'$ is in the intersection of the domains of $F_i$ and $F_o$, then
we apply the integral representation with different paths, and get
$F(z') = F_{\!i}(z')- F_{\!o}(z')$ by Proposition~\ref{prop-psC}.
This gives the desired decomposition.

Together with Proposition \ref{prop-Ei}, this implies the theorem.
\end{proof}

\subsection{Periodic harmonic functions and boundary germs} In
Definition~\ref{def-ldper} we introduced the concept of
$\ld$-periodic functions, for $\ld\in \CC^\ast$.
\il{ldperbg}{$\ld$-periodic boundary germ}We use this terminology also
for boundary germs satisfying $f|_p T = \ld\, f$, with $T=\matc1101$.
(The transformation does not depend on the weight~$p$.

\begin{lem}\label{lem-per-harm}
Put
\ir{Frndef}{\F{r,n}}
\be\label{Frndef} \F{r,n}(z)\;:=\; e^{2\pi i n z} \, y^{1-r}\,
\hypg11\bigl(1-r;2-r;4\pi n y\bigr)
\qquad(r\in \CC\setminus\ZZ_{\geq 2},\; n\in \CC)\,.\ee
{\rm (For $r=1$ we have $\F{1,n}(z)=e^{2\pi i n z}$.)}
\begin{enumerate}
\item[i)] For $r\in \ZZ_{\geq 2}$ the $\ld$-periodic elements $\ph\in
\dsv{2-r}\pol$ form the one-dimensional subspace of constant
functions if $\ld=1$, and are zero otherwise.
\item[ii)] If $F\in \sharmb r(U)$ represents a $\ld$-periodic element
of $\esv r {\om,\wdg}[\infty]$ then it has an $r$-harmonic extension
as an element of $\sharm_r(\uhp)$, and is given by a Fourier
expansion
\be \label{harm-Four}
F(z)\= \begin{cases}
\sum_{n\equiv\al(1)} c_n \, \F{r,n}(z)&\text{ if }r\in \CC\setminus
\ZZ_{\geq 2}\,,\\
\sum_{n\equiv \al(1)}c_n \, e^{2\pi i n z}
+ a_0\, y^{1-r}&\text{ if }r\in \ZZ_{\geq 2}\,,
\end{cases}
\ee
where the coefficients $c_n$ satisfy $c_n=\oh\bigl( e^{-b\,|\re
n|}\bigr)$ as $|\re n|\rightarrow\infty$, and where $a_0\in \CC$ is
equal to zero unless $\ld=1$.
\item[iii)]Let $r\in \CC\setminus \ZZ_{\geq 1}$. If $F\in
\sharm_r(\uhp)$ represents a $\ld$-periodic element of $\W
r\om(\RR)$, then $F\in \esv r {\om,\wdg}[\infty]$. {\rm (Hence the
statements in Part~ii) apply to~$F$.)}
\end{enumerate}
\end{lem}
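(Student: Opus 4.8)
The overall plan is to reduce all three statements to the Fourier expansion of a $\ld$-periodic $r$-harmonic function, using that a single Fourier term of such a function solves an ordinary differential equation which is (a substitution away from) the confluent hypergeometric equation $\hypg11(1-r;2-r;\cdot)$, whose two solutions behave near $\proj\RR$ like $y^{1-r}$ and like a constant. Part~(i) is elementary: for $\ph\in\dsv{2-r}\pol$ the relation $\ph(t+1)=\ld\,\ph(t)$ forces, on comparing leading coefficients, either $\ph=0$ or $\ld=1$; and a genuinely $1$-periodic polynomial takes the value $\ph(0)$ at every integer, hence equals $\ph(0)$ identically. Since $r\geq2$, constants do lie in $\dsv{2-r}\pol$, which is all of~(i).

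For part~(ii) with $r\notin\ZZ_{\geq2}$ I would argue in three steps. \emph{Step 1: extend $F$ to $\uhp$.} A representative $F\in\sharmb r(U)$ of a $\ld$-periodic element of $\esv r{\om,\wdg}[\infty]\subset\W r\om(\RR)$ is $r$-harmonic on $U\cap\uhp$ with $U\cup\lhp$ an $\{\infty\}$-excised neighbourhood (Proposition~\ref{prop-Esys}(i)), so $U$ omits only a vertical half-strip at~$\infty$; since $F|_rT=\ld F$ gives $F=\ld^{-k}(F|_rT^{k})$ on $T^{-k}U$ for every $k\in\ZZ$, and the translates $T^{-k}U$ cover $\uhp$ and agree with $F$ on overlaps, $F$ extends to an element of $\sharm_r(\uhp)$. \emph{Step 2: Fourier expansion.} Being real-analytic, $\ld$-periodic and $r$-harmonic on $\uhp$, $F$ has $F(x+iy)=\sum_{n\equiv\al(1)}c_n(y)\,e^{2\pi inx}$ with $\ld=e^{2\pi i\al}$, converging uniformly on compacta; inserting into $\Dt_rF=0$ gives for each $n$ an ODE for $c_n$ whose solution with exponent $1-r$ at $y=0$ is, by a direct verification, exactly the factor of $e^{2\pi inx}$ in $\F{r,n}(z)$ (and $\F{r,0}(z)=y^{1-r}$). \emph{Step 3: select the branch.} $F\in\sharmb r$ means $F/f_r$ continues real-analytically across $\proj\RR$; since $f_r\sim c\,q(z)^{1-r}$ near $\proj\RR$ with $q=y/|z+i|^2$ vanishing to first order (remark after Definition~\ref{sharmbdef}), this forces $y^{r-1}F$ to be real-analytic across $\RR$, so for $r\notin\ZZ_{\geq1}$ the exponent-$0$ solution (which would contribute a non-real-analytic $y^{r-1}$-type term) is absent and $c_n(y)$ is a scalar multiple of the $\F{r,n}$-branch; hence $F=\sum_nc_n\F{r,n}$.

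It remains to handle the coefficients. Because $F$ is now $r$-harmonic on all of $\uhp$, its Fourier coefficients $c_n(y)$ decay rapidly in $n$ for each fixed~$y$; comparing with the fact that $\F{r,n}(iy)$ grows like $e^{2\pi|n|y}$ at the cusp, this forces $c_n=\oh(e^{-b|\re n|})$ for every $b$, and in particular guarantees convergence of $\sum_nc_n\F{r,n}$ on $\uhp$. A short computation from $\F{r,n}/f_r$ (using Proposition~\ref{prop-tPloc}) gives $\rs_r\F{r,n}=\gamma\,e^{2\pi int}$ with a nonzero constant $\gamma$ independent of $n$, so $\rs_rF=\gamma\sum_nc_n\,e^{2\pi int}$ is entire and $\ld$-periodic, consistent with $\rs_rF\in\dsv{2-r}{\om,\wdg}[\infty]$. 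For $r\in\ZZ_{\geq2}$, Step~3 no longer restricts anything, so I would instead use the exact sequence $0\rightarrow\dsv r\om\rightarrow\esv r\om\stackrel{\rs_r}\rightarrow\dsv{2-r}\pol\rightarrow0$ of Proposition~\ref{prop-iw-esv}: $\rs_rF$ is a $\ld$-periodic element of $\dsv{2-r}\pol$, hence a constant $a_0$ by part~(i) (zero unless $\ld=1$), which by Proposition~\ref{prop-polexp}(iii) is precisely the term $a_0\,y^{1-r}$; subtracting a lift of $a_0$ inside $\sum_{\mu=1-r}^{-1}\CC\,\M{r,\mu}$ reduces $F$ to an element of $\dsv r{\om,\wdg}[\infty]$, a holomorphic $\ld$-periodic function with the classical expansion $\sum_nc_n\,e^{2\pi inz}$ and exponentially small coefficients. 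For part~(iii) ($r\notin\ZZ_{\geq1}$), $F\in\sharm_r(\uhp)$ is already globally $r$-harmonic, so Steps~2 and~3 give $F=\sum_nc_n\F{r,n}$ with $c_n$ super-exponentially small; then $\rs_rF=\gamma\sum_nc_n\,e^{2\pi int}$ extends to an entire $\ld$-periodic function, which is holomorphic on every $\{\infty\}$-excised neighbourhood of $\lhp\cup\proj\RR$, so $\rs_rF\in\dsv{2-r}{\om,\wdg}[\infty]$ and, by bijectivity of $\rs_r$ for $r\notin\ZZ_{\geq2}$ (Theorem~\ref{thm-VWiso}(i)), $F=\rs_r^{-1}\rs_rF\in\esv r{\om,\wdg}[\infty]$.

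The main obstacle I expect is twofold. First, Step~3: one must show rigorously that the $\sharmb r$-condition kills the exponent-$0$ solution of the confluent hypergeometric equation, which requires a careful local analysis near $\proj\RR$ of how the factor $q(z)^{1-r}$ in $f_r$ interacts with the two indicial exponents, and is exactly where the hypothesis $r\notin\ZZ_{\geq1}$ enters (and where, for $r\in\ZZ_{\geq2}$, the argument has to be replaced by the exact sequence together with Proposition~\ref{prop-polexp}). Second, the domain bookkeeping: upgrading ``holomorphic near $\proj\RR$'' to ``entire'' via the $\{\infty\}$-excised structure and the $\ld$-periodicity, and controlling the growth of $\F{r,n}$ at the cusp tightly enough that the Fourier series converges on all of $\uhp$ and that $\rs_rF$ returns to $\dsv{2-r}{\om,\wdg}[\infty]$.
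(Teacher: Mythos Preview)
Your overall strategy matches the paper's: extend $F$ to $\uhp$ by $\ld$-periodicity and the excised-neighbourhood structure, take the Fourier expansion in~$x$, use the two-dimensional ODE solution space for each term together with the $\sharmb r$ condition to select the $\F{r,n}$ branch, and then use convergence of the expansion at all heights~$y$ to force exponential decay of the~$c_n$. For Part~(iii) you go via $\rs_r F$ rather than $F/f_r$; these are equivalent formulations.

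There are two genuine gaps. First, the case $r=1$ in Part~(ii): your Step~3 indicial-exponent argument is stated for $r\notin\ZZ_{\geq1}$, and your exact-sequence argument for $r\in\ZZ_{\geq2}$, so $r=1$ slips through. At $r=1$ the two indicial exponents coincide and the separation argument does not apply; the paper handles this via $\sharmb 1(U)=\sharmh 1(U)$ (Proposition~\ref{prop-tPloc}(ii)(b)), so each Fourier term is already holomorphic and equals a multiple of $e^{2\pi inz}=\F{1,n}$. Second, for $r\in\ZZ_{\geq2}$ your lift of~$a_0$ should be $a_0\,y^{1-r}$ (which is $1$-periodic and lies in $\esv r\om$ by Proposition~\ref{prop-polexp}(iii)), not a combination purely in $\sum_\mu\CC\,\M{r,\mu}$: the latter is singular at~$i$ and not periodic, so the difference would be neither $\ld$-periodic nor defined on all of~$\uhp$. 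More importantly, after subtracting you assert that $G:=F-a_0\,y^{1-r}$ is holomorphic on~$\uhp$, but the exact sequence only gives this at the germ level near~$\RR$. To propagate holomorphy to all of~$\uhp$ you need one more step: $G$ is $r$-harmonic on~$\uhp$, so $\shad_r G$ is holomorphic on~$\uhp$ and vanishes on the strip where $G$ is holomorphic, hence everywhere, which forces $\partial_{\bar z}G=0$ on~$\uhp$. The paper avoids this detour by analyzing each Fourier term directly and showing that a term in $\sharmb r$ with nonzero restriction must have $n=0$; your exact-sequence route is a legitimate alternative once the holomorphy step is made explicit.
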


\begin{proof}Since $\dsv{2-r}\pol$ consists of polynomials of degree
at most $r-2$, Part~i) is immediately clear.

For Parts ii) and iii) we consider first a neighbourhood $U$ of $\RR$
in $\CC$ and $F\in \sharmb r(U)$ that represents an element of $\W r
\om(\RR)$. Since $F$ is $\ld$-periodic the set $U$ contains a
strip $-\e<\im z<\e$ for some $\e>0$.

The $\ld$-periodic, $r$-harmonic function $F$ on $U\cap\uhp$ is given
by a Fourier expansion
\[ F(z) \= \sum_{n\equiv\al(1)} e^{2\pi i n x} \, f_n(y)\]
that is absolutely convergent on~$\uhp$. Since the operator $\Dt_r$
defining $r$-harmonicity commutes with translation $z\mapsto z+u$
with $u\in \RR$, all Fourier terms have the form $e^{2\pi i n x}\,
f_n(y)$ \, and are also $r$-harmonic. Hence they are in a
two-dimensional solution space.

We use that the condition $F\in \sharmb r
(U)$ is inherited by the Fourier terms. The multiples of $\F{r,n}$ are
in $\sharmb r(U)$ for some neighbourhood $U$ of $\RR$ in~$\CC$. If $r
\in \CC \smallsetminus \ZZ_{\geq 1}$, a linearly independent
element of the solution space is the holomorphic function $z\mapsto
e^{2\pi i n z}$, which does not represent an analytic boundary germ.
Thus we get for $r\not\in \ZZ_{\geq 1}$ a Fourier expansion of the
form indicated in~\eqref{harm-Four}.

For $r\in \ZZ_{\geq 1}$ we consider the functions $F(z) = e^{2\pi i n
z} \, g(y)$ for which $\shad _r F$ is
holomorphic. In all cases we can take $g(y)$ constant, and obtain the
multiples of $e^{2\pi i n z}$.

If $g$ is not constant, the condition that $F\in \sharmb r(U)$ leads
to $g(y) = y^{1-r}\, a(y)$ with a real-analytic function $a $ on a
neighbourhood of $y=0$ in~$\RR$. We find that we should have
\[ \frac{r-1}{2i}\, a(y) + y\, a'(y) \= c\, e^{4\pi n y}\]
with some $c\in \CC$. If $r=1$ this is possible only with $c=0$, and
then $a'(y)=0$. So we do not get more than indicated
in~\eqref{harm-Four}.

For $r\in \ZZ_{\geq 2}$ we take the restriction:
\[ \rsp_r \bigl(e^{2\pi i n z} \, y^{1-r}\, a(y)\bigr)(t) \=
-(-2i)^{r-2}\, (t-i)^{2-r}\, e^{2\pi i n t}\, a(0)\,.\]
This should be a $\ld$-periodic element of $\dsv{2-r}\pol$, which can
be non-zero only if $\ld=1$, and then is a constant function. This
leads to the term $a_0\, y^{1-r}$ in~\eqref{harm-Four}.\smallskip

For Part~ii) we suppose that $F$ represents an element of $\esv r
{\om,\exc}[\infty]$. This is an assumption in Part~ii). Part~i) in
 Proposition~\ref{prop-Esys} implies that $U\cap\uhp$ contains a
region of the form
\[ \bigl\{ z\in \uhp \;:\; |\re z|>\e^{-1}\bigr\} \cup \bigl\{ z\in
\uhp\;:\; \im z<\e\bigr\}\]
for some $\e>0$. The relation $F(z+\nobreak 1)=\ld\, F(z)$ allows us
to find a real-analytic continuation of $F$ to all of~$\uhp$. So
under the assumptions of Part~ii) we have $U=\CC$. In Part~iii) it is
given that $U$ contains $\uhp$. So now we know only that $U$ contains
all $z\in \CC$ with $\im z>-\e$ for some $\e>0$.\smallskip

In both parts we have the expansion \eqref{harm-Four} for all $z\in
\uhp$. This leads to information concerning the coefficients.

For $r \in \CC \smallsetminus \ZZ_{\geq 1}$, we quote
from~\cite[\S4.1.1]{Sl60} the asymptotic behavior of the confluent
hypergeometric series:
\be\label{F11as}
\hypg11\bigl( 1-r;2-r;t\bigr) \;\sim\; \begin{cases}
(1-r)\, t^{-1}\, e^t
&\text{ as }\re t\rightarrow\infty\,, \nonumber \\
\Gf(2-r)\, (-t)^{r-1}
&\text{ as }\re t \rightarrow -\infty\,.
\end{cases}
\ee
The absolute convergence of the Fourier expansion of $F$ implies the
estimate of the coefficients~$c_n$.

This gives, for $r\in \CC \smallsetminus \ZZ_{\geq 1}$  the growth of
the coefficients, and finishes the proof of Part~ii)
for $r\in \CC \smallsetminus \ZZ_{\geq 1}$ Moreover, dividing by
$y^{1-r}$ we get a Fourier expansion converging on all of $\CC$, and
representing a real-analytic function on~$\CC$. That implies that
$F(z)/f_r(z)$ is real-analytic on $\CC$, hence $F\in \sharmb r(\CC)$,
which shows that $F$ represents an element of $\esv r
{\om,\exc}[\infty]$, by Part~i) of Proposition~\ref{prop-Esys}. This
gives Part~iii).\smallskip

We are left with Part~ii) for $r\in \ZZ_{\geq 1}$. For $r=1$ we have
$(\rsp_1 F)(z) = \frac{z-i}{2i}\, F(z)$ in $\dsv1{\om,\wdg}[\infty]$.
Hence $F$ has a holomorphic extension to $\CC$. This extension is
still given by a convergent Fourier expansion, which should be the
expansion $\sum_n c_n\, \F{1,n}(z) = \sum_n c_n\, e^{2\pi i n z}$.
This convergence on $\CC$ implies the estimate of the coefficients.

Finally, if $r\in \ZZ_{\geq 2}$, then the term $\sum_n c_n\, e^{2\pi i
n z}$ is holomorphic, and hence is in $\dsv r {\om,\wdg}[\infty]$.
Again, we get convergence on all of~$\CC$. This ends the proof of
Part~ii).
\end{proof}

\subsection{Completion of the proof of
Proposition~\ref{prop-Esys}}\label{sect-cpEsys}
\begin{proof}[Proof of Part~vii) for $r
\in\CC\smallsetminus\ZZ_{\geq1}$.] The function $F\in
\sharm_r(U\cap\uhp)$ represents an $r$-har\-monic boundary germ $f\in
\bg_r(\proj\RR)$. According to Theorem~\ref{thm-bgdecomp} we
have a unique decomposition $f=P
+g$, with $P\in \sharm_r(\uhp)$ identified with the boundary germ it
represents, and $g\in \W r \om(\proj\RR)$ with representative $G=F-P$
in $\sharm_r(U\cap \uhp)$. Since $G$ represent an element of $\W r
\om(\proj\RR)$ it is an element of $\sharmb r
(U_1)$ for some neighbourhood $U_1\subset U$ of $\proj\RR$
in~$\proj\CC$.

Now
\[ f|_r(\ld^{-1} T- 1) \= P|_r(\ld^{-1} T- 1) +g |_r(\ld^{-1} T-
1)\,.\]
The left hand side is in $\esv r \om \subset \W r \om(\proj\RR)$ by
condition~b) in the assumption. So the direct sum in
\eqref{bg-decomp} shows that $\ld^{-1} \, P|_r T=P$.

Since $P=F-G $ represents an element of $ \esv r{\om,\wdg}[\infty]+ \W
r \om(\proj\RR)
\subset \W r \om(\RR)$ we can apply Part~iii) of
Lemma~\ref{lem-per-harm} to~$P$ and conclude that $P\in \esv r
{\om,\wdg}[\infty]$. Then $G = F-P$ represents an element
\begin{align*}g\in \esv r{\om,\wdg}[\infty] \cap \W r \om(\proj\RR)
&\= (\rs_r)^{-1} \Bigl(\dsv{2-r}{\om,\wdg}[\infty] \cap \Prj{2-r}^{-1}
\V{2-r}\om(\proj\RR)\Bigr)\\
&\= \rs_r^{-1} \dsv{2-r}\om \= \esv r \om\,. \qedhere\end{align*}
\end{proof}

The proof for $r\in \ZZ_{\geq 1}$ requires some preparation.

\begin{lem}\label{lem-Gz}Let $r\in \ZZ_{\geq 1}$. Suppose that $F$
representing an element of $\dsv r {\om,\wdg}[\infty]$ satisfies the
conditions in Part~vii) in Proposition~\ref{prop-Esys}. Then there is
a $\ld$-periodic function $P\in \hol(\CC)$ such that $G=F-P$ is
holomorphic on a neighbourhood of $\lhp\cup\proj\RR$ in $\proj\CC$,
and satisfies $G(\infty)=0$.
\end{lem}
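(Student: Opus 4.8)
The aim is to split $F$ as $F=P+G$ with $P$ entire and $\ld$-periodic and $G$ representing an element of $\dsv r\om$; since for $r\in\ZZ_{\geq1}$ such representatives are holomorphic on a neighbourhood of $\lhp\cup\proj\RR$ in $\proj\CC$ with a zero of order at least $r$ (hence vanishing) at $\infty$, this is precisely the assertion. Put $g(z):=\ld^{-1}F(z+1)-F(z)$. This is holomorphic wherever defined, and by condition~b) in Proposition~\ref{prop-Esys}(vii) its germ near $\proj\RR$ coincides with that of some $\hat g\in\esv r\om$. First I would pin down $\hat g$. By \eqref{harm-detect} one has $\shad_r\hat g=\shad_r g=0$; on the other hand, writing $\hat g=\hat g_0+\sum_{\mu=1-r}^{-1}c_\mu\,\M{r,\mu}$ with $\hat g_0$ representing an element of $\dsv r\om$ (Proposition~\ref{prop-iw-esv}(iii); for $r=1$ the sum is empty) and using that the functions $\shad_r\M{r,\mu}$, $1-r\le\mu\le-1$, are linearly independent by Table~\ref{tab-PMH}, all $c_\mu$ vanish, so $\hat g=\hat g_0$. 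Hence $g$ agrees near $\proj\RR$ with a function holomorphic on a neighbourhood of $\lhp\cup\proj\RR$ in $\proj\CC$, with a zero of order $\ge r$ at $\infty$; I keep calling this extension $g$.

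Next I would exploit condition~a). The $r$-harmonic extension of $F$ to $U\cap\uhp$ has holomorphic shadow on the connected set $U\cap\uhp$ (by \eqref{harm-detect}), and this shadow vanishes on the part near $\RR$ where $F$ is already holomorphic, so it vanishes identically; thus the extension is holomorphic. Gluing it to $F$, we may assume $F$ holomorphic on $\Om\cup(U\cap\uhp)$, where $\Om$ is an $\{\infty\}$-excised neighbourhood on which $F$ was originally given. Near $\infty$ this set contains a full punctured neighbourhood of $\infty$ in $\proj\CC$, so the only remaining obstruction to $F$ being holomorphic near all of $\lhp\cup\proj\RR$ is a bounded region $\mathcal B\subset\uhp$ lying at a positive distance from $\proj\RR$.

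Then I would produce $P$ by a one-sided average. Since $g$ vanishes at $\infty$, Proposition~\ref{prop-osa1} gives $h:=-\av{T,\ld}^+g$ (for $|\ld|\ge1$) resp.\ $h:=-\av{T,\ld}^-g$ (for $|\ld|<1$), holomorphic on the corresponding region $D^\pm_\e$ of~\eqref{Depspm} and solving $h(z)-\ld^{-1}h(z+1)=-g(z)$, which is exactly the difference equation \eqref{hfe} also satisfied by $F$. Therefore $F-h$ is $\ld$-periodic on the overlap of the two domains. That overlap contains a connected set $S=\{z\in\uhp:\re z>c,\ |z|>c\}$ (for $c$ large), which is stable under $z\mapsto z+1$ and satisfies $\bigcup_{n\in\ZZ}(S+n)=\CC$; extending $F-h$ periodically from $S$ yields a $\ld$-periodic $P\in\hol(\CC)$ with $F=h+P$ throughout the common domain. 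Moreover, since $g$ vanishes to order $\ge r$ at $\infty$, Proposition~\ref{prop-av-as} forces $h(t)=\oh(|t|^{-r})$, so $h$ represents an element of $\dsv r\om$.

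Finally I would put $G:=F-P$ and check the two conclusions. On the common domain $G=h$, so gluing $G$ with $h$ and with the holomorphic extension of the second step extends $G$ holomorphically past the excised strip; near $\infty$ the extended $G$ is $\oh(|z|^{-r})$, so by Riemann's removable-singularity theorem it is holomorphic at $\infty$ with $G(\infty)=0$. The only delicate point is the bounded gap $\mathcal B$: there one uses the relation $\ld^{-1}G(z+1)-G(z)=g(z)$ to express $G$ on $\mathcal B$ through its values on translates $\mathcal B+n$ lying near $\infty$, where $G$ is already holomorphic, and a Fourier–expansion argument as in Lemma~\ref{lem-per-harm}, together with the vanishing statements of Lemma~\ref{lem-ldpr}, shows that the periodic part $P$ has absorbed all of the obstruction, so that $G$ is holomorphic on a full neighbourhood of $\lhp\cup\proj\RR$ in $\proj\CC$. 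I expect this last bookkeeping at $\infty$ — fitting together the excised neighbourhood of $F$, the harmonic extension from~a), the one-sided average, and the periodic $P$ — to be the main obstacle; it is where both a) and b) are indispensable, and it parallels the gluing arguments in \cite{BLZm}.
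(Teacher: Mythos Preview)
Your approach via one-sided averages is genuinely different from the paper's, which is considerably more direct. Having established (as you do) that $F\in\hol(\CC\setminus K)$ for some compact $K\subset\uhp$, the paper simply \emph{defines}
\[
P(z)\;=\;\frac1{2\pi i}\int_{|\tau|=c}F(\tau)\,\frac{d\tau}{\tau-z}
\]
for $c$ large, shows $P$ is $\ld$-periodic by applying Cauchy's theorem to $\ld^{-1}F(\tau+1)-F(\tau)$ (which, as you also note, lies in $\dsv r\om$ and hence has a zero of order $\ge r\ge1$ at~$\infty$), and then a short Laurent-coefficient computation from the identity $\frac1{2\pi i}\int_{|\tau|=c}G(\tau)\,\frac{d\tau}{\tau-z}=0$ yields $G(\infty)=0$ directly. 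No averaging, no gluing across regions.

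Your write-up has concrete gaps. First, the decay claim $h(t)=\oh(|t|^{-r})$ is wrong: applying Proposition~\ref{prop-av-as} with weight parameter $2-r$ in place of $r$ (since $g\in\dsv r\om=\dsv{2-(2-r)}\om$) gives an expansion $(\av{T,\ld}^\pm g)(t)\sim (it)^{-r}(c_{-1}t+c_0+\cdots)$, i.e.\ only $\oh(|t|^{1-r})$, and $c_{-1}$ need not vanish. For $r\ge2$ this still yields $G(\infty)=0$, but not via Riemann's theorem on the sector~$S$: you must use that $G=F-P$ is already holomorphic on a \emph{full} punctured neighbourhood of~$\infty$, and then the sectorial bound kills the nonnegative Laurent coefficients. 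Second, for $r=1$ with $|\ld|=1$ the argument collapses: Proposition~\ref{prop-osa1} requires $(\Prj_1 g)(\infty)=0$ for the average even to exist, which you have not arranged, and the resulting bound $\oh(1)$ would not force $G(\infty)=0$ anyway. Third, your worry about the bounded gap~$\mathcal B$ is misplaced: the lemma only asks for $G$ holomorphic on a neighbourhood of $\lhp\cup\proj\RR$ in~$\proj\CC$, and $\proj\CC\setminus\mathcal B$ already is such a neighbourhood once $G(\infty)=0$; no Fourier argument is needed there.
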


\begin{proof}Condition~a) in Part~vii) of Proposition~\ref{prop-Esys}
tells us that $F$ is $r$-harmonic on $\uhp\setminus K$ for some
compact set $K\subset \uhp$. Since we have the additional information
that $F$ represents an element of $\dsv r {\om,\wdg}[\infty]$ it is
holomorphic on a neighbourhood of $\lhp\cup\RR$ in~$\CC$. Hence $F\in
\hol(\CC\setminus\nobreak K)$.

We define a holomorphic function $P$ on~$\CC$ by
\be\label{P} P(z) \= \frac1{2\pi i}\int_{|\tau|=c}\,
F(\tau)\,\frac{d\tau}{\tau-z}\,,\ee
where $c$ is chosen larger than $|z|$, and in such a way that $K$ is
enclosed by the path of integration. The function $z\mapsto
\ld^{-1}\, F(z+\nobreak 1)-F(z)$ is holomorphic on  $\CC \setminus
\left( K \cup T^{-1}K\right)$.  It represents an element of $\esv r
\om$ by Assumption~b), hence it represents an element of $\dsv r
\om$, and  is holomorphic on $\proj\CC \setminus \left( K \cup
T^{-1}K\right)$, with  at $\infty$ a
zero of order at least~$r$. Since $r\geq 1$, Cauchy's theorem implies
that
\[ \frac1{2\pi i} \int_{|\tau|=c} \Bigl( \ld^{-1}\,
F(\tau+1)-F(\tau)\Bigr)\,\frac{d\tau}{\tau-z}\=0\,,\]
for all sufficiently large~$c$. This implies that
$\ld^{-1}P(z+\nobreak 1)=P(z)$ for all $z\in \CC$.

Take $G=F-P$. With \eqref{P} we find for all sufficiently large $c$
and $|z|<c<c_1$
\begin{align*} \frac1{2\pi i}&\int_{|\tau|=c} G(\tau) \,
\frac{d\tau}{\tau-z} \= P(z) - \frac1{2\pi
i}\int_{|\tau_1|=c_1}\frac1{2\pi i}\int_{|\tau|=c}
\frac{d\tau}{(\tau_1-\tau)\,(\tau-z)}\, F(\tau_1)\, d\tau_1
\\
&\=P(z) - P(z)\=0\,.
\end{align*}
Insertion of the Laurent expansion $G(\tau) = \sum_{k\in \ZZ} b_k\,
\tau^k$ of $G$ at $\infty$ into the integral shows that $b_k=0$ for
$k\geq 0$. So the function $G$ is holomorphic on a neighbourhood of
$\infty$ with a zero at~$\infty$.
\end{proof}

\begin{proof}[Proof of Part~vii) for $r=1$]We have $\esv 1 \om=\dsv 1
\om$ and $\esv 1 {\om,\wdg}[\infty]=\dsv 1{\om,\wdg}[\infty]$; see
\eqref{wt1ED}. We apply Lemma~\ref{lem-Gz}, and use that a first
order zero at~$\infty$ suffices to conclude that~$G$ represents an
element of~$\dsv 1 \om$.\end{proof}

\begin{proof}[Proof of Part~vii) for $r\in \ZZ_{\geq2}$]We have  $H
\in\sharm_r(\uhp\setminus K)$ with a compact set
$K\subset\uhp$, for which we can arrange that $i\in K$.

We write $F=F_0+m$, with $F_0$ representing an element of $\dsv
r{\om,\wdg}[\infty]$ and $m=\sum_{\mu=1-r}^{-1} c_\mu \, \M{r,\mu}\in
\esv r \om$, with the $c_\mu$ in~$\CC$. Then $F_0$ is $r$-harmonic on
$\uhp\setminus K$, and is holomorphic on an $\{\infty\}$-excised
neighbourhood. It also satisfies Condition~b)
in Part~vii) of Proposition~\ref{prop-Esys}, so we can apply
Lemma~\ref{lem-Gz}. This gives $F_0 = G+P$, with a $\ld$-periodic
holomorphic function $P$ on~$\CC$, and $G$ holomorphic on a
neighbourhood of $\lhp\cup\proj\RR$ in $\proj\CC$ with a zero
at~$\infty$. For $G$ to represent an element of $\dsv r \om$ we would
need a zero at $\infty$ of order at least~$r$.

The function $G$ shares with $F_0$ the property that $z\mapsto
\ld^{-1} \, G(z+\nobreak 1)-G(z)$ represents an element of $\esv r
\om$, even an element of $\dsv r \om$ by holomorphy. Insertion of
this property in the power series of $G$ at~$\infty$ shows that the
zero of $G$ at~$\infty$ has order at least $r-1$. 

If $\ld=1$, we cannot reach order~$r$. In this case we write
$G=G_0+c_0\, R_r$, where
\[ R_r(z) \= \Bigl( \frac{2i}{z-i}\Bigr)^{r-1}\,,\]
and where $c_0$ is chosen such that $G_0$ has a zero at~$\infty$ of
order at least~$r$. So $G_0$ represents an element of~$\dsv r \om$.

Part~iii) of Proposition~\ref{prop-polexp} shows that $R_r(z)
= y^{1-r} + m_1$, with $m_1\in \esv r \om$. Working modulo elements of
$\esv r \om$ we have
\[ F \= F_0+m \;\equiv\; G+P \= G_0+c_0\, R_r +P \;\equiv\; c_0\,
y^{1-r} + c_0\,m_1+P \;\equiv\; c_0\, y^{1-r}+P\,. \]
The function $c_0\, y^{1-r}+P$ represents a $\ld$-periodic element of
$\esv r {\om,\wdg}[\infty]$.
\end{proof}

\subsection{Related work}\label{sect-lit8}
In \cite{BLZm} the analytic cohomology groups have values in the space
$\V s \om(\proj\RR)$ which is isomorphic to the space $\W s
\om(\proj\RR)$ by the results in~\cite[\S5.2]{BLZ13}. In the present
context we work with the subspace $\dsv{v,2-r}\om$ of
$\Prj{2-r}^{-1}\V{v,2-r}\om(\proj\RR)$, and we have to do more work
to determine a submodule of analytic boundary germs related to
$\dsv{v,2-r}\om$.

For weights $r \in \CC \smallsetminus \ZZ_{\geq 1}$ we have the
isomorphism in Part~i) of Theorem~\ref{thm-VWiso}, which points the
way to the definition of $\esv{v,r}\om$. The power series approach in
that theorem is more complicated than that in~\cite[Proposition
5.6]{BLZ13}. For weights $r \in \ZZ_{\geq 1}$ we defined
$\esv{v,r}\om$ so that it satisfies the properties in
Proposition~\ref{prop-Esys}.

Property~vii) in Proposition~\ref{prop-Esys} is similar to~\cite[Lemma
9.23]{BLZm}. It will be essential for the proof of
Theorem~\ref{sect-THMbg} in~\S\ref{sect-caf}. The proof of this
property in the case $r \in \CC \smallsetminus \ZZ_{\geq 1}$ follows
the proof in~\cite{BLZm}. It uses the boundary germ splitting in
Theorem~\ref{thm-bgdecomp} which is similar to~\cite[Proposition
5.3]{BLZ13}. The resolvent kernel $Q_r$ in \eqref{Qr-def}, the
Green's form in \eqref{Gr-f} and the Cauchy-like result
Proposition~\ref{prop-psC} have their examples in
(3.8), (3.13), Theorem~3.1 in~\cite{BLZ13}, respectively.

To prove property~vii) in Proposition~\ref{prop-Esys} for positive
integral weights we had to find other methods, which were inspired by
the use of hyperfunctions and the Poisson transformation (\S2.2 and
\S3.3 in~\cite{BLZ13}). In these notes we avoid the explicit use of
hyperfunctions and the Poisson transformation.

%%%%%%%%%%%%%%%%%%%%%%%%%%%%%%%%%%%%%%%%%%%%%%%%%%%%%%%%%%%%%%%%%%%%
\section{Tesselation and cohomology}\label{sect-tesscoh}

Up till now we worked with the standard description of group
cohomology, recalled in~\S\ref{sect-coh-mpc}. For the boundary germ
cohomology we turn to the description of cohomology that turned out
to be useful in~\cite{BLZm}. We use the concepts and notations of
those notes, and do not repeat a complete discussion. We invite the
reader to have a quick look at \cite[\S6.1--3]{BLZm}, where the
approach is explained for cocompact discrete groups, and then to
consult \cite[\S11]{BLZm} for the case of groups with cusps.

\subsection{Tesselations of the upper half-plane}\label{sect-tess}The
\il{tess}{tesselation} tesselations that we use are called ``of type
\textbf{Fd}'' in~\cite{BLZm}. They are based on the choice of a
suitable fundamental domain for $\Gm\backslash\uhp$.

\rmrk{Tesselation for the modular group}With the standard choice of
the fundamental domain $\fd$ for $\Gmod\backslash\uhp$, a part of the
tesselation looks as in Figure~\ref{fig-modtess}.
\begin{figure}[ht]
\[
\newcommand\hy{Y}
\setlength{\unitlength}{2.3cm}
\begin{picture}(3,3.2)(-1.5,-.3)
\put(0,1){\circle*{.07}}
\put(-.5,.866){\circle*{.07}}
\put(.5,.866){\circle*{.07}}
\put(-.5,2.1){\circle*{.07}}
\put(.5,2.1){\circle*{.07}}
\put(-.05,1.05){$i$}
\put(-.1,2.2){$f_\infty$}
\put(.2,.8){$e_2$}
\put(-.3,.8){$S e_2$}
\put(.55,1.5){$e_1$}
\put(-.95,1.5){$T^{-1}e_1$}
\put(.55,2.3){$e_\infty$}
\put(-.95,2.3){$T^{-1}e_\infty$}
\put(.55,1.92){$P_\infty$}
\put(-.1,2.48){$V_\infty$}
\put(-.1,1.5){$\fd_\hy$}
\put(-1.5,0){\line(1,0){3.2}}
\put(-1.5,.866){\line(0,1){1.7}}
\put(1.5,.866){\line(0,1){1.7}}
\qbezier(-1.5,.866)(-1.3,1)(-1,1)
\qbezier(-.5,.866)(-.7,1)(-1,1)
\qbezier(.5,.866)(.7,1)(1,1)
\qbezier(1.5,.866)(1.3,1)(1,1)
\qbezier(0,0)(0,.45)(.5,.866)
\qbezier(1,0)(1,.45)(1.5,.866)
\qbezier(-1,0)(-1,.45)(-.5,.866)
\qbezier(0,0)(0,.45)(-.5,.866)
\qbezier(-1,0)(-1,.45)(-1.5,.866)
\qbezier(1,0)(1,.45)(.5,.866)
\qbezier(0,0)(0,.333)(.333,.333)
\qbezier(.333,.333)(.4,.333)(.5,.289)
\qbezier(0,0)(0,.333)(-.333,.333)
\qbezier(-.333,.333)(-.4,.333)(-.5,.289)
\qbezier(1,0)(1,.333)(.667,.333)
\qbezier(.667,.333)(.6,.333)(.5,.289)
\qbezier(1,0)(1,.333)(1.333,.333)
\qbezier(1.333,.333)(1.4,.333)(1.5,.289)
\qbezier(-1,0)(-1,.333)(-.667,.333)
\qbezier(-.667,.333)(-.6,.333)(-.5,.289)
\qbezier(-1,0)(-1,.333)(-1.333,.333)
\qbezier(-1.333,.333)(-1.4,.333)(-1.5,.289)
\put(-.5,.289){\line(0,1){.577}}
\put(.5,.289){\line(0,1){.577}}
\put(1.5,.289){\line(0,1){.577}}
\put(-1.5,.289){\line(0,1){.577}}
\put(-1.5,2.1){\line(1,0){3}}
\qbezier(0,.476)(.2,.476)(.225,.317)
\qbezier(0,.476)(-.2,.476)(-.225,.317)
\qbezier(-1,.476)(-.8,.476)(-.775,.317)
\qbezier(-1,.476)(-1.2,.476)(-1.225,.317)
\qbezier(1,.476)(1.2,.476)(1.225,.317)
\qbezier(1,.476)(.8,.476)(.775,.317)
\put(-.04,-.15){$0$}
\thicklines
\put(-.5,.866){\line(0,1){1.7}}
\put(.5,.866){\line(0,1){1.7}}
\qbezier(-.5,.866)(-.3,1)(0,1)
\qbezier(.5,.866)(.3,1)(0,1)
\put(-.5,2.1){\line(1,0){1}}
\put(.5,2.2){\vector(0,1){.3}}
\put(-.5,2.2){\vector(0,1){.3}}
\put(.5,2.1){\vector(-1,0){.6}}
\put(.5,1.3){\vector(0,1){.3}}
\put(-.5,1.3){\vector(0,1){.3}}
\put(.2,.983){\vector(4,-1){.17}}
\put(-.2,.983){\vector(-4,-1){.17}}
\end{picture}
\]
\caption{Sketch of a tesselation for the modular group, based on the
standard fundamental domain. } \label{fig-modtess}
\end{figure}
The tesselation \il{te}{$\tess$}$\tess$ is obtained by taking all
$\Gmod$-translates of the \il{funddom}{fundamental domain}fundamental
domain \il{fd}{$\fd$}$\fd$ divided in a \il{cutr}{cuspidal
triangle}cuspidal triangle \il{Vca}{$V_\ca$}$V_\infty$ and a compact
part~\il{fdY}{$\fd_Y$}$\fd_Y$. The set of \il{fate}{face of a
tesselation}faces is \il{Xtess}{$X^\tess_i$}$X^\tess_2=\bigl\{
\gm^{-1}V_\infty, \,\gm^{-1}\fd_Y\;:\; \gm\in\Gmod \bigr\}$. In the
boundary $\partial_2
\fd$ of the fundamental domain there are oriented edges $e_\infty$
from a point \il{Pinf}{$P_\ca$}$P_\infty=\frac12+iY$ (with some
$Y>1$) to the cusp~$\infty$, and compact edges $e_1$ from $e^{\pi
i/3}$ to $P_\infty$ and $e_2$ from $i$ to $e^{\pi i/3}$. There is
also the horizontal edge $f_\infty$ from $P_\infty$ to
$T^{-1}P_\infty$. These four edges generate a set $X_1^\tess$ of
oriented \il{edte}{edge of a tesselation}edges freely over
$\overline\Gmod=\{\pm 1\}\backslash\Gmod$. If $e\in X_1^\tess$ then
the same edge with the opposite orientation is written as $-e$. We
follow the convention used in~\cite{BLZm} to include in $X_1^\tess$
only one of the two oriented edges corresponding to a given
unoriented edge. The points $i$, $e^{\pi i/3}$, $P_\infty$ of $\uhp$
together with the cusp $\infty$ generate over $\Gmod$ the set
$X_0^\tess$ of vertices, but not freely over $\overline{\Gmod}$,
since $i$ and $e^{\pi i/3}$ are fixed by subgroups of
$\overline\Gmod$ of orders $2$ and $3$ respectively. The subgroup of
$\overline\Gmod$ fixing $P_\infty$ consists only of $ 1$, and the
group $\Gmod_\infty$ fixing $\infty$ is infinite.

We define the subsets \il{XiY}{$X^{\tess,Y}_i$}$X_i^{\tess,Y}$
consisting of all elements that are compact in $\uhp$. So
$X_0^{\tess,Y}$ is generated by $i$, $e^{\pi i/3}$, and~$P_\infty$;
$X_1^{\tess,Y}$ by $e_1$, $e_2$ and $f_\infty$; and $X_2^{\tess,Y}$
by $\fd_Y$.

\rmrk{General groups}In general, the fundamental domain $\fd$ is
chosen in such a way that its closure in $\uhp\cup\proj\RR$ contains
only one cusp of $\Gm$ from each $\Gm$-orbit of cusps. The
fundamental domain is the union of a compact part $\fd_Y$ and a
number of cuspidal triangles \il{Va}{$V_\ca$}$V_\ca$, for the cusps
$\ca$ in the closure of~$\fd$. Each $V_\ca$ has vertices $\ca$,
$P_\ca$ and $\pi_\ca^{-1}P_\ca$, and a boundary consisting of edges
\il{eca}{$e_\ca$}$e_\ca\in X_1^\tess$ from $P_\ca$ to~$\ca$,
$\pi_\ca^{-1}e_\ca$, and
\il{fca}{$f_\ca$}$f_\ca\in X_1^{\tess,Y}$ from $P_\ca$ to
$\pi_\ca^{-1}P_\ca$. So each of these cuspidal triangles looks the
same as the triangle $V_\infty$ for the modular group.

%%%%%%%%%%%%%%%%%%%%%%%%%%%
\subsection{Resolutions based on a tesselation}\label{sect-rbot} The
tesselation $\tess$ gives rise to $\Gm$-mod\-ules
\il{Fitess}{$F_i^\tess$}\il{FitessY}{$F_i^{\tess,Y}$}$F_i^\tess
:=\CC[X_i^\tess]\supset F_i^{\tess,Y}:=\CC[X_i^{\tess,Y}]$, which are
considered as \emph{right} modules, by \il{x|g}{$(x)|\gm$}$(x)|\gm =
(\gm^{-1}x)$. There are the obvious \il{bo}{boundary
operators}boundary operators \il{pai}{$\partial_i$}$\partial_i
: \CC[X_i^\tess] \rightarrow \CC[X_{i-1}^\tess]$ that satisfy
$\partial_i
\CC[X_i^{\tess,Y}]\,\subset\, \CC[X_{i-1}^{\tess,Y}]$.

For the modular group:
\[ \partial_2 (V_\infty) \= (e_\infty)|(1-T)-(f_\infty)\,,\quad
\partial_2 (\fd_Y) \= (e_1)|(1-T)+(e_2)|(1-S) +(f_\infty)\,.\]

This leads to complexes $\bigl(F_\cdot^\tess\bigr)
\supset \bigl(F_\cdot^{\tess,Y}\bigr)$ of $\Gm$-modules. It turns out
(\cite[\S11.2]{BLZm}) that for right $\Gm$-modules $V$ that are vector
spaces over $\CC$ the cohomology of the resulting complex
$\hom_{\CC[\Gm]}(F^{\tess,Y}_\pnt,V)$ is canonically isomorphic to
the group cohomology $H^\cdot(\Gm;V)$.
In working with this description of cohomology it is often useful to
identify a $\CC[\Gm]$-homomorphism $F_i^\tess=
\CC[X_i^\tess]\rightarrow V$ with the corresponding map
$c:X_i^\tess\rightarrow V$, which satisfies $c(\gm^{-1}x) = c(x)|\gm$
for all $\gm\in \Gm$, $x\in X_i^\tess$.

We use the complex $\bigl( F_\pnt^{\tess}\bigr)$ to describe the
\il{mpc1}{mixed parabolic cohomology}mixed parabolic cohomology. The
\il{mpcc}{mixed parabolic cochain}mixed parabolic cochains are
defined by\ir{CiFtess}{C^i(F^\tess_\pnt;V,W)}
\badl{CiFtess} C^i(F^\tess_\pnt;V,W) \= \Bigl\{ c:X_i^\tess\rightarrow
W\; &:\; c(x)\in V\text{ if }x\in X_i^{\tess,Y}\,,\\
&\quad c(\gm^{-1}x)=c(x)|\gm\text{ for all }\gm\in \Gm\Bigr\}\,. \eadl
A derivation can be defined by \il{deri}{$d$, $d^i$}$d^i c (x) =
(-1)^{i+1} \, c(\partial_{i+1}x)$ for $x\in X_i^\tess$. We often write
$d$ instead of $d^i$.

The space \il{ZiFtess}{$Z^i(F^\tess_\pnt;V,W)$}$Z^i(F^\tess_\pnt;V,W)$
of \il{mpcoc1}{mixed parabolic cocycles}mixed parabolic cocycles is
defined as the kernel of $d^i : C^i(F^\tess_\pnt;V,W)\rightarrow
C^{i+1}(F^\tess_\pnt;V,W)$ and the subspace of \il{mpcob1}{mixed
parabolic coboundary}mixed parabolic coboundaries
\il{BiFtess}{$B^i(F^\tess_\pnt;V,W)$}$B^i(F^\tess_\pnt;V,W)$ as
$d^{i-1}C^{i-1}(F^\tess_\pnt;V,W)$ if $i\geq 1$ and as the zero
subspace if $i=0$. Then the cohomology groups of the complex,
\be\label{mpchi} Z^i(F^\tess_\pnt;V,W) \bigm/
B^i(F^\tess_\pnt;V,W)\,,\ee
are for $i=1$ isomorphic to the mixed parabolic cohomology groups
$\hpar^1(\Gm;V,W)$ in Definition~\ref{mpcg}. In \cite[\S11.3]{BLZm}
the mixed parabolic cohomology groups
\il{hpar1mpc}{$\hpar^i(\Gm;V,W)$}$\hpar^i(\Gm;V,W)$ are defined as
the spaces in~\eqref{mpchi} for all~$i$.

In particular for $i=1$ we have the following commutative diagram for
$\CC[\Gm]$-modules $V\subset W$:
\[\xymatrix{ Z^1(\Gm;V)/B^1(\Gm;V) &
Z^1(F^{\tess,Y}_\cdot;V)/B^1(F^{\tess,Y}_\cdot;V)
\ar[l]_{\cong}\\
Z^1(\Gm;V,W)/B^1(\Gm;V) \ar@{^{(}->}[u]&
Z^1(F^{\tess}_\cdot;V,W)/B^1(F^{\tess}_\cdot;V,W)
\ar@{^{(}->}[u] \ar[l]_{\cong} }\]
The isomorphic spaces in the top row give two isomorphic descriptions
of $H^1(\Gm;V)$, and the two spaces in the bottom row of
$\hpar^1(\Gm;V,W)$.

The conditions on the tesselations are such that the action of
$\bar\Gm=\{\pm 1\}\backslash\Gm$ on $X_1^\tess$ and $X_2^\tess$ is
free on finitely many elements. So for $i\geq 1$ the cochains $c\in
C^i(F^\tess_\pnt;V,W)$ are determined by their values on a
$\CC[\Gm]$-basis of $\CC[X_i^\tess]$. For the modular group, $c\in
C^1(F^\tess_\pnt;V,W)$ is completely determined by $c(e_1)$, $c(e_2)$
and $c(f_\infty)$ in $V$, and $c(e_\infty)\in W$, and $b\in
C^2(F^\tess_\pnt;V,W)$ is determined by $c(\fd_Y)\in V$ and
$c(V_\infty)\in W$. For $i=0$ there are in general no bases over
$\CC[\bar \Gm]$. The fact that each cusp is fixed by an infinite
subgroup of $\Gmod$ makes the difference between parabolic cohomology
and standard group cohomology. Points of $X_0^{\tess,Y}$ may be fixed
by non-trivial finite subgroups of $\bar\Gm$. As long as we work with
$\Gm$-modules that are vector spaces over $\CC$ this is not
important. For $\overline{\Gmod}$ it suffices if we can divide by $2$
and $3$ in the modules that we use.\smallskip

For a cocycle $c\in Z^1(F^\tess_\pnt;V,W)$ the value $c(p)$ on a cycle
$p\in \ZZ[X_1^\tess]$ corresponding to a path from $P_1$ to $P _2$
(both in $X_0^\tess$) does not depend on the choice of the path along
edges in $X_1^\tess$, only on the end-points $P_1$ and $P_2$. So we
can write $c(p)=c(P_1,P_2)$, and view $c$ as a function on
$X_0^\tess\times X_0^\tess$. In general $c(p)\in W$. It satisfies
\badl{crel} c(P_1,P_3) &\= c(P_1,P_2)+c(P_2,P_3)&&\text{ for }P_j\in
X_0^\tess\,,\\
c(\gm^{-1}P_1,\gm^{-1}P_2)&\= c(P_1,P_2)|\gm&&\text{ for }\gm\in
\Gm\,,\; P_j\in X_0^\tess\,. \eadl
If both $P_1$ and $P_2$ are in $X_0^{\tess,Y}$, then the path can be
chosen in $\ZZ[X_0^{\tess,Y}]$, and hence $c(p)
\in V$.

Now choose a base point $P_0 \in X_0^{\tess,Y}$. Then $\ps_\gm=
c(\gm^{-1}P_0,P_0)$ is in $V$ for each $\gm\in \Gm$. It turns out to
define a group cocycle $\ps\in Z^1(\Gm;V)$. It is even a mixed
parabolic group cocycle in $\zpar^1(\Gm;V,W)$. Let us check this in
the situation of the modular group, with the tesselation discussed
above. Then
\begin{align*} \ps_T &\= c(T^{-1}P_0,P_0)\=
c(T^{-1}P_0,T^{-1}P_\infty)+ c(T^{-1}P_\infty,T^{-1}\infty) \\
&\quad \hbox{}
+ c(\infty,P_\infty) + c(P_\infty,P_0)
\= \bigl( -c(e_\infty)+c(P_\infty,P_0)\bigr)\bigm|( 1-T )\,\in\,
W|(1-T)\,.
\end{align*}
This computation shows that the presence of $\infty$ as a vertex of
the tesselation forces parabolicity of the cocycle. (We use $|$ to
denote the action of $\Gm$ on the $F_i^\tess=\CC[X_i^\tess]$, as well
as in the modules $V$ and~$W$.)

On can check that this association $c\mapsto \ps$ sends coboundaries
to coboundaries and that taking a different base point $P_0$ does not
change the cohomology class. The map $c\mapsto \psi$ is an easy way
to describe the canonical isomorphism between the description of
cohomology with a tesselation and the standard description of group
cohomology.

\subsection{Cocycles attached to automorphic forms}\label{sect-af-bgc}
To describe the linear maps \il{coh1}{$\coh r \om$}$\coh r \om:
A_r(\Gm,v)\rightarrow H^1(\Gm;\dsv{v,2-r}\om)$ and \il{bcoh1}{$\bcoh
r \om$}$\bcoh r \om: A_r(\Gm,v)\rightarrow H^1(\Gm;\esv{v,r}\om)$ in
Theorem~\ref{THMac} and Proposition~\ref{prop-bcoh} in the approach
to cohomology based on a tesselation $\tess$ we use for an
unrestricted holomorphic automorphic form $F\in A_r(\Gm,v)$ the
cocycles $\ps_F\in Z^1(F_\pnt^{\tess,Y};\dsv{v,2-r}\om)$ and $c_F \in
Z^1(F_\pnt^{\tess,Y};\esv{v,r}\om)$ given on edges $x\in
X_1^{\tess,Y}$ by\ir{psF}{\ps_F}\ir{cF}{c_F}
\begin{align}\label{psF}
\ps_F (x;t) &\;:=\; \int_{\tau\in x} \om_r(F;t,\tau) &\=&
\int_{\tau\in x}
(\tau-t)^{r-2}\, F(\tau)\, d\tau\,,\\
\label{cF}
c_F(x;z) &\;:=\; \int_{\tau \in x} K_r(z;\tau)\,F(\tau)\, d\tau&\=&
\int_{\tau \in x} \frac{2i}{z-\tau}\,\Bigl( \frac{\bar z-\tau}{\bar
z-z}\Bigr)^{r-1}\, F(\tau)\,d\tau\,.
\end{align}
The orientation of the edge $x$ determines the direction of the
integration. We use the boundary germ cohomology in the next section,
and hence will work with the cocycle~$c_F$. Property~iv) in
Proposition~\ref{prop-Esys} implies that $c_F$ has values in $\esv r
\om$.

\rmrk{Notations} Let $\esv r {\fsn,\wdg} =\indlim \esv
r{\om,\wdg}[\ca_1,\ldots,\ca_n]$, where $\{\ca_1,\ldots,\ca_n\}$ runs
over the finite subsets of cusps of~$\Gm$.

The space $\esv r{\fs,\wdg}$, defined in Proposition~\ref{prop-Esys},
is invariant under the operators $|_r g$ with $g\in \SL_2(\RR)$, but
these operators act in $\esv r {\fsn,\wdg}$ only if $g$ maps cusps to
cusps. By \il{esvfsn}{$\esv{v,r}\ast$}$\esv{v,r}\om$,
$\esv{v,r}{\fsn,\wdg}$, and $\esv{v,r}{\fs,\wdg}$ we denote the
$\Gm$-modules for the action $|_{v,r}$ on the corresponding spaces.

\rmrk{Image of $\bcoh r \om$} For edges in $X_1^\tess\setminus
X_1^{\tess,Y}$ the integration does not make sense, unless $F$
happens to be a cusp form. To extend $c_F$ to $X_1^\tess$ we need to
define $c_F(e_\ca)$ for each cusp $\ca$ of~$\Gm$ such that
$c_F(\partial_2 V_\ca)=0$, in the notation of~\S\ref{sect-tess}.

For weights $r\in \CC\setminus \ZZ_{\geq 2}$ and the highest weight
spaces in Definition~\ref{esv-def-gen}, Theorem~\ref{thm-imafwdg}
implies $\bcoh r \om A_r(\Gm,v) \subset \hpar^1(\Gm;\esv
{v,r}\om,\esv{v,r}{\fsn,\wdg})$. For $r\in \ZZ_{\geq
2}$ we will see in \S\ref{sect-imaf} that not all automorphic forms
give rise to mixed parabolic cocycles with values in the analytic
boundary germs.

\subsection{ Derivatives of $L$-functions}\label{sect-Lf}
In the introduction we mentioned that derivatives of $L$-functions can
be related to cocycles. We illustrate this here by an example.

Let $f$ be a newform of weight $2$ for $\Gamma=\Gamma_0(N)$ such that
$L_f(1)=0$ (under the assumption that $f$ is even for the Fricke
involution). Set
$$u(z)=\log(\eta(z) \eta(Nz)), \qquad z \in \mathfrak H.$$
Then, as shown in \cite{Go95},
\begin{equation}
L_f'(1)=\frac{1}{\pi} \int_0^{\infty} f(iy)u(iy)dy. \label{L}
\end{equation}
This integral, though reminiscent of a period integral, has an
integrand that is far from $\Gamma$-invariant and thus does not give
a cocycle. To address this problem, we first note that, by the
defining formula of $u(z)$, the RHS of \eqref{L} equals the value at
$r=0$ of the derivative
$$\frac{d}{dr} \left ( \frac{1}{\pi} \int_0^{\infty} f(iy) \left (
\eta(iy) \eta(i N y) \right )^r dy \right )\biggr|_{r=0}.$$
This integral is still not $\Gamma$-invariant but now it can be
formulated in terms of cocycles considered in these notes.

Set
$$f_r(z)=f(z) (\eta(z) \eta(Nz))^r\,.$$
This is a cusp form of weight $2+r$ for $\Gamma$ depending
holomorphically on $r$ on a neighbourhood of $0$ in~$\CC$. The
  corresponding multiplier system $w_r$ is also holomorphic in~$r$.

We refine the tesselation in Figure~\ref{fig-modtess} so that the
geodesic from $0$ to $\infty$ is a sum of edges, forming a path $p\in
\ZZ[X_1^\tess]$. (Then the faces $V_\infty$ and $\fd_Y$ are each
divided into two faces.) We have for any automorphic form~$F$ the
value
$$ \ps_F(p;t)=\int_{\tau\in p} \om_r(F;t,\tau)\,.$$
Applying this to $f_r$ defined above, we obtain $ \ps_{f_r}(p;\cdot)
\in \dsv{w,-r}{\om,\infty}[0,\infty]$, since $f_r$ is a cusp form. In
particular
\be \ps_{f_r}(p;0) \= i\, e^{\pi ir/2}\,\int_0^\infty f(iy)\, e^{ r\,
u(iy)}\,y^r\, dy\,.
\ee
 With the change of variables $y\mapsto 1/Ny$, using the invariance of
 $f_r$ under the Fricke involution, this can be seen to be equal to
$$ - i\,N^{-r/2}\, e^{\pi i r}\, \int_0^\infty f(iy)\, e^{r\,u(iy)}\,
dy\,.$$
With Goldfeld's result we obtain the following relation between the
cocycle $\ps_{f_r}$ and the $L$-function:
\bad \ps_{f_r}(p;0) &\= -i\, L_f(1)
 + r\, \Bigl(\frac {\pi i}2 \,\log N\, L_f(1) -\pi^2\, L_f(1) - \pi
 i\,L'_f(1)\Bigr) \\
 &\qquad\hbox{}+\oh(r^2)\quad(r\rightarrow0)\\
&\=-\pi i r\, L'_f(1) + \oh(r^2)
\quad(r\rightarrow0) \,.\ead

\subsection{Related work}\label{sect-lit9}The general approach to
group cohomology via an arbitrary projective resolution is well
known. See for instance, in Brown~\cite{Bro82}, Chap.~III, \S1, for
the definition, and Chap.~I, \S5, for the standard complex. Also more
topologically oriented complexes are well known; see for instance
\cite[\S4, Chap.~1]{Bro82}. In \cite{BLZm} the tesselations of the
upper half-plane based on a fundamental domain of the discrete group
in question turned out to be useful.

%%%%%%%%%%%%%%%%%%%%%%%%%%%%%%%%%%%%%%%%%%%%%%%%%%%%%%%%%%%%%%%%%%%%
\section{Boundary germ cohomology and automorphic
forms}\label{sect-caf}

\subsection{Spaces of global representatives for highest weight
spaces}\label{sect-gr}
Property~i) in Proposition~\ref{prop-Esys} shows that elements of
$\esv r \om$ are represented by elements of $\sharmb r(U)$, hence by
$r$-harmonic functions on $U\cap\uhp$. Property~v) shows that if
$F\in \sharmb r(U)$ is non-zero, then $U\supset\uhp$ is impossible.
For the cohomological manipulations in this section it is desirable
to have spaces of representatives that are defined on~$\uhp$. If
non-zero, these functions cannot be $r$-harmonic everywhere
on~$\uhp$.

\begin{defn}\label{Gr-def}
We define the spaces $\Gr r \om$, $\Gr r \fs$  and $\Gr r \fsn$
of functions on $\uhp$:\ir{Gromfs}{\Gr r \om,\; \Gr r \fs,\; \Gr r \fsn} 
\begin{align}
\label{Gromfs}
\Gr r \om \;:=\; \Bigl\{& F \in C^2(\uhp) \;:\; \text{there exists an
open neighbourhood $U$}\\
\nonumber
&\text{ of $\proj\RR$ in $\proj \CC$ such that } F|_{U\cap\uhp} \text{
is in } \sharmb r (U)
\\
\nonumber
&\text{ and represents an element of }\esv r \om \Bigr\}\,,
\displaybreak[0]\\
\Gr r {\fs,\wdg} \;:=\; \Bigl\{& F \in C^2(\uhp) \;:\; \text{there
exists an excised neighbourhood $U$}\\
\nonumber
&\text{such that }F|_{U\cap\uhp} \text{ is in } \sharmb r (U)
\text{ and represents}\\
\nonumber
&\text{an element of }\esv r {\fs,\wdg} \Bigr \}\,,
\displaybreak[0]\\
\Gr r {\fsn,\wdg} \;:=\; \Bigl\{& F \in C^2(\uhp) \;:\; \text{there
exists an excised neighbourhood $U$}\\
\nonumber
&\text{with excised set consisting of cusps, such that }F|_{U\cap\uhp}
\\
\nonumber
&\text{is in } \sharmb r (U)
\text{ and represents an element of }\esv r {\fsn,\wdg} \Bigr \} \,.
\end{align}
The operators $|_r g$ with $g\in \SL_2(\RR)$ act in $\Gr r \om$ and
$\Gr r {\fs,\wdg}$, and in $\Gr r {\fsn,\wdg}$ if $g\in \Gamma$.
\il{Grvr}{$\Gr{v,r}\ast$}By $\Gr{v,r}\om$, $\Gr{v,r}{\fs,\wdg},$
$\Gr{v,r}{\fsn,\wdg}$ we denote the corresponding $\Gm$-modules with
the action $|_{v,r}$.
\end{defn}

\rmrks
\itmi This definition formalizes for $\esv r \om$ what we did
informally for $\dsv{2-r}\om$ in Remark~\ref{2interpr},~b).

\itm While, by Part~v) of Proposition~\ref{prop-Esys},
\be\label{H-Gom-0} \sharm_r(\uhp) \,\cap\, \Gr r \om \= \{0\}\,, \ee
the space $\sharm_r(\uhp) \cap \Gr r {\fs,\wdg}$ contains non-zero
elements, for instance the functions $\F{r,n}$ in~\eqref{Frndef}.

\begin{defn}\label{Nrdef}
We define \il{Nr}{$\N r\om $, $\N r{\fs,\wdg}$}$\N r \om$, $\N r
{\fs,\wdg}$, or $\N r {\fsn,\wdg}$ as the kernels of the natural maps
$\Gr r \om \rightarrow \esv r \om$, $\Gr r{\fs,\wdg}\rightarrow\esv
r{\fs,\wdg}$, or $\Gr r {\fsn,\wdg}\rightarrow\esv r{\fsn,\wdg}$
which assign to $F$ the boundary germ represented by it.
\end{defn}

\begin{prop}\label{prop-NG}
\begin{enumerate}
\item[i)] $\N r \om$, and $\N r {\fs,\wdg}$ are invariant under the
operators $|_r g$ with $g\in \SL_2(\RR)$, and the action $|_{v,r}$
makes $\N r {\fsn,\wdg}$ into a $\Gm$-module $\N {v,r}{\fsn,\wdg}$.
\item[ii)] The space $\N r \om$ is the space $C_c^2(\uhp)$ of the
twice differentiable compactly supported functions on $\uhp$, and $\N
r {\fs,\wdg}$, respectively $\N r {\fsn,\wdg}$, is the space of the
twice differentiable functions on~$\uhp$ with support contained in a
set $\uhp\setminus U$ where $U$ is an excised neighbourhood of
$\proj\RR$, which in the case of $\N r {\fsn,\wdg}$ has an excised
set consisting of cusps.
\item[iii)] The diagram of $\Gm$-equivariant maps
\[ \xymatrix{ 0 \ar[r] & \N {v,r} \om \ar[r] \ar@{^{(}->}@<-1ex>[d] &
\Gr{v,r} \om \ar[r] \ar@{^{(}->}@<-1ex>[d] & \esv {v,r} \om \ar[r]
\ar@{^{(}->}[d]
&0
\\
0 \ar[r] & \N {v,r} {\fs,\wdg} \ar[r] & \Gr {v,r} {\fs,\wdg} \ar[r] &
\esv {v,r} {\fs,\wdg} \ar[r] &0 } \]
commutes. The rows are exact sequences.
\end{enumerate}
\end{prop}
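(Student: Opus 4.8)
The plan is to prove Proposition~\ref{prop-NG} one part at a time, exploiting that $\Gr r\om$, $\Gr r{\fs,\wdg}$ and $\Gr r{\fsn,\wdg}$ are defined by \emph{local} conditions near $\proj\RR$ together with $C^2$-regularity on all of $\uhp$, and that the subspaces $\N r\ast$ record exactly the failure of a function to \emph{represent} a given boundary germ. The only genuine content is the identification of the kernels $\N r\ast$ in Part~ii); Parts i) and~iii) are then essentially bookkeeping, once ii) is in hand.

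First I would do Part~ii), which is the heart of the matter. Suppose $F\in\Gr r\om$ maps to $0$ in $\esv r\om$. By definition of $\Gr r\om$ there is an open neighbourhood $U$ of $\proj\RR$ in $\proj\CC$ such that $F|_{U\cap\uhp}\in\sharmb r(U)$ and the germ it represents is the given element of $\esv r\om$; vanishing of that germ means, by the inductive-limit description of $\W r\om$ in Definition~\ref{sharmbdef} and of $\esv r\om$, that after shrinking $U$ we have $F|_{U\cap\uhp}=0$. Hence $\supp F\subset\uhp\setminus U$, a closed set which, since $U\supset\proj\RR$, is a closed subset of $\uhp$ bounded away from $\proj\RR$; being closed in $\uhp$ and contained in such a set, it is compact in $\uhp$. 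Conversely any $G\in C^2_c(\uhp)$ lies in $\Gr r\om$ (take $U=\proj\CC\setminus\supp G$; on $U\cap\uhp$, $G\equiv0$ is trivially in $\sharmb r(U)$ and represents $0\in\esv r\om$) and maps to $0$. This gives $\N r\om=C^2_c(\uhp)$. For $\N r{\fs,\wdg}$ and $\N r{\fsn,\wdg}$ the same argument runs verbatim, replacing ``open neighbourhood of $\proj\RR$'' by ``excised neighbourhood'' (Definition~\ref{dec-en}) and, in the $\fsn$-case, insisting the excised set consists of cusps: if $F$ represents $0$ in $\esv r{\fs,\wdg}$ resp.\ $\esv r{\fsn,\wdg}$, then after shrinking the excised neighbourhood $U$ the restriction $F|_{U\cap\uhp}$ vanishes, so $\supp F\subset\uhp\setminus U$; and conversely such an $F$ obviously lies in the relevant $\Gr r$-space with image~$0$.

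Next, Part~i): invariance of $\N r\om$ and $\N r{\fs,\wdg}$ under the operators $|_r g$, $g\in\SL_2(\RR)$, and the $\Gm$-module structure on $\N r{\fsn,\wdg}$. This is immediate from the characterisation in ii): $|_r g$ is a linear automorphism of $C^2(\uhp)$ that maps $\supp(F|_r g)=g^{-1}\supp F$, so it carries compactly supported functions to compactly supported functions, hence preserves $\N r\om$; and by Proposition~\ref{prop-sasi} (or directly from Definition~\ref{dec-en}) $g^{-1}$ sends $\{\xi_1,\dots,\xi_n\}$-excised neighbourhoods to $\{g^{-1}\xi_1,\dots,g^{-1}\xi_n\}$-excised neighbourhoods, so $|_r g$ preserves $\N r{\fs,\wdg}$; for $\N r{\fsn,\wdg}$ one needs $g=\gm\in\Gm$ so that cusps go to cusps, whence $|_{v,r}\gm$ makes $\N r{\fsn,\wdg}$ a $\Gm$-module $\N{v,r}{\fsn,\wdg}$. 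Alternatively one checks invariance without ii) by noting that the defining conditions for $\Gr r\ast$ and the map $\Gr r\ast\to\esv r\ast$ are $\SL_2(\RR)$- resp.\ $\Gm$-equivariant (using the equivariance of $\rsp_r$ in~\eqref{res-g} and of the $\esv{v,r}$-spaces in Proposition~\ref{prop-Esys}, iii)), so the kernel is an invariant subspace; I would include whichever phrasing is shortest.

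Finally Part~iii): the two rows are exact sequences of $\Gm$-equivariant maps and the square commutes. By Definition~\ref{Nrdef} each row is exact at the left and middle terms (kernel of the surjection onto the germ space is by definition $\N{v,r}\ast$, and injectivity of $\N{v,r}\ast\hookrightarrow\Gr{v,r}\ast$ is clear). Exactness at the right term is surjectivity of $\Gr{v,r}\om\to\esv{v,r}\om$ resp.\ $\Gr{v,r}{\fs,\wdg}\to\esv{v,r}{\fs,\wdg}$: given a germ $f\in\esv r\om$, pick a representative $F_0\in\sharmb r(U)$ for some neighbourhood $U$ of $\proj\RR$ and extend it to a $C^2$ function on $\uhp$ by multiplying with a smooth cutoff that is $1$ near $\proj\RR$ and $0$ outside a slightly smaller neighbourhood — the resulting $F\in C^2(\uhp)$ lies in $\Gr r\om$ (it agrees with $F_0$ near $\proj\RR$) and represents $f$; the $\fs,\wdg$ case is identical with an excised neighbourhood in place of $U$, and this cutoff construction is $\Gm$-equivariant after averaging or can simply be performed once per germ since exactness is a statement about maps of vector spaces. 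Commutativity of the square is obvious: all four maps are the identity on underlying functions (the vertical inclusions) or the ``represent by'' map, and these visibly compose correctly. The one point to be careful about is that the cutoff in the surjectivity argument must not move the excised set; since the cutoff can be taken to depend only on $\mathrm{dist}(\cdot,\proj\RR)$ in a fixed model, it leaves the combinatorics of the excised neighbourhood untouched. I expect essentially no obstacle here; the only mildly delicate point, and the one I would write out most carefully, is the topological claim in Part~ii) that a $C^2$ function on $\uhp$ vanishing on an excised neighbourhood of $\proj\RR$ has support that is closed in $\uhp$ and contained in the asserted region — which is true by definition of the relative topology, but deserves one sentence.
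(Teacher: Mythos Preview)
Your proposal is correct and follows essentially the same route as the paper: identify the kernels via the vanishing of the representative on a (possibly excised) neighbourhood of $\proj\RR$, and prove surjectivity in Part~iii) by multiplying a representative in $\sharmb r(U)$ by a $C^2$ cutoff supported near~$\proj\RR$. The only cosmetic difference is that the paper dispatches Part~i) in one line directly from Definition~\ref{Nrdef} (kernels of equivariant maps are invariant), rather than via the support characterisation in~ii); you already list this as your alternative approach, and it is the shorter one.
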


\begin{proof}
Part~i) follows directly from Definition~\ref{Nrdef}.

For Part~ii), suppose that $F$ is in the kernel of $\Gr r
\om\rightarrow \esv r \om$, $\Gr r {\fs,\wdg}\rightarrow\esv
r{\fs,\wdg}$, or $\Gr r {\fsn,\wdg}\rightarrow\esv r{\fsn,\wdg}$,
then $F=0$ on a set $U\cap \uhp$ with $U$ a neighbourhood of
$\proj\RR$ in~$\proj\CC$, or $U\cap\uhp=U_0\cap \uhp$ for an excised
neighbourhood $U_0$ of~$\proj\RR$. In the former case $\uhp \setminus
U$ is relatively compact in~$\uhp$, hence $F$ has compact support. In
the latter case $F$ is zero on an excised neighbourhood intersected
with~$\uhp$.

For the exactness in Part~iii) we need to prove the surjectivity of
the linear maps $\Gr r \om\rightarrow \esv r \om$ and $\Gr r
{\fs,\wdg} \rightarrow \esv r {\fs,\wdg}$. The commutativity of the
diagram is clear.

We start with a representative $F\in \sharm_r^b(U)$ of an element of
$\esv r \om$, respectively $\esv r{\fs,\wdg}$, where $U$ is a
neighbourhood of $\proj\RR$ in $\proj\CC$, respectively contained in
an excised neighbourhood $U_0$ of $\proj\RR$ such that $U\cap\uhp =
U_0\cap\uhp$. We take smaller sets $U_1\subset U_2 \subset U$ such
that $U_2$ is a neighbourhood of the closure of $U_1$ and $U$ is a
neighbourhood of the closure of $U_2$, and consider a cut-off function
 $\ps\in C^2(\uhp)$ equal to $1$ on $U_1$ and equal to $0$ on
$\uhp\setminus U_2$. Then $z\mapsto\ps(z)\, F(z)$, extended by $0$,
is an element of $\Gr r \om$, respectively $\Gr r{\fs,\wdg}$,
representing the same boundary germ as~$F$.
\end{proof}

\begin{lem}\label{lem-singTe}Let $\ld\in \CC^\ast$. If $h\in \esv
r{\fs,\wdg}$ and $\ld^{-1} h|_r T - h \in \esv r \om$, then $h\in
\esv r{\om,\wdg}[\infty]$.
\end{lem}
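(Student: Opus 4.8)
The plan is to reduce the statement to the already‑established Lemma~\ref{lem-sing-inv} by passing from the boundary‑germ picture to the simpler sheaf $\V{2-r}\om$ via the restriction morphism, for which the analogous statement about singularities is known. Concretely, let $h\in \esv r{\fs,\wdg}$, so by definition there is a finite set $E\subset\proj\RR$ with $h\in \esv r{\om,\wdg}[\xi_1,\ldots,\xi_n]$, i.e. $\bsing(\rs_r h)\subset E$ (using Property~iii) of Proposition~\ref{prop-sasi} and the fact that $\rs_r$ intertwines the $\SL_2(\RR)$‑actions). I want to show that in fact $\bsing(\rs_r h)\subset\{\infty\}$, which is exactly the assertion $h\in \esv r{\om,\wdg}[\infty]$.

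First I would apply the restriction operator $\rs_r$, or rather $\rsp_r$, to the hypothesis $\ld^{-1}h|_r T - h\in \esv r\om$. By Proposition~\ref{prop-restr}, formula~\eqref{res-g}, we have $\rsp_r(h|_r T) = (\rsp_r h)|^\prj_{2-r}T$; transporting to the non‑projective model via $\rs_r=\Prj{2-r}^{-1}\rsp_r$ gives $\rs_r(h|_r T)=(\rs_r h)|_{2-r}T$. Since $\rs_r\esv r\om = \dsv{2-r}\om$ (this is the definition of $\esv r\om$ for $r\notin\ZZ_{\geq2}$, and for $r\in\ZZ_{\geq2}$ it follows from the exact sequence in Proposition~\ref{prop-iw-esv}, noting that $\rs_r$ vanishes on $\dsv r\om$ and the image of $\esv r\om$ is $\dsv{2-r}\pol\subset\dsv{2-r}\om$), the element $\varphi:=\rs_r h\in \dsv{2-r}\fs$ satisfies $\ld^{-1}\,\varphi|_{2-r}T-\varphi\in\dsv{2-r}\om$. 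This is precisely the hypothesis of Lemma~\ref{lem-sing-inv} applied with $\pi=T$ (whose unique fixed point is $\infty$), so that lemma yields $\bsing\varphi\subset\{\infty\}$, hence $\varphi\in\dsv{2-r}\om[\infty]$.

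Finally I would transport this conclusion back: for $r\notin\ZZ_{\geq2}$ the map $\rs_r$ restricts to a bijection $\esv r{\om,\wdg}[\infty]\to\dsv{2-r}{\om,\wdg}[\infty]$ (Definition~\ref{esv-def-gen}), and more relevantly, the defining property of $\esv r{\om,\wdg}[\infty]$ together with $h\in\esv r{\fs,\wdg}$ means $h\in\esv r{\om,\wdg}[\infty]$ iff the excised singularities of $h$ lie in $\{\infty\}$, which is detected by $\bsing(\rs_r h)\subset\{\infty\}$; for $r\in\ZZ_{\geq2}$ one uses the exact sequence in Proposition~\ref{prop-iw-esv}, writing $h=h_0+\sum_{\mu=1-r}^{-1}c_\mu\M{r,\mu}$ with $h_0$ representing an element of $\dsv r{\om,\wdg}[\cdots]$: the $\M{r,\mu}$ contribute no boundary singularity, $\rs_r h=\rs_r h_0$, and $\bsing(\rs_r h)\subset\{\infty\}$ forces $h_0\in\dsv r{\om,\wdg}[\infty]$, whence $h\in\esv r{\om,\wdg}[\infty]$. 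The main obstacle I anticipate is bookkeeping rather than conceptual: making sure the identification $\rs_r\esv r\om=\dsv{2-r}\om$ and the compatibility $\rs_r(h|_rT)=(\rs_r h)|_{2-r}T$ are applied correctly in both the generic and the integral‑weight cases, and that the notion ``$h\in\esv r{\om,\wdg}[\infty]$'' is genuinely equivalent to ``$\bsing(\rs_r h)\subset\{\infty\}$'' — this last equivalence is where the special structure of $\esv r\om$ (Definitions~\ref{esv-def-gen} and~\ref{esv-def-iw}) must be invoked carefully.
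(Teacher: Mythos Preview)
Your detour through $\rs_r$ has a genuine error in the integral case $r\in\ZZ_{\geq 2}$. You write ``$\rs_r h=\rs_r h_0$'', but this is backwards: by the exact sequence in Proposition~\ref{prop-iw-esv}~iii), the holomorphic part $h_0\in\dsv r{\om,\wdg}[\xi_1,\ldots,\xi_n]$ lies in the \emph{kernel} of $\rs_r$, so $\rs_r h=\rs_r m\in\dsv{2-r}\pol$. Thus $\rs_r h$ carries no information whatsoever about the boundary singularities of $h_0$, and the implication ``$\bsing(\rs_r h)\subset\{\infty\}$ forces $h_0\in\dsv r{\om,\wdg}[\infty]$'' is unsupported. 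One could rescue the argument by observing that $\ld^{-1}h_0|_rT-h_0\in\dsv r{\fs,\wdg}\cap\esv r\om=\dsv r\om$ and then applying the analogue of Lemma~\ref{lem-sing-inv} to $h_0$ inside the modules $\dsv r{\ast}$ directly --- but at that point you are no longer using $\rs_r$ at all.

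The paper's proof is simply ``In the same way as for Lemma~\ref{lem-sing-inv}'', meaning the direct combinatorial argument on boundary singularities: if $\bsing h$ contained a smallest real point $\xi_1$, then $\xi_1-1\in\bsing(h|_rT)$ by Proposition~\ref{prop-Esys}~iii), and since $\xi_1-1\notin\bsing h$ this singularity survives in $\ld^{-1}h|_rT-h$, contradicting membership in $\esv r\om$. This uses only the structural properties ii) and iii) of Proposition~\ref{prop-Esys} and works uniformly for all $r\in\CC$, with no case distinction. Your approach for $r\notin\ZZ_{\geq 2}$ is correct but unnecessarily elaborate, since $\rs_r$ is there an isomorphism and you are just transporting the same combinatorial argument across it; the direct route is both shorter and avoids the pitfall you fell into for integral weights.
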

\begin{proof}In the same way as for Lemma~\ref{lem-sing-inv}.
\end{proof}

\begin{defn}
\begin{enumerate}
\item[i)] For $f\in \esv r {\fs,\wdg}$ we denote by
\il{bsing1}{$\bsing$}$\bsing f$ the minimal set
$\{\xi_1,\ldots,\xi_n\}$ such that $f\in\esv r
{\om,\wdg}[\xi_1,\ldots,\xi_n]$, and call it the set of
\il{bs1}{boundary singularity}\emph{boundary singularities} of~$f$.
For $F\in \sharmb r (U)$ we denote by $\bsing F$ the set $\bsing f$
for the boundary germ $f$ represented by~$F$.
\item[ii)]For any twice differentiable function~$F$ on~$\uhp$ we
denote by \allowbreak\il{sing}{$\singr$}$\singr F$ the complement of
the maximal open set in~$\uhp$ on which $\Dt_r F=0$, and call it the
set of \il{singu}{singularity}\emph{singularities} of~$F$.
\item[iii)] Analogously we define \il{Gr[}{$\Gr r
{\om,\exc}[\xi_1,\ldots,\xi_n]$}$\Gr r
{\om,\exc}[\xi_1,\ldots,\xi_n]$ as the set of $F\in \Gr r{\fs,\exc}$
representing an element of $\esv r{\om,\exc}[\xi_1,\ldots,\xi_n]$,
and $\bsing _r F$ for $F\in \Gr r {\fs,\exc}$ as the set of boundary
singularities of the element of $\esv r{\fs,\exc}$ that $F$
represents.
\end{enumerate}
\end{defn}

\rmrks \itmi $\bsing F\subset \proj\RR$ and $\singr F\subset \uhp$ for
each $F\in \Gr r{\fs,\wdg}$.

\itm For elements of $\dsv {2-r}\fs$ we dealt only with boundary
singularities, and often called them singularities. For $\esv r
{\fs,\wdg}$ and $\Gr r{\fs,\wdg}$ it is important to distinguish both
types of singularities.

\itm The properties in Proposition~\ref{prop-Esys} imply properties of
sets of boundary singularities. For instance
\be \bsing (f|_r g) \= g^{-1}\,\bsing f\qquad \text{for }g\in
\SL_2(\RR)\,.\ee

\itm If $F\in \Gr r \om$ then $\singr F$ is a compact subset of
$\uhp$, and if $F\in \Gr r {\fs,\wdg}$ then $\singr F$ is contained
in an excised neighbourhood.

\rmrk{$\ld$-periodic elements}
\begin{defn}\label{Irld-def}For $\ld\in \CC^\ast$, put \il{Irld}{$\I r
\ld$}$\I r \ld :=\bigl\{ f\in \esv r {\fs,\wdg}\;:\;f|_r T = \ld \,
f\bigr\}$.
\end{defn}

\begin{lem}\label{lem-ENinv}Let $\ld\in \CC^\ast$.
\begin{enumerate}
\item[i)] Each element of $\I r \ld$ is represented by a unique
$\ld$-periodic function in $\sharm_r(\uhp)
\cap \Gr r {\fs,\wdg}$.
\item[ii)] If $F\in \N r {\fs,\wdg}$ is $\ld$-periodic, then $F=0$.
\end{enumerate}
\end{lem}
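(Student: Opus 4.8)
The plan is to establish part~ii) first, since it yields the uniqueness in part~i). For part~ii), I would invoke Part~ii) of Proposition~\ref{prop-NG}, which identifies $\N r{\fs,\wdg}$ with the $C^2$-functions on $\uhp$ whose support lies in $\uhp\setminus U$ for some excised neighbourhood $U$ of $\proj\RR$. Unwinding Definition~\ref{dec-en}, such a $U$ contains $U_0\setminus\bigcup_{\xi\in E}W_\xi$ with $U_0$ a standard neighbourhood of $\lhp\cup\proj\RR$ in $\proj\CC$ (so $\uhp\setminus U_0$ is compact in $\uhp$) and $E\subset\proj\RR$ finite, each $W_\xi$ being the image under an element of $\SL_2(\RR)$ of a truncated vertical strip of bounded width; hence $\supp F$ has bounded real part. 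On the other hand, if $F$ is $\ld$-periodic and $F(z_0)\neq0$, then $F(z_0+n)=\ld^n F(z_0)\neq0$ for all $n\in\ZZ$, so $\supp F\supseteq z_0+\ZZ$ is unbounded in the real direction. These two facts force $F=0$.

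For the existence half of part~i), I would start from $f\in\I r\ld$ and apply Lemma~\ref{lem-singTe} with $h=f$: since $\ld^{-1}f|_rT-f=0\in\esv r\om$, this gives $f\in\esv r{\om,\wdg}[\infty]$, so $f$ is a $\ld$-periodic element of $\esv r{\om,\wdg}[\infty]$. Picking any representative $F_0\in\sharmb r(U)$ of $f$ and applying Part~ii) of Lemma~\ref{lem-per-harm}, I obtain an $r$-harmonic extension $F\in\sharm_r(\uhp)$ of $F_0$ with Fourier expansion~\eqref{harm-Four}; every term $\F{r,n}$ with $n\equiv\al\bmod1$ satisfies $\F{r,n}(z+1)=e^{2\pi in}\F{r,n}(z)=\ld\F{r,n}(z)$, and the extra term $y^{1-r}$ (present only for $\ld=1$) is $1$-periodic, so $F$ is $\ld$-periodic. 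Since $F$ is $C^2$ on $\uhp$, extends $F_0$, and still represents the element $f\in\esv r{\fs,\wdg}$, it lies in $\Gr r{\fs,\wdg}$, giving the desired $\ld$-periodic representative in $\sharm_r(\uhp)\cap\Gr r{\fs,\wdg}$.

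Uniqueness will then follow formally: if $F_1,F_2\in\sharm_r(\uhp)\cap\Gr r{\fs,\wdg}$ are $\ld$-periodic and represent $f$, then $G=F_1-F_2$ is a $\ld$-periodic element of $\sharm_r(\uhp)$ representing the zero germ, hence $G\in\N r{\fs,\wdg}$, and part~ii) gives $G=0$. I do not expect a genuine obstacle here: the only substantive point is to see that Lemma~\ref{lem-per-harm} already delivers a globally $r$-harmonic, $\ld$-periodic representative, after which everything reduces to the elementary support argument of part~ii). The minor care needed is in checking that the set-theoretic unwinding of ``excised neighbourhood'' genuinely forces bounded real part of $\supp F$, which is where I would be most careful.
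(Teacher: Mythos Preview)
Your proof is correct and follows essentially the same route as the paper. Both arguments hinge on Lemma~\ref{lem-singTe} to localize the boundary singularity at~$\infty$, then use $\ld$-periodicity to extend across~$\uhp$; for part~ii) both exploit that the support of an element of $\N r{\fs,\wdg}$ has bounded real part, which is incompatible with nontrivial $\ld$-periodicity.

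The only cosmetic differences are that you package the extension step via Lemma~\ref{lem-per-harm} (which gives more than needed, namely the Fourier expansion), whereas the paper does the periodicity extension by hand from the strip $0<\im z<\e$; and you derive uniqueness in~i) from part~ii), whereas the paper argues directly that an $r$-harmonic $\ld$-periodic function is determined by its values on such a strip. Your self-assessed ``minor care'' about the excised-neighbourhood unwinding is indeed the only place requiring attention: for $\xi\in\RR$ the set $W_\xi$ is bounded, while $W_\infty$ is a vertical half-strip of bounded width, and $\uhp\setminus U_0$ is compact, so the union has bounded real part as you claim.
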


\begin{proof}Let $F\in \Gr r {\fs,\wdg}$ represent an element of $\I r
\ld$. Then it represents an element of $\esv r {\om,\wdg}[\infty]$ by
Lemma~\ref{lem-singTe}. \vskip.4em

\twocolwithpictr{\quad{}From Part~i) in Proposition~\ref{prop-Esys} we
see that $F\in \sharm_r(U\cap \uhp)$ for an excised neighbourhood $U$
with excised set $\{\infty\}$, and that $\ld^{-1} F(z+\nobreak
1)=F(z)$ for all $z\in \uhp \cap U \cap T^{-1}U$.

\quad This implies that $F$ can be analytically extended to give a
$\ld$-periodic element of $ \sharm_r(\uhp)$. Since this analytic
extension is determined by its values on a strip $0<\im z<\e$ it is
unique.} { \setlength\unitlength{1cm}
\begin{picture}(6,4.5)(-3,-.6)
\put(-3,0){\line(1,0){6}}
\put(-2,2){$U$}
\put(0,-.5){$U$}
\put(-.5,2){\text{singularities}}
\thicklines
\put(-1,1){\line(0,1){3}}
\put(-1,1){\line(1,0){3}}
\put(2,1){\line(0,1){3}}
\end{picture}
}\vskip.4em If the $\ld$-periodic function $F$ represents an element
of $\N r {\fs,\wdg}$ then $F$ is zero on~$U$, hence the extension is
zero.
\end{proof}

\begin{lem}\label{lem-Ies}Let $\ca$ be a cusp of $\Gm$. Denote by
\il{Gmca}{$\Gm_\ca$}$\Gm_\ca$ the subgroup of $\Gm$ fixing~$\ca$. Then
the following sequence is exact:
\[ 0 \rightarrow \bigl( \N {v,r} {\fs,\wdg}\bigr)^{\Gm_\ca}
\rightarrow \bigl(\Gr {v,r} {\fs,\wdg}\bigr)^{\Gm_\ca} \rightarrow
\bigl(\esv {v,r} {\fs,\wdg}\bigr)^{\Gm_\ca} \rightarrow 0 \;.\]
\end{lem}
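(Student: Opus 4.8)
The goal is to show that the short exact sequence of $\Gm$-modules in Part~iii) of Proposition~\ref{prop-NG} stays exact after taking $\Gm_\ca$-invariants. Only exactness on the right is at issue, since taking invariants is always left exact: the inclusion $\bigl(\N{v,r}{\fs,\wdg}\bigr)^{\Gm_\ca}\hookrightarrow\bigl(\Gr{v,r}{\fs,\wdg}\bigr)^{\Gm_\ca}$ is obvious and a $\Gm_\ca$-invariant element of $\Gr{v,r}{\fs,\wdg}$ mapping to $0$ in $\esv{v,r}{\fs,\wdg}$ lies in $\N{v,r}{\fs,\wdg}$ and is still $\Gm_\ca$-invariant. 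So the whole content is: given $f\in\bigl(\esv{v,r}{\fs,\wdg}\bigr)^{\Gm_\ca}$, produce a $\Gm_\ca$-invariant $F\in\Gr{v,r}{\fs,\wdg}$ representing it.

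First I would reduce to a normalized cusp. Conjugating by $\s_\ca\in G_0$ with $\pi_\ca=\s_\ca T\s_\ca^{-1}$, and using Property~iii) of Proposition~\ref{prop-Esys} (equivariance of the spaces under $|_r g$) together with the analogous equivariance of $\Gr r{\fs,\wdg}$ and $\N r{\fs,\wdg}$ recorded in Proposition~\ref{prop-NG}~i), it suffices to treat $\ca=\infty$, $\Gm_\ca=\{\pm\pi_\ca^n\}$ with $\pi_\ca$ corresponding to $T$, and $\ld=v(\pi_\ca)$. Then a $\Gm_\ca$-invariant element of $\esv r{\fs,\wdg}$ is exactly a $\ld$-periodic one, i.e.\ an element $f\in\I r\ld$ in the notation of Definition~\ref{Irld-def}. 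By Part~i) of Lemma~\ref{lem-ENinv}, $f$ is represented by a \emph{unique} $\ld$-periodic function $F_0\in\sharm_r(\uhp)\cap\Gr r{\fs,\wdg}$. This $F_0$ is already globally defined on $\uhp$, $\ld$-periodic (hence $\Gm_\ca$-invariant up to the sign $\matr{-1}00{-1}$, which acts trivially under $|_{v,r}$), and it lies in $\Gr r{\fs,\wdg}$; transporting back by $\s_\ca$ gives the required lift. Thus surjectivity holds.

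The one point that needs care — and the main (though small) obstacle — is checking that the $\ld$-periodic global representative $F_0$ furnished by Lemma~\ref{lem-ENinv} genuinely lies in $\Gr r{\fs,\wdg}$, not merely in $\sharm_r$ on an $\{\infty\}$-excised neighbourhood: Property~v) of Proposition~\ref{prop-Esys} forbids a nonzero element of $\esv r\om$ from being $r$-harmonic on all of $\uhp$, but here $F_0$ need not be everywhere harmonic — its singularity set $\singr F_0$ is a $\ld$-invariant (so $T$-invariant) subset of $\uhp$ contained in an excised neighbourhood, which is exactly what membership in $\Gr r{\fs,\wdg}$ allows. So the statement of Lemma~\ref{lem-ENinv}~i), which already asserts $F_0\in\Gr r{\fs,\wdg}$, closes this gap, and no extra work is needed beyond invoking it. Exactness at the middle term, finally, is inherited verbatim from Proposition~\ref{prop-NG}~iii) restricted to $\Gm_\ca$-invariants, since $\N{v,r}{\fs,\wdg}=\ker\bigl(\Gr{v,r}{\fs,\wdg}\to\esv{v,r}{\fs,\wdg}\bigr)$ is preserved by any subgroup's action. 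This completes the proof.
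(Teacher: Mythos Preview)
Your proof is correct and follows essentially the same route as the paper: reduce by conjugation to the cusp $\infty$ with $\ld=v(\pi_\ca)$, and invoke Lemma~\ref{lem-ENinv}~i) to obtain a $\ld$-periodic lift in $\sharm_r(\uhp)\cap\Gr r{\fs,\wdg}$. The paper's write-up is slightly more compact: it also invokes Part~ii) of Lemma~\ref{lem-ENinv} to observe that the $\ld$-periodic elements of $\N r{\fs,\wdg}$ vanish, so the invariant sequence is literally $0\to 0\to\I r\ld\to\I r\ld\to 0$, making exactness immediate. One small remark on your ``care'' paragraph: since Lemma~\ref{lem-ENinv}~i) places $F_0$ in $\sharm_r(\uhp)$, in fact $\singr F_0=\emptyset$; your worry about a $T$-invariant singular set does not arise, and the paragraph can be dropped.
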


\begin{proof}The group $\Gm_\ca$ is generated by $\pi_\ca=g_\ca T
g_\ca^{-1}$. By conjugation we can reduce the statement of the lemma
to the exactness of the sequence one obtains if one takes in the
sequence
\[ 0\rightarrow \N r{\fs,\wdg} \rightarrow \Gr r{\fs,\wdg} \rightarrow
\esv r{\fs,\wdg}\rightarrow 0\]
the kernel of the operator $ |_r (\ld^{-1} T
-\nobreak 1)$ with $\ld=v(\pi_\ca)$. This does not necessarily produce
an exact sequence, but here we get by Lemma~\ref{lem-ENinv} the
sequence
\[ 0\rightarrow 0 \rightarrow \I r \ld \rightarrow \I r \ld\rightarrow
0\,,\]
which is exact. (We identify $\I r \ld$ with the space of the harmonic
representatives in Part~i) of Lemma~\ref{lem-ENinv}.)
\end{proof}

\begin{lem}\label{lem-rem-sing}
Let $\ld\in \CC^\ast$. Suppose that the function $F$ on~$\uhp$
satisfies:
\begin{enumerate}
\item[a)] $F\in \Gr r{\fs,\wdg}$,
\item[b)] $\singr F$ is a compact subset of $\uhp$,
\item[c)] $z\mapsto \ld^{-1}\, F(z+1)- F(z)$ is an element of $\Gr r
\om$.
\end{enumerate}
Then $F = P + G$ with $P\in \I r \ld$ and $G\in \Gr r \om$.
\end{lem}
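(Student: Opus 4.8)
The statement asserts that a function $F$ on $\uhp$ which lives in $\Gr r{\fs,\wdg}$, has compactly supported singularity set, and satisfies $\ld^{-1}F|_rT-F\in\Gr r\om$, splits as $P+G$ with $P\in\I r\ld$ and $G\in\Gr r\om$. The natural route is to pass to boundary germs, apply the periodicity result already available there (Property~vii) of Proposition~\ref{prop-Esys}), and then lift back to the level of functions using Proposition~\ref{prop-NG} and Lemma~\ref{lem-Ies}/Lemma~\ref{lem-ENinv}. First I would let $f\in\esv r{\fs,\wdg}$ be the boundary germ represented by $F$. Hypothesis (a) gives $f\in\esv r{\fs,\wdg}$; hypothesis (c) says that $\ld^{-1}f|_rT-f$ is the germ represented by an element of $\Gr r\om$, hence lies in $\esv r\om$. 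By Lemma~\ref{lem-singTe} this forces $\bsing f\subset\{\infty\}$, so in fact $f\in\esv r{\om,\wdg}[\infty]$.

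Next I would verify the hypotheses of Property~vii) in Proposition~\ref{prop-Esys} for $f$. Condition~b) of that property is exactly what we have just derived from hypothesis (c). Condition~a) of Property~vii) requires a representative of $f$ that is $r$-harmonic on $U\cap\uhp$ for a full neighbourhood $U$ of $\proj\RR$ in $\proj\CC$; this is where hypothesis (b), compactness of $\singr F$ in $\uhp$, enters: the function $F$ itself is $r$-harmonic outside the compact set $\singr F$, in particular on the intersection with $\uhp$ of a neighbourhood of $\proj\RR$, so $F$ (restricted suitably) serves as such a representative. Applying Property~vii) we get $f=p+g$ with $g\in\esv r\om$ and $p\in\esv r{\om,\wdg}[\infty]$ a $\ld$-periodic germ, i.e. $p\in\I r\ld$.

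Finally I would lift this germ-level splitting to functions. Using Part~i) of Lemma~\ref{lem-ENinv}, the $\ld$-periodic germ $p$ is represented by a unique $\ld$-periodic $P\in\sharm_r(\uhp)\cap\Gr r{\fs,\wdg}$, so $P\in\I r\ld$. Now $F-P\in\Gr r{\fs,\wdg}$ represents the germ $f-p=g\in\esv r\om$. By the surjectivity in Proposition~\ref{prop-NG} (exactness of the bottom, and hence, restricting, relevant row of its diagram), $g$ is also represented by some $G_0\in\Gr r\om$; then $F-P-G_0$ represents the zero germ, hence lies in $\N r{\fs,\wdg}$, the space of functions supported off an excised neighbourhood. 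It remains to absorb this $\N r{\fs,\wdg}$-piece: since $\singr(F-P)$ is compact (both $F$ has compact singular set and $P$ is genuinely $r$-harmonic on $\uhp$), the difference $F-P-G_0$, while supported off an excised neighbourhood, can be adjusted by a compactly supported modification of $G_0$ to yield a $G\in\Gr r\om$ with $F-P=G$; concretely, $G:=F-P$ itself already represents $g\in\esv r\om$, is $C^2$ on $\uhp$, and equals an element of $\sharmb r(U)$ near $\proj\RR$, so $G\in\Gr r\om$ directly by Definition~\ref{Gr-def}. Thus $F=P+G$ as required.

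\textbf{Main obstacle.} The delicate point is checking condition~a) of Property~vii) of Proposition~\ref{prop-Esys} — namely producing a representative of $f$ that is $r$-harmonic on a \emph{full} neighbourhood of $\proj\RR$ (not merely an excised one), which is precisely what hypothesis (b) on compactness of $\singr F$ is there to guarantee; one must be careful that the singularities of $F$ genuinely stay away from the boundary and do not accumulate at cusps. The second mildly technical point is the bookkeeping in the final lifting step: making sure that the function one ends up calling $G$ is literally in $\Gr r\om$ (i.e. is $C^2$ on all of $\uhp$ and has the right germ), rather than only agreeing with such a function near $\proj\RR$; this is handled by taking $G=F-P$ outright and invoking that $P$ is $r$-harmonic everywhere, so $F-P$ inherits from $F$ both the $C^2$-regularity on $\uhp$ and the correct boundary behaviour.
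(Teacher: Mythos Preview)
Your proposal is correct and follows essentially the same route as the paper: pass to the boundary germ $f$, use Lemma~\ref{lem-singTe} together with hypothesis~(c) to get $f\in\esv r{\om,\wdg}[\infty]$, invoke hypothesis~(b) to verify condition~a) of Property~vii) in Proposition~\ref{prop-Esys}, apply that property to split $f=p+g$, lift $p$ to its unique $\ld$-periodic harmonic representative $P$ via Lemma~\ref{lem-ENinv}, and set $G:=F-P$. The paper's proof is terser and goes directly to $G:=F-P$ without the intermediate detour through a separate lift $G_0$ of $g$ and the $\N r{\fs,\wdg}$-adjustment; as you yourself note at the end, $G=F-P$ already lies in $\Gr r\om$ because $P\in\sharm_r(\uhp)$ makes $\singr(F-P)=\singr F$ compact and $F-P$ represents $g\in\esv r\om$.
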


\begin{proof}The open set $\uhp \setminus \singr F$ is of the form
$U\cap\uhp$ with $U$ a neighbourhood of $\proj\RR$ in $\proj\CC$. The
restriction $f$ of $F$ to $U\cap \uhp$ represents an element of $\esv
r {\fs,\wdg}$.  Assumptions a) and~c) imply  that $f$
represents an element of $\esv r{\om,\wdg}[\infty]$, by
Lemma~\ref{lem-singTe}. Part~vii)
in Proposition~\ref{prop-Esys} implies the existence of $p\in \I r
\ld$ such that $g=f-p \in \esv r \om$. Taking $P$ as the global
representative of $p$ in Part~i) of Lemma~\ref{lem-ENinv} we get a
representative $G:=F-P$ of $g$ in~$\Gr r \om$.
\end{proof}

\subsection{From parabolic cocycles to automorphic
forms}\label{sect-cocaf} Now we start with a mixed parabolic cocycle
and construct a corresponding holomorphic automorphic form.

\begin{prop}\label{prop-alr}
\begin{enumerate}\item[i)]
If $c\in Z^1\bigl(F_\pnt^\tess;\esv{v,r}\om, \esv{v,r}{\fsn,\wdg})$
there is $u([c],\cdot) \in
A_r(\Gm,v)$ that depends only on the cohomology class of $c$, and
$[c]\mapsto u([c],\cdot)$ defines a linear map\ir{alr}{\al_r}
\be\label{alr}
\al_r : \hpar^1(\Gm;\esv {v,r} \om,\esv {v,r} {\fsn,\wdg})
\rightarrow A_r(\Gm,v)\,. \ee
\item[ii)] Let $F\in A_r(\Gm,v)$ such that $\bcoh r \om F \in
\hpar^1(\Gm;\esv {v,r} \om,\esv {v,r} {\fsn,\wdg})$, then
$u([c_F],\cdot)=F$, with $c_F$ as in \eqref{cF}.
\end{enumerate}
\end{prop}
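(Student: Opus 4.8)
The plan is to build the automorphic form directly from the cocycle using the boundary germ machinery, following the strategy of \cite[\S12]{BLZm}. Given a mixed parabolic cocycle $c\in Z^1(F_\pnt^\tess;\esv{v,r}\om,\esv{v,r}{\fsn,\wdg})$, I would first lift its values to \emph{global representatives}. For each edge $x\in X_1^{\tess,Y}$ choose $c(x)\in \esv{v,r}\om$ and pick a representative $\tilde c(x)\in \Gr{v,r}\om$ (possible by the surjectivity in Part~iii) of Proposition~\ref{prop-NG}); for the cuspidal edges $e_\ca$ one must use the $\Gm_\ca$-equivariant surjectivity in Lemma~\ref{lem-Ies}, so that $\tilde c(e_\ca)\in \Gr{v,r}{\fsn,\wdg}$ is chosen $\pi_\ca$-invariant in the appropriate twisted sense. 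Extending $\Gm$-equivariantly and using the remaining freedom, one arranges $\tilde c$ to be a cochain in $C^1(F_\pnt^\tess;\Gr{v,r}\om,\Gr{v,r}{\fsn,\wdg})$ lifting $c$. Its coboundary $d\tilde c$ then takes values in $\N{v,r}{\fsn,\wdg}$, the module of twice-differentiable functions supported away from an excised neighbourhood (Part~ii) of Proposition~\ref{prop-NG}); in particular the $2$-cochain $d\tilde c$ has values in $C^2$-functions on $\uhp$ vanishing near $\proj\RR$.

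Next I would use the tesselation to reconstruct a function on $\uhp$. Fixing a base point and summing the representatives $\tilde c$ along paths of edges, one obtains, on each face of the tesselation, a function; the failure of these local functions to glue is measured exactly by $d\tilde c\in\N{v,r}{\fsn,\wdg}$, which is supported in a compact-modulo-$\Gm$ region. One then solves an inhomogeneous $\Dt_r$-equation (equivalently, a $\bar\partial$-problem for the shadow) to correct the local pieces: since each $\tilde c(x)$ restricted to a neighbourhood of $\proj\RR$ is $r$-harmonic and represents an element of $\esv{v,r}\om$, the resulting corrected function $u$ is $r$-harmonic on all of $\uhp$ and $\Gm$-invariant under $|_{v,r}$, i.e.\ $u\in\harm_r(\Gm,v)$. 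The key point is that its shadow $\shad_r u$ vanishes: the obstruction lives in a cohomology group that one computes to be zero using Part~vi) of Proposition~\ref{prop-Esys} (shadows of analytic boundary germs extend holomorphically to $\uhp$) together with the periodicity analysis in Lemma~\ref{lem-per-harm} and Lemma~\ref{lem-rem-sing} to kill the periodic contributions at the cusps. By \eqref{harm-detect} vanishing shadow means $u=u([c],\cdot)$ is holomorphic, hence $u([c],\cdot)\in A_r(\Gm,v)$. Independence of the choices (the lift $\tilde c$, the base point, and the representative cocycle within $[c]$) follows because different choices change $u$ by a function that is simultaneously $r$-harmonic on $\uhp$ and represents the zero boundary germ, which is zero by \eqref{H-Gom-0}; likewise a coboundary $c=b|(\cdot-1)$ produces $u=0$. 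This gives the well-defined linear map $\al_r$ of Part~i).

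For Part~ii), let $F\in A_r(\Gm,v)$ with $\bcoh r\om F\in \hpar^1(\Gm;\esv{v,r}\om,\esv{v,r}{\fsn,\wdg})$, represented by the cocycle $c_F$ of \eqref{cF}. Here I would exhibit an \emph{explicit} global lift: by Property~iv) of Proposition~\ref{prop-Esys} and the kernel formula $K_r(z;\tau)$, the function $z\mapsto \int_{z_1}^{z}K_r(z;\tau)F(\tau)\,d\tau$ (an integral from an edge, deformed appropriately) is a natural candidate for $\tilde c_F(x)$, and the point is that the corresponding reconstructed function $u([c_F],\cdot)$ is, up to the normalizing constant, $\int_{z_0}^{\cdot}K_r(z;\tau)F(\tau)\,d\tau$ collapsed to its holomorphic part --- which by \eqref{shad-Kr} has shadow a nonzero multiple of $F$ itself. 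Concretely, one recognizes $\Ci$ of the function $Q_F$ of \eqref{QF}: the relation \eqref{QF-psi}, $Q_F|_{v,2-r}(\gm-1)=\ps_{F,\gm}^{z_0}$, together with \eqref{shad Q}, $\shad_{2-\bar r}\Ci Q_F = 2^{r-1}e^{\pi i(r-1)/2}F$, is exactly the template, transported through the restriction isomorphism $\rs_r$ relating $\coh r\om$ and $\bcoh r\om$ (Part~iii) of Proposition~\ref{prop-bcoh}). Matching constants shows $u([c_F],\cdot)=F$.

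\textbf{Main obstacle.} The hard part will be the vanishing of the shadow of the reconstructed harmonic form $u([c],\cdot)$, i.e.\ showing the obstruction cocycle in $\hol(\uhp)$ built from $\shad_r\tilde c(x)$ is a coboundary. This requires precisely Part~vi) and Part~vii) of Proposition~\ref{prop-Esys}: vi) guarantees each $\shad_r\tilde c(x)$ extends holomorphically across $\proj\RR$ and hence is defined on $\uhp$, and vii) (the periodic splitting at $\infty$, whose proof already cost the lengthy arguments of \S\ref{sect-cpEsys}) is what lets one absorb the parabolic contributions and conclude the obstruction class is trivial. Managing the bookkeeping at the cusps --- where one must stay inside $\esv{v,r}{\fsn,\wdg}$ and exploit that mixed parabolicity forces $c(e_\ca)$ to satisfy the excision condition --- will be the delicate point; everything else is the formal cohomological dictionary of \S\ref{sect-tesscoh}.
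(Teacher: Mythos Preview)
Your general framework (lift the cocycle to $\Gr{v,r}\om,\Gr{v,r}{\fsn,\wdg}$ via the exact sequence of Proposition~\ref{prop-NG}) matches the paper, but the actual construction and the holomorphy argument diverge from it in ways that leave real gaps.

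\textbf{Construction of $u$.} The paper does not ``sum representatives along paths'' and then solve an inhomogeneous $\Dt_r$-equation. Instead, for $z$ in a region $Z$, one takes a large closed cycle $C\in\ZZ[X_1^\tess]$ encircling $Z$ once, chosen so that for every edge $x$ occurring in $C$ the singular set $\singr\tilde c(x)$ misses $Z$, and simply \emph{defines}
\[
u(C;z)\;=\;\frac{1}{4\pi}\,\tilde c(C)(z)\,.
\]
Since $C$ is a cycle, $c(C)=0$ as a boundary germ, but $\tilde c(C)$ is a genuine function on $\uhp$, $r$-harmonic on $Z$. Independence of $C$, of the lift $\tilde c$, and of the representative in $[c]$ then follow exactly as in \cite[\S7.1, \S12.2]{BLZm}. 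No PDE is solved; there is nothing to ``correct''.

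\textbf{Holomorphy.} You identify this as the hard point and invoke Part~vii) of Proposition~\ref{prop-Esys} together with Lemmas~\ref{lem-per-harm} and~\ref{lem-rem-sing}. This is an overcomplication: holomorphy needs only Part~vi). The argument (Lemma~\ref{lem-harmhol} in the paper) is that each $\shad_r\tilde c(x_j)$ extends to a holomorphic $h_j$ on $\uhp$ by Part~vi); since $\tilde c(C)=0$ on the excised neighbourhood $V$ where it represents $c(C)=0$, one has $\sum_j\eps_j h_j=0$ on $V$, hence on all of $\uhp$ by analytic continuation; in particular $\shad_r\tilde c(C)=\sum_j\eps_j h_j=0$ on $Z$. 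Part~vii) and the periodic-splitting machinery are used later, for the \emph{injectivity} of $\al_r$ (Proposition~\ref{prop-alr-inj}, via Lemma~\ref{lem-H1G}), not for its definition.

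\textbf{Part~ii).} Your route through $Q_F$ and \eqref{shad Q} is circuitous and does not obviously yield $u([c_F],\cdot)=F$ (it relates shadows, not the functions themselves). The paper's argument is a one-line Cauchy computation: for $z$ in the interior of $\fd_Y$, with $C=\partial_2\fd_Y$,
\[
\tilde c(C)(z)\;=\;\int_{\partial_2\fd_Y}\frac{2i}{z-\tau}\Bigl(\frac{\bar z-\tau}{\bar z-z}\Bigr)^{r-1}F(\tau)\,d\tau\;=\;4\pi\,F(z)\,,
\]
since the factor $\bigl(\frac{\bar z-\tau}{\bar z-z}\bigr)^{r-1}$ is holomorphic in $\tau$ and the pole at $\tau=z$ has residue $-2i\cdot 1\cdot F(z)$.
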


\begin{proof}The proof is almost identical to that of
~\cite[Proposition 12.2]{BLZm}. Table~\ref{tab-cor} compares the
analogous quantities.
\begin{table}[ht]\renewcommand\arraystretch{1.3}
\begin{tabular}{|l|l|}\hline
\multicolumn{1}{|c|}{{\bf holomorphic
forms}}&\multicolumn{1}{|c|}{{\bf Maass forms}}\\ \hline
$\Gm$-module $\esv{v,r}\om $ &$\Gm$-module $\W s \om$ \\
$\Gm$-module $\esv{v,r}{\fsn,\wdg}$ &$\Gm$-module $\W s {\fs,\wdg} $\\
cocycle $c $ & cocycle $\ps$\\
cochain $\tilde c$ & cochain $\tilde\ps$\\
$\Gr{v,r}\om,\; \Gr{v,r}{\fsn,\wdg}$& $\Gr s \om,\; \Gr s
{\fs,\wdg}$\\
$\N{v,r}\om,\; \N{v,r}{\fsn,\wdg}$& $\N s \om,\; \N s {\fs,\wdg}$\\
$u([c],\cdot)$ & $u_\ps$
\\ \hline
\end{tabular}\smallskip
\caption{Correspondence between the quantities in the proof here, for
holomorphic automorphic forms, and the quantities in the proof
of~\cite[Proposition 12.2]{BLZm}, for Maass forms of weight $0$ and
more general invariant eigenfunctions. Here we work with boundary
singularities restricted to the cusps, whereas in~\cite{BLZm} the
singularities were general at first, and had to be reduced to
singularities in cusps by an additional step.}\label{tab-cor}
\end{table}
Instead of repeating the proof, we give below a discussion of the main
ideas in the context of the modular group. There is one complication,
which is not present in~\cite{BLZm}. We handle it in
Lemma~\ref{lem-harmhol}.
\end{proof}

\rmrk{Lift of the cocycle}Let $c\in Z^1(F^\tess_\pnt;\esv
{v,r}\om,\esv{v,r}{\fsn,\wdg})$ be given. Its values $c(x)$ on $x\in
X_1^\tess$ are boundary germs in $\esv{v,e}{\fsn,\exc}$. See the
right column in the diagram in Proposition~\ref{prop-NG}. We want to
lift $c$ to a cochain $\tilde c\in C^1(F_\cdot^\tess;\Gr
{v,r}\om,\Gr{v,r}{\fsn,\exc})$, which involves the central column in
the diagram. For each $x$ in the $\CC[\Gmod]$-basis
$\{e_1,e_2,f_\infty\}$ of $F^{\tess,Y}_1$ we can, according to
Proposition~\ref{prop-NG} choose a representative $\tilde c(x) \in
\Gr {v,r}\om$ of $c(x) \in \esv{v,r}\om$. For $c(e_\infty)$ we can
choose a representative $\tilde c(e_\infty)\in \Gr {v,r}{\fsn,\wdg}$.
Since $\tilde c(e_\infty)|_{v,r}(1-\nobreak T)$ represents
$c(e_\infty)|_{v,r}(1-\nobreak T) = c(f_\infty)\in \esv{v,r}\om$, we
have $\bsing \tilde c(e_\infty)
\subset\{\infty\}$ by Lemma~\ref{lem-singTe}. So $\tilde c $ is
determined by
\badl{lift-c} \tilde c(e_1),\;\tilde c(e_2),\; \tilde c(f_\infty) \in
\Gr{v,r}\om\,,
&\text{ \ representatives of }c(e_1),\;c(e_2),\;c(f_\infty)\,,\\
\tilde c(e_\infty)\in \Gr{v,r}{\om,\wdg}[\infty]\,,&\text{ \
representative of } c(e_\infty)\,. \eadl

For each $x\in \{e_1,e_2,f_\infty\}$ the set $\singr \tilde c(x)$ is
compact. So we can find $R>0$ such that for each of these three edges
the set $\singr \tilde c(x)$ is contained in the $R$-neighbourhood
(for the hyperbolic distance)
of~$x$. Furthermore $\singr \tilde c(e_\infty)$ is contained in the
complement of an excised neighbourhood with excised set $\{\infty\}$,
hence
\[ \singr \tilde c(e_\infty) \subset \bigl\{ z\in \uhp\;:\; |\re
z|\leq \e^{-1}\,,\im z>\e\bigr\}\,,\]
for some $\e>0$.

Since $\tilde c$ is given on a basis of $F_1^\tess$, we can extend it
$\CC[\Gmod]$-linearly, and obtain a cochain \il{tldc}{$\tilde c$
representing $c$}$\tilde c \in
C^1(F^\tess_\pnt;\Gr{v,r}\om,\Gr{v,r}{\fsn,\wdg})$. There is no
reason for the lift $\tilde c$ to be a cocycle. For any $y\in
X_1^\tess$ and any $\gm\in \Gm(1)$ we have $\singr \tilde
c(\gm^{-1}y) = \gm^{-1}\,\singr \tilde c(y)$. So for $y\in
X_1^{\tess,Y}$ the set $\singr \tilde c(y)$ is contained in the
$R$-neighbourhood of $x$. Similarly, $\singr \tilde
c(\gm^{-1}e_\infty)$ is contained in
\be \label{wdg}
\bigl\{ \gm^{-1} z \;:\; |\re z|\leq \e^{-1}\,,\im z>\e\bigr\}\,.\ee
This means that the singularities of any $\tilde c(y)$ cannot be ``too
far'' from the edge $y\in X_1^\tess$.

\rmrk{Construction}We start the construction of an automorphic form.
First we work on a connected set $Z\subset\uhp$ that is contained in
finitely many $\Gmod$-translates of the standard fundamental
domain~$\fd$. We choose a closed path $C\in\ZZ[X_1^\tess]$ encircling
$Z$ once in positive direction. Since $Z$ may contain a translate of
$\fd$ this path may have to go through cusps, as illustrated in
Figure~\ref{fig-Cp}.
\begin{figure}[ht]
\[\setlength\unitlength{1cm}
\newcommand\ci{
 \qbezier(-.2,0)(-.2,.2)(0,.2)
 \qbezier(0,.2)(.2,.2)(.2,0)
}
\newcommand\bgr{
 \qbezier(0,0)(0,.4)(.5,.867)
}
\newcommand\bgl{
 \qbezier(0,0)(0,.4)(-.5,.867)
}
\begin{picture}(12,5.2)(-6,0)
\put(-6,0){\line(1,0){12}}
\put(-.8,2.3){\line(0,1){3}}
\put(.8,2.3){\line(0,1){3}}
\put(0,2.3){\oval(1.6,1.6)[b]}
\put(-.15,3.6){$Z$}
\put(3.6,3){$C$}
\thicklines
\put(.2,0)\ci
\put(.6,0)\ci
\put(1,0)\ci
\put(1.4,0)\ci
\put(1.8,0)\ci
\put(2,0)\bgr
\put(2,0)\bgr
\put(3,0)\bgl
\put(3,0)\bgr
\put(3.5,.867){\vector(0,1){4.4}}
\put(-.2,0)\ci
\put(-.6,0)\ci
\put(-1,0)\ci
\put(-1.4,0)\ci
\put(-1.8,0)\ci
\put(-2,0)\bgl
\qbezier(-2.5,.867)(-2.7,1)(-3,1)
\qbezier(-3.5,.867)(-3.3,1)(-3,1)
\put(-3.5,.867){\line(0,1){4.4}}
\end{picture}
\]
\caption{}\label{fig-Cp}
\end{figure}
We can take the cycle $C$ far away from $Z$, such that $\singr \tilde
c(x) \cap Z=\emptyset$ for all $x$ occurring in the path~$C$.  We
choose $C$ such that for all edges $x$ occurring in $C$ the
$R$-neighbourhoods of edges in $X_1^{\tess,Y}$ and the sets
in~\eqref{wdg} do not intersect~$Z$. So for each edge $x$ occurring
in $C$ the set $Z$  is in the region on which $\tilde c(x)$ is
a representative of $c(x)$.

We define for $z\in Z$:\footnote{The factor $\frac1{4\pi}$ differs
from the factor $\frac1{\pi i}$ in~\cite{BLZm}. This is caused by a
difference in the normalization of $K_r(\cdot;\cdot)$ in \eqref{Kq},
and $q_s(\cdot,\cdot)$ in~\cite[(1.4)]{BLZm}. }\ir{uc}{u(C;z)}
\be \label{uc} u(C;z) : = \frac1{4\pi} \, \tilde c(C)\,(z) \,.\ee
So $u(C;z)$ is the sum of contributions $\frac{\pm 1}{4\pi} \tilde
c(x)(z)$ with $x\in X_1^\tess$ occurring in~$C$. Since $C$ is far
away from $Z$ the function $u(C;\cdot)$ is $r$-harmonic on (the
interior of)~$Z$.

\rmrk{Independence of choices}The next step is to get rid of the
choice of the lift $\tilde c$ and of the choice of $c$ in its
cohomology class. This can be done in exactly the same way as
in~\cite[\S7.1 and \S12.2]{BLZm}. The main reasoning is given in
\cite[\S7.1]{BLZm} for the cocompact case. There it is explained that
the definition does not depend on the choice of the cycle $C$,
provided it is far enough from $Z$. For a given $\gm\in \Gm$ we can
take $C$ such that both $C$ and $\gm^{-1}C$ can be used
in~\eqref{uc}. Then the $\Gm$-equivariance of $\tilde c$ implies that
$u(C;\cdot)|_{v,r}\gm = u(C;\cdot)$ on the intersection
$Z\cap \gm^{-1}Z$ for $\gm\in \Gm$. The function is $r$-harmonic on
the interior of~$Z$, by the same argument as in~\cite[\S7.1]{BLZm}.
The independence on $C$ allows us to enlarge the set $Z$, thus ending
up with an element of $\harm_r(\Gm,v)$, which we now can call
$u(\tilde c;\cdot)$.

By the reasoning in~\cite[\S7.1]{BLZm} the $r$-harmonic automorphic
form that we obtained is independent of the choice of the lift
$\tilde c$ of~$c$, and of the choice of $c$ in its cohomology class.
So we may now denote it by $u\bigl( [c];\cdot\bigr)$.

\rmrk{Remaining questions}There are two questions left: (1) Is
$u\bigl([c];\cdot\bigr)$ a \emph{holomorphic} automorphic form? (2)
If $\bcoh r \om F$ happens to be in $\hpar^1(\Gm;\esv
{v,r}\om,\esv{v,r}{\fsn,\wdg})$, what is then the relation between
$F$ and $u\bigl( \bcoh r \om F;\cdot)$?\smallskip

Question (1) does not arise in \cite{BLZm}. The following lemma treats
it, for general cofinite~$\Gm$ with cusps.

\begin{lem}
\label{lem-harmhol}Let $\tilde c\in C^1(F^\tess_\pnt;\Gr
{v,r}\om,\Gr{v,r}{\fsn,\wdg})$ be a lift of $c\in
Z^1(F^\tess_\pnt;\esv{v,r}\om,\esv{v,r}{\fsn,\wdg})$. Suppose that
$C=\sum_j \e_j\, x_j\in \ZZ[X_1^\tess]$ (finite sum, with $\e_j=\pm
1$, $x_j\in X_1^\tess$) is a cycle encircling an open set $Z\in \uhp$
once in positive direction, so that for each $x_j$ the set $Z$ is
contained in the set where $ \tilde c(x_j)$ represents $c(x_j)$. Then
$\tilde c(C)$ is holomorphic on $Z$.
\end{lem}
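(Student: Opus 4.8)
\emph{Proof proposal.} The plan is to detect holomorphy through the shadow operator. From the discussion preceding the lemma we already know that $\tilde c(C)=4\pi\, u(C;\cdot)$ is $r$-harmonic on the interior of $Z$: each summand $\tilde c(x_j)$ is $r$-harmonic on $Z$ because, by hypothesis, $Z$ lies in the region on which $\tilde c(x_j)$ represents the boundary germ $c(x_j)$. Since a $C^2$-function $H$ on an open set is holomorphic precisely when $\shad_r H=0$ (immediate from the definition~\eqref{shad} of the shadow operator, $\shad_r H=2i\,y^{\bar r}\,\overline{\partial_{\bar z}H}$), it suffices to show that $\shad_r\tilde c(C)$ vanishes identically on~$Z$.

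The key input is Part~vi) of Proposition~\ref{prop-Esys}: if $F\in\sharmb r(U)$ represents an element of $\esv{v,r}{\fsn,\wdg}$ (in particular of $\esv{v,r}\om=\esv r{\om,\wdg}[\,]$), then $\shad_r F$, a priori holomorphic only on $U\cap\uhp$, extends holomorphically to all of~$\uhp$, and this extension is independent of the chosen representative by uniqueness of analytic continuation. This yields a well-defined map $s:X_1^\tess\rightarrow\hol(\uhp)$ sending $x$ to the holomorphic continuation to $\uhp$ of the shadow of any representative of $c(x)$. The assignment $s$ is additive, and $\CC[\Gm]$-equivariant for the action $|_{\bar v,2-\bar r}$ by~\eqref{xi-g} together with the uniqueness of analytic continuation; since $c$ is a cocycle, $c(\partial_2 V)=0$ in $\esv{v,r}{\fsn,\wdg}$ for every $V\in X_2^\tess$, and hence $s(\partial_2 V)=0$ as well. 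Thus $s$ is a cocycle in $Z^1\bigl(F^\tess_\pnt;\hol(\uhp)\bigr)$ for the action $|_{\bar v,2-\bar r}$.

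Now fix an edge $x_j$ occurring in $C$. By hypothesis $Z$ is contained in an open set on which $\tilde c(x_j)$ coincides with a representative $F_j\in\sharmb r(U_j)$ of $c(x_j)$; hence $\shad_r\tilde c(x_j)=\shad_r F_j=s(x_j)$ on that set, and in particular on~$Z$. Summing over the edges of $C$ gives $\shad_r\tilde c(C)=s(C)$ on~$Z$. Finally, $C$ is a cycle in $\ZZ[X_1^\tess]$ encircling $Z$ once in the positive direction, so $C=\partial_2 D$, where $D\in\ZZ[X_2^\tess]$ is the sum of the faces of the tesselation enclosed by $C$ (the interior edges cancelling in pairs, as usual, including the cuspidal edges). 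Since $s$ is a cocycle, $s(C)=s(\partial_2 D)=0$, so $\shad_r\tilde c(C)=0$ on~$Z$, and therefore $\tilde c(C)$ is holomorphic on~$Z$.

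The main obstacle is the passage from boundary germs to honest functions on $\uhp$: without Part~vi) of Proposition~\ref{prop-Esys} the shadow $s(x)$ would only be a germ near $\proj\RR$ (away from the boundary singularities), and then $s(C)$ would not even be defined for a cycle $C$ that runs deep into $\uhp$ and through the cusps, as in Figure~\ref{fig-Cp}; it is precisely the holomorphic continuation of the shadow to all of $\uhp$ that makes the cocycle $s$ take values in $\hol(\uhp)$ and lets the "homology is killed by a cocycle" argument go through. The identity $C=\partial_2 D$ for a $2$-chain built from cuspidal and non-cuspidal faces requires only a routine combinatorial check in the tesselation formalism of~\cite[\S11]{BLZm}.
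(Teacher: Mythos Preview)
Your proof is correct and follows essentially the same approach as the paper: both use Part~vi) of Proposition~\ref{prop-Esys} to extend the shadows $\shad_r\tilde c(x_j)$ to holomorphic functions $h_j$ on all of~$\uhp$, and then argue that $\sum_j\e_j h_j=0$ because $c(C)=0$ as a boundary germ. The only organizational difference is that you first package the extended shadows into a cocycle $s\in Z^1\bigl(F^\tess_\pnt;\hol(\uhp)\bigr)$ and then invoke $C=\partial_2 D$, whereas the paper works directly with the single cycle~$C$, observing that $\tilde c(C)=0$ on an excised neighbourhood~$V$ (since it represents $c(C)=0$ there), hence $\sum_j\e_j h_j=0$ on~$V$, and then continues analytically to~$\uhp$; the content is the same.
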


\begin{proof}
For each $x_j$ the function $\tilde c(x_j)$ represents an element of
$\esv{v,r}{\fsn,\wdg}$ on some set $U_j$ as in Property~i) in
Proposition~\ref{prop-Esys}, and $Z \subset U_j$. By Property~vi) we
know that $\shad_r \tilde c(x_j)$ has a holomorphic extension $h_j\in
\hol(\uhp)$.
\begin{figure}[ht]
\[\setlength\unitlength{1cm}
\begin{picture}(8,4.5)(-4,0)
\put(-4,0){\line(1,0){8}}
\put(-3.1,3){$S_{\!1}$}
\put(2.9,3){$S_{\!2}$}
\put(-2.1,1.2){$S_{\!3}$}
\put(1.9,1.2){$S_{\!4}$}
\put(0,3){\circle{1}}
\put(-.13,2.9){$Z$}
\put(-4,.6){$V_{\!1}$}
\put(3.8,.6){$V_{\!2}$}
\thicklines
\put(-3.5,1.2){\line(0,1){3.3}}
\put(-2.5,1.2){\line(0,1){3.3}}
\put(-3.5,1.2){\line(1,0){1}}
\put(2.5,1.2){\line(0,1){3.3}}
\put(3.5,1.2){\line(0,1){3.3}}
\put(2.5,1.2){\line(1,0){1}}
\qbezier(0,0)(0,2)(-3,2)
\qbezier(0,0)(0,1)(-3,1)
\put(-3,1){\line(0,1){1}}
\qbezier(0,0)(0,2)(3,2)
\qbezier(0,0)(0,1)(3,1)
\put(3,1){\line(0,1){1}}
\end{picture}
\]
\caption{Illustration for the proof of Lemma~\ref{lem-harmhol}. We
take $C= \sum_{j=1}^4 \e_j \, x_j$. The singularities of $\tilde
c(x_j)$ are contained in $S_{\!j}$, and we can take
$U_j=\uhp\setminus S_{\!j}$. The union $V_{\!1}\cup V_{\!2}$ is an
excised neighbourhood, on which $\tilde c(C)$ represents $ c(C)$.
}\label{fig-harmhol}
\end{figure}
Since $C$ is a closed cycle, we have $c( C )=0$ in
$\esv{v,r}{\fsn,\wdg}$. So $\tilde c( C)=0$ on an excised
neighbourhood $V$ on which $\tilde c( C)$ represents  $c(C )$. This
neighbourhood is contained in the intersection of the $U_j$, but will
in general not contain $Z$. See Figure~\ref{fig-harmhol}.

We now know the following:
\begin{align*}
\forall_j:&& \tilde c(x_j) &\=c(x_j)&\text{ on }&U_j \supset Z\,,
\displaybreak[0]\\
\forall _j:&& h_j &\;\in\; \hol(\uhp)\,,
\displaybreak[0]\\
\forall_j:&& 
\shad_r\,  \tilde c(x_j) &\= h_j &\text{ on }&U_j\supset Z\,,
\displaybreak[0]\\
&&\tilde c(C) &\= c(C) \=0&\text{ on }&V\,,
\displaybreak[0]\\
&&\shad_r \tilde c(C) &\= \sum_j \e_j\, \shad_r \tilde c(x_j)
\=0&\text{ on }&V
\displaybreak[0]\\
&& \sum_j \e_j\, h_j &\= \sum_j \e_j\, \shad_r \tilde c(x_j)\=0&\text{
on }&\uhp \text{ (continuation)}
\displaybreak[0]\\
&& \shad _r \tilde c(C) &\=\sum_j \e_j\,\shad_r \tilde c(x_j)\=\sum_j
\e_j \, h_j=0 & \text{ on }&Z
\displaybreak[0]\\
&& u([c];\cdot)&\=
\frac 1{4\pi} \tilde c(C) \text{ is holomorphic on } Z\,. \qedhere
\end{align*}
\end{proof}

This lemma implies directly that $u\bigl([c];\cdot)\in
A_r(\Gm,v)$, thus completing the proof of Part~i) of
Proposition~\ref{prop-alr}.

The other remaining question concerns Part~ii),  to which we apply
Cauchy's formula:
\begin{lem}Suppose that $[c]=\bcoh r \om F\in
\hpar^1(\Gm;\esv{v,r}\om,\esv{v,r}{\fsn,\wdg})$ for some automorphic
form $F\in A_r(\Gm,v)$. Then $u\bigl( [c];\cdot)=F$.
\end{lem}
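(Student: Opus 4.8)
The plan is to use the cocycle $c_F$ defined in \eqref{cF} as the representative of the class $[c]=\bcoh r \om F$ (using Part~ii) of Proposition~\ref{prop-bcoh}, the definitions in \S\ref{sect-af-bgc}, and the hypothesis that $\bcoh r \om F$ lies in the mixed parabolic cohomology group), and then to evaluate the construction of $u([c];\cdot)$ with this particular representative. First I would recall from \S\ref{sect-cocaf} that, for $z$ in a connected set $Z\subset\uhp$ contained in finitely many $\Gm$-translates of the fundamental domain, one chooses a cycle $C\in\ZZ[X_1^\tess]$ encircling $Z$ once in the positive direction, far enough from $Z$ that each $\tilde c(x)$ represents $c(x)$ on $Z$; then $u(C;z)=\frac1{4\pi}\tilde c(C)(z)$, and this is independent of all choices. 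The point is that for the representative $c_F$ we can take the tautological lift $\tilde c_F(x)=c_F(x)$ given by the explicit integral $\int_{\tau\in x}K_r(z;\tau)\,F(\tau)\,d\tau$ (the cocycle $c_F$ already has values represented by global functions, the integrals over compact edges converging absolutely; for the non-compact edges $e_\ca$ one uses that $\bcoh r\om F$ being in $\hpar^1(\Gm;\esv{v,r}\om,\esv{v,r}{\fsn,\wdg})$ provides the needed representatives at the cusps). Then
\[ \tilde c_F(C)(z) \= \int_{\tau\in C} K_r(z;\tau)\,F(\tau)\,d\tau\,,\]
where $C$ is now viewed as a closed path in $\uhp$ (going through cusps where necessary) encircling $z$ once positively.

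Next I would invoke a Cauchy-type integral formula for $K_r$. Since $K_r(z;\tau)=\frac{2i}{z-\tau}\bigl(\frac{\bar z-\tau}{\bar z-z}\bigr)^{r-1}$ and, for fixed $z$, the function $\tau\mapsto \bigl(\frac{\bar z-\tau}{\bar z-z}\bigr)^{r-1}$ is holomorphic near $\tau=z$ with value $1$ at $\tau=z$, the residue of $K_r(z;\cdot)\,F(\cdot)$ at $\tau=z$ is $2i\,F(z)\cdot\frac1{?}$; more precisely $\frac{2i}{z-\tau}$ has residue $-2i$ at $\tau=z$ with the standard orientation, so $\int_{\tau\in C}K_r(z;\tau)F(\tau)\,d\tau = 2\pi i\cdot(\text{residue}) = 4\pi\,F(z)$ for a positively oriented loop around $z$ (the sign and the factor being fixed precisely so that the normalization $\frac1{4\pi}$ in \eqref{uc} works out — this is the reason for the footnote in \S\ref{sect-cocaf} about the factor $\frac1{4\pi}$ versus $\frac1{\pi i}$). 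Hence $u([c_F];z)=\frac1{4\pi}\cdot 4\pi\,F(z)=F(z)$ on each such $Z$, and since the $Z$ exhaust $\uhp$ we get $u([c_F];\cdot)=F$ on all of $\uhp$. Because $u([c];\cdot)$ depends only on the cohomology class and not on the chosen representative, this proves $u([c];\cdot)=F$, which together with the previous lemma completes the proof of Part~ii) of Proposition~\ref{prop-alr}.

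The main obstacle I anticipate is the careful justification that the contour $C$, which must pass through cusps in order to encircle a region $Z$ that contains a full translate of $\fd$, can be used in the residue computation: one has to check that the contributions of the edges $e_\ca$ to $\int_C K_r(z;\tau)F(\tau)\,d\tau$ combine correctly (using the relation $c_F(\partial_2 V_\ca)=0$, i.e. that $c_F$ is genuinely a cocycle on all of $F^\tess_\pnt$, not just on $F^{\tess,Y}_\pnt$), and that no spurious boundary terms appear at the cusps. This is where the hypothesis $\bcoh r\om F\in\hpar^1(\Gm;\esv{v,r}\om,\esv{v,r}{\fsn,\wdg})$ is essential. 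Concretely I would argue that deforming $C$ to a small positively oriented circle around $z$ only changes the integral by integrals of the closed differential form $K_r(z;\tau)F(\tau)\,d\tau$ over boundaries of faces $V_\ca$ and $\fd_Y$ lying between the two contours; each such boundary integral equals $c_F(\partial_2(\text{face}))(z)$ evaluated appropriately, which vanishes since $c_F$ is a cocycle. Once that bookkeeping is in place, the residue evaluation is immediate and the identification $u([c_F];\cdot)=F$ follows. I would model this argument on the analogous step in \cite[\S12]{BLZm}, adjusting only the kernel and the normalizing constant.

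\begin{proof}
By Part~ii) of Proposition~\ref{prop-bcoh} and the discussion in \S\ref{sect-af-bgc}, the class $[c]=\bcoh r \om F$ is represented by the cocycle $c_F\in Z^1(F^\tess_\pnt;\esv{v,r}\om,\esv{v,r}{\fsn,\wdg})$ of \eqref{cF}; the hypothesis on $[c]$ guarantees that $c_F$ extends to a mixed parabolic cocycle on all of $F^\tess_\pnt$. Since $u([c];\cdot)$ does not depend on the choice of representative of the cohomology class, it suffices to compute $u([c_F];\cdot)$. For $z$ in a connected set $Z\subset\uhp$ contained in finitely many $\Gm$-translates of $\fd$, choose a cycle $C\in\ZZ[X_1^\tess]$ encircling $Z$ once in the positive direction and far enough from $Z$ that for each edge $x$ occurring in $C$ the function $c_F(x)$ is represented on $Z$ by the integral $z\mapsto\int_{\tau\in x}K_r(z;\tau)\,F(\tau)\,d\tau$. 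Taking this representative as the lift $\tilde c_F$, we get
\be u([c_F];z) \= \frac1{4\pi}\,\tilde c_F(C)(z) \= \frac1{4\pi}\int_{\tau\in C} K_r(z;\tau)\,F(\tau)\,d\tau\,, \ee
where $C$ is regarded as a closed path in $\uhp$ encircling $z$ once positively. The differential form $\tau\mapsto K_r(z;\tau)\,F(\tau)\,d\tau$ is holomorphic in $\tau$ on $\uhp\setminus\{z\}$, hence closed there. Deforming $C$ to a small positively oriented circle $C_\e$ around $z$ changes the integral only by integrals of this closed form over the boundaries $\partial_2 V_\ca$, $\partial_2\fd_Y$ of the faces lying between $C$ and $C_\e$; each such contribution equals $c_F$ applied to the corresponding $2$-boundary, evaluated at $z$, and vanishes because $c_F$ is a cocycle. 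Therefore
\be \int_{\tau\in C} K_r(z;\tau)\,F(\tau)\,d\tau \= \int_{\tau\in C_\e} K_r(z;\tau)\,F(\tau)\,d\tau\,. \ee
Now $K_r(z;\tau)=\dfrac{2i}{z-\tau}\,\Bigl(\dfrac{\bar z-\tau}{\bar z-z}\Bigr)^{r-1}$, and for fixed $z$ the factor $\tau\mapsto\bigl(\frac{\bar z-\tau}{\bar z-z}\bigr)^{r-1}$ is holomorphic near $\tau=z$ with value $1$ at $\tau=z$, while $F$ is holomorphic near $z$. Hence the only singularity of $K_r(z;\cdot)F(\cdot)$ inside $C_\e$ is the simple pole at $\tau=z$, with residue $-2i\,F(z)$, so by the residue theorem
\be \int_{\tau\in C_\e} K_r(z;\tau)\,F(\tau)\,d\tau \= 2\pi i\cdot(-2i)\,F(z) \= 4\pi\,F(z)\,. \ee
Thus $u([c_F];z)=F(z)$ for all $z\in Z$. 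Letting $Z$ exhaust $\uhp$ gives $u([c];\cdot)=u([c_F];\cdot)=F$.
\end{proof}
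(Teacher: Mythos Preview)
Your approach via the Cauchy residue of $K_r(z;\tau)F(\tau)\,d\tau$ is exactly the paper's, and the residue computation is correct. The difference is that the paper sidesteps your cusp difficulty entirely rather than confronting it. Since $u([c];\cdot)$ has already been shown to lie in $A_r(\Gm,v)$ and is in particular holomorphic on~$\uhp$, analytic continuation reduces the task to verifying $u([c];z)=F(z)$ on \emph{any} non-empty open set. The paper therefore takes $Z$ open and relatively compact in the interior of the compact face $\fd_Y$, and uses $C=\partial_2\fd_Y$, whose edges all lie in $X_1^{\tess,Y}$. For these compact edges the cocycle value $c_F(x)$ is by definition \eqref{cF} the integral $\int_x K_r(\cdot;\tau)F(\tau)\,d\tau$, so the identification $\tilde c_F(C)(z)=\int_{\partial_2\fd_Y}K_r(z;\tau)F(\tau)\,d\tau$ is immediate, and Cauchy's formula gives $4\pi F(z)$.

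Your formal proof, as written, asserts that for every edge $x$ in $C$ the value $c_F(x)$ is represented on $Z$ by the integral $\int_x K_r(\cdot;\tau)F(\tau)\,d\tau$. For the cuspidal edges $e_\ca$ this is not generally true: the integral need not converge, and the extension of $c_F$ to $X_1^\tess\setminus X_1^{\tess,Y}$ was obtained by solving the parabolic equation (Lemma~\ref{lem-locwdgbg}), not by an integral formula. Your subsequent deformation argument is also tangled: once $C$ is an honest closed curve in $\uhp$ encircling $z$, the deformation to a small circle is just Cauchy's theorem for the holomorphic $1$-form $K_r(z;\tau)F(\tau)\,d\tau$ on $\uhp\setminus\{z\}$; invoking the cocycle relation on face boundaries $\partial_2 V_\ca$ is neither needed nor quite correct, since $C_\e$ is not a tesselation cycle and the integral over $\partial_2 V_\ca$ would itself involve the divergent cuspidal edge. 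All of this evaporates once you restrict $Z$ to~$\mathring\fd_Y$.
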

\begin{proof}By analytic continuation it suffices to show the equality
on some non-empty open set. Let us take $Z$ open and relatively
compact in the interior of the compact face $\fd_Y$ of the
tesselation contained in the fundamental domain.

For $z$ in the interior of $\fd_Y$ we have
\be \label{cbdi} c(\partial_2\fd_Y)(z) \= \int_{\partial_2\fd_Y}
\frac{2i}{z-\tau}\, \Bigl( \frac{\bar z-\tau}{\bar z-z} \Bigr)^{r-1}\,
F(\tau)\, d\tau\,,\ee
as follows from~\eqref{cF}. The factor $\Bigl( \frac{\bar z-\tau}{\bar
z-z} \Bigr)^{r-1}$ is holomorphic as a function of~$\tau$. So the
value of the integral is $4\pi \, F(z)$ for $z$ in the interior of
$\fd_Y$, in particular for $z\in Z$. The hyperbolic distance of $Z$
to $\partial_2\fd_Y$ is larger than some $\e>0$. We can choose the
lift $\tilde c$ of $c$ such that for each $x\in X_1^{\tess,Y}$ the
singularities of $\tilde c(x)$ are contained in the $\e$-neighbourhood
of~$x$. Then $\tilde c(\partial_2
\fd_Y)$ is equal to $c(\partial_2\fd_Y)$ on~Z.\end{proof}

\rmrk{Averages}An alternative to \eqref{uc} is the description of
$u([c];\cdot)$ as an infinite sum, which is a kind of Poincar\'e
series.

\begin{defn}Let $f$ be a continuous function on~$\uhp$ with support
contained in finitely many $\Gm$-translates of a fundamental domain
of $\Gm\backslash\uhp$. We define the
\il{avr}{$\Gm$-average}\emph{$\Gm$-average} of $f$ by
\ir{avG}{\avGm{v,r}}
\be\label{avG}
 \bigl( \avGm {v,r} f \bigr) (z) \;:=\; \sum_{\gm \in \{\pm
 1\}\backslash\Gm} \bigl(f|_{v,r} \gm ) \,(z)\,. \ee
\end{defn}

\rmrks
\itmi We have $|_{v,r}(-\gm) = |_{v,r}\gm$, so it makes sense to sum
over $\{\pm 1\}\backslash\Gm$.

\itm Under the support condition in the definition the sum is locally
finite and defines a continuous function that is invariant for the
action $|_{v,r}$ of~$\Gm$.

%\itm The name \emph{average} is convenient but slightly incorrect,
%since we do not divide by the (infinite) number of terms.

\itm To use the average to describe $u([c],\cdot)$, we start with the
exact sequence
\be \label{esNGE}\rightarrow\hpar^1(\Gm;\Gr
{v,r}\om,\Gr{v,r}{\fsn,\wdg}) \rightarrow
\hpar^1(\Gm;\esv{v,r}\om,\esv{v,r}{\fsn,\wdg})\rightarrow
\hpar^2(\Gm;\N {v,r}\om,\N {v,w}{\fsn,\wdg}) \rightarrow \ee
The exactness follows from~\cite[Proposition 11.9]{BLZm}. To see that
the conditions of that theorem are satisfied, we use the diagram in
Part~iii) of Proposition~\ref{prop-NG} and Lemma~\ref{lem-Ies}.

Let $\tilde c \in C^1(F^\tess_\pnt;\Gr{v,r}\om,\Gr{v,r}{\fsn,\wdg})$
be a lift of $c\in
Z^1(F^\tess_\pnt;\esv{v,r}\om,\esv{v,r}{\fsn,\wdg})$. The exact
sequence~\eqref{esNGE} shows that $d\tilde c\in
\hpar^2(\Gm;\N{v,r}\om,\N{v,r}{\fsn,\wdg})$. We apply $d\tilde c$ to
the fundamental domain $\fd$ of $\Gm\backslash\uhp$ underlying the
tesselation $\tess$. So $d\tilde c(\fd) = d\tilde c(\fd_Y)+d\tilde
c(V_\infty)$ in the case of $\Gamma=\Gamma(1)$, and in general
$d\tilde c(\fd) = d\tilde c(\fd_Y) + \sum_\ca d\tilde c(V_\ca)$,
where $\ca$ runs over the cusps in the closure of the fundamental
domain~$\fd$. This implies that $d\tilde c(\fd)\in
\N{v,r}{\fsn,\wdg}$, hence we can apply $\avGm{v,r}$ to it.

\begin{prop}\label{prop-alav}With the notations of
Proposition~\ref{prop-alr}:
\be\label{avGm} u([c],z) \= \frac1{4\pi}
\bigl(\av{\Gm,v,r}d\tilde c(\fd)\bigr)(z)
\= \frac1{4\pi} \sum_{\gm\in \{\pm 1\}\backslash\Gm} \bigl(d\tilde
c(\fd)\bigr)\bigm|_{v,r} \gm\;(z)\,. \ee
\end{prop}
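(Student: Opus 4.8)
The plan is to read off $u([c],\cdot)$ from the construction in the proof of Proposition~\ref{prop-alr} and then re-express the finite sum that appears there as a $\Gm$-average. Recall that in that proof we fixed a lift $\tilde c\in C^1(F^\tess_\pnt;\Gr{v,r}\om,\Gr{v,r}{\fsn,\wdg})$ of $c$ and showed that, for any cycle $C\in\ZZ[X_1^\tess]$ encircling a point $z$ once in positive direction and lying far enough from $z$ (so that for every edge $x$ occurring in $C$ the representative $\tilde c(x)$ agrees with the boundary germ $c(x)$ on the region enclosed by $C$), the value $u([c],z)=\frac1{4\pi}\tilde c(C)(z)$ is independent of the admissible choice of $C$. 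The idea is to take $C$ to be the boundary of a large patch of $\Gm$-translates of the fundamental domain $\fd$.

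First I would fix $z\in\uhp$ and choose a finite set $\Om\subset\{\pm1\}\backslash\Gm$ so that the patch $\bigcup_{\gm\in\Om}\gm^{-1}\fd$ is connected, contains $z$ in its interior, and is large enough that its boundary is an admissible cycle in the sense above; I would moreover require $\Om$ to contain the set $\Om_1$ of those $\gm$ for which $(d\tilde c(\fd))|_{v,r}\gm$ is nonzero at $z$. Such a finite $\Om$ exists: since $c$ is a cocycle, $c(\partial_2\fd)=d^1c(\fd)=0$, and therefore, using $d^1\tilde c(\fd)=(-1)^{1+1}\tilde c(\partial_2\fd)=\tilde c(\partial_2\fd)$ together with Definition~\ref{Nrdef} and the exact rows in Proposition~\ref{prop-NG}(iii), the function $d\tilde c(\fd)=\tilde c(\partial_2\fd)$ lies in $\N{v,r}{\fsn,\wdg}$; by Proposition~\ref{prop-NG}(ii) its support is contained in the complement in $\uhp$ of an excised neighbourhood of $\proj\RR$, which in turn is contained in a relatively compact subset of $\uhp$ together with finitely many ``cusp boxes'', each of which meets the (discrete) $\Gm$-orbit of $z$ in only finitely many points because that orbit stays at bounded height in every cusp. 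Hence $\Om_1$ is finite, and the patch can be enlarged to absorb it while $\Om$ stays finite. With this choice, $C=\sum_{\gm\in\Om}(\partial_2\fd)|\gm=\partial_2\!\bigl(\sum_{\gm\in\Om}(\fd)|\gm\bigr)$ is an admissible cycle encircling $z$.

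Then I would compute, using $\CC[\Gm]$-linearity of $\tilde c$, the $\Gm$-equivariance $\tilde c(x|\gm)=\tilde c(x)|_{v,r}\gm$, and again $d^1\tilde c(\fd)=\tilde c(\partial_2\fd)$:
\[
\tilde c(C)=\sum_{\gm\in\Om}\tilde c\bigl((\partial_2\fd)|\gm\bigr)
=\sum_{\gm\in\Om}\bigl(\tilde c(\partial_2\fd)\bigr)|_{v,r}\gm
=\sum_{\gm\in\Om}\bigl(d\tilde c(\fd)\bigr)|_{v,r}\gm
\]
as an identity of functions on $\uhp$. Evaluating at $z$ and using $\Om\supseteq\Om_1$, the terms with $\gm\notin\Om$ contribute zero, so $\tilde c(C)(z)=\sum_{\gm\in\{\pm1\}\backslash\Gm}\bigl(d\tilde c(\fd)\bigr)|_{v,r}\gm(z)=\bigl(\avGm{v,r}d\tilde c(\fd)\bigr)(z)$, the $\Gm$-average being locally finite for exactly the finiteness reason used for $\Om_1$. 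Dividing by $4\pi$ and invoking $u([c],z)=\frac1{4\pi}\tilde c(C)(z)$ gives the asserted formula.

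I expect the only real work to be the support bookkeeping in the second paragraph: verifying that $d\tilde c(\fd)\in\N{v,r}{\fsn,\wdg}$, that its support meets a single $\Gm$-orbit in a finite set, and that one can pick a patch $\bigcup_{\gm\in\Om}\gm^{-1}\fd$ large enough to be simultaneously admissible for Proposition~\ref{prop-alr} and to contain $\Om_1$. This is routine and parallel to the arguments in \cite[\S7.1, \S12.2]{BLZm}; once it is in place the algebraic identity $\tilde c(C)=\sum_{\gm\in\Om}(d\tilde c(\fd))|_{v,r}\gm$ is immediate and finishes the proof.
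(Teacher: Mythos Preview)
Your argument is correct and is exactly the approach the paper has in mind: the paper's own proof is a one-line reference to Propositions~7.1 and~(12.5) of \cite{BLZm}, and what you have written is precisely that argument, spelled out in the present setting. The key identity $\tilde c(C)=\sum_{\gm\in\Om}(d\tilde c(\fd))|_{v,r}\gm$ for $C=\partial_2\bigl(\sum_{\gm\in\Om}(\fd)|\gm\bigr)$, together with the finiteness of $\Om_1$ via $d\tilde c(\fd)\in\N{v,r}{\fsn,\wdg}$, is the heart of the matter, and you have it right; just make sure the patch is chosen simply connected so that its chain-theoretic boundary is a single positively oriented cycle as required by the construction in Proposition~\ref{prop-alr}.
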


\begin{proof}The proof follows the approach to Propositions 7.1 and
(12.5) in \cite{BLZm}.\end{proof}

\rmrke On first sight it may seem amazing that the sum of translates
of the non-analytic function $d\tilde c(\fd)$ is a holomorphic
function. See the discussion after~\cite[Proposition 7.1]{BLZm}.

\subsection{Injectivity}\label{sect-inj}Proposition~\ref{prop-alr}
gives us a linear map $\al_r$ from mixed parabolic cohomology that is
left inverse to $\bcoh r \om$. It might have a non-zero kernel.

\begin{prop}\label{prop-alr-inj} The linear map $\al_r$ in~\eqref{alr}
is injective.
\end{prop}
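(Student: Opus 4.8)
\textbf{Proof plan for Proposition~\ref{prop-alr-inj}.} The plan is to show that $\al_r$ is injective by exhibiting, for each class in $\hpar^1(\Gm;\esv{v,r}\om,\esv{v,r}{\fsn,\wdg})$, a relation $\bcoh r \om\bigl(\al_r[c]\bigr)=[c]$ up to the obvious obstruction, and then to read off injectivity. The natural route mirrors the argument in~\cite[\S12]{BLZm}: combine Part~ii) of Proposition~\ref{prop-alr} (which already gives $u([c_F],\cdot)=F$ for automorphic forms $F$ whose boundary germ cocycle is mixed parabolic) with a ``reconstruction of the cocycle from the automorphic form'' statement. Concretely, suppose $[c]\in \ker\al_r$, so $u([c],\cdot)=0$. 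I want to conclude $[c]=0$, i.e. that $c$ is a coboundary in $\zpar^1(F^\tess_\pnt;\esv{v,r}\om,\esv{v,r}{\fsn,\wdg})$.

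First I would recall the construction underlying $\al_r$: a lift $\tilde c\in C^1(F^\tess_\pnt;\Gr{v,r}\om,\Gr{v,r}{\fsn,\wdg})$ of $c$ is chosen, and $u([c];z)=\frac1{4\pi}\,\tilde c(C)(z)$ for a cycle $C$ encircling $z$ far enough away. The hypothesis $u([c],\cdot)=0$ means that, for every such local cycle $C$ and every $z$ in the encircled region $Z$, the function $\tilde c(C)(z)$ vanishes. Equivalently, since $\tilde c(C)=\frac1{4\pi}\cdot 0$ and $\tilde c(C)$ represents $c(C)$ on $Z$, we get that the $r$-harmonic function assembled from $\tilde c$ around any contractible region is zero. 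The key step is then to upgrade this pointwise vanishing to the statement that the cochain $\tilde c$ itself differs from a genuine cocycle with values in $\esv{v,r}\om$ by a coboundary; more precisely one shows that $d\tilde c$, which a priori lies in $C^2(F^\tess_\pnt;\N{v,r}\om,\N{v,r}{\fsn,\wdg})$ via the exact sequence~\eqref{esNGE}, represents the zero class in $\hpar^2(\Gm;\N{v,r}\om,\N{v,r}{\fsn,\wdg})$ precisely because $\avGm{v,r} d\tilde c(\fd) = 4\pi\, u([c],\cdot)=0$ by Proposition~\ref{prop-alav}, and because the averaging map on the relevant $\N$-modules is injective on the image of $d\tilde c(\fd)$ (using that $d\tilde c(\fd)$ has support in finitely many $\Gm$-translates of the fundamental domain and that harmonic pieces recovered from it vanish, along the lines of Lemma~\ref{lem-ENinv}~ii) and Lemma~\ref{lem-Ies}). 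Once $d\tilde c$ is a coboundary in the $\N$-complex, exactness of~\eqref{esNGE} (from~\cite[Proposition 11.9]{BLZm}, whose hypotheses are verified via Part~iii) of Proposition~\ref{prop-NG} and Lemma~\ref{lem-Ies}) shows that $[c]$ comes from $\hpar^1(\Gm;\Gr{v,r}\om,\Gr{v,r}{\fsn,\wdg})$; but a class in the image of that group which also maps to zero under $\al_r$ must be the class of a cochain $\tilde c$ that is globally $r$-harmonic and assembles to the zero automorphic form, hence is a coboundary already in the $\esv$-complex. This last implication uses Property~v) of Proposition~\ref{prop-Esys} ($\sharm_r(\uhp)\cap\esv r\om=\{0\}$, equivalently~\eqref{H-Gom-0}) to kill the harmonic ambiguity.

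I expect the main obstacle to be the bookkeeping in the second step: showing that vanishing of the averaged function $\avGm{v,r}d\tilde c(\fd)$ forces $d\tilde c(\fd)$ itself to be a coboundary in the $\N$-complex, rather than merely forcing its average to vanish. This is exactly the point where one needs the injectivity of $\Gm$-averaging on the pertinent subspace, which in turn relies on the support restriction built into Definition~\ref{Nrdef} and on the periodic-vanishing lemmas (Lemma~\ref{lem-ENinv}, Lemma~\ref{lem-Ies}) to control the cuspidal contributions $d\tilde c(V_\ca)$. In~\cite{BLZm} this is handled for Maass forms; here the extra ingredient is that the holomorphic-automorphic-form output is detected by its shadow (Lemma~\ref{lem-harmhol}), and one must check that the shadow of the reconstructed object vanishes iff the object does, which is immediate from~\eqref{harm-detect} together with $\sharm_r(\uhp)\cap\esv r\om=\{0\}$. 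Modulo this, injectivity of $\al_r$ follows formally, and in fact the cleanest phrasing is: $\al_r$ is injective because $\al_r\circ\bcoh r\om$ is the identity on the image of $\bcoh r\om$ and, by the reconstruction argument, every class in $\hpar^1(\Gm;\esv{v,r}\om,\esv{v,r}{\fsn,\wdg})$ lies in that image — so $\al_r$ is in fact bijective onto $A_r(\Gm,v)$ once Theorem~\ref{thm-imafwdg} is taken into account, and injectivity is the part we need here.
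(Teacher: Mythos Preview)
Your overall architecture matches the paper's proof: factor $\al_r$ through the connecting map $\dt$ of the exact sequence~\eqref{esNGE} composed with the averaging map $[b]\mapsto \avGm{v,r}b(\fd)$, then kill the kernel of each factor separately. You correctly identify the averaging step as the main technical point, and the paper handles it exactly as you outline (Lemma~\ref{lem-kerav-cb}: if $\avGm{v,r}d\tilde c(\fd)=0$ then $d\tilde c$ is a coboundary in the $\N$-complex).

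The gap is in your treatment of the other factor. Once you know $[c]$ lies in the image of $\hpar^1(\Gm;\Gr{v,r}\om,\Gr{v,r}{\fsn,\wdg})$, you claim the representing cochain ``is globally $r$-harmonic'' and then invoke Property~v) of Proposition~\ref{prop-Esys}. But a cocycle with values in $\Gr{v,r}\om$ has values that are $C^2$ on $\uhp$ and $r$-harmonic only on a neighbourhood of the boundary; there is no reason for them to be $r$-harmonic on all of~$\uhp$, so Property~v) does not apply directly. The paper instead proves the stronger statement $\hpar^1(\Gm;\Gr{v,r}\om,\Gr{v,r}{\fsn,\wdg})=\{0\}$ (Lemma~\ref{lem-H1G}), which makes $\dt$ injective outright. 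That vanishing is not formal: it follows the argument of~\cite[Proposition~12.5]{BLZm}, using Lemma~\ref{lem-rem-sing} (and hence Property~vii) of Proposition~\ref{prop-Esys}) to strip off the periodic part at each cusp and then Property~v) to kill the remaining globally harmonic piece $c(\ca,\cb)$ between cusps. Your final ``cleanest phrasing'' is also circular: showing that every class in $\hpar^1$ lies in the image of $\bcoh r\om$ is equivalent (given $\al_r\circ\bcoh r\om=\mathrm{id}$) to the injectivity you are trying to prove.
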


\begin{proof}The proof is based on the exact sequence \eqref{esNGE}
and the average in~\eqref{avGm}:
\be\label{isch}
\xymatrix@C=.4cm{ \hpar^1\bigl(\Gm;
\Gr{v,r}\om,\Gr{v,r}{\fsn,\wdg}\bigr)
\ar[r] & \hpar^1\bigl(\Gm; \esv{v,r}\om,\esv{v,r}{\fsn,\wdg}\bigr)
\ar[d]^{\al_r} \ar[r]^\dt & \hpar^2\bigl(\Gm;
\N{v,r}\om,\N{v,r}{\fsn,\wdg}\bigr)
\ar[d]_{[b] \mapsto \avGm {v,r} b(\fd)}\\
& A_r(\Gm,v) \ar@{^{(}->}[r]
&C^2(\uhp)^\Gm_{v,r} } \ee
The vertical map on the right is given by associating to the
cohomology class $[b]$ the average $\avGm {v,r} b(\fd)$. By
$C^2(\uhp)_{v,r}$ we mean the space $C^2(\uhp)$ provided with the
action $|_{v,r}$ of~$\Gm$. The map $\al_r$ is the composition of the
connecting homomorphism $\dt$ and the vertical map. Failure of
injectivity might be caused by $\dt$ and by the average.

Lemma~\ref{lem-kerav-cb} below implies that the vertical map cannot
contribute to the kernel of~$\al_r$. That leaves us with the
connection homomorphism $\dt$. Lemma~\ref{lem-H1G} below gives the
vanishing of $\hpar^1(\Gm;\Gr{v,w}\om,\Gr{v,r}{\fsn,\wdg})$, and
hence the injectivity of~$\dt$.
\end{proof}

\begin{lem}\label{lem-kerav-cb}Let $c \in
Z^1(F_\pnt^\tess;\esv{v,r}\om, \esv{v,r} {\fsn,\wdg})$ and let
$\tilde c$ be a lift of $c$ as in \eqref{lift-c}. If $\avGm{v,r} d
\tilde c(\fd) =\sum_{\gm\in \{\pm 1\}\backslash\Gm} d\tilde
c(\fd)|_{v,r}\gm=0$ then $d\tilde c\in B^2(F_\pnt^\tess; \N {v,r}\om,
\N {v,r}{\fsn,\wdg})$.
\end{lem}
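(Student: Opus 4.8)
We are given a cocycle $c \in Z^1(F_\pnt^\tess;\esv{v,r}\om,\esv{v,r}{\fsn,\wdg})$ and a lift $\tilde c$ of $c$ to a cochain with values in $\Gr{v,r}\om$ (on compact edges) and $\Gr{v,r}{\fsn,\wdg}$ (on the cuspidal edges $e_\ca$), as in \eqref{lift-c}. Since $c$ is a cocycle, $d\tilde c$ takes values in $\N{v,r}{\fsn,\wdg}$ on faces of $X_2^\tess$, more precisely $d\tilde c(\fd_Y)\in \N{v,r}\om$ and $d\tilde c(V_\ca)\in \N{v,r}{\fsn,\wdg}$. The hypothesis is that $\avGm{v,r}\,d\tilde c(\fd) = 0$, where $\fd = \fd_Y \cup \bigcup_\ca V_\ca$, and we must show $d\tilde c$ is a coboundary in $B^2(F_\pnt^\tess;\N{v,r}\om,\N{v,r}{\fsn,\wdg})$.

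\textbf{The plan.} The argument should follow the pattern of \cite[Proposition 12.2, last part]{BLZm}, where the corresponding statement for Maass forms is proved; the correspondence of objects is recorded in Table~\ref{tab-cor}. First I would note that $d\tilde c$ is an element of $Z^2(F_\pnt^\tess;\N{v,r}\om,\N{v,r}{\fsn,\wdg})$ (being in the image of $d$ it is automatically a cocycle), so the task is to produce $b \in C^1(F_\pnt^\tess;\N{v,r}\om,\N{v,r}{\fsn,\wdg})$ with $db = d\tilde c$. The key device is the $\Gm$-average. Because $d\tilde c(\fd)$ has support in finitely many $\Gm$-translates of $\fd$ (each $d\tilde c(x)$ with $x \in X_1^\tess$ has compactly-supported, or excised-neighbourhood-avoiding, singularity set, hence the relevant functions vanish outside a bounded region of each translate), the sum $\avGm{v,r}\,d\tilde c(\fd)$ converges locally finitely. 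The vanishing of this average is precisely the obstruction to writing $d\tilde c$ as $db$: one constructs $b$ by a ``partial summation'' over the tesselation, assigning to each edge $x \in X_1^\tess$ a value $b(x)$ built from the sum of $d\tilde c(\sigma)$ over the faces $\sigma$ lying on one side of $x$, suitably truncated. This is exactly the mechanism used in \cite[\S7, \S12]{BLZm} to pass between the description of $u([c],\cdot)$ as $\frac1{4\pi}\tilde c(C)$ (a finite sum along an encircling cycle) and as $\frac1{4\pi}\avGm{v,r}d\tilde c(\fd)$ (Proposition~\ref{prop-alav}); the present lemma is the statement that when that average vanishes the chain $d\tilde c$ splits off a coboundary.

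\textbf{Carrying it out.} I would: (i) use the structure of the tesselation $\tess$ of type \textbf{Fd} to organize $X_2^\tess$ as $\{\gm^{-1}\fd_Y, \gm^{-1}V_\ca : \gm \in \Gm\}$ and pick, for each edge $x \in X_1^\tess$, a way of splitting $\uhp \setminus (\text{edges})$ into the two components adjacent to $x$; (ii) define $b(x)$ as the (locally finite, because of the support restrictions on $d\tilde c$ coming from $\singr$ and $\bsing$, cf.\ the discussion around \eqref{wdg}) sum of $\pm d\tilde c(\sigma)|_{v,r}\gm$ over the faces $\sigma = \gm^{-1}\fd_Y$ or $\gm^{-1}V_\ca$ on the chosen side; (iii) check $\Gm$-equivariance $b(\gm^{-1}x) = b(x)|_{v,r}\gm$ and that $b$ lands in $\N{v,r}\om$ on $X_1^{\tess,Y}$ and in $\N{v,r}{\fsn,\wdg}$ on cuspidal edges — here one uses Proposition~\ref{prop-NG}(ii) identifying $\N{v,r}\om = C_c^2(\uhp)$ and $\N{v,r}{\fsn,\wdg}$ with the functions vanishing off an excised neighbourhood with cuspidal excised set, so the partial sums have the required support; (iv) verify $db(\sigma) = d\tilde c(\sigma)$ for $\sigma \in \{\fd_Y, V_\ca\}$ by a telescoping argument, where the global vanishing of the average $\avGm{v,r}d\tilde c(\fd) = 0$ is exactly what guarantees that the ``infinite boundary term'' which would otherwise appear is zero, so the partial sums are consistent. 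I expect step (iv), the consistency/telescoping check, to be the main obstacle: one must be careful that the naive partial sum is well-defined independently of the enumeration of the faces, and that at the cusps the cuspidal triangles $V_\ca$ (which are not compact in $\uhp$) are absorbed correctly — this is where the refinement to $\N{v,r}{\fsn,\wdg}$ versus $\N{v,r}\om$ matters and where the hypothesis on the average is used in an essential way. Since the combinatorial bookkeeping is identical to that in \cite[\S7.1, \S12.2]{BLZm} (with $s$ replaced by the weight data $(v,r)$ and the coefficient modules as in Table~\ref{tab-cor}), I would cite that argument for the details rather than reproduce it, pointing out only that the holomorphy subtlety handled by Lemma~\ref{lem-harmhol} does not intervene here because we work entirely at the level of the $C^2$-modules $\N{v,r}\ast$ and $\Gr{v,r}\ast$.
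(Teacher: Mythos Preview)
Your proposal is too vague to constitute a proof, and the mechanism you sketch---defining $b(x)$ as a ``partial sum of $d\tilde c(\sigma)$ over faces on one side of $x$''---does not match the paper's argument and faces a real obstacle you have not addressed. In the simply connected tesselation of $\uhp$ both sides of any edge contain infinitely many faces. Local finiteness of the sum at each point is not enough: for $b$ to land in $\N{v,r}\om$ on compact edges the value $b(x)$ must have \emph{compact support}, and a sum of infinitely many compactly supported bumps generically has unbounded support. Your ``suitably truncated'' and ``infinite boundary term vanishes'' remarks gesture at this but do not resolve it, and the cuspidal sectors $V_\ca$ make the issue worse. You also cite \cite[\S7.1, \S12.2]{BLZm}; the relevant reference is \cite[Lemma 12.6]{BLZm}.

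The paper's proof is a two-stage partition-of-unity argument, not a telescoping sum. A $2$-cocycle $b$ with values in $(\N{v,r}\om,\N{v,r}{\fsn,\wdg})$ is determined by $(b(V_\ca),b(\fd_Y))$, and the coboundaries allow three moves: shift $(u,-u)$ with $u\in\N{v,r}\om$ (from $f_\ca$), add $(t|_{v,r}(1-\pi_\ca),0)$ with $t\in\N{v,r}{\fsn,\wdg}$ (from $e_\ca$), and add $(0,w|_{v,r}(1-\gm))$ (from compact edges). Stage one: using a cutoff $\chi(z)=\al(\re z)\bt(\im z)$ (with $\sum_n\al(x+n)=1$), subtract a coboundary of the second type so that high in the cusp $\hat b(V_\ca)(z)=\al(\re z)\sum_{\gm\in\Gm_\ca}b(V_\ca)|_{v,r}\gm\,(z)$. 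Since only $\Gm_\ca$-translates of $b(V_\ca)$ reach that high, this equals $\al(\re z)\cdot(\avGm{v,r}b(\fd))(z)=0$ there by hypothesis; hence $\hat b(V_\ca)$ is compactly supported and can be moved into $\hat b(\fd_Y)$ by the first type of move. Stage two: with $f=\hat b(\fd_Y)\in\N{v,r}\om$ and a $\Gm$-partition of unity $\psi$, write
\[
f \;=\; f-\psi\cdot\avGm{v,r}f \;=\; \sum_{\gm}\bigl(f\cdot(\psi\circ\gm^{-1})\bigr)\bigm|_{v,r}(1-\gm),
\]
a finite sum of terms of the third type. This exhibits $\hat b$, hence $d\tilde c$, as a coboundary.
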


\begin{proof}
The proof is analogous to that of~\cite[Lemma 12.6]{BLZm}. Here we
discuss it in the modular case $\Gm=\Gmod$.

A cocycle $b\in Z^2(F^\tess_\pnt;\N{v,r}\om,\N{v,r}{\fsn,\wdg})$ is
determined by its values on the faces $(V_\infty)$ and $(\fd_Y)$. The
freedom that we have within a cohomology class is to add to $\bigl(
b(V_\infty),b(\fd_Y)\bigr)$ elements of three forms:
(1) $(u,-u)$, with $u\in \N{v,r}\om$ (related to the edge $f_\infty$),
(2) $\bigl( t|_{v,r}(1-\nobreak T),0\bigr)$ with $t\in
\N{v,r}{\fs,\wdg} $ (related to the edge $e_\infty$), (3)
$\bigl( 0,w|_{v,r}(1-\gm)\bigr)$ with $w\in \N{v,r}\om$, $\gm\in \Gm$
(related to the edges $e_1$ and $e_2$). So $b(\fd)=
b(V_\infty)+b(\fd_Y) $ is determined by $b$ up to addition of an
element of $\N{v,r}{\fs,\wdg}|_{v,r}(1-\nobreak T)
+ \sum_{\gm\in \Gm} \N{v,r}\om|_{v,r}(1-\nobreak \gm)$.

The first consequence of this description is that $\avGm{v,r} b(\fd)$
does not depend on the choice of $b$ in its cohomology class.

Now we consider $b=d\tilde c$ as in the lemma. The element $b(\fd_Y)$
is in $\N{v,r}\om \subset C_c^2(\uhp)$. So there is $q>Y$ such that
the support of $b(\fd_Y)$ does not intersect the region
\[ \bigcup_{\gm\in \Gm} \Bigl\{ \gm z\;:\; \im z\geq q\Bigr\}\,.\]
Further,  $b(V_\infty)=\tilde c(e_\infty)|_{v,r}(1-T)-\tilde
c(f_\infty)$
represents the zero element of $\esv{v,r}{\om,\exc}[\infty]$. Hence
$b(V_\infty)$ has support  in a set of the form $\bigl\{ z\in
\uhp\;:\; \im z>\e,\; |\re z| \leq \e^{-1}\bigr\}$ for some $\e>0$.
We deal with $C^2$-functions, and hence we can split off from
$b(V_\infty)$ an element $u\in C_c^2(\uhp)=\N{v,r}\om$ and move it to
$b(\fd_Y)$, by the freedom indicated above. In this way we arrange
that $b(V_\infty)$ has support in the set $\{z\in \uhp\;:\; \im
z>q-1\,,\; |\re z|\leq \e^{-1}\bigr\}$.

We take a partition of unity $\al$ on~$\RR$: $\al\in C_c^2(\RR)$ such
that $\sum_{n\in \ZZ} \al(x+\nobreak n)=1$ for all $x\in \RR$. We
take $\bt\in C^2(0,\infty)$ such that $\bt(y)=1$ for $y\geq q+\dt$
with $\dt>0$ and $\bt(y)=0$ for $y\leq q$, and put $\ch(z) = \al(\re
z)\, \bt(\im z)$. So $\sum_n \ch(z+\nobreak n)=1$ for all $z$ with
$\im z\geq q+\dt$.

The element $b_1 \in C^2(F^\tess_\cdot;
\N{v,r}\om,\N{v,r}{\fs,\exc})$ determined by $b_1(\fd_Y)=0$ and
\[b_1(V_\infty)(z)\= \sum_{n\in
\ZZ}\bigl(b(V_\infty)\,\ch(\cdot+n)\bigr)\bigm|_{v,r}(1-T^{-n})(z)\]
is a coboundary. (Note that the terms in the sum vanish for all but
finitely many~$n$.) We define $\hat b = b-b_1$, which is in the same
cohomology class as~$b$.  For $\im z\geq q+\dt$ 
\begin{align}\nonumber
\hat b(V_\infty)(z)&\=b(V_\infty)(z) - \sum_n \Bigl(
b(V_\infty)(z)\cdot \ch(z+n)-b(V_\infty)(z-n)\cdot \ch(z)
\Bigr)\\
\label{smhbi}
&\= \al(\re z)\, \bt(\im z) \, \sum_{\gm\in \{\pm
1\}\backslash\Gmod_\infty} b(V_\infty)|_{v,r}\gm\,(z)\end{align}

Now we use the assumption that $\avGm{v,r} b(\fd)=0$. {}From our
knowledge of the support $b(V_\infty)$ we conclude
that $\bigl(\avGm{v,r}b(\fd)\bigr)(z)
= \bigl(\avGm{v,r}b(V_\infty)\bigr)(z)$ if $\im z\geq q+\dt$.
Furthermore, for $\im z\geq q+\dt$ the expression in~\eqref{smhbi} is
equal to $\al(x) \allowbreak
\bigl(\avGm{v,r}b(V_\infty)\bigr)(z)$, since  since for $\gm \not \in
\Gm(1)_{\infty}$, the support of  $b(V_{\infty}) | \gm$ 
does not intersect the support of $b(V_{\infty})$. for So $\hat
b(V_\infty)$ vanishes on this domain, hence it has compact support.
So we can move $\hat b(V_\infty)$ to $\hat b(\fd_Y)$.

We are left with a cocycle $\hat b$ given by $\hat b(V_\infty)=0$ and
$\hat b(\fd_y)$ with support not intersecting the region
\[ \bigcup_{\gm\in \Gm} \Bigl\{ \gm z\;:\; \im z\geq q+\dt\Bigr\}\,.\]

We take a $\Gmod$-partition of unity $\ps\in C^\infty(\uhp)$ with
support of $\ps$ contained in the union of finitely many
$\Gmod$-translates of $\fd$. So $\sum_{\gm\in \{\pm 1\}\backslash\Gm}
\ps(\gm z)=1$ for all $z\in \uhp$, and the sum is a finite sum for
all~$z$. We write $f=\hat b(\fd)=\hat b(\fd_Y)$, and know by the
assumption that $\avGm{v,r}f=0$. For $z\in \uhp$:
\begin{align*}
f(z) &\= f(z) - \ps(z) \bigl(\avGm {v,r}f\bigr) (z)
\= \sum_{\gm\in \{\pm 1\}\backslash\Gm} \bigl( f(z)\, \ps(\gm^{-1}z) -
\ps(z)
\, f|_{v,r}\gm\,(z)
\bigr)\\
&\= \sum_{\gm\in \{\pm 1\}\backslash\Gm} \bigl( f\cdot
(\ps\circ\gm^{-1}) \bigr)|_{v,r}(1-\gm)\,.
\end{align*}
For almost all $\gm$ the intersection of the supports of $f$ and
$\psi\circ\gm^{-1}$ have empty intersections. So the sum is finite,
and $\hat b$ is a coboundary.
\end{proof}

\begin{lem}\label{lem-H1G}
$\hpar^1(\Gm;\Gr{v,r}\om, \Gr{v,r}{\fsn,\wdg}) =\{0\}$.
\end{lem}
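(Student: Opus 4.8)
The plan is to reduce the vanishing of $\hpar^1(\Gm;\Gr{v,r}\om,\Gr{v,r}{\fsn,\wdg})$ to a statement about partitions of unity, following the pattern of the analogous result in \cite{BLZm}. The point is that the modules $\Gr{v,r}\om$ and $\Gr{v,r}{\fsn,\wdg}$ consist of functions globally defined on $\uhp$ (by Definition~\ref{Gr-def}), and such modules admit a kind of ``flabbiness'' coming from cut-off functions on $\uhp$: unlike the sheaf-theoretic germ modules $\esv{v,r}\ast$, a function in $\Gr{v,r}\om$ can be multiplied by an arbitrary smooth cut-off and still land in $\Gr{v,r}\om$, because the condition in Definition~\ref{Gr-def} only constrains the behaviour near $\proj\RR$. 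This is exactly the feature that makes $H^1$ vanish.

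First I would work in the description of cohomology based on the tesselation $\tess$ (Section~\ref{sect-tesscoh}), so that a class in $\hpar^1(\Gm;\Gr{v,r}\om,\Gr{v,r}{\fsn,\wdg})$ is represented by a cocycle $c\in Z^1(F^\tess_\pnt;\Gr{v,r}\om,\Gr{v,r}{\fsn,\wdg})$, determined by its values on a $\CC[\bar\Gm]$-basis of $X_1^\tess$ (for the modular group: $c(e_1),c(e_2),c(f_\infty)\in\Gr{v,r}\om$ and $c(e_\infty)\in\Gr{v,r}{\fsn,\wdg}$). Viewing $c$ as a function $c(P_1,P_2)$ on pairs of vertices as in \S\ref{sect-rbot}, I would choose a base vertex $P_0\in X_0^{\tess,Y}$ and attempt to write $c$ as a coboundary $c=db$ for a suitable $0$-cochain, i.e.\ find $F\in\Gr{v,r}{\fsn,\wdg}$ with $c_\gm=F|_{v,r}(\gm-1)$. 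The candidate is built as in the proof of Lemma~\ref{lem-kerav-cb}: take a $\Gm$-partition of unity $\ps\in C^\infty(\uhp)$ with support in finitely many $\Gm$-translates of the fundamental domain $\fd$, so $\sum_{\gm\in\{\pm1\}\backslash\Gm}\ps(\gm^{-1}z)=1$ locally finitely, and set
\[
 F(z) \;:=\; \sum_{\gm\in\{\pm1\}\backslash\Gm} \ps(\gm^{-1}z)\, c(\gm^{-1}P_0,P_0)\,(z)\,,
\]
the sum being locally finite. One then checks by the cocycle relations \eqref{crel} that $F|_{v,r}(\dt-1)=c_\dt$ for $\dt\in\Gm$, in the same manner as the reduction at the end of the proof of Lemma~\ref{lem-kerav-cb}. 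I would verify that $F$ lies in $\Gr{v,r}{\fsn,\wdg}$: near each non-excised point of $\proj\RR$, $F$ is a finite sum of translates of the $c(x)$, each of which is $r$-harmonic and represents an element of $\esv{v,r}\om$ resp.\ $\esv{v,r}{\fsn,\wdg}$ on an excised neighbourhood, and since $\ps$ is constant equal to the partition-of-unity value on a small enough neighbourhood, the cut-offs do not disturb the boundary behaviour; the excised set of $F$ is contained in the union of the $\Gm$-translates of the cusps occurring in the excised sets of the $c(e_\ca)$, which is a $\Gm$-invariant set of cusps. The parabolicity condition (that $c_{\pi_\ca}\in\Gr{v,r}{\fsn,\wdg}|_{v,r}(\pi_\ca-1)$) is automatic once $F$ is produced, since $F$ realises $c$ as a coboundary over the larger module; this is where the choice $c(e_\ca)\in\Gr{v,r}{\fsn,\wdg}$ (rather than $\Gr{v,r}\om$) is used.

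The main obstacle is bookkeeping around the cusps: one must check that the locally finite sum defining $F$ really has its singularities confined to an \emph{excised} neighbourhood whose excised set is a union of cuspidal orbits, and that no accumulation of translated singularities spoils this. Concretely, $\singr(c(x)|_{v,r}\gm)=\gm^{-1}\singr c(x)$, and one needs that for each compact region in $\uhp$ only finitely many translates contribute singularities there, together with the observation (as in the displays at \eqref{wdg}) that the singularities of a translate of $c(e_\ca)$ stay near the horocyclic region at the translated cusp. A clean way to organise this is to invoke \cite[Proposition~11.9]{BLZm} and the exact sequence \eqref{esNGE}, reducing the vanishing of $\hpar^1(\Gm;\Gr{v,r}\om,\Gr{v,r}{\fsn,\wdg})$ to the surjectivity of $\hpar^1(\Gm;\Gr{v,r}\om,\Gr{v,r}{\fsn,\wdg})\to\hpar^1(\Gm;\esv{v,r}\om,\esv{v,r}{\fsn,\wdg})$ being matched by the right kernel; but the direct partition-of-unity argument above is self-contained and parallels \cite[Lemma~12.6]{BLZm} closely. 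I expect the proof to read: ``The argument is analogous to the corresponding step in the proof of Lemma~\ref{lem-kerav-cb}; using a $\Gm$-partition of unity on $\uhp$ one writes any mixed parabolic cocycle with values in $\Gr{v,r}\om$ as a coboundary in $\Gr{v,r}{\fsn,\wdg}$,'' with the cusp bookkeeping spelled out in a sentence or two.
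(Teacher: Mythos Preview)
Your partition-of-unity construction does not produce an element of $\Gr{v,r}{\fsn,\wdg}$ (or of $\Gr{v,r}\om$), and this is a genuine obstacle, not bookkeeping. Membership in $\Gr{v,r}{\fsn,\wdg}$ (Definition~\ref{Gr-def}) requires that $F|_{U\cap\uhp}\in\sharmb r(U)$ for some excised neighbourhood $U$; in particular $F$ must be $r$-harmonic on $U\cap\uhp$ and $F/f_r$ must extend real-analytically across $\proj\RR$ away from a \emph{finite} excised set. But as $z$ approaches any boundary point $\xi\in\proj\RR$ it passes through infinitely many $\Gm$-translates of~$\fd$, so the set of $\gm$ with $\ps(\gm^{-1}z)\neq0$ keeps changing and the cut-offs transition infinitely often arbitrarily close to~$\xi$. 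Your sentence ``$\ps$ is constant\ldots on a small enough neighbourhood'' is simply false near~$\proj\RR$; multiplication by a nonconstant cut-off destroys $r$-harmonicity, so $F$ is not $r$-harmonic on any strip along the boundary, let alone a representative of an analytic boundary germ. The analogy with Lemma~\ref{lem-kerav-cb} is misleading: there the partition of unity is applied to elements of $\N{v,r}\om=C_c^2(\uhp)$ with \emph{compact} support, so the sum is honestly finite and the result is again compactly supported; here the cocycle values live all the way out to $\proj\RR$ and it is precisely the boundary behaviour that is at stake.

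The paper's argument (following \cite[Proposition~12.5]{BLZm}) is of a different nature and uses the structural properties of the boundary-germ modules rather than soft flabbiness. One first shows that for any two cusps $\ca,\cb$ the value $c(\ca,\cb)$ lies in $\sharm_r(\uhp)$: since $\singr c(x)$ is contained in an $R$-neighbourhood $N_R(x)$ of each edge~$x$, a path from $\ca$ to $\cb$ in $\ZZ[X_1^\tess]$ can be moved to avoid any prescribed point of~$\uhp$, so $\singr c(\ca,\cb)=\emptyset$. Next one checks that $\singr c(P,\ca)$ is compact for $P\in X_0^{\tess,Y}$, and applies Lemma~\ref{lem-rem-sing} (which packages Property~vii) of Proposition~\ref{prop-Esys}) to split $c(P,\ca)$, after conjugation, as $Q_\ca+G$ with $Q_\ca\in\sharm_r(\uhp)$ a $v(\pi_\ca)$-periodic piece and $G\in\Gr{v,r}\om$. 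Subtracting the $\Gm$-equivariant $0$-cochain built from the $Q_\ca$ gives a cocycle $\hat c$ in the same class with $\hat c(\ca,\cb)\in\sharm_r(\uhp)\cap\Gr r\om$ for all cusps; by \eqref{H-Gom-0} (Property~v) of Proposition~\ref{prop-Esys}) this intersection is $\{0\}$, so $\hat c$ vanishes between cusps and the class is trivial. The essential ingredients are thus the path-deformation step and Properties~v) and~vii) of Proposition~\ref{prop-Esys}; none of these enter your proposal.
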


\begin{proof}Similar to the proof of~\cite[Proposition 12.5]{BLZm}, to
which we refer for the full proof. Table~\ref{tab-cor1} gives a list
of corresponding notations and concepts.
\begin{table}[ht]\renewcommand\arraystretch{1.3}
\begin{tabular}{|l|l|}\hline
\multicolumn{1}{|c|}{{\bf holomorphic
forms}}&\multicolumn{1}{|c|}{{\bf Maass forms wt.~$0$ }}\\ \hline
$\Gm$-module $\Gr{v,r}\om $ &$\Gm$-module $\Gr s \om$ \\
$\Gm$-module $\Gr{v,r}{\fsn,\wdg}$ &$\Gm$-module $\Gr s {\fs,\wdg} $\\
cocycle $c $ & cocycle $\ps$\\
$c(\xi,\xi')$& $\ps_{\xi,\xi'}$
\\ \hline
\end{tabular}\smallskip
\caption{Correspondence with~\cite[Proposition
12.5]{BLZm}.}\label{tab-cor1}
\end{table}

Let $c\in Z^1(F^\tess_\pnt;\Gr{v,r}\om,\Gr{v,r}{\fsn,\wdg})$ be given.
This cocycle induces a map $X_0^\tess\times X_0^\tess \rightarrow
 \esv{v,r}{\fsn,\wdg}$ which we also indicate by $ c$. It has the
properties in~\eqref{crel}. The aim is to show that it is a
coboundary. To do that is suffices to show that the group cocycle
$\gm\mapsto c(\gm^{-1}P_0,P_0)$ is a coboundary for one
base point~$P_0\in X^\tess_0$.

\setcounter{rmrkcnt}{0}
\itm There exists $R>0$ such that $\singr c(x)\subset N_R(x)$ for all
edges $x\in X_1^\tess$. The set $N_R(x)$ is an $R$-neighbourhood of
$x$ for the hyperbolic metric if $x$ is an edge in $X_1^{\tess,Y}$,
and a more general neighbourhood defined in~\cite[(12.2)]{BLZm} if $x$
is an edge going to a cusp.

\itm We prove that $c(\ca,\cb)\in \sharm_r(\uhp)$ for any two cusps
$\ca,\cb$.

Suppose that $z\in \singr c(\ca,\cb)$. The value of $c(\ca,\cb)$ is
the value $c(p)$ for any path in $\ZZ[X_1^\tess]$ from $\ca$ to
$\cb$. We can move the path $p$ away from $z$ in such a way that $z$
is not in $N_R(x)$, in (a), for any of the edges $x$ occurring
in~$p$. So $\singr c(\ca,\cb)=\emptyset$.

\itm By breaking up a path from $\ca$ to $\cb$ at a point $P\in
X_0^{\tess,Y}=X_0^\tess\cap \uhp$ it can be shown that $\singr
c(P,\ca)$ is a compact subset of $\uhp$ for any path $\ZZ[X_1^\tess]$
from $P$ to $\ca\in \cu$.

\itm Now Lemma~\ref{lem-rem-sing} can be applied to the conjugated
element $ F= c(P,\ca)|_r \s_\ca^{-1}$. We note that $F\in \Gr r
{\fs,\wdg}$, for Condition~a), that its singularities are contained
in a compact set, for Condition~b), and that
\[F|_{v,r}(1-\pi_\ca) \= c(P,\ca)|_{v,r}(1-\pi_\ca)
\= c(P,\pi_\ca^{-1}P)\in \Gr {v,r}\om\]
implies Condition~c). The conclusion is that $F=Q_\ca+G$, with $Q_\ca
\in \Gr {v,r}{\fsn,\wdg}$ satisfying $Q_\ca|_{v,r}\pi_\ca = P$, and
$G\in \Gr {v,r}\om$ representing an element of $\esv r \om$. Then use
Lemma~\ref{lem-ENinv} to see that $Q_\ca\in \sharm_r(\uhp)$.

\itm Such an element $Q_\ca$ exists for all cusps $\ca$, and for
$\cb=\gm^{-1}\ca$ we have $Q_\cb = Q_\ca|_{v,r}\gm$.

\itm The transformation properties of the $Q_\ca$ allow us to define
 another cocycle $\hat c$ in the same class as $c$ by taking for
 $x,y\in X_0^\tess$:
\[ \hat c(x,y) \;:=\; c(x,y)
+ \left\{ \begin{array}{ll}Q_x&\text{ if }x\in X_0^{\tess}\setminus
X_0^{\tess,Y}\\
0&\text{ if }x\in X_0^{\tess,Y}
\end{array}
\right\}+\left\{ \begin{array}{ll}-Q_y&\text{ if }y\in
X_0^{\tess}\setminus X_0^{\tess,Y}\,,\\
0&\text{ if }y\in X_0^{\tess,Y}\,.
\end{array}\right\}
\]  It has the property that $\hat c(\ca,\cb)
\in \sharm_r(\uhp) \cap \Gr r \om$ for all cusps $\ca, \cb$. 
Part~v)  of Proposition~\ref{prop-Esys} implies that  $\hat c
(\ca,\cb)=0$ for all cusps. Taking a cusp as
the base point $P_0$, we see that the cohomology class of the cocycle
$\hat c$, and hence of the original cocycle $c$, is zero.\end{proof}

\subsection{From analytic boundary germ cohomology to automorphic
forms}\label{sect-THMbg} We have obtained two linear maps, $\bcoh r
\om$ (Proposition~\ref{prop-bcoh}) and $\al_r$
(Proposition~\ref{prop-alr}):
\badl{bcoh-alr} \xymatrix{ A_r(\Gm,v) \ar[r]^{\bcoh r \om}
& H^1(\Gm;\esv{v,r}\om)
\\
& \hpar^1(\Gm;\esv{v,r}\om,\esv{v,r}{\fsn,\wdg}) \ar[lu]^{\al_r} }
\eadl
 We recall that $\hpar^1(\Gm;\esv{v,r}\om,\esv{v,r}{\fsn,\wdg})\subset
H^1(\Gm;\esv{v,r}\om)$. The following theorem shows the relation
between these maps.

\begin{thm}\label{thmbg}
Let $\Gm$ be a cofinite discrete group of $\SL_2(\RR)$ with cusps. Let
 $r \in \CC$ and let $v$ be a corresponding multiplier system.
\begin{enumerate}
\item[i)] Both linear maps $\bcoh r \om$ and $\al_r$ in
\eqref{bcoh-alr} are injective.
\item[ii)] Define \ir{AE}{A^\E_r(\Gm,v)}
\be\label{AE} A^\E_r(\Gm,v) \;:=\; \bigl(\bcoh r \om\bigr)^{-1}
\hpar^1(\Gm;\esv {v,r}\om,\esv {v,r}{\fsn,\wdg}) \,. \ee

Then the restriction of $\bcoh r \om$ to $A^\E_r(\Gm,v) $ and the
restriction of $\al_r$ to the image $\bcoh r\om A^\E_r(\Gm,v) \subset
\hpar^1(\Gm;\esv{v,r}\om,\esv{v,r}{\fsn,\wdg})$ are inverse to each
other.
\[ \xymatrix{ A^\E_r(\Gm,v) \ar@<.3ex>[r]^{\bcoh r \om}
& \bcoh r \om A^\E_r(\Gm,v) \ar@<.3ex>[l]^{\al_r} } \]
\end{enumerate}
\end{thm}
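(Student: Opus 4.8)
\textbf{Proof proposal for Theorem~\ref{thmbg}.}
The plan is to assemble the statement from the pieces already established rather than to prove anything essentially new. First I would settle injectivity of $\bcoh r \om$. This was already asserted in Proposition~\ref{prop-bcoh} only as far as defining the map; the injectivity comes from Proposition~\ref{prop-alr-inj}, via the following observation: by Theorem~\ref{thm-imafwdg} we know $\coh r \om A_r(\Gm,v)\subset \hpar^1(\Gm;\dsv{v,2-r}\om,\dsv{v,2-r}{\fsn,\wdg})$, and by the commuting diagram in Part~iii) of Proposition~\ref{prop-bcoh} the map $\bcoh r \om$ factors through $\coh r \om$ followed by $\rsp_r$-related maps; but it is cleaner to argue directly. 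For weights $r\in\CC\setminus\ZZ_{\geq 2}$ the sheaf isomorphism $\rsp_r:\W r\om\to\V{2-r}\om$ of Theorem~\ref{thm-VWiso} shows $\bcoh r\om$ and $\coh r\om$ carry the same information, so injectivity of $\bcoh r\om$ follows once we know $\al_r\circ\bcoh r\om=\mathrm{id}$, which is Part~ii) of Proposition~\ref{prop-alr}. For $r\in\ZZ_{\geq 2}$ we do not yet know that $\bcoh r\om A_r(\Gm,v)$ lands in the mixed parabolic subgroup (that is Theorem~\ref{THMaci}, proved in Section~\ref{sect-abg-iw}); so here I would instead invoke injectivity of $\coh r\om$ together with the fact that $\bcoh r\om$ refines $\coh r\om$ through the exact sequence of Proposition~\ref{prop-iw-esv}, iii), and the classical injectivity of the restriction map $\rs_r$ on cohomology (unless $r=2$ with trivial multiplier). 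So in all cases: $\bcoh r\om$ injective.

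Next, injectivity of $\al_r$ is exactly Proposition~\ref{prop-alr-inj}, which I would simply cite. That proposition rests on the exact sequence \eqref{esNGE}, the vanishing $\hpar^1(\Gm;\Gr{v,r}\om,\Gr{v,r}{\fsn,\wdg})=\{0\}$ of Lemma~\ref{lem-H1G}, and Lemma~\ref{lem-kerav-cb} controlling the kernel of the averaging map. No further work is needed there.

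For Part~ii), the key input is Part~ii) of Proposition~\ref{prop-alr}: if $F\in A_r(\Gm,v)$ is such that $\bcoh r\om F$ lies in $\hpar^1(\Gm;\esv{v,r}\om,\esv{v,r}{\fsn,\wdg})$, then $\al_r(\bcoh r\om F)=u([c_F],\cdot)=F$. By the definition \eqref{AE} of $A^\E_r(\Gm,v)$, this says precisely that $\al_r\circ\bcoh r\om$ restricts to the identity on $A^\E_r(\Gm,v)$. It remains to show $\bcoh r\om\circ\al_r$ restricts to the identity on $\bcoh r\om A^\E_r(\Gm,v)$; but this is formal: given a class $\zeta = \bcoh r\om F$ with $F\in A^\E_r(\Gm,v)$, we have $\al_r(\zeta)=\al_r(\bcoh r\om F)=F$, hence $\bcoh r\om(\al_r(\zeta))=\bcoh r\om F=\zeta$. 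Thus the two restrictions are mutually inverse, and by injectivity of $\al_r$ the image $\bcoh r\om A^\E_r(\Gm,v)$ is genuinely a subspace on which $\al_r$ is a bijection onto $A^\E_r(\Gm,v)$.

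The main obstacle I anticipate is not in Part~ii), which is bookkeeping, but in making the injectivity claim for $\bcoh r\om$ uniform in the weight: the argument for $r\in\ZZ_{\geq 2}$ genuinely needs the structural results of Section~\ref{sect-abg-iw} (Theorem~\ref{THMaci}) to even locate $\bcoh r\om A_r(\Gm,v)$ inside the mixed parabolic cohomology, and one must take care not to argue circularly if Theorem~\ref{THMaci}'s proof in turn uses this theorem. The safe route is to prove injectivity of $\bcoh r\om$ using only the left inverse $\al_r$ on the subspace $A^\E_r(\Gm,v)$ where it is available, together with the observation that $A^\E_r(\Gm,v)=A_r(\Gm,v)$ for $r\notin\ZZ_{\geq 2}$ by Theorem~\ref{thm-imafwdg} and Proposition~\ref{prop-parb*0}, and to defer the integral-weight injectivity statement to its independent proof via $\coh r\om$ and the exact sequence in Proposition~\ref{prop-iw-esv}. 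I would structure the write-up so that Part~i) for general $r$ is deduced from Part~ii) plus these already-available facts, keeping the logical dependencies clean.
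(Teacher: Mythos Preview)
Your treatment of Part~ii) and the injectivity of $\al_r$ is fine and matches the paper. The gap is in your argument for the injectivity of $\bcoh r\om$: you are making it much harder than it needs to be, and in the process flirting with circularity.

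The clean uniform argument, valid for all $r\in\CC$ simultaneously, is this: if $F\in\ker\bcoh r\om$, then $\bcoh r\om F=0$, and the zero class lies in \emph{every} subgroup of $H^1(\Gm;\esv{v,r}\om)$, in particular in $\hpar^1(\Gm;\esv{v,r}\om,\esv{v,r}{\fsn,\wdg})$. Hence $F\in A^\E_r(\Gm,v)$ by definition \eqref{AE}. Now Part~ii) of Proposition~\ref{prop-alr} applies and gives $F=\al_r(\bcoh r\om F)=\al_r(0)=0$. No case distinction on the weight, no appeal to Theorem~\ref{THMaci}, no need to know whether $A^\E_r(\Gm,v)$ equals $A_r(\Gm,v)$.

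Your worry about circularity is well-founded: Theorem~\ref{THMaci} is proved \emph{using} Theorem~\ref{thmbg} (see Lemma~\ref{lem-coh-bcoh} and the recapitulation in \S\ref{sect-cmplaci}), so invoking it here for the integral-weight case would be circular. The observation you are missing---that the kernel of $\bcoh r\om$ automatically sits inside $A^\E_r(\Gm,v)$---is exactly what dissolves the difficulty.
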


\begin{proof}Proposition~\ref{prop-alr-inj}
gives the injectivity of~$\al_r$. Suppose that $F\in \ker \bcoh r
\om$. (Then $F\in A_r^\E(\Gm,v)$ by the definition
in~\eqref{AE}.)  Proposition~\ref{prop-alr} ii)
shows that $\al_r \bcoh r \om F = F$, hence $F=\al_r\, 0 = 0$. This
shows that $\bcoh r \om$ is injective. This gives Part~i).

Part~ii) of Proposition~\ref{prop-alr} shows that $\al_r \circ \bcoh r
\om$ is the identity on $A^\E_r(\Gm,v)$. Since $\bcoh r \om :
A^\E_r(\Gm,v)
\rightarrow \bcoh r \om A^\E_r(\Gm,v)$ is surjective Part~ii) follows.
\end{proof}

\subsection{Completion of the proof of Theorem~\ref{THMac} for general
weights}\label{sect-recap-THMac}
We consider $r\in \CC\setminus\ZZ_{\geq 2}$.
Proposition~\ref{prop-cohrom} shows that $\coh r \om : A_r(\Gm,v)
\rightarrow H^1(\Gm;\dsv{r,2-r}\om)$ is a well-defined linear map.
Theorem~\ref{thm-imafwdg} shows that the image is contained in
$\hpar^1(\Gm;\dsv{v,2-r}\om,\dsv{v,2-r}{\fsn,\wdg})$. We have the
following relations:
\[ \xymatrix{ A_r(\Gm;v) \ar[r]^(.38){\coh r \om} &
\hpar^1(\Gm;\dsv{v,2-r}\om,\dsv{v,2-r}{\fsn,\wdg})
\ar@{^{(}->}[r]& H^1(\Gm;\dsv{v,2-r}\om)
 \\
A_r^\E (\Gm, v) \ar@{^{(}->}[u] \ar[r]^(.38){ \bcoh r \om } &
\hpar^1(\Gm;\esv{v,r}\om,\esv{v,r}{\fsn,\wdg})
\ar[u]_{\rs_r}^{\cong} } \]
Definition~\ref{esv-def-gen} of the highest weight spaces of boundary
germs $\esv r \ast$ as isomorphic to the corresponding highest weight
spaces $\dsv{2-r}\ast$ induces an isomorphism in cohomology.
Theorem~\ref{thm-imafwdg} implies that $\coh r \om
A_r(\Gm,v)\subset\hpar^1(\Gm;\dsv{v,2-r}\om,\dsv{v,2-r}{\fsn,\wdg})$,
and hence $\bcoh r \om A_r(\Gm,v)\subset
\hpar^1(\Gm;\esv{v,2-r}\om,\esv{v,2-r}{\fsn,\wdg})$ by 
Proposition~\ref{prop-bcoh}. 
So $A^\E_r(\Gm,v)=A_r(\Gm,v)$. Theorem~\ref{thmbg} then gives the
inverse $\al_r$ of $\bcoh r \om$:
\[ \xymatrix{ A_r(\Gm;v) \ar[r]^(.38){\coh r \om} &
\hpar^1(\Gm;\dsv{v,2-r}\om,\dsv{v,2-r}{\fsn,\wdg})
\ar@{^{(}->}[r]& H^1(\Gm;\dsv{v,2-r}\om)
 \\
A_r (\Gm, v) \ar[u]_{=} \ar@<.3ex>[r]^(.38){\bcoh r \om} &
\hpar^1(\Gm;\esv{v,r}\om,\esv{v,r}{\fsn,\wdg})
\ar[u]_{\rs_r}^{\cong} \ar@<.3ex>[l]^(.62){\al_r} } \]

\subsection{Related work}\label{sect-lit10}As indicated at several
places in this section, we followed closely the approach of
\cite{BLZm}, \S7 and~\S12.2--3.

%%%%%%%%%%%%%%%%%%%%%%%%%%%%%%%%%%%%%%%%%%%%%%%%%%%%%%%%

\section{Automorphic forms of integral weights at least $2$ and
analytic boundary germ cohomology}\label{sect-abg-iw}

In this section we will prove Theorem~\ref{THMaci}, which concerns
automorphic forms with weight $ r \in \ZZ_{\geq 2}$ and analytic
boundary germ cohomology.

Throughout this section we only treat the case of weight $ r \in
\ZZ_{\geq 2}.$

\subsection{Image of automorphic forms in mixed parabolic
cohomology}\label{sect-imaf}
The linear map $\bcoh r \om: A_r(\Gm,v)
\rightarrow H^1\bigl(\Gm;\W {v,r} \om(\proj\RR)\bigr)$ in
Proposition~\ref{prop-bcoh} has image in $H^1(\Gm;\esv r \om)$, by
Property~iv) in Proposition~\ref{prop-Esys}.

\begin{defn}For all $r\in \CC$ we define\ir{A0}{A_r^0(\Gm,v)}
\be\label{A0} A^0_r(\Gm,v) \;:=\; \bigl\{ F\in A_r(\Gm,v)
\;:\;a_0(\ca,F)=0\text{ for all cusps }\ca \text{ with
}v(\pi_\ca)=1\Bigr\}\,. \ee
\end{defn}
See \eqref{FexpF-xi} for the Fourier coefficients $a_n(\ca,F)$ at the
cusp~$\ca$.

The idea is to allow automorphic forms with large growth at the cusps,
but not to allow constant terms in the Fourier expansion.

\begin{prop}\label{prop-A0-a0}Let $r\in \ZZ_{\geq 2}$. For each $F\in
A_r(\Gm,v)$ the following statements are equivalent:
\begin{enumerate}
\item[a)] $\bcoh r \om F\in \hpar^1(\Gm;\esv
{v,r}\om,\esv{v,r}{\fsn,\wdg})$
\item[b)] $F\in A^0_r(\Gm,v)$.
\end{enumerate}
\end{prop}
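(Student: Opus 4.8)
The plan is to decide, cusp by cusp, when the class $\bcoh r \om F=[c^{z_0}_F]$ of Proposition~\ref{prop-bcoh}, with $c^{z_0}_{F,\gm}(z)=\int_{\gm^{-1}z_0}^{z_0}K_r(z;\tau)F(\tau)\,d\tau$, admits a representative in $\zpar^1(\Gm;\esv{v,r}\om,\esv{v,r}{\fsn,\wdg})$; recall that by Theorem~\ref{thmbg} statement~(a) is exactly $F\in A^\E_r(\Gm,v)$. By the standard reductions -- it suffices to check the parabolic condition at one cusp $\ca$ from each $\Gm$-orbit, changing the cocycle by a coboundary $\Ph|_{v,r}(\gm-1)$ with $\Ph\in\esv{v,r}\om$ moves $c^{z_0}_{F,\pi_\ca}$ only within $\esv{v,r}\om|_{v,r}(\pi_\ca-1)\subseteq\esv{v,r}{\om,\wdg}[\ca]|_{v,r}(\pi_\ca-1)$, and by the analogue of Lemma~\ref{lem-singTe} any solution has its boundary singularity at $\ca$ -- this comes down to asking, for each cusp $\ca$, whether
\[ h_\ca|_{v,r}(\pi_\ca-1)\=c^{z_0}_{F,\pi_\ca}\qquad\text{has a solution}\quad h_\ca\in\esv{v,r}{\om,\wdg}[\ca]\,. \]
Conjugating by $\s_\ca$ and writing $E=F|_r\s_\ca=\sum_{n\equiv\al_\ca(1)}a_n\,e^{2\pi inz}$, $\ld=v(\pi_\ca)$, $w=\s_\ca^{-1}z_0$, this becomes $h|_r(1-\ld^{-1}T)=g$ with $g=\int_{w-1}^{w}K_r(\cdot;\tau)E(\tau)\,d\tau\in\esv r\om$ (Part~iv) of Proposition~\ref{prop-Esys}) and $h\in\esv r{\om,\wdg}[\infty]$.

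The conceptual heart, and the only ingredient genuinely beyond what is in \cite{BLZm} and the case $r\notin\ZZ_{\geq2}$, is a degree count obtained by applying $\rs_r$. Because $r\in\ZZ_{\geq2}$ the function $\rs_r K_r(\cdot;\tau)(t)=(\tau-t)^{r-2}$ is a polynomial in $t$ of degree $r-2$, so $\rs_r g$ is the polynomial $\ps(t)=\int_{w-1}^{w}(\tau-t)^{r-2}E(\tau)\,d\tau\in\dsv{2-r}\pol$ whose coefficient of $t^{r-2}$ is $(-1)^r\int_{w-1}^{w}E(\tau)\,d\tau$. Integrating the Fourier series term by term over a horizontal segment gives $\int_{w-1}^{w}e^{2\pi in\tau}\,d\tau=0$ for $n\in\ZZ\setminus\{0\}$; hence if $\ld=1$ (so $\al_\ca\in\ZZ$) this coefficient equals $(-1)^r a_0(\ca,F)$, while for $\ld\neq1$ it will be irrelevant. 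On the other hand, by Part~iii) of Proposition~\ref{prop-iw-esv}, $\rs_r$ maps $\esv r{\om,\wdg}[\infty]$ \emph{onto} $\dsv{2-r}\pol$, the polynomials of degree $\leq r-2$. A solution $h$ would therefore produce $\phi=\rs_r h\in\dsv{2-r}\pol$ with $\phi(t)-\ld^{-1}\phi(t+1)=\ps(t)$; when $\ld=1$ the left side has degree $\leq r-3$, forcing the $t^{r-2}$-coefficient of $\ps$, that is $a_0(\ca,F)$, to vanish. This proves $(a)\Rightarrow(b)$: if $F\notin A^0_r(\Gm,v)$ then at some cusp $\ca$ with $v(\pi_\ca)=1$ no $h_\ca$ exists, so $\bcoh r \om F\notin\hpar^1(\Gm;\esv{v,r}\om,\esv{v,r}{\fsn,\wdg})$.

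For $(b)\Rightarrow(a)$ one runs the computation backwards and produces the $h_\ca$. Assume $F\in A^0_r(\Gm,v)$. Then at each cusp $\ps$ has degree $\leq r-3$ if $\ld=1$ and degree $\leq r-2$ if $\ld\neq1$, so the first-difference equation $\phi(t)-\ld^{-1}\phi(t+1)=\ps(t)$ has a solution $\phi\in\dsv{2-r}\pol$ (unique when $\ld\neq1$). Since $\dsv{v,2-r}\pol=\rs_r(\esv{v,r}\om)$, lift $\phi$ to $\tilde\phi\in\esv{v,r}\om$; then, conjugating back, $g':=c^{z_0}_{F,\pi_\ca}-\tilde\phi|_{v,r}(\pi_\ca-1)$ lies in $\ker\rs_r\cap\esv{v,r}\om=\dsv{v,r}\om$ -- carrying the weight $r$, not $2-r$. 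It then suffices to solve $h'|_{v,r}(\pi_\ca-1)=g'$ with $h'\in\dsv{v,r}{\om,\wdg}[\ca]$, for $h_\ca:=\tilde\phi+h'$ is then the required solution; and the cocycle $c^{z_0}_F$ itself, now seen to satisfy the parabolic condition at every orbit representative, lies in $\zpar^1(\Gm;\esv{v,r}\om,\esv{v,r}{\fsn,\wdg})$, whence $F\in A^\E_r(\Gm,v)$.

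The main obstacle is this last step: solving $h'|_{v,r}(\pi_\ca-1)=g'$ in $\dsv{v,r}{\om,\wdg}[\ca]$ for $g'\in\dsv{v,r}\om$ with $r\in\ZZ_{\geq2}$ (equivalently, in the ``dual weight'' $2-r\in\ZZ_{\leq0}$). After conjugation this is the inhomogeneous equation of \S\ref{sect-osa}, and it is handled as in \S\ref{sect-constr-peq}: split the relevant periodic function into its cuspidal, exponentially increasing, and -- when $|\ld|\neq1$ -- purely oscillating Fourier pieces; for the cuspidal piece use the convergent integral $\int_{w}^{i\infty}K_r(\cdot;\tau)E_c(\tau)\,d\tau$, which represents an element of $\esv r{\om,\wdg}[\infty]$ by the reasoning of Lemma~\ref{lem-inf} together with Proposition~\ref{prop-Kr}, and for the remaining pieces deform the path of integration exactly as in Lemmas~\ref{lem-expgr} and~\ref{lem-pim}; the one-sided averages of \S\ref{sect-osa} (Proposition~\ref{prop-osa1}, Lemma~\ref{lem-h-Av+-}) guarantee that the results are holomorphic on genuine $\{\infty\}$-excised neighbourhoods, hence lie in $\esv r{\om,\wdg}[\infty]$, rather than only on the one-sided domains $D_\e^\pm$. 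The order-zero Fourier term -- present only when $\ld=1$ -- is exactly the one ruled out by the hypothesis $F\in A^0_r(\Gm,v)$, which is what makes the construction possible. I expect transferring the path-deformation arguments to the kernel $K_r$ and to boundary germs to be the bulk of the work; the polynomial-degree argument, which is new for $r\in\ZZ_{\geq2}$, is short.
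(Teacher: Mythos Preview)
Your degree argument for $(a)\Rightarrow(b)$ is correct and is exactly the paper's: apply $\rs_r$, use that $\rs_r\,\esv r{\om,\wdg}[\infty]\subset\dsv{2-r}\pol$ (Proposition~\ref{prop-iw-esv}\,iii)), and observe that for $\ld=1$ the first difference of a polynomial of degree $\le r-2$ has degree $\le r-3$, forcing the $t^{r-2}$-coefficient of $\psi$---namely $(-1)^r a_0(\ca,F)$---to vanish.

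For $(b)\Rightarrow(a)$ your plan is essentially the paper's (Fourier-split $E$, build $h$ by path integrals of $K_r$), but two points need attention.

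First, the detour through $g'\in\dsv{v,r}\om$ is not clean as stated. You claim to solve $h'|_{v,r}(\pi_\ca-1)=g'$ in $\dsv{v,r}{\om,\wdg}[\ca]$ and then invoke the Fourier splitting of $E$ and integrals of $K_r$---but those integrals solve the \emph{original} equation for $g$, not the reduced one for $g'$. Solving in $\dsv r{\om,\wdg}[\infty]$ for a general $g'\in\dsv r\om$ via one-sided averages does not work directly: $\av{T,\ld}^+g'$ is holomorphic only off a half-strip extending to $\re z=-\infty$, which is not contained in any $W_\infty$, so it does not represent an element of $\dsv r{\om,\wdg}[\infty]$. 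The paper avoids this detour entirely and constructs $h$ for each Fourier piece of $E$ directly.

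Second, and more importantly, the justification ``Lemma~\ref{lem-inf} together with Proposition~\ref{prop-Kr}'' does not show that $\int_w^{i\infty}K_r(\cdot;\tau)E_c(\tau)\,d\tau$ lies in $\esv r{\om,\wdg}[\infty]$ rather than merely in $\W r\om(\RR)$. The tool the paper uses here, and which you should name, is the finite decomposition~\eqref{iwK} from Proposition~\ref{prop-polexp}\,i)b):
\[
K_r(z;\tau)\=\sum_{\mu=1-r}^{-1}(-1)^{-\mu-1}\binom{r-2}{-\mu-1}\P{2-r,-\mu-1}(\tau)\,\M{r,\mu}(z)\;+\;p_r(z;\tau)\,.
\]
Integrating the $\M{r,\mu}$-terms against $E(\tau)\,d\tau$ yields scalar multiples of $\M{r,\mu}\in\esv r\om$; the kernel $p_r(z;\tau)=\frac{2i}{z-\tau}\bigl(\frac{\tau-i}{z-i}\bigr)^{r-1}$ is \emph{holomorphic} in $z$ with $O(|z|^{-r})$ decay, so its path-integrals (with the deformed contours of Lemmas~\ref{lem-expgr}--\ref{lem-pim}) land in $\dsv r{\om,\wdg}[\infty]$ by the same reasoning as in~\S\ref{sect-constr-peq}. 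This decomposition is what replaces your ``bulk of the work'' with a one-line reduction.
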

We will base the proof on the following lemma:

\begin{lem}\label{lem-locwdgbg}Let $r\in \ZZ_{\geq 2}$, $z_0\in \uhp$,
and $\ld=e^{2\pi i \al}$ with $\al\in \CC$. Suppose that the
holomorphic function $E$ on $\uhp$ is given by the Fourier expansion
\[ E(\tau) = \sum_{n\equiv\al(1)}a_n\, e^{2\pi i n \tau}\,.\]
Then there exists $h \in \esv r {\om,\wdg}[\infty]$ such that
\be\label{pbE} \ld^{-1}\,h(z+1) - h(z)
\= \int_{\tau=z_0-1}^{z_0} K_r(z;\tau)\, E(\tau)\, d\tau\,,\ee
if and only if $\ld\neq 1$ or $a_0=0$.
\end{lem}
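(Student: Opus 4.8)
\textbf{Proof plan for Lemma~\ref{lem-locwdgbg}.}
The strategy is to reduce the boundary-germ equation \eqref{pbE} to the already-solved Eichler-integral situation of \S\ref{sect-pe} via the restriction morphism, then to lift back to boundary germs. First I would apply $\rs_r$ (equivalently $\rsp_r$) to \eqref{pbE}. Using Proposition~\ref{prop-iw-esv}~iii) the image $\rs_r h$ lies in $\dsv{2-r}\pol$ if $h$ itself were in the ``holomorphic part'', but in general $\rs_r h \in \dsv{2-r}{\om,\wdg}[\infty]$; applying $\rs_r$ to the right-hand side and using \eqref{res-Kr} together with $\rs_r = \Prj{2-r}^{-1}\rsp_r$ turns the kernel $K_r(z;\tau)$ into the Eichler kernel $(\tau-t)^{r-2}$, so the restricted equation is exactly the parabolic equation \eqref{peq} studied in \S\ref{sect-pe}, with the Fourier series $E$ split as $E = E_c + E_0 + E_e$ according to $\re n>0$, $n=0$, $\re n<0$. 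Thus the component of $h$ modulo the kernel sheaf $\Whol r \om$ is governed by Lemmas \ref{lem-peq-cusp}, \ref{lem-expgr}, \ref{lem-pim} and, crucially for the obstruction, Lemma~\ref{lem-peq-cst}: for the constant term $E_0 = a_0$ with $\ld = 1$, the solution in $\dsv{2-r}{\om,\wdg}[\infty]$ is $(1-r)^{-1}(z_0-t)^{r-1}$, which does \emph{not} lie in $\dsv{2-r}\pol$ when $r\in\ZZ_{\geq 2}$, whereas for $\ld\neq 1$ or $a_0=0$ all pieces give solutions whose restrictions are in $\dsv{2-r}\pol$.

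Next I would lift from the restriction side back to analytic boundary germs. For the pieces of $E$ with exponential decay in some direction (the cuspidal part $E_c$, and $E_e$, $E_0$ when $\ld\neq 1$ or $a_0=0$) the desired $h$ can be produced directly by an integral against $K_r$ analogous to \eqref{h0-cusp}: set $h_0(z) = \int_{z_0}^{\infty} K_r(z;\tau)E(\tau)\,d\tau$ along a suitable path (vertical, or deformed so that $E$ decays), check convergence from the Fourier growth, and verify \eqref{pbE} by the same contour-shift argument as in the proof of Lemma~\ref{lem-peq-cusp}. That $h_0$ represents an element of $\esv r {\om,\wdg}[\infty]$ follows from Property~iv) of Proposition~\ref{prop-Esys} for the compact part of the path plus the analysis of the behavior near $\infty$; the excised-neighbourhood condition comes from taking geodesic paths, exactly as in Lemma~\ref{lem-inf}. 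For the $n=0$, $\ld=1$ piece I would use Part~iii) of Proposition~\ref{prop-polexp}, which writes $y^{1-r}$ explicitly in terms of the $\M{r,\mu}$ and $(2i/(z\mp i))^{r-1}$, to see that the natural antiderivative lives in $\esv r{\om,\wdg}[\infty]$ precisely when one does not need it, i.e.\ when $a_0=0$.

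For the converse direction I would argue by obstruction: suppose $h\in\esv r{\om,\wdg}[\infty]$ solves \eqref{pbE}. Split $h = h_0 + m$ with $h_0$ representing an element of $\dsv r{\om,\wdg}[\infty]$ and $m = \sum_{\mu=1-r}^{-1}c_\mu\M{r,\mu}$, as in Definition~\ref{esv-def-iw}. Applying the shadow operator $\shad_r$ and using Property~vi) of Proposition~\ref{prop-Esys} together with Table~\ref{tab-PMH}, and applying $\rs_r$ and Proposition~\ref{prop-iw-esv}~iii), I would extract from \eqref{pbE} the restricted equation \eqref{peq} with right-hand side in $\dsv{2-r}\pol$; then Lemma~\ref{lem-peq-cst}~iii) (the $r=1$ logarithmic obstruction is replaced here, for $r\geq 2$, by the statement that the solution has degree $r-1$ and hence is not in $\dsv{2-r}\pol$) forces $a_0=0$ when $\ld=1$. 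Concretely: if $\ld=1$ and $a_0\neq 0$, the restriction $\rs_r h$ would have to be a $\dsv{2-r}\pol$-valued solution of $h(t+1)-h(t) = a_0\cdot\text{(const)}\cdot(i-t)^{2-r}$-type equation, impossible by the degree count in the Example after Definition~\ref{sav-ss}. The main obstacle I anticipate is the bookkeeping at $\infty$ for the integer-weight case: controlling precisely which of $\dsv r\om$, the span of $\M{r,\mu}$, and the ``rational'' pieces like $(2i/(z-i))^{r-1}$ the solution $h_0$ lands in, and matching that with the exactness sequence of Proposition~\ref{prop-iw-esv}~iii); this is where the distinction $\ld=1$ versus $\ld\neq 1$ and $a_0=0$ versus $a_0\neq 0$ must be tracked carefully, essentially reproducing the table-of-cases analysis of \S\ref{sect-constr-peq} but one floor up, in boundary germs rather than in $\dsv{2-r}\ast$.
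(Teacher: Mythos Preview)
Your obstruction argument (the ``only if'' direction) is essentially the paper's: apply $\rs_r$ to a putative $h\in\esv r{\om,\wdg}[\infty]$, land in $\dsv{2-r}\pol$ by Proposition~\ref{prop-iw-esv}~iii), and observe that for the constant term with $\ld=1$ the restricted equation $\ph(t+1)-\ph(t)=\int_{z_0-1}^{z_0}(\tau-t)^{r-2}\,d\tau$ has right-hand side of exact degree $r-2$, forcing any polynomial solution to have degree $r-1\notin\dsv{2-r}\pol$. One correction: for $r\in\ZZ_{\geq2}$ the image $\rs_r h$ is \emph{always} in the finite-dimensional $\dsv{2-r}\pol$, never in a larger $\dsv{2-r}{\om,\wdg}[\infty]$ as your first paragraph suggests; this is the content of the exact sequence in Proposition~\ref{prop-iw-esv}~iii).

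The construction direction has a gap. You write the integral $h_0(z)=\int_{z_0}^\infty K_r(z;\tau)E_c(\tau)\,d\tau$ and claim membership in $\esv r{\om,\wdg}[\infty]$ via ``Property~iv) of Proposition~\ref{prop-Esys} for the compact part plus analysis near~$\infty$'' and an analogy with Lemma~\ref{lem-inf}. But Property~iv) only covers integrals with both endpoints in~$\uhp$, and Lemma~\ref{lem-inf} produces elements of $\dsv{2-r}{\om,\infty,\wdg}$, which is the wrong module in the wrong weight. The paper's key tool here, which you invoke only for the $n=0$ piece and not for the actual construction, is the polar expansion~\eqref{iwK} of Proposition~\ref{prop-polexp}~i)b): for $r\in\ZZ_{\geq2}$ one writes $K_r(z;\tau)=\sum_{\mu=1-r}^{-1}c_\mu(\tau)\,\M{r,\mu}(z)+p_r(z;\tau)$ with the explicit holomorphic kernel $p_r(z;\tau)=\frac{2i}{z-\tau}\bigl(\frac{\tau-i}{z-i}\bigr)^{r-1}$, valid by analytic continuation for all relevant $(z,\tau)$. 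Integrating termwise against $E_c$ (or $E_\infty$, along the paths of \S\ref{sect-constr-peq}), the $\M{r,\mu}$ terms give multiples of $\M{r,\mu}\in\esv r\om$, while the $p_r$ term integrates to a function holomorphic on $\CC\setminus(z_0+i[0,\infty))$, hence in $\dsv r{\om,\wdg}[\infty]$. This matches exactly the decomposition $\esv r{\om,\wdg}[\infty]=\dsv r{\om,\wdg}[\infty]+\sum_\mu\CC\,\M{r,\mu}$ of Definition~\ref{esv-def-iw}. Without this splitting of the kernel, your ``analysis near~$\infty$'' has no concrete mechanism.
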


\rmrke If $\ld\neq 1$, then $n$ in the Fourier expansion does not run
over the integers, and $a_0$ is not defined.

\begin{proof}This is a situation similar to that in
\S\ref{sect-constr-peq}. We can split up the Fourier expansion of
$E$. For the cuspidal part
\[ E_c(\tau) \= \sum_{n\equiv\al(1),\, \re n>0}a_n\, e^{2\pi i n
\tau}\] we can use
\be\label{hcint}
 h_c(z) \= \int_{\tau=z_0}^\infty K_r(z;\tau)\, E_c(\tau)\, d\tau \ee
for $z\in \uhp\setminus \left(z_0+i[0,\infty)\right) $. (In this way
we avoid the singularity at $\tau=z$. See \eqref{Kq}.)

Expression \eqref{iwK} gives, for those $\tau$ that have smaller
hyperbolic distance to~$i$ than~$z$, an expression for $K_r(z;\tau)$
in terms of a linear combination of $\M{r,\mu}$ with $1-r\leq \mu
\leq-1$ and an explicit expression $p_r(z;\tau)$. The factors of the
$\M{r,\mu}$ are holomorphic on~$\proj\CC\setminus\{-i\}$, and
$p_r(z;\tau)$ is meromorphic on $\proj\CC\times\proj\CC$, with
singularity in $\uhp\times\uhp$ given by $\frac1{z-\tau}$. 
By analytic continuation 
\eqref{iwK} is valid for all $(z,\tau)$ of interest in~\eqref{hcint}.

On insertion of~\eqref{iwK} the integrals of the term with
$\M{r,\mu}(z)$ yield, by the exponential decay of $E_c(\tau)$, a
multiple of $\M{r,\mu}$, hence in $\esv r \om$. The term with
$p_r(z;\tau)$ gives
\[ \int_{\tau=z_0}^\infty \frac{2i}{z-\tau} \,
\frac{(\tau-i)^{r-1}}{(z-i)^{r-1}}\, E_c(\tau)\, d\tau \,.\]
It yields a holomorphic function on $\CC\setminus
\left(z_0+i[0,\infty)\right)$, hence the result is an element of
$\dsv r {\om,\wdg}[\infty]$. Together with the multiples of
$\M{r,\mu}$ we obtain an element of $\esv r {\om,\wdg}[\infty]$ with
the desired property.

We proceed similarly with the contribution $E_\infty(\tau)
$ of the part of the Fourier series with $\re n<0$. The path of
integration is replaced by the path used in the proof of
Lemma~\ref{lem-expgr}. If $|\ld|\neq 1$ we can take $\al \in i\RR$,
$\al\neq 0$, and use the paths as in the proof of
Lemma~\ref{lem-pim}. This gives a function~$h_\infty$ satisfying
\[ \ld^{-1}\,h_\infty(z+1)-h_\infty(z)\=\int_{\tau=z_0-1}^{z_0}
K_r(z;\tau)\, E_\infty(\tau)\, d\tau\,.\]

There remains the case of a constant Fourier term, present only if
$\ld=1$. We look for $h\in \esv r {\om,\wdg}[\infty]$ such that
\[ h(z+1)-h(z) \= \int_{\tau=z_0-1}^{z_0} K_r(z;\tau)\, d\tau\,,
\]
which maps under the restriction map to a relation for $\ph=\rs_r h\in
\dsv{2-r}\pol$:
\[ \ph(t+1)-\ph(t) \= \int_{\tau=z_0-1}^{z_0} (\tau-t)^{r-2}\, d\tau
\= \frac{(z_0-t)^{r-1}-(z_0-t-1)^{r-1}}{r-1} \,.\]
(We have used \eqref{res-Kr} and~\eqref{Prj}.)
The right hand side is a polynomial in $t$ with $(-t)^{r-2}$ as the
term of highest degree in~$t$. Any polynomial solution $\ph$ is a
polynomial with degree $r-1$ in~$t$, and hence is not in
$\dsv{2-r}\pol$.\end{proof}

\rmrks
\itmi The function $h=h_c+h_\infty+h_0\in \esv r{\om,\wdg}[\infty]$
constructed in the proof has $\singr h \subset z_0+i[0,\infty)$.

\itm If $\ld=1$ the constant term can be handled by
$\int_{z_0-1}^{z_0} K_r(z;\tau)\, d\tau = h_0(z+1)-h_0(z)$, with
\be h_0(z) \= -2i \,\log(z-z_0)+2i \log y -2i
\sum_{l=1}^{r-1}\binom{r-1}l\; \frac{(z-z_0)^{l}}{l\, (\bar
z-z)^l}\,.\ee
We note that although $h_0$ is an $r$-harmonic function, it does not
represent an analytic boundary germ: $h_0\not\in\W r \om(\RR)$.

\begin{proof}[Proof of Proposition~\ref{prop-A0-a0}] Let $z_0\in
\uhp$, and consider the cocycle $c_F^{z_0}$ in~\eqref{cz0-def}, which
represents the cohomology class $\bcoh r \om F$ of $F\in A_r(\Gm,v)$.
The following statements are equivalent:
\begin{itemize}
\item $\bcoh r \om F \in \hpar^1(\Gm;\esv
{v,r}\om,\esv{v,r}{\fsn,\wdg})$.
\item For each cusp $\ca$ there exists $h\in \esv r{\om,\wdg}[\infty]$
that satisfies the relation \eqref{pbE} with $\ld=v(\pi_\ca)$,
$E=F|_r \s_\ca$, and $z_0$ replaced by $\s_\ca^{-1}z_0$.
\end{itemize}
This gives the proposition.\end{proof}

Relation \eqref{AE} in Theorem~\ref{thmbg} defines a subspace
$A^\E_r(\Gm,v)\subset A_r(\Gm,v)$. We state a direct consequence of
Proposition~\ref{prop-A0-a0}:

\begin{cor}\label{cor-A0E}Let $r\in \ZZ_{\geq 2}$. Then $A^\E_r(\Gm,v)
= A^0_r(\Gm,v)$.
\end{cor}

Next we would like to know that $\bcoh r \om A_r^0(\Gm,v)$ is equal to
$\hpar^1(\Gm;\esv{v,r}\om,\esv{v,r}{\fsn,\wdg})$. This requires quite
some work, carried out in the following subsection.

\subsection{Image of mixed parabolic cohomology classes in automorphic
forms}

\begin{prop}\label{prop-impcA}Let $r\in \ZZ_{\geq 2}$. The linear map
$\al_r: \hpar^1(\Gm;\esv{v,r}\om,\esv{v,r}{\fsn,\wdg} ) \rightarrow
A_r(\Gm,v)$ in Proposition~\ref{prop-alr} has image in
$A^0_r(\Gm,v)$.
\end{prop}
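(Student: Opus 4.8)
The plan is to fix a cusp $\ca$ of $\Gm$ with $v(\pi_\ca)=1$ and to show that $F:=\al_r([c])=u([c],\cdot)$ has vanishing Fourier coefficient $a_0(\ca,F)$; since $A^0_r(\Gm,v)$ is defined by exactly these conditions, this gives the proposition. After conjugating one may assume $\ca=\infty$, $\pi_\ca=T$, $v(T)=1$, and that $\infty$ is the cusp in the closure of the fundamental domain $\fd$ representing its $\Gm$-orbit. Since $v(T)=1$ the form $F$ is honestly $1$-periodic on all of $\uhp$, so $a_0(\infty,F)=\int_0^1 F(x+iy)\,dx$ for every $y>0$; this is the number to be shown to vanish.

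The first step is to localize $F$ at the cusp by means of the $\Gm$-average of Proposition~\ref{prop-alav}: $4\pi F=\avGm{v,r}\bigl(d\tilde c(\fd)\bigr)$ for a lift $\tilde c$ of $c$ chosen as in Subsection~\ref{sect-cocaf}, with $d\tilde c(\fd)=d\tilde c(\fd_Y)+\sum_\cb d\tilde c(V_\cb)$. Here $d\tilde c(\fd_Y)\in\N{v,r}\om$ is compactly supported in $\uhp$ (it represents the zero germ, by the cocycle relation $c(\partial_2\fd_Y)=0$), and each $d\tilde c(V_\cb)\in\N{v,r}{\fsn,\wdg}$ is supported near the cusp $\cb$; for $\cb\neq\infty$ the cusp $\cb$ lies in a $\Gm$-orbit different from that of $\infty$, because $\fd$ meets each orbit once. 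Precise invariance of a small horoball at $\infty$ (Shimizu's lemma) then shows that for $\im z$ large the only terms of the average that are nonzero at $z$ are the $\Gm_\infty=\langle T\rangle$-translates of $d\tilde c(V_\infty)$, so
\[4\pi F(z)\=\sum_{n\in\ZZ}\bigl(d\tilde c(V_\infty)\bigr)(z+n)\qquad\text{for }\im z>M\,,\]
and unfolding over a period gives $4\pi a_0(\infty,F)=\int_{\RR}\bigl(d\tilde c(V_\infty)\bigr)(x+iy)\,dx$ for every $y>M$; in particular this horocycle integral is independent of $y$.

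The heart of the argument is to show this integral vanishes. By construction $d\tilde c(V_\infty)=\tilde c(e_\infty)|_{v,r}(1-T)-\tilde c(f_\infty)$, and since $\tilde c$ lifts the cocycle $c$ this function represents $0$ in $\esv{v,r}{\fsn,\wdg}$; hence it vanishes on an excised neighbourhood of $\proj\RR$ with excised set $\{\infty\}$, so its restriction to each horocycle $\{\im z=y\}$, $y>M$, is supported in one fixed compact interval and the integral is finite. To see it tends to $0$ as $y\to\infty$ I would read Lemma~\ref{lem-locwdgbg} in reverse: the vanishing of $d\tilde c(V_\infty)$ as a boundary germ says that $\tilde c(e_\infty)|_{v,r}(1-T)$ and $\tilde c(f_\infty)$ represent the same element $g\in\esv r\om$, so the germ $h\in\esv r{\om,\wdg}[\infty]$ represented by $\tilde c(e_\infty)$ solves a parabolic difference equation $h|_{v,r}(1-T)=g$ whose right-hand side is, up to a coboundary, the element represented by $\pm\int_{z_0-1}^{z_0}K_r(\,\cdot\,;\tau)\,F(\tau)\,d\tau$. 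The proof of Lemma~\ref{lem-locwdgbg} identifies the obstruction to solving such an equation inside $\esv r{\om,\wdg}[\infty]$ in the case $v(\pi_\ca)=1$ as precisely the constant Fourier coefficient of $F$ at $\infty$; running that analysis backwards, the existence of $h$ forces $a_0(\infty,F)=0$. As $\ca$ was an arbitrary cusp with $v(\pi_\ca)=1$, this gives $F\in A^0_r(\Gm,v)$.

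The step I expect to be the main obstacle is this last one. The two summands $\tilde c(e_\infty)|_{v,r}(1-T)$ and $\tilde c(f_\infty)$ are individually badly behaved near the cusp — the singularity set $\singr\tilde c(e_\infty)$ of the chosen representative runs all the way up to $\infty$ inside an excised strip — so one cannot simply split the horocycle integral and cancel $\tilde c(e_\infty)|_{v,r}(1-T)$ against its $T$-translate by translation invariance and then estimate the remainder. The argument has to exploit that the combined function $d\tilde c(V_\infty)$ is genuinely small near $\infty$ (supported in a narrow strip, representing the zero germ) together with the structural properties of the highest weight space of analytic boundary germs, in particular that the shadow of such a germ extends holomorphically to $\uhp$ (Parts vi) and vii) of Proposition~\ref{prop-Esys}); this is what makes the reverse reading of Lemma~\ref{lem-locwdgbg} legitimate. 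Once this proposition is in hand, Theorem~\ref{thmbg} together with Proposition~\ref{prop-A0-a0} yields that $\bcoh r\om$ restricts to a bijection $A^0_r(\Gm,v)\to\hpar^1(\Gm;\esv{v,r}\om,\esv{v,r}{\fsn,\wdg})$ with inverse $\al_r$.
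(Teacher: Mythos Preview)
Your localization step via the $\Gm$-average is a reasonable opening, and the reduction to analyzing $\int_\RR d\tilde c(V_\infty)(x+iy)\,dx$ is correct in spirit. The real problem is the final step, and you rightly flag it as the obstacle---but your proposed resolution is circular.

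You write that $g=c(f_\infty)$ is ``up to a coboundary'' equal to $\int_{z_0-1}^{z_0}K_r(\cdot;\tau)F(\tau)\,d\tau$. That integral is $c_F(f_\infty)$ for the cocycle $c_F$ attached to $F=\al_r[c]$ via $\bcoh r\om$. For $g$ and $c_F(f_\infty)$ to agree up to something controllable you need $[c]=\bcoh r\om F$. But at this point in the argument you only know $\al_r\circ\bcoh r\om=\mathrm{id}$ on $A^\E_r(\Gm,v)$, not $\bcoh r\om\circ\al_r=\mathrm{id}$ on the cohomology side; the latter is Corollary~\ref{cor-A0imq}, which is \emph{derived from} this proposition. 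So invoking Lemma~\ref{lem-locwdgbg} ``in reverse'' for the right-hand side you have in mind presupposes the conclusion.

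The paper's proof avoids this circularity by never comparing $c$ to $c_F$. Instead it makes two moves you are missing. First, it modifies $c$ within its cohomology class so that $c(e_\infty)\in\dsv r{\om,\wdg}[\infty]$ lies in the \emph{holomorphic} submodule, not merely in $\esv r{\om,\wdg}[\infty]$; this is possible because $\esv r{\om,\wdg}[\infty]=\dsv r{\om,\wdg}[\infty]+\esv r\om$ and one can subtract a coboundary built from the $\esv r\om$ part. Second, with $h=\tilde c(e_\infty)$ and $g=\tilde c(f_\infty)$ now representing elements of $\dsv r{\om,\wdg}[\infty]$ and $\dsv r\om$, it applies the one-sided average machinery of Section~\ref{sect-osa} directly (Lemma~\ref{lem-h-Av+-} and Proposition~\ref{prop-avT-Fe}, which need $r\in\ZZ_{\geq2}$ so that $2-r\leq0$). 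Choosing a wide cycle $C=(T^ke_\infty)-(T^{-k}e_\infty)-\sum_{n=-k}^{k-1}T^{-n}f_\infty$, one gets $\tilde c(C)=-(\av{T,1}g)+p_--p_+$ with $1$-periodic holomorphic $p_\pm$. On the low region where $\tilde c(C)=0$ this forces $p_--p_+$ to have only negative Fourier terms; on the high region $Q$ where $\tilde c(C)=4\pi u$, the term $\av{T,1}^\uparrow g$ has only positive Fourier terms. Hence the zero-th Fourier coefficient of $u$ vanishes---no reference to $c_F$ needed.
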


\begin{proof}
Let a cohomology class $[c]\in \hpar^1(\Gm;\esv {v,r} \om,\esv {v,r}
{\fsn,\wdg})$ be given. In the proof of Proposition~\ref{prop-alr}
the image $u=\al_r\bigl([c]\bigr)$ is constructed in \eqref{uc} as
$u(z):=u(C;z)=\frac1{4\pi} \tilde c
(C)(z)$, where the cochain $\tilde c \in C^1(F^\tess_\pnt;\Gr r
\om,\Gr {v,r} {\fsn,\wdg})$ is a lift of $c\in Z^1(F^\tess_\pnt;\esv
{v,r}\om,\esv{v,r}{\fsn,\wdg})$, and where $C\in \ZZ[X_1^\tess]$ is a
path around $z$ adapted to~$\tilde c$. We use a tesselation~$\tess$
and the corresponding resolution $\bigl(F^\tess_i\bigr)
= \bigl( \ZZ[X_i^\tess]\bigr)$ as discussed in~\S\ref{sect-tesscoh}.

We want to show that for each cusp the Fourier term of order
zero of the automorphic form vanish. It suffices to do this
for one representative $\ca$ of each $\Gm$-orbit of cusps for which
$v(\pi_\ca)=1$. (If $v(\pi_\ca)\neq 1$ there is no Fourier term of
order zero at the cusp $\ca$.)
This can be handled for each such cusp separately. By conjugation we
can assume that $\ca=\infty$ and $\pi_\ca=T=\matc 1101$.

After the conjugation, the cuspidal sector $V_\infty$ looks exactly
like that for the modular group, in Figure~\ref{fig-modtess},
\S\ref{sect-tess}. The sector $V_\infty$ is bounded by edges
$e_\infty$ from $P_{\infty}=\frac12+iY$ (for some $Y>0$) to~$\infty$,
$T^{-1}e_\infty$ from $T^{-1}\infty$ to $\infty$, and $f_\infty$ from
$P_\infty$ to~$T^{-1}P_\infty$. By holomorphy it suffices to consider
the Fourier term of order $0$ high up in the cuspidal sector.

The cocycle $c$ satisfies $c(f_\infty)\in \esv r \om$, $c(e_\infty)
\in \esv r{\fs,\wdg}$, and $c(e_\infty)|_r(1-T)=c(f_\infty)$. By
Lemma~\ref{lem-singTe} this implies $c(e_\infty)\in \esv r
{\om,\wdg}[\infty]$. We change the cocycle within its cohomology
class. Definition~\ref{esv-def-iw} shows that $\esv
r{\om,\wdg}[\infty] = \dsv r {\om,\wdg}[\infty]+ \esv r \om$. So
there is $k\in \esv r \om$ such that $c(e_\infty)-k \in \dsv
r{\om,\wdg}[\infty]$. We define $f\in C^0(F^\tess_\pnt;\esv r
\om,\esv r {\fsn,\wdg})$ by taking $f(\gm^{-1}P_\infty)=k|_r \gm$ for
all $\gm\in \Gm$, and $f=0$ on all other $\Gm$-orbits in $X_0^\tess$.
Then $c_1=c-df$ is in the same cohomology class as~$c$, and satisfies
$c_1(e_\infty)\in \dsv r {\om,\wdg}[\infty]$. Replacing $c$ by~$c_1$,
we can assume that the cocycle $c$ now satisfies $c(e_\infty)\in \dsv
r {\om,\wdg}[\infty]$ and then automatically $c(f_\infty)\in \dsv r
\om$ by the cocycle relation.

In the construction of $u(C;z)$ in \eqref{uc} we started with $z$ in a
given set~$Z$ and showed that there are suitable cycles around it.
Here we will take a special cycle $C$ and choose a region $Z$
encircled by it, high up in~$\uhp$.\vskip.3ex
\twocolwithpictl{
\setlength\unitlength{.7cm}
\begin{picture}(8,5)(-3,0)
\put(3.1,3.4){$e_\infty$}
\put(-1.2,.9){$\singr \tilde c(f_\infty)$}
\put(-3,0){\line(1,0){8}}
\put(-.5,2){\oval(1,1)[l]}
\put(3.5,2){\oval(1,1)[r]}
\put(-.5,1.5){\line(1,0){4}}
\put(-.5,2.5){\line(1,0){4}}
\put(3,1.1){\oval(1.6,1.6)[b]}
\put(2.2,1.1){\line(0,1){3.9}}
\put(3.8,1.1){\line(0,1){3.9}}
\put(-.5,3.9){$\singr \tilde c(e_\infty)$}
\thicklines
\put(3,2){\line(0,1){3}}
\put(1,2){\line(1,0){2}}
\end{picture}
}{ Let $\tilde c \in C^1(F^\tess_\cdot;\Gr
{v,r}\om,\Gr{v,r}{\fsn,\exc})$ be a lift of $c$.  Our choice of
the lift $\tilde c(f_\infty)$ of $c(f_\infty)$ in the proof of
Proposition~\ref{prop-alr} implies that  $\singr \tilde
c(f_\infty)$ is compact in $\uhp$. The lift $\tilde
c(e_\infty)$  was  chosen so that $\uhp\setminus\singr
\tilde c(e_\infty)$ is an $\{\infty\}$-excised
neighbourhood. The regions where $\tilde c(e_\infty)$ and $\tilde
c(f_\infty)$ are not holomorphic may be large, and cover the
sector~$V_\infty$. We have drawn the edges $x=e_\infty, f_\infty$
inside the singular set $\singr \tilde c(x)$.   For $\tilde
c(f_\infty)$ this~is}\vskip.6ex

\twocolwithpictr{a consequence of the choice, and for $\tilde
c(e_\infty)$ we can arrange the choice so that $e_\infty \subset
\singr \tilde c(e_\infty)$.

\quad We would like to enclose the set $Z$ on which
to study the function~$u$ by the boundary $\partial_2
V_\infty=e_\infty - T^{-1}e_\infty-f_\infty$. However, the
corresponding sets of singularities may very well overlap, leaving no
space for a region~$Z$. Instead of this, we take the union of a
number of translates $T^{-n}V_\infty$. }{\setlength\unitlength{.75cm}
\begin{picture}(6,5)(-.5,0)
\put(-.5,0){\line(1,0){6}}
\put(3,2){\line(0,1){3}}
\put(2,2){\line(0,1){3}}
\put(1,2){\line(0,1){3}}
\put(4,2){\line(0,1){3}}
\put(5,2){\line(0,1){3}}
\put(0,2){\line(1,0){5}}
\put(2.3,3.4){$\scriptstyle V_\infty$}
\put(1,3.4){$\scriptstyle T^{-1}V_\infty$}
\put(3.2,3.4){$\scriptstyle TV_\infty$}
\put(2.4,1.6){$\scriptstyle f_\infty$}
\thicklines
\put(5,2){\line(0,1){3}}
\put(0,2){\line(1,0){5}}
\put(0,2){\line(0,1){3}}
\end{picture}
}\vskip.6em

We take $k\in \ZZ_{\geq 1}$ large, so that there is a region of
width at least~$2$ between $\singr \tilde c(T^{-k}e_\infty) = T^{-k}
\singr \tilde c(e_\infty)$ and $\singr \tilde c(T^{k}e_\infty) =
T^{k} \singr \tilde c(e_\infty)$. We put
$g_k=\sum_{n=-k}^{k-1}T^{-n}f_\infty$. This leads to the situation in
Figure~\ref{fig-S}.
\begin{figure}[ht]
\[
\setlength\unitlength{.7cm}
\begin{picture}(12,5)(-7,0)
\put(-7,0){\line(1,0){12}}
\put(-6.5,2){\oval(1,1)[l]}
\put(3.5,2){\oval(1,1)[r]}
\put(-6.5,1.5){\line(1,0){10}}
\put(-6.5,2.5){\line(1,0){10}}
\put(3,1.1){\oval(1.6,1.6)[b]}
\put(2.2,1.1){\line(0,1){3.9}}
\put(3.8,1.1){\line(0,1){3.9}}
\put(-3,1.1){\oval(1.6,1.6)[b]}
\put(-3.8,1.1){\line(0,1){3.9}}
\put(3.8,1.1){\line(0,1){3.9}}
\put(-2.2,1.1){\line(0,1){3.9}}
\put(3.9,3.9){$\singr \tilde c(T^k e_\infty)$}
\put(-7.3,3.9){$\singr \tilde c(T^{-k}e_\infty)$}
\put(-7.8,.9){$\singr \tilde c(g_k)$}
\put(0,3.8){$Q$}
\put(5,.8){$R$}
\end{picture}
\]
\caption{Illustration of regions of non-holomorphy. } \label{fig-S}
\end{figure}
There is a connected region $Q$ high up in the upper half-plane of width at
least~$2$ on which $\tilde c(T^{\pm k}e_\infty)$ and $\tilde c(g_k)$
are holomorphic. The region $Q$ is 
disjoint from the connected region $R$ in
the complement of the three singular  sets  that has $\RR$
in its boundary.

We consider the cycle $C = (T^k e_\infty) -
(T^{-k}e_\infty)-(f_\infty^{(k)})\in \ZZ[X_1^\tess]$, with $k$ as
fixed above. It encircles $Q$ once, so $4\pi\, u(z) = \tilde
c(C)\,(z)$ for $z\in Q $. Furthermore, on the $\{\infty\}$-excised
neighbourhood $R$ the function $\tilde c(C)$ is zero, since it
represents $c(C)$, which is zero by the cocycle relation. 

The element $h=\tilde c(e_\infty) \in \Gr{v,r}{\fsn,\exc}$
represents an element of $\dsv r {\om,\exc}[\infty]$ and
$g=\tilde c(f_\infty) \in \Gr{v,r}\om$ represents an element of
$\dsv r \om$. Furthermore the multiplier system $v$ is trivial on
$\pi_\ca$, and we have conjugated $\ca$ to~$\infty$. So we can apply
Lemma~\ref{lem-h-Av+-}. The averages $\av{T,1}^+ g$ and
$\av{T,1}^- g$ are functions in $C^2(\uhp)$ that are holomorphic on
the region $0<\im(z)<\e$ and on $\pm \re(z)>\e^{-1}$ for some
 sufficiently small positive~$\e$. Lemma~\ref{lem-h-Av+-} gives us
$1$-periodic $p_+$ and $p_-$ such that $h = \av{T,1}^\pm g + p_\pm$,
on regions as indicated in the lemma. 

Shifting this with $T^{\pm k}$ we get
\bad \tilde c(T^k e_\infty) &\= \tilde c(e_\infty)|_r T^{-k} \=
(\av{T,1}^-g)|_r T^{-k} +p_-\,,\\
     \tilde c(T^{-k} e_\infty) &\= \tilde c(e_\infty)|_r T^k \=
     (\av{T,1}^+g)|T^k+p_+\,,
\ead
first on regions as indicated in the lemma, and then by analytic continuation to regions containing $Q$ and a strip $0<\im(z)<\e$.

For $z$ in $Q$ or near $\RR$ we have
\begin{align*}
\tilde c(C)(z)
&\= \tilde c(T^ke_\infty)(z) - \tilde c(T^{-k}e_\infty)(z) - \tilde c(g_k)(z)
\displaybreak[0]\\
&\= (\av{T,1}^- g)|_r T^{-k} (z)+p_-(z) 
- (\av{T,1}^+ g)|_r T^k\, (z) -p_+(z)
- \sum_{n=-k}^{k-1} g|_r T^n \,(z)
\displaybreak[0]\\
&\= -\sum_{n\leq -1} g|_r T^{n-k}\,(z) - \sum_{n\geq 0} g|_r T^{n+k}\, (z)
- \sum_{n=-k}^{k-1} g|_r T^n \,(z) +p_-(z)-p_+(z)
\displaybreak[0]\\
&\= -(\av{T,1}g)\,(z) +p_-(z)-p_+(z)\,,
\end{align*}
with $\av{T,1}$ as defined in \eqref{avTdef}.

Next we apply Proposition~\ref{prop-avT-Fe} to $g=\tilde c(f_\infty)$. 
The $1$-periodic function $\av {T,1} g (z)=\sum_{n\in
\ZZ}g(z+n)$  on $\uhp$ is
holomorphic on a region of the form $0<\im z<\e$ and on a region
$y>\e^{-1}$ for some $\e\in
(0,1)$. We denote the holomorphic function on the upper region by $
\av {T,1}^\uparrow \tilde c(f_\infty)$, and the holomorphic function
on the lower region by $\av {T,1}^\downarrow \tilde c(f_\infty)$.
Proposition~\ref{prop-avT-Fe} states that $\av{T,1}^\uparrow \tilde
c(f_\infty)$ has a Fourier expansion with terms of positive order
only, and $\av{T,1}^\downarrow \tilde c(f_\infty)$ a Fourier
expansion with only terms of negative order.

The domain of $\av{T,1}^\downarrow g $ is 
contained in the region~$R$. There we find
\[ 0 \= \tilde c(C)(z) \= p_-(z)-p_+(z) - \bigl(\av {T,1}^\downarrow
g (z)\,.\]
So all Fourier terms of $p_--p_+$ of order $n\geq 0$ vanish. This
holds on $\uhp$, since $p_--p_+$ is holomorphic and $1$-periodic
on~$\uhp$.

If $z\in Q$ then $z$ is in the domain of $\av{T,1}^\uparrow g $, and
\[ 4\pi\, u(C;z) \= \tilde c(C)(z) \= p_-(z)-p_+(z) -
\av{T,1}^\uparrow g 
\,. \]
The function $\bigl( \av {T,1}^\uparrow  g 
\bigr)(z)$ is given by a Fourier expansions with terms of positive
order. The term $p_--p_+$ has a Fourier expansion with terms of
negative order. Hence the Fourier coefficient of $u$ at~$\infty$ of
order~$0$ vanishes.
\end{proof}

\begin{cor}\label{cor-A0imq}Let $r\in \ZZ_{\geq 2}$. Then
$\hpar^1(\Gm;\esv{v,r}\om,\esv{v,r}{\fsn,\wdg})= \bcoh r \om\,
A_r^0(\Gm,v)$.
\end{cor}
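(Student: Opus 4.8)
The corollary is the equality of two subspaces of $\hpar^1(\Gm;\esv{v,r}\om,\esv{v,r}{\fsn,\wdg})$, and by now everything needed is in place; the proof is just a matter of assembling the previous results into a two-sided inclusion. The plan is to combine Proposition~\ref{prop-A0-a0}, Corollary~\ref{cor-A0E}, Proposition~\ref{prop-impcA}, and Theorem~\ref{thmbg}.

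First I would record the inclusion $\bcoh r \om\, A_r^0(\Gm,v)\subset \hpar^1(\Gm;\esv{v,r}\om,\esv{v,r}{\fsn,\wdg})$. Indeed, for $F\in A_r^0(\Gm,v)$ Proposition~\ref{prop-A0-a0} (statement b) $\Rightarrow$ a)) gives exactly $\bcoh r \om F\in \hpar^1(\Gm;\esv{v,r}\om,\esv{v,r}{\fsn,\wdg})$; equivalently, $A_r^0(\Gm,v)\subset A^\E_r(\Gm,v)$, which together with Corollary~\ref{cor-A0E} ($A^\E_r(\Gm,v)=A^0_r(\Gm,v)$) shows $A^\E_r(\Gm,v)=A^0_r(\Gm,v)$ precisely. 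So $\bcoh r \om$ maps $A_r^0(\Gm,v)=A^\E_r(\Gm,v)$ into the mixed parabolic cohomology group.

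For the reverse inclusion, I would take an arbitrary class $[c]\in \hpar^1(\Gm;\esv{v,r}\om,\esv{v,r}{\fsn,\wdg})$ and apply the linear map $\al_r$ of Proposition~\ref{prop-alr}. By Proposition~\ref{prop-impcA}, $u:=\al_r([c])$ lies in $A_r^0(\Gm,v)$. Since $A_r^0(\Gm,v)=A^\E_r(\Gm,v)$, Theorem~\ref{thmbg}(ii) applies: on $A^\E_r(\Gm,v)$ the maps $\bcoh r \om$ and $\al_r$ are mutually inverse bijections between $A^\E_r(\Gm,v)$ and $\bcoh r \om\, A^\E_r(\Gm,v)$. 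In particular $[c]=\bcoh r \om\,\al_r([c])=\bcoh r\om u$ with $u\in A_r^0(\Gm,v)$, so $[c]\in \bcoh r \om\, A_r^0(\Gm,v)$. (Here one must check that $[c]$ actually lies in the image $\bcoh r\om A^\E_r(\Gm,v)$ on which $\al_r$ is the two-sided inverse; but since $\al_r([c])\in A^\E_r(\Gm,v)$ and $\al_r\circ\bcoh r\om=\mathrm{id}$ on $A^\E_r(\Gm,v)$ while $\bcoh r \om\circ\al_r=\mathrm{id}$ on $\bcoh r\om A^\E_r(\Gm,v)$, the injectivity of $\al_r$ from Proposition~\ref{prop-alr-inj} forces $[c]=\bcoh r\om\al_r([c])$, which is what is needed.)

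I do not expect any real obstacle here; the corollary is essentially bookkeeping. The only point requiring a moment's care is the last parenthetical one: verifying that the identity $\bcoh r\om\circ\al_r=\mathrm{id}$, which Theorem~\ref{thmbg}(ii) asserts on the image $\bcoh r\om A^\E_r(\Gm,v)$, can be invoked for the given class $[c]$. This follows because $\al_r$ is injective (Proposition~\ref{prop-alr-inj}) and $\al_r(\bcoh r\om\al_r([c]))=(\al_r\circ\bcoh r\om)(\al_r([c]))=\al_r([c])$, the middle equality using $\al_r([c])\in A^\E_r(\Gm,v)$ and Proposition~\ref{prop-alr}(ii); injectivity then yields $\bcoh r\om\al_r([c])=[c]$. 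Hence $[c]\in\bcoh r\om\,A_r^0(\Gm,v)$, completing the proof.
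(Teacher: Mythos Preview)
Your proof is correct and follows essentially the same route as the paper's own argument: the easy inclusion via Proposition~\ref{prop-A0-a0}, and the reverse inclusion by applying $\al_r$ (Proposition~\ref{prop-impcA}), then showing $\bcoh r\om\al_r([c])=[c]$ via $\al_r\bigl(\bcoh r\om\al_r([c])\bigr)=\al_r([c])$ together with the injectivity of~$\al_r$. The only cosmetic difference is that the paper cites Theorem~\ref{thmbg}(i)--(ii) for both the injectivity of $\al_r$ and the identity $\al_r\circ\bcoh r\om=\mathrm{id}$ on $A^\E_r(\Gm,v)$, whereas you trace these back to Propositions~\ref{prop-alr-inj} and~\ref{prop-alr}(ii) directly; the content is identical.
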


\begin{proof} By Proposition~\ref{prop-impcA} we have
$\al_r\hpar^1(\Gm;\esv{v,r}\om,\esv{v,r}{\fsn,\wdg})\subset
A^0_r(\Gm,v)$.

A given class $[c]\in \hpar^1(\Gm;\esv{v,r}\om,\esv{v,r}{\fsn,\wdg})$
has image $\al_r[c]\in A_r^0(\Gm,v)$, and hence $\bcoh r \om \al_r
[c] \in  \bcoh r \om A^0_r(\Gm,v) $. 
Part~ii) of Theorem~\ref{thmbg} implies
that $\al_r \bcoh r \om \al_r [c]= \al_r[c]$, and then $\bcoh r \om
\al_r[c]= [c]$, by the injectivity of $\al_r$ in Part~i)
of that theorem. This proves that
$\hpar^1(\Gm;\esv{v,r}\om,\esv{v,r}{\fsn,\wdg})\subset \bcoh r \om\,
A_r^0(\Gm,v)$.

The other inclusion follows from $(\bcoh r \om)^{-1}\hpar^1(\Gm;\esv
{v,r} \om,\esv{v,r}{\fsn,\wdg})=A^0_r(\Gm,v)$
(Proposition~\ref{prop-A0-a0}).
\end{proof}

\subsection{Exact sequences for mixed parabolic cohomology
groups}\label{sect-es}
\begin{prop}\label{prop-esH}
Let $r\in \ZZ_{\geq 2}$. We put\il{Kvr}{$K_{v,r}$} $K_{1,2}:=
\dsv{1,0}\pol \cong\CC$ (trivial representation), and $K_{v,r}:=\{0\}
$ if $r\in \ZZ_{\geq 3}$ or $v\neq 1$.

The rows in the following commuting diagram are exact. {\rm (We have
suppressed $\Gm$ from the notation.)}
\badl{DED-exs} \xymatrix@C=.5cm{ K_{v,r} \ar[r] \ar@{=}[d] &
H^1(\dsv{v,r}\om) \ar[r] & H^1(\esv{v,r}\om) \ar[r]^{\rs_r} &
H^1(\dsv{v,2-r}\pol) \ar[r] & 0 \\
K_{v,r} \ar[r] & \hpar^1(\dsv{v,r}\om,\dsv{v,r}{\fsn,\wdg}) \ar[r]
\ar@{^{(}->}[u] & \hpar^1(\esv{v,r}\om,\esv{v,r}{\fsn,\wdg})
\ar[r]^{\rs_r} \ar@{^{(}->}[u] & \hpar^1(\dsv{v,2-r}\pol)
\ar@{^{(}->}[u] } \eadl
\end{prop}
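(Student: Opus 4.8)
The plan is to derive the exact rows of \eqref{DED-exs} from the short exact sequences of $\Gm$-modules established in Proposition~\ref{prop-iw-esv}, namely
\[
0 \to \dsv r \om \to \esv r \om \xrightarrow{\rs_r} \dsv{2-r}\pol \to 0,
\qquad
0 \to \dsv r {\om,\wdg}[\xi_1,\ldots,\xi_n] \to \esv r {\om,\wdg}[\xi_1,\ldots,\xi_n] \xrightarrow{\rs_r} \dsv{2-r}\pol \to 0,
\]
twisted by the multiplier system $v$ to give sequences of $\Gm$-modules. Passing to the inductive limit over finite sets of cusps preserves exactness, so we also get $0 \to \dsv{v,r}{\fsn,\wdg} \to \esv{v,r}{\fsn,\wdg} \xrightarrow{\rs_r} \dsv{v,2-r}\pol \to 0$, where $\dsv{v,2-r}\pol$ appears because the limit of the constant quotient is still that constant. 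The top row then comes from the long exact sequence in group cohomology $H^0 \to H^0 \to H^0 \to H^1(\dsv{v,r}\om) \to H^1(\esv{v,r}\om) \xrightarrow{\rs_r} H^1(\dsv{v,2-r}\pol) \to H^2(\dsv{v,r}\om)$. First I would compute the $H^0$ terms: $H^0(\Gm;\dsv{v,2-r}\pol) = (\dsv{v,2-r}\pol)^\Gm$, which is the space of $\Gm$-invariant polynomial period functions of degree $\le r-2$; this is nonzero precisely when $r=2$ and $v$ is trivial, giving the trivial module $\CC = K_{1,2}$, and zero otherwise by a standard argument on the action of a hyperbolic or parabolic element. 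Likewise $H^0(\Gm;\esv{v,r}\om) = 0$ by Part~v) of Proposition~\ref{prop-Esys}, and $H^0(\Gm;\dsv{v,r}\om)=0$ as well. Thus the connecting map $H^0(\dsv{v,2-r}\pol) = K_{v,r} \to H^1(\dsv{v,r}\om)$ starts the sequence, which is exactly the left-hand part of the top row.

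Next I would establish surjectivity of $\rs_r : H^1(\esv{v,r}\om) \to H^1(\dsv{v,2-r}\pol)$, i.e.\ that the $\rightarrow 0$ at the right of the top row is legitimate. This is where I would invoke Theorem~\ref{THMaci} Part~i): the commuting diagram \eqref{Ediag} has $\coh{2-r}\om : A_{2-r}(\Gm,v) \xrightarrow{\cong} H^1(\Gm;\dsv{v,r}\om)$ and $\bcoh r \om : A_r^0(\Gm,v) \xrightarrow{\cong} H^1(\Gm;\esv{v,r}\om)$, with the composition $\rs_r \circ \bcoh r \om$ corresponding to $c_r \partial_\tau^{r-1} : A_{2-r}(\Gm,v) \to A_r^0(\Gm,v)$ followed by... wait, more directly: one knows classically (Eichler--Shimura, for the adjoint weight) that $\coh r \om A_r(\Gm,v)$ surjects onto $H^1(\Gm;\dsv{v,2-r}\pol)$ when $v$ is unitary, and the diagram \eqref{Ediag} exhibits $H^1(\dsv{v,2-r}\pol)$ as a quotient of $H^1(\esv{v,r}\om)$ via $\rs_r$. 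Actually the cleanest route: the long exact sequence gives that the cokernel of $\rs_r$ on $H^1$ injects into $H^2(\Gm;\dsv{v,r}\om)$, and by Poincaré duality / the fact that $\Gm\backslash\uhp$ is a noncompact surface (cohomological dimension $1$), $H^2(\Gm;V)=0$ for any $\CC$-vector space module $V$ — this is the standard vanishing for groups with cusps. Hence $\rs_r$ is surjective on $H^1$, and moreover $H^1(\dsv{v,r}\om) \to H^1(\esv{v,r}\om)$ has kernel exactly the image of $K_{v,r}$. That completes the top row.

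For the middle row I would repeat the argument with mixed parabolic cohomology, using the description via the tesselation complex from Section~\ref{sect-tesscoh}: the short exact sequence of coefficient pairs $(\dsv{v,r}\om, \dsv{v,r}{\fsn,\wdg}) \hookrightarrow (\esv{v,r}\om,\esv{v,r}{\fsn,\wdg}) \twoheadrightarrow (\dsv{v,2-r}\pol,\dsv{v,2-r}\pol)$ induces a long exact sequence of the complexes $C^\bullet(F^\tess_\bullet; -,-)$, hence of the $\hpar^i$. One needs the analogue of the $H^0$ and $H^2$ computations: $\hpar^0 = H^0$ (the parabolic condition is vacuous in degree $0$), so the left terms match $K_{v,r}$ as before, and $\hpar^2$ of the relevant modules vanishes — here I would appeal to the vanishing statements used already in Section~\ref{sect-caf} (e.g.\ the exact sequences cited from \cite[Prop.~11.9]{BLZm} together with Lemma~\ref{lem-Ies} and the vanishing $\hpar^1(\Gm;\Gr{v,r}\om,\Gr{v,r}{\fsn,\wdg})=\{0\}$ of Lemma~\ref{lem-H1G}), or more simply to the fact that $\hpar^2(\Gm;V,W)$ maps to $H^2(\Gm;V)=0$ with controllable kernel. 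The surjectivity $\rs_r : \hpar^1(\esv{v,r}\om,\esv{v,r}{\fsn,\wdg}) \to \hpar^1(\dsv{v,2-r}\pol)$ then follows, using Corollary~\ref{cor-A0imq} which identifies the source as $\bcoh r \om A_r^0(\Gm,v)$ and the classical surjectivity of the period map onto parabolic polynomial cohomology. The vertical maps are the inclusion-induced natural maps, and commutativity of \eqref{DED-exs} is automatic from functoriality of the long exact sequences with respect to the morphism of short exact sequences $(\dsv\cdot{\fsn,\wdg}) \hookrightarrow (\dsv\cdot\cdot)$ (i.e.\ "forget the parabolic restriction"). The main obstacle I anticipate is not any single step but assembling the correct vanishing inputs: one must be careful that the relevant $H^2$ / $\hpar^2$ groups vanish (or that the connecting maps into them are zero) precisely in the generality of arbitrary multiplier systems and weights $r\in\ZZ_{\geq 2}$, and one must correctly pin down the exceptional module $K_{v,r}=\CC$ in the case $r=2$, $v=1$ — which is exactly the phenomenon already flagged in Part~iii) of Theorem~\ref{THMaci}, so consistency with that statement is the sanity check.
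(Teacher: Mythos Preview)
Your argument for the top row is essentially the paper's: the long exact sequence associated to $0\to\dsv{v,r}\om\to\esv{v,r}\om\xrightarrow{\rs_r}\dsv{v,2-r}\pol\to0$, the identification $(\dsv{v,2-r}\pol)^\Gm=K_{v,r}$, and the vanishing $H^2(\Gm;\dsv{v,r}\om)=0$ because $\Gm$ has cusps. Good.

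For the lower row there are two problems.

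\textbf{First, you overclaim.} The proposition does \emph{not} assert surjectivity of $\rs_r$ on the mixed parabolic level: the second row of \eqref{DED-exs} has no terminating $\to 0$. The paper explicitly remarks after the proof that it did not succeed in proving that $\hpar^2(\Gm;\dsv{v,r}\om,\dsv{v,r}{\fsn,\wdg})\to\hpar^2(\Gm;\esv{v,r}\om,\esv{v,r}{\fsn,\wdg})$ is injective, and only obtains the surjectivity indirectly in \S\ref{sect-cocl} for unitary $v$ via classical Eichler--Shimura theory. Your appeal to ``classical surjectivity of the period map'' is exactly what fails here for general multiplier systems, and Corollary~\ref{cor-A0imq} alone doesn't give it either. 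So drop that part; you only need exactness at the three displayed terms.

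\textbf{Second, you skip the hypothesis of \cite[Prop.~11.9]{BLZm}.} Writing down the short exact sequence of coefficient pairs does \emph{not} automatically give a long exact sequence in mixed parabolic cohomology; one must verify that for every cusp $\ca$ the sequence of $\Gm_\ca$-invariants
\[
0\to (\dsv{v,r}{\fsn,\wdg})^{\Gm_\ca}\to(\esv{v,r}{\fsn,\wdg})^{\Gm_\ca}\xrightarrow{\rs_r}(\dsv{v,2-r}\pol)^{\Gm_\ca}\to0
\]
is exact. The only nontrivial point is surjectivity on the right, and this is where the paper does real work: conjugating $\ca$ to $\infty$ and using the Fourier expansion from Lemma~\ref{lem-per-harm} ii), one sees that the only possible contribution to $(\dsv{v,2-r}\pol)^{\Gm_\ca}$ is the constant function (when $v(\pi_\ca)=1$), and it is hit by the $\pi_\ca$-invariant element $y^{1-r}\in\esv{v,r}{\fsn,\wdg}$ since $\rs_r\,y^{1-r}=-(2i)^{r-2}$. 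Your citation of Lemma~\ref{lem-Ies} is misplaced---that lemma concerns the sequence $\N\to\Gr{}{}\to\esv{}{}$, not the present $\dsv{}{}\to\esv{}{}\to\dsv{}{\pol}$. Once this cusp-level exactness is checked, \cite[Prop.~11.9]{BLZm} yields the long exact sequence, and \cite[(11.11)]{BLZm} identifies $\hpar^0$ with the invariants, giving $K_{v,r}$ on the left.
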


\begin{proof}We use the following commuting diagram of $\Gm$-modules
with exact rows:
\badl{ed} \xymatrix@R=.5cm{ 0\ar[r] & \dsv{v,r}\om \ar[r]
\ar@{^{(}->}[d]
& \esv{v,r}\om \ar[r]^{\rs_r} \ar@{^{(}->}[d]
& \dsv{v,2-r}\pol \ar[r] \ar@{=}[d]
& 0
\\
0\ar[r] & \dsv{v,r}{\fsn,\wdg} \ar[r]
& \esv{v,r}{\fsn,\wdg} \ar[r]^{\rs_r}
& \dsv{v,2-r}\pol \ar[r]
& 0 } \eadl
See Part~iii) of Proposition~\ref{prop-iw-esv}. 

The upper row in \eqref{DED-exs} is part of the corresponding
long exact sequence in group cohomology. Since $\Gm$ has
cusps, all groups $H^2(\Gm;V)$ are zero. (See, {\sl e.g.},
\cite[\S11.2]{BLZm}.)
The $\Gm$-invariants of $\dsv{v,2-r}\pol$ are zero, unless $r=2$ and
$v=1$, when $\dsv{1,2-2}\pol$ is the trivial representation. This
gives the exactness of the upper row.

To use \cite[Proposition 11.9]{BLZm} for the lower row, we need also
exactness of
\[ 0 \rightarrow (\dsv{v,r}{\fsn,\wdg})^{\Gm_\ca} \rightarrow
(\esv{v,r}{\fsn,\wdg})^{\Gm_\ca} \stackrel{\rs_r}\rightarrow
(\dsv{v,2-r}\pol)^{\Gm_\ca}\rightarrow 0\]
for each cusp $\ca$ of $\Gm$. Most of the exactness follows from
Part~iii) in Proposition~\ref{prop-iw-esv}. For the surjectivity of
$\rs_r$ we conjugate $\ca$ to $\infty$, and use the Fourier expansion
in Part~ii) of Lemma~\ref{lem-per-harm}. The restriction $\rs_r$
sends the holomorphic contributions to zero. Only if $v(\pi_\ca)=1$
there may be a multiple of $y^{1-r}$; and we note that $\rs_r
y^{1-r}=-(2i)^{r-2}$ spans the $T$-invariants in $\dsv {2-r}\pol$. If
$v(\pi_\ca)\neq 1$ there is no multiple of $y^{1-r}$, and there are
no non-zero elements in $\dsv{2-r}\pol$ on which $T$ acts as
multiplication by $v(\pi_\ca)$.

Proposition 11.9 in \cite{BLZm} gives a long exact sequence of the
corresponding mixed parabolic cohomology groups, of which the lower
row is a part. We use \cite[(11.11)]{BLZm}, which tells us that
$\hpar^0(\Gm;V) $ is the space of
\il{VGm}{$V^\Gm$}\il{invar}{invariants}invariants~$V^\Gm$. Hence we
get $K_{v,r}$ on the left.\end{proof}

\rmrke In the second line of diagram \eqref{DED-exs} we have not
written a terminating $\rightarrow 0$. We did not succeed in proving
this directly, for instance by showing that
\[ \hpar^2(\Gm;\dsv{v,r}\om,\dsv{v,r}{\fsn,\wdg}) \rightarrow
\hpar^2(\Gm;\esv{v,r}\om,\esv{v,r}{\fsn,\wdg})\]
is injective. For unitary multiplier system the surjectivity of
\[\rs_r : \hpar^1(\Gm;\esv{v,r}\om,\esv{v,r}{\fsn,\wdg}) \rightarrow
\hpar^1(\Gm;\dsv{v,2-r}\pol)
\]
is known to hold, by classical results, as we will discuss
in~\S\ref{sect-cocl}.

\subsection{Automorphic forms and analytic boundary germ
cohomology}\label{sect-cmplaci}
We proceed under the assumption $r\in \ZZ_{\geq 2}$. In the diagram
\badl{diaAD} \xymatrix{ \hpar^1(\Gm;\dsv{v,r}\om,\dsv{v,r}{\fsn,\wdg})
\ar[r] & \hpar^1(\Gm;\esv{v,r}\om,\esv{v,r}{\fsn,\wdg})
\ar@<.4ex>[d]^{\al_r\;\; \cong}
\\
A_{2-r}(\Gm,v) \ar[u]_\cong^{\coh {2-r} \om}
& A_r^0(\Gm,v) \ar@<.4ex>[u]^{\bcoh r \om} } \eadl
we use Theorem~\ref{THMac} in the weight $2-r\in \ZZ_{\leq 0}$ to get
the isomorphism $\coh r \om$ on the left. The isomorphisms $\al_r$
and $\bcoh r \om$ on the right follow from Theorem~\ref{thmbg},
Corollaries \ref{cor-A0E} and~\ref{cor-A0imq}. The horizontal arrow
denotes the natural map associated to the inclusions $\dsv{v,r}\om
\subset \esv{v,r}\om$ and $\dsv{v,r}{\fsn,\wdg}\subset
\esv{v,r}{\fsn,\wdg}$. The following
results makes this into a commutative diagram:

\begin{lem}\label{lem-Bol}Let $r\in \ZZ_{\geq 2}$. Let
\il{cr}{$c_r$}$c_r =\frac i2\, \frac1{(r-1)!}$. Let $K_{v,r}$ be as
defined in Proposition~\ref{prop-esH}.

The following diagram commutes and has exact rows:
\bad \xymatrix{ K_{v,r}\ar[r] &
\hpar^1(\Gm;\dsv{v,r}\om,\dsv{v,r}{\fsn,\wdg}) \ar[r]^{\mathrm{id}}
& \hpar^1(\Gm;\esv{v,r}\om,\esv{v,r}{\fsn,\wdg})
\ar@<.4ex>[d]^{\al_r\;\;\cong}
\\
K_{v,r}\ar[r] \ar[u]^{\times (-i/2)}& A_{2-r}(\Gm,v)
\ar[u]^\cong_{\coh{2-r}\om } \ar[r]^{c_r\, \partial_z^{r-1}}
& A_r^0(\Gm,v) \ar@<.4ex>[u]^{\bcoh r \om} } \ead
{\rm By $\mathrm{ id }$ we indicate the homomorphism induced by the
inclusions $\dsv{v,r}\ast \rightarrow \esv{v,r}\ast$.}
\end{lem}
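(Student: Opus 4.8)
\textbf{Proof plan for Lemma~\ref{lem-Bol}.}

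The plan is to verify commutativity of the two squares separately; the left square (involving $K_{v,r}$) is elementary, and the right square is the substantial one. For the right square I would start by unwinding the definitions of the three maps at the level of cocycles. Given $F\in A_{2-r}(\Gm,v)$, the class $\coh{2-r}\om F$ is represented by $\gm\mapsto \ps^{z_0}_{F,\gm}(t)=\int_{\gm^{-1}z_0}^{z_0}(z-t)^{(2-r)-2}\,F(z)\,dz=\int_{\gm^{-1}z_0}^{z_0}(z-t)^{-r}\,F(z)\,dz$, viewed in $\dsv{v,r}\om$ after applying the inclusion $\dsv{v,r}\ast\hookrightarrow\esv{v,r}\ast$ (here one must be careful: the subscript ``$2-r$'' in $\dsv{v,2-(2-r)}\om=\dsv{v,r}\om$ matches, so the target module is indeed $\dsv{v,r}\om$). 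On the other side, $\bcoh r \om$ applied to $G:=c_r\,\partial_z^{r-1}F\in A_r^0(\Gm,v)$ is represented by $\gm\mapsto c^{z_0}_{G,\gm}(z)=\int_{\gm^{-1}z_0}^{z_0}K_r(z;\tau)\,G(\tau)\,d\tau$ in $\esv{v,r}\om$. So the task reduces to: show that the cocycles $\gm\mapsto\ps^{z_0}_{F,\gm}$ (pushed into the boundary-germ module via the inclusion $\dsv{v,r}\om\hookrightarrow\esv{v,r}\om$, i.e.\ regarding the holomorphic function $t\mapsto\ps^{z_0}_{F,\gm}(t)$ on $\lhp$, which extends past $\proj\RR$, as an analytic boundary germ) and $\gm\mapsto c_r\,c^{z_0}_{\partial_z^{r-1}F,\gm}$ are cohomologous in $Z^1(\Gm;\esv{v,r}\om)$.

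The key computational step is a Bol-type identity relating the Eichler kernel $(z-t)^{-r}$ in weight $2-r$ and the boundary-germ kernel $K_r(z;\tau)$ in weight $r$ via $(r-1)$-fold differentiation. The clean way to do this is integration by parts: I would integrate $c^{z_0}_{\partial_\tau^{r-1}F,\gm}(z)=\int_{\gm^{-1}z_0}^{z_0}K_r(z;\tau)\,\partial_\tau^{r-1}F(\tau)\,d\tau$ by parts $r-1$ times in $\tau$. Each application of $\partial_\tau$ to $K_r(z;\tau)=\frac{2i}{z-\tau}\bigl(\frac{\bar z-\tau}{\bar z-z}\bigr)^{r-1}$ lowers the order of the boundary-germ pole (recall $K_r(\cdot;\tau)$ is $r$-harmonic, and differentiating in $\tau$ interacts with the hypergeometric/polar structure of \S\ref{sect-pol}); after $r-1$ steps the ``principal part'' that survives is, up to the explicit constant $c_r=\frac{i}{2(r-1)!}$, precisely the holomorphic kernel $(z-t)^{-r}$ restricted appropriately, i.e.\ the image of $\om_{2-r}(F;\cdot,\tau)$ under the inclusion into boundary germs, while the boundary terms at $\tau=z_0$ and $\tau=\gm^{-1}z_0$ assemble into a coboundary in $\esv{v,r}\om$. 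Concretely, I expect $\partial_\tau^{r-1}\bigl[K_r(z;\tau)F(\tau)\bigr]$ to produce, after pairing with the integral, a main term equal to (a multiple of) $(z-t)^{-r}F(\tau)\,d\tau$ plus $\partial_\tau$ of an explicit $\esv r\om$-valued expression depending bilinearly on derivatives of $F$ and $K_r$; integrating the latter gives a telescoping contribution whose value on $\gm$ is of the form $b|_{v,r}(\gm-1)$. This is the analogue of the classical fact that the Eichler integral of $F$ of weight $2-r$ and the period polynomial of $\partial^{r-1}F$ of weight $r$ carry the same cohomological information, transported into the boundary-germ setting via Proposition~\ref{prop-polexp} and Proposition~\ref{prop-Esys}(iv).

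The main obstacle will be bookkeeping the boundary-germ structure through the $(r-1)$ integrations by parts: one must check that at each intermediate stage the integrand still represents an element of $\esv{v,r}\om$ (not merely of $\W{v,r}\om(\proj\RR)$ or of some larger germ space), so that the ``telescoping'' terms genuinely define coboundaries in the correct module, and one must pin down the constant $c_r$ exactly — this is where the normalization $f_r$ in \eqref{frdef}, the factor $\frac{2i}{z-\tau}$ in $K_r$, and the branch conventions of \S\ref{sect-actions} all have to be reconciled. For the left square, I would argue as follows: $K_{v,r}$ is nonzero only when $r=2$, $v=1$, where it is the trivial module $\CC\cong\dsv{1,0}\pol$; the map $\times(-i/2)$ from $K_{v,r}$ to $A_0(\Gm,1)=\CC$ (constants) followed by $\coh0\om$ must agree with the composite $K_{v,r}\to\hpar^1(\Gm;\dsv{1,2}\om,\dsv{1,2}{\fsn,\wdg})$ from the exact sequence of Proposition~\ref{prop-esH}; this I would check directly using the explicit cocycle $\tilde\ps$ in \eqref{ps-r=0} for the constant function $1=\eta^0$, exactly as in the example preceding the discussion of harmonic lifts, which already computes $\coh0\om 1$. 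Exactness of the rows is inherited verbatim from Proposition~\ref{prop-esH} (top row) and, for the bottom row, from Theorem~\ref{THMac} in weight $2-r$ together with Corollaries~\ref{cor-A0E} and~\ref{cor-A0imq}, which identify $\coh{2-r}\om$, $\bcoh r\om$ and $\al_r$ as isomorphisms onto the indicated groups; once the squares commute, the kernel of $c_r\partial_z^{r-1}:A_{2-r}(\Gm,v)\to A_r^0(\Gm,v)$ is exactly the constants when $r=2,v=1$ and is $\{0\}$ otherwise, matching $K_{v,r}$.
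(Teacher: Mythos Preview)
Your treatment of the left square and of exactness of the rows matches the paper's: Bol's identity $\partial_z^{r-1}(F|_{2-r}g)=F^{(r-1)}|_r g$ shows $c_r\partial_z^{r-1}$ lands in $A_r(\Gm,v)$ with kernel the constants, and the top row is Proposition~\ref{prop-esH}. For the left square the paper lifts $-\tfrac i2\in K_{1,2}$ to $-\tfrac i2\,K_2(\cdot;z_0)\in\esv{1,2}\om$ and computes the connecting homomorphism directly via~\eqref{Kq-inv}; your use of the explicit cocycle~\eqref{ps-r=0} is the same computation in different packaging.

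For the right square your plan differs from the paper's. Your integration-by-parts route is viable: the key identity $\partial_\tau^{r-1}K_r(z;\tau)=2i\,(r-1)!\,(z-\tau)^{-r}$ follows from the binomial expansion of $(\bar z-\tau)^{r-1}=\bigl((\bar z-z)+(z-\tau)\bigr)^{r-1}$, and it produces exactly the constant~$c_r$. The obstacle you flag is real, though: the individual boundary terms $\partial_\tau^j K_r(z;\tau)\,F^{(r-2-j)}(\tau)\big|_{\gm^{-1}z_0}^{z_0}$ are \emph{not} termwise $\Gm$-equivariant (only $F^{(r-1)}$ transforms by $|_r$, intermediate derivatives of $F$ do not), so showing the full boundary expression is a coboundary requires a further Bol-type cancellation across the sum in~$j$, or an indirect argument using that the difference of two cocycles is a cocycle of a special shape.

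The paper avoids this entirely by composing with $\al_r$ rather than comparing cocycles. Since $\al_r$ is a bijection (Theorem~\ref{thmbg}, Corollaries~\ref{cor-A0E},~\ref{cor-A0imq}), commutativity follows once $\al_r\circ\mathrm{id}\circ\coh{2-r}\om\,F=c_r\,\partial_z^{r-1}F$. The cocycle $x\mapsto\int_x(\tau-z)^{-r}F(\tau)\,d\tau$ already takes values in $\dsv{v,r}\om\subset\esv{v,r}\om$ as \emph{globally defined} holomorphic functions on $\CC\setminus x$, so no lifting is needed in the construction of $\al_r$: for $z$ in the interior of $\fd_Y$,
\[
u(z)\;=\;\frac1{4\pi}\,c_F(\partial_2\fd_Y)(z)\;=\;\frac1{4\pi}\oint_{\partial_2\fd_Y}(\tau-z)^{-r}F(\tau)\,d\tau\;=\;\frac{2\pi i}{4\pi}\,\frac{F^{(r-1)}(z)}{(r-1)!}\;=\;c_r\,\partial_z^{r-1}F(z)
\]
by Cauchy's formula for derivatives. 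This is a one-line computation once $\al_r$ is available, and it sidesteps the boundary-term bookkeeping entirely.
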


\begin{proof}
Bol's equality $\partial_z^{r-1}
\bigl(F|_{2-r}g) = F^{(r-1)}|_r g$ for $g\in \SL_2(\RR)$, which
appears in \cite[\S8]{Bol}, implies that $c_r\,
\partial_z^{r-1}$ determines a map $A_{2-r}(\Gm,v)\rightarrow
A_r(\Gm,v)$. Since the constant functions are the sole polynomials
that can be automorphic forms, the kernel is $K_{v,r}$. So the lower
row is exact. Proposition~\ref{prop-esH} gives the exactness of the
upper row.

For the commutativity of the left rectangle we assume $r=2$ and $v=1$.
The map $\coh {2-r}\om$ sends the constant function~$1$ to the class
represented by the cocycle $\gm\mapsto\ps_{1,\gm}^{z_0}
(t) = \frac1{t-z_0}-\frac1{t-\gm^{-1}z_0}$, for an arbitrary base
point $z_0\in \uhp$. The constant function $-\frac i2 \in K_{1,2}
\cong
(\dsv{1,0}\pol)^\Gm$ has a lift $t\mapsto
-\frac i2 K_2(t;z_0)$ in $\esv{1,2}\om$, with the kernel function
$K_2$ defined in~\eqref{Kq}. So the connecting homomorphism sends
$\ps_1^{z_0}$ to the cocycle $\ch$ determined by $\ch_\gm =
-\frac i2 K_2(\cdot;z_0)|_{1,2}(1-\nobreak
\gm)$. The kernel function $K_r$ has the invariance property
$K_r(\cdot;\cdot)|_r g \otimes |_{2-r} g
=K_r(\cdot;\cdot)$, in~\eqref{Kq-inv}. For $\gm=\matc abcd\in \Gm$:
\[ \ch_\gm(t) \= -\frac i2 K_2(t;z_0) +\frac i2 (a-cz_0)^0 \,
K_2(t;\gm^{-1}z_0) \= \frac1{t-z_0}-\frac1{t-\gm^{-1}z_0}\=
\ps_{1,\gm}^{z_0}(t)\,. \]

For the commutativity of the second rectangle we start with $F\in
A_{r-2}(\Gm,v)$ and compute its image under the composition $\al_r
\circ \mathrm{id} \circ \coh{r-2}\om$. We use the description of
cohomology with a tesselation as discussed in~\S\ref{sect-tesscoh}.
The cocycle $c$ representing $\coh r \om F$ is determined by
\[ c_F(x;z) \= \int_x (z-t)^{-r}\, F(z)\, dz\,,\]
where $x\subset \uhp$ is an oriented edge in $X_1^{\tess,Y}$.

The function $c_F(x;\cdot)$ is defined on $\CC\setminus x$, and
represents an element of $\dsv{v,2}\om$. Since $\dsv{v,r}\om
\subset\esv{v,r}\om$, the same cocycle represents $\mathrm{id}(\coh r
\om F)$. The image of $\mathrm{ id} (\bcoh r \om F)$ under the map
$\al_r $ in Proposition~\ref{prop-alr} is an automorphic form $u\in
A_r(\Gm,v)$. By analytic continuation it is determined by its value
on the interior $\mathring \fd_Y$ of the face $\fd_Y\in
X_2^{\tess,Y}$. (It is important to use a face that is completely
contained in $\uhp$; otherwise $c(x)$ need not be given by the
integral above for all edges $x$ in the boundary of $\partial_2
\fd_Y$.) We apply Proposition~\ref{prop-alr-inj}. It gives, for $z\in
 \mathring \fd_Y $
\begin{align*}
u(z) &\= \frac1{4\pi} \, c_F(\partial_2 \fd_Y)(z)
 \= \frac1{4\pi} \sum_{x\in
\partial_2
 \fd_Y} c_F(x ; z) \= \frac1{4\pi} \sum_{x\in
\partial_2
\fd_Y} \int_x (\tau-z)^{-r} \, F(\tau)\, d\tau
\displaybreak[0]
\\
&\= \frac1{4\pi}\int_{\partial_2 \fd_Y} (\tau-z)^{-r} \, F(\tau)\,
d\tau \= \frac{2\pi i}{4\pi} \frac1{(r-1)!}\, F^{(r-1)}(z) \=
\bigl(c_r\,
\partial_z^{r-1}
F\bigr)(z)\,.\qedhere
\end{align*}
\end{proof}

\begin{lem}\label{lem-coh-bcoh}Let $r\in \ZZ_{\geq 2}$. The map $\bcoh
r \om : A_r(\Gm,v) \rightarrow H^1(\Gm;\esv{v,r}\om)$ is injective
and the following diagram commutes:
\badl{Adiag} \xymatrix@R=.6cm{ H^1(\Gm;\esv{v,r}\om) \ar[rr]^{\rs_r}
&& H^1(\Gm;\dsv{v,2-r}\pol)\\
\hpar^1(\Gm;\esv{v,r}\om,\esv{v,r}{\fsn,\wdg}) \ar@{^{(}->}[u] \\
A^0_r(\Gm,v) \ar[u]^{\bcoh r \om}_{\cong} \ar@{^{(}->}[r]& A_r(\Gm,v)
\ar@{^{(}->}[luu]_{\bcoh r \om} \ar[ruu]^{\coh r \om} } \eadl
\end{lem}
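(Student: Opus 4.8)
The statement is a synthesis of results already in hand, so the plan is to assemble it rather than to prove anything new; the only genuine point is the identification $\rs_r\circ\bcoh r\om=\coh r\om$. First I would record the injectivity of $\bcoh r\om$ on all of $A_r(\Gm,v)$: this is Part~i) of Theorem~\ref{thmbg}. Indeed, if $\bcoh r\om F=0$ then $F\in A^\E_r(\Gm,v)$ (because $0$ lies in every mixed parabolic subgroup, so $0\in\hpar^1(\Gm;\esv{v,r}\om,\esv{v,r}{\fsn,\wdg})$ and hence $F\in(\bcoh r\om)^{-1}\hpar^1(\Gm;\esv{v,r}\om,\esv{v,r}{\fsn,\wdg})=A^\E_r(\Gm,v)$), whence $F=\al_r\bcoh r\om F=\al_r0=0$ by Part~ii) of Theorem~\ref{thmbg}.

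Next, combining Corollary~\ref{cor-A0E} ($A^\E_r(\Gm,v)=A^0_r(\Gm,v)$), Corollary~\ref{cor-A0imq} ($\bcoh r\om A^0_r(\Gm,v)=\hpar^1(\Gm;\esv{v,r}\om,\esv{v,r}{\fsn,\wdg})$) and Part~ii) of Theorem~\ref{thmbg} (on $A^\E_r(\Gm,v)$ the maps $\bcoh r\om$ and $\al_r$ are mutually inverse), I get that $\bcoh r\om$ restricts to an isomorphism $A^0_r(\Gm,v)\to\hpar^1(\Gm;\esv{v,r}\om,\esv{v,r}{\fsn,\wdg})$, which is the arrow marked $\cong$ in \eqref{Adiag}. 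The lower triangle of \eqref{Adiag} then commutes tautologically, since both composites $A^0_r(\Gm,v)\hookrightarrow A_r(\Gm,v)\to H^1(\Gm;\esv{v,r}\om)$ and $A^0_r(\Gm,v)\to\hpar^1(\Gm;\esv{v,r}\om,\esv{v,r}{\fsn,\wdg})\hookrightarrow H^1(\Gm;\esv{v,r}\om)$ are, by construction, the cohomology class of the single cocycle~\eqref{cF}.

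For the upper rectangle I would argue with the tesselation cocycles of Section~\ref{sect-af-bgc}. For $F\in A_r(\Gm,v)$ the class $\bcoh r\om F$ is represented by $c_F\colon x\mapsto\int_{\tau\in x}K_r(z;\tau)\,F(\tau)\,d\tau$ from~\eqref{cF}, with values in $\esv{v,r}\om$ by Property~iv) of Proposition~\ref{prop-Esys}. Since $\rs_r\colon\esv{v,r}\om\to\dsv{v,2-r}\pol$ is a $\Gm$-equivariant morphism (Part~iii) of Proposition~\ref{prop-iw-esv}) and commutes with integration over a compact $\tau$-set (as used repeatedly, e.g.\ in the proof of Proposition~\ref{prop-Esys}), applying $\rs_r$ under the integral and using the value $\bigl(\rs_r K_r(\cdot;\tau)\bigr)(t)=(\tau-\nobreak t)^{r-2}$ computed in Section~\ref{sect-Kr} turns $c_F$ into $x\mapsto\int_{\tau\in x}(\tau-\nobreak t)^{r-2}\,F(\tau)\,d\tau$, which is exactly the cocycle $\ps_F$ of~\eqref{psF} representing $\coh r\om F$. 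Hence $\rs_r\circ\bcoh r\om=\coh r\om$ as maps into $H^1(\Gm;\dsv{v,2-r}\pol)$, the target being legitimate by Part~iii) of Proposition~\ref{prop-cohrom}. Equivalently one may deduce this from Part~iii) of Proposition~\ref{prop-bcoh}, namely $\Prj{2-r}\circ\coh r\om=\rsp_r\circ\bcoh r\om$, upon composing with $\Prj{2-r}^{-1}$ and using $\rs_r=\Prj{2-r}^{-1}\circ\rsp_r$.

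There is no serious obstacle here; the one thing to watch is that the restriction operation genuinely produces classes in the \emph{polynomial} module $\dsv{v,2-r}\pol$ and not merely in $\dsv{v,2-r}\om$, which is precisely what the exact sequence $0\rightarrow\dsv{v,r}\om\rightarrow\esv{v,r}\om\rightarrow\dsv{v,2-r}\pol\rightarrow0$ (third arrow $\rs_r$) of Part~iii) of Proposition~\ref{prop-iw-esv} together with the explicit formula $\bigl(\rs_r K_r(\cdot;\tau)\bigr)(t)=(\tau-\nobreak t)^{r-2}$ guarantee. Everything else in the diagram is formal.
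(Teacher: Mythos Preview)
Your proposal is correct and follows essentially the same line as the paper's proof: injectivity via Theorem~\ref{thmbg}, the left triangle from the definitions, and the right triangle via the relation $\rs_r=\Prj{2-r}^{-1}\circ\rsp_r$ combined with Proposition~\ref{prop-bcoh}. You add a direct cocycle-level verification of $\rs_r\circ\bcoh r\om=\coh r\om$ using $\rs_r K_r(\cdot;\tau)(t)=(\tau-t)^{r-2}$, which the paper does not spell out but which is an equally valid (and arguably more transparent) route to the same conclusion.
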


\begin{proof}
Suppose that $F\in A_r(\Gm,v)$ satisfies $\bcoh r \om F=0$. Then $F$
is in the space $A^\E_r(\Gm,v)$ in~\eqref{AE}, and then $F=0$ by the
injectivity in Part~i) of Theorem~\ref{thmbg}.

The commutativity of the left triangle is a direct consequence of the
definitions. For the right triangle we start with the commutativity
of the diagram in Proposition~\ref{prop-bcoh}, where we can replace
$\dsv{v,2-r}\om$ by $\dsv{v,2-r}\pol$, since $r\in \ZZ_{\geq 2}$.
\[ \xymatrix{ H^1\bigl(\Gm;\W{v,r}\om(\proj\RR)\bigr)
\ar[rr]^{\rsp_r}
&
& H^1\bigl(\Gm;\V{v,2-r}\om(\proj\RR)\bigr)
\\
& H^1(\Gm;\esv{v,r}\om) \ar[lu] \ar[r]^{\rs_r}
& H^1(\Gm;\dsv{v,2-r}\pol) \ar[u]_{\Prj{2-r}}
\\
& A_r(\Gm,v) \ar[ru]_{\coh r \om} \ar[luu]^{\bcoh r \om} \ar[u]_{\bcoh
r \om} } \]
We have chosen the spaces $\esv{v,r}\ast$ in such a way the the image
of $\bcoh r \om$ is in $H^1(\Gm;\esv{v,r}\om)$. {}From
$\rs_r=\Prj{2-r}^{-1}\, \rsp_r$ (Definition \ref{rsl}) and Part~iii) of
Proposition~\ref{prop-iw-esv} it follows that
$\rsp_r \, \bcoh r \om =  \coh r \om$.
\end{proof}

\begin{proof}[Recapitulation of the proof of Theorem~\ref{THMaci}]The
commutativity of various parts of the diagram in~\eqref{Ediag} in
 Theorem~\ref{THMaci} follows from Proposition \ref{prop-esH} and the
Lemmas \ref{lem-Bol} and~\ref{lem-coh-bcoh}.

The exactness of the top row and the second row, in Part~ii), are
given by Proposition~\ref{prop-esH}, which gives also the information
in Part~iii) of the theorem.

The injectivity of $\bcoh r \om : A_r(\Gm,v)
\rightarrow H^1(\Gm;\esv{v,r}\om)$ is shown in
Lemma~\ref{lem-coh-bcoh}, the injectivity of the vertical maps
between cohomology groups follows directly from the definition of
(mixed)
parabolic cohomology. The bijectivity of $\coh{2-r}\om$ is given by
Theorem~\ref{THMac} for weights not in $\ZZ_{\geq 2}$, and the
injectivity of $\bcoh r \om:A_r^0(\Gm,v)\rightarrow
\hpar^1(\Gm;\esv{v,r}\om,\esv{v,r}{\fsn,\wdg})$ is a consequence of
Theorem~\ref{thmbg} and Corollary~\ref{cor-A0E}.
\end{proof}

\subsection{Comparison with classical results}\label{sect-cocl}In the
following part of diagram~\eqref{Ediag} in Theorem~\ref{THMaci}
\badl{essur} \xymatrix{ H^1(\Gm;\esv{v,r}\om) \ar[r]^{\rs_r}
& H^1(\Gm;\dsv{v,2-r}\pol) \ar[r] & 0\\
\hpar^1(\Gm;\esv{v,r}\om,\esv{v,r}{\fsn,\wdg})
\ar[r]^{\rs_r} \ar@{^{(}->}[u] & \hpar^1(\Gm;\dsv{v,2-r}\pol)
\ar@{^{(}->}[u] }\eadl
the absence of an arrow $\rightarrow0$ in the second row is
remarkable. The surjectivity of $\rs_r$ in the top row is a
consequence of the general fact that $H^2(\Gm;V)=\{0\}$ for any
$\Gm$-module for groups $\Gm$ with cusps. See, eg.,
\cite[\S11/2]{BLZm}. In the long exact sequence corresponding to the
diagram in~\eqref{ed} there is a sequel
\[ \hpar^1(\Gm;\dsv{v,2-r}\pol) \rightarrow
\hpar^2(\Gm;\dsv{v,r}\om,\dsv{v,r}{\fsn,\wdg})
\rightarrow \hpar^2(\Gm;\dsv{v,r}\om,\dsv{v,r}{\fsn,\wdg})\]
that may be non-zero. It would be interesting to see that in general
the second row in~\eqref{essur} is surjective.

We review some classical results, under the assumption that the
multiplier system $v$ for weight $r\in \ZZ_{\geq 2}$ is unitary.

The elements of $\dsv{2-r}\pol$ are polynomial functions, and hence
are holomorphic on $\CC$. This space of polynomials is invariant
under the involution $\Ci$ in~\eqref{Ci}. The action is changed
(unless $v$ is real-valued): $\Ci:\dsv{v,2-r}\pol\leftrightarrow
\dsv{\bar v,2-r}\pol$. This induces involutions
\bad\xymatrix{ H^1(\Gm;\dsv{v,2-r}\pol) \ar[r]^\Ci
& H^1(\Gm;\dsv{\bar v,2-r}\pol)
\\
\hpar^1(\Gm;\dsv{v,2-r}\pol) \ar[r]^\Ci \ar@{^{(}->}[u]
& \hpar^1(\Gm;\dsv{\bar v,2-r}\pol) \ar@{^{(}->}[u] }\ead
The linear map $\coh r \om:A_r(\Gm,v)\rightarrow
H^1(\Gm;\dsv{v,2-r}\pol)$ has an antilinear counterpart $\Ci \coh r
\om: A_r(\Gm,v) \rightarrow H^1(\Gm;\dsv{\bar v,2-r}\pol)$, in which
$\Ci\coh r \om F$ is represented by
\[ \gm \mapsto \int_{z=\gm^{-1}z_0}^{z_0} \overline{F(z)} \, (\bar
z-t)^{r-2}\, d\bar z\,.\]

We now look at the classical theory in \cite{HuKn}, where Theorem~1
gives
\be H^1(\Gm;\dsv{v,2-r}\pol)\= \coh r \om M_r (\Gm,v)\,\oplus\, \Ci
 \coh{\bar r}\om \cusp r(\Gm,\bar v)\,.\ee

The restriction of $\coh r \om$ to $M_r(\Gm,v)$ is a multiple of the
map $\bt$ in~\cite{HuKn} and \cite[\S1.3]{Kn74}. It is described by
$(r-\nobreak1)$-fold integration. The construction of $(\Ci \coh
{\bar r}\om)f$ for $f\in \cusp r(\Gm,\bar v)$ is carried out by
forming $ g^\ast \in A_r(\Gm,v)$, and then forming, with the
``supplementary function'', (a multiple of)
$\coh r \om g^\ast$ with the property that $\coh r \om g^\ast$ is a
multiple of $ \Ci \coh{\bar r}\om f$. (The resulting antilinear map
$\cusp r(\Gm,\bar v)\rightarrow H^1(\Gm;\dsv{v,w-r}\pol)$ is called
$\al$ in~\cite{HuKn}. In particular, $\coh r \om g$ is a parabolic
class, in $\hpar^1(\Gm;\dsv{v,2-r}\pol)$. The computations in
\S\ref{sect-constr-peq}, especially Lemma~\ref{lem-peq-cst}, show
that $g\in A_r^0(\Gm,v)$. With Theorem~1 in \cite{HuKn}, we conclude
that $\hpar^1(\Gm;\dsv{v,2-r}\pol)$ is contained in $\coh r \om \,
A_r^0(\Gm,v)$. The diagram in Theorem~\ref{THMaci} implies that
$\hpar^1(\Gm;\dsv{v,2-r}\pol)= \coh r \om \, A_r^0(\Gm,v)$. So
indeed, the classical theory gives us the missing surjectivity, for
unitary multiplier systems.

We note that in \cite{Kn74} the map $\al$ is constructed in a
different way, with automorphic integrals of Niebur \cite{Ni74}. For
the purpose of this subsection the supplementary functions used in
\cite{HuKn} are more useful.

\rmrke Knopp, Lehner and Raji \cite{KLR9} \cite{KR10} \cite{Ra11,Ra13}
have studied cohomology classes associated to \emph{generalized
modular forms} for which the multiplier systems need to satisfy
$|v(\pi)|=1$ only for parabolic $\pi\in\Gm$.

\subsection{Related work}\label{sect-lit11}
In this section we connected the classical results concerning the
relation between automorphic forms and Eichler cohomology to the
boundary germ cohomology in Theorem~\ref{thmbg}.

%%%%%%%%%%%%% Part IV %%%%%%%%%%%%%%%%%%%%%%%%%%%%%%%%%%%

\part{Miscellaneous}

We have proved Theorems \ref{THMac}--\ref{THMaci} in the introduction,
and some of the isomorphisms in Theorem~\ref{THMiso} in
\S\ref{sect-isocg}. In Sections \ref{sect-isos-pc}
and~\ref{sect-coc-sing} we complete the proof of
Theorem~\ref{THMiso}. In Section~\ref{sect-qaf} we discuss quantum
automorphic forms and their relation to cohomology. We close this
part with Section~\ref{sect-lit}, which gives further remarks on the
literature.

%%%%%%%%%%%%%%%%%%%%%%%%%%%%%%%%%%%%%%%%%%%%%%%%%%%%%%%%%

\section{Isomorphisms between parabolic cohomology
groups}\label{sect-isos-pc}

%%%%%%%%%%%%%%%%%%%%%%%%%%%%%%%%%%%%%%%%%%%%%%%%%%%%%%%%%%%%%%%%%%

\subsection{Invariants under hyperbolic and parabolic elements} For
$\Gm$-modules $V\subset W$ there is a natural map $\hpar^1(\Gm;V,W)
\rightarrow \hpar^1(\Gm;W)$, which turns out to be an isomorphism in
several cases under consideration. It takes quite some work to sort
this out. As a first step, we consider for parabolic and hyperbolic
elements $\gm\in\Gm$ the spaces
\il{Vinvgm}{$V^\gm$}$V^\gm=\bigl\{v\in V\;:\; v|\gm=v\bigr\}$ of
\il{inv}{invariants}invariants in the $\Gm$-modules $V$ under
consideration.

\rmrk{Parabolic elements} Lemma~\ref{lem-sing-inv} implies that for a
parabolic $\pi \in \Gm$ we have $(\dsv{r,2-r}\fs)^\pi \subset
\dsv{v,2-r}\om[ \ca]$, where $\ca$ is the cusp fixed by~$\pi$.

\begin{lem}\label{lem-par-inv}Let $r\in \CC$, and let $\pi\in \Gm$ be
parabolic. We denote $\ld=v(\pi)$.
\begin{enumerate}
\item[a)] The dimensions of various spaces of invariants are as
follows:
\[\renewcommand\arraystretch{1.4}
\begin{array}{|c|c|c|c|}\hline
& r\not\in \ZZ_{\geq 1}\text{ or }\ld\neq 1 & r=1\text{ and }\ld=1 &
r\in \ZZ_{\geq 2}\text{ and }\ld=1\\ \hline
\dim(\dsv{v,2-r}\fs)^\pi&\infty&\infty&\infty \\
\dim(\dsv{v,2-r}{\fs,\wdg})^\pi&\infty&\infty&\infty \\
\dim(\dsv{v,2-r}{\fs,\smp})^\pi&0&1&1 \\
\dim(\dsv{v,2-r}{\fs,\infty})^\pi&0&0&1\\ \hline
\end{array}
\]
\item[b)] In all cases $(\dsv{v,2-r}{\fs,\infty})^\pi =
(\dsv{v,2-r}{\om})^\pi$, and $(\dsv{v,2-r}\om)^\pi =
(\dsv{v,2-r}\pol)^\pi$ if $r\in \ZZ_{\geq 2}$.
\end{enumerate}
\end{lem}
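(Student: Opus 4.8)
The plan is to reduce everything to the study of $\ld$-periodic functions with at most a single boundary singularity at $\infty$, and then to feed the result into Lemma~\ref{lem-ldpr}. First I would conjugate $\pi$ to $T^{\pm1}=\matc1{\pm1}01$, exactly as in the proof of Lemma~\ref{lem-sing-inv}: pick $\s\in\SL_2(\RR)$ with $\s\infty=\ca$ and $\pi=\s T^{\pm1}\s^{-1}$. By Propositions~\ref{prop-inv} and~\ref{prop-sasi} the operator $f\mapsto f|_{2-r}\s$ is a linear bijection of each of $\dsv{2-r}\fs$, $\dsv{2-r}{\fs,\wdg}$, $\dsv{2-r}{\fs,\smp}$, $\dsv{2-r}{\fs,\infty}$ onto itself, and, using \eqref{ncj}, it carries the condition $f|_{v,2-r}\pi=f$ into $g|_{2-r}T^{\pm1}=v(\pi)g$, i.e. $g:=f|_{2-r}\s$ is $\ld^{\pm1}$-periodic in the sense of Definition~\ref{def-ldper}, with $\ld=v(\pi)$. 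Since the table depends on $\ld$ only through whether $\ld=1$, and $\ld=1\Leftrightarrow\ld^{-1}=1$, this reduces matters to counting $\ld$-periodic elements of the four spaces. Moreover, for such a $g$ one has $\bsing(g|_{2-r}T)=T^{-1}\bsing g=\bsing g$, so the finite set $\bsing g$ is invariant under the infinite cyclic group $\langle T\rangle$, whose only fixed point on $\proj\RR$ is $\infty$; hence $\bsing g\subset\{\infty\}$ (this is Lemma~\ref{lem-sing-inv} applied with zero right-hand side, together with its evident extension to the $\wdg$-spaces, as in Lemma~\ref{lem-singTe}). So it suffices to analyse $\ld$-periodic elements of $\dsv{2-r}{\om,\cond}[\infty]$.

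For $\dsv{2-r}\fs$, and hence for the larger space $\dsv{2-r}{\fs,\wdg}\supset\dsv{2-r}\fs$, infinite-dimensionality is immediate: writing $\ld=e^{2\pi i\al}$, the functions $t\mapsto e^{2\pi i(\al+m)t}$ with $m\in\ZZ$ and $\re(\al+m)\le0$ are entire, hence (after applying $\Prj{2-r}$) lie in $\dsv{2-r}\om[\infty]\subset\dsv{2-r}\fs$, are $\ld$-periodic, and are linearly independent, and there are infinitely many such $m$. This settles the first two rows of the table. For the last two rows I would use $\dsv{2-r}{\fs,\smp}\subset\dsv{2-r}\fs$ and $\dsv{2-r}{\fs,\infty}\subset\dsv{2-r}\fs$, so a $\ld$-periodic element $f$ of either lies in $\dsv{2-r}{\om,\smp}[\infty]$ resp. $\dsv{2-r}{\om,\infty}[\infty]$. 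The defining smoothness condition at $\infty$ — that $t^{-1}\Prj{2-r}f$ resp. $\Prj{2-r}f$ be $C^\infty$ on $\lhp\cup\proj\RR$ — yields a two-sided asymptotic expansion $\Prj{2-r}f(t)\sim\sum_{n\ge k}b_n\,t^{-n}$ with the same coefficients as $t\uparrow\infty$ and as $t\downarrow-\infty$, with $k=-1$ in the $\smp$-case and $k=0$ in the $\infty$-case. Lemma~\ref{lem-ldpr}(i) then gives that $f$ is a constant function when $\ld=1$ and $r\in\ZZ_{\ge k+2}$, and $f=0$ otherwise. Conversely a constant $c$ has $\Prj{2-r}c(t)=(i-t)^{2-r}c$, and inspection in the coordinate $1/t$ near $\infty$ shows this lies in $\dsv{2-r}{\om,\smp}[\infty]$ exactly when $r\ge1$ and in $\dsv{2-r}{\om,\infty}[\infty]$ exactly when $r\ge2$. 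Matching against the three columns gives the stated dimensions $0,1,1$ in the $\smp$-row and $0,0,1$ in the $\infty$-row.

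For part~(b), the same Lemma~\ref{lem-ldpr}(i) argument applies verbatim to $\dsv{2-r}\om$ itself: a $\ld$-periodic $f\in\dsv{2-r}\om$ has $\Prj{2-r}f$ holomorphic at $\infty$, hence a convergent (so two-sided) expansion $\sum_{n\ge0}a_n t^{-n}$ there, so $f$ is constant when $\ld=1$ and $r\in\ZZ_{\ge2}$ and is $0$ otherwise; and $(i-t)^{2-r}c$ is holomorphic at $\infty$ precisely for $r\ge2$, so in the nonzero case the invariant really lies in $\dsv{2-r}\om$. Thus $(\dsv{v,2-r}\om)^\pi$ is $\{0\}$ unless $r\in\ZZ_{\ge2}$ and $v(\pi)=1$, in which case it is $\CC\cdot1$. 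Comparing with the $\fs,\infty$-computation above (same answer, and the constant function lies in $\dsv{2-r}{\fs,\infty}$ for $r\ge2$) and using the inclusion $\dsv{2-r}\om\subset\dsv{2-r}{\fs,\infty}$ gives $(\dsv{v,2-r}{\fs,\infty})^\pi=(\dsv{v,2-r}\om)^\pi$ in all cases; and when $r\in\ZZ_{\ge2}$ the constant function is a polynomial of degree $0\le r-2$, so it lies in $\dsv{2-r}\pol\subset\dsv{2-r}\om$, whence $(\dsv{v,2-r}\pol)^\pi=(\dsv{v,2-r}\om)^\pi$.

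I expect the only real friction to be the bookkeeping in the reduction step — the argument conventions for the automorphy factor under conjugation by $\s$, and the interchanges $T\leftrightarrow T^{-1}$, $\ld\leftrightarrow\ld^{-1}$ — all of which is routine and already done in the proofs of Lemma~\ref{lem-sing-inv} and Lemma~\ref{lem-singTe}; the genuine mathematical content sits entirely in Lemma~\ref{lem-ldpr}, which may be cited. The one point deserving a little care is pinning down the exact correspondence between the smoothness conditions defining $\dsv{2-r}{\om,\smp}[\infty]$ and $\dsv{2-r}{\om,\infty}[\infty]$ and the starting index $k$ of the expansion at $\infty$, together with the elementary check of when $(i-t)^{2-r}c$ satisfies them.
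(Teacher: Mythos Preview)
Your proof is essentially correct and, for the $\smp$ and $\infty$ rows and for part~(b), cleaner than the paper's: the paper redoes the asymptotic analysis by hand (expanding $(\Prj{2-r}\ph)(t)$ at $\infty$ and feeding the invariance relation term by term), whereas you simply invoke Lemma~\ref{lem-ldpr}(i), which packages exactly that computation.  Your identification of the starting index $k$ with $-1$ for $\smp$ and $0$ for $\infty$ is right, and your check of when the constant function actually lies in $\dsv{2-r}{\om,\smp}[\infty]$ or $\dsv{2-r}{\om,\infty}[\infty]$ is the same check the paper does implicitly.

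There is one slip in the first two rows: you write ``hence for the larger space $\dsv{2-r}{\fs,\wdg}\supset\dsv{2-r}\fs$'', but the inclusion goes the other way---an excised neighbourhood is in particular an open set containing $\lhp\cup\bigl(\proj\RR\setminus\{\xi_j\}\bigr)$, so $\dsv{2-r}{\fs,\wdg}\subset\dsv{2-r}\fs$.  Your deduction ``infinite-dimensional invariants in $\dsv{2-r}\fs$ $\Rightarrow$ the same for $\dsv{2-r}{\fs,\wdg}$'' is therefore not valid as stated.  However, your actual argument survives unchanged: the exponentials $e^{2\pi i(\al+m)t}$ you exhibit are entire on~$\CC$, so they lie in $\dsv{2-r}{\om,\wdg}[\infty]$ (hence in $\dsv{2-r}{\fs,\wdg}$) as well as in $\dsv{2-r}\om[\infty]$.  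Just drop the inclusion claim and note directly that the exhibited invariants sit in the smaller space $\dsv{2-r}{\fs,\wdg}$, hence in both.  (The side condition $\re(\al+m)\le0$ is not needed for this; any $m\in\ZZ$ works.)
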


\begin{proof}Going over to $\pi^{-1}$ if necessary, the element $\pi$
is conjugate in $\SL_2(\RR)$ to $T=\matc1101$. After conjugation we
find that invariance amounts to $\ph(t+\nobreak 1) = \ld \,\ph(t)$,
with $\ld=v(\pi)\in \CC^\ast$. This has solutions given by
$\sum_{n\equiv \al(1)} a_n\, e^{2\pi i n t}$ with $e^{2\pi i
\al}=\ld$. For $\dsv{v,2-r}\fs$ we need convergence on a half-plane
$\im t<\e$ for some $\e>0$. For $\dsv{v,2-r}{\fs,\wdg}$ the
 $\ld$-periodicity of $\ph$ implies that $\ph$ extends holomorphically
 to all of $\CC$, and hence we need convergence on all of $\CC$. In
both cases in Part~i) we get an infinite-dimensional space of
invariants.

In the other parts there is a condition at~$\infty$, which implies
that $(\Prj{2-r}\ph)(t) := (i-\nobreak t)^{2-r}\allowbreak\,\ph(t)$
has an asymptotic expansion of the form $ (\Prj{2-r}\ph)(t) \sim
\sum_{\ell\geq k} c_\ell \, t^{-\ell}$, valid as $t\in \lhp$
approaches~$\infty$. For $\dsv{v,2-r}{\fs,\smp}$ we have $k=-1$, and
for $\dsv{v,2-r}{\fs,\infty}$ and its submodules, $k=0$.

So if $\ph\neq 0$ the expansion starts with $d_n \, t^{r-2-n}
+d_{n+1} \, t^{r-3-n}+\cdots$, where $d_n\neq 0$ and $n\geq k$. We
insert this into the invariance relation. If $\ld\neq 1$, the
starting term shows that $d_n=0$. So for $\ld\neq 1$ no invariants
exist in $\dsv{v,2-r}{\fs,\smp}$ and smaller modules.

If $\ld=1$ then we find from the second term that
$d_n\,\left(r-2-n\right)=0$. So for an invariant the expansion should
start at $n=r-2$. Since $n\geq k$, this leads to $r\in\ZZ_{\geq 1}$
for $\dsv{v,2-r}{\fs,\smp}$, and $r\in \ZZ_{\geq 2}$ for the smaller
modules. Thus we have $\ph(t)=d_{r-2}+ d_{r-1}\, t^{-1} + \cdots$.
There is indeed an easy invariant under these conditions, namely the
constant function $\ph_{\mathrm{cst}}(t)=1$. It is in each of the
modules. 
Then $\ph-d_{r-2}\ph_{\mathrm{cst}}$ is $1$-periodic and $\oh(t^{-1})$,
hence zero. So the dimension of the space of
invariants equals~$1$.
\end{proof}

\rmrke For $\pi=T$ and $v(T)=1$, the proof shows that the invariants
are the constant functions. For other parabolic elements
$\pi=gTg^{-1}$ the spaces of invariants in Parts ii)--iv) of the
lemma have dimension $1$, but need not consist of constant functions.

\rmrk{Hyperbolic elements and closed geodesics} An element $\gm=\matc
abcd \in \SL_2(\RR)$ is \il{hypelt}{hyperbolic}\emph{hyperbolic} if
$a+d>2$. A hyperbolic element $\gm$ of $\SL_2(\RR)$ has exactly two
invariant points in $\proj\CC$, situated on $\proj\RR$, say $\xi$ and
$\xi'$. On the geodesic in $\uhp$ connecting $\xi$ and $\xi'$ the
action of $\gm$ on the points of the geodesic amounts to a shift over
a fixed distance for the hyperbolic metric, which we call
\il{elleta}{$\ell(\gm)$}$\ell(\gm)$. We note that $\ell(\gm^n)
= |n|\, \ell(\gm)$ for $n\in \ZZ$. The image in $\Gm\backslash \uhp$
of that invariant geodesic is a so-called \il{clgeo}{closed
geodesic}closed geodesic, with length~$\ell(\gm)$.

A \emph{hyperbolic subgroup} $\Eta$ of $\Gm$ is a subgroup generated
by a hyperbolic $\gm $ and $-1$. Such a hyperbolic generator $\gm$
is a \il{phe}{primitive hyperbolic}\emph{primitive hyperbolic
element} of~$\Gm$. The inverse $\gm^{-1}$ is the other primitive
hyperbolic element in~$\Eta$. We can conjugate a hyperbolic element
$\gm$ in $\SL_2(\RR)$ to $\matc {p^{1/2}}00{p^{-1/2}}$ with
$p=e^{\ell(\gm)}>1$. This element has $\infty$ as attracting fixed
point, and $0$ as repelling fixed point.

\begin{lem}\label{lem-sing-hyp}
Let $\ld\in \CC^\ast$, and let $\gm\in \SL_2(\RR)$ be hyperbolic. If
$f\in \dsv{2-r}\fs$ satisfies $f|_{2-r}\gm=f$, then $f\in
\dsv{2-r}\om[\xi,\xi']$, where $\xi$ and $\xi'$ are the fixed points
of~$\gm$.
\end{lem}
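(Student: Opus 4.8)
The plan is to imitate the proof of Lemma~\ref{lem-sing-inv}, replacing the ``ordering of $\RR$'' argument by a dynamical one. The only structural tool needed is Part~iii) of Proposition~\ref{prop-sasi}, which says $\bsing(\ph|_{2-r}g) = g^{-1}\,\bsing\ph$ for $\ph\in \dsv{2-r}\fs$ and $g\in \SL_2(\RR)$, together with the fact that $\dsv{2-r}\fs$ is stable under the operators $|_{2-r}g$ (Part~i)a) of the same proposition) so that $f|_{2-r}\gm$ again lies in $\dsv{2-r}\fs$ and has a well-defined finite set $\bsing(f|_{2-r}\gm)$ of boundary singularities. Applying this with $g=\gm$ to the hypothesis $f|_{2-r}\gm = f$ gives immediately
$\bsing f = \bsing(f|_{2-r}\gm) = \gm^{-1}\,\bsing f$,
so that the finite set $\bsing f\subset\proj\RR$ is invariant under $\gm^{-1}$, hence also under~$\gm$. (If one prefers to work with the weaker hypothesis $\ld^{-1}f|_{2-r}\gm - f\in \dsv{2-r}\om$, as in the parabolic lemma, the same conclusion holds: elements of $\dsv{2-r}\fs$ differing by an element of $\dsv{2-r}\om$ have the same set of boundary singularities since $\dsv{2-r}\om$-functions extend holomorphically across all of $\proj\RR$, and multiplication by $\ld^{-1}\in\CC^\ast$ does not change $\bsing$; this is where the $\ld$ in the statement plays its role.)

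The remaining point is purely a statement about the action of a hyperbolic $\gm\in\SL_2(\RR)$ on $\proj\RR$. Conjugating $\gm$ in $\SL_2(\RR)$ to the diagonal matrix $\matc{p^{1/2}}00{p^{-1/2}}$ with $p=e^{\ell(\gm)}>1$ (so $\xi,\xi'$ go to $0,\infty$), one sees that the induced map on $\proj\RR$ is $t\mapsto p^{\pm 1}t$, which fixes exactly $0$ and $\infty$ and moves every other point $t$ to a point whose full $\ZZ$-orbit $\{p^n t:n\in\ZZ\}$ is infinite. Transporting back, $\gm$ fixes exactly $\xi$ and $\xi'$ on $\proj\RR$, and the $\gm$-orbit of any other point of $\proj\RR$ is infinite. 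Therefore a finite $\gm$-invariant subset of $\proj\RR$ must be contained in $\{\xi,\xi'\}$, so $\bsing f\subseteq\{\xi,\xi'\}$, which by the minimality in Definition~\ref{def-sav}~iv) is exactly the assertion $f\in\dsv{2-r}\om[\xi,\xi']$.

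I do not expect a genuine obstacle here: unlike the parabolic case of Lemma~\ref{lem-sing-inv}, where one must use the linear order on $\RR$ to exclude singularities in $\RR$ while allowing one at $\infty$, the hyperbolic case is uniform, the North--South dynamics immediately killing any stray singularity. The only things to be careful about are bookkeeping items --- that the correct transformation rule is $\bsing(\ph|_{2-r}g)=g^{-1}\bsing\ph$ and not $g\,\bsing\ph$, and that $\gm$ and $\gm^{-1}$ share the same pair of fixed points --- both of which are routine.
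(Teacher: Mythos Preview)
Your argument is correct and is precisely what the paper intends: its proof reads in full ``Analogous to the proof of Lemma~\ref{lem-sing-inv},'' and your write-up is exactly the hyperbolic analogue, replacing the translation $t\mapsto t+1$ by the dilation $t\mapsto pt$ after conjugation. Your observation that the $\ld$ in the statement plays no role under the stated hypothesis $f|_{2-r}\gm=f$ (and that the argument goes through unchanged for the weaker hypothesis $\ld^{-1}f|_{2-r}\gm-f\in\dsv{2-r}\om$) is also on point.
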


\begin{proof}Analogous to the proof of Lemma~\ref{lem-sing-inv}.
\end{proof}

To formulate the following result it is convenient to introduce for a
hyperbolic $\gamma\in \Gamma$ the quantity
\il{kp-hyp}{$ \k=\k_{v,2-r}(\gm)$}$\k:= \k_{v,2-r}(\gm)\in \CC$ that
is uniquely determined by
\be \label{kpdef}
e^{\k \,\ell(\gm)} \= v(\gm)\,
e^{(r/2-1)\,\ell(\gm)}\,,\quad\text{and} \quad
-\frac\pi{\ell(\gm)} < \im \k \leq \frac\pi{\ell(\gm)}\,,
\ee
where $\ell(\gm)$ is the length of the periodic geodesic corresponding
to~$\gm$.

\begin{lem}\label{lem-hyp-inv}Let $r\in \CC$, and let $\gm$ be a
hyperbolic element of $\Gm$, corresponding to a closed geodesic in
$\Gm\backslash\uhp$ with length~$\ell(\gm)$.
\begin{enumerate}
\item[a)] With $\k=\k_{v,2-r}(\gm)$ as in \eqref{kpdef} the dimensions
of various spaces of invariants are as follows:
\[ \renewcommand\arraystretch{1.4}
\begin{array}{|c|c|c|c|}\hline
&r\not\in \ZZ \text{ or } & r\in \ZZ_{\geq 0} \text{ and}& \k,r\in \ZZ
\text{ and}\\
&\k\leq -2 \text{ or }\k\geq r &\k\in\{-1,r-1\}&0\leq \k \leq r-2
\\\hline
\dim (\dsv{v,2-r}\fs)^\gm & \infty & \infty & \infty \\
\dim (\dsv{v,2-r}{\fs,\wdg})^\gm & \infty & \infty & \infty \\
\dim (\dsv{v,2-r}{\fs,\smp})^\gm & 0 & 1 & 1\\
\dim(\dsv{v,2-r}{\fs,\infty})^\gm & 0 & 0 & 1 \\ \hline
\end{array}
\]
\item[b)] In all cases $(\dsv {v,2-r}\om)^\gm = (\dsv
{v,2-r}{\fs,\infty})^\gm$, and
$(\dsv{v,2-r}\om)^\gm=(\dsv{v,2-r}\pol)^\gm$ if $r\in\ZZ_{\geq 2}$.
\end{enumerate}
\end{lem}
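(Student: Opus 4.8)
The plan is to transcribe the proof of Lemma~\ref{lem-par-inv} to the hyperbolic setting, with the translation $T$ replaced by the dilation induced by $\gm$. First I would invoke Lemma~\ref{lem-sing-hyp}: any $f\in\dsv{2-r}\fs$ with $f|_{2-r}\gm=f$ already lies in $\dsv{2-r}\om[\xi,\xi']$, so I may assume $f$ is holomorphic near $\lhp\cup(\proj\RR\setminus\{\xi,\xi'\})$. Choosing $g\in\SL_2(\RR)$ (which, as in the proof of Lemma~\ref{lem-sing-hyp}, one arranges to lie in the set $G_0$ of~\eqref{G0}) with $g^{-1}\gm g=a_0:=\matc{p^{1/2}}{0}{0}{p^{-1/2}}$, $p=e^{\ell(\gm)}>1$, carries $\{\xi,\xi'\}$ to $\{0,\infty\}$. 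Since $a_0$ has the shape $\matc yx0{y^{-1}}$, identity~\eqref{ncj} reduces invariance of $f$ under $|_{v,2-r}\gm$ to invariance of $f|_{2-r}g$ under $|_{v,2-r}a_0$, and computing that operator turns the problem into the functional equation
\[
f(pt)\= p^{\k}\,f(t),\qquad \k=\k_{v,2-r}(\gm),
\]
with $\k$ exactly as normalised in~\eqref{kpdef}.

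Next I would solve this equation. Setting $t=e^u$ makes $u\mapsto e^{-\k u}f(e^u)$ periodic of period $\ell=\ell(\gm)$, and Fourier expansion yields $f(t)=\sum_{n\in\ZZ}c_n\,t^{\,\k+2\pi i n/\ell}$. For the two large spaces $\dsv{2-r}\fs$ and $\dsv{2-r}{\fs,\wdg}$ there is no constraint on the exponents beyond holomorphy near $\proj\RR\setminus\{0,\infty\}$, respectively holomorphy on a $\{0,\infty\}$-excised neighbourhood after multiplication by $(i-t)^{2-r}$; each monomial $t^{\k+2\pi i n/\ell}$, with branch cut taken along the geodesic joining $0$ and $\infty$, qualifies, and these are linearly independent, so the invariant spaces are infinite-dimensional. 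The decisive elementary point for the remaining cases is that the exponents $\k+2\pi i n/\ell$ for distinct $n$ differ by nonzero purely imaginary numbers, so at most one of them can be an integer; this forces $\k\in\ZZ$ and isolates the mode $n=0$.

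For $\dsv{2-r}{\fs,\smp}$ and $\dsv{2-r}{\fs,\infty}$ I would then pass to $\Prj{2-r}f(t)=(i-t)^{2-r}f(t)$, noting that near $0$ the prefactor is holomorphic and nonvanishing while near $\infty$ it is $(-t)^{2-r}\bigl(1+O(1/t)\bigr)$. The smoothness condition defining $\dsv{2-r}{\fs,\smp}$ (that $t\mapsto t^{\mp1}\Prj{2-r}f$ be $C^\infty$ up to each of $0,\infty$) and the stronger one defining $\dsv{2-r}{\fs,\infty}$ then force $r\in\ZZ$, $\k\in\ZZ$, and $-1\le\k\le r-1$ in the first case, $0\le\k\le r-2$ in the second; outside these ranges only $f=0$ survives, and inside, the invariants are spanned by the single function with $f(t)=t^{\k}$ in the conjugated model, so the dimension is $1$. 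This matches the table. For Part~b), when $r\in\ZZ_{\geq2}$ the surviving function $t^{\k}$ with $0\le\k\le r-2$ is a polynomial of degree at most $r-2$, hence lies in $\dsv{2-r}\pol$, and it simultaneously generates $(\dsv{v,2-r}\om)^\gm$, $(\dsv{v,2-r}{\fs,\infty})^\gm$ and $(\dsv{v,2-r}\pol)^\gm$; conjugating back by $g$ and undoing $\Prj{2-r}$ preserves dimensions.

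The main obstacle will be the branch/argument bookkeeping rather than the functional-equation analysis: one must verify that "asymptotic expansion valid as $t$ approaches $0$ (or $\infty$) through $\lhp\cup\RR$" really forces integrality of $\k+2\pi i n/\ell$ even when $\k$ is complex and the powers are genuinely multivalued, check that the endpoint cases of the ranges in the table (small $r$; $\k$ equal to $-1$, $r-1$ or $r-2$) come out precisely as stated, and confirm that the conjugating element $g$ may be taken in $G_0$ or else track the resulting constant via~\eqref{ncj}. None of this is deep, but it is where care is needed.
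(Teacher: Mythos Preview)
Your proposal is correct and follows essentially the same route as the paper: conjugate $\gm$ to the diagonal $\matc{p^{1/2}}00{p^{-1/2}}$, reduce invariance to $f(pt)=p^{\k}f(t)$, Fourier-expand via $t=e^u$ to get solutions $\sum_n c_n\,(it)^{\k+2\pi in/\ell}$, and then impose the asymptotic conditions at $0$ and $\infty$ to pin down when $\k$ and $r$ must be integers with the stated bounds. The paper uses $(it)^{\al}$ with the standard argument (cut along $i[0,\infty)$) rather than $t^{\al}$ with a cut along the geodesic, and is a little more explicit about why holomorphy on a $\gm$-invariant domain forces convergence on a sector wider than $-\frac\pi2<\arg(it)<\frac\pi2$, but these are the bookkeeping points you already flagged and they do not change the argument.
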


\begin{proof}
We conjugate $\gm$ in $\SL_2(\RR)$ to $\matc {p^{1/2}}00{p^{-1/2}}$,
where $p=e^{\ell(\gm)}$, which leaves fixed $0$ and $\infty$, and the
geodesic between them. This leads to the equation
\be\label{p-phi-rel}
p^{1-r/2}\, \ph(pt)\=v(\gm)\, \ph(t)\,.\ee

By examining the Fourier expansion of $\ps(x):=\ph(e^x)$, the
solutions in $\dsv{v,2-r}\fs$ can be obtained as
\be \label{it-ser}
\sum_{n\in \ZZ} d_n \, \bigl(it\bigr)^{\alpha +2\pi i
n/\ell(\gm)}\,,\ee
convergent for at least $-\frac\pi2<\arg(it)<\frac\pi 2$, and $\al$ in
the set
\badl{hyp-Ft} E_{v,2-r}(\gm) &\= \Bigl\{\frac r2-1+\frac{\log
v(\gm)+2\pi i m}{\ell(\gm)}\;:\; 
m \in \ZZ \Bigr\}\\
&\= \Bigl\{ \k_{v,2-r}(\gm) + \frac{2\pi i  m}{\ell(\gm)}\;:\; 
m \in \ZZ \Bigr\}\,. \eadl
With the standard choice of the argument, $(it)^\al$ is well defined on
$\CC\setminus i[0,\infty)$. 

By Lemma~\ref{lem-sing-hyp}, a function $\ph$ representing an
element of $(\dsv{v,2-r}\fs)^\gm$ should extend holomorphically to
neighbourhoods of $(0,\infty)$ and $(-\infty,0)$ in $\CC$. The
$\gm$-invariance implies that $\ph$ should be holomorphic on a
$\Gm$-invariant domain. So $\ph$ should be holomorphic on at least 
a region
$-\pi-\e<\arg t < \e$ for some $\e>0$. Hence the series
in~\eqref{it-ser} represents an element of $(\dsv{v,2-r}\fs)^\gm$
precisely if it converges
on a region $-\frac\pi 2-\e < \arg(it) < \frac\pi2+\e$ with
some $\e>0$. To get a holomorphic function on an excised neighbourhood
with excised set $\{0,\infty\}$, we need to pick coefficients such
that we have convergence for $-\pi<\arg(it)<\pi$. In this way we
obtain a complete description of the, infinite-dimensional, spaces
$(\dsv{v,2-r}\fs)^\gm$ and $(\dsv{v,2-r}{\fs,\exc})^\gm$ in
the first two lines in the table in Part~a).

For the smaller modules there should be asymptotic expansions at $0$
and $\infty$. Let $k=-1$for $\dsv{v,2-r}{\fs,\smp}$ and $k=0$ for
$\dsv{v,2-r}{\fs,\infty}$. In the expansion at zero,
$d_n=0$ for $n\neq 0$, and $\al$ should be in $E_{v,2-r}(\gm) \cap
\ZZ_{\geq k}$.  The function $t\mapsto t^{r-2}\,
\ph(-1/t)$ should also have an
expansion with terms $t^{m}$ with $m\geq k$. Hence we have the
further restriction $r-2-\al \in \ZZ_{\geq k}$. So the exponents $\al
\in E_{v,2-r}(\gm)$ should satisfy
\[ \al \in E_{v,2-r}(\gm) \cap \ZZ \cap \bigl( r-2+\ZZ)\quad\text{ and
}k\leq \al \leq r-2-k\,. \]
So we should have $r\in \ZZ$. The condition on $\im \k_{v,2-r}(\gm)$
in \eqref{kpdef} implies that $\al =\k_{v,2-r}(\gm)\in \ZZ$ and
$n=0$. The remaining condition gives $k\leq \k_{v,2-r}(\gm)\leq
r-2-k$. This gives the third and fourth line in
the table. This completes the proof of Part~a).

Moreover, if $r\in \ZZ_{\geq 2}$, any invariant $t\mapsto t^\k$ that
 is in $\dsv{v,2-r}{\fs,\infty}$ is in $\dsv{v,2-r}\pol$. This gives
Part~b).
\end{proof}

\rmrke The characterization depends on the primitive hyperbolic
element~$\gm$. The element $\gm^{-1}$ is primitive hyperbolic as
well, and
\be \k_{v,2-r}(\gm^{-1})\;\equiv\; r-2-\k_{v,2-r}(\gm)\bmod 2\pi
i/\ell(\gm)\,.\ee
The transition $x\mapsto r-2-x$ maps the set $E_{v,2-r}(\gm)$ in
\eqref{hyp-Ft} into $E_{v,2-r}(\gm^{-1})$.

\begin{lem}Suppose that both fixed points $\xi$ and $\xi'$ of the
hyperbolic element $\gm\in \Gm$ are in $\RR$ and satisfy $\xi<\xi'$.
If $r$ and $\k $ in the previous lemma are integral, and $-1\leq
\k\leq r-1$, then the $\gm$-invariants in
$\dsv{v,2-r}{\fs,\smp}$ are spanned by the rational function
\[ t\mapsto (t-\xi')^{r-2-\k} (t-\xi)^\k \,.\qedhere\]
\end{lem}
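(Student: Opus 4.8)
The plan is to reduce the statement to the computations already carried out in the proof of Lemma~\ref{lem-hyp-inv}. Recall that there we conjugated the hyperbolic element $\gm$ to the diagonal form $\matc{p^{1/2}}00{p^{-1/2}}$, so that the fixed points became $0$ and $\infty$, and found that a $\gm$-invariant function $\ph$ lying in the smaller module $\dsv{v,2-r}{\fs,\smp}$ must be a single power $t\mapsto t^\al$ with $\al=\k_{v,2-r}(\gm)\in\ZZ$, subject to $-1\le\al\le r-1$. For the present lemma the conjugating element $g\in\SL_2(\RR)$ instead sends $0\mapsto\xi$ and $\infty\mapsto\xi'$; since $\xi,\xi'\in\RR$ with $\xi<\xi'$ we may take $g$ real, say $g=\frac1{\sqrt{\xi'-\xi}}\matc{\xi'}{\xi}11$ (up to sign), which lies in $G_0$ so that the operator identities of \S\ref{sect-actions} apply cleanly.

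First I would transport the basic solution back: by Proposition~\ref{prop-sasi}(iii), $\bsing(\ph|_{2-r}g)=g^{-1}\bsing\ph$, so $g$ carries the $\{0,\infty\}$-singular invariants for the diagonal element to the $\{\xi,\xi'\}$-singular invariants for $\gm$. Concretely, the one-dimensional space $(\dsv{v,2-r}{\fs,\smp})^{\text{diag}}$ is spanned by $t\mapsto t^\k$ (with $\k=\k_{v,2-r}(\gm)$), and applying $|_{2-r}g^{-1}$ to it — using the explicit formula \eqref{SL2op} for the weight action and the Mbius transformation $t\mapsto g t=\frac{\xi' t+\xi}{t+1}$ — yields, after clearing the $(ct+d)^{-(2-r)}=(t+1)^{r-2}$ factor against the numerator, a constant multiple of $(t-\xi')^{r-2-\k}(t-\xi)^\k$. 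This is a routine computation: writing $s=gt$ one has $s-0 = \frac{(\xi'-\xi)t}{t+1}$ and $s-\infty$ contributing the Mbius Jacobian, and the powers reorganize exactly into the claimed rational function. The degree bookkeeping is consistent with Lemma~\ref{lem-hyp-inv}: the exponents $r-2-\k$ and $\k$ sum to $r-2$, matching the fact that $\Prj{2-r}$ of this function is holomorphic at $\infty$ to the required order, and the condition $-1\le\k\le r-1$ is exactly what makes both exponents $\ge-1$, i.e.\ places the function in $\dsv{v,2-r}{\fs,\smp}$ rather than merely $\dsv{v,2-r}{\fs,\wdg}$.

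It then remains to check that this function is genuinely $\gm$-invariant (not just invariant under the diagonal conjugate) and that it spans the whole invariant space rather than a proper subspace. Invariance follows formally from \eqref{ncj}: if $\ph_0|_{2-r}(g^{-1}\gm g)=\ph_0$ then $(\ph_0|_{2-r}g^{-1})|_{2-r}\gm=\ph_0|_{2-r}g^{-1}$, provided $g\in G_0$, which we arranged. For spanning: Lemma~\ref{lem-hyp-inv}(a) already asserts $\dim(\dsv{v,2-r}{\fs,\smp})^\gm=1$ under the integrality and range hypotheses on $\k$ and $r$, and the rational function just produced is a nonzero element of that space, hence a basis. One should double-check the edge cases $\k=-1$ and $\k=r-1$, where one of the two factors has exponent $-1$ (a genuine simple pole at $\xi$ or $\xi'$) — this is precisely the ``simple singularity'' allowed by Definition~\ref{sav-ss}, so no trouble arises. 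The main obstacle, such as it is, is purely bookkeeping: getting the branch of the weight action right so that the constant multiple is unambiguous and the argument conventions of \S\ref{sect-actions} are respected throughout the conjugation; once $g$ is chosen in $G_0$ and real, the branch issues disappear and the identity is an elementary rearrangement of powers.
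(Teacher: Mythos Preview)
Your proposal is correct and follows essentially the same approach as the paper: conjugate via $g=|\xi-\xi'|^{-1/2}\matc{\xi'}{\xi}11$, apply $|_{2-r}g^{-1}$ to the diagonal invariant $t\mapsto t^\k$ (the paper uses $(it)^\k$, differing only by a constant), and read off the rational function up to a nonzero scalar. Your additional remarks on the dimension count from Lemma~\ref{lem-hyp-inv}, the edge cases $\k\in\{-1,r-1\}$, and the branch bookkeeping are sound but not needed in the paper's terser treatment.
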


\begin{proof}We use
\[ g \=|\xi-\xi'|^{-1/2}\, \matc {\xi'}\xi11 \in \SL_2(\RR)\]
to transform the geodesic $i(0,\infty)$ into the geodesic from $\xi$
to $\xi'$. We work with $t\in \lhp$, and denote by
\il{pt=}{$\stackrel\pnt=$}$\stackrel\pnt=$ that we ignore non-zero
factors that do not depend on~$t$:
\[
(it)^\k|_{2-r} g^{-1}\; (t) \;\stackrel\pnt=\; (-t+\xi')^{r-2} \,
\Bigl( \frac{t-\xi}{-t+\xi'}\Bigr)^\k \;\stackrel\pnt=\;
(t-\xi')^{r-2-\k}\,(t-\xi)^\k\,.\qedhere\]
\end{proof}

%%%%%%%%%%%%%%%%%%%%%%%%%%%%%%%%%%%%%%%%%%%%%%%%%%% 

\subsection{Modules of singularities} In the sequel we will deal with
two $\Gm$-mod\-ules $V\subset W$, where $V=\dsv{v,2-r}\om$, and $W$
is one of the following larger modules:
\be\label{VW} (a):\; \dsv{v,2-r}\fs\,,\quad (b):
\;\dsv{v,2-r}{\fs,\wdg}\,,\quad
(c): \;\dsv{v,2-r}{\fs,\smp}\,,\quad (d):\; \dsv{v,2-r}{\fs,\infty}\,.
\ee

\begin{defn}
In each of the cases in~\eqref{VW} we consider the quotient
module\ir{Sg}{\Sg{}{}, \; \Sg{v,2-r}{\fs,\ast}}
\be \label{Sg}\Sg{}{} \;:=\; W/V\,,\ee
which we call the \il{msing}{module of
singularities}\il{singm}{singularities, module of}\emph{module of
singularities}. We write $\Sg{v,2-r}\fs$, $\Sg{v,2-r}{\fs,\wdg}$,
$\ldots$, if we want to indicate the case under consideration
explicitly.
\end{defn}

\begin{defn}For $\xi\in \proj\RR$ we put
\il{Sg-xi}{$\Sg\xi{}$}$\Sg\xi{} := W[\xi]/V \subset\Sg{}{}$, where
\il{W[]}{$W[\xi]$}$W[\xi]$ consists of the elements $f\in W$ with
$\bsing f \subset \{\xi\}$.
\end{defn}

\rmrks
\itmi The space $\Sg\xi{}$ is a subspace of $\Sg{}{}$, not the stalk
of a sheaf.

\itm The direct sum $\bigoplus_{\xi\in \proj\RR} \Sg\xi{}$ is a
submodule of $\Sg{}{}$.

\begin{defn}We say that the module $\Sg{}{}=W/V$ has
\il{sepsing}{separation of singularities}\emph{separation of
singularities} if
\be\label{sesidcp} 
\Sg{}{}\= \bigoplus_{\xi\in \proj\RR} \Sg\xi{}\,.\ee
\end{defn}

\begin{prop}\label{prop-sepsing}For all cases in~\eqref{VW} the module
$\Sg{}{}$ has separation of singularities.
\end{prop}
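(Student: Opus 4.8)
The plan is to prove that every class in the module of singularities $\Sg{}{}=W/V$ decomposes uniquely as a locally finite (hence finite, on any given representative) sum of classes supported at single boundary points. The statement has two halves: (1) the subspaces $\Sg\xi{}$ together span $\Sg{}{}$, and (2) the sum is direct, i.e. $\Sg\xi{}\cap\sum_{\eta\neq\xi}\Sg\eta{}=\{0\}$ in $\Sg{}{}$.

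First I would set up notation. Take $f\in W$ representing a class in $\Sg{}{}$. By the definitions in \S\ref{sect-sav}, $f\in\dsv{2-r}\om[\xi_1,\ldots,\xi_n]$ (with the appropriate extra condition $\wdg$, $\smp$, or $\infty$ at the $\xi_j$) for some finite set $\bsing f=\{\xi_1,\ldots,\xi_n\}$. For spanning, I would argue by induction on $n=\#\bsing f$. If $n\le 1$ the class is already in some $\Sg\xi{}$ (or is zero). For $n\ge 2$, the key is a \emph{partition-of-unity / contour-splitting} argument in the projective model: with $h=\Prj{2-r}f$ holomorphic on a suitable (excised, or smooth, etc.) neighbourhood $U$ of $\lhp\cup(\proj\RR\setminus\{\xi_1,\ldots,\xi_n\})$, choose disjoint open arcs $I_1,\ldots,I_n\subset\proj\RR$ with $\xi_j\in I_j$, and pick small contours; using a Cauchy-type integral (or simply a smooth cutoff adapted to the one complex variable $t$) one writes $h=h_1+\cdots+h_n$ where $h_j$ is holomorphic on a neighbourhood of $\lhp\cup(\proj\RR\setminus\{\xi_j\})$ and inherits the relevant boundary condition ($\wdg$, $\smp$, $\infty$) at $\xi_j$, while being regular at every $\xi_k$, $k\neq j$. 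Then $\Prj{2-r}^{-1}h_j$ represents a class in $\Sg{\xi_j}{}$ and their sum is the class of $f$. One must check that the decomposition of $h$ can be arranged so each $h_j$ really lies in the correct refined space; for $\wdg$ this is immediate from the excised-neighbourhood structure, for $\infty$ and $\smp$ one checks that a cutoff supported away from $\xi_j$ adds only something holomorphic near $\xi_j$, hence in $V$ after passing to the quotient, and the asymptotic/Taylor conditions at the remaining points are untouched.

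For directness, suppose a class $s\in\Sg\xi{}$ also lies in $\sum_{\eta\neq\xi}\Sg\eta{}$. Lift $s$ to $f\in W[\xi]$, so $\bsing f\subset\{\xi\}$, and simultaneously $f\equiv g_1+\cdots+g_m\pmod V$ with $\bsing g_i\subset\{\eta_i\}$, $\eta_i\neq\xi$. Then $f-\sum g_i\in V=\dsv{v,2-r}\om$, so $f$ and $\sum g_i$ represent the same element of $W$ modulo a section of $\V{2-r}\om$ that is regular everywhere on $\proj\RR$; comparing boundary singularity sets via Proposition~\ref{prop-sasi}(iii) and the fact that $\bsing$ is the \emph{minimal} exceptional set, the only point that can carry a singularity of $f$ is $\xi$, while the only candidates from $\sum g_i$ are the $\eta_i\neq\xi$; since a holomorphic ($\dsv{2-r}\om$) correction cannot create or cancel singularities at distinct points, one forces $f\in V$, i.e. $s=0$. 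This is essentially the same ``non-cancellation of singularities at distinct boundary points'' principle already used in Lemmas~\ref{lem-sing-inv} and~\ref{lem-sing-hyp}: a power-series (holomorphic) function at $\xi$ cannot absorb a genuine singularity at $\eta\neq\xi$ and vice versa.

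The main obstacle I anticipate is the spanning step in the refined cases $\smp$ and $\infty$: one needs the contour/cutoff splitting $h=\sum h_j$ to preserve, at each $\xi_j$, precisely the prescribed local regularity of $h$ (an honest asymptotic expansion of order $\ge -1$ or $\ge 0$ of $\Prj{2-r}h_j$), while making $h_j$ genuinely regular at the other $\xi_k$. Handling $\infty\in\bsing f$ requires the usual change of chart ($t\mapsto -1/t$) and care with the factor $(i-t)^{2-r}$. I would expect the authors to reduce to this by first treating the generic case $r\notin\ZZ_{\ge 2}$ via the isomorphism $\rsp_r$ and Theorem~\ref{thm-VWiso}, then noting the $\pol$-correction for $r\in\ZZ_{\ge 2}$ is a finite-dimensional global object that does not affect separation of singularities, since polynomials are holomorphic on all of $\proj\RR$ and hence contribute $0$ to every $\Sg\xi{}$.
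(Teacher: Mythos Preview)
Your approach is correct in spirit and close to the paper's, but you make it harder than necessary in two places.

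For spanning, the paper does essentially what you propose but phrases it as a direct appeal to the standard complex-analysis fact (H\"ormander, Proposition~1.4.5): if $\Om_1,\Om_2\subset\CC$ are open, any $f\in\hol(\Om_1\cap\Om_2)$ splits as $f=f_1-f_2$ with $f_j\in\hol(\Om_j)$. Applied with $\Om_1$ a neighbourhood of $\lhp\cup(\proj\RR\setminus\{\xi_1\})$ and $\Om_2$ a neighbourhood of $\lhp\cup(\proj\RR\setminus\{\xi_2,\ldots,\xi_n\})$, this peels off one singularity at a time. Your Cauchy-integral/cutoff description is just one concrete realization of this Cousin-type splitting, so the core idea is the same.

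Where you overcomplicate: first, you worry about arranging the splitting so that each $h_j$ satisfies the refined condition ($\wdg$, $\smp$, $\infty$) at its own $\xi_j$. The paper sidesteps this entirely. It proves separation only for the largest module $\Sg{v,2-r}{\fs}$, then observes that for each smaller $W\subset\dsv{v,2-r}{\fs}$ the defining condition at a point $\xi$ depends only on the germ there; adding something regular at $\xi$ cannot spoil it. So once you split $f$ as $\sum f_j$ with $f_j\in\dsv{2-r}\om[\xi_j]$, the condition that $f$ satisfies at $\xi_j$ is automatically inherited by $f_j$, since $f-f_j$ is regular at $\xi_j$. No need to build the splitting carefully. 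Second, your detour through Theorem~\ref{thm-VWiso} and a separate treatment of $r\in\ZZ_{\ge2}$ is unnecessary: the argument is purely about holomorphic functions on domains in $\proj\CC$ and is uniform in~$r$.

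Your directness argument is fine but could be stated in one line: if $\sum s_j=0$ with $s_j\in\Sg{\xi_j}{}$ for distinct $\xi_j$, lift to $f_j\in W[\xi_j]$; then $f_1=-\sum_{j\ge2}f_j+v$ with $v\in V$ forces $\bsing f_1\subset\{\xi_1\}\cap\{\xi_2,\ldots\}=\emptyset$, so $s_1=0$. The paper does not even spell this out.
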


\begin{proof}In \cite[Proposition 13.1]{BLZm} this is shown for the
sheaves used in that paper: $V= \V s \om$, the sheaf of analytic
functions with action of $\PSL_2(\RR)$ specified by the spectral
parameter $s$ and $W$ a subsheaf of $\V s{\fs,\wdg}$. It is based on
the result of complex function theory that if $\Om_1$ and $\Om_2$ are
open sets in $\CC$ any holomorphic function $f$ on $\Om_1\cap \Om_2$
can be written as $f=f_1-f_2$ with $f_1\in \hol(\Om_1)$, $f_2\in
\hol(\Om_2)$. See, e.g., \cite[Proposition 1.4.5]{Ho}.

This shows that if, for some open set containing $\lhp$
and $\proj\RR \setminus\{\xi_1,\ldots,\xi_n\}$, $f\in \hol(U)$ 
represents an element of $\dsv{v,2-r}\fs$, then we can take a 
neighbourhood $U_1\supset U$
of $\lhp \cup\bigl( \proj\RR \setminus \nobreak\{\xi_1\}\bigr)$, and
$U_2\supset U$ a neighbourhood of $\lhp \cup\bigl(
\proj\RR\setminus\nobreak\{\xi_2,\ldots,\xi_n\}\bigr)$. Then $f_2$
 represents an element of $\dsv{v,2-r} \om[\xi_1]$ and $f_1$ an
element of $\dsv{v,2-r} \om[\xi_2,\ldots,\xi_n]$. Successively we can
write each element of $\dsv{v,2-r}\om[\xi_1,\ldots,\xi_n]$
non-uniquely as the sum of elements in the spaces
$\dsv{v,2-r}\om[\xi_j]$. This shows that $\Sg{v,2-r}\fs$ has
separation of singularities.

For the other spaces $\Sg{}{}=W/V$ we have $W\subset \dsv{v,2-r}\fs$.
All these subspaces are defined by conditions on the singularities of
a local nature, based on the properties of a representative at each
$\xi$ in the set of singularities separately. Addition of an element
for which $\xi$ is not in the set of boundary singularities does not
influence the condition at~$\xi$. So separation of singularities is
inherited from $\Sg{v,2-r}\fs$.
\end{proof}

\begin{lem}\label{lem-hyp-is}Let $r\in \CC$.
\begin{enumerate}
\item[i)] The space of invariants $\Sg{}\Gm$ is zero.
\item[ii)] Let $\gm\in \Gm$ be hyperbolic, with fixed points $\xi$
and~$\xi'$. By $\ell(\gm)$ we denote the length of the associated
geodesic. Then the dimensions of the spaces of invariants are as
follows.
\[ \renewcommand\arraystretch{1.4}
\begin{array}{|c|c|c|}\hline
& v(\gm) \neq e^{-r \ell(\gm)/2} &v(\gm) = e^{-r \ell(\gm)/2} \\
\hline
\dim\bigl((\Sg{v,2-r}\fs)_\xi\bigr)^\gm& \infty & \infty \\
\dim\bigl((\Sg{v,2-r}{\fs,\wdg})_\xi\bigr)^\gm& \infty & \infty \\
\dim\bigl((\Sg{v,2-r}{\fs,\smp})_\xi\bigr)^\gm& 0 & 1 \\
\dim\bigl((\Sg{v,2-r}{\fs,\infty})_\xi\bigr)^\gm& 0 & 0 \\ \hline
\end{array}
\]
\end{enumerate}
\end{lem}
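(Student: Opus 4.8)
The plan is to handle part~(i) directly from Lemma~\ref{lem-sing-inv}, and part~(ii) by a localisation argument that reduces to an exponent computation mimicking the proofs of Lemmas~\ref{lem-par-inv} and~\ref{lem-hyp-inv}, now carried out in the quotient $\Sg{}{}=W/V$ with $V=\dsv{v,2-r}\om$ and $W$ one of the four modules in~\eqref{VW}.

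For part~(i): if $f\in W\subset\dsv{v,2-r}\fs$ represents an element of $\Sg{}\Gm$, then $f|_{v,2-r}\gm-f\in V$ for all $\gm\in\Gm$. For a cusp $\ca$ with primitive parabolic $\pi_\ca$ this reads $v(\pi_\ca)^{-1}f|_{2-r}\pi_\ca-f\in\dsv{2-r}\om$, so Lemma~\ref{lem-sing-inv} gives $\bsing f\subset\{\ca\}$. Since $\Gm_\ca$ has infinite index in $\Gm$ there is $\dt\in\Gm$ with $\dt\ca\neq\ca$; the same argument applied to the parabolic $\dt\pi_\ca\dt^{-1}$ (fixed point $\dt\ca$) gives $\bsing f\subset\{\dt\ca\}$. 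Hence $\bsing f=\emptyset$, so $f\in V$ and its class is zero.

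For part~(ii): first, by separation of singularities (Proposition~\ref{prop-sepsing}) and because $\gm$ fixes $\xi$, the operator $|_{v,2-r}\gm$ preserves $(\Sg{}{})_\xi$, so the $\gm$-invariants sit inside $(\Sg{}{})_\xi=W[\xi]/V$. Conjugating $\gm$ in $\SL_2(\RR)$ to $\matc{p^{1/2}}00{p^{-1/2}}$ with $p=e^{\ell(\gm)}$ and moving $\xi$ to $\infty$ (as in the proofs of Lemmas~\ref{lem-sing-hyp} and~\ref{lem-hyp-inv}), a representative $\ph$ is holomorphic near every finite real point, satisfies the $W$-regularity condition at $\infty$, and obeys the scaling relation
\[ p^{1-r/2}\,\ph(pt)-v(\gm)\,\ph(t)\ \in\ \dsv{2-r}\om\,. \]
Thus the class of $\ph$ is its germ at $\infty$ modulo germs holomorphic at $\infty$, constrained by this relation; the substitution $t=e^w$ turns the relation into a twisted $\ell(\gm)$-periodicity, whose solutions are Fourier-type series with exponents $\k_{v,2-r}(\gm)+2\pi i m/\ell(\gm)$, $m\in\ZZ$, with $\k_{v,2-r}(\gm)$ as in~\eqref{kpdef}. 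In the cases $W=\dsv{v,2-r}\fs$ and $W=\dsv{v,2-r}{\fs,\wdg}$ the representatives are holomorphic only on a sector at $\infty$, so infinitely many Fourier modes yield germs that are genuinely singular there, giving an infinite-dimensional invariant space. In the cases $W=\dsv{v,2-r}{\fs,\smp}$ and $W=\dsv{v,2-r}{\fs,\infty}$ the regularity condition at $\infty$ forces an asymptotic expansion in integer powers, so only integer exponents in a bounded range occur and the scaling relation selects a single value; by the table of Lemma~\ref{lem-hyp-inv} the only exponent compatible with a nonzero class in $(\Sg{}{})_\xi$ is $\k_{v,2-r}(\gm)=-1$, i.e.\ $v(\gm)=e^{-r\ell(\gm)/2}$, and then the rational invariant $t\mapsto(t-\xi')^{r-1}(t-\xi)^{-1}$ from the preceding lemma has boundary singularity exactly $\{\xi\}$ and represents a nonzero class, so the dimension is $1$; for the exponents $0\le\k\le r-2$ this invariant is a polynomial of degree $r-2$ lying in $\dsv{2-r}\pol\subset V$, hence represents zero, and for the stricter $\infty$-condition even $\k=-1$ is excluded, so the dimension is $0$.

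The main obstacle is the upper bound in the $\smp$ and $\infty$ cases: showing that $(\Sg{}{})_\xi^\gm$ contains no invariant class beyond the rational invariant just exhibited. Equivalently, in the long exact sequence of $\langle\gm\rangle$-modules attached to $0\to V\to W[\xi]\to(\Sg{}{})_\xi\to0$ one must control the contribution of $H^1(\langle\gm\rangle;V)=V/(\gm-1)V$, i.e.\ carry the germ-at-$\xi$ expansion far enough to see that a representative satisfying the scaling relation together with the $W$-regularity condition is determined modulo $\dsv{2-r}\om$ up to that one rational function. This is the hyperbolic analogue of the exponent bookkeeping in the proof of Lemma~\ref{lem-hyp-inv}, made more delicate by working modulo holomorphic germs rather than with exact invariants, and by the convention fixing which fixed point of $\gm$ is $\xi$.
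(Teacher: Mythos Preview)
Your Part~(i) is correct; the paper argues slightly more directly that $\bsing f$ is a finite $\Gm$-invariant subset of $\proj\RR$, hence empty since every $\Gm$-orbit on $\proj\RR$ is infinite. Your two-parabolic-element argument is an equally valid route.

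For the first two rows of Part~(ii) your sketch matches the paper's: use the exact invariants $(it)^{\k+2\pi in/\ell(\gm)}$ from the proof of Lemma~\ref{lem-hyp-inv} and note that for $n\neq 0$ they project nontrivially to $(\Sg{\xi}{})^\gm$.

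The gap is where you flag it, and you have not found the mechanism that resolves it. The paper does \emph{not} go through the long exact sequence or $H^1(\langle\gm\rangle;V)$. Instead, after conjugating the fixed point to~$0$, it takes $\ph\in W[0]$ representing a class in $(\Sg{0}{})^\gm$, so that $\ph|_{v,2-r}(\gm-1)\in V$, and works with the asymptotic expansion $\ph(t)\sim\sum_{m\ge k}c_m\,t^m$ at~$0$. The key observation is that $\ph|_{v,2-r}(\gm-1)$, being in $V$, is \emph{holomorphic} at~$0$; hence its asymptotic series
\[\sum_{m}c_m\bigl(v(\gm)^{-1}p^{\,1-r/2+m}-1\bigr)\,t^m\qquad\bigl(p=e^{\ell(\gm)}>1\bigr)\]
is actually a convergent power series. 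But the coefficient factor $v(\gm)^{-1}p^{\,1-r/2+m}-1$ grows like $p^m$ and can vanish for at most one $m_0\ge 0$, so convergence of this series forces $\sum_{m\ge0,\,m\neq m_0}c_m\,t^m$ itself to converge, putting that part of~$\ph$ in~$V$. The single $t^{m_0}$ term, if present, already lies in $V^\gm$. Thus only the $c_{-1}t^{-1}$ term can contribute a nonzero class, and it survives precisely when $v(\gm)^{-1}p^{-r/2}=1$, i.e.\ $v(\gm)=e^{-r\ell(\gm)/2}$. This growth-forces-convergence step is the idea you are missing.

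A secondary issue: your lower bound via the ``rational invariant'' $(t-\xi')^{r-1}(t-\xi)^{-1}$ from the lemma after Lemma~\ref{lem-hyp-inv} requires $r\in\ZZ_{\ge 1}$; for other $r$ the factor $(t-\xi')^{r-1}$ has a branch point at~$\xi'$, so the function is not in $W[\xi]$. The paper's lower bound is phrased instead in terms of an arbitrary representative $\ph\in W[0]$ with $c_{-1}\neq 0$, which avoids any integrality hypothesis on~$r$.
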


\begin{proof}
For Part~i) we note that if $f\in W$ represents an element of
$\Sg{}\Gm$, then $\singr f$ is a $\Gm$-invariant subset of
$\proj\RR$. All orbits in $\proj\RR$ of the cofinite discrete group
$\Gm$ are infinite. However, elements of $\dsv{v,2-r}\fs$ have only
finitely many singularities. 

In Part~ii) we denote $V=\dsv{v,2-r}\om$, and take for $W$ one of the
modules $\dsv{v,2-r}\fs$, $\dsv{v,2-r}{\fs,\wdg}$,
$\dsv{v,2-r}{\fs,\smp}$, and $\dsv{v,2-r}{\fs,\infty}$. There is an
injective map $W^\gm/V^\gm\rightarrow \Sg{}\gm$. The image is
contained in $\Sg{\xi}{}\oplus \Sg{\xi'}{}$. With separation of
singularities, we can split each element of $\Sg{}\gm$ as a component
in $(\Sg{\xi}{})^\gm$ and a component in $(\Sg{\xi'}{})^\gm$.

We conjugate $\gm$ in $\SL_2(\RR)$ to $\matc {p^{1/2}}00{p^{-1/2}}$
with $p=e^{\ell(\gm)}$. The invariants $t\mapsto
(it)^{\k+\frac{2\pi in}{\ell(\gm)}}$ in the proof of
Lemma~\ref{lem-hyp-inv} have a singularity at~$0$, except possibly
in the case $n=0$. This leads to the first two lines in the table.

Now let $W=\dsv{v,2-r}{\fs,\smp}$ or $W=\dsv{v,2-r}{\fs,\infty}$. The
component in $(\Sg 0 {})^\gm$ of the image $f+V$ in $\Sg{}{}$ is
invariant if and only if $0\in \bsing f$ and $f|_{v,2-r}(\gm-\nobreak
1)\in V$. Let $f(t)\sim \sum_{n\geq k} c_m\, t^m$ be the asymptotic
expansion at~$0$, with $k=-1$ for $\dsv{v,2-r}{\fs,\smp}$, and $k=0$
for $\dsv{v,2-r}{\fs,\infty}$.

If $k=-1$ the term $c_{-1}t^{-1}$ can be non-zero if
$p^{-r/2}=v(\gm)$. Then $f(t)=t^{-1}$ leads to a non-zero element of
$(\Sg0{})^\gm$.

If $m_0\geq 0$ a term with $p^{-r/2+1+m_0}=v(\gm)$ leads to an
invariant in $W^\gm$ which is also in $V^\gm$, so not to a non-zero
element of $(\Sg{0}{})^\gm$. The remaining asymptotic series
$\sum_{m\geq 0,\,m\neq m_0} c_m \, t^m$ for $\ph \in W[\xi]$, leads
to an asymptotic series
\[\sum_{m\geq 0,\,m\neq m_0} c_m\, \left(v(\gm)^{-1}
p^{-r/2+1+m}-1\right)\, t^m\]
for $\ph|_{v,2-r}(\gm-\nobreak 1)$. For an invariant in $\Sg{}{}$ this
last series should be convergent on a neighbourhood of $0$ in~$\CC$.
But since $p^m$ is exponentially increasing, then also $\sum_{m\geq
0,\,m\neq m_0} c_m \, t^m$ is convergent on that
neighbourhood, and hence is in~$V$.
\end{proof}

\subsection{Mixed parabolic cohomology and parabolic cohomology}For
$\Gm$-modules $V\subset W$ as in \eqref{VW} there is a natural map
$\hpar^1(\Gm;V,W)\rightarrow \hpar^1(\Gm;W)$. We'll show that it is
injective, and investigate its surjectivity.

\begin{lem}\label{lem-esVWS}
Let $V=\dsv{v,2-r}\om\subset W$, where $W$ is one of the modules
$\dsv{v,2-r}{\fs,\cond}$ in~\eqref{VW}, or one of the corresponding
modules $\dsv{v,2-r}{\fsn,\cond}$. The following sequence is exact:
\be\label{esVWS}
0\rightarrow \hpar^1(\Gm;V,W) \rightarrow \hpar^1(\Gm;W) \rightarrow
H^1(\Gm;\Sg{}{})\,.
\ee
\end{lem}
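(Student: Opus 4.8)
The plan is to deduce the exactness entirely from the short exact sequence of $\Gm$-modules $0\to V\to W\to\Sg{}{}\to0$, where $\Sg{}{}=W/V$, the only nontrivial input being that the space of $\Gm$-invariants $(\Sg{}{})^\Gm$ vanishes. For the modules $\Sg{v,2-r}{\fs,\cond}$ this vanishing is exactly Part~i) of Lemma~\ref{lem-hyp-is}; for the modules built from $\dsv{v,2-r}{\fsn,\cond}$ the same argument applies: if $f\in W$ represents a class with $f|_{v,2-r}(\gm-1)\in V$ for every $\gm\in\Gm$, then $\bsing\bigl(f|_{v,2-r}\gm-f\bigr)=\emptyset$ and Part~iii) of Proposition~\ref{prop-sasi} give $\gm^{-1}\,\bsing f=\bsing f$ for all $\gm$, so $\bsing f$ is a finite $\Gm$-invariant subset of $\proj\RR$, hence empty since all $\Gm$-orbits in $\proj\RR$ are infinite, i.e.\ $f\in V$.

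First I would fix the two maps in the sequence. Since $V\subset W$, and both $\hpar^1(\Gm;V,W)$ and $\hpar^1(\Gm;W)=\hpar^1(\Gm;W,W)$ are cut out of $H^1(\Gm;V)$ resp.\ $H^1(\Gm;W)$ by the same parabolicity condition $\ps_\pi\in W|_{v,2-r}(\pi-1)$ on representing cocycles, one has $B^1(\Gm;V)\subset B^1(\Gm;W)$ and $\zpar^1(\Gm;V,W)\subset\zpar^1(\Gm;W,W)$, so inclusion of cocycles induces a well-defined map $\hpar^1(\Gm;V,W)\to\hpar^1(\Gm;W)$; composing representing cocycles with the projection $W\to\Sg{}{}$ gives the second map $\hpar^1(\Gm;W)\to H^1(\Gm;\Sg{}{})$, namely the restriction of $H^1(\Gm;W)\to H^1(\Gm;\Sg{}{})$.

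For exactness at $\hpar^1(\Gm;V,W)$ (injectivity), I would take $[\ps]$ represented by $\ps\in\zpar^1(\Gm;V,W)$ and assume it maps to $0$ in $\hpar^1(\Gm;W)$, so $\ps_\gm=b|_{v,2-r}(\gm-1)$ for some $b\in W$; the image $\bar b$ of $b$ in $\Sg{}{}$ then satisfies $\bar b|_{v,2-r}(\gm-1)=\overline{\ps_\gm}=0$ for all $\gm$, because $\ps_\gm\in V$, so $\bar b\in(\Sg{}{})^\Gm=\{0\}$, whence $b\in V$ and $[\ps]=0$ already in $H^1(\Gm;V)$. For exactness at $\hpar^1(\Gm;W)$: the composite vanishes since a cocycle with values in $V$ projects to the zero cocycle in $\Sg{}{}$; conversely, given $[\phi]\in\hpar^1(\Gm;W)$ represented by $\phi\in\zpar^1(\Gm;W,W)$ with zero image in $H^1(\Gm;\Sg{}{})$, write $\overline{\phi_\gm}=\bar b|_{v,2-r}(\gm-1)$, lift $\bar b$ to $b\in W$, and set $\ps:=\phi-b|_{v,2-r}(\cdot-1)$. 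Then $\ps$ is cohomologous to $\phi$ in $Z^1(\Gm;W)$, $\overline{\ps_\gm}=0$ so $\ps$ takes values in $V$, and for each parabolic $\pi$ one has $\ps_\pi=\phi_\pi-b|_{v,2-r}(\pi-1)\in W|_{v,2-r}(\pi-1)$ since $\phi_\pi\in W|_{v,2-r}(\pi-1)$ and $b|_{v,2-r}(\pi-1)\in W|_{v,2-r}(\pi-1)$; hence $\ps\in\zpar^1(\Gm;V,W)$ and $[\ps]\in\hpar^1(\Gm;V,W)$ maps to $[\phi]$.

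I do not expect a serious obstacle: the argument is a routine cocycle manipulation attached to the long exact cohomology sequence. The only two points that genuinely need checking are the vanishing $(\Sg{}{})^\Gm=0$ in the $\fsn$-cases, handled in the first paragraph, and the bookkeeping that the parabolicity condition $\ps_\pi\in W|_{v,2-r}(\pi-1)$ is preserved under subtracting the coboundary $b|_{v,2-r}(\cdot-1)$, which holds trivially because $b|_{v,2-r}(\pi-1)$ already lies in $W|_{v,2-r}(\pi-1)$. This is the analogue in the present setting of the corresponding statement for the modules used in \cite{BLZm}.
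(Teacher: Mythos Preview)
Your proof is correct and follows essentially the same approach as the paper: both arguments rest on the vanishing $(\Sg{}{})^\Gm=\{0\}$ (Part~i) of Lemma~\ref{lem-hyp-is}), with the paper invoking the long exact sequence in mixed parabolic cohomology from \cite[\S11]{BLZm} while you unpack that sequence by direct cocycle manipulation. Your explicit treatment of the $\fsn$ case simply reproduces the same finite-orbit argument that the paper uses in the proof of Lemma~\ref{lem-hyp-is}.
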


\begin{proof}The exact sequence of $\Gm$-modules $0\rightarrow V
\rightarrow W \rightarrow \Sg{}{} \rightarrow 0$ induces a long exact
sequence in mixed parabolic cohomology. This is discussed in
\cite{BLZm} at the end of \S11. We use the following part of the long
 exact sequence:
\be \label{expcSg} H^0(\Gm;\Sg{}{}) \rightarrow \hpar^1(\Gm;V,W)
\rightarrow \hpar^1(\Gm;W)
\rightarrow H^1(\Gm;\Sg{}{})
\ee
Part~i) of Lemma~\ref{lem-hyp-is} leads to the desired sequence.
\end{proof}

The lemma shows that $\hpar^1(\Gm;V,W)
\rightarrow \hpar^1(\Gm;W)$ is injective. It is surjective if the
image of $\hpar^1(\Gm;W)
\rightarrow H^1(\Gm;\Sg{}{})$ is zero.

\begin{defn}For each $\Gm$-orbit $x\subset \proj\RR$
put\ir{Sg-orb}{\Sg{}{}\{x\}}
\be\label{Sg-orb} \Sg{}{}\{x\}:= \bigoplus_{\xi\in x} \Sg{\xi}{}\,.
\ee
\end{defn}

For each orbit $x\in \Gm\backslash \proj\RR$ the space $\Sg{}{}\{x\}$
is a $\Gm$-module. Since $\Sg{}{}$ has separation of singularities we
have
\be \Sg{}{} \= \bigoplus_{x\in \Gm\backslash\proj\RR} \Sg{}{}\{x\}\,.
\ee
To investigate the image of $\hpar^1(\Gm;W)\rightarrow
H^1(\Gm;\Sg{}{})$ we can investigate separately the images of
$H^1(\Gm;W)\rightarrow H^1(\Gm;\Sg{}{}\{x\})$. The following
statement is analogous to~\cite[Proposition 13.4]{BLZm}:

\begin{prop}\label{prop-WSg-0}Let $x$ be a $\Gm$-orbit in $\proj\RR$.
The natural map
\[ \hpar^1(\Gm;W) \rightarrow H^1(\Gm;\Sg{}{}\{x\})\]
is the zero map in each of the following cases:
\begin{enumerate}
\item[a)] the stabilizers $\Gm_\xi$ of the elements $\xi \in x$ are
equal to $\{1,-1\}$,
\item[b)] the orbit $x$ consists of cusps of~$\Gm$,
\item[c)] the stabilizers $\Gm_\xi$ of the elements $\xi \in x$
contain hyperbolic elements, with the additional condition that for
all $\gm\in \Gm_\xi$ the space of invariants $\Sg\xi\gm$ is zero.
\end{enumerate}
\end{prop}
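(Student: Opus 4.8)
\textbf{Proof plan for Proposition~\ref{prop-WSg-0}.} The plan is to mimic the strategy of~\cite[Proposition 13.4]{BLZm}: reduce the vanishing of the composed map $\hpar^1(\Gm;W)\rightarrow H^1(\Gm;\Sg{}{}\{x\})$ to a statement about a single stabilizer, and then use Shapiro's lemma together with the invariant computations already available. First I would fix a cocycle $\ps\in Z^1(\Gm;W)$ and look at its image in $Z^1(\Gm;\Sg{}{}\{x\})$. Since $\Sg{}{}$ has separation of singularities (Proposition~\ref{prop-sepsing}), the orbit-summand $\Sg{}{}\{x\}=\bigoplus_{\xi\in x}\Sg\xi{}$ is, as a $\Gm$-module, the induced module $\mathrm{Ind}_{\Gm_\xi}^\Gm \Sg\xi{}$ for a chosen representative $\xi$ of the orbit $x$ (the $\Gm$-action permutes the summands according to the action on $x$, and each $\Sg\xi{}$ is $\Gm_\xi$-stable by Proposition~\ref{prop-sasi}~iii)). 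So by Shapiro's lemma $H^1(\Gm;\Sg{}{}\{x\})\cong H^1(\Gm_\xi;\Sg\xi{})$, and the image of $\ps$ corresponds to the restricted class $[\ps|_{\Gm_\xi}]$, viewed with values in $\Sg\xi{}$.

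Next I would analyze $H^1(\Gm_\xi;\Sg\xi{})$ case by case according to the three hypotheses. In case~a), $\Gm_\xi=\{1,-1\}$ is finite, and $H^1$ of a finite group with values in a $\CC$-vector space vanishes; so the restricted class is already zero in $H^1(\Gm_\xi;\Sg\xi{})$, hence the image in $H^1(\Gm;\Sg{}{}\{x\})$ is zero. In case~b), $\Gm_\xi$ is the infinite cyclic group (mod $\pm1$) generated by the primitive parabolic $\pi_\xi$; here $H^1$ of an infinite cyclic group with coefficients in a module $M$ is the coinvariants $M_{\pi_\xi}=M/(\pi_\xi-1)M$. I would show that multiplication by $(\pi_\xi-1)$ is surjective on $\Sg\xi{}$: given $g\in W[\xi]$, conjugating $\xi$ to $\infty$ reduces to solving $h|_{2-r}(1-\ld^{-1}T)-$ something in $V$, which is exactly what the one-sided averages in Proposition~\ref{prop-osa1} (and Lemma~\ref{lem-*solve}, Lemma~\ref{lem-osa-pe}) provide up to an element of $\dsv{2-r}\om$ — i.e. up to $V$. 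Since we only need surjectivity onto the \emph{quotient} $\Sg\xi{}=W[\xi]/V$, the possible failure of the averages to land in the right smaller module at $\infty$ is harmless: any discrepancy lies in $V$. Hence $(\Sg\xi{})_{\pi_\xi}=0$ and again the image is zero. In case~c), $\Gm_\xi$ contains a hyperbolic element; I would use that $\Gm_\xi$ is then infinite cyclic generated by a primitive hyperbolic $\gm_0$ (mod $\pm1$), so again $H^1(\Gm_\xi;\Sg\xi{})\cong(\Sg\xi{})_{\gm_0}$, and I must show $(\gm_0-1)$ is surjective on $\Sg\xi{}$. Dually this is controlled by the \emph{invariants} $(\Sg\xi{})^{\gm_0}$: the hypothesis that $\Sg\xi\gm=0$ for all $\gm\in\Gm_\xi$ — in particular for $\gm_0$ — is precisely Part~ii) of Lemma~\ref{lem-hyp-is} in the ``generic'' column, and I would argue that this vanishing of invariants forces vanishing of coinvariants. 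Concretely, conjugating $\gm_0$ to $\mathrm{diag}(p^{1/2},p^{-1/2})$, the operator $(\gm_0-1)$ acts on the Laurent/asymptotic data of a representative by scalars $v(\gm_0)^{-1}p^{-r/2+1+m}-1$ as in the proof of Lemma~\ref{lem-hyp-is}; the exponential growth of $p^m$ makes these scalars invertible for all but the (absent, by hypothesis) exceptional exponent, and one checks the resulting series converges in the required class modulo $V$, giving surjectivity of $(\gm_0-1)$ on $\Sg\xi{}$.

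Finally I would assemble: in each of the three cases $H^1(\Gm_\xi;\Sg\xi{})=0$ (case a, by finiteness) or the relevant (co)invariants vanish (cases b and c), so via Shapiro the image of $\hpar^1(\Gm;W)$ in $H^1(\Gm;\Sg{}{}\{x\})$ is zero. I expect the main obstacle to be the bookkeeping in cases~b) and c): one must be careful that the one-sided averages and the ``divide by invertible scalars'' constructions land in the correct module \emph{after} passing to the quotient by $V$, and that the resulting series genuinely represent elements of $W$ (not something larger). This is where one leans hardest on the local nature of the conditions defining the modules in~\eqref{VW} and on the explicit estimates in Sections~\ref{sect-osa} and~\ref{sect-isos-pc}; the identification of $\Sg{}{}\{x\}$ as an induced module and the Shapiro step are routine once separation of singularities is in hand.
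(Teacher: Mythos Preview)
Your Shapiro reduction is a legitimate alternative to the paper's geometric argument, and case~a) is clean and correct. But cases~b) and~c) have real problems.

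In case~b) you try to show the coinvariants $(\Sg\xi{})_{\pi_\xi}$ vanish by invoking one-sided averages. That is the wrong direction: Proposition~\ref{prop-osa1} and Lemma~\ref{lem-*solve} take input in $V=\dsv{2-r}\om$ and produce output in $\dsv{2-r}\om[\infty]$; they do not solve $h|_{2-r}(1-\ld^{-1}T)\equiv g\bmod V$ for $g\in W[\xi]$. Case~b) can be salvaged within your framework, but by a different mechanism: parabolicity of $\ps$ gives $\ps_{\pi_\xi}=h|(\pi_\xi-1)$ with $h\in W$, and since $\pi_\xi$ fixes $\xi$ the $\xi$-component of $h\bmod V$ exhibits the restricted class as a coboundary in $B^1(\Gm_\xi;\Sg\xi{})$. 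You never need $(\Sg\xi{})_{\pi_\xi}=0$.

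Case~c) is where the approach genuinely breaks down. There is no parabolicity to exploit for a hyperbolic $\gm_0$, so you are forced to show $(\Sg\xi{})_{\gm_0}=0$. Your argument solves the equation term-by-term in the asymptotic expansion, but an asymptotic series is not a function: you must produce an actual $h\in W[\xi]$, holomorphic on a neighbourhood of $\lhp\cup(\proj\RR\setminus\{\xi\})$, whose asymptotic expansion is the one you constructed. Borel-type realization does not respect the holomorphy requirement, and ``vanishing of invariants forces vanishing of coinvariants'' is false for infinite-dimensional modules without further structure. The paper sidesteps this entirely: rather than showing $H^1(\Gm_\xi;\Sg\xi{})=0$, it shows that for the specific cocycles coming from $\hpar^1(\Gm;W)$ one can choose a path in the tesselation from $\gm^{-1}P_\ca$ to $P_\ca$ avoiding the finitely many $\Gm_\xi$-orbits of edges where the $\xi$-component is nonzero; when a barrier prevents this, the cocycle relation forces $c(\gm^{-1}P_\ca,P_\ca)_\xi\in\Sg\xi{\gm_0}$, and \emph{that} is where the hypothesis $\Sg\xi{\gm_0}=0$ enters --- as a statement about invariants, which Lemma~\ref{lem-hyp-is} actually computes, not about coinvariants.
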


\rmrks
\itmi This result shows an important difference between hyperbolic
elements of $\Gm$ and other elements. If the Condition~c) is not
satisfied it opens the way to construct cocycles that do not come
from automorphic forms via the injection $\coh r \om$ and the natural
map from mixed parabolic cohomology to parabolic cohomology.

\itm In Case~c) we need to check that $\Sg\xi\gm=\{0\}$ only for one
generator $\gm$ of~$\Gm_\xi$.

\itm We do not repeat the proof, since it is completely analogous to
the proof of~\cite[Proposition 13.4]{BLZm}. We explain the main
steps.
\medskip

The proof uses the description of cohomology based on a tesselation
$\tess$ of the upper half-plane, as discussed
in~\S\ref{sect-tesscoh}. We quote two lemmas from~\cite{BLZm} before
sketching the proof of Proposition~\ref{prop-WSg-0}.

\begin{lem}\label{lem-cocparbY}For each cocycle $c_1\in
Z^1(F_\pnt^\tess;W)$ there is $c\in Z^1(F_\pnt^\tess;W)$ in the same
cohomology class with the properties that $c(e)=0$ for all edges
$e\in X_{1}^\tess$ that occur in the boundary of any cuspidal
 triangle.
\end{lem}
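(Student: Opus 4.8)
The plan is to obtain $c$ from $c_1$ by subtracting a coboundary $d^0b$ for a suitable $0$-cochain $b\in C^0(F^\tess_\pnt;W)$ supported on the $\Gm$-orbits of the vertices $P_\ca$ attached to the cuspidal triangles. The key structural input is that, exactly as for the modular group, the boundary of a cuspidal triangle is $\partial_2V_\ca=(e_\ca)|(1-\pi_\ca)-(f_\ca)$, so the cocycle identity $d^1c_1=0$ evaluated on $V_\ca$ gives
\[ c_1(e_\ca)\bigm|_{v,2-r}(1-\pi_\ca)\;=\;c_1(f_\ca)\,. \]
Consequently it is enough to arrange $c(e_\ca)=0$ for one representative $\ca$ of each $\Gm$-orbit of cusps: the value on $f_\ca$ then vanishes automatically by the displayed relation, and the values on all $\Gm$-translates of $e_\ca$ and $f_\ca$ vanish by $\Gm$-equivariance --- and every edge appearing in the boundary of a cuspidal triangle is such a translate.

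Concretely, I would fix representatives $\ca_1,\dots,\ca_h$ of the cuspidal orbits (the cusps in the closure of the chosen fundamental domain) and define $b$ on $X_0^\tess$ by $b(\gm^{-1}P_{\ca_j}):=c_1(e_{\ca_j})\bigm|_{v,2-r}\gm$ for $\gm\in\Gm$, $1\le j\le h$, and $b:=0$ on every other $\Gm$-orbit of vertices, in particular $b=0$ at all cusps. One must check that this is a well-defined $\Gm$-equivariant $0$-cochain: the $P_{\ca_j}$ lie in pairwise distinct $\Gm$-orbits with stabilizer $\{\pm1\}$, and $-1$ acts trivially on $W$, so the prescription on $\Gm P_{\ca_j}$ is consistent; the value $0$ at a cusp $\eta$ is trivially $\pi_\eta$-invariant; and the value $0$ at the vertices of $X_0^{\tess,Y}$ with nontrivial finite stabilizer is harmless since $W$ is a $\CC$-vector space. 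Then put $c:=c_1-d^0b$, which is cohomologous to $c_1$, hence still a cocycle.

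It remains to verify the two vanishings, using $d^0b(x)=-b(\partial_1x)$. For the edge $e_{\ca_j}$ (from $P_{\ca_j}$ to $\ca_j$) one gets $d^0b(e_{\ca_j})=b(P_{\ca_j})-b(\ca_j)=c_1(e_{\ca_j})$, so $c(e_{\ca_j})=0$; for $f_{\ca_j}$ (from $P_{\ca_j}$ to $\pi_{\ca_j}^{-1}P_{\ca_j}$) one gets $d^0b(f_{\ca_j})=b(P_{\ca_j})-b(P_{\ca_j})\bigm|_{v,2-r}\pi_{\ca_j}=c_1(e_{\ca_j})\bigm|_{v,2-r}(1-\pi_{\ca_j})=c_1(f_{\ca_j})$ by the cocycle relation, so $c(f_{\ca_j})=0$. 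Since $e_{\ca_k}$ and $f_{\ca_k}$ are incident only to vertices lying in $\Gm P_{\ca_k}\cup\{\ca_k\}$, the coboundary coming from the $j$-th orbit does not disturb the edges of the $k$-th cuspidal triangle for $j\ne k$, so all the $c(e_{\ca_j})$ and $c(f_{\ca_j})$ vanish simultaneously, and then $c$ vanishes on $\Gm e_{\ca_j}$ and $\Gm f_{\ca_j}$ by equivariance. The algebra here is a one-line consequence of the cocycle relation; the only point requiring genuine care --- and the main thing I would write out carefully --- is the bookkeeping of the tesselation and of the resolution $(F^\tess_\pnt)$: the orientations of $e_\ca$ and $f_\ca$, the formula for $\partial_2V_\ca$, and the well-definedness and equivariance of the $0$-cochain $b$.
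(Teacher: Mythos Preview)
Your argument is correct and is the natural one: modify $c_1$ by the coboundary of the $0$-cochain supported on the orbits of the $P_{\ca_j}$ with value $c_1(e_{\ca_j})$ there, and use the cocycle relation $c_1(\partial_2 V_{\ca_j})=0$ to get the vanishing on $f_{\ca_j}$ for free. The paper does not prove this lemma here but refers to the proof of the corresponding Lemma~13.2 in~\cite{BLZm}; your write-up is essentially what that proof does.
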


\twocolwithpictr{We recall that each cusp $\ca$ of $\Gm$ occurs as
vertex of infinitely many faces $\pi_\ca^{-n} V_\ca\in X_2^\tess$,
$n\in \ZZ$, $\pm\pi_\ca$ generators of the stabilizer of~$\ca$. These
$\pi_\ca^{_-n}\, V_\ca$ are cuspidal triangles. The edges
$\pi_\ca^{-n}f_\ca$ form a \il{horc}{horocycle}\emph{horocycle} in
$\uhp$. If the cusp $\ca$ is in $\RR$ this horocycle is a euclidean
circle.

\quad The lemma says that we can arrange that $c$ vanishes on all
edges $\pi_\ca^{-n}e_\ca$ and $\pi_\ca^{-n}f_\ca$. }{
\setlength\unitlength{2cm}
\begin{picture}(1.4,1.8)(-.7,-.1)
\put(-.7,0){\line(1,0){1.4}}
\put(-.1,1.74){$f_\ca$}
\put(-.1,1.1){$V_\ca$}
\put(-.4,.9){$e_\ca$}
\put(-.85,1.4){$P_\ca$}
\put(.65,1.4){$\pi_\ca^{-1}P_\ca$}
\put(-.05,-.15){$\ca$}
%%%%%%%%%%%%%%%%%%%%%%%%%%%%%%%%%%%%%%%%%%%%
\qbezier(0,0.)(-0.0146618,0.134618)(-0.143061,0.177639)
\qbezier(-0.143061,0.177639)(-0.27771,0.192011)(-0.347113,0.0757337)
\qbezier(0.347113,0.0757337)(0.381824,0.0916397)(0.414938,0.11065)
\qbezier(0.347113,0.0757337)(0.27771,0.192011)(0.143061,0.177639)
\qbezier(0.143061,0.177639)(0.0146618,0.134618)(0,0.)
\qbezier(0.414938,0.11065)(0.465381,0.139624)(0.511322,0.17531)
\qbezier(0.414938,0.11065)(0.322234,0.240228)(0.164964,0.214719)
\qbezier(0.164964,0.214719)(0.0158743,0.158533)(0,0.)
\qbezier(0.511322,0.17531)(0.588757,0.235581)(0.650195,0.312094)
\qbezier(0.511322,0.17531)(0.380198,0.316677)(0.19305,0.27027)
\qbezier(0.19305,0.27027)(0.0167988,0.192083)(0,0.)
\qbezier(0.650195,0.312094)(0.76972,0.462906)(0.813008,0.650407)
\qbezier(0.650195,0.312094)(0.451605,0.450243)(0.226908,0.360609)
\qbezier(0.226908,0.360609)(0.016427,0.241357)(0,0.)
\qbezier(0.813008,0.650407)(0.858212,0.858501)(0.800891,1.06359)
\qbezier(0.800891,1.06359)(0.740537,1.26781)(0.591716,1.42012)
\qbezier(0.813008,0.650407)(0.50336,0.707938)(0.250203,0.520579)
\qbezier(0.250203,0.520579)(0.011843,0.314725)(0,0.)
\qbezier(0.591716,1.42012)(0.343402,1.6575)(0,1.66667)
\qbezier(0,1.66667)(-0.343402,1.6575)(-0.591716,1.42012)
\qbezier(0.591716,1.42012)(0.022004,0.824165)(0,0.)
\qbezier(-0.591716,1.42012)(-0.740537,1.26781)(-0.800891,1.06359)
\qbezier(-0.800891,1.06359)(-0.858212,0.858501)(-0.813008,0.650407)
\qbezier(0,0.)(-0.022004,0.824165)(-0.591716,1.42012)
\qbezier(-0.813008,0.650407)(-0.76972,0.462906)(-0.650195,0.312094)
\qbezier(0,0.)(-0.011843,0.314725)(-0.250203,0.520579)
\qbezier(-0.250203,0.520579)(-0.50336,0.707938)(-0.813008,0.650407)
\qbezier(-0.650195,0.312094)(-0.588757,0.235581)(-0.511322,0.17531)
\qbezier(0,0.)(-0.016427,0.241357)(-0.226908,0.360609)
\qbezier(-0.226908,0.360609)(-0.451605,0.450243)(-0.650195,0.312094)
\qbezier(-0.511322,0.17531)(-0.465381,0.139624)(-0.414938,0.11065)
\qbezier(0,0.)(-0.0167988,0.192083)(-0.19305,0.27027)
\qbezier(-0.19305,0.27027)(-0.380198,0.316677)(-0.511322,0.17531)
\qbezier(-0.414938,0.11065)(-0.381824,0.0916397)(-0.347113,0.0757337)
\qbezier(0,0.)(-0.0158743,0.158533)(-0.164964,0.214719)
\qbezier(-0.164964,0.214719)(-0.322234,0.240228)(-0.414938,0.11065)
\end{picture}
}
\begin{proof}See the proof of~\cite[Lemma 13.2]{BLZm}.
\end{proof}

Let $x\in \Gm\backslash\proj\RR$. If a cohomology class in
$\hpar^1(\Gm;W)$ is given by a cocycle in $Z^1(F^\tess_\pnt;W)$ as in
Lemma~\ref{lem-cocparbY} its image $c\in
Z^1(F^\tess_\pnt;\Sg{}{}\{x\})$ vanishes on all edges in
$X^\tess_1\setminus X^{\tess,Y}_1$, so it is in fact a cocycle on
$F^{\tess,Y}_1$. Therefore $c$ represents a class in
$H^1(\Gm;\Sg{}{}\{x\})$. Anyhow, $c$ is a cocycle that vanishes on
all edges $\gm^{-1}f_\ca$ and $\gm^{-1}e_\ca$ with $\gm\in \Gm$ and
$\ca\in \fdcu$ (the intersection of $\proj\RR$ with the closure of
the fundamental domain $\fd$ underlying the tesselation~$\tess$).

For any edge $e\in X_1^\tess$ we denote by
\il{cexxi}{$c(e)_\xi$}$c(e)_\xi$ the component of $c(e)$ in $\Sg\xi{}$
in the decomposition $\Sg{}{}\{x\}= \bigoplus _{\xi\in x} \Sg\xi{}$.
We put, for the fixed cocycle~$c$\ir{Dxi}{D(\xi)}
\be\label{Dxi} D(\xi) \;:=\; \bigl\{ e\in X_1^\tess\;:\; c(e)_\xi\neq
0\bigr\}\,. \ee
\begin{lem}For each $\xi\in x$, $x\in \Gm\backslash\proj\RR$, the set
$D(\xi)$ consists of finitely many $\Gm_\xi$-orbits.
\end{lem}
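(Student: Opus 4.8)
The statement to prove is that for each $\xi\in x$ with $x\in\Gm\backslash\proj\RR$, the set $D(\xi)=\{e\in X_1^\tess : c(e)_\xi\neq 0\}$ is a union of finitely many $\Gm_\xi$-orbits, where $c\in Z^1(F^\tess_\pnt;\Sg{}{}\{x\})$ is the normalized cocycle from Lemma~\ref{lem-cocparbY}. The plan is to exploit three facts: (1) the cocycle $c$ is $\Gm$-equivariant, (2) it vanishes on all edges in $X_1^\tess\setminus X_1^{\tess,Y}$ (the non-compact edges going to cusps), because $c$ was arranged to vanish on all $\gm^{-1}e_\ca$ and $\gm^{-1}f_\ca$, and (3) elements of $\dsv{v,2-r}\fs$, hence of $\Sg{}{}$, have only finitely many boundary singularities, so $c(e)$ has nonzero component in $\Sg\xi{}$ only when $\xi$ is ``close'' to the edge $e$ in a suitable sense.

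\textbf{Key steps.} First I would reduce to compact edges: since $c$ vanishes on all edges in $X_1^\tess \setminus X_1^{\tess,Y}$, we have $D(\xi)\subset X_1^{\tess,Y}$, and $X_1^{\tess,Y}$ is generated over $\CC[\bar\Gm]$ by finitely many edges $e_1,\dots,e_N$ (for the modular group, $e_1,e_2,f_\infty$). Thus every $e\in X_1^{\tess,Y}$ is of the form $\gm^{-1}e_j$ for some $j$ and some $\gm\in\Gm$, and $c(\gm^{-1}e_j)=c(e_j)|\gm$. Second, I would use $\Gm$-equivariance together with the decomposition $\Sg{}{}\{x\}=\bigoplus_{\eta\in x}\Sg\eta{}$: the component satisfies $c(\gm^{-1}e_j)_\xi = \bigl(c(e_j)|\gm\bigr)_\xi = \bigl(c(e_j)_{\gm\xi}\bigr)|\gm$, since the action $|\gm$ on $\Sg{}{}$ permutes the summands by $\Sg\eta{}|\gm = \Sg{\gm^{-1}\eta}{}$ (this follows from Proposition~\ref{prop-sasi}(iii), $\bsing(f|_{2-r}g)=g^{-1}\bsing f$). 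Hence $c(\gm^{-1}e_j)_\xi\neq 0$ forces $c(e_j)_{\gm\xi}\neq 0$. Third, for each fixed $j$, the value $c(e_j)\in\Sg{}{}$ is a single element, so it has only finitely many boundary singularities; thus the set $B_j := \{\eta\in x : c(e_j)_\eta\neq 0\}$ is finite. Therefore $\gm\xi\in B_j$, i.e.\ $\gm$ lies in the union over $\eta\in B_j$ of the (finitely many) cosets $\{\gm : \gm\xi=\eta\}$; each such nonempty set is a single left coset $\gm_\eta \Gm_\xi$ of the stabilizer $\Gm_\xi$. Consequently $\{\gm^{-1}e_j : c(\gm^{-1}e_j)_\xi\neq 0\}$ is contained in $\bigcup_{\eta\in B_j}\Gm_\xi\cdot(\gm_\eta^{-1}e_j)$, a finite union of $\Gm_\xi$-orbits. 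Taking the union over $j=1,\dots,N$ shows $D(\xi)$ is a finite union of $\Gm_\xi$-orbits.

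\textbf{Main obstacle.} The bookkeeping with the two group actions — the action of $\Gm$ on the chains $F_i^\tess = \CC[X_i^\tess]$ on the one hand, and the action $|_{v,2-r}$ on the coefficient module $\Sg{}{}\{x\}$ permuting the summands $\Sg\eta{}$ on the other — is where care is needed; one must check that the $\Gm$-equivariance $c(\gm^{-1}e)=c(e)|\gm$ interacts correctly with the projection onto $\Sg\xi{}$, i.e.\ that projecting onto $\Sg\xi{}$ and then applying $|\gm^{-1}$ equals applying $|\gm^{-1}$ then projecting onto $\Sg{\gm\xi}{}$. Once the indexing is set up consistently this is immediate from $\bsing(f|g)=g^{-1}\bsing f$. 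A secondary, minor point is to note that finitely many $\Gm_\xi$-orbits suffice even though $\Gm_\xi$ may be infinite (for $\xi$ a cusp or a hyperbolic fixed point); but since we have already discarded the noncompact edges and are working inside the finitely-$\Gm$-generated module $X_1^{\tess,Y}$, this causes no difficulty.
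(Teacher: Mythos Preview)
Your proposal is correct. The paper itself gives no argument here but simply refers to \cite[Lemma~13.5]{BLZm}; your proof supplies exactly the kind of details that reference stands in for, and follows the natural approach: finite generation of $X_1^\tess$ over $\bar\Gm$, the equivariance relation $c(\gm^{-1}e_j)_\xi=\bigl(c(e_j)_{\gm\xi}\bigr)\bigm|\gm$, and finiteness of $\bsing c(e_j)$ for each of the finitely many generators~$e_j$. One small point worth making explicit: your argument shows $D(\xi)$ is \emph{contained in} finitely many $\Gm_\xi$-orbits; to conclude it \emph{consists of} $\Gm_\xi$-orbits you should note that $D(\xi)$ is $\Gm_\xi$-invariant, which follows from the same equivariance computation (for $\dt\in\Gm_\xi$ one has $c(\dt e)_\xi=(c(e)_\xi)|\dt^{-1}$, nonzero iff $c(e)_\xi$ is).
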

\begin{proof}See the proof of~\cite[Lemma 13.5]{BLZm}.
\end{proof}

\begin{proof}[Sketch of the proof of Proposition~\ref{prop-WSg-0}]To
the cocycle $c\in Z^1(F^\tess_\pnt;\Sg{}{}\{x\})$ is associated a
function $X_0^{\tess, Y}\times X_0^{\tess,Y} \rightarrow
\Sg{}{}\{x\}$, also denoted $c$. The value $c(P,Q)$ is determined by
the value of $c$ on any path in $\ZZ[X_1^\tess]$ from $P$ to~$Q$.

Let $\ca$ be a cusp of~$\Gm$. If we can show that
$c(\gm^{-1}P_\ca,P_\ca)=0$ for all $\gm\in \Gm$, then the group
cocycle $\ps_\gm=c(\gm^{-1}P_\ca,P_\ca)$ vanishes, and hence the
cohomology class of $c$ is trivial.

The proof in~\cite[\S13.1]{BLZm} considers the three cases given in
Proposition~\ref{prop-WSg-0} separately. In all cases it is argued
 for a given $\xi\in x$, that there is a path in $\ZZ[X_1^{\tess,Y}]$
from $\gm^{-1}P_\ca$ to $P_\ca$ that does not contain edges in
$D(\xi)$. This gives $c(\gm^{-1}P_\ca,P_\ca)_\xi=0$, and leads to
$[c]=0$ in $H^1(\Gm;\Sg{}{}\{x\})$.\smallskip

Case~a) in Proposition~\ref{prop-WSg-0} is easiest, since in this case
$D(\xi)$ is a finite set of edges, which is easily avoided.

In Case~b) the orbit $x$ consists of cusps, and we take $\ca \in x$.
Now the set $D(\xi)$ may be infinite. Let $\gm\in\Gm$ be fixed. If
$\xi \not \in \{\ca,\gm^{-1}\ca\}$ it is shown that there is a path
from $\gm^{-1}P_\ca$ to $P_\ca$ avoiding $D(\xi)$. Then the
observation that $c(\gm^{-1}P_\ca,P_\ca) \in \Sg{\gm^{-1}\ca}{}
\oplus \Sg\ca{}$ is the basis for an argument showing that
$\gm\mapsto c(\gm^{-1}P_\ca,P_\ca)$ is a coboundary.

In Case~c) the set $D(\xi)$ may be a barrier that makes it impossible
to find a suitable path between $\gm^{-1}P_\ca$ and $P_\ca$ if they
are on opposite sides of the barrier. If this happens the cocycle
relation can be used to show that $c(\gm^{-1}P_\ca,P_\ca)_\xi $ is in
$\Sg\xi\gm$ for a generator $\gm$ of~$\Gm_\xi$. Under the additional
condition in Part~c) in Proposition~\ref{prop-WSg-0}, this invariant
is zero.
\end{proof}

\begin{thm}\label{thm-iso-mpcpc}Let $v$ be a multiplier system for the
weight $r\in \CC$ on the cofinite discrete subgroup $\Gm$ of
$\SL_2(\RR)$ with cusps. The natural map
\[ \hpar^1\bigl(\Gm;\dsv{v,2-r}\om,W\bigr) \rightarrow
\hpar^1(\Gm;W)\]
is an isomorphism for each of  the following  $\Gm$-modules~$W$:
\begin{enumerate}
\item[i)] $W$ is one of the $\Gm$-modules $\dsv{v,2-r}\fsn$,
$\dsv{v,2-r}{\fsn,\wdg}$, $\dsv{v,2-r}{\fsn,\smp}$, or
$\dsv{v,2-r}{\fsn,\infty}$.
\item[ii)] $W=\dsv{v,2-r}{\fs,\infty}$.
\item[iii)] $W=\dsv{v,2-r}{\fs,\smp}$, under the additional condition
that $v(\gm)\neq e^{-r\ell(\gm)/2}$ for all primitive hyperbolic
elements $\gm\in\Gm$. {\rm(By $\ell(\gm)$ we denote the length of the
associated closed geodesic in $\Gm\backslash\uhp$.)}
\end{enumerate}
\end{thm}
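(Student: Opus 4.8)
The plan is to derive Theorem~\ref{thm-iso-mpcpc} from the exact sequence of Lemma~\ref{lem-esVWS}, the separation of singularities of Proposition~\ref{prop-sepsing}, the vanishing statement Proposition~\ref{prop-WSg-0}, and the dimension count in Lemma~\ref{lem-hyp-is}; essentially all the analytic work has already been done in those results, so what remains is assembly. First, I would apply Lemma~\ref{lem-esVWS} with $V=\dsv{v,2-r}\om$: it shows the natural map $\hpar^1(\Gm;\dsv{v,2-r}\om,W)\to\hpar^1(\Gm;W)$ is always injective, and identifies its image with the kernel of $\hpar^1(\Gm;W)\to H^1(\Gm;\Sg{}{})$, where $\Sg{}{}=W/\dsv{v,2-r}\om$. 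Hence it suffices to prove that this last map is the zero map. By Proposition~\ref{prop-sepsing} (and the remark that it also applies to the $\fsn$-modules, since these are subspaces of $\dsv{v,2-r}\fs$ cut out by local conditions on singularities), one has a $\Gm$-equivariant decomposition $\Sg{}{}=\bigoplus_{x\in\Gm\backslash\proj\RR}\Sg{}{}\{x\}$ with $\Sg{}{}\{x\}=\bigoplus_{\xi\in x}\Sg\xi{}$, so it is enough to show that each component map $\hpar^1(\Gm;W)\to H^1(\Gm;\Sg{}{}\{x\})$ vanishes.

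Next I would classify the $\Gm$-orbits in $\proj\RR$. Since $\Gm$ is a cofinite Fuchsian group, the stabilizer $\Gm_\xi$ of $\xi\in\proj\RR$ is either $\{1,-1\}$, or generated (up to $\pm1$) by a primitive parabolic element — which occurs exactly when $\xi$ is a cusp — or contains a primitive hyperbolic element; elliptic elements fix no point of $\proj\RR$. These are precisely cases a), b), c) of Proposition~\ref{prop-WSg-0}. For Part~i), where $W$ is one of $\dsv{v,2-r}\fsn$, $\dsv{v,2-r}{\fsn,\wdg}$, $\dsv{v,2-r}{\fsn,\smp}$, $\dsv{v,2-r}{\fsn,\infty}$, every element of $W$ has its boundary singularities contained in a finite set of cusps, so $\Sg{}{}\{x\}=0$ unless $x$ is a cuspidal orbit; thus only orbits of types a) and b) contribute, and Proposition~\ref{prop-WSg-0}, cases a) and b), gives the vanishing. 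This proves Part~i).

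For Parts~ii) and~iii), with $W=\dsv{v,2-r}{\fs,\infty}$ respectively $W=\dsv{v,2-r}{\fs,\smp}$, orbits of type a) and b) are handled exactly as above, and it remains to invoke case c) of Proposition~\ref{prop-WSg-0}, whose hypothesis is that $\Sg\xi\gm=0$ for a generator $\gm$ of $\Gm_\xi$ (by the remark after that proposition, one generator suffices). Lemma~\ref{lem-hyp-is}, Part~ii), computes this invariant: for $W=\dsv{v,2-r}{\fs,\infty}$ one has $\dim\bigl((\Sg{v,2-r}{\fs,\infty})_\xi\bigr)^\gm=0$ unconditionally, giving Part~ii); for $W=\dsv{v,2-r}{\fs,\smp}$ one has $\dim\bigl((\Sg{v,2-r}{\fs,\smp})_\xi\bigr)^\gm=0$ precisely when $v(\gm)\neq e^{-r\ell(\gm)/2}$, which is exactly the additional hypothesis of Part~iii), applied to the primitive hyperbolic generators of the various $\Gm_\xi$. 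In all three cases the composite $\hpar^1(\Gm;W)\to H^1(\Gm;\Sg{}{})$ vanishes, so the natural map is surjective as well as injective, i.e. an isomorphism.

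As for the main obstacle: the genuinely hard work is already packaged in Proposition~\ref{prop-WSg-0} (proved by the tesselation and path-avoidance argument imported from \cite{BLZm}) and in the Fourier-series dimension counts of Lemmas~\ref{lem-hyp-inv} and~\ref{lem-hyp-is}. The remaining step is bookkeeping, and the only point needing care is matching the module of singularities to the orbit type: one must check that for the $\fsn$-modules the quotient $W/\dsv{v,2-r}\om$ is supported only over cuspidal orbits (immediate from the definitions of $\dsv{v,2-r}{\fsn,\cdot}$), and that the hypothesis of Part~iii) is exactly the negation of the condition $v(\gm)=e^{-r\ell(\gm)/2}$ under which a one-dimensional hyperbolic invariant in $\Sg{v,2-r}{\fs,\smp}$ survives. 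No new estimates are required.
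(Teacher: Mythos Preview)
Your proposal is correct and follows essentially the same route as the paper: invoke the exact sequence of Lemma~\ref{lem-esVWS} to reduce to showing that $\hpar^1(\Gm;W)\to H^1(\Gm;\Sg{}{})$ is zero, then use separation of singularities and Proposition~\ref{prop-WSg-0} orbit by orbit, with Lemma~\ref{lem-hyp-is} supplying the vanishing of hyperbolic invariants in Parts~ii) and~iii). One small tightening: in Part~i) you note that $\Sg{}{}\{x\}=0$ unless $x$ is a cuspidal orbit, so only Case~b) of Proposition~\ref{prop-WSg-0} is actually needed there (Case~a) is vacuous, not merely harmless).
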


\begin{proof}We use Proposition~\ref{prop-WSg-0} to show that
$\hpar^1(\Gm;W)\rightarrow H^1(\Gm;\Sg{}{})$ is the zero map. Then
the exact sequence in~\eqref{esVWS} gives the desired bijectivity.

For the spaces $W$ in Part~i) we have $W/V=\bigoplus_{\ca \;\mathrm{
cusp}}\Sg\ca{}$, and need only Case~b) in
Proposition~\ref{prop-WSg-0}. For Parts ii) and~iii) we have to take
into account all cases in Proposition~\ref{prop-WSg-0}, and need the
vanishing of $\Sg\xi\gm$ for all hyperbolic $\gm\in \Gm$ that leave
fixed~$\xi$. Part~ii) of Lemma~\ref{lem-hyp-is} shows that this is
the case for $W=\dsv{v,2-r}{\fs,\infty}$, and also for
$\dsv{v,2-r}{\fs,\smp}$ provided $e^{-r\ell(\gm)/2}\neq v(\gm)$.
\end{proof}

\rmrk{Missing case} Missing in Theorem~\ref{thm-iso-mpcpc} is the
module $W=\dsv{v,2-r}{\fs,\exc}$. That case is discussed in
Proposition~\ref{prop-icd-hyp}.

\subsection{Related work}\label{sect-lit12}We followed closely the
approach in~\cite[\S13.1]{BLZm}.

%%%%%%%%%%%%%%%%%%%%%%%%%%%%%%%%%%%%%%%%%%%%%%%%%%%%%%%%%

\section{Cocycles and singularities}\label{sect-coc-sing}

There are several natural maps between cohomology groups that we did
not yet handle in the previous sections. Theorem~\ref{THMiso} in
\S\ref{sect-isocg} states explicitly some maps that are not
isomorphisms. In this section we prove those statements by
constructing cocycles with the appropriate properties.

\subsection{Cohomology with singularities in hyperbolic fixed
 points}\label{sect-cshfp}In the exceptional case in Part~iii) of
 Theorem~\ref{thm-iso-mpcpc} we want to show that the injective map
\[ \hpar^1(\Gm; \dsv{v,2-r}\om ,W) \rightarrow \hpar^1(\Gm;W)\]
is not surjective for $W=\dsv{v,2-r}{\fs,\wdg}$, or for
$W=\dsv{v,2-r}{\fs,\smp}$ under the additional condition $v(\gm)
= e^{-r \ell(\gm)/2}$ for at least one primitive hyperbolic element
of~$\Gm$.

We use the description of cohomology based on a tesselation $\tess$ of
the upper half-plane, as discussed in~\S\ref{sect-tesscoh}. 

\rmrk{Notations}We work with a hyperbolic subgroup~$\Eta$ of $\Gm$. So
$\Eta$ is generated by a primitive hyperbolic element~$\dt$, and
$-1$. All elements of $\Eta$ leave fixed the repelling fixed point
$\zt_1$ and the attracting fixed point $\zt_2$ of~$\dt$. The elements
of $\Eta$ leave invariant the geodesic between $\zt_1$ and $\zt_2$.
The image of this geodesic in $\Gm\backslash\uhp$ is a closed
 geodesic, whose length we indicate by~$\ell(\dt)$.

\begin{lem}\label{lem-path-hyp}Let $\tess$ be a $\Gm$-invariant
tesselation of~$\uhp$. Let $\dt\in \Eta$ and $\zt_1,\zt_2\in
\proj\RR$ as indicated above.

There is a path $p$ from $\zt_1$ to $\zt_2$ in $\uhp$ with the
following properties:
\begin{enumerate}
\item[a)] $p$ is an oriented $C^1$-curve in $\uhp \cup\proj\RR$, with
respect to the structure of $\proj\CC$ as a $C^1$-variety.
\item[b)] $p$ has no self-intersection, and intersects $\proj\RR$ only
in the end-points $\zt_1$ and~$\zt_2$.
\item[c)] $p$ does not go through points of $X_0^{\tess,Y}=X_0^\tess
\cap\uhp$.
\item[d)] $p$ intersects each edge $e\in X_1^\tess$ transversely, at
most a finite number of times.
\item[e)] For each edge $e\in X_1^\tess$ there are only finitely many
$\Gm$-translates $\gm^{-1}p$ that intersect~$e$.
\item[f)] $\dt^{-1}p=p$.
\end{enumerate}
\end{lem}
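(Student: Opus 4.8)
\emph{Proof plan.} The plan is to normalize the configuration by a conjugation and then to take $p$ to be a suitably chosen straight Euclidean ray from the origin. First I would conjugate $\dt$ in $\SL_2(\RR)$ so that it becomes $z\mapsto \ld^2 z$ with $\ld^2=e^{\ell(\dt)}$, so that $\zt_1=0$, $\zt_2=\infty$, and the $\dt$\nobreakdash-invariant geodesic $\mu$ is the imaginary axis $i(0,\infty)$. After this conjugation $\tess$ becomes a tesselation invariant under the conjugated group; by the construction of type $\mathbf{Fd}$ tesselations every edge of $\tess$ lies on a Euclidean line or circle in $\proj\CC$ (a geodesic segment, a geodesic ray to a cusp, or a horocyclic segment of type $f_\ca$), and $X_0^{\tess,Y}$ is a countable, locally finite subset of $\uhp$, while the remaining vertices of $\tess$ are finitely many cusps. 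For $\ph\in(0,\pi)$ put $p_\ph:=\{\,e^{i\ph}t : t>0\,\}$; this is a ray in $\uhp$ from $0=\zt_1$ to $\infty=\zt_2$. In the chart $z$ near $0$ and in the chart $w=1/z$ near $\infty$ the curve $p_\ph$ is a straight ray, so $p_\ph\cup\{\zt_1,\zt_2\}$ is a smooth embedded arc of a circle in $\proj\CC$; it has no self-intersection and meets $\proj\RR$ only in its endpoints. Since $\dt$ acts as a Euclidean dilation fixing the origin, $\dt^{-1}p_\ph=p_\ph$ as oriented curves. Thus properties (a), (b), (f) hold for $p=p_\ph$ and \emph{every} $\ph$.

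Next I would choose $\ph$ outside a thin set of exceptional values. The curve $p_\ph$ passes through a vertex $v\in X_0^{\tess,Y}$ exactly when $\ph=\arg v$, so this forbids only a countable set of $\ph$. It cannot pass through a cusp: a discrete subgroup of $\SL_2(\RR)$ cannot contain both a hyperbolic and a parabolic element with a common fixed point, so $\zt_1,\zt_2$ are not cusps, and $p_\ph\cap\proj\RR=\{\zt_1,\zt_2\}$. For an edge $e$ lying on a line or circle $C_e$, the circle $C_\ph:=p_\ph\cup\{\zt_1,\zt_2\}$ is tangent to $C_e$ for only finitely many $\ph$, and $C_\ph\subseteq C_e$ can happen only if $\ph=\pi/2$ and $e\subset\mu$ (in which case, for $\ph\neq\pi/2$, $p_\ph$ meets $e$ only at $0\notin\uhp$); since $\tess$ has countably many edges, this again forbids only countably many $\ph$. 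Choosing $\ph\neq\pi/2$ outside this countable set, $p_\ph$ meets each edge only at interior points and transversely, and, two distinct lines or circles meeting in at most two points, it meets each edge at most twice. This gives (c) and (d).

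For (e) I would use that $p_\ph$ stays at constant hyperbolic distance $d(\ph)<\infty$ from $\mu$, hence inside a fixed tubular neighbourhood $N=N_{R_0}(\mu)$. The cyclic group $\langle\dt\rangle$ acts cocompactly on $N\cap\uhp$ (its image in the annulus $\langle\dt\rangle\backslash\uhp$ is the compact $R_0$\nobreakdash-collar of the core geodesic), and $\tess$, being $\Gm$-invariant, descends to a locally finite cell decomposition of that annulus, so only finitely many $\langle\dt\rangle$-orbits of edges meet $N$. Now fix an edge $e$. A translate $\gm^{-1}p_\ph$ meets $e$ if and only if $p_\ph$ meets $\gm e$, which forces $\gm e$ to meet $N$. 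Within one $\langle\dt\rangle$-orbit $\{\dt^n\gm_0 e\}_{n\in\ZZ}$ of such edges, $\dt^n\gm_0 e$ meets $p_\ph$ if and only if $\gm_0 e$ does (because $\dt^{-n}p_\ph=p_\ph$), and the corresponding translates $(\dt^n\gm_0)^{-1}p_\ph=\gm_0^{-1}p_\ph$ all coincide; so each orbit contributes at most one distinct translate meeting $e$. Since there are only finitely many relevant orbits, only finitely many distinct translates $\gm^{-1}p_\ph$ meet $e$, which is (e).

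The main obstacle is reconciling (a) with (f). A naive approach — perturbing $\mu$ inside a small tube while keeping $\dt$-invariance — cannot work: if a $\dt$-invariant curve deviates from $\mu$ on one fundamental segment, then under the iterates $\dt^{\pm n}$ it acquires deviations of a \emph{fixed relative} size arbitrarily close to $\zt_1$ and $\zt_2$, so its tangent direction oscillates there and the curve fails to be $C^1$ at the endpoints. The point that rescues the argument is the rigidity observation that a $\dt$-invariant $C^1$ arc joining $\zt_1$ to $\zt_2$ must be a single Euclidean ray from the origin in the normalizing chart; once this is accepted, the whole difficulty reduces to finding, within the one-parameter family $\{p_\ph\}$, a member in general position with respect to $\tess$, which is carried out in the two middle paragraphs above.
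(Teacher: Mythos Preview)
Your argument is correct and takes a genuinely different route from the paper. The paper constructs $p$ by choosing a $C^1$ arc $p_0$ from a point $P_0$ in the interior of a face to $\dt P_0$, with matching tangent directions at the endpoints, perturbing $p_0$ locally if necessary to avoid vertices and non-transversal intersections, and then setting $p=\bigcup_{m\in\ZZ}\dt^m p_0$. Property~(e) is obtained from the compactness of $p_0$: only finitely many $\Gm$-translates of $\fd$ meet $p_0$, hence only finitely many cosets $\Eta\gm$ give a translate $\gm^{-1}p$ meeting a fixed edge. This is essentially the same mechanism as your tube--quotient argument for~(e), phrased through the fundamental segment rather than the equidistant neighbourhood.

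The substantive difference is in how (a) and (f) are reconciled. Your observation in the final paragraph is actually correct: if the fundamental segment $p_0$ is not a radial segment in the normalized chart, then $\arg z$ oscillates through a fixed range along $\bigcup_m\dt^m p_0$ as $z\to 0$, so no \emph{regular} $C^1$ parametrization extends across the endpoint. The paper's concatenation therefore does not literally produce a regular $C^1$ curve at $\zt_1,\zt_2$ once $p_0$ has been perturbed off a ray; your restriction to the one-parameter family $\{p_\ph\}$ sidesteps this cleanly. What you give up in exchange is that your genericity argument for (c) and (d) uses that each edge lies on a Euclidean line or circle, so that a ray meets it in at most two points and is tangent for only finitely many angles. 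This holds for the type-$\mathbf{Fd}$ tesselations actually used (edges are geodesic or horocyclic, hence real-analytic), but would need a separate argument for arbitrary $C^1$ tesselations. For the applications that follow (the intersection numbers $\eps(x,\gm^{-1}p)$), only transversality at interior points of edges in $\uhp$ matters, so either construction is adequate; yours is the more careful one with respect to the stated property~(a).
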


\rmrke All $\Gm$-translates $\gm^{-1}p$ form $C^1$-paths in $\uhp$
from $\gm^{-1}\zt_1$ to $\gm^{-1}\zt_2$ with properties b)--e), and
$(\gm^{-1}\dt\gm)^{-1}\; \gm^{-1}p=\gm^{-1}p$.

\begin{proof}Intuitively, we may start with the geodesic from $\zt_1$
to $\zt_2$ and deform it to satisfy the conditions.

More precisely, we take a point $P_0$ in the interior of a face of the
tesselation, and take a $C^1$-path $p_0$ from $P_0$ to $\dt P_0$,
taking care to arrive in $\dt P_0$ with the same derivative as $\dt
p_0$ departs from~$\dt P_0$. If $p_0$ goes through a vertex or
has a non-transversal intersection with an edge, or coincides with an
edge, we deform it locally. In this way we arrange that $p_0$
intersects finitely many edges once, transversally. Near $P_0$ and
$\dt P_0$ we have not changed $p_0$. Taking the union $\bigcup_{m\in
\ZZ} \dt^m p_0$ and closing it in $\proj\CC$, we get a $C^1$-path $p$
satisfying Properties a)-d), and~f).

The compact path $p_0$ intersects only finitely many $\Gm$-translates
of the fundamental domain $\fd$ on which the tesselation $\tess$ is
built. A given edge $e$ is contained in the closure of one
$\Gm$-translate of~$\fd$. So there are only finitely many $\gm\in
\Gm$ such that $e$ intersects $\gm^{-1}p_0$.  This
implies that the path satisfies Property~e) as well.
\end{proof}

\begin{defn}\label{epsdefn}Let $p$ be a path as in
Lemma~\ref{lem-path-hyp}, let $\gm \in \Gm$, and let $x\in \pm
X_1^\tess$ be an oriented edge.
\begin{enumerate}
\item[i)]For each point intersection point $P \in x \cap \gm^{-1}
p$ we define
\il{epsP}{$\eps_P(e,\gm^{-1}p)$}$\eps_P(x,\gm^{-1}p)\in \{\pm1\}$
depending on the orientation as indicated in Figure~\ref{fig-epsP}.
\begin{figure}
\[
\setlength\unitlength{.7cm}
\begin{array}{ccc}
\begin{picture}(4,3)
\put(4,2){\vector(-2,-1){4}}
\put(3,0){\vector(-1,3){1}}
\put(2.3,2.5){$x$}
\put(-.2,.6){$\gm^{-1} p$}
\end{picture}&&
\begin{picture}(4,3)
\put(0,0){\vector(2,1){4}}
\put(3,0){\vector(-1,3){1}}
\put(2.3,2.5){$x$}
\put(-.2,.6){$\gm^{-1}p$}
\end{picture}\\
\eps_P(x,\gm^{-1}p)=1&\quad& \eps_P(x,\gm^{-1}p)=-1\\
\text{ $\gm^{-1}p$ crosses $x$ from right to left}&& \text{
$\gm^{-1}p$ crosses $x$ from left to right}
\end{array}
\]
\caption{Choice of $\eps_P(x,\gm^{-1}p)$.}\label{fig-epsP}
\end{figure}
\item[ii)] We put\ir{epsdef}{\eps(x,\gm^{-1}p)}
\be\label{epsdef} \eps(x,\gm^{-1}p) \;:=\; \sum_{P\in x\cap \gm^{-1}p}
\eps_P(x,\gm^{-1}p)\,.\ee
\item[iii)] We extend $x\mapsto \eps(x,\gm^{-1}p)$ to a $\CC$-linear
map $\CC[X_1^\tess] \rightarrow \CC$.
\end{enumerate}
\end{defn}

\rmrks
\itmi If $x$ and $\gm^{-1}p$ have no intersection, then the sum
in~\eqref{epsdef} is empty, hence $\eps(x,\gm^{-1}p)=0$.

\itm Like in \cite{BLZm} we use the convention that $X_1^\tess$
consists of oriented edges of the tesselation, and that if $e\in
X_1^\tess$, then the edge $-e$ with the opposite orientation is not
in $X_1^\tess$.

\itm \label{espdefnd} Property~d) in Lemma~\ref{lem-path-hyp} implies
that the total number of crossing of $x$ and $\gm^{-1}p$ is finite.
So $\eps(x,\gm^{-1}p)$ in Part~ii) is well defined. It counts the
number of crossings from right to left minus the number of crossings
from left to right.

\itm The definition of $\eps$ is arranged in such a way that for each
oriented edge $x$ occurring in the boundary $\partial_2
V$ of a face $V\in X_2^\tess$ the quantity $\eps(x,\gm^{-1}p)$ counts
the number of times that $\gm^{-1}p$ enters the face $V$ through the
edge~$x$ minus the number of times it leaves $V$ through~$x$. This
gives $\eps(\partial_2
V,\gm^{-1}p)=0$ for all faces $V\in X_2^\tess$.

\itm We have $\eps_P(-x,\gm^{-1}p)=-\eps_P(x,\gm^{-1}p)$ and
$\eps(-x,\gm^{-1}p)=-\eps(x,\gm^{-1}p)$. Hence the $\CC$-linear
extension in Part~iii) is possible.

\itm \label{nettosum} For an oriented path $q\in \ZZ[X_1^\tess]$ we
can view $\eps(q,\gm^{-1}p)$ as the number of times $q$ crosses
$\gm^{-1}p$ where $\gm^{-1}p$ goes from right to left, with respect
to the orientation of $q$, minus the number of times $q$ crosses
$\gm^{-1}p$ where $\gm^{-1}p$ goes from left to right.

\itm The function $\eps$ is $\Gm$-invariant:
\be \label{eps-eqv}
\eps(\bt^{-1}x,\bt^{-1}\gm^{-1}p) \= \eps(x,\gm^{-1}p)\qquad\text{for
all }\bt\in \Gm\,. \ee
\medskip

\begin{prop}\label{prop-cocp}Let $\Eta$ be a hyperbolic subgroup of
$\Gm$, and let $p$ be a path as in Lemma~\ref{lem-path-hyp} between
the fixed points $\zt_1$ and $\zt_2$ of~$\Eta$. Let $W$ be a
$\Gm$-module.

For each $a\in W^\Eta$ we put\ir{cpa-def}{c(p,a;x)}
\be \label{cpa-def}
  c(p,a;x) \;:=\; \sum_{\gm \in \Eta\backslash \Gm}
\eps(x,\gm^{-1}p)\; a|\gm
\qquad\text{for }x\in F_1^\tess=\CC[X_1^\tess]\,. \ee
\begin{enumerate}
\item[a)] This defines a cocycle $c(p,a;\cdot)\in
Z^1\bigl( F^\tess_\pnt;W)$.
\item[b)] If $p_1$ and $p_2$ are paths as in
Lemma~\ref{lem-path-hyp} with the same initial point $\zt_1$ and the
same final point $\zt_2$, then $c(p_1,a;\cdot)$
and $c(p_2,a;\cdot)$ are in the same cohomology class in
$\hpar^1(\Gm;W)$.
\end{enumerate}
\end{prop}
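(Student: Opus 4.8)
The plan is to verify first that the formula~\eqref{cpa-def} makes sense, then check the cocycle relation, and finally prove independence of the path up to coboundary. For the well-definedness, I would note that by Property~e) in Lemma~\ref{lem-path-hyp}, for a fixed edge $x\in X_1^\tess$ only finitely many $\Gm$-translates $\gm^{-1}p$ meet $x$; moreover two translates $\gm_1^{-1}p$ and $\gm_2^{-1}p$ coincide exactly when $\gm_2\gm_1^{-1}\in \Eta$ (since $\Eta$ is the full stabilizer of the oriented geodesic from $\zt_1$ to $\zt_2$, as $\dt$ is primitive hyperbolic and $-1$ acts trivially on $\uhp$). Hence the sum over $\Eta\backslash\Gm$ in~\eqref{cpa-def} is a well-defined finite sum, and $a|\gm$ depends only on the coset $\Eta\gm$ because $a\in W^\Eta$; the combination $\eps(x,\gm^{-1}p)\,a|\gm$ therefore depends only on the coset. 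The $\Gm$-equivariance $c(p,a;\bt^{-1}x)=c(p,a;x)|\bt$ follows by substituting $\gm\mapsto\gm\bt$ in the sum and using the equivariance~\eqref{eps-eqv} of $\eps$, so $c(p,a;\cdot)$ is indeed an element of $C^1(F^\tess_\pnt;W)$.

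For Part~a), the cocycle condition is $d^1 c(p,a;\cdot)=0$, i.e. $c(p,a;\partial_2 V)=0$ for every face $V\in X_2^\tess$ (up to sign, by the definition $d^1c(V)=c(\partial_2 V)$). By $\CC$-linearity of $x\mapsto\eps(x,\gm^{-1}p)$ this reduces to $\eps(\partial_2 V,\gm^{-1}p)=0$ for each $\gm$, which is exactly the remark made just before the proposition: the net count of how often $\gm^{-1}p$ enters minus leaves the face $V$ through its boundary is zero, since $\gm^{-1}p$ is a path with endpoints on $\proj\RR$ and no self-intersections, and each traversal of $\partial_2 V$ contributes an enter and a matching leave (using that $\gm^{-1}p$ does not pass through vertices in $X_0^{\tess,Y}$, by Property~c)). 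Thus each term of the sum over $\Eta\backslash\Gm$ vanishes, giving $d^1c(p,a;\cdot)=0$.

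For Part~b), given two paths $p_1,p_2$ from $\zt_1$ to $\zt_2$ as in the lemma, I would form the ``difference'' region they bound: after possibly reversing orientation, $p_1$ followed by $-p_2$ is a closed loop in $\uhp\cup\{\zt_1,\zt_2\}$. The natural candidate for the cochain witnessing cohomology equivalence is $b\in C^0(F^\tess_\pnt;W)$ defined on a vertex $P\in X_0^{\tess,Y}$ by summing $\pm a|\gm$ over those cosets $\Eta\gm$ for which $P$ lies in the region swept between $\gm^{-1}p_1$ and $\gm^{-1}p_2$, with sign given by the winding/side data; one then checks $d^0 b = c(p_1,a;\cdot)-c(p_2,a;\cdot)$ by comparing, for each edge $x$ from $P$ to $P'$, the change in region-membership across $x$ with the intersection numbers $\eps(x,\gm^{-1}p_i)$. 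This is the standard ``homotopy of paths gives cohomologous cocycles'' argument, carried out combinatorially on the tesselation exactly as in the analogous step of~\cite[\S13]{BLZm}; the finiteness needed to make $b$ well-defined again comes from Properties d), e) of Lemma~\ref{lem-path-hyp}. The main obstacle I anticipate is the bookkeeping in Part~b): one must argue carefully that the set of cosets contributing to $b(P)$ is finite for each $P$, that $b$ is $\Gm$-equivariant, and that the telescoping of signs along an edge matches $\eps$ with the correct orientation conventions of Definition~\ref{epsdefn}; once the sign conventions are pinned down, $d^0b = c(p_1,a;\cdot)-c(p_2,a;\cdot)$ is a local computation edge by edge. (In fact both $c(p_i,a;\cdot)$ are parabolic cocycles — their values on parabolic elements are supported away from the cusps since $p$ stays in a compact neighbourhood of the closed geodesic — so the equality holds already in $\hpar^1(\Gm;W)$, as claimed.)
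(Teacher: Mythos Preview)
Your argument for Part~a) is correct and matches the paper's proof exactly: well-definedness from Property~e) and $\Eta$-invariance of $a$, $\Gm$-equivariance from~\eqref{eps-eqv}, and the cocycle relation from $\eps(\partial_2 V,\gm^{-1}p)=0$.

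For Part~b) you take a genuinely different route. The paper does \emph{not} build an explicit $0$-cochain via winding numbers. Instead it passes to the associated function $c(p,a;\cdot,\cdot)$ on $X_0^\tess\times X_0^\tess$ and observes that for cusps $\ca,\cb$ the value $\eps(q_{\ca,\cb},\gm^{-1}p)$ depends only on which of the two cyclic intervals $(\gm^{-1}\zt_1,\gm^{-1}\zt_2)_{\mathrm{cycl}}$ and $(\gm^{-1}\zt_2,\gm^{-1}\zt_1)_{\mathrm{cycl}}$ contain $\ca$ and $\cb$ --- not on the actual path $p$. Hence the group cocycle $\gm\mapsto c(p,a;\gm^{-1}\ca,\ca)$ with a cusp as base point is literally the same for $p_1$ and $p_2$, and no coboundary needs to be constructed. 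This buys you a short, bookkeeping-free argument; your approach would instead yield an explicit primitive, which is more information but much more work.

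Your approach is in principle viable, but there is a real gap in the justification. You write that ``the finiteness needed to make $b$ well-defined again comes from Properties d),~e) of Lemma~\ref{lem-path-hyp}.'' Those properties control how many $\Gm$-translates of $p$ meet a given \emph{edge}; they say nothing directly about how many translates of the region between $p_1$ and $p_2$ contain a given \emph{vertex}. What you actually need is: since $p_1$ and $p_2$ are both $\dt$-invariant, the signed region they bound is $\dt$-invariant, and a fundamental domain for $\Eta$ acting on it is relatively compact in~$\uhp$; since the $\Gm$-orbit of $P\in X_0^{\tess,Y}$ is discrete, only finitely many cosets $\Eta\gm$ have $\gm P$ in that compact set. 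You also need to say what $b$ does on cusps (or, cleanly, argue via the injection $\hpar^1(\Gm;W)\hookrightarrow H^1(\Gm;W)$, so that a coboundary on $X_0^{\tess,Y}$ suffices). Finally, your parenthetical about parabolicity is unnecessary: membership in $Z^1(F^\tess_\pnt;W)$ already places the class in $\hpar^1(\Gm;W)$ by the discussion in~\S\ref{sect-rbot}, and the claim that ``$p$ stays in a compact neighbourhood of the closed geodesic'' is false as stated (only $p\bmod\Eta$ is compact).
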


\rmrke Without the counting function $\eps(\cdot,\cdot)$ the values $
c(p,a;x)$ of the cocycle $c(p,a;\cdot)$ are hyperbolic Poincar\'e
series. Hence we call the sums in~\eqref{cpa-def} \il{shPs}{signed
hyperbolic Poincar\'e series}\il{Pssh}{Poincar\'e series, signed
hyperbolic}\emph{signed hyperbolic Poincar\'e series}.

\begin{proof}The terms in the sum are invariant under $\gm\mapsto
\dt\gm$ with $\dt\in \Eta$. It is a finite sum by Property~e) in
Lemma~\ref{lem-path-hyp}. So $c(p,a;x)$ is well-defined. In
Remark~\ref{espdefnd} after Definition~\ref{epsdefn} we have noted
that $\eps(\partial_2
V,\gm^{-1}p)=0$ for each $V\in X_2^\tess$. This gives the cocycle
property. With \eqref{eps-eqv} we have for $\bt\in \Gm$
\begin{align*} c(p,a;\bt^{-1}x) &\= \sum_{\gm\in \Eta\backslash\Gm}
\eps(\bt^{-1}x, \gm^{-1}p)\; a|\gm \=\sum_{\gm\in \Eta\backslash\Gm}
\eps(\bt^{-1}x,
(\gm\bt)^{-1}p)\; a|\gm\bt \\
&\= \sum_{\gm\in \Eta\backslash\Gm} \eps(x, \gm^{-1}p)\; a|\gm \bt \=
c(p,a;x)|\bt\,.\end{align*}
This gives the $\CC[\Gm]$-linearity of $x\mapsto c(p,a;x)$, and ends
the proof of Part~i).

To prove Part~b) we consider the function $c(p,a;\cdot,\cdot)$ on
$X_0^\tess \times X_0^\tess$ given by $c(p,a;Q_1,Q_2)=c(p,a;q)$
independent of the choice of the path $q\in \ZZ[X_1^\tess]$ from
$Q_1$ to~$Q_2$. The cohomology class of $c(p,a;\cdot)$ is
determined by the group cocycle
$\gm \mapsto c(p,a;\gm^{-1} Q_0, Q_0)$ for any base point
$Q_0 \in X_0^\tess$. See the final paragraphs of~\S\ref{sect-rbot}.
We will show that for cusps $\ca$ and $\cb$ the value of
$c(p,a;\ca,\cb)$ depends only on the position of $\ca$ and $\cb$ in
relation to $\zt_1$ and $\zt_2$, and not on the actual path $p$ from
$\zt_1$ to~$\zt_2$. With a cusp as the base point $Q_0$ this gives
Part~b).

The points $\zt_j$ divide $\proj\RR$ into two \il{ci}{cyclic
interval}\emph{cyclic intervals}
\il{cyclint}{$\xi,\eta)_{\mathrm{cycl}}$}$(\zt_1,\zt_2)_{\mathrm{cycl}}$
and $(\zt_2,\zt_1)_{\mathrm{cycl}}$ for the cyclic order
on~$\proj\RR$. See Figure~\ref{fig-ci}.
\begin{figure}[ht]
\[\setlength\unitlength{.8cm}
\begin{picture}(8,2.3)(0,-1.5)
\put(0,0){\line(1,0){8}}
\put(1.5,0){\circle*{.08}}
\put(6,0){\circle*{.08}}
\put(1.4,-.4){$\zt_1$}
\put(5.8,-.4){$\zt_2$}
\put(3.2,.7){$(\zt_1,\zt_2)_{\mathrm{cycl}}$}
\put(3.2,-1.5){$(\zt_2,\zt_1)_{\mathrm{cycl}}$}
\put(3.7,.6){\vector(0,-1){.5}}
\put(3.1,-1.2){\vector(-3,1){2.8}}
\put(4.3,-1.2){\vector(3,1){2.8}}
\end{picture}
\]
\caption{}\label{fig-ci}
\end{figure}

For cusps $\ca$ and $\cb$ we choose a path $q_{\ca,\cb}\in
\ZZ[X_1^\tess]$ from $\ca$ to $\cb$. By Remark~\ref{nettosum} after
Definition~\ref{epsdefn}, the values of $\eps(q_{\ca,\cb},\gm^{-1}p)$
are zero if $\ca$ and $\cb$ are not separated in $\proj\RR$ by the
points $\gm^{-1}\zt_1$ and $\gm^{-1}\zt_2$. Table~\ref{tab-a-b-gm}
gives the values of $\eps(q_{\ca,\cb},\gm^{-1}p)$ for $\gm\in \Gm$
and the cusps $\ca$ and $\cb$ for the fixed path~$p$. See also
 Figure~\ref{fig-pq}.
\begin{table}[ht]
\[ \begin{array}{|c|cc|}\hline
\eps(q_{\ca,\cb},\gm^{-1}p)
& \cb \in (\gm^{-1}\zt_1,\gm^{-1}\zt_2)_{\mathrm{cycl}} & \cb \in
(\gm^{-1}\zt_2,\gm^{-1}\zt_1)_{\mathrm{cycl}}
\\ \hline
\ca \in (\gm^{-1}\zt_1,\gm^{-1}\zt_2)_{\mathrm{cycl}} & 0 & -1 \\
\ca \in (\gm^{-1}\zt_2,\gm^{-1}\zt_1)_{\mathrm{cycl}} & 1 & 0
\\ \hline
\end{array}
\]
\caption{}\label{tab-a-b-gm}
\end{table}
This implies that $c(p,a;\ca,\cb)$ only depends on the position of the
cusp $\ca$ and $\cb$ in relation to $\zt_1$ and $\zt_2$, not on the
actual path~$p$.\end{proof}
\begin{figure}
\[\setlength\unitlength{1cm}
\begin{picture}(10,2.4)(0,-.4)
\put(0,0){\line(1,0){10}}
\put(.7,-.4){$\gm^{-1}\zt_1$}
\put(5.7,-.4){$\gm^{-1}\zt_2$}
\put(2.9,-.4){$\ca$}
\put(7.9,-.4){$\cb$}
\qbezier(1,0)(1,1)(2,1)
\qbezier(5,1)(6,1)(6,0)
\put(2,1){\vector(1,0){.8}}
\put(2,1){\line(1,0){3}}
\put(1.4,1.2){$\gm^{-1}p$}
\qbezier(3,0)(3.5,1.7)(4,2)
\qbezier(7,2)(7.5,1.7)(8,0)
\put(4,2){\line(1,0){3}}
\put(4,2){\vector(1,0){1.5}}
\put(7.2,2){$q$}
\put(3.3,.7){$\scriptstyle \eps=-1$}
\end{picture}
\]
\caption{Illustration of case $\ca\in
(\gm^{-1}\zt_1,\gm^{-1}\zt_2)_{\mathrm{cycl}}$ and $\cb \in
(\gm^{-1}\zt_2,\gm^{-1}\zt_1)_{\mathrm{cycl}}$ in
Table~\ref{tab-a-b-gm}. The path $\gm^{-1}p$ crosses $q$ from left to
right. } \label{fig-pq}
\end{figure}

\rmrks
\itmi The cocycle is $\Gm$-equivariant in the following way:
\be c(p,a;\cdot) \= c\bigl( \gm^{-1}p,a|\gm;\cdot\bigr)\qquad\text{for
all }\gm\in \Gm\,. \ee

\itm The cocycle $c(p,a;\cdot)$ depends linearly on $a\in W^\Eta$;
{\sl ie,} for all $\ld_1,\ld_2\in \CC$
\[ c(p,\ld_1 a_1+\ld_2 a_2;\cdot) = \ld_1\, c(p,a_1;\cdot)+ \ld_2\,
c(p,a_2;\cdot)\,.\]

\itm The construction is canonical for a morphism of $\Gm$-modules
$W\rightarrow W_1$: If $a\in W^\Eta$ is mapped to $b\in W_1^\Eta$,
then
\[ c(p,a;\cdot) \mapsto c(p,b;\cdot)\quad\text{under the natural map
}Z^1(F^\tess_\pnt;W) \rightarrow Z^1(F^\tess_\pnt;W_1)\,.\]

\rmrk{Geodesics with elliptic fixed points} The geodesic from
$\frac12-\frac12\sqrt 5$ to $\frac12+\frac12\sqrt 5$ induces a closed
geodesic on $\Gmod\backslash\uhp$. The corresponding hyperbolic
subgroup $\Eta$ of $\Gmod$ can be generated by $D=\matc2111$
and~$-\Id$. This geodesic passes through the point $i\in \uhp$, which
is fixed by the elliptic element $S=\matr0{-1}10\in \Gmod$. It
induces an element $\pm S$ in $\overline\Gmod$ of order two, so $i$
is an \il{ep}{elliptic point of order $2$}elliptic point of~$\Gmod$
of order~$2$. All points $D^n \, i$, with $n\in \ZZ$, are elliptic
points of $\Gmod$ of order~$2$, fixed by $D^n S D^{-n}\in \Gmod$.

In general, a geodesic of $\Gm$ may go through elliptic fixed points
of $\Gm$ of order~$2$ in~$\bar \Gm$. Then there are elliptic elements
of order two in $\s\in \Gamma$ such that $\sigma \gm
\sigma=\gamma^{-1}$ for all $\gamma \in \Eta$. The action
of $\s$ interchanges the two fixed points of~$\Eta$. The element $\s$
normalizes $\Eta$, but is not an element of~$\Eta$. Conversely, each
$\s\in \Gm\setminus \Eta$ such that $\s\Eta\s^{-1}=\Eta$, is elliptic
with a fixed point of order $2$ on the geodesic.

\begin{lem}\label{lem-Psi-Eta}Let $V=\dsv{v,2-r}\om$, and either
$W=\dsv{v,2-r}{\fs,\smp}$ or $W=\dsv{v,2-r}{\fs,\wdg}$. Denote
$\Sg{}{}=W/V$. For each hyperbolic subgroup $\Eta\subset\Gm$ there is
a linear map\ir{PsiEta}{\Psi_\Eta,\; \tilde\Psi_\Eta}
\be \label{PsiEta} \Psi_\Eta : W^\Eta \rightarrow \hpar^1(\Gm;W)\ee
with the following properties:
\begin{enumerate}
\item[i)] The image of the composition $\tilde\Psi_\Eta:W^\Eta
\stackrel{\Psi_\Eta}\rightarrow\hpar^1(\Gm;W)
\rightarrow H^1(\Gm;\Sg{}{})$ is contained in the $\Gm$-invariant
summand $H^1\bigl(\Gm;\Sg{}{}\{\Gm\,\zt_1\} +
\Sg{}{}\{\Gm\,\zt_2\}\bigr)$ of
$H^1(\Gm;\Sg{}{})$, where $\zt_1$ and $\zt_2$ are the fixed points
of~$\Eta$.
\item[ii)] The kernel of $\tilde\Psi_\Eta$ is the space
\[ V^\Eta + \Bigl\{ a\in W^\Eta\;:\; a|_{v,2-r}\s =a\text{ for some
}\s\in \Gm\setminus \Eta \text{ normalizing }\Eta\Bigr\}\,.\]
\end{enumerate}
\end{lem}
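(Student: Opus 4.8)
The statement constructs a map $\Psi_\Eta$ out of $W^\Eta$ and identifies its kernel in cohomology. The plan is to build $\Psi_\Eta$ directly from the signed hyperbolic Poincar\'e series of Proposition~\ref{prop-cocp}: for $a\in W^\Eta$, choose a path $p$ from $\zt_1$ to $\zt_2$ as in Lemma~\ref{lem-path-hyp}, set $\Psi_\Eta(a):=[c(p,a;\cdot)]$, and use Part~b) of Proposition~\ref{prop-cocp} to see this is independent of the choice of $p$, hence well defined, and linear in $a$ by the remark following that proposition. That $c(p,a;\cdot)$ is automatically a \emph{parabolic} (not merely mixed parabolic) cocycle with values in $W$ follows because it lies in $Z^1(F^\tess_\pnt;W)$ with $W$ in the list of Theorem~\ref{thm-iso-mpcpc}(i)--(iii)$\ldots$ actually more carefully: the cocycle has values in all of $W=\dsv{v,2-r}{\fs,\cond}$, and by Lemma~\ref{lem-sing-inv} (applied to the parabolic elements $\pi_\ca$ of $\Gm$) its restrictions to parabolic elements already land in $W|(\pi_\ca-1)$, so $[c(p,a;\cdot)]\in\hpar^1(\Gm;W)$; this is where $\hpar^1$ on the right of \eqref{PsiEta} comes from.

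First I would establish Part~i). The values $c(p,a;x)$ for compact edges $x\in X_1^{\tess,Y}$ have $\bsing$ contained in the set of $\Gm$-translates of $\zt_1,\zt_2$ that are ``crossed'' by $\gm^{-1}p$; indeed each summand $a|\gm$ has boundary singularities in $\gm^{-1}\bsing a\subset\{\gm^{-1}\zt_1,\gm^{-1}\zt_2\}$ by Proposition~\ref{prop-sasi}(iii). Hence, modulo $V=\dsv{v,2-r}\om$ (which is the "no singularities" module), the image of $c(p,a;\cdot)$ in $Z^1(F^\tess_\pnt;\Sg{}{})$ has all its values in $\Sg{}{}\{\Gm\zt_1\}\oplus\Sg{}{}\{\Gm\zt_2\}$, using separation of singularities (Proposition~\ref{prop-sepsing}). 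Pushing to cohomology gives the containment in $H^1\bigl(\Gm;\Sg{}{}\{\Gm\zt_1\}+\Sg{}{}\{\Gm\zt_2\}\bigr)$ asserted in~i). This is essentially bookkeeping with the $\Gm$-equivariance of $\eps$ in \eqref{eps-eqv} and the direct-sum decomposition of $\Sg{}{}$.

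The substantive part is Part~ii), the kernel computation. One inclusion is easy: if $a\in V^\Eta=(\dsv{v,2-r}\om)^\Eta$ then $c(p,a;\cdot)$ already has values in $V$, so its image in $Z^1(F^\tess_\pnt;\Sg{}{})$ is zero. For the second generator of the kernel, suppose $a|_{v,2-r}\s=a$ for some elliptic $\s\in\Gm\setminus\Eta$ normalizing $\Eta$ (so $\s$ has order $2$ in $\bar\Gm$, fixes a point on the geodesic of $\Eta$, and $\s\gm\s^{-1}=\gm^{-1}$ on $\Eta$); then I would exhibit $c(p,a;\cdot)$ as a coboundary modulo $V$ by a symmetrization/cancellation argument: the coset space $\Eta\backslash\Gm$ carries the involution induced by $\gm\mapsto\s\gm$, and the pairing of the term for $\gm$ with the term for $\s\gm$, together with $\eps(x,\gm^{-1}p)+\eps(x,(\s\gm)^{-1}p)=0$ type identities coming from $\s$ reversing the orientation of $p$ (because $\s$ swaps $\zt_1,\zt_2$), makes the $\Sg{}{}$-valued cocycle vanish in cohomology. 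This mirrors how in the analogous Maass-form situation an elliptic point on a closed geodesic kills the corresponding hyperbolic cocycle.

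\textbf{Main obstacle.} The hard direction will be showing that these two explicit families \emph{exhaust} the kernel of $\tilde\Psi_\Eta$, i.e.\ that if $[c(p,a;\cdot)]$ maps to $0$ in $H^1(\Gm;\Sg{}{})$ then $a$ is a sum of an element of $V^\Eta$ and an element fixed by such a $\s$. The plan here is to use Lemma~\ref{lem-hyp-is}(ii): the dimensions $\dim\bigl((\Sg{v,2-r}{\fs,\smp})_{\zt_1}\bigr)^\dt$ (and similarly for $\wdg$) are computed there and pin down exactly which invariants survive, and one relates $a$ modulo $V$ to an element of $\bigl((\Sg{}{})_{\zt_1}\bigr)^\dt$; the vanishing of the cohomology class then forces, via the tesselation description and the "barrier" argument sketched after Proposition~\ref{prop-WSg-0}, either that the class of $a$ in the module of singularities is zero (giving $a\in V^\Eta$ after adjusting) or that there is a coboundary relation which can only be produced by an elliptic $\s$ normalizing $\Eta$. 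I expect the combinatorial analysis of which $\Gm_{\zt_1}$-invariant singularities can be cancelled by a coboundary — essentially an extension of the case analysis in the proof of Proposition~\ref{prop-WSg-0}(c) to the situation where $\Sg{\zt_1}{}^\dt\neq\{0\}$ — to be the most delicate point, and I would expect to invoke Proposition~\ref{prop-icd-hyp} (referenced in the excerpt) or prove an analogue of it inline.
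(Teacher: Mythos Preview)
Your construction of $\Psi_\Eta$ via the signed hyperbolic Poincar\'e series, your argument for Part~i), and your handling of the easy inclusion in Part~ii) all match the paper's proof. In particular the pairing $\gm\leftrightarrow\s\gm$ with $\eps(x,(\s\gm)^{-1}p)=-\eps(x,\gm^{-1}p)$ (since $\s$ reverses the orientation of~$p$) is exactly what the paper uses, and in fact shows that the cocycle $c(p,a;\cdot)$ itself vanishes when $a|_{v,2-r}\s=a$, not merely its class.

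The gap is in the hard inclusion of Part~ii). Your plan there is vague and pointed in the wrong direction: the barrier argument from Proposition~\ref{prop-WSg-0} goes the other way (it shows certain maps to $H^1(\Gm;\Sg{}{})$ are zero, not that vanishing of a specific class constrains~$a$), and invoking Proposition~\ref{prop-icd-hyp} would be circular since that result is proved \emph{using} this lemma. The paper's argument is direct and you are missing its key idea. Suppose $a\in\ker\tilde\Psi_\Eta$. If an elliptic $\s$ normalizing $\Eta$ exists, decompose $a$ into $\pm1$-eigenvectors for $|_{v,2-r}\s$ and reduce to the case $a|_{v,2-r}\s=-a$. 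Now the image $\tilde a$ of $a$ in $\Sg{}{}$ lies in $\Sg{\zt_1}{}\oplus\Sg{\zt_2}{}$ by Lemma~\ref{lem-sing-hyp}, and there is $\tilde f\in C^0(F^\tess_\pnt;\Sg{}{}\{\Gm\zt_1\}+\Sg{}{}\{\Gm\zt_2\})$ with $c(p,\tilde a;\cdot)=d\tilde f$. The crucial observation is that $\tilde f(\ca)=0$ for \emph{every} cusp~$\ca$: by $\Gm$-equivariance $\tilde f(\ca)$ is $\pi_\ca$-invariant, so Lemma~\ref{lem-sing-inv} forces $\bsing f(\ca)\subset\{\ca\}$, but $\ca\neq\zt_1,\zt_2$ since hyperbolic fixed points are never cusps. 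Now choose cusps $\ca\in(\zt_2,\zt_1)_{\mathrm{cycl}}$ and $\cb\in(\zt_1,\zt_2)_{\mathrm{cycl}}$ and a path $q$ in $\ZZ[X_1^\tess]$ between them. By Table~\ref{tab-a-b-gm} the only cosets $\gm\in\Eta\backslash\Gm$ contributing to the $\Sg{\zt_1}{}\oplus\Sg{\zt_2}{}$-component of $c(p,\tilde a;q)$ are $\gm=1$ (and $\gm=\s$ if it exists), giving that component equal to $\tilde a$ (respectively $2\tilde a$). But $c(p,\tilde a;q)=\tilde f(\ca)-\tilde f(\cb)=0$, so $\tilde a=0$ and $a\in V^\Eta$.
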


\rmrks
\itmi The summands $\Sg{}{}\{\Gm\, \zt_1\}$ and $\Sg{}{}\{\Gm\,
\zt_2\}$ of $\Sg{}{}$ either coincide or have intersection~$\{0\}$.

\itm Any $\s\in \Gm\setminus \Eta$ normalizing $\Eta$ is
elliptic of order two. (If $\gm\in \Gm$ normalizes $\Eta$ and fixes
$\zt_1$ and $\zt_2$, then it is hyperbolic, and hence in $\Eta$. If
it interchanges the $\zt_j$ it has order two, and hence is elliptic.)

\itm The second term in the description of $\ker\tilde\Psi_\Eta$ in
Part~ii) is zero if there are no elliptic elements
normalizing~$\Eta$.  

\begin{proof}We use a path $p$ from $\zt_1$ to $\zt_2$ as in
Lemma~\ref{lem-path-hyp}. For $a\in W^\Eta$ we define $ \Psi_H(a) $
as the cohomology class of $c(p,a;\cdot)$ in
Proposition~\ref{prop-cocp}. This gives a linear map, and the
construction  and Lemma~\ref{lem-sing-hyp} show 
that the cocycle $c(p,a;\cdot)$ has values with
singularities in the $\Gm$-orbits of $\bsing a
\subset\{\zt_1,\zt_2\}$. This gives Part~i).

Suppose first that there is an elliptic $\sigma \in \Gamma$
normalizing $\Eta$.  In the sum over $\gm\in
\Eta\backslash\Gm$ in the definition of $c(p,a;\cdot)$
in~\eqref{cpa-def} we combine the summands $\gm$ and $\s\gm$. Since
$\s^{-1}p$ is $p$ with the opposite orientation, we have
$\eps\bigl(x,(\s\gm)^{-1}p)= \eps\bigl( \gm^{-1}(-p)\bigr) = -
\eps(x,\gm^{-1}p)$. The two corresponding terms in the sum
in~\eqref{cpa-def} give
\[ \eps(x,\gm^{-1}p) \,\bigl( a|_{v,2-r}\gm - a|_{v,2-r}\s\gm\bigr) \=
\eps(x,\gm^{-1}p) \; a|_{v,2-r}(1-\s)\gm\,.\]
So if $a\in W^\Eta$ satisfies $a|_{v,2-r}\s=a$, then the cocycle
$c(p,a;\cdot)$ is zero, so $a\in
\ker\Psi_\Eta\subset \ker\tilde \Psi_\Eta$.

If $a\in V^\Eta$, then $c(p,a;\cdot)$ has values in~$V$, hence the
image cocycle in $\Sg{}{}$ vanishes. This establishes the inclusion
$\supset$ in Part~ii), for the case that an elliptic
$\sigma$ normalizing $\Eta$ exists and for the other case.

To show the other inclusion, suppose that $a\in W^\Eta$ is in $\ker
\tilde \Psi_\Eta$. If there are elliptic $\s\in
 \Gm\setminus\Eta$ normalizing $\Eta$, then $a|_{v,2-r}\s=\pm a$. If
$a|_{v,2-r}\s= a$ then $a$ is in the right hand side in Part~ii) and
 and we are done. If $a|_{v,2-r}\sigma = -a$ then we will show that
$a\in V^\Eta$.

Since $a$ is $\Eta$-invariant, $\bsing a \subset\{\zt_1,\zt_2\}$ by
Lemma~\ref{lem-sing-hyp}. So the image $\tilde a $ of $a$ in
$\Sg{}{}$ is in $\Sg{\zt_1}{}\oplus \Sg{\zt_2}{}$. Since the class of
$c(p,\tilde a;\cdot)$ is zero, this cocycle is a coboundary, and
 there exists $\tilde f \in C^0(X_0^\tess;\Sg{}{}\{\zt_1,\zt_2\})$
such that $c(p,\tilde a;\cdot)=d\tilde
f$, with the notation
$\Sg{}{}\{\zt_1,\zt_2\}=\Sg{}{}\{\zt_1\}+\Sg{}{}\{\zt_2\}$.

By $\Gm$-equivariance, $f(\ca)=f(\ca)|_{v,2-r}\pi_\ca$ for all
cusps~$\ca$. So $\bsing f(\ca) \subset \{\ca\}$, by
Lemma~\ref{lem-sing-inv}. Since $\zt_1$ and $\zt_2$ are no cusps, we
have $\tilde f(\ca)=0$ for all cusps.

Let $q_{\ca,\cb}$ be a path in $\ZZ[X_1^\tess]$ from a cusp $\ca\in
(\zt_2,\zt_1)_{\mathrm{cycl}}$ to a cusp $\cb\in
(\zt_1,\zt_2)_{\mathrm{cycl}}$.  If no elliptic $\s\in \Gm$
normalizing $\Eta$ exists, then $\eps(q_{\ca,\cb},p)=1$. The
contribution to $c(p,a;q_{\ca,\cb})$ with singularities in
$\{\zt_1,\zt_2\}$ is given by $a$. So the component of $c(p,\tilde
a;q)$ in $\Sg{\zt_1}{}\oplus \Sg{\zt_2}{}$ is equal to
the component of $\tilde a$ in~$\Sg{\zt_1}{}\oplus \Sg{\zt_2}{}$ in
the decomposition \eqref{sesidcp}. On the other hand $c(p,\tilde
a;q)=\tilde f(\ca)-\tilde f(\cb)=0$. Since $\bsing a
\subset\{\zt_1,\zt_2\}$, this implies that $\tilde a=0$, hence $a\in
V$. But $a\in W^\Eta$, so $a\in V^\Eta$.

If $\s\in \Gm\setminus\Eta$ normalizes $\Eta$ some changes in
this reasoning are needed, since we have also
$\eps(q_{\ca,\cb},\s^{-1}p)=-1$. Now the contribution to
$c(p,a;q_{\ca,\cb})$ with singularities in $\{\zt_1,\zt_2\}$ is
$a-a|_{v,2-r}\s$, and the component of $c(p,\tilde a;q)$ in
$\Sg{\zt_1}{}\oplus\Sg{\zt_2}{}$ is equal to $2\tilde a$. We can
finish the proof by the same argument as in the other case.
\end{proof}

Theorem~\ref{thm-iso-mpcpc} asserts that the canonical map
$\hpar^1(\Gm;V,W)
\rightarrow \hpar^1(\Gm;W)$ is an isomorphism if $V=\dsv{v,2-r}\om$
and $W$ is one of a list of larger modules, each contained in
$\dsv{v,2-r}\fs$. Now we focus on the following two cases, for which
Theorem~\ref{thm-iso-mpcpc} does not give information:
\begin{enumerate}
\item[a)] $V=\dsv{v,2-r}\om$, $W=\dsv{v,2-r}{\fs,\wdg}$.
\item[b)] $V=\dsv{v,2-r}\om$, $W=\dsv{v,2-r}{\fs,\smp}$, and there are
primitive hyperbolic elements $\gm\in \Gm$ for which
$v(\gm)=e^{-r\ell(\gm)/2}$.
\end{enumerate}
The following result gives information concerning Case~a), and partial
information concerning Case~b).

\begin{prop}\label{prop-icd-hyp}
Let $r\in \CC$.
\begin{enumerate}
\item[i)] The natural map
\[\hpar^1(\Gm;\dsv{v,2-r}\om,\dsv{v,2-r}{\fs,\wdg})\rightarrow
\hpar^1(\Gm;\dsv{v,2-r}{\fs,\wdg})\]
\begin{enumerate}
\item[1)] is injective,
\item[2)] and its image in $\hpar^1(\Gm;\dsv{v,2-r}{\fs,\wdg})$ has
infinite codimension.
\end{enumerate}
\item[ii)] Suppose that the set
\[ P \= \Bigl\{ \gm\in \Gm\;:\; \gm \text{ is primitive hyperbolic,
and } v(\gm)=e^{-r\ell(\gm)/2} \Bigr\}\]
 is non-empty. {\rm(Recall that $\ell(\gm)$ is the length of the
 closed geodesic associated to~$\gm$.)}
\begin{enumerate}
\item[1)] The natural map
\be \label{smpmap}
\hpar^1(\Gm;\dsv{v,2-r}\om,\dsv{v,2-r}{\fs,\smp})\rightarrow
\hpar^1(\Gm;\dsv{v,2-r}{\fs,\smp})\ee
is injective.
\item[2)] It is not surjective if $r\in \ZZ_{\geq 0}$ and for some
$\gm\in P$ one of the following two conditions is satisfied:
\begin{enumerate}
\item[a)] There are no $\s\in \Gm$ such that $\s\gm\s^{-1}=\gm^{-1}$.
\item[b)] $r=0$, $v(\gm)=1$, and there exist $\s\in\Gm$ such that
$\s\gm\s^{-1}=\gm^{-1}$, and $v(\s)=1$.
\end{enumerate}
\end{enumerate}
\end{enumerate}
\end{prop}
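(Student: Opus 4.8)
The plan is to treat the injectivity statements i.1) and ii.1) together, and then handle the non-surjectivity statements i.2) and ii.2) by exhibiting explicit cocycles via the signed hyperbolic Poincar\'e series of Proposition~\ref{prop-cocp} and Lemma~\ref{lem-Psi-Eta}. For the injectivity, the exact sequence~\eqref{esVWS} of Lemma~\ref{lem-esVWS} gives at once that $\hpar^1(\Gm;\dsv{v,2-r}\om,W)\rightarrow\hpar^1(\Gm;W)$ is injective for $W=\dsv{v,2-r}{\fs,\wdg}$ and $W=\dsv{v,2-r}{\fs,\smp}$, since the left end of that sequence is $H^0(\Gm;\Sg{}{})=\Sg{}\Gm=\{0\}$ by Part~i) of Lemma~\ref{lem-hyp-is}. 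So i.1) and ii.1) are immediate; the whole content of the proposition is in the codimension/non-surjectivity assertions.

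For i.2) and ii.2.2), the strategy is: by the exactness in~\eqref{esVWS} it suffices to produce classes in $\hpar^1(\Gm;W)$ whose image in $H^1(\Gm;\Sg{}{})$ is nonzero (and, for i.2), to produce infinitely many linearly independent such images). I would fix a hyperbolic subgroup $\Eta\subset\Gm$ with fixed points $\zt_1,\zt_2$ and length $\ell=\ell(\dt)$, and consider the map $\tilde\Psi_\Eta:W^\Eta\rightarrow H^1(\Gm;\Sg{}{})$ of Lemma~\ref{lem-Psi-Eta}. By Part~ii) of that lemma, $\tilde\Psi_\Eta$ is injective modulo $V^\Eta$ together with the $\s$-invariant part (when an elliptic $\s$ normalizing $\Eta$ exists); hence $\mathrm{rank}\,\tilde\Psi_\Eta \geq \dim(W^\Eta) - \dim(V^\Eta) - (\text{correction})$. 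For $W=\dsv{v,2-r}{\fs,\wdg}$ I use Lemma~\ref{lem-hyp-inv}: $\dim(\dsv{v,2-r}{\fs,\wdg})^\Eta = \dim(\dsv{v,2-r}{\fs,\wdg})^\dt = \infty$ while $\dim(\dsv{v,2-r}\om)^\dt = \dim(\dsv{v,2-r}{\fs,\infty})^\dt$ is finite (at most $1$), so $W^\Eta/V^\Eta$ is infinite-dimensional; even after subtracting the $\s$-invariant contribution (which, being a subspace of $W^\Eta$ cut out by one linear condition-type constraint, still leaves infinite dimension) the image of $\tilde\Psi_\Eta$ is infinite-dimensional. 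This gives infinite codimension in i.2). For ii.2.2), the hypothesis $\gm\in P$, i.e.\ $v(\gm)=e^{-r\ell(\gm)/2}$, is exactly the condition $e^{-r\ell(\gm)/2}=v(\gm)$ appearing in Part~ii) of Lemma~\ref{lem-hyp-is}, under which $\dim\bigl((\Sg{v,2-r}{\fs,\smp})_{\zt_j}\bigr)^\dt=1>0$; together with the integrality $r\in\ZZ_{\geq0}$ and $\k_{v,2-r}(\gm)$ being forced (via~\eqref{kpdef}, since $v(\gm)=e^{-r\ell/2}$ forces $\k=0$) into the admissible range $-1\leq\k\leq r-1$, Lemma~\ref{lem-hyp-inv} gives a nonzero $\gm$-invariant in $\dsv{v,2-r}{\fs,\smp}$ not in $\dsv{v,2-r}\om$. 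Case~a) of ii.2.2) (no elliptic $\s$ with $\s\gm\s^{-1}=\gm^{-1}$) means the correction term in $\ker\tilde\Psi_\Eta$ vanishes, so $\tilde\Psi_\Eta$ is nonzero on $W^\Eta/V^\Eta$ and the map~\eqref{smpmap} misses that class. Case~b) ($r=0$, $v(\gm)=1$, and $\s$ exists with $v(\s)=1$) requires checking that the invariant $a$ we produce satisfies $a|_{v,2-r}\s=-a$ rather than $+a$: with $r=0$, $2-r=2$, the invariant of weight $2$ associated to $t\mapsto(t-\xi')^{r-2-\k}(t-\xi)^\k=(t-\xi')^{-2}$ for $\k=0$ transforms under the order-two elliptic $\s$ (which swaps $\zt_1,\zt_2$) with a sign that I would compute directly from the weight-$2$ action and the argument conventions; the claim is that this sign is $-1$, placing $a$ outside $\ker\tilde\Psi_\Eta$.

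The main obstacle I anticipate is the sign computation in Case~b) of ii.2.2): one must verify carefully, using the explicit conjugating element $g=|\xi-\xi'|^{-1/2}\matc{\xi'}{\xi}11$ from the lemma following Lemma~\ref{lem-hyp-inv} and the weight-$2$ operator $|_{v,2-r}$ with $v(\s)=1$, that $a|_{v,2}\s = -a$ and not $+a$ — i.e.\ that the elliptic element genuinely contributes a $-1$, so that $a$ is not killed by the $\s$-symmetrization in the signed Poincar\'e series. A secondary technical point is confirming that, in i.2), after quotienting the infinite-dimensional $W^\Eta$ by $V^\Eta$ and by the $\s$-invariants, what remains still maps to an infinite-dimensional subspace of $H^1(\Gm;\Sg{}{})$ and hence forces infinite codimension of the image of $\hpar^1(\Gm;V,W)$ in $\hpar^1(\Gm;W)$; this follows because the cokernel of $\hpar^1(\Gm;V,W)\rightarrow\hpar^1(\Gm;W)$ injects into $H^1(\Gm;\Sg{}{})$ by~\eqref{esVWS} and contains the infinite-dimensional image of $\tilde\Psi_\Eta$. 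Everything else is bookkeeping with the exact sequences of Lemmas~\ref{lem-esVWS} and~\ref{lem-hyp-is} and the dimension tables of Lemmas~\ref{lem-hyp-inv} and~\ref{lem-hyp-is}.
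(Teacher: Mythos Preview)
Your approach matches the paper's: injectivity via the exact sequence~\eqref{esVWS}, non-surjectivity via $\tilde\Psi_\Eta$ from Lemma~\ref{lem-Psi-Eta} combined with the dimension tables of Lemmas~\ref{lem-hyp-inv} and~\ref{lem-hyp-is}. Two points to correct.

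First, an arithmetic slip in the $\k$ computation. From~\eqref{kpdef} with $v(\gm)=e^{-r\ell/2}$ you get
\[
e^{\k\ell}\;=\;e^{-r\ell/2}\cdot e^{(r/2-1)\ell}\;=\;e^{-\ell},
\]
so $\k=-1$, not $\k=0$. The invariant in $(\dsv{v,2-r}{\fs,\smp})^\gm\setminus(\dsv{v,2-r}\om)^\gm$, after conjugating the fixed points to $0,\infty$, is therefore $a(t)\stackrel\pnt=(it)^{-1}$, not $(t-\xi')^{-2}$. For $r=0$ (weight $2-r=2$) one computes directly $a|_{v,2}\s=-v(\s)\,a$; since $\s$ has order two in $\bar\Gm$, $v(\s)\in\{\pm1\}$, and the condition $a\notin\ker\tilde\Psi_\Eta$ (namely $a|_{v,2-r}\s\neq a$) becomes exactly $v(\s)=1$. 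This resolves your flagged obstacle and is why Condition~b) carries the hypotheses $r=0$, $v(\gm)=1$, $v(\s)=1$.

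Second, for Part~i.2) when an elliptic normalizer $\s$ exists, your ``cut out by one linear condition'' description of the $\s$-fixed subspace is too loose. The paper's cleaner device: for each $f\in W^\Eta$ put $a:=f-f|_{v,2-r}\s$; then $a|_{v,2-r}\s=-a$ automatically, so $a$ lies outside the $\s$-fixed part of $\ker\tilde\Psi_\Eta$. Since $W^\Eta=(\dsv{v,2-r}{\fs,\wdg})^\Eta$ is infinite-dimensional and $V^\Eta$ is at most one-dimensional (Lemma~\ref{lem-hyp-inv}), this yields infinitely many linearly independent nonzero images in $H^1(\Gm;\Sg{}{})$, forcing infinite codimension via~\eqref{esVWS}.
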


\rmrke If in Part~ii)2) none of the conditions a) and b) holds, we do
not know whether the map in~\eqref{smpmap} is surjective.

\begin{proof}The \emph{injectivity} in Parts i)1) and~ii)1) follows
from the exact sequence~\eqref{esVWS}.

\rmrk{Part~i)2)} Let $\Eta$ be a hyperbolic subgroup of~$\Gm$, with
primitive hyperbolic generator $\gm$. We consider two cases:
\begin{itemize}
\item  Suppose that there are no elliptic $\s\in \Gm$ normalizing
$\Eta$. Lemma~\ref{lem-hyp-is} implies that $W^\Eta/V^\Eta$, with
$W=\dsv{v,2-r}{\fs,\exc})$ and $V=(\dsv{v,2-r}\om)^\Eta$, has
infinite dimension. Lemma~\ref{lem-Psi-Eta} and the exact
sequence~\eqref{esVWS} imply that its image in
$H^1(\Gm;\dsv{v,2-r}{\fs,\exc})$ has infinite dimension.

\item If there exists $\s\in \Gm\setminus \Eta$ normalizing $\Eta$, we
need an subspace of 
\[ W^\Eta \bigm/ \bigl( V^\Eta+ \ker(\s-1) \bigr)
\]
of infinite dimension. If
$f\in W^\Eta = (\dsv{v,2-r}{\fs,\exc})^\Eta$, then
$f|_{v,2-r}\s \in (\dsv{v,2-r}{\fs,\exc})^\Eta$ too
(because of $\s\gm\s^{-1}=\gm^{-1}$). Then $a:=f-f|_{v,2-r}\s$
satisfies $a|_{v,2-r}\s=-a$. By Part~ii) of Lemma~\ref{lem-hyp-is}
there are infinitely many linearly independent such $f$ not in
$V=\dsv{v,2-r}\om$.
\end{itemize}

Furthermore, Lemma~\ref{lem-Psi-Eta} shows that different hyperbolic
subgroups of $\Gm$ lead to cohomology classes with values in
different summands of~$\Sg{}{}$, which is an other source of infinite
dimensionality.

\rmrk{Part ii)2)} We show non-surjectivity of the map by producing
cocycles that have non-zero image in $H^1(\Gm;\Sg{}{})$. See
Lemma~\ref{lem-esVWS}. Lemma~\ref{lem-Psi-Eta} provides us with
cocycles. More precisely, consider $\gm\in P$. To apply
Lemma~\ref{lem-Psi-Eta} we need $a\in
(\dsv{v,2-r}{\fs,\smp})^\gm$ that is not in $(\dsv{v,2-r}{\om})^\gm$.
Lemma~\ref{lem-hyp-inv} shows that there is a one-dimensional space
with such elements, occurring for $r\in \ZZ_{\geq 0}$ and $\k\in
\{-1,r-\nobreak1\}$, with $\k$ as indicated in that lemma. That gives
$e^{\ell(\gm)(\k+1)}=1$, hence $\k=-1$. Under Condition~a)
in Part~ii)2) we conclude that there is a class in
$\hpar^1(\Gm;\dsv{v,2-r}{\fs,\smp})$ with non-trivial image in
$H^1(\Gm;\Sg{v,2-r}{\fs,\smp})$.

Under Condition~b) we have $\pm 1 = v(\gm) = e^{-r\ell(\gm)/2}$. Since
$r\in \ZZ_{\geq 0}$ this is possible only for $r=0$ and $v(\gm)=1$.
Conjugation as above brings us to the situation
$a(t)\stackrel\pnt=(it)^{-1}$. Hence $a|_{v,2-0}\s = -v(\s)\, a$. So
we need the value $v(\s)=1$ of the two possible values~$\pm 1$ to
complete the proof with Lemma~\ref{lem-Psi-Eta}.\end{proof}

\subsection{Mixed parabolic cohomology and condition at cusps}
\begin{prop}\label{prop-icod-exc-fs}Let $r\in \CC\setminus \ZZ_{\geq
2}$. The space $\hpar^1(\Gm;\dsv {v,2-r}\om,\dsv{v,2-r}{\fs,\wdg})$
has infinite codimension in the space $\hpar^1(\Gm;\dsv
{v,2-r}\om,\dsv{v,2-r}\fs)$.
\end{prop}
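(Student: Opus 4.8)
The plan is to produce, for each $\Gm$-orbit of cusps, infinitely many linearly independent cohomology classes in $\hpar^1(\Gm;\dsv{v,2-r}\om,\dsv{v,2-r}\fs)$ whose representing cocycles take values that are genuinely not holomorphically continuable to any excised neighbourhood at the cusp, so that they cannot lie in $\hpar^1(\Gm;\dsv{v,2-r}\om,\dsv{v,2-r}{\fs,\wdg})$. The natural device is again signed hyperbolic Poincar\'e series, but here I would instead exploit the parabolic fixed points themselves. Fix a cusp $\ca$ of $\Gm$ with stabiliser generated by $\pi_\ca$, $\ld=v(\pi_\ca)$. By Lemma~\ref{lem-par-inv} the space of $\pi_\ca$-invariants $(\dsv{v,2-r}\fs)^{\pi_\ca}$ is infinite-dimensional, while $(\dsv{v,2-r}{\fs,\wdg})^{\pi_\ca}$ is also infinite-dimensional --- so the right analogue of Lemma~\ref{lem-hyp-is} to establish is that the module of singularities $\Sg{v,2-r}\fs = \dsv{v,2-r}\fs/\dsv{v,2-r}\om$ has an infinite-dimensional space of $\pi_\ca$-invariants in its summand $(\Sg{v,2-r}\fs)_\ca$, whereas the corresponding summand of $\Sg{v,2-r}{\fs,\wdg}$ does \emph{not} see these, because an element whose $\pi_\ca$-difference is holomorphic and which is holomorphic on an excised neighbourhood at $\ca$ must itself be holomorphic at $\ca$. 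Concretely, an element of $(\dsv{v,2-r}\fs)^{\pi_\ca}$ after conjugating $\ca$ to $\infty$ and $\pi_\ca$ to $T$ is a $\ld$-periodic holomorphic function on a lower half-plane $\im t < \e$; its Fourier series $\sum_{n\equiv\al}a_n e^{2\pi i n t}$ (with $e^{2\pi i\al}=\ld$) is holomorphically continuable past $\infty$ (i.e.\ is in $\dsv{v,2-r}{\fs,\wdg}$) only when one-sided growth conditions on the $a_n$ hold; the generic choice fails this. That gap, infinite-dimensional, is the source of the infinite codimension.

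Concretely, I would argue as follows. First, reduce via the exact sequence of Lemma~\ref{lem-esVWS} (applied with $V=\dsv{v,2-r}\om$, $W=\dsv{v,2-r}\fs$ and with $W=\dsv{v,2-r}{\fs,\wdg}$) to comparing the images of $\hpar^1(\Gm;\dsv{v,2-r}\fs)\to H^1(\Gm;\Sg{v,2-r}\fs)$ and $\hpar^1(\Gm;\dsv{v,2-r}{\fs,\wdg})\to H^1(\Gm;\Sg{v,2-r}{\fs,\wdg})$; combined with Theorem~\ref{thm-iso-mpcpc}(i) (which identifies $\hpar^1(\Gm;\dsv{v,2-r}\om,\dsv{v,2-r}\fsn)$ with $\hpar^1(\Gm;\dsv{v,2-r}\fsn)$, and Proposition~\ref{prop-parb*0} which equates the $\fsn$ and $\fs$ versions of the mixed groups), this turns the codimension question into a statement about $H^1$ of the quotient sheaves at the cusps. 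Second, for a fixed cusp $\ca$, I would construct explicit parabolic cocycles: starting from $a\in (\dsv{v,2-r}\fs)^{\pi_\ca}$ not lying in $\dsv{v,2-r}\om$, define $c_a$ on $F^\tess_\pnt$ by a one-sided cuspidal average $\gm\mapsto \sum_{\gm'\in \Gm_\ca\backslash\Gm}(\text{crossing count})\, a|_{v,2-r}\gm'$ analogous to the signed hyperbolic Poincar\'e series in Proposition~\ref{prop-cocp}, but summing over $\Gm_\ca\backslash\Gm$ and using a path from $\ca$ into $\uhp$; by Lemma~\ref{lem-sing-inv} (stated there as Lemma~\ref{lem-sing-inv}) $a$ has $\bsing a\subset\{\ca\}$, so these cocycles have values in $\dsv{v,2-r}\fs$ and in fact parabolic with $\bsing$ concentrated on the $\Gm$-orbit of $\ca$. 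Third, I would show the induced map $a\mapsto [c_a]$, composed into $H^1(\Gm;\Sg{v,2-r}\fs\{\Gm\ca\})$, has infinite-dimensional image modulo the subspace $\dsv{v,2-r}\om+\dsv{v,2-r}{\fs,\wdg}$ of "tame at $\ca$" invariants, exactly as in Lemma~\ref{lem-Psi-Eta}: the kernel of the composite is $(\dsv{v,2-r}\om)^{\pi_\ca}$ plus terms killed by normalising elements, and since $(\dsv{v,2-r}\fs)^{\pi_\ca}/(\dsv{v,2-r}{\fs,\wdg})^{\pi_\ca}$ is infinite-dimensional (one-sided vs.\ two-sided Fourier tails), we are done.

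The main obstacle I anticipate is not the construction but the bookkeeping that shows the classes $[c_a]$ remain linearly independent \emph{in the quotient} $\hpar^1(\Gm;\dsv{v,2-r}\om,\dsv{v,2-r}\fs)\big/\hpar^1(\Gm;\dsv{v,2-r}\om,\dsv{v,2-r}{\fs,\wdg})$; that is, that one cannot correct $c_a$ by a coboundary plus a cocycle with values in $\dsv{v,2-r}{\fs,\wdg}$ to cancel its singularity at $\ca$. The clean way is via Lemma~\ref{lem-esVWS}: it suffices to see that the image of $a$ in $H^1(\Gm;\Sg{v,2-r}\fs)$ (which lands in the $\ca$-orbit summand by separation of singularities, Proposition~\ref{prop-sepsing}) is nonzero whenever $a\notin \dsv{v,2-r}{\fs,\wdg}+\dsv{v,2-r}\om$; this in turn follows by evaluating the associated function $X_0^\tess\times X_0^\tess\to\Sg{v,2-r}\fs$ on a path from $\pi_\ca^{-1}P_\ca$ to $P_\ca$ and reading off the $\Sg{}{}_\ca$-component, which is precisely the class of $a$. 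A secondary technical point is the presence, for the orbit of $\ca$, of a subtle interaction with the standard-group-cohomology-vs-parabolic distinction at the cusp $\ca$ itself: one must check that the coboundary $a|_{v,2-r}(\pi_\ca-1)$ of a would-be correcting $0$-cochain cannot remove an arbitrarily large-dimensional chunk of the Fourier-tail gap, which is where the infinite-dimensionality of $(\dsv{v,2-r}\fs)^{\pi_\ca}/(\dsv{v,2-r}{\fs,\wdg})^{\pi_\ca}$ does the real work. For $r\in\ZZ_{\geq 2}$ this argument does not apply as stated --- which is why the hypothesis $r\in\CC\setminus\ZZ_{\geq 2}$ appears --- since there the relevant invariant spaces are finite-dimensional polynomial spaces and the gap closes; I would remark on this exclusion at the end.
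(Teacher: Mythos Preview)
Your approach diverges substantially from the paper's, and the central construction has a gap I do not see how to close.

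The paper does not use $\pi_\ca$-invariants with singularities at all. It fixes a cusp $\ca$ and some $\dt\in\Gm\setminus\Gm_\ca$, takes $a\in\dsv{v,2-r}\om$ (no boundary singularities), and builds a cocycle $c(a;\cdot)\in Z^1(F^{\tess,Y}_\pnt;\dsv{v,2-r}\om)$ via a signed sum over all of $\{\pm1\}\backslash\Gm$ along a \emph{single finite} path from $\ca$ to $\dt^{-1}\ca$ (Lemma~\ref{lem-pcoc}). The construction is arranged so that $c(a;f_\ca)=a|_{v,2-r}(1-\dt^{-1})$, and the extension $\tilde c(a;\cdot)$ to a mixed parabolic cocycle lands in the $\wdg$-subgroup exactly when $a|_{v,2-r}(1-\dt^{-1})\in\dsv{v,2-r}{\fs,\wdg}|_{v,2-r}(1-\pi_\ca)$. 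The infinite codimension then comes from Lemma~\ref{lem-icod-exc-fs}: one exhibits an infinite-dimensional family of $a$ (built from rational functions with a single pole at a chosen $z_0\in\uhp$) for which the parabolic equation is solvable in $\dsv{2-r}\fs$ but not in $\dsv{2-r}{\fs,\wdg}$, the obstruction being that any excised solution can have singularities at only finitely many points of $z_0+\ZZ$, which the difference equation forbids.

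Your ``signed cuspidal Poincar\'e series'' is the problematic step. A sum over $\gm'\in\Gm_\ca\backslash\Gm$ of $(\text{crossing count})\cdot a|_{v,2-r}\gm'$ against a path ``from $\ca$ into $\uhp$'' is not well-defined as written: such a path has no second endpoint, is not $\Gm_\ca$-invariant (so the summand is not constant on cosets), and you have not established the analogue of Lemma~\ref{lem-path-hyp}(e)---finiteness of nonzero crossing counts---which is what makes the hyperbolic series converge. Even granting some construction, your cocycle values on compact edges would carry singularities at the $\Gm$-translates of $\ca$, so $c_a$ lives in $Z^1(F^\tess_\pnt;\dsv{v,2-r}\fsn)$ rather than $Z^1(F^{\tess,Y}_\pnt;\dsv{v,2-r}\om)$; you then lean on the isomorphism of Theorem~\ref{thm-iso-mpcpc}(i), but the independence argument you sketch (``read off the $\Sg_\ca$-component on $f_\ca$'') compares against the wrong quotient: Lemma~\ref{lem-esVWS} controls $\hpar^1(\Gm;V,W)$ inside $\hpar^1(\Gm;W)$ via $W/V$, whereas what you need is to compare two mixed groups $\hpar^1(\Gm;V,W_1)\subset\hpar^1(\Gm;V,W_2)$, which requires $W_2/W_1$, not $W_2/V$. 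The ``Fourier-tail gap'' you identify between $(\dsv{v,2-r}\fs)^{\pi_\ca}$ and $(\dsv{v,2-r}{\fs,\wdg})^{\pi_\ca}$ is real and is morally related to the phenomenon, but it does not by itself produce classes in $H^1(\Gm;\dsv{v,2-r}\om)$.
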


We prepare the proof of this proposition in two lemmas, one of
geometric nature, like Proposition~\ref{prop-cocp}, the other an
infinite codimension result.\smallskip

In the lemma with a geometric flavor, we work with a tesselation as
discussed in~\S\ref{sect-tess}, based on a fundamental domain $\fd$
of $\Gm\backslash\uhp$, which is split up in a compact set $\fd_Y$,
and cuspidal triangles $V_\cb$ where $\cb$ runs over a set of
representatives of the $\Gm$-orbits of cusps. The edge $f_\cb$ is the
intersection of the boundaries of $\fd_Y$ and~$V_\cb$.

\begin{lem}\label{lem-pcoc}Let $r\in \CC\setminus \ZZ_{\geq 2}$, let
$\ca$ be a cusp of~$\Gm$, and let $\dt\in \Gm\setminus\Gm_\ca$. Let
$a\in \dsv{v,2-r}\om$ such that
\[ a|_{v,2-r}(1-\dt^{-1})\, \in
\dsv{v,2-r}\fs\bigm|_{v,2-r}(1-\pi_\ca) \,.\]
\begin{enumerate}
\item[a)] There exists a $\CC$-linear map $a\mapsto c(a;\cdot)$ from
$\dsv{v,2-r}\om$ to $ Z^1(F^{\tess,Y}_\pnt;\dsv{v,2-r}\om)$ such that
$c(a;f_\ca)=a|_{v,2-r}(1- \dt^{-1})$, and if there are cusps $\cb$
not in the orbit $\Gm\, \ca$ then $c(a;f_\cb)=0$.
\item[b)] The $\CC[\Gm]$-equivariant linear map
$c(a;\cdot):F^{\tess,Y}_1
\rightarrow \dsv{v,2-r}\om$ has a $\CC[\Gm]$-equivariant linear
extension $\tilde c(a;\cdot)
:F^\tess_1\rightarrow \dsv{v,2-r}\fs$ such that
\[\tilde c(a;\cdot)\in
Z^1(F^\tess_\pnt;\dsv{v,2-r}\om,\dsv{v,2-r}\fs)\,.\]
\item[c)] The cohomology class $[\tilde c(a;\cdot)]\in
\hpar^1(\Gm;\dsv{v,2-r}\om,\dsv{v,2-r}\fs)$ satisfies
\[ [\tilde
c(a;\cdot)]\in\hpar^1(\Gm;\dsv{v,2-r}\om,\dsv{v,2-r}{\fs,\wdg})
\Longleftrightarrow a|_{v,2-r}(1-\dt^{-1})\in
 \dsv{v,2-r}{\fs,\wdg}\bigm|_{v,2-r}(1-\nobreak\pi_\ca)\,.\]
\end{enumerate}
\end{lem}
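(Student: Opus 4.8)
The plan is to work inside the tesselation description of cohomology from Section~\ref{sect-tesscoh}, using the tesselation $\tess$ built on a fundamental domain $\fd=\fd_Y\cup\bigcup_\cb V_\cb$ as in Section~\ref{sect-tess}. Write $b:=a|_{v,2-r}(1-\dt^{-1})$; since $a\in\dsv{v,2-r}\om$ and $\dt^{-1}\in\Gm$ we have $b\in\dsv{v,2-r}\om$. The construction of a cochain on the compact resolution rests on the boundary relation
\[ \partial_2\fd_Y \= \sum_k (e_k)|(1-\gm_k)\;+\;\sum_\cb (f_\cb)\,, \]
where the $e_k$ form a $\CC[\bar\Gm]$-basis of the compact interior edges of $\fd_Y$, the $\gm_k\in\Gm$ are the corresponding side-pairings, and $\cb$ runs over the cusps in the closure of $\fd$ (compare the modular case in Section~\ref{sect-tess}). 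A key point I will use is that the $\gm_k$ generate $\Gm$: this is the standard fact that the side-pairings of a fundamental domain generate the group, together with the observation that in the present tesselation the parabolic generators $\pi_\cb$ already occur among the $\gm_k$.

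For part~a) I would first record the elementary identity that for any $\gm\in\Gm$ the coboundary map $b'\mapsto b'|_{v,2-r}(\gm-1)$ on $\dsv{v,2-r}\om$ has image contained in $\sum_k \dsv{v,2-r}\om|_{v,2-r}(1-\gm_k)$; this follows by induction on the word length of $\gm$ in the $\gm_k^{\pm1}$, using $(b'|\gm')|_{v,2-r}(\gm_k-1)=-(b'|\gm')|_{v,2-r}(1-\gm_k)$ and $(b'|\gm')|_{v,2-r}(\gm_k^{-1}-1)=(b'|\gm'\gm_k^{-1})|_{v,2-r}(1-\gm_k)$. Applying this to $-b=a|_{v,2-r}(\dt^{-1}-1)$, with a word for $\dt^{-1}$ fixed once and for all, produces elements $c(a;e_k)\in\dsv{v,2-r}\om$, depending $\CC$-linearly on $a$, with $\sum_k c(a;e_k)|_{v,2-r}(1-\gm_k)=-b$. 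Setting in addition $c(a;f_\ca):=b$ and $c(a;f_\cb):=0$ for the cusps $\cb$ not in the orbit of $\ca$, and extending $\CC[\bar\Gm]$-equivariantly, gives a cochain with $c(a;\partial_2\fd_Y)=0$, hence an element of $Z^1(F^{\tess,Y}_\pnt;\dsv{v,2-r}\om)$.

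For part~b) the hypothesis supplies $h\in\dsv{v,2-r}\fs$ with $h|_{v,2-r}(1-\pi_\ca)=b$, and I extend $c(a;\cdot)$ to $F^\tess_\pnt$ by $\tilde c(a;e_\ca):=h$ and $\tilde c(a;e_\cb):=0$ for the other cusp orbits. Since $\partial_2 V_\ca=(e_\ca)|(1-\pi_\ca)-(f_\ca)$, one gets $\tilde c(a;\partial_2 V_\ca)=h|_{v,2-r}(1-\pi_\ca)-b=0$, while $\tilde c(a;\partial_2 V_\cb)=0$ trivially and $\tilde c(a;\partial_2\fd_Y)$ is unchanged; so $\tilde c(a;\cdot)$ is a cocycle with values on the non-compact edges in $\dsv{v,2-r}\fs$ and on the compact edges in $\dsv{v,2-r}\om$, i.e.\ an element of $Z^1(F^\tess_\pnt;\dsv{v,2-r}\om,\dsv{v,2-r}\fs)$. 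For part~c) I pass to the associated group cocycle $\ps$ with base point $P_\ca$ via the recipe of Section~\ref{sect-rbot}: for the parabolic generators one finds $\ps_{\pi_\cb}=\bigl(-\tilde c(a;e_\cb)+\tilde c(a;P_\cb,P_\ca)\bigr)|_{v,2-r}(1-\pi_\cb)$, which lies in $\dsv{v,2-r}\om|_{v,2-r}(1-\pi_\cb)$ for $\cb\ne\ca$ (a path from $P_\cb$ to $P_\ca$ uses only compact edges), and $\ps_{\pi_\ca}=-h|_{v,2-r}(1-\pi_\ca)=-b$. Hence $[\tilde c(a;\cdot)]$ lies in $\hpar^1(\Gm;\dsv{v,2-r}\om,\dsv{v,2-r}{\fs,\wdg})$ if and only if its class in $H^1(\Gm;\dsv{v,2-r}\om)$ has a representative $\ps'=\ps+d^0 a'$, $a'\in\dsv{v,2-r}\om$, with $\ps'_{\pi_\ca}\in\dsv{v,2-r}{\fs,\wdg}|_{v,2-r}(1-\pi_\ca)$ (the condition at the other cusps being automatic); since $\ps'_{\pi_\ca}=-b+a'|_{v,2-r}(\pi_\ca-1)$ and $\dsv{v,2-r}\om\subset\dsv{v,2-r}{\fs,\wdg}$, this is exactly the condition $b\in\dsv{v,2-r}{\fs,\wdg}|_{v,2-r}(1-\pi_\ca)$. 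The reverse implication is immediate: if $b=h_0|_{v,2-r}(1-\pi_\ca)$ with $h_0\in\dsv{v,2-r}{\fs,\wdg}$, then choosing $h=h_0$ in part~b) makes all non-compact values of $\tilde c(a;\cdot)$ lie in $\dsv{v,2-r}{\fs,\wdg}$.

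The main obstacle is concentrated in part~a): one must know that the compact side-pairings $\gm_k$ generate $\Gm$, so that $a|_{v,2-r}(\dt^{-1}-1)$ indeed lies in the span of the $(\,\cdot\,)|_{v,2-r}(1-\gm_k)$, and one must keep the $\CC[\bar\Gm]$-equivariance bookkeeping consistent through the telescoping. I also note that the hypothesis $\dt\notin\Gm_\ca$ is not actually used in a)--c): it is stated because for $\dt\in\Gm_\ca$ the condition in c) is automatically satisfied with $h$ already in $\dsv{v,2-r}\om$, so the resulting class is an ordinary parabolic class — the lemma will be applied precisely in order to manufacture classes that are not of this kind.
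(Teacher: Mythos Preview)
Your proof is correct, and parts b) and c) match the paper's argument closely (the paper stays in the tesselation picture for c), computing $k|_{v,2-r}(1-\pi_\ca)$ for $k=\tilde c(a;e_\ca)-df(e_\ca)$, but this is equivalent to your group-cocycle computation). Part~a), however, is genuinely different. The paper constructs $c(a;\cdot)$ \emph{geometrically}: it chooses a $C^1$ path $p$ from $\ca$ to $\dt^{-1}\ca$ that crosses $f_\ca$ once, $\dt^{-1}f_\ca$ once, and finitely many interior edges transversally, then sets
\[
c_0(a;x)\;:=\;\sum_{\gm\in\{\pm1\}\backslash\Gm}\eps(x,\gm^{-1}p)\;a|\gm\,,
\]
exactly as in the signed hyperbolic Poincar\'e series of Proposition~\ref{prop-cocp}. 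The intersection numbers give $c_0(a;f_\ca)=a|(1-\dt^{-1})$, $c_0(a;f_\cb)=0$ for $\cb$ in other orbits, and $c_0(a;\partial_2\fd_Y)=0$ automatically. Your approach is algebraic: you telescope a fixed word for $\dt^{-1}$ in the side-pairings $\gm_k$ using the group-ring identity $(\gm\gm'-1)=(\gm-1)\gm'+(\gm'-1)$. This is more elementary and avoids any geometry, at the cost of needing the (true, but not entirely trivial) fact that the compact side-pairings already contain the $\pi_\cb$'s and hence generate $\Gm$. The paper's construction, by contrast, is uniform with the hyperbolic case treated just before and makes the dependence on $\dt$ geometrically transparent. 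Both yield cocycles with the same values on the $f_\cb$'s, and your observation in c) that $\ps_{\pi_\ca}=-b$ depends only on these values shows that the equivalence in c) holds for \emph{any} cocycle produced by either method, so the non-uniqueness of the word (or path) is harmless. Your closing remark about $\dt\notin\Gm_\ca$ is also correct.
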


\newcommand\tfd{\put(-1.5,2.){\line(0,-1){1.13397}}
\qbezier(0,0.)(-0.0317542,0.559017)(-0.5,0.866025)
\qbezier(-0.5,0.866025)(-1.,1.11803)(-1.5,0.866025)
\qbezier(0.5,0.866025)(0.269594,0.730406)(0.133975,0.5)
\qbezier(0.133975,0.5)(0.00224617,0.267347)(0,0.)
\put(0.5,0.866025){\line(0,1){1.13397}}
\qbezier(1.5,0.866025)(1.26959,0.730406)(1.13397,0.5)
\qbezier(1.13397,0.5)(1.00225,0.267347)(1.,0.)
\qbezier(1.5,0.866025)(1.,1.11803)(0.5,0.866025)
\qbezier(0.5,0.866025)(0.0317542,0.559017)(0,0.)
\qbezier(0.5,0.288675)(0.333333,0.372678)(0.166667,0.288675)
\qbezier(0.166667,0.288675)(0.0105847,0.186339)(0,0.)
\qbezier(1.,0.)(0.989415,0.186339)(0.833333,0.288675)
\qbezier(0.833333,0.288675)(0.666667,0.372678)(0.5,0.288675)
}

\twocolwithpictr{\rmrke In the simplest situation, we apply the lemma
with a choice of $\dt\in \Gm$ such that the fundamental domain
$\dt^{-1}\fd$ is a neighbour of the fundamental domain $\fd$, and has
common edges with it. Since $\dt\not\in \Gm_\ca$ the edges $e_\ca$
and $\pi_\ca^{-1}e_\ca$ in $\partial_2
\fd$ that go to $\ca$ are not edges of $\dt^{-1}\fd$.

A general choice of $\dt\in \Gm\setminus \Gm_\ca$ leads to fundamental
domains $\fd$ and $\dt^{-1}\fd$ that are far apart. We can connect
them by a finite corridor of fundamental domains $\gm_j^{-1}\fd$ such
that $\gm_{j-1}^{-1}\fd$ and $\gm_j^{-1}\fd$ have a common edge.
}{\setlength\unitlength{1.8cm}
\begin{picture}(3.2,1.7)(-1.6,-.1)
\put(-1.6,0){\line(1,0){3.2}}
\put(-.05,-.2){$\cb$}
\put(.85,-.2){$\dt^{-1}\ca$}
\put(-.55,2){$\uparrow \ca$}
%%%%%%%%%%%%%%%%%%%%%%%%%%%%%%%%%%%%%%%%%
\tfd
\end{picture}
}
\begin{proof}To construct a cocycle $c(a;\cdot)$ with the desired
properties we adapt the geometric approach in \S\ref{sect-cshfp} to
the present needs.\vskip.3ex
\twocolwithpictl{\setlength\unitlength{1.8cm}
\begin{picture}(3.2,2.1)(-1.6,-.2)
\put(-1.6,0){\line(1,0){3.2}}
\put(-1.5,1.5){\line(1,0){2}}
\put(-.05,-.2){$\cb$}
\put(.85,-.2){$\dt^{-1}\ca$}
%%%%%%%%%%%%%%%%%%%%%%%%%%%%%%%%%%%%%%%%%
\tfd
%%%%%%%%%%%%%%%%%%%%%%%%%%%%%%%%%%%%%%%%%
\qbezier(1.2,0.6)(1.03382,0.708404)(0.850929,0.631476)
\qbezier(0.850929,0.631476)(0.679652,0.531319)(0.666667,0.333333)
\qbezier(0.333333,0.333333)(0.320348,0.531319)(0.149071,0.631476)
\qbezier(0.149071,0.631476)(-0.0338201,0.708404)(-0.2,0.6)
%%%%%%%%%%%%%%%%%%%%%%%%%%%%%%%%%%%%%%%%%
\put(-.58,1.8){$p$}
\put(-.6,1.5){\vector(-1,0){.6}}
\put(-1.1,1.6){$f_\ca$}
\put(-.1,.69){$\scriptstyle f_\cb$}
\put(.8,.69){$\scriptstyle \dt^{-1}f_\ca$}
\thicklines
\put(-.4,2){\vector(0,-1){.3}}
\put(-.4,2){\line(0,-1){.6}}
\qbezier(-.4,1.4)(-.4,1.2)(.2,.8)
\qbezier(.2,.8)(.9,.7)(.95,.3)
\qbezier(.95,.3)(1,.2)(1,0)
\end{picture}}{\quad We take a $C^1$ path from $\ca$ to $\dt^{-1}\ca$
not going through vertices in $X_0^\tess$, except the initial and
final points $\ca$ and $\dt^{-1}\ca$, passing through the interior of
$V_\ca$, leaving it through a point of $f_\ca$, then going on through
the interior of $\uhp_Y=\bigcup_{\gm\in \Gm}\gm^{-1}\fd_Y$, crossing
edges in $X_1^{\tess,Y}$ transversally, entering $\dt^{-1}V_\ca$ via
a point of $\dt^{-1}f_\ca$ and going through the interior of
$\dt^{-1}V_\ca$ to $\dt^{-1}\ca$.

\quad We can choose the path $p$ in such a way that it has many of the
properties in Lemma~\ref{lem-path-hyp}, namely a)--d). }\vskip.4em

In b) we replace $\zt_1$ and $\zt_2$ by $\ca$ and $\dt^{-1}\ca$. In d)
we have intersections only with the edges $f_\ca$, $\dt^{-1}f_\ca$,
and a finite number of intermediate edges in $X_1^{\tess,Y}$. Since
$p$ runs through finitely many translates of $\fd$, Property~e) is
also satisfied. Moreover, all edges $e_\cb$ to cusps $\cb$ do not
intersect~$p$. Property~f) does not apply here.

We define \il{eps1}{$\eps(x,\gm^{-1}p)$}$\eps(x,\gm^{-1}p)$ for
$x\in X_1^\tess$ and $\gm\in \Gm$ as in Definition~\ref{epsdefn}, and
next define $c_0(a;\cdot)
\in C^1(F^\tess_\pnt;\dsv{v,2-r}\om)$ by\ir{cpb-def-c}{c_0(a;x)}
\be\label{cpb-def-c}
 c_0(a;x)\;:=\; \sum_{\gm \in \{\pm
 1\}\backslash\Gm}\eps(x,\gm^{-1}p)\; a|\gm\quad\text{ for }x\in
 F^\tess_1\,. \ee
The $\Gm$-equivariance is clear from the equivariance of $\eps$,
however $c_0(a;\cdot)$ need not be a cocycle. Indeed, on the one
hand, $c_0(a;e_\ca)=c_0(a;\pi_\ca^{-1}e_\ca)=0$, since $p$ does not
intersect any $\Gamma$-translate of~$e_\ca$ in an interior point. On
the other hand, $c_0(a;f_\ca)=a|(1-\dt)$ may very well be non-zero.
However, we still have
\[ c_0\bigl(a;\partial_2
\gm^{-1}\fd_Y\bigr)=0\qquad\text{for all }\gm\in \Gm\,. \]
So the restriction $c(a;\cdot)$ of $c_0a;\cdot)$ to
$F^{\tess,Y}_1=\CC[X_1^{\tess,Y}]$ is in
$Z^1(F_\pnt^{\tess,Y};\dsv{v,2-r}\om)$.

The path $p$ intersects $f_\ca$ with $\eps(f_\ca,p)=1$ and
$\dt^{-1}f_\ca$ with $\eps(\dt^{-1}f_\ca,p)=-1$, and no other
$\Gm$-translates of edges $f_{\cb'}$ with $\cb'$ a cusp of~$\Gm$. So
no path $\gm^{-1}p$ with $\gm\in \Gm$ intersects $f_\cb$ with
$\cb\neq\ca$ in the closure of $\fd$ in $\proj\CC$. For $f_\ca$ we
find
\[ c(a;f_\ca) \= \eps(f_\ca,p)\, a + \eps(f_\ca,\dt\,p)\, a|\dt^{-1}
\= a|_{v,2-r}(1-\dt^{-1}) \,.\]
So $c(a;\cdot)$ satisfies the requirements in Part~a) of the lemma.

Part~b) asks for defining \il{tldcpa}{$\tilde c$}$\tilde c(a;e_\cb)$
for the cusps $\cb$ in the closure of~$\fd$. For $\cb\neq\ca$ this is
easy: We have $c(a;f_\cb)=0$, and define $\tilde c(a;e_\cb)=0$ to
have $\tilde c(a;\partial_2
V_\cb)=0$. The assumptions on $a$ in the lemma show that there exists
$h\in \dsv{v,2-r}\om[\ca]$ such that
$h|_{v,2-r}(1-\nobreak\pi_\ca)=a|_{v,2-r}(1-\nobreak
\dt^{-1})=c(a;f_\ca)$. By taking $\tilde c(a;e_\ca)=h$ we have
$\tilde c(a;\partial_2
V_\ca)=0$. By $\Gm$-equivariance we use this to define a cocycle
$\tilde c(a;\cdot) \in
Z^1(F^\tess_\pnt;\dsv{v,2-r}\om,\dsv{v,2-r}\fs)$ that coincides with
$c$ on~$F^{\tess,Y}_1$.

The implication $\Leftarrow$ in Part~c) is a direct consequence of the
definition of~$c$. For the implication $\Rightarrow$ we suppose that
there exists $f\in C^0(F^\tess_\pnt;\dsv{v,2-r}\om,\dsv{v,2-r}\fs)$
such that $\tilde c(a;\cdot)-(df)(\cdot) \in
Z^1(F_\pnt^\tess;\dsv{v,2-r}\om,\dsv{v,2-r}{\fs,\wdg})$. The
$\Gm$-equivariance of $f$ implies that
$f(\ca)|_{v,2-r}\pi_\ca=f(\ca)$. Denote $k= \tilde
c(a;e_\ca)-df(e_\ca)$; so $k\in \dsv{v,2-r}{\fs,\wdg}$. Then
\begin{align*} k|_{v,2-r}(1-\pi_\ca) &\= \tilde c(a;e_\ca)|
_{v,2-r}(1-\pi_\ca)
- \bigl( f(P_\ca)-f(\ca)\bigr)|_{v,2-r}(1-\pi_\ca)
\\
&\= h|_{v,2-r}(1-\pi_\ca) - f(P_\ca)|_{v,2-r}(1-\pi_\ca) + 0
\\
&\;\in\; c(a;f_\ca) + \dsv{v,2-r}\om|_{v,2-r}(1-\pi_\ca)
\qquad\text{ (since }P_\ca \in X_0^{\tess,Y})\\
&\=a|_{v,2-r}(1- \dt^{-1})
+\dsv{v,2-r}\om|_{v,2-r}(1-\pi_\ca)\,.
\end{align*}
Hence $a|_{v,2-r}(1- \dt^{-1})\in
\dsv{v,2-r}{\fs,\wdg}(1-\nobreak\pi_\ca)$.
\end{proof}

\begin{lem}\label{lem-icod-exc-fs}Let $r\in \CC\setminus\ZZ_{\geq 2}$,
$\ld,\mu \in \CC^\ast$, and $\gm=\matc abcd\in \SL_2(\RR)$ with 
$c > 0$. Then the space
\be\label{doe}
\Bigl( \dsv{2-r}\om\bigm|_{2-r}(1-\mu\,\gm^{-1})\Bigr) \cap \Bigl(
\dsv{2-r}{\fs,\exc}\bigm|_{2-r}(1-\ld^{-1} T) \Bigr)
\ee
has infinite codimension in the space
\be\label{do}
\Bigl(\dsv{2-r}\om \bigm|_{2-r}(1-\mu\,\gm^{-1})\Bigr)
\cap \Bigl( \dsv{2-r}\fs\bigm|_{2-r}|(1-\ld^{-1}T)
\Bigr)\,. \ee
\end{lem}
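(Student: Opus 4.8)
The plan is to reduce the statement to the solvability of a difference equation and then exhibit, by hand, an infinite family of witnesses. Write $A:=\dsv{2-r}\om|_{2-r}(1-\mu\gamma^{-1})$, so that both spaces in the lemma are subspaces of $A\subset\dsv{2-r}\om$. If $\psi\in A$ and $h\in\dsv{2-r}\fs$ satisfies $h|_{2-r}(1-\ld^{-1}T)=\psi$, then $\ld^{-1}h|_{2-r}T-h=-\psi\in\dsv{2-r}\om$, so Lemma~\ref{lem-sing-inv} forces $\bsing h\subset\{\infty\}$, i.e. $h\in\dsv{2-r}\om[\infty]$; likewise with the condition $\wdg$. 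Hence the two spaces in the lemma equal $A\cap\bigl(\dsv{2-r}\om[\infty]|_{2-r}(1-\ld^{-1}T)\bigr)$ and $A\cap\bigl(\dsv{2-r}{\om,\wdg}[\infty]|_{2-r}(1-\ld^{-1}T)\bigr)$. By Lemma~\ref{lem-*solve} (and $\dsv{2-r}\om[\infty]\subset\dsv{2-r}\fs$) the first of these already contains all of $A$ up to a subspace of dimension $\leq 1$ (only when $r=1$), so it suffices to prove that $W_0:=\dsv{2-r}\om\cap\bigl(\dsv{2-r}{\om,\wdg}[\infty]|_{2-r}(1-\ld^{-1}T)\bigr)$ has infinite codimension in $\dsv{2-r}\om$ after intersecting with $A$.

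Next I would characterise membership in $W_0$ via one-sided averages. For $\psi\in\dsv{2-r}\om$ at least one of $\av{T,\ld}^\pm\psi$ exists (Proposition~\ref{prop-osa1}; in the two borderline corners — $r=1$, or $|\ld|=1$ with $(\Prj{2-r}\psi)(\infty)\neq0$ — one first normalises so that $\Prj{2-r}\psi$ vanishes at $\infty$), it is holomorphic on $D_\e^\pm$, and it solves $\av{T,\ld}^\pm\psi|_{2-r}(1-\ld^{-1}T)=\psi$, all other solutions differing by a $\ld$-periodic function. Since a $\ld$-periodic function holomorphic on $D_\e^+$ is automatically entire, while an $\{\infty\}$-excised neighbourhood contains, near $\infty$, both rays $\{\pm\re z>\e^{-1}\}$ whereas $D_\e^+$ contains only $\{\re z>\e^{-1}\}$, one gets: $\psi\in W_0$ iff $\av{T,\ld}^+\psi$ extends holomorphically across the sector $\{\re z<-\e^{-1},\ \im z\geq\e\}$ near $\infty$ (symmetrically for $\av{T,\ld}^-$), equivalently, when both averages exist, iff $\av{T,\ld}^+\psi-\av{T,\ld}^-\psi$ is entire. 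The operative point is that $\av{T,\ld}^+\psi$ is singular precisely at the points $s-m$, $m\in\ZZ_{\geq0}$, where $s$ runs over the singularities of the continuation of $\psi$ into $\uhp$; so if that continuation has an honest pole at some $w\in\uhp$, then $\av{T,\ld}^+\psi$ has poles at $w-m$ for all $m\geq0$, infinitely many of which lie in the forbidden sector, and no $\ld$-periodic (hence entire) correction can remove them. Thus $\psi\notin W_0$.

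Then I would construct the family. For $w\in\uhp$ (off the branch cut of $(i-t)^{r-2}$) let $\phi_w\in\dsv{2-r}\om$ have projective model $\Prj{2-r}\phi_w(t)=(t-w)^{-1}$, so $\phi_w$ continues to $\uhp$ with a simple pole at $w$ of residue $(i-w)^{r-2}\neq0$, and set $\psi_w:=\phi_w|_{2-r}(1-\mu\gamma^{-1})\in A$ (modified, if necessary in the borderline cases, by a term making $\Prj{2-r}\psi_w$ vanish at $\infty$). By linearity the relevant average splits as a contribution of $\phi_w$ and one of $\phi_w|_{2-r}\gamma^{-1}$, and the latter has its pole at $\gamma w$. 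Here the hypothesis $c>0$ enters: $\gamma$ does not fix $\infty$ and does not commute with $T$, so for $w$ outside a countable union of measure-zero sets the $T$-orbit $\gamma w+\ZZ$ is disjoint from $w+\ZZ$. Consequently the average of $\psi_w$ inherits an honest simple pole at $w$, residue $(i-w)^{r-2}\neq0$, entirely from the single $m=0$ term of the $\phi_w$-part, with no interference from the $\phi_w|_{2-r}\gamma^{-1}$-part; hence it is not entire/does not extend, and $\psi_w\notin W_0$. Finally, choosing $w_1,w_2,\dots$ inductively so as to avoid the countably many conditions $w_j-w_k\in\ZZ$, $w_j-\gamma w_k\in\ZZ$, $\gamma w_j-\gamma w_k\in\ZZ$ $(j\neq k)$ and $w_j-\gamma w_j\in\ZZ$, one arranges that $w_{j_0}$ is a pole of the average of $\psi_{w_{j_0}}$ but of no average of $\psi_{w_k}$ with $k\neq j_0$; hence every nontrivial finite combination $\sum c_j\psi_{w_j}$ has an average with nonzero residue at some $w_{j_0}$, so lies outside $W_0$. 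Thus the $\psi_{w_j}$ are linearly independent modulo $A\cap W_0$, proving the infinite codimension.

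The main obstacle I expect is the second paragraph: making the distinction between $\dsv{2-r}{\om,\wdg}[\infty]$ and $\dsv{2-r}\om[\infty]$ fully precise at the level of solutions of the difference equation, i.e. identifying the exact neighbourhood-of-$\infty$ condition and verifying that $\ld$-periodic corrections cannot cure a pole acquired by a one-sided average in the sector near $\infty$ that an excised neighbourhood must cover. This requires combining the geometry of excised neighbourhoods with the convergence domains $D_\e^\pm$ and the asymptotics of $\av{T,\ld}^\pm$ from Propositions~\ref{prop-osaa} and~\ref{prop-av-as}. The borderline weights demand only a short separate remark and do not affect the conclusion.
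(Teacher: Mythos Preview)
Your approach is correct and essentially the same as the paper's. Both construct witnesses $\psi\in A$ whose solutions of $h|_{2-r}(1-\ld^{-1}T)=\psi$ necessarily carry infinitely many poles at $\ZZ$-translates of a point in~$\uhp$, forcing $h\notin\dsv{2-r}{\om,\wdg}[\infty]$ since an $\{\infty\}$-excised neighbourhood confines singularities to a vertical strip $|\re z|\le a$. The only real difference is organisational: the paper fixes a single $z_0\in\uhp$ and obtains infinite dimension by letting the pole order of $\phi$ at $z_0$ vary (requiring $p(\infty)=p(\gm^{-1}\infty)=0$, so order $\ge2$), then argues directly from the difference equation that any solution $h$ with bounded-strip singularities must be nonsingular at every $z_0+n$; you instead take simple poles at many points $w_j$ in generic position and detect the poles of the explicit solution $\av{T,\ld}^\pm\psi_{w_j}$, observing that $\ld$-periodic corrections (entire because $D_\e^\pm$ contains a full half-plane $\{\pm x>\e^{-1}\}$) cannot remove them. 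Your residue bookkeeping for linear independence is a bit heavier than the paper's, which gets linear independence for free from the varying pole order at one point, but both arguments go through.
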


\begin{proof}
This may be compared with Lemma \ref{lem-*solve}, which implies, with
Lemma~\ref{lem-sing-inv}, that for $r\in \CC\setminus \ZZ_{\geq 2}$
the space
\[ \dsv{2-r}\om \cap \bigl( \dsv{2-r}\fs|_{2-r}(1-\ld^{-1}T)\bigr)\]
has finite codimension in the space $\dsv{2-r}\om$. So we have to show
that imposing the condition~``$\wdg$'' and applying
$|_{2-r}(1-\nobreak \mu\gm^{-1})$ makes an infinite-dimensional
difference. We do this by giving an infinite-dimensional space
\[ R\subset \Bigl( \dsv{2-r}\om\bigm|_{2-r}(1-\mu\,\gm^{-1})\Bigr)
\cap \Bigl( \dsv{2-r}{\fs}\bigm|_{2-r}(1-\ld^{-1} T) \Bigr)\,,\]
for which we then show that it has zero intersection with
\[ \dsv{2-r}{\fs,\wdg}|_{2-r}(1-\ld^{-1}T)\,.\]

We take $z_0\in \uhp$, on which we will impose some restrictions later
on, and put
\begin{align*} R \= \Bigl\{ \ph|_{2-r}&(1-\mu\, \gm^{-1}) \in \dsv
{2-r}\om\;:\; \ph(t)=(i-t)^{r-2}\,p(t)
\text{ where $p$ is a rational}\\
&\qquad \text{ function on $\proj\CC$, such that
$p(\infty)=p(\gm^{-1}\infty)=0$, and}\\
&\qquad \text{ $p$ has a singularity at $t=z_0$, and nowhere else in
$\proj\CC$}\Bigr\}\,.
\end{align*}
Since the order of the singularity of $p$ at $t=z_0$ is not
prescribed, this space has infinite dimension. There should be a zero
at at least two points in $\proj\CC$, so any non-zero $p$ has a
singularity at~$t=z_0$ of order at least~$2$. The factor $(i-\nobreak
t)^{r-2}$ may give $\ph$ a boundary singularity at~$t=\infty$. This
factor has no influence on the singularities of $\ph$ at $t=z_0$ and
$t=\gm\, z_0$.\vskip.4ex

\newcommand\tfe{
\put(-1.5,0){\line(1,0){3}}
\put(-.5,1.1){\circle*{.04}}
\put(-.55,1.18){$z_0$}
\put(.06,.95){$i$}
\put(.36,.32){$\gm i$}
\put(.9,-.2){$\gm\infty$}
\put(.658,.753){\circle*{.04}}
\put(.65,.85){$\gm z_0$}
\thicklines
\put(0,1){\line(0,1){.7}}
\qbezier(.5,.5)(1,.5)(1,0)
\put(1,0){\line(0,1){1.7}} }
\twocolwithpictr{\quad The singularities of $\ph$ in $\proj\CC$ occur
at $z_0$, from $p$, and on the line $i[1,\infty]$, from the factor
$(i-\nobreak t)^{r-2}$. The singularities of $\ph|_{2r}\gm^{-1}(t)
=
(a-cz)^{r-2}\, \ph(\gm^{-1}t)$ are contained in the union of
$a/c+i[0,\infty]$ and $\gm$ applied to the singularities of~$\ph$.

\quad We choose $z_0$ such that the set $z_0+\ZZ$ does not contain
points of $i[1,\infty]\cup \gm(i[1,\infty)]\cup (\gm\infty+\nobreak
i[0,\infty])\cup\{\gm \, z_0\}$. }{
\label{fig-singf}\setlength\unitlength{1.6cm}
\begin{picture}(3,1.4)(-1.5,-0)
\tfe
\end{picture}
}\vskip.4ex

Let $f=\ph|_{2-r}(1-\nobreak \mu\, \gm^{-1})\in R$. We have
$\Prj{2-r}\ph(t) = p(t)$, hence $(\Prj{2-r}\ph)(\infty)=0$; and also
$\bigl(\Prj{2-r}
(\ph|_{2-r}\gm^{-1}) \bigr)(\infty) = \bigl( p |^\prj_{2-r} \gm^{-1}
)(\infty) = 0$. (See~\eqref{prjact}.)
Using a one-sided average (Proposition~\ref{prop-osa1}) we find $h \in
\dsv{2-r}\om[\infty]$ such that
\be\label{hf-rel}
 h(t) - \ld^{-1} h(t+1) \=f(t)\= \ph(t)-\mu(\ph|_{2-r}\gm^{-1})(t)
\,,\ee
at least for $t\in \lhp$. So $f\in
\dsv{2-r}\om[\infty]|_{2-r}(1-\nobreak\ld^{-1}T)$, and
obviously also in $\dsv{2-r}\om|_{2-r}\allowbreak(1-\nobreak
\mu^{-1}\gm)$.
We have to show that if $p\neq0$, then none of the solutions
of \eqref{hf-rel} can be in $\dsv{2-r}{\om,\exc}[\infty]$.

If a solution $h$ of \eqref{hf-rel} were in
$\dsv{2-r}{\om,\exc}[\infty]$, then it extends holomorphically to an
$\{\infty\}$-excised neighbourhood. So $h$ can have singularities only
inside a strip $|\re z|\leq N$ for some $N>0$. In particular $h$ can
have singularities at $z_0+n$ only for a finite number of $n\in
\ZZ$.\vskip.4ex
\twocolwithpictl{\setlength\unitlength{1.6cm}
\begin{picture}(3,1.4)(-1.5,-0)
\tfe
\put(.5,1.1){\circle*{.04}}
\put(1.5,1.1){\circle*{.04}}
\put(-1.5,1.1){\circle*{.04}}
\end{picture}
}{\quad The right hand side in Relation~\eqref{hf-rel} has
singularities at $z_0+n$ only if $n=0$. So the maximal $n\geq 0$ such
that $z_0+\nobreak n$ is a singularity of $h$ cannot be larger than
$0$, since otherwise there would be a singularity $z_0+n+1$ as well.
Similarly, the minimum value of $n\leq 0$ such that $h$ is singular
at $z_0+n$ is also $0$. However, a singularity of $h$ only at $z_0$
is also impossible, since $f$ is holomorphic at $z_0\pm 1$.
}\vskip.4ex

So $h$ cannot have a singularity at any point of $z_0+\ZZ$. The choice
of $z_0$ shows that then $\ph$ has no singularity at $z_0$, in
contradiction with $p\neq 0$.
\end{proof}

\begin{proof}[Proof of Proposition~\ref{prop-icod-exc-fs}.] We have to
show that
\[ \dim \Bigl( \hpar^1(\Gm;\dsv{v,2-r}\om,\dsv{v,2-r}\fs) \bigm/
\hpar^1(\Gm;\dsv{v,2-r}\om,\dsv{v,2-r}{\fs,\wdg}) \Bigr)
\=\infty\,.\]
We choose a cusp $\ca$ of $\Gm$ and $\dt\in \Gm\setminus \Gm_\ca$, and
apply Parts b) and~c) of Lemma~\ref{lem-pcoc}. The map $a\mapsto
[\tilde c(a;\cdot)]$ induces a linear map
\begin{align*} \Bigl(\dsv{v,2-r}\om\bigm|_{v,2-r}&(1-\dt^{-1})
\Bigr)\cap \Bigl( \dsv{v,2-r}\fs\bigm|_{v,2-r}(1-\pi_\ca) \Bigr)\\
&\hbox{} \rightarrow \hpar^1(\Gm;\dsv {v,2-r}\om,\dsv{v,2-r}\fs)
\bigm/
\hpar^1(\Gm;\dsv{v,2-r}\om,\dsv{v,2-r}{\fs,\wdg})\,,\end{align*}
with kernel
\[ \Bigl(\dsv{v,2-r}\om \bigm|_{v,2-r} (1-\dt^{-1})\Bigr) \cap \Bigl(
\dsv{v,2-r}{\fs,\exc}\bigm|_{v,2-r}(1-\pi_\ca) \Bigr)\,.\]
So it suffices to show that this kernel has infinite codimension in
\[ \Bigl(\dsv{v,2-r}\om\bigm|_{v,2-r}(1-\dt^{-1}) \Bigr)\cap \Bigl(
\dsv{v,2-r}\fs\bigm|_{v,2-r}(1-\pi_\ca) \Bigr)\,.\]
Conjugating $\ca$ to $\infty$ and $\dt$ to $\gm$, we arrive at a
statement handled in Lemma~\ref{lem-icod-exc-fs}, with $\ld$ and
$\mu$ determined by $v(\pi_\ca)$ and~$v(\dt)$.
\end{proof}

\subsection{Recapitulation of the proof of Theorem
\ref{THMiso}}\label{sect-recap-iso}

\rmrkn{Part~i)} concerns the case $r\in \CC\setminus\ZZ_{\geq 2}$. We
have to show
\begin{enumerate}
\item[a)] $\hpar^1(\Gm;\dsv{v,2-r}\om,\dsv{v,2-r}{\fsn,\exc}) =
\hpar^1(\Gm;\dsv{v,2-r}\om,\dsv{v,2-r}{\fs,\exc})
\cong \hpar^1(\Gm;\dsv{v,2-r}{\fsn,\exc})$.
\item[b)] $\hpar^1(\Gm;\dsv{v,2-r}\om,\dsv{v,2-r}{\fsn,\exc})$ has
infinite codimension in $H^1(\Gm;\dsv{v,2-r}\om)$.
\item[c)] $\hpar^1(\Gm;\dsv{v,2-r}{\fsn,\exc}) \rightarrow
\hpar^1(\Gm;\dsv{v,2-r}{\fs,\exc})$ is injective with an image of
infinite codimension.
\end{enumerate}
In the following diagram we indicate where we have carried out the
various steps. (To save space we suppress $\Gm$ in the notation.) For
Parts i)a) and i)b) we have:
\bad\xymatrix@C=2cm@R=1cm{
\hpar^1(\dsv{v,2-r}\om,\dsv{v,2-r}{\fsn,\exc})
\ar@{=}[r]^{\text{Prop.~\ref{prop-parb*0}}}
\ar[d]_\cong^{\text{Thm.~\ref{thm-iso-mpcpc}}}
& \hpar^1(\dsv{v,2-r}\om,\dsv{v,2-r}{\fs,\exc})
\ar@{^{(}->}[d]_{\text{inf.~codim.}}
  ^{\text{Prop.~\ref{prop-icod-exc-fs}}}
\\
\hpar^1(\dsv{v,2-r}{\fsn,\exc})
& \hpar^1(\dsv{v,2-r}\om,\dsv{v,2-r}{\fs})
\ar@{^{(}->}[d]^{\text{Prop.~\ref{prop-acpc}}} _{\text{fin.~codim.}}
\\
& H^1(\dsv{v,2-r}\om)
} \ead
Part~i)c) follows from the following commuting diagram:
\bad \xymatrix@C=2cm@R=1cm{
\hpar^1(\dsv{v,2-r}\om,\dsv{v,2-r}{\fsn,\exc})
\ar@{=}[r]^{\text{Prop.~\ref{prop-parb*0}}}
\ar[d]_\cong^{\text{Thm.~\ref{thm-iso-mpcpc}}}
& \hpar^1(\dsv{v,2-r}\om,\dsv{v,2-r}{\fs,\exc})
\ar@{^{(}->}[d]_{\text{inf. cod.}}^{\text{Prop. \ref{prop-icd-hyp}}}
\\
 \hpar^1(\dsv{v,2-r}{\fsn,\exc})
 \ar[r]
& \hpar^1(\dsv{v,2-r}{\fs,\exc})
} \ead

\rmrkn{Part ii)} of the theorem states the following identities and
isomorphisms:
\begin{align*}
 \hpar^1(\Gm;\dsv{v,2-r}\om,\dsv{v,2-r}{\fsn,\infty,\exc}) &\=
 \hpar^1(\Gm;\dsv{v,2-r}\om,\dsv{v,2-r}{\fsn,\infty})
 \= \hpar^1(\Gm;\dsv{v,2-r}\om,\dsv{v,2-r}{\fs,\infty})\\
 &\;\cong\;\hpar^1(\Gm;\dsv{v,2-r}{\fsn,\infty})
 \;\cong\; \hpar^1(\Gm;\dsv{v,2-r}{\fs,\infty})\,.
\end{align*}
It follows from the diagram
\bad\xymatrix{ \hpar^1(\dsv{v,2-r}\om,\dsv{v,2-r}{\fsn,\infty,\wdg})
\ar@{=}[d]^{\text{Prop.~\ref{prop-rid-of-exc}}}\\
\hpar^1(\dsv{v,2-r}\om,\dsv{v,2-r}{\fsn,\infty})
\ar@{=}[r]^{\text{Prop.~\ref{prop-parb*0}}}
\ar[d]^{\text{Thm.~\ref{thm-iso-mpcpc}}} _\cong
& \hpar^1(\dsv{v,2-r}\om,\dsv{v,2-r}{\fs,\infty})
\ar[d]^{\text{Thm.~\ref{thm-iso-mpcpc}}} _\cong \\
\hpar^1(\dsv{v,2-r}{\fsn,\infty})
& \hpar^1(\dsv{v,2-r}{\fs,\infty})
}\ead
Only for the first equality we need $r\in \RR\setminus \ZZ_{\geq 2}$.
For all other steps $r\in \CC\setminus \ZZ_{\geq 2}$ suffices.

\rmrkn{Part~iii)} states for $r\in \RR \setminus \ZZ_{\geq 1}$:
\begin{enumerate}
\item[a)] The image $\coh r \om M_r
(\Gm,v)=\hpar^1(\Gm;\dsv{v,2-r}\om,\dsv{v,2-r}{\fsn,\smp,\wdg})$ is
 equal to
\[\hpar^1(\Gm;\dsv{v,2-r}\om,\dsv{v,2-r}{\fsn,\smp})\,,\quad
\hpar^1(\Gm;\dsv{v,2-r}\om,\dsv{v,2-r}{\fs,\smp})\,\]
and canonically isomorphic to $\hpar^1(\Gm;\dsv{v,2-r}{\fsn,\smp})$.
\item[b)] The space
$\hpar^1(\Gm;\dsv{v,2-r}\om,\dsv{v,2-r}{\fsn,\smp,\wdg})$ is
canonically isomorphic to the space
$\hpar^1(\Gm;\dsv{v,2-r}{\fs,\smp})$ if $v(\gm)\neq
e^{-r\ell(\gm)/2}$ for all primitive hyperbolic elements $\gm\in
\Gm$, where $\ell(\gm)$ is the hyperbolic length of the closed
geodesic associated to~$\gm$
\end{enumerate}
Part~iii)a) follows from the diagram
\bad\xymatrix{ \hpar^1(\dsv{v,2-r}\om,\dsv{v,2-r}{\fsn,\smp,\exc})
\ar@{=}[d]_{\text{Prop. \ref{prop-rid-of-exc}, ii)}}\\
\hpar^1(\dsv{v,2-r}\om,\dsv{v,2-r}{\fsn,\smp})
\ar@{=}[r]^{\text{Prop.~\ref{prop-parb*0}}}
\ar[d]_{\cong}^{\text{Thm.~\ref{thm-iso-mpcpc}}}
& \hpar^1(\dsv{v,2-r}\om,\dsv{v,2-r}{\fs,\smp})
\\
\hpar^1(\dsv{v,2-r}{\fsn,\smp})
}\ead
The condition that $r$ is real is needed only for the first step.
Part~iii)b) follows also from Theorem~\ref{thm-iso-mpcpc} under a
condition on hyperbolic elements.

\subsection{Related work}\label{sect-lit13}
The constructions in this section arose from a generalization of the
examples in Propositions 13.7 and~14.3 in~\cite{BLZm}. The paths~$p$
in Lemma~\ref{lem-path-hyp} and in the proof of Lemma~\ref{lem-pcoc}
represent cycles in homology. It is conceivable that they can be
related to the computations of Ash \cite{Ash89}, who computes the
parabolic cohomology with values in the rational functions by
computing first homology groups. We have not succeeded in making this
relation explicit.

%%%%%%%%%%%%%%%%%%%%%%%%%%%%%%%%%%%%%%%%%%%%%%%%%%%%%%%%%%%%%%%%%%%%

\section{Quantum automorphic forms}\label{sect-qaf}
Theorem~\ref{THMiso} implies that $\coh r \om : A_r(\Gm,v) \rightarrow
H^1(\Gm;\dsv{v,2-r}\om)$ is far from 
surjective. Quantum automorphic forms may be put, for weights $r\in
\CC \smallsetminus \ZZ_{\geq 1}$, on the place of the
question mark in the diagram
\badl{qaf-diag} \xymatrix{ A_r(\Gm,v) \ar[d]_{\coh r \om}^\cong
\ar@{.}[r]
&? \ar@{.}[d]
\\
 \hpar^1(\Gm;\dsv{v,2-r}\om; \dsv{v,2-r}{\fsn,\wdg})
\ar@{^{(}->}[r]
& H^1(\Gm;\dsv{v,2-r}\om)
 }\eadl
This is similar to the role of quantum Maass forms
in~\cite[\S14.4]{BLZm}.

\subsection{Quantum modular forms}\label{sect-qmf}
Zagier \cite{Za10}
gives examples of \il{qmf}{quantum modular form}\emph{quantum modular
forms} as functions on $\QQ$ that have a modular transformation
behavior modulo a smooth function on~$\RR$.

\rmrkn{Example: } Powers of the Dedekind eta-function.
\il{poweta-q}{powers of the Dedekind eta-function} We attach a
quantum modular form to $\eta^{2r}$ with $\re r>0$.

The cusps of $\Gmod$ form one orbit, $\proj\QQ=\QQ \cup\{\infty\}$.
For each cusp $\ca\in \QQ$ the function
\be\label{hcad} h_\ca(t) \;:=\; \int_{z_0}^\ca \om_r(\eta^{2r};t,z)
\= \int_{z_0}^\ca \eta^{2r}(z)\,(z-t)^{r-2}\, dz \ee
is well defined for $t\in \lhp\cup\RR$.

Let $\dt=\matc abcd \in \Gmod$ such that $\ca,\dt^{-1}\ca \in \QQ$.
Then
\badl{tp} v_r(\dt)^{-1}&\,(c t+d)^{r-2}\, h_{\dt \ca}(\dt t) -
h_\ca(t) \= \biggl(
\int_{\dt^{-1}z_0}^\ca-\int_{z_0}^\ca\biggr)\om_r\bigl(
\eta^{2r};t,z) \\
&\= \ps_{\eta^{2r},\dt}^{z_0}(t)\,, \eadl
by Lemma~\ref{lem-omprop}. All terms in this relation are in
the space $\dsv{2-r}\infty$ of smooth vectors, hence
\be\label{peta2r} p(\ca) \;:=\; h_\ca(\ca)\qquad (\ca\in \QQ) \ee
is well defined, and satisfies
\be p|_{v_r,2-r} (\dt-1)\,(\ca) \=
\ps_{\eta^{2r},\dt}^{z_0}(\ca)\qquad
(\ca,\dt\ca\in \QQ)\,.\ee

The function $p:\QQ\rightarrow \CC$ has no reason to have a
continuous extension to~$\RR$. However, $p|_{v_r,2-r} (\dt-1)$ is the
restriction of a real-analytic function on~$\RR$. The function $p$ is
an example of a quantum modular form.

\rmrk{Strong quantum modular forms}Since $h_\ca$ as indicated above is
an element of $\dsv{2-r}\infty$, we have an asymptotic series
$h_\ca(t) \sim P(\ca,t):=\sum_{n\geq 0} c_n(\ca)\,(t-\ca)^n$,
approximating $h_\ca(t)$ as $t\rightarrow \ca$ through $\lhp\cup\RR$.
For $\dt\in \Gm$ as above we have from~\eqref{tp}:
\be v_r(\dt)^{-1}\,(c t+d)^{r-2}\, P(\dt\ca,\dt t) - P(\ca,t)
\,\sim\, \ps_{\eta^{2r},\dt}^{z_0}(t)
\ee
as $t\rightarrow \ca $ through $\lhp\cup\RR$. This means that $P$ is a
\il{sqaf}{strong quantum modular form}\il{qafs}{quantum modular form,
strong}\emph{strong quantum modular form} in the sense of
Zagier~\cite{Za10}.

\rmrk{Constant function}Now we take $r=0$ , hence $\eta^0=1\in
M_0\bigl(\Gmod,1\bigr)$. It seems sensible to take now
\be\label{hca} h_\ca(t) \= \frac1{t-\ca}\,.\ee
Now we cannot substitute $t=\ca$. However, with $\dt$ as above
\be \label{c1} v_0(\dt)^{-1}\, (ct+d)^{0-2}\, h_{\dt \ca}(\dt t) -
h_\ca(t)
\= \frac {-c}{c t+d}= \tilde\ps_\dt(t)\,, \ee
with the cocycle $\tilde\ps \in
Z^1\bigl(\Gmod;\dsv{1,2}{\fs,\wdg}\bigr)$
in~\eqref{ps-r=0}. So $P(\ca,t)=\frac 1{t-\ca}$ can be viewed as a
strong quantum automorphic form if we allow asymptotic series of the
form $\sum_{n\geq -1} c_n(\ca)\,(t-\ca)^n$.

\subsection{Quantum automorphic forms} \label{sect-qaf-def}For general
cofinite discrete groups~$\Gm$ we define quantum automorphic forms as
simply as possible for our purpose. The example of the constant
function shows that we need to use series starting at order~$-1$.

It turns out that we get satisfactory results in the context of these
notes if we use expansion starting at order $-1, $ namely,
$P(\ca,t):= c_{-1}\, (t-\ca)^{-1}+c_0 \cdots$, and leaving implicit
the terms $c_1, c_2,\ldots$.

\begin{defn}By \il{Scu}{$\cu$}$\cu$ we denote the set of cusps
of~$\Gm$. By a \il{soexp}{system of expansions}\emph{system of
expansions} $p$ on $\cu$ we mean a map assigning to all except
finitely many points $\ca\in\cu\cap\RR$ an expression
\[ p(\ca,t) \= c_{-1}(\ca) \,(t-\ca)^{-1}+ c_0 (\ca)
+ (t-\ca)\, \CC\hk{t-\ca}\,,\]
where $\CC\hk{t-\nobreak\ca}$ is the ring of formal power series in
$t-\ca$. Two such systems $p$ and~$p_1$ are equivalent if
$p(\ca,t)\equiv p_1(\ca,t) \bmod (t-\nobreak\ca)\,
\CC\hk{t-\nobreak\ca}$ for all but finitely many $\ca\in\cu\cap\RR$.
By \il{Rqaf}{$\R$}$\R$ we denote the linear space of equivalence
classes of systems of expansions.
\end{defn}

If $t\mapsto \ph(t)$ is real-analytic on a neighbourhood of~$\ca$
in~$\RR$, then multiplication by $\ph(t)$ is well defined for
elements of $(t-\nobreak \ca)^{-1}\CC\hk{t-\nobreak \ca} \bmod
(t-\nobreak \ca)\,\CC\hk{t-\nobreak\ca}$.

\begin{defn}The action \il{actGm-qaf}{$|_{v,2-r}$}$|_{v,2-r}$ of~$\Gm$
on $\R$ is induced by
\be (p|_{v,2-r}\gm)(\ca,t) \;:=\; v(\gm)^{-1}\, (ct+d)^{r-2} \,
p(\gm\ca,\gm t) \ee
for all $\ca\in \cu\cap \RR$ and $\gm =\matc\ast\ast c d\in \Gm$ for
which $p(\ca,\cdot)$ and $p(\gm \ca,\cdot)$ are defined. If
$r\not\in \ZZ$ we define $(ct+\nobreak d)^{r-2}$ by the argument
convention~\eqref{ac} for $t\in \lhp$.
\end{defn}

\rmrks
\itmi The operations in both parts of the definition preserve the
equivalence between systems of expansions. We will mostly identify an
equivalence class with a  representative  of it.

\itm The inclusion $\dsv{v,2-r}\om \rightarrow \R$ given by $\ph
\mapsto p_\ph$, where\ir{pphi-def}{p_\ph}
\be \label{pphi-def} p_\ph(\ca,t) \= \ph(\ca) +
 (t-\ca)\CC[[t-\ca]]
 \qquad\text{ for all }\ca\in \cu\cap\RR\,,\ee
is equivariant for the actions $|_{v,2-r}$ of $\Gm$ on
$\dsv{v,2-r}\om$ and $\R$.

\begin{defn}\label{defn-qaf}
Let $r\in\CC$ and let $v$ be a multiplier system for the weight~$r$.
\begin{enumerate}
\item[a)] By \il{Rv}{$\R_{v,2-r}$}$\R_{v,2-r}$ we denote $\R$ provided
with the action $|_{v,2-r}$ of~$\Gm$.
\item[b)] We define the $\Gm$-module \il{Qv}{$\Q_{v,2-r}$}$\Q_{v,2-r}
:= \R_{v,2-r}\bigm/ \dsv{v,2-r}\om$.
\item[c)] We define the space $\qA_{2-r}(\Gm,v)$ of \il{qautf}{quantum
automorphic form}\emph{quantum automorphic forms} of weight $2-r$
with multiplier system $v$ as a quotient of
$\Gm$-invariants:\ir{qA}{\qA_{2-r}(\Gm,v)}
\be\label{qA}
\qA_{2-r}(\Gm,v) \;:=\; \Q_{v,2-r}^\Gm \bigm/ \R_{v,2-r}^\Gm\,. \ee
\end{enumerate}
\end{defn}

\rmrks
\itmi So we have an exact sequence of $\Gm$-modules
\[ 0 \rightarrow \dsv{v,2-r}\om \rightarrow \R_{v,2-r} \rightarrow
\Q_{v,2-r}\rightarrow 0\,,\]
with the associated long exact sequence
\[ 0 \rightarrow(\dsv{v,2-r}\om)^\Gm \rightarrow \R_{v,2-r} ^\Gm
\rightarrow \Q_{v,2-r}^\Gm \rightarrow
H^1(\Gm;\dsv{v,2-r}\om)\rightarrow\cdots\]
We choose to define quantum automorphic forms as the quotient
$\Q_{v,2-r}^\Gm / \R_{v,2-r}^\Gm$, which can automatically mapped
into $H^1(\Gm,\dsv{v,2-r}\om)$ injectively.

In this way a quantum automorphic form is a function defined on almost
all of $\cu\cap \RR$ that has automorphic transformation behavior
modulo functions that are analytic on $\RR$ minus finitely many
points. Further we work modulo functions on $\cu \cap \RR$ that are
exactly automorphic.

\itm We leave it to the reader to explore the examples of Zagier
\cite{Za10}. The purpose of our definition is not to cover all those
examples. We are content to define quantum automorphic forms in such
a way that they fill the hole in diagram~\ref{qaf-diag}.

\subsection{Quantum automorphic forms, cohomology, and automorphic
forms}
\begin{prop}\label{prop-qA-coh}
Let $v$ be a multiplier system on~$\Gm$ for the weight $r\in \CC$.
\begin{enumerate}
\item[a)] There is an injective natural map\ir{Cdef}{\qC}
\be\label{Cdef} \qC:\qA_{2-r}(\Gm,v) \rightarrow
H^1(\Gm;\dsv{v,2-r}\om)\,. \ee
\item[b)] If $ r\in \CC \smallsetminus \ZZ_{\geq 1}$, then $\qC$ is
surjective.
\end{enumerate}
\end{prop}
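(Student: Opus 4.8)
\textit{Part a).} This is pure homological algebra. The exact sequence of $\Gm$-modules $0\to\dsv{v,2-r}\om\to\R_{v,2-r}\to\Q_{v,2-r}\to0$ from Definition~\ref{defn-qaf} yields the long exact sequence
\[ 0\to (\dsv{v,2-r}\om)^\Gm \to \R_{v,2-r}^\Gm \to \Q_{v,2-r}^\Gm \xrightarrow{\;\delta\;} H^1(\Gm;\dsv{v,2-r}\om) \to H^1(\Gm;\R_{v,2-r})\to\cdots \]
Exactness at $\Q_{v,2-r}^\Gm$ says that the image of $\R_{v,2-r}^\Gm$ in $\Q_{v,2-r}^\Gm$ equals $\ker\delta$, so $\delta$ descends to an injection on the quotient $\Q_{v,2-r}^\Gm/\R_{v,2-r}^\Gm=\qA_{2-r}(\Gm,v)$; take $\qC$ to be this induced map. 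Naturality is automatic. (Concretely, for $\bar p\in\Q_{v,2-r}^\Gm$ one chooses any lift $p\in\R_{v,2-r}$; $\Gm$-invariance of $\bar p$ forces $p|_{v,2-r}(\gm-1)\in\dsv{v,2-r}\om$ for all $\gm$, and $\qC[\bar p]$ is the class of the cocycle $\gm\mapsto p|_{v,2-r}(\gm-1)$.)

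\textit{Part b), reduction.} From the same long exact sequence, $\delta$ — hence $\qC$ — is surjective if and only if the map $\iota_*\colon H^1(\Gm;\dsv{v,2-r}\om)\to H^1(\Gm;\R_{v,2-r})$ induced by the inclusion $\dsv{v,2-r}\om\hookrightarrow\R_{v,2-r}$, $\ph\mapsto p_\ph$, of \eqref{pphi-def} is the zero map. The plan is to prove $\iota_*=0$ by splitting $\R_{v,2-r}$ along $\Gm$-orbits of cusps. For a cusp $\ca$ write $E_\ca$ for the two-dimensional $\Gm_\ca$-module of expansion germs $c_{-1}(t-\ca)^{-1}+c_0+(t-\ca)\CC\hk{t-\ca}$ at $\ca$ taken modulo $(t-\ca)\CC\hk{t-\ca}$. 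Up to modifications supported on finitely many cusps (harmless for $H^1$ by a filtered-colimit argument, $\Gm$ being finitely generated, with a small extra care for the point $\infty\notin\RR$ when $\infty\in\cu$), the module $\R_{v,2-r}$ is the direct sum over a system of orbit representatives $\ca_1,\dots,\ca_k$ of the coinduced modules $\mathrm{Coind}_{\Gm_{\ca_j}}^{\Gm}E_{\ca_j}$. Shapiro's lemma then gives $H^1(\Gm;\R_{v,2-r})\cong\bigoplus_j H^1(\Gm_{\ca_j};E_{\ca_j})$, and identifies the $j$-th component of $\iota_*$ with the composite $H^1(\Gm;\dsv{v,2-r}\om)\xrightarrow{\mathrm{res}}H^1(\Gm_{\ca_j};\dsv{v,2-r}\om)\to H^1(\Gm_{\ca_j};E_{\ca_j})$ (second arrow: evaluation of a function at the cusp). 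So it suffices to show, for each cusp $\ca$, that the natural map $H^1(\Gm_\ca;\dsv{v,2-r}\om)\to H^1(\Gm_\ca;E_\ca)$ vanishes.

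\textit{Part b), the local step.} Since $\Gm_\ca/\{\pm1\}$ is infinite cyclic, generated by $\pi_\ca$, these are coinvariants, and the map in question is $\dsv{v,2-r}\om/(\pi_\ca-1)\to E_\ca/(\pi_\ca-1)$, sending $[\ph]$ to the class of $(c_{-1},c_0)=(0,\ph(\ca))$ (as $\ph$ is analytic at $\ca$). Conjugating so that $\ca$ is finite and $\pi_\ca$ has derivative $1$ at its fixed point, a one–line expansion gives that $\xi|_{v,2-r}(\pi_\ca-1)$ has pole coefficient $(v(\pi_\ca)^{-1}-1)c_{-1}$ and, when $v(\pi_\ca)=1$, constant coefficient a nonzero multiple of $(r-1)c_{-1}$. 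Hence $(\pi_\ca-1)E_\ca$ already contains the whole subspace $\{(0,*)\}$: this is trivial when $v(\pi_\ca)\neq1$ (then $\pi_\ca-1$ is invertible on $E_\ca$) and uses $r\neq 1$ when $v(\pi_\ca)=1$. Therefore $[(0,\ph(\ca))]=0$, and the map is zero. The clean statement of this solvability, valid within the admissible class of expansions (and valid only for $r\notin\ZZ_{\geq1}$, the obstructions being polynomial growth for $r\in\ZZ_{\geq2}$ and logarithmic growth for $r=1$), is exactly what Section~\ref{sect-osa} provides: by Proposition~\ref{prop-osa1} one can realise a primitive via a one-sided average $\av{T,v(\pi_\ca)}^{\pm}g$, and Proposition~\ref{prop-av-as} shows that its projective model has an asymptotic expansion at the cusp beginning at order $-1$, so it indeed defines an element of $E_\ca$.

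\textit{Main obstacle.} The genuine work is not the local computation but making the structural claim about $\R_{v,2-r}$ precise: since a system of expansions is allowed to be undefined on an unspecified finite set and is recorded only modulo $(t-\ca)\CC\hk{t-\ca}$, the identification $H^1(\Gm;\R_{v,2-r})\cong\bigoplus_j H^1(\Gm_{\ca_j};E_{\ca_j})$ requires the colimit argument above together with a careful treatment of the orbit of $\infty$, and it is here that one must confirm that none of this spoils the vanishing of $\iota_*$. An alternative that avoids Shapiro's lemma, closer to the style of the rest of the paper, is to construct the required invariant element of $\Q_{v,2-r}^\Gm$ directly on a tesselation as in Section~\ref{sect-tesscoh}: solve the local equation at one cusp per orbit, extend $\Gm$-equivariantly, and verify that the cocycle the resulting $\bar p$ determines is cohomologous to the given class — the delicate point there being that equality at cusps suffices modulo coboundaries. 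Either route isolates the hypothesis $r\notin\ZZ_{\geq1}$ in the local solvability step, as it must be in view of the classical Eichler picture for $r\in\ZZ_{\geq2}$.
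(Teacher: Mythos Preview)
Part a) is correct and identical to the paper's argument.

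For Part b) you take a genuinely different route from the paper. The paper does not use Shapiro's lemma or any decomposition of $\R_{v,2-r}$; instead it constructs the preimage directly. Given a cocycle $\ps$, it sets
\[
p(\ca,t)\;:=\;-\bigl(\av{\pi_\ca,v(\pi_\ca)}^{\pm}\ps_{\pi_\ca}\bigr)(t)\;+\;(t-\ca)\,\CC\hk{t-\ca}
\]
at each cusp $\ca\in\cu\cap\RR$, using the one-sided averages of Proposition~\ref{prop-osa1}. The verification that $p|_{v,2-r}(\dt-1)\equiv\ps_\dt$ for \emph{arbitrary} $\dt\in\Gm$ (not just $\pi_\ca$) is the substantive step: one shows that the difference is a $v(\pi_\ca)$-periodic function with a one-sided asymptotic expansion starting at order $-1$, and then Lemma~\ref{lem-ldpr}~ii) forces it to vanish. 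That lemma is precisely where $r\notin\ZZ_{\geq1}$ enters: for $r\in\ZZ_{\geq1}$ and $\ld=1$ the periodic function could be a nonzero constant.

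Your Shapiro approach is sound in outline, and your local computation on the two-dimensional module $E_\ca$ is correct. Notice, however, that this computation only uses $r\neq1$: for $\ld=1$ the image of $\pi_\ca-1$ on $E_\ca$ is exactly $\{(0,\ast)\}$ whenever $r\neq1$, regardless of whether $r\in\ZZ_{\geq2}$. Your subsequent appeal to Section~\ref{sect-osa} to recover the restriction $r\notin\ZZ_{\geq1}$ is therefore misplaced---if the Shapiro argument is valid, the algebraic local step already suffices and you would obtain a stronger conclusion than the paper states. The genuine gap is the one you flag yourself: the identification of $\R_{v,2-r}$ (a quotient by finitely-supported functions, and missing the point $\infty$) with a sum of coinduced modules is not carried out. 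In particular, the full product $\prod_{\ca\in\cu\cap\RR}E_\ca$ is \emph{not} a $\Gm$-module when $\infty$ is a cusp, since the $\Gm$-action moves finite cusps to~$\infty$; one has to work with $\prod_{\ca\in\cu}E_\ca$ and check that the inclusion $\dsv{v,2-r}\om\hookrightarrow\R_{v,2-r}$ lifts $\Gm$-equivariantly, which is not obvious for the component at~$\infty$ when $r\neq2$. This can likely be repaired (e.g.\ by working in the projective model, or by conjugating $\Gm$ so that $\infty$ is not a cusp), but as written the argument is incomplete at exactly the point you identify as the main obstacle.
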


\begin{proof} \emph{Injectivity, Part~a). }Definition~\ref{defn-qaf}
implies that the sequence
\be 0\rightarrow \dsv{v,2-r}\om \rightarrow \R_{v,2-r}\rightarrow
\Q_{v,2-r}\rightarrow0 \ee
is exact. The part
\[ \R_{v,2-r}^\Gm \rightarrow \Q_{v,2-r}^\Gm \rightarrow
H^1(\Gm;\dsv{v,2-r}\om)
\]
of the corresponding long exact sequence in group cohomology shows
that the connecting homomorphism induces an injective linear map
\[\qA_{2-r}(\Gm,v) \rightarrow H^1(\Gm;\dsv{v,2-r}\om)\,,\]
which we call~$\qC$. It sends a quantum automorphic form represented
by $p\in \R_{v,2-r}$ to the class of the cocycle $\gm \mapsto
p|_{v,2-r}( \gm-\nobreak1)$.

\rmrk{Surjectivity, Part~b)} Let $ r\in \CC \smallsetminus \ZZ_{\geq
1}$, and let $\ld\in \CC^\ast$. Proposition~\ref{prop-osa1} shows
that for each $f\in \dsv{2-r}\om$ at least one of the one-sided
averages $\av{T,\ld}^+f$ and $\av{T,\ld}^- f$ exists in
$\dsv{2-r}\fs$, and that $(\av{T,\ld}^\pm f)|_{2-r}(1-\ld^{-1}T)=f$.
We use $\av{T,\ld}^+ f$ if $|\ld|\geq 1$ and $\av{T,\ld}^- f$ if
$|\ld|<1$. Furthermore, by Proposition~\ref{prop-av-as}, there is an
asymptotic formula
$(\av{T,\ld}^\pm f)(t) 
= (it)^{r-2 }\,\bigl( c_{-1}t + c_0 + \oh(t^{-1})\bigr)$
as $t\rightarrow\pm \infty$ through~$\RR$, with coefficients
$c_{-1}$ and $c_0$ determined by $f$. By conjugation, we define for
parabolic $\pi=\s T \s^{-1}$, with $\s\in \SL_2(\RR)$,
$\xi=\s\infty\neq \infty$ and $f\in
\dsv{2-r}\om$:\ir{avpidef}{\av{\pi,\ld}^\pm}
\be\label{avpidef} \av{\pi,\ld}^\pm f \;:=\; \bigl(
\av{T,\ld}^\pm(f|_{2-r}\s)\bigr)|_{2-r}\s^{-1}\,.\ee

It satisfies
\be (\av{\pi,\ld}^\pm f)|_{2-r}(1-\ld^{-1}\pi) \= f\,. \ee
With the transformation $|_{2-r}\s^{-1}$ the asymptotic behavior
of $\av{T,\ld}^\pm (f|_{2-r}\s)$ at $\infty$ leads to an asymptotic
formula of the form
\be \label{ae-avpi}
(\av{\pi,\ld}^\pm f)(t) \= c_{-1}\, (t-\xi)^{-1} + c_0 + \oh(t-\xi)\,,
\ee
where $t\uparrow \xi$ for $\av{\pi,\ld}^+$ and $t\downarrow \xi$ for
$\av{\pi,\ld}^-$. The constants $c_{-1}$ and $c_0$ differ from the
constants at~$\infty$. The definition of $\av{\pi,\;d}^\pm f$ 
depends on the choice of $\s$ such that $\xi=\s\infty$. For cusps
$\ca\in \cu \cap\RR$ we use $\s_\ca$ as in~\S\ref{sect-af}.

After this preparation we consider a cohomology class in
$H^1(\Gm;\dsv{v,2-r}\om)$, represented by the cocycle $\ps\in
Z^1(\Gm;\dsv{v,2-r}\om)$. For $ \ca\in \cu \cap \RR $ we put
\be\label{psurj} p(\ca,t) \;:=\; - \bigl(\av{\pi_\ca,v(\pi_\ca)}^\pm
\ps_{\pi_\ca}\bigr)(t) + (t-\ca)\, \CC\hk{t-\ca} \,, \ee
where we choose $\pm $ so that the average exists.

Let $\dt=\matc abcd\in \Gm$ with $\ca,\dt\ca\in \RR$. Then
$$(\av{\pi_\ca,v(\pi_\ca)}^\pm \ps_{\pi_\ca} ) |_{v,2-r}(\pi_\ca-1) \=
- \ps_{\pi_\ca}.$$
Since $\pi_{\dt\ca} \= \dt \pi_\ca \dt^{-1}$, we have $\ps_{\pi_{\dt\ca}} \=
\ps_{\pi_\ca}|_{v,2-r}\dt^{-1} + \ps_\dt
|_{v,2-r}(\pi_\ca-1)\dt^{-1}$. Therefore,
\begin{align*}
&v(\dt)^{-1}\,(ct +d)^{r-2}\,
\bigl(\av{\pi_{\dt\ca},v(\pi_{\dt\ca})}^\pm
\ps_{\pi_{\dt\ca}}\bigr)(\dt t)
\= \bigl(\av{\pi_{\dt\ca},v(\pi_{\dt\ca})}^\pm
\ps_{\pi_{\dt\ca}}\bigr)|_{v,2-r}\dt\,(t)
\\
&\bigl(\av{\pi_{\dt\ca},v(\pi_{\dt\ca})}^\pm
\ps_{\pi_{\dt\ca}}\bigr)|_{v,2-r}\dt(\pi_\ca-1)
\=\bigl(\av{\pi_{\dt\ca},v(\pi_{\dt\ca})}^\pm
\ps_{\pi_{\dt\ca}}\bigr)|_{v,2-r}(\pi_{\dt\ca}-1)\dt\\
&\= -\ps_{\dt\ca}|_{v,2-r}\dt \= -\ps_{\pi_\ca} -
\ps_\dt|_{v,2-r}(\pi_\ca-1).
\end{align*}
Therefore,
$$\Bigl( \bigl(\av{\pi_{\dt\ca},v(\pi_{\dt\ca})}^\pm
\ps_{\pi_{\dt\ca}}\bigr)|_{v,2-r}\dt -
(\av{\pi_\ca,v(\pi_\ca)}^\pm \ps_{\pi_\ca} ) \Bigr)
|_{v,2-r}(\pi_\ca-1)
\=- \ps_\dt|_{v,2-r}(\pi_\ca-1).$$
From~\eqref{ae-avpi} we know that
$(\av{\pi_\ca,v(\pi_\ca)}^\pm \ps_{\pi_\ca} )\,(t)$ has a one-sided
expansion at $\ca$, and $\bigl(\av{\pi_{\dt\ca},v(\pi_{\dt\ca})}^\pm
\ps_{\pi_{\dt\ca}}\bigr)\,(t)$ at $\dt^{-1}\ca$. The transformation
formula for $|_{v,2-r}\dt$ shows that then
$\bigl(\bigl(\av{\pi_{\dt\ca},v(\pi_{\dt\ca})}^\pm
\ps_{\pi_{\dt\ca}}\bigr)|_{v,2-r}\dt\bigr)\,(t)$ has a similar
expansion at~$\ca$. The function $\ps_\dt$ is holomorphic at
$\ca$.

So the function $f:=
\bigl(\av{\pi_{\dt\ca},v(\pi_{\dt\ca})}^\pm
\ps_{\pi_{\dt\ca}}\bigr)|_{v,2-r}\dt -
(\av{\pi_\ca,v(\pi_\ca)}^\pm \ps_{\pi_\ca} )
+\ps_\dt$ has a one-sided asymptotic expansion at $\ca$ starting at a
multiple of $(t-\nobreak\ca)^{-1}$ and is invariant under
$|_{v,2-r}\pi_\ca$. Conjugating this to~$\infty$ and applying
Part~ii) of Lemma~\ref{lem-ldpr} we conclude that $f(t)\sim 0$ as $t$
approaches $\ca$ from one direction. (We use that
$r\not\in \ZZ_{\geq 1}$.)

So we have the following equality of asymptotic expansions
\[ \bigl(\av{\pi_{\dt\ca},v(\pi_{\dt\ca})}^\pm
\ps_{\pi_{\dt\ca}}\bigr)|_{v,2-r}\dt\; (t) -
(\av{\pi_\ca,v(\pi_\ca)}^\pm \ps_{\pi_\ca} ) (t) \sim
-\ps_\dt(t)
 \] 
as $t$ approaches $\ca$ through $\RR$ from
the left or the right depending on
$\pm$. We conclude that 
\be (p|_{v,2-r}\dt)(\ca,t) - p(\ca,t) \= \ps_\dt(t) 
+ (t-\ca)\, \CC[[t-\ca]]\,.
 \ee 
 So $p|_{v,2-r}(\dt-1) = \ps_\dt$ in
$\R$, and $\qC( p) =[\ps]$.
\end{proof}

\rmrke For weight $r=1$, Part~i) of Proposition~\ref{prop-osa1}
implies (after conjugating $\infty$ to $\ca\in \cu\cap\RR$) that
$\av{\pi_\ca,v(\pi_\ca)} \ps_{\pi_\ca}$ is defined if $
\ps_{\pi_\ca}(\ca)=0$. Since there are finitely many $\Gm$-orbits of
cusps, the construction in proof of surjectivity of $\qC$ goes
through for a subspace of $H^1(\Gm;\dsv{v,1}\om)$ of finite
codimension.

\begin{prop}\label{prop-Q}Let $v$ be a multiplier system on~$\Gm$ for
the weight $r\in \CC\setminus\ZZ_{\geq 1}$. There is an injective
linear map \il{Q}{$Q$}$Q:A_r(\Gm,v) \rightarrow \qA_{2-r}(\Gm,v)$
such that the following diagram commutes:
\[\xymatrix{ A_r(\Gm,v) \ar@{^{(}->}[rr]^{\coh r \om }
\ar@{^{(}->}[dr]^Q
 &
& H^1(\Gm;\dsv{v,2-r}\om)\\
& \qA_{2-r}(\Gm,v) \ar[ru]_\qC^{\cong} }\]
\end{prop}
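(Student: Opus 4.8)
The statement asserts the existence of an injective linear map $Q:A_r(\Gm,v)\to\qA_{2-r}(\Gm,v)$ factoring the injection $\coh r\om$ through the isomorphism $\qC$ of Proposition~\ref{prop-qA-coh}. The natural strategy is simply to \emph{define} $Q:=\qC^{-1}\circ\coh r\om$ and then check that this abstract definition is realised by an explicit system of expansions, so that the resulting object genuinely deserves to be called a quantum automorphic form, and is not merely a formal preimage. First I would note that Proposition~\ref{prop-qA-coh}\,b) gives, for $r\in\CC\setminus\ZZ_{\geq1}$, that $\qC:\qA_{2-r}(\Gm,v)\to H^1(\Gm;\dsv{v,2-r}\om)$ is a bijective linear map; hence $\qC^{-1}$ is a well-defined linear map, and $Q:=\qC^{-1}\coh r\om$ is linear with $\qC\circ Q=\coh r\om$, which is the commutativity asserted in the diagram. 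Injectivity of $Q$ is then immediate: $\coh r\om$ is injective by Theorem~\ref{THMac}\,i) (or Proposition~\ref{prop-cohrom}), and $\qC$ is injective, so $Q=\qC^{-1}\coh r\om$ is injective.

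\textbf{Explicit description of $Q$.} What remains, and what makes this more than a triviality, is to exhibit $QF$ concretely for $F\in A_r(\Gm,v)$ — this is the content that connects the abstract map to the ``quantum modular form'' examples of \S\ref{sect-qmf}. Following the example of powers of the Dedekind eta-function there, I would fix a base point $z_0\in\uhp$ and, for each cusp $\ca\in\cu\cap\RR$, consider the function $h_\ca^{z_0}(t):=\int_{z_0}^{\ca}\om_r(F;t,z)$, together with its expansion $p(\ca,t)$ at $t=\ca$ inside $\R_{v,2-r}$, normalised to start at order $-1$. The point is to verify that $p$ represents a $\Gm$-invariant class in $\Q_{v,2-r}^\Gm$, i.e.\ that $p|_{v,2-r}(\dt-1)=\ps^{z_0}_{F,\dt}$ modulo $(t-\ca)\CC\hk{t-\ca}$ for all $\dt\in\Gm$ with $\ca,\dt\ca\in\RR$. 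This goes exactly as in~\eqref{tp}: Lemma~\ref{lem-omprop}\,iii) gives $v(\dt)^{-1}(ct+d)^{r-2}h_{\dt\ca}^{z_0}(\dt t)-h_\ca^{z_0}(t)=\ps^{z_0}_{F,\dt}(t)$, and since $\ps^{z_0}_{F,\dt}\in\dsv{v,2-r}\om$ is holomorphic at $\ca$, taking the class in $\R$ yields $p|_{v,2-r}(\dt-1)=\ps^{z_0}_{F,\dt}$ in $\R_{v,2-r}$. Thus $p\in\Q_{v,2-r}^\Gm$, its image in $\qA_{2-r}(\Gm,v)$ is the class $[p]$, and $\qC([p])$ is by construction the class of $\gm\mapsto\ps^{z_0}_{F,\gm}$, which is $\coh r\om F$. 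Hence $Q F=[p]$, and independence of the choice of $z_0$ follows because changing $z_0$ alters $\ps^{z_0}_F$ by a coboundary (Proposition~\ref{prop-cohrom}\,ii)) and correspondingly alters $p$ by an element of $\R_{v,2-r}^\Gm$, so $[p]\in\Q_{v,2-r}^\Gm/\R_{v,2-r}^\Gm$ is unchanged.

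\textbf{The one integrability point to check.} The one genuine thing to verify is that $h_\ca^{z_0}(t)$ really has an expansion of the required shape at $t=\ca$, i.e.\ that it defines an element of $\R_{v,2-r}$ — a priori the integral $\int_{z_0}^{\ca}(z-t)^{r-2}F(z)\,dz$ need not converge at the cusp, since $F$ is only an unrestricted automorphic form. The remedy, already used throughout \S\ref{sect-constr-peq} and in Lemma~\ref{lem-inf}, is to split $\int_{z_0}^{\ca}$ as $\int_{z_0}^{z_1}+\int_{z_1}^{\ca}$ for a suitable $z_1$ and to observe that only the behaviour of the tail near $\ca$ matters for the expansion; the Fourier expansion of $F$ at $\ca$ decomposes into cuspidal, exponentially growing, and zeroth-order parts, and each part contributes, by the constructions in Lemmas~\ref{lem-peq-cusp}, \ref{lem-expgr}, \ref{lem-peq-cst} (valid here because $r\notin\ZZ_{\geq1}$), an asymptotic series at $\ca$ starting at worst at order $-1$ (the order $-1$ term coming precisely from the constant Fourier term, as in the constant-function example~\eqref{hca}--\eqref{c1}). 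This is why the definition of $\R$ allows series from order $-1$. I expect this last verification — keeping track of the starting order of the expansions uniformly over cusps, and confirming that finitely many cusps may have to be excluded exactly when the relevant one-sided average fails — to be the main technical obstacle, but it is entirely parallel to arguments already carried out in \S\ref{sect-constr-peq} and in the proof of surjectivity of $\qC$, so no new ideas are needed. With $QF:=[p]$ established, linearity of $Q$ is clear from linearity of $F\mapsto\ps^{z_0}_F$ and of $h_\ca^{z_0}$ in $F$, and injectivity follows as above; the diagram commutes by construction.
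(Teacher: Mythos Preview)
Your first paragraph is a complete and correct proof, and it is precisely the paper's argument: define $Q:=\qC^{-1}\circ\coh r\om$, use Proposition~\ref{prop-qA-coh}\,b) for the bijectivity of $\qC$, and Theorem~\ref{THMac} for the injectivity of $\coh r\om$. The paper's proof is in fact just these two sentences.

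The remaining two paragraphs go beyond what the statement requires and beyond what the paper proves here. Your attempt at an explicit description via $h_\ca^{z_0}(t)=\int_{z_0}^{\ca}\om_r(F;t,z)$ has a genuine convergence problem that you note but do not resolve: for general $F\in A_r(\Gm,v)$ this integral diverges, and ``splitting and keeping only the tail'' does not by itself produce a well-defined element of $\R_{v,2-r}$, since the divergent tail is exactly the part whose expansion at $\ca$ you need. The paper handles the explicit realisation separately, after the proof, and does so differently: it uses the one-sided averages $-\av{\pi_\ca,v(\pi_\ca)}^\pm\ps^{z_0}_{F,\pi_\ca}$ (cf.\ the construction in~\eqref{psurj}) rather than the direct Eichler integral to~$\ca$. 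For cusp forms the two agree (as shown there for $\eta^{2r}$ with $\re r>0$ via Lemma~\ref{lem-osa-pe}), but for forms with exponential growth only the average construction makes sense. So your explicit description works for cusp forms but not in general; fortunately, none of that is needed for the proposition as stated.
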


\begin{proof}Theorem~\ref{THMac} implies that $\coh r \om$ is
injective. Since $\qC$ is bijective by Proposition~\ref{prop-qA-coh}
the map $\qC$ is invertible, so $Q=\qC^{-1}\circ \coh r \om$.
\end{proof}

\rmrke This result shows that for $r \in \CC \smallsetminus \in
\ZZ_{\geq 1}$ each class in $H^1(\Gm;\dsv{v,2-r}\om)$ is the
image of an object with automorphic flavor.

\rmrk{ Back to the examples} We now discuss the examples of
quantum modular forms in \ref{sect-qmf} after 
Definition~\ref{defn-qaf}.

For the \il{etapow-q}{powers of the Dedekind eta-function}\emph{powers
of the Dedekind eta-function} with $\re r>0$ we gave in \eqref{hcad}
\be p(\ca)=h_\ca(\ca)\qquad\text{with }h_\ca(t) = \int_{z_0}^\ca
\eta^{2r}(z)\, (z-t)^{r-2}\, dz\,.\ee
On the other hand, if $r\in \CC\setminus \ZZ_{\geq 1}$, then
$Q(\eta^{2r}) = \qC^{-1} \bigl( \coh r \om(\eta^{2r})
\bigr)$ in Proposition~\ref{prop-Q} can be given by
\be q(\ca,t) \= - \bigl(\av{\pi_\ca,v_r(\pi_\ca)}^\pm
\ps^{z_0}_{\eta^{2r},\pi_\ca}\bigr)(t)+(t-\ca)\, \CC[[t-\ca]]\,, \ee
according to the construction in~\eqref{psurj}, where $\pm$ has to be
chosen so that the one-sided average exists.

First take $r\in (0,\infty) \setminus \ZZ_{\geq 1}$. Then $h_\ca \in
\dsv{v_r,2-r}{\om,\infty,\exc}$ satisfies
$h_\ca|_{v,2-r}(\pi_\ca-\nobreak 1)= \ps_{\eta^{2r},\pi_\ca}^{z_0}$
by Lemma~\ref{lem-inf}. By Part~ii) of Lemma~\ref{lem-osa-pe}
(also conjugated to~$\ca$) we have $h_\ca=
\av{\pi_\ca,v_r(\pi_\ca)}^\pm\ps^{z_0}_{\eta^{2r},\pi_\ca}$ for both
choices of~$\pm$. So $p(\ca)
\equiv q(\ca,t)\bmod (t-\nobreak \ca)\, \CC[[t-\nobreak\ca]]$ in this
case.

For $\re r>0$, $r\in \CC\setminus \RR$, we consider only the case that
$\im r>0$; the other case goes similarly. We note that $v_r(\pi_\ca)
= v_r(T) \= e^{\pi i r/6}$ for all cusps~$\ca$. We use
$\av{\pi_\ca,v_r(\pi_\ca)}^-$, and the asymptotic of
$(\av{\pi_\ca,v_r(\pi_\ca)}^- \ps_{\eta^{2r},\pi_\ca}^{z_0})(t)$ as
$t\downarrow \ca$. Since $h_\ca$ and $-\av{\pi_\ca,v_r(\pi_\ca)}^-
\ps_{\eta^{2r},\pi_\ca}^{z_0}$ satisfy the same equation, we have
$h_\ca=-\av{\pi_\ca,v_r(\pi_\ca)}^- \ps_{\eta^{2r},\pi_\ca}^{z_0}+P$
with a$v_r(\pi_\ca)$-periodic function $P$. The asymptotic behavior
as $t\downarrow \ca$ shows that $P(t)$ has to be $\oh(t-\nobreak
\ca)$ as $t\downarrow\ca$. So $h_\ca$ and
$-\av{\pi_\ca,v_r(\pi_\ca)}^- \ps_{\eta^{2r},\pi_\ca}^{z_0}$
determine the same element of~$\R_{v_r,2-r}$. \smallskip

For the \emph{constant function} $\eta^0=1$ we used
$h_\ca(t)=(t-\nobreak \ca)^{-1}$. It leads in~\eqref{c1} to a cocycle
with values in $\dsv{1,2}{\fsn,\wdg}$, not in $\dsv{1,2}\om$. So it
does not represent $Q(1)\in \qA_2\bigl(\Gmod,1\bigr)$.

For an explicit computation, we write $\ca\in \QQ$ in the form $\s_\ca
\infty$, with $\s_\ca=\matc abcd \in \Gmod$, and hence $\ca=\frac
ac$. Then
\[ \pi_\ca^n \= \matc{1-nac}{na^2}{-nc^2}{1+nac}\,.\]
So we have, with $\pi=\pi_\ca=\s_\ca T \s_\ca^{-1}$:
\begin{align*}
\av{\pi,1}^+ &\ps^{z_0}_{1,\pi}\, (t)
\= \sum_{n\geq 0} \ps^{z_0}_{1,\pi}\bigm|_{1,2} \pi^n\, (t) \=
\sum_{n\geq 0} \int_{\pi^{-1}z_0}^{z_0}
\om_0(1;\cdot,z)|_{1,2}\pi^n\,(t)
\displaybreak[0]\\
&\= \sum_{n\geq 0}
\int_{\pi^{-n-1}z_0}^{\pi^{-n}{z_0}}\frac{dz}{(z-t)^2}\,.
\end{align*}
We have $\lim_{n\rightarrow \infty} \pi_\ca^{-n}z_0 = a/c=\ca$. Hence
\[ -\av{\pi,1}^+ \ps^{z_0}_{1,\pi}\, (t) \= -
\int_{\ca}^{z_0}\frac{dz}{(z-t)^2} \= \frac1{t-\ca}
+\frac1{z_0-t}\,. \]
This modification of the function $h_\ca$ in~\eqref{hca} leads to the
element of $\R$ given by
\[ p(\ca,t) \= \frac1{t-\ca}+(z_0-\ca)^{-1} +(t-\ca)\,
\CC\hk{t-\ca}\,,\]
which satisfies $p|_{1,2}(\dt-1)(\ca,t) \equiv \ps_{1,\dt}^{z_0}(t)
\bmod
(t-\ca)\, \CC\hk{t-\nobreak\ca}$.

\rmrk{ Dependence on the parameters} The family of modular forms
$r\mapsto \eta^{2r}$ depends holomorphically on $r$. This suggest to
look for quantum modular forms given by
\[ p(\ca,t) = \frac{\al(\ca,r)}{t-\ca} + \bt(\ca,r) + (t-\ca)\,
\CC[[t-\ca]]\,,\]
where $r\mapsto \al(\ca,r) $ and $r\mapsto \bt(\ca,r)$ are at least
continuous on $[0,\infty)$. This is impossible (proof left to the
reader). It is a phenomenon similar to the asymptotic expansion
in~\eqref{H-as}, where the coefficient in the leading term is
discontinuous in~$\ld$.

\subsection{Related work}\label{sect-lit14}The concept of quantum
automorphic is due to Zagier. His paper~\cite{Za10} gives beautiful
explicit examples of quantum modular forms. Zagier mentioned the
concept long before the appearance of~\cite{Za10}. The
paper~\cite{Br7} was written during the preparation of~\cite{BLZm},
to fill a hole in a diagram analogous to~\eqref{qaf-diag}.

%%%%%%%%%%%%%%%%%%%%%%%%%%%%%%%%%%%%%%%%%%%%%%%%%%%%%%%%%%%%%%%%%%%%%

\section{Remarks on the literature}\label{sect-lit}
Like we mentioned in~\S\ref{sect-lit2},  an indication  of what we now
call the Eichler integral is present in a paper of Poincar\'e in
1905, \cite{Poinc}. Eichler's definition in~\cite{Ei57} is based on
Bol's equality $\partial_\tau^{r-1} \bigl( F|_{2-r}\gm\bigr) =
F^{(r-1)}|_r \gm$, which appears in~\cite[\S8]{Bol}. In~\cite{Co56}
Cohn indicated this approach for weight~$4$. The paper \cite{Sh59} of
Shimura has a different atmosphere; it stresses cohomology with
values in a $\ZZ$-module. In the following years Gunning, Knopp,
Lehner and others studied the relation between automorphic forms and
cohomology: \cite{Gu59, Gu61, Ei65, CK65,Leh69, HK92, Leh71a, Raz77,
Raz77a, GR76}. Kra \cite{Kra69a, Kra69b} started the study of
cohomology of kleinian groups. Here the cohomology group is not
generated by Eichler integrals. We have not included in the list of
references all papers on the cohomology of kleinian groups.

Manin \cite{Ma73} discussed arithmetical questions. For a cuspidal
Hecke eigenform for $\SL_2(\ZZ)$ of even weight the ratio between the
even periods are in the field generated by the Fourier coefficients
of the cusp form; for the ratios of the odd periods the same holds.
The cocycles are present in the background, for instance in the
period relations. So apart from the Fourier coefficients there are
two, possibly transcendental, numbers involved in the coefficients of
the period polynomials. The arithmetic of the period polynomials,
associated with values of $L$-functions at integral points in the
critical strip, are an important area of study in connection with the
cocycles attached to automorphic forms. It goes further than the
central idea in these notes, which is establishing the relation
between automorphic forms and cohomology. Therefore we have not tried
to include all papers in this area in the list of references. We
mention the concept ``modular symbol''; see \cite{Ma73, Sho80}. We
mention also Haberland's paper~\cite{Ha83}, and \cite{KM87, Fu97,
Fu98, Za90}. In \cite{Za03} Zagier describes rather explicitly how to
reconstruct a cuspidal Hecke eigenform from its period polynomial.

The step from weights in~$\ZZ_{\geq 2}$ to general real weights was
done by Knopp in his paper \cite{Kn74}. For general real weights one
needs a multiplier system, which Knopp assumes to be unitary. This
definition leads to a map from cusps forms to cohomology classes with
values in the highest weight module $\dsv{v,2-r}{-\infty}$,
characterized by the condition of polynomial growth. Knopp's cocycle
integral also occurs in the paper \cite{Ni74} of Niebur. In the
proofs in \cite{Kn74} Knopp uses the construction of ``supplementary
series'', from~\cite{Kn62}. It is nice to see that with hindsight we
can view the resulting functions as mock automorphic forms.  See
for instance Pribitkin \cite{Pr99}, \cite{Pr0}. 

The isomorphism between the space of cusp forms and the cohomology
group was completed for all weights in 2010 by Knopp and Mawi,
\cite{KM10}. In \cite{CLR14} the isomorphism of Knopp is
combined with multiplication by non-zero cusp forms.

Knopp, \cite{Kn78}, started the study of rational period functions and
gave examples. He showed in~\cite{Kn81} that the singularities can
occur only in the rational points $0$, $\infty$, and in points in
real-quadratic fields
(which are hyperbolic fixed points of~$\Gmod$), and Choie\cite{Ch89}
showed the existence of rational period functions with singularities
in any real quadratic irrationals. Several authors expanded the
theory, \cite{MR84, KZ84, Ash89, Ch89, CP90, CP91, Ch92, CZ93, HK92,
Pa93, Ch94, Schm96, Fu97, DIT10}. We expect that the approach in
Sections \ref{sect-isos-pc} and~\ref{sect-coc-sing} can be applied to
cohomology with values in the module of rational functions.

In \cite{KnMa3} Knopp and Mason start the study of ``generalized
modular forms'', which are vector-valued automorphic forms with at
most exponential growth at the cusps for the modular group
$\SL_2(\ZZ)$ with real weight and matrix-valued multiplier systems
that need not be unitary. The papers \cite{KLR9, KR10, Ra11, Ra13}
deal with the cohomology classes associated to these automorphic
forms.

The $\Gm$-behavior of automorphic forms can be formulated as the
vanishing of $F|_{v,r}(\gm-\nobreak 1)$ for all $\gm\in \Gm$. This
has been generalized to the condition that
\[ F|_{v,r}(\gm_1-1)\,(\gm_2-1)\cdots (\gm_q-1) \=0\quad\text{ for all
}\gm_1,\ldots,\gm_q\in \Gm\,,\]
leading to ``higher order automorphic forms'', for which Deitmar,
\cite{Dei9, Dei12}, has studied cohomological questions. See also
Diamantis and O'Sullivan \cite{DO8}, Sim \cite{Si9}. Cohomological
techniques have also been used in the context of higher-order forms
by Taylor~\cite{Ta12}. See further~\cite{BrDi}.

In \cite{BGKO} Bringmann, Guerzhoy, Kane and Ono consider period
polynomials for $r$-harmonic modular forms with negative even
weights. Bringmann, Diamantis and Raum \cite{BDR13} extended the
construction to account for non-critical values of
$L$-functions.\medskip

The condition of holomorphy can be completely removed from the
definition of automorphic forms, and replaced by a second order
differential equation. Formulated in terms of functions on the
 universal covering group~$\tG$ this is the eigenvalue equation for
the Casimir operator. This leads to the so-called ``Maass forms'' and
their generalizations. For Maass forms of weight~$0$ the relation
between automorphic forms and cohomology has been studied by many
authors. Lewis, \cite{Le97}, gave a bijection between even Maass cusp
forms and spaces of holomorphic functions on
$\CC\setminus(-\infty,0]$ that satisfy a functional equation similar
to the equation satisfied by period function for the modular group
$\PSL_2(\ZZ)$. In the papers \cite{LZ97} and especially \cite{LZ1}
this is further discussed for the modular group. M\"uhlenbruch,
\cite{Mue3} extended this to real weights. See also~\cite{MuRa}.
Martin \cite{Mar6} uses similar methods in the context of
holomorphic modular forms of weight~$1$. A relation between
the period functions of Lewis and the hyperfunctions associated to
Maass forms was explored in~\cite{Br97}, the ideas in which were
expanded in~\cite{DH5, DH7, Dei11}.

Another, rather unexpected, relation is with eigenfunctions of the
transfer operator introduced by Mayer, \cite{May91}, in connection
with the Selberg zeta-function. Transfer operators are a concept from
mathematical physics, applied by Mayer to the geodesic flow on the
quotient $\PSL_2(\ZZ)\backslash \uhp$. The eigenfunctions of the
transfer operator with eigenvalue~$1$ are, after a simple
transformation, identical with Lewis's period functions. So the
eigenfunctions of the transfer operator are related to cohomology
classes. See \cite{LZ97}, \cite[Chap.~IV, \S3]{LZ1}, and \cite{Za2}
for a further discussion. In \cite{BrMu9} this relation with
cohomology is used to relate eigenfunctions of two transfer
operators. See also \cite{MP11, Po12, Po12a, Po13}. As far as we see,
the use of a transfer operator is less suitable in the present
context, since the space $\dsv{2-r}\om$ is not the space of global
sections of a sheaf on~$\proj\RR$.\smallskip

The aim of the paper~\cite{BLZm} is to explore the relation between
Maass forms of weight zero and cohomology more completely, for all
cofinite discrete group. For cocompact discrete groups rather
complete results were available, even in the context of automorphic
forms on more general symmetric spaces, in the work of Bunke and
Olbrich, \cite{BO95, BO98}. For groups with cusps a reasonably
complete description was obtained with use of three ideas: (1) use of
mixed parabolic cohomology groups; (2) work with boundary germs as
coefficient module; (3) description of the mixed parabolic cohomology
groups with resolutions based on a suitable tesselation of the upper
half-plane. In the present notes we tried to apply these ideas in the
context of holomorphic automorphic forms.

%%%%%%%%%%%%%% appendix %%%%%%%%%%%%%%%%%%%%%%%%%%%%%%%%%%%
\appendix

\section{Universal covering group and representations} The discussion
in this appendix is not really essential for these notes, but several
 definitions and arguments become more natural if we relate them to
 the universal covering group of $\SL_2(\RR)$.

\subsection{Universal covering group} The \il{ucg}{universal covering
group}\emph{universal covering group} \il{tG}{$\tG$}$\tG$ of
$\SL_2(\RR)$ is a simply connected Lie group that is locally
isomorphic to the Lie group~$\SL_2(\RR)$.

We can describe $\tG$ with help of the \il{Iwdc}{Iwasawa
decomposition}\emph{Iwasawa decomposition} of $\SL_2(\RR)$, which
writes each $g\in \SL_2(\RR)$ uniquely as
\[ g \= \matc{\sqrt y}{\frac{x}{\sqrt y}}0{\frac1{\sqrt y}} \,
\matr{\cos\th}{\sin\th}{-\sin\th}{\cos\th}\,,\]
with $z=x+iy\in \uhp$ and $\th\in \RR\bmod2\pi\ZZ$. As an analytic
variety, $\SL_2(\RR)$ is isomorphic to $\uhp \times(\RR/2\pi\ZZ)$. A
simply connected analytic variety that covers $\uhp
\times(\RR/2\pi\ZZ)$ is $\uhp\times \RR$, with the natural map
$\RR\rightarrow \RR/2\pi\ZZ$. We denote its points as
\il{Ic}{$(z,\th)$}$(z,\th)$, with $z\in \uhp$, $\th\in \RR$. It is
possible to define a group structure on $\uhp\times\RR$ such that the
projection map $\uhp\times\RR\rightarrow\uhp\times(\RR/2\pi\ZZ)$ is
an real-analytic group homomorphism. The resulting group with
underlying space $\uhp\times \RR$ is the universal covering group
$\tG$ of $\SL_2(\RR)$, with projection homomorphism\ir{pr}{\pr}
\be\label{pr}
\pr : \tG \rightarrow\SL_2(\RR)\,.\ee

Here we do not describe the group structure of~$\tG$ explicitly. (See,
{\sf eg.,} \cite[\S2.2.1]{Br94}.)
We mention that there is a group homomorphism
\il{tldk}{$\tilde k(\th)$}$\tilde k:\RR\rightarrow \tG$, given by
$\tilde k(\th) = (i,\th)$. It covers the isomorphism
$\RR/2\pi\ZZ\rightarrow \SO(2)$ given by
$\th\mapsto\matr{\cos\th}{\sin\th}{-\sin\th}{\cos\th}$. We note that
$\bigl\{ \tilde k(2\pi n)\,:\, n\in \ZZ \bigr\}$ is the kernel of
$\pr :\tG\rightarrow \SL_2(\RR)$, and that
\il{tldZ}{$\tilde Z$}$\tZ:=\bigl\{\tilde k(\pi n)\,:\, n\in
\ZZ\bigr\}$ is the center of~$\tG$.

The most important aspect of the group structure is the lift $g\mapsto
\tilde g$ from $\SL_2(\RR)$ to~$\tG$, given by\ir{tildeg}{\tilde g}
\be\label{tildeg} \widetilde{\matc abcd} \;:=\; \Bigl(
\frac{ai+b}{ci+d}, - \arg(ci+d)
\Bigr)\qquad\text{with }-\pi<\arg(cz+d)\leq \pi\,.\ee
It takes a preimage for the covering map~$\pr$. It satisfies
\be\label{g-lift} \widetilde{\matc abcd}\, (z,\th) \= \Bigl(
\frac{az+b}{cz+d},\th-\arg(cz+d)
 \Bigr)\,. \ee
This map is continuous on the open dense subset $G_0\subset
\SL_2(\RR)$, in~\eqref{G0}. We have
\badl{tldrel} (\tilde g)^{-1} &\= \widetilde{g^{-1}}&&\text{for } g\in
G_0\,,
\\
\widetilde{gpg^{-1}}&\= \tilde g \tilde p (\tilde g)^{-1}&&\text{for
}g\in G_0,\; p=\matc{\sqrt y}{\frac x{\sqrt y}}0{\frac 1{\sqrt
y}},\;x+iy\in \uhp\,. \eadl
All elements of $\tG$ can be, non-uniquely, written as a product
$\tilde g \, k(\pi n)$, with $g\in G_0$, $n\in \ZZ$.

\subsubsection{Weight functions and actions by right and left
translation}\label{app-wf}
\begin{defn}A function $f:\tG\rightarrow \CC$ has
\il{ucwt}{weight}\emph{weight} $r\in \CC$ if $f(z,\th)=f(z,0)\,
e^{ir\th}$.
\end{defn}
A function $f$ on~$\tG$ with weight~$r$ is determined by its values on
$(z,0)$, with $z\in \uhp$. We define a corresponding function $R_r f$
on $\uhp$ by\ir{Rrf}{R_r f}
\be\label{Rrf} (R_r f)(z) \;:=\; y^{-r/2}\, f(z,0)\,,\quad\text{ hence
} f(z,\th) \= y^{r/2}\, (R_r f)(z)\, e^{ir\th}\,. \ee

\rmrk{Left translation}The group $\tG$ has a right action in the space
of functions $\tG\rightarrow \CC$ given by \il{uclt}{left
translation}\emph{left translation}\ir{Lt}{f|g}
\be\label{Lt}
\text{for }g\in \tG: f\mapsto f|g\,,\text{ given by }(f|g)(g_1)\=f
(gg_1)\,. \ee
We also use the notation \il{Lg}{$L_g f= f|g$}$L_g f = f|g$.

The action by left translation preserves the weight. Moreover, we have
\be\label{Rrcmp} \bigl( R_r(f|\tilde g ) \bigr) (z) \= (cz+d)^{-r}\,
(R_r f)(z)\qquad\text{for }g=\matc abcd\in \SL_2(\RR)\,.\ee
Thus, we see that the operators $|_r g$ in \eqref{SL2op} correspond
naturally to the representation of $\tG$ by left translation in the
functions on $\tG$ of weight~$r$. The argument convention for
$\arg(cz+\nobreak d)$ for $z\in \uhp$ in \eqref{ac} is coupled to the
choice of the argument in~\eqref{tildeg}. Then the convention for
$z\in \lhp$ is determined by the wish to have
relation~\eqref{Ci-act}. Since $g\mapsto \tilde g$ is not a group
homomorphism, the operators $|_r g$ do not form a representation of
$\SL_2(\RR)$.

\rmrk{Right translation}There is also the left action of $\tG$ on the
functions on~$\tG$ by \il{ritr}{right translation}\emph{right
translation}:\ir{rt}{R_g}
\be\label{rt} (R_g f)(g_1) \;:=\; f(g_1 g)\,.\ee
It commutes with left translations. It does not preserve the weight.

\subsubsection{Discrete subgroup}\label{app-dgu}For a cofinite
\il{dgu}{discrete subgroup}discrete subgroup $\Gm\subset\PSL_2(\RR)$
we define \ir{tGm}{\tGm}
\be\label{tGm}
\tGm \;:\;= \bigl\{ g\in \tG \;:\; \pr\, g\in \Gm\bigr\}\,.\ee
It is a discrete subgroup of $\tG$. It contains the center $\tZ$.

Any character $\ch: \tGm\rightarrow\CC^\ast$ of $\tGm$ induces a
\il{cchu}{central character}\emph{central character} of $\tZ$ which
is determined by $\ch\bigl( \tilde k(\pi)\bigr)
$, which we can write as $\ch\bigl(\tilde k(\pi)\bigr) = e^{\pi i r}$
with $r\in \CC \bmod2\pi\ZZ$. The map $v_\ch : \Gm\rightarrow
\CC^\ast$ given by\il{vch}{$v_\ch$}
\be\label{vch}
v_\ch \matc abcd \;:=\; \ch\biggl( \widetilde {\matc abcd}
\biggr)\qquad \matc abcd\in \Gm \ee
is a \il{ucmsu}{multiplier system}multiplier system on $\Gm$ for the
weight~$r$. One can check that all multiplier systems on~$\Gm$ arise
in this way.

The representation $|_{v_\ch,r}$ of $\bar\Gm$ on the functions on
$\uhp$, in \eqref{vq-act+}, corresponds to the representation
$\ch^{-1}\otimes L$ of $\tGm$ on the functions of weight~$r$ on
$\tG$. For these functions the generator $\tilde k(\pi)$ of~$\tZ$
acts as multiplication by $\ch\bigl( \tilde k(\pi)\bigr)^{-1}\,
e^{\pi i r} = 1$. So indeed, $\ch^{-1}\otimes L$ is a representation
of $\tGm/\tZ \cong \bar\Gm$.

The invariants of the representation $\ch^{-1}\otimes L$ in the
functions of weight $r$ correspond to the space of all functions on
$\uhp$ with \il{atb}{automorphic transformation
behavior}$(\Gm,v)$-automorphic transformation behavior of weight~$r$.
For automorphic forms one requires also that the functions are
eigenfunctions of a differential operator. These differential
operators can be described with the Lie algebra.
(See~\S\ref{sect-Lie}.)

\rmrk{Modular group} The \il{mguc}{modular group}modular group
$\Gmod=\SL_2(\ZZ)$ is covered by
\il{tGmod}{$\tGmod$}$\tGmod\subset\tG$. The generators $T=\matc 1101$
and $S=\matr0{-1}10$ can be lifted to give generators $t=(i+\nobreak
1,0)$ and $s=(i,-\pi/2)$ of $\tGmod$, with relations generated by
$ts^2=s^2t$ and $ts\,ts\,t=s$.

All characters of $\tGmod$ are of the form $\ch_r:\tGmod\rightarrow
\CC^\ast$ with $r\in \CC/12\pi\ZZ$ given by\ir{uc-chr}{\ch_r}
\be\label{uc-chr} \ch_r(t) \= e^{\pi i/6}\,,\qquad \ch_r(s) \= e^{-\pi
i r/2}\,, \ee
corresponding to the multiplier system \il{ucvr}{$v_r$}$v_r$
in~\eqref{vr-mod}.

\subsubsection{Lie algebra}\label{sect-Lie}The real \il{La}{Lie
algebra}Lie algebra of $\SL_2(\RR)$ is\ir{glie}{\glie_r}
\be\label{glie}
\glie_r \;:=\; \Bigl\{ g \in M_2(\RR)\;:\; \mathrm{Trace}\,g=0
\Bigr\}\,.\ee
A basis is $\WW=\matr01{-1}0$, $\HH=\matr100{-1}$, $\VV=\matc0110$.
For each $\XX\in \glie_r$ the \il{em}{exponential map}exponential
$\exp \XX=\sum_{n\geq 0} \frac1{n!}\, \XX^n$ is an element of
$\SL_2(\RR)$. For small values of $t\in \RR$ we have $\exp(t\XX)\in
G_0$; the lift $t\mapsto (\exp t\XX)^\sim$ extends to a group
homomorphism $\RR\rightarrow\tG$. This leads to differential
operators on~$\tG$:\il{LLie}{$L_\XX$}\il{RLie}{$R_\XX$}
\be (L_\XX f)(g)\;:=\;\frac d{dt} f\bigl( (\exp t\XX)^\sim
g\bigr)\Bigr|_{t=0}\,,\quad
(R_\XX f)(g) \;:=\; \frac d{dt} f\bigl( g(\exp
t\XX)^\sim\bigr)\Bigr|_{t=0}\,. \ee
This can be extended to a linear map $\XX\mapsto L_\XX$ from the
complexified Lie algebra
\il{gliec}{$\glie$}$\glie:=\CC\otimes_\RR \glie_r$ to the first order
\il{rido}{right-invariant differential operators}right-invariant
differential operators on~$\tG$. Similarly we have a linear map
$\XX\mapsto R_\XX$ from $\glie$ to the first order
\il{lido}{left-invariant differential operators}left-invariant
differential operators on~$\tG$. So the operators $R_\XX$ leave
invariant the space of invariants for the representation
$\ch^{-1}\otimes L$ in $C^\infty(\tG)$, and the operators $L_\XX$
leave invariant the space of differentiable functions with a given
weight.

The relation with the \il{Lp}{Lie product}Lie product
\il{[.,.]}{$[\cdot,\cdot]$}$[\XX,\YY]=\XX\YY-\YY\XX$
is
\be R_\XX R_\YY - R_\YY R_\XX \= R_{[\XX,\YY]}\,,\quad L_\XX L_\YY -
L_\YY L_\XX = - L_{[\XX,\YY]}\,.\ee
We also write \il{XXf}{$\XX f = R_\XX f$}$\XX f$ instead of $R_\XX f$.
For the basis $\WW$, \il{EEpm}{$\EE^\pm$}$\EE^+=\HH+i\VV$,
$\EE^-=\HH-i\VV$ of~$\glie$ we have in the coordinates $(z,\th)\in
\tG$:\badl{Wee} \WW &\=
\partial_\th\,,\\
\EE^+ &\= e^{2i\th}
\bigl(2iy\,\partial_x+2y\,\partial_y-i\partial_\th\bigr)\,,\quad
\EE^- \= e^{-2i\th}
\bigl(-2iy\,\partial_x+2y\,\partial_y+i\partial_\th\bigr)\,.
\eadl

The Lie algebra $\glie$ can be embedded in the \il{uea}{universal
enveloping algebra}\emph{universal enveloping algebra}
\il{Uenv}{$\mathcal U$}$\mathcal U$, generated by all products of
elements of $\glie$, with the relations $\XX\YY-\YY\XX=[\XX,\YY]$ for
 all $\XX,\YY\in \glie$. The maps $\XX\mapsto R_\XX$ and $\XX\mapsto
L_\XX$ can be extended to~$\mathcal U$, and describe the
ring of all left-invariant, respectively right-invariant,
differential operators on~$\tG$. The center of $\mathcal U$ is a
polynomial algebra in one variable, for which we can
take\ir{omdef}{\om}
\be \label{omdef} \om\;:=\; - \frac14\EE^-\EE^+ + \frac 14\WW^2+ \frac
i2\WW\= - \frac14\EE^+\EE^- + \frac 14\WW^2- \frac i2\WW\,.\ee
It gives rise to the following \il{bido}{bi-invariant differential
operator}bi-invariant differential operator
on~$\tG$:\ir{Cas}{L_\om=R_\om}
\be\label{Cas}L_\om\= R_\om \= e^{-2i\th}
\bigl(-2iy\,\partial_x+2y\,\partial_y+i\partial_\th\bigr)\,,
\ee
called the \il{Casop}{Casimir operator}\emph{Casimir operator}.

\subsubsection{Automorphic forms on~$\tG$}\label{app-do}One may define
an \il{ucaftG}{automorphic form}automorphic form on $\tG$ with
character~$\ch$ as a function $f:\tG\rightarrow \CC$ with
transformation behavior $f(\gm g)=\ch(\gm)\, f(g)$ for all $g\in
\tG$, $\gm\in \tGm$, that is an eigenfunction of $R_\om$
and~$R_\WW$. With this definition, an automorphic form has a
\il{uc1wt}{weight}weight~$r\in \CC$, determined by $R_\WW f = irf$,
 and an \il{ei}{eigenvalue of automorphic form}eigenvalue $\ld\in
\CC$, determined by $R_\om f=\ld f$.

There are several interesting sets of values for $(\ld,r)$. If one
wants to do spectral theory, it is convenient to take $r\in \RR$.
Then square integrability of the automorphic forms restricts $\ld$ to
a subset of~$\RR$ containing the interval $(1/4,\infty)$.

The automorphic forms considered in \cite{BLZm} correspond to $r=0$
and $\ld=s(1-\nobreak s)$ with $0<\re s<1$.

The differential operator $\Dt_r$  in~\eqref{Dtr} 
corresponds under $R_r$ in~\eqref{Lt} to $R_\om - \frac
r2(1-\nobreak\frac r2)$. If $f$ has weight~$r$, then $\EE^-
f$ has weight $r-2$. With \eqref{Rrf} we have
\be R_{r-2} (\EE^- f) \= -4i y^2 \,\partial_{\bar z} R_r f \;\Bigl( \=
\overline{2 y^{r-2}\,\shad _r F}\,\Bigr)
\,.\ee
So the condition of holomorphy corresponds to being in the kernel
of~$\EE^-$. Then \eqref{Cas} implies that $\ld=\frac
r2\bigl(1-\nobreak \frac r2\bigr)$. For the same eigenvalue there are
more eigenfunctions of the Casimir operator than there are in the
kernel of $\EE^-$. They correspond to the larger space of
$r$-harmonic automorphic forms.

\subsubsection{Polar functions}\label{app-pol-fcts} The polar
$r$-harmonic functions \il{Prmucu}{$\P{r,\mu}$}$\P{r,\mu}$,
\il{Mrmucu}{$\M{r,\mu}$}$\M{r,\mu}$, and
\il{Hrmucu}{$\H{r,\mu}$}$\H{r,\mu}$ in \S\ref{sect-pol-exp} are
specializations of functions in~\cite[\S4.2]{Br94}. Fourier terms
$F(\mu,\cdot)$
transforming according to $F(\mu,\cdot)|_r
\matr {\cos\th}{\sin\th}{-\sin\th}{\cos\th} = e^{i(r+2\mu)\th}\,
F(\mu,\cdot)$ for small values of~$\th$ are of the form $R_r
f(\mu,\cdot)$, as in~\eqref{Rrf}, where $f(\mu,\cdot):\tG\rightarrow
\CC$ satisfies
\be f\bigl(\mu,\tilde k(\eta) g \tilde k(\ps) \bigr)\=
e^{ir(\eta+\ps)+2i\mu\eta}\, f(\mu,g)\,,\quad R_\om f(\mu,g) \= \frac
r2\,\Bigl(1-\frac r2\Bigr)\,f(\mu,g)\,. \ee

Such a function can be written as
\be \tilde k(\eta)\, (it,0)\, \tilde k(\ps)
\mapsto e^{2i\mu\eta+ir(\eta+\ps)}\, \bigl(\frac
u{u+1}\Bigr)^{\mu/2}\,(u+1)^{-r/2}\, h_\mu\Bigl(\frac
1{u+1}\Bigr)\,,\ee
with $t\geq1$, $u=\bigl( (t^{1/2}+\nobreak t^{-1/2})/2\bigr)^2$, where
$h_\mu$ satisfies the differential equation in~~\cite[\S4.2.6]{Br94}.
\begin{table}[ht]
\[ \renewcommand\arraystretch{1.2}
\begin{array}{|rcl|rcl|}\hline
\text{\cite{Br94}}&&\text{here}& \text{\cite{Br94}}&&\text{here}\\
\hline
n&=&r+2\mu& l&=&r\\
u&=& \frac{(t+1)^2}{4t}& s&=& \frac{r-1}2 \\
p&=&\frac12|\mu| & \e &=&\sign \mu\\ \hline
u&=&\frac{|z-i|^2}{4y} & e^{2i \eta} &=&
\frac{z-i}{z+i}\,\frac{|z+i|}{|z-i|}\\
e^{2i(\eta+\ps)}&=& \frac{2i}{z+i}\, \frac{|z+i|}2&&&
\\ \hline
\end{array}
\]
\caption{Relations for the computation
in~\S\ref{app-pol-fcts}}\label{tab-rels}
\end{table}
In Table~\ref{tab-rels} we summarize the relation between the
variables in~\cite{Br94} and here. The solutions in \cite[4.2.6 and
4.2.9]{Br94} give:
\badl{muom} \mu\bigl(n,s;(it,0)\bigr) &\= \Bigl( \frac u{u+1}
\Bigr)^{\mu/2}\,
(u+1)^{-r/2}\,,\\
\mu\bigl(n,-s;(it,0)\bigr) &\=\Bigl( \frac u{u+1} \Bigr)^{\mu/2}\,
(u+1)^{r/2-1}\, \hypg21\Bigl( 1+\mu,1-r;2-r;\frac1{u+1}\Bigr)\,,\\
\text{if }\mu\leq 0\,:\\
\om\bigl( n,s;(it,0)\bigr)&\= \Bigl(\frac u{u+1} \Bigr)^{-\mu/2}\,
(u+1)^{-r/2}\,\hypg21\Bigl( |\mu|, r; 1+|\mu|;\frac u{u+1}\Bigr)\,.
\eadl
We write $(z,0)=\tilde k(\eta)\,
(it,0)\,\tilde k(\ps)$, and have to multiply with $y^{-r/2}\,
e^{ir(\eta+\ps)+2i\eta\mu}$ to get the corresponding function
$F(\mu,\cdot)$. Table~\ref{tab-rels} shows also that the functions
 in~\eqref{muom} correspond to $\P{r,\mu}$, $\M{r,\mu}$, and
$\H{r,\mu}$, respectively. This requires some computations and, for
$\M{r,\mu}$ with $\mu\leq 0$, use of a Kummer relation
(Relation (2),~\cite[\S2.9]{EMOT}).

\subsubsection{Resolvent kernel}\label{app-rk}Let $m_r$ denote the
function on~$\tG$ such that $R_r m_r = \M{r,0}$. So $m_r\bigl( \tilde
k(\eta) g \tilde k(\ps)\bigr)= e^{ir(\eta+\psi)}\, m_r(g)$. The
kernel function~\il{Qruc}{$Q_r(\cdot,\cdot)$}$Q_r$ in~\eqref{Qr-def}
corresponds to the function $q_r(g_1,g_2) := m_r(g_1^{-1}g_2)$, which
satisfies $q_r\bigl(g_1\tilde k(\th_1),g_2\tilde k(\th_2) \bigr)=
e^{ir(\th_2,\th_1)}\, q_r(g_1,g_2)$. So it has weight $-r$ in~$g_1$
and weight $r$ in~$g_2$, and we should have $Q_r(z_1,z_2) = y_1^{r/2}
y_2^{-r/2} \, q_r\bigl(
z_1,0),(z_2,0)\bigr)$, which is indeed the case:
\begin{align*}
y_1^{r/2} y_2^{-r/2} \,& q_r\bigl( z_1,0),(z_2,0)\bigr)
\= (y_1/y_2)^{r/2}\,m_r \biggl(\;
\matc{y_1^{-1/2}}{-x_1y_1^{-1/2}}0{y_1^{1/2}}^\sim\,(z_2,0)\biggr)
\\
&\= (y_1/y_2)^{r/2}\, m_r\bigl( (z_2-x_1)/y_1,0\bigr)
\\
&\= (y_1/y_2)^{r/2}(y_2/y_1)^{r/2} \, M_r \bigl(z_2-x_1)/y_1 \bigr) \=
M_r \bigl(z_2-x_1)/y_1 \bigr)\,.
\end{align*}
Since $q_r(gg_1,gg_2)=q_r(g_1,g_2)$ for all $g\in \tG$, this
immediately implies the invariance relation~\eqref{Qinv}.

For the differential equations we use that in weight $r$ the Casimir
operator corresponds to $\Dt_r + \frac r2\bigl(1-\nobreak\frac
r2\bigr)$. Since $\om$ is left-invariant, we have
\[R_\om q_r(g_1,\cdot)
\= \frac r2\bigl(1-\frac r2\bigr)\, q_r(g_1,\cdot)\,.\]
This corresponds to~\eqref{qh2}.

The Casimir operator commutes with $g\mapsto g^{-1}$ and with right
translations, so $\om q_r(\cdot,g_2) = \frac r2\bigl(1-\frac
r2\bigr)\, q_r(\cdot,g_2)$.
Since $Q_r$ has weight $-r$ in the first variable, we have
\[ \Bigl( \Dt_{-r} +\frac{-r}2\,\bigl(1+\frac r2\bigr)\Bigr)
Q_r(\cdot,z_2) \= \frac r2\,\bigl(1-\frac r2\bigr)\,
Q_r(\cdot,g_2)\,,\]
which is~\eqref{nhe}.

\subsection{Principal series}\label{app-ps}
\rmrk{Induced representation}\il{indrepr}{induced representation}The
set \il{tP}{$\tP$}$\tP:= \bigl\{
(z,m\pi)\in \tG\;:\; z\in \uhp,\; m\in \ZZ\}$ is a subgroup of $\tG$.
The \il{ps}{principal series}principal series representations of
$\tG$ are obtained by induction from the characters, which can be
written as
\be \ch_{s,r} : (z,m\pi) \mapsto y^s\, e^{-m\pi i r}\,, \ee
with $s\in \CC$, $r\in \CC\mod 2\ZZ$. This leads to the space
\il{Vps}{$\V{}\om[s,r]$}$\V{}\om[s,r]$ consisting of the
real-analytic functions $\tG\rightarrow\CC$ that satisfy $f(gp)
= \ch_{s,r}(p)^{-1}\, f(g)$ with $ p\in \tP$, $g\in \tG$. The action
of $\tG$ by left translation makes $\V{}\om[s,r]$ into a
representation of~$\tG$. The collection $\bigl\{ \V{}\om[s,r]\;:\;
s\in \CC,\; r\in \RR/2\ZZ \bigr\}$ is
called the \emph{principal series} of representations of $\tG$,
depending on the \il{spprm}{spectral parameter}\emph{spectral
parameter} $s\in \CC$ and the \il{cechps}{central
character}\emph{central character} $\tilde k(m\pi)
\mapsto e^{-\pi i m r}$. The superscript $\om$ indicates that, for the
moment, we consider analytic vectors.

The classes of $\tG/\tP$ can be parametrized as $\tilde k(\th)\tP$
with $\th \in \RR\bmod\pi\ZZ$. We can describe the elements of
$\V{}\om[s,r]$ as functions $f:\RR\rightarrow\CC$ that satisfy
$f(\th+\nobreak \pi) = e^{\pi i r}\, f(\th)$. With some work one can
explicitly describe $f\mapsto f\bigm|\widetilde{\matc
abcd}$ in terms of
analytic functions of $\th$ depending on $a$, $b$, $c$, and $d$.

This is not a practical way to work with principal series
representations. We choose $p\in r+2\ZZ$ and relate $f$ as above to
$\ph$ on $\proj\RR$ by $\ph(-\cot\th) = e^{-ip\th}\, f(\th)$. This
leads to a realization of the principal series $\V{}\om[s,r]$ in the
real-analytic functions on $\proj\RR$. We denote this realization by
\il{Vomsp}{$\V{}\om(s,p)$}$\V{}\om(s,p)$, and call it a
\il{prjmps}{projective model}\emph{projective model} of
$\V{}\om[s,r]$. The model depends on the choice of $p\equiv r\bmod
2$. If one carries out the computations one arrives at the following
description of the action\il{proj-model}{$|^\prj_{s,p}$}
\badl{proj-model} \ph|^\prj_{s,p} \tilde k(\pi) \,(t) &\= e^{\pi i r
}\, \ph(t)\qquad(\text{independent of $q\equiv r\bmod 2$})\,,\\
\ph|^\prj_{s,p} \tilde g\, (t) &\= (a+ic)^{-s-p/2}\,(a-ic)^{-s+p/2}\\
&\qquad\hbox{} \Bigl(\frac{t-i}{t-g^{-1}\,i}\Bigr)^{s-p/2}\,
\Bigl(\frac{t+i}{t-g^{-1}(-i)}\Bigr)^{s+p/2}\,
\ph\Bigl(\tfrac{at+b}{ct+d}\Bigr)\,,\\
\eadl
for $g=\matc abcd\in G_0\subset\SL_2(\RR)$, as defined in~\eqref{G0}.

\rmrks \itmi The description in \eqref{proj-model} is complicated. The
factor $ \bigl(\frac{t-i}{t-g^{-1}\,i}\bigr)^{s-p/2}$ is holomorphic
on $\proj\CC$ minus a path in $\uhp$ from $i$ to $g^{-1}i$, and
similarly the factor $\bigl(\frac{t+i}{t-g^{-1}(-i)}\bigr)^{s+p/2}$
is holomorphic on $\proj\CC$ minus a path in $\lhp$ from $-i$ to
$g^{-1}(-i)$. So if $\ph$ is a real-analytic function on $\proj\RR$,
then $\ph|^\prj_{s,p}\tilde g$ is also real-analytic on $\proj\RR$.

\itm Any real-analytic function on $\proj\RR$ is the restriction of a
holomorphic function on some neighbourhood of $\proj\RR$ in
$\proj\CC$. We can view $\V{}\om(s,p)$ as a space on holomorphic
functions on some neighbourhood $U_\ph$ of $\proj\RR$ in $\proj\CC$.
The action $|^\prj_{s,p}$ preserves this space.

\itm We do not have one projective model of $\V{}\om[s,r]$, but
infinitely many. Multiplication by the function $t\mapsto
\bigl(\frac{t-i}{t+i}\bigr)^\ell$, with $\ell\in \ZZ$, gives an
isomorphism
\be\label{Vqchange} \V{}\om(s,r+2\ell) \longrightarrow \V{}\om(s,r)\,.
\ee

\itm The action $|^\prj_{s,p}$ leaves invariant other spaces of
functions on~$\proj\RR$, for instance the $C^\infty$-functions. This
leads to the space \il{V*(s,p)}{$\V{}\ast(s,p)$}$\V{}\infty(s,p)$ of
smooth vectors in the principal series representation. The discussion
in~\cite[\S2]{BLZ13} of the space distribution vectors and
hyperfunction vectors can be applied here, leading to
$\V{}{-\infty}(s,p)$ and $\V{}{-\om}(s,p)$.

\itm All elements of $\V{}\om(s,p)$ can be represented as a sum
\be\label{ephisum} \sum_{\mu \in \ZZ} c_\mu \,\Bigl(
\frac{t-i}{t+i}\Bigr)^\mu\,, \ee
with $c_\mu = \oh(e^{-a|\mu|})$ for some $a>0$. For the larger spaces
$\V{}x(s,p)$ with $x=\infty,-\infty,-\om$, there are similar
descriptions, like in~\cite[(2.18)]{BLZm}, each with a condition on
the growth of the coefficients~$c_\mu$.

\subsubsection{Highest weight subspaces}\label{sect-hwssp}For general
combinations of $s, p\in \CC$ the $\tG$-module $\V{}\om(s,p)$ is
irreducible.
(Reducibility has to be understood as the existence of a \emph{closed}
non-trivial invariant subspace, for the topology on $\V{}\om(s,p)$
that in the projective model is induced by the collection of supremum
norms on the neighbourhoods $U$ of $\proj\RR$ in~$\proj\CC$.)
Reducibility occurs if $2s\equiv p$ or $2s\equiv
-p$ modulo~$2$. For our purpose we consider $2s\equiv -p\bmod 2$. In
view of the isomorphism in~\eqref{Vqchange} we can look at the case
$(s,p)
 = \bigl(1-\nobreak\frac r2,r-\nobreak 2\bigr)$. In that case the
action in~\eqref{proj-model} is given
by\ir{proj-special}{|^\prj_{s,p}}
\badl{proj-special} \ph|^\prj_{1-r/2,r-2} \tilde k(\pi) \,(t) &\=
e^{\pi i r }\, \ph(t)\,,\\
 \ph|^\prj_{1-r/2,r-2} \tilde g\, (t) &\= \,(a-ic)^{r-2}\,
 \Bigl(\frac{t-i}{t-g^{-1}\,i}\Bigr)^{2-r}\,
\ph\Bigl(\tfrac{at+b}{ct+d}\Bigr)\,.\\
\eadl

The factor $\left( (t-i)/(t-g^{-1}i) \right)^{2-r}$ has  
singularities only on a path in $\uhp$ from $i$ to $g^{-1}i$.
Hence $\V{}\om\left(1-\frac r2,r-2\right)$ contains as an invariant
subspace the vectors represented by a holomorphic function on a
neighbourhood of $\lhp\cap \proj\RR$ in~$\proj\CC$. That is just the
\il{projmod}{projective model}projective model
$\Prj{2-r}\dsv{2-r}\om$. Moreover, a comparison
of~\eqref{proj-special} with \eqref{prjact} shows that
$|^\prj_{1-r/2,r-2}\tilde g$ is the same as the operator
$|^\prj_{2-r}g$. In this way, the space $\dsv{2-r}\om$ can be viewed
as an invariant subspace of $\V{}\om\left(1-\frac r2,r-2\right)$.

In the representation \eqref{ephisum} the subspace
\il{ucDsvom}{$\dsv{2-r}\om$}$\Prj{2-r}\dsv{2-r}\om
\subset\V{}\om\bigl(1-\nobreak\frac r2,r-\nobreak 2\bigr)$ is
characterized by $c_\mu=0$ for $\mu>0$. Then the sum represents a a
holomorphic function on a neighbourhood of~$\lhp\cup\proj\RR$
in~$\proj\CC$.

The function $t\mapsto \Bigl( \frac{t-i}{t+i}\Bigr)^\mu$ is an
eigenfunction of $\tilde k(\th)$ with eigenvalue $e^{\pi i
(r+2\mu)}$. One calls $r+2\mu$ the \il{wt-repr}{weight}weight. In
$\Prj{2-r}\dsv{2-r}\om$ only weights $r+2\mu$ with $\mu\leq 0$ occur,
hence the name \il{uchw}{highest weight subspace}\emph{highest
weight} subspace.

We may proceed similarly with the larger representations spaces
$\V{}\infty\bigl(1-\nobreak\frac r2,r-\nobreak 2\bigr) $,
$\V{}{-\infty}\bigl(1-\nobreak\frac r2,r-\nobreak 2\bigr)$, and $
\V{}{-\om}\bigl(1-\nobreak\frac r2,r-\nobreak 2\bigr)$, to obtain
descriptions of the projective models of $\dsv{v,2-r}x$ with
$x=\infty,-\infty,-\om$.

\subsection{Related work}\label{app-lit}The idea to view automorphic
forms as functions on a Lie group is well-known, and has led to wide
generalizations. We have not tried to find the first place where this
idea appears in the literature. To handle automorphic forms of
non-integral weight one has to use a central extension of the Lie
group $\SL_2(\RR)$. For half-integral weights one needs a double
cover, the \il{metapl}{metaplectic group}\emph{metaplectic group}.
See, {\sl e.g.}, Gelbart`s treatment~\cite{Ge76}. For general complex
weights we need the universal covering group~$\tG$. See Selberg
\cite{Se56}, and Roelcke \cite[\S4]{Roe66}.

Covering groups are often described with a $2$-cocycle on $\SL_2(\RR)$
with values in the center, $\ZZ/2\ZZ$ for the metaplectic group,
$\tZ\cong\ZZ$ for~$\tG$. This cocycle turns up naturally in the
description of multiplier systems, even if one does not use the
language of Lie groups. Petersson gives it in~\cite[(11)]{Pe38}, and
Roelcke in~\cite[(1.7)]{Roe66}. We feel more comfortable with the
description of $\tG$ as the space $\uhp\times\RR$ provided with an
analytic group structure. This keeps the $2$-cocycle hidden in the
properties of the lift $g\mapsto\tilde g$.

For all semisimple Lie groups the principal series of representations
is important. In \cite[Chap~II]{Kna86} one finds examples. For the
universal covering group~$\tG$ of $\SL_2(\RR)$ it was developed by
Puk\'anski \cite{Pu64}, since he needed it for function theory
on~$\tG$. Chapter~VII of~\cite{Kna86} discusses the construction of
principal series representations as an induced representation.

%%%%%%%%%%%%%% references %%%%%%%%%%%%%%%%%%%%%%%%%%%%%%%%%

\ifcitenumber
\newcommand\bibit[4]{
\bibitem {#1}#2: {\em #3;\/ } #4}
\newcommand\bibitq[4]{
\bibitem {#1}#2: {\em #3\/ } #4}
\else
\newcommand\bibit[4]{
\bibitem[#1] {#1}#2: {\em #3;\/ } #4.}
\newcommand\bibitq[4]{
\bibitem[#1] {#1}#2: {\em #3\/ } #4.}
\fi
\newcommand\bibitn[4]{} % to be inserted later,
     % if we refer to it
% 1: label
% 2: author(s)
% 3: title
% 4: remaining part of reference
\raggedright
\iffilename
% file name on bottom of page
\renewcommand\subsnb{}
\fi

%%%%%%%%%%%%%%% indices %%%%%%%%%%%%%%%%%%%%%%%%%%%%%%%%

\newcommand\ind[2]{\item #1\quad\ #2}
\renewcommand\il[1]{\pageref{inl-#1}}
\renewcommand\ir[1]{\pageref{#1}}

\section*{Index}
\begin{multicols}{2}
 \raggedright 
\begin{trivlist}\footnotesize
\ind{analytic boundary germ}{\il{abg}}
\ind{analytic cohomology}{\il{analcoh}}
\ind{argument convention}{\ir{ac}}
\ind{asymptotic series}{\il{asmpts}}
\ind{automorphic form}{\il{af}, \il{ucaftG}}
\ind{---, holomorphic}{\il{afh}}
\ind{---, unrestricted holomorphic}{\il{afuh}}
\ind{automorphic integral}{\il{ai}}
\ind{automorphic transformation behavior}{\il{atb}}
\ind{$\Gm$-average}{\il{avr}}
\indexspace
\ind{bi-invariant differential operator}{\il{bido}}
\ind{boundary germs}{\il{bg}}
\ind{boundary operators}{\il{bo}}
\ind{boundary singularity}{\il{singul}, \il{bs1}}
\indexspace
\ind{Casimir operator}{\il{Casop}}
\ind{central character}{\il{cchu}, \il{cechps}}
\ind{closed geodesic}{\il{clgeo}}
\ind{coboundary}{\il{cob}}
\ind{cocycle}{\il{coc}}
\ind{cofinite}{\il{cft}}
\ind{cohomology group}{\il{cg}}
\ind{complex projective line}{\il{cpl}}
\ind{cusp}{\il{cu}}
\ind{cusp form}{\il{cf}}
\ind{cuspidal triangle}{\il{cutr}}
\ind{cyclic interval}{\il{ci}}
\indexspace
\ind{Dedekind eta-function}{\il{Def}, \il{Deta-pe}}
\ind{discrete subgroup}{\il{ds}, \il{dgu}}
\ind{dual weight}{\il{dw0}}
\indexspace
\ind{edge of a tesselation}{\il{edte}}
\ind{eigenvalue of automorphic form}{\il{ei}}
\ind{elliptic point of order~$2$}{\il{ep}}
\ind{entire automorphic form}{\il{eaf}}
\ind{excised neighbourhood}{\il{en}}
\ind{$E$-excised neighbourhood}{\il{Een}}
\ind{excised semi-analytic vectors}{\il{esav}}
\ind{excised set}{\il{es}}
\ind{exponential decay}{\il{expdecc}}
\ind{exponential map}{\il{em}}
\indexspace
\ind{face of a tesselation}{\il{fate}}
\ind{free space resolvent kernel}{\il{fsresk}}
\ind{Fourier coefficient}{\il{Fc}}
\ind{Fourier expansion}{\il{Fe}}
\ind{fundamental domain}{\il{funddom}}
\indexspace
\ind{Green's form}{\il{Gf}}
\indexspace
\ind{$r$-harmonic}{\il{r-harm}}
\ind{harmonic lift}{\il{hl}, \il{hlc}}
\ind{harmonic automorphic form}{\il{harmaf}}
\ind{harmonic boundary germ}{\il{hbg}}
\ind{highest weight subspace}{\il{uchw}}
\ind{holomorphic automorphic form}{\il{haf}}
\ind{---, unrestricted}{\il{hafu}}
\ind{holomorphic Eisenstein series in weight $2$}{\il{holEis}}
\ind{horocycle}{\il{horc}}
\ind{Hurwitz-Lerch zeta-function}{\il{HuLe}}
\ind{---, asymptotic behavior}{\il{HLzt-as}}
\ind{hyperbolic matrix}{\il{hypelt}}
\ind{hyperbolic measure}{\il{hpm}}
\indexspace
\ind{incomplete gamma-function}{\il{icgf}}
\ind{induced representation}{\il{indrepr}}
\ind{invariants}{\il{invar}, \il{inv}}
\ind{Iwasawa decomposition}{\il{Iwdc}}
\indexspace
\ind{kernel function}{\il{Krkf}}
\indexspace
\ind{$L$-series}{\il{Lser}}
\ind{left-invariant differential operator}{\il{lido}}
\ind{left translation}{\il{uclt}}
\ind{Lie algebra}{\il{La}}
\ind{Lie product}{\il{Lp}}
\ind{lower half-plane}{\il{lhpl}}
\indexspace
\ind{metaplectic group}{\il{metapl}}
\ind{mixed parabolic coboundary}{\il{mpcob1}}
\ind{mixed parabolic cochain}{\il{mpcc}}
\ind{mixed parabolic cocycle}{\il{mpcoc}, \il{mpcoc1}}
\ind{mixed parabolic cohomology group}{\il{mpcg}, \il{mpc1}}
\ind{mock automorphic form}{\il{maf}}
\ind{modular group}{\il{mgr}, \il{mgp}, \il{mguc}}
\ind{module of singularities}{\il{msing}}
\ind{multiplier system}{\il{ms}, \il{ucmsu}}
\ind{--- for a given weight}{\il{mssuit}}
\ind{---, unitary|}{\il{msu}}
\indexspace
\ind{one-sided average}{\il{osa}}
\ind{---, asymptotic behavior}{\il{1sa-as}}
\indexspace
\ind{parabolic cocycle}{\il{pbcoc}}
\ind{parabolic cohomology group}{\il{pchgp}}
\ind{parabolic difference equation}{\il{parbeq}}
\ind{parabolic matrix}{\il{parbm}}
\ind{period function for $\eta^{2r}$}{\il{pfeta2r}}
\ind{$\ld$-periodic}{\il{ldper}, \il{ldperbg}}
\ind{Pochhammer symbol}{\il{Poch}}
\ind{Poincar\'e series, signed hyperbolic}{\il{Pssh}}
\ind{polar expansion}{\il{polexp}}
\ind{polynomial growth}{\il{polgr}}
\ind{---, at the cusps}{\il{polgrc}}
\ind{powers of the Dedekind eta-function}{\il{etapow}, \il{etapow1},
\ir{hmodint}, \il{poweta-q}, \il{etapow-q}}
\ind{primitive hyperbolic}{\il{phe}}
\ind{primitive parabolic}{\il{prpar}}
\ind{principal series}{\il{ps}}
\ind{projective model}{ \il{prjm}, \il{prjmps}, \il{projmod}}
\indexspace
\ind{quantum automorphic form}{\il{qautf}}
\ind{quantum modular form}{\il{qmf}}
\ind{---, strong}{\il{qafs}}
\indexspace
\ind{rational period function}{\il{rpf}}
\ind{real projective line}{\il{rpl}}
\ind{resolvent kernel}{\il{rk}}
\ind{restriction morphism}{\il{rm}}
\ind{right-invariant differential operator}{\il{rido}}
\ind{right translation}{\il{ritr}}
\indexspace
\ind{semi-analytic vector}{\il{sav}}
\ind{---, excised}{\il{save}}
\ind{---, smooth}{\il{savs}}
\ind{--- with simple singularities}{\il{sav-ss}}
\ind{separation of singularities}{\il{sepsing}}
\ind{shadow operator}{\il{so}, \il{shad1}}
\ind{sheaf of $r$-harmonic functions}{\il{shf}}
\ind{sheaf of real-analytic functions}{\il{srafP}}
\ind{signed hyperbolic Poincar\'e series}{\il{shPs}}
\ind{singularity}{\il{singu}}
\ind{singularities, module of}{\il{singm}}
\ind{smooth semi-analytic vector}{\il{ssav}}
\ind{spectral parameter}{\il{spprm}}
\ind{splitting of harmonic boundary germs}{\il{splbg}}
\ind{stabilizer}{\il{stab}}
\ind{strong quantum automorphic form}{\il{sqaf}}
\ind{system of expansions}{\il{soexp}}
\indexspace
\ind{tesselation}{\il{tess}}
\indexspace
\ind{unitary multiplier system}{\il{ums}}
\ind{universal covering group}{\il{ucg}}
\ind{universal enveloping algebra}{\il{uea}}
\ind{unrestricted holomorphic automorphic form}{\il{uhaf}}
\ind{upper half-plane}{\il{uhpl}}
\indexspace
\ind{weight}{\il{wt}, \il{ucwt}, \il{uc1wt}, \il{wt-repr}}
\end{trivlist}
\end{multicols}

\section*{List of notations}
\renewcommand\ind[2]{\item $#1$\quad\ #2}
\begin{multicols}{3}
 \raggedright
\begin{trivlist}\footnotesize
\ind{A_r(\Gm,v)}{\il{Ar}}
\ind{A_r^0(\Gm,v)}{\ir{A0}}
\ind{A^\E_r(\Gm,v)}{\ir{AE}}
\ind{\qA_{2-r}(\Gm,v)}{\ir{qA}}
\ind{\avGm{v,r}}{\ir{avG}}
\ind{\av{T,\ld}}{\ir{avTdef}}
\ind{\av {T,\ld}^+,\; \av{T,\ld}^-}{\ir{avpm}, \il{osa1}}
\ind{\av{\pi,\ld}^\pm}{\ir{avpidef}}
\ind{a_n(\ca,F)}{\il{anxi}}
\indexspace
\ind{\bg_r}{\ir{bdg}, \ir{bg-decomp}}
\ind{B^1(\Gm;V)|}{\il{B1}}
\ind{ B^i(F_\pnt^\tess;V,W)}{\il{BiFtess}}
\ind{\bsing}{\il{bsing}, \il{bsing1}}
\indexspace
\ind{\cu $ set of cusps$ }{\il{Scu}}
\ind{C^i(F^\tess_\pnt;V,W)}{\ir{CiFtess}}
\ind{\qC}{\ir{Cdef}}
\ind{c_F}{\ir{cF}}
\ind{c^{z_0}_F}{\ir{cz0-def}}
\ind{\tilde c $ lift of cocycle $ c}{\il{tldc}}
\ind{c(p,a;x)}{\ir{cpa-def}}
\ind{c_0(a;x)}{\ir{cpb-def-c}}
\ind{\tilde c(p,a;\cdot)}{\il{tldcpa}}
\ind{c(e)_\xi}{\il{cexxi}}
\ind{c_r}{\il{crt}, \il{cr}}
\indexspace
\ind{\dsv{2-r}\om,\; \dsv{2-r}\infty,\; \dsv{2-r}{-\infty},\;
 \dsv{2-r}{-\om}}{\il{dsv*}, \il{ucDsvom}}
\ind{\dsv{v,2-r}\om,\; \dsv{v,2-r}\infty,\; \dsv{v,2-r}{-\infty},\;
 \dsv{v,2-r}{-\om}}{\il{dsv-v2-r}}
\ind{\dsv{2-r}\pol,\; \dsv{v,2-r}\pol}{\il{dsvpol}}
\ind{\dsv{2-r}\om[\xi_1,\ldots,\xi_n],\; \dsv{2-r}\fsn,\;
\dsv{2-r}\fs}{\ir{dsvomxi}}
\ind{\dsv{2-r}{\om,\infty}[\xi_1,\ldots,\xi_n]}{\il{dsvominf}}
\ind{ \dsv{2-r}{\om,\smp}[\xi_1,\ldots,\xi_n]}{\ir{smpdef}}
\ind{ \dsv{2-r}{\om,\wdg}[\xi_1,\ldots,\xi_n]}{\ir{dsvwdg}}
\ind{\dsv{2-r}{\fsn,\cond},\; \dsv{2-r}{\fs,\cond}}{\ir{fs-fsn-cond}}
\ind{\dsv{2-r}{\fsn,\cond_1,\cond_2},\; 
 \dsv{2-r}{\fs,\cond_1,\cond_2}}{\il{combcond}}
\ind{\dsv{v,2-r}{\om,\cond},\; \dsv{v,2-r}{\fsn,\cond},\;
\dsv{v,2-r}{\fs,\cond}}{\il{gmactfsfsn}}
\ind{D_p}{\ir{Dp}}
\ind{D_\e^\pm}{\ir{Depspm}}
\ind{D(\xi)}{\ir{Dxi}}
\ind{\partial_i}{\il{pai}}
\ind{d,\; d^i}{\il{deri}}
\indexspace
\ind{\esv r \om,\; \esv{v,r}\om}{\ir{esv-gen}, \ir{esv-iw}}
\ind{\esv{r}{\om,\wdg}[\xi_1,\ldots,\xi_n],\;
\esv{r}{\om,\wdg}[]}{\il{esv-gen[]}, \il{esv-iw[]}, \il{esv[]}}
\ind{\esv r {\fsn,\wdg},\; \esv{r}{\fs,\wdg}}{\ir{esv*}}
\ind{\esv {v,r}{\fsn,\wdg},\; \esv {v,r}{\fs,\wdg}}{\il{esvfsn}}
\ind{\EE^+,\; \EE^-}{\il{EEpm}}
\ind{E_2,\; E_s^\ast}{\ir{E2}}
\ind{e_\ca}{\il{eca}}
\indexspace
\ind{\fd}{\il{fd}}
\ind{\fd_Y}{\il{fdY}}
\ind{\F{r,n}}{\ir{Frndef}}
\ind{F_i^\tess}{\il{Fitess}}
\ind{F_i^{\tess,Y}}{\il{FitessY}}
\ind{F(\mu,\cdot)}{\ir{pol-term}}
\ind{f_r}{\ir{frdef}}
\ind{f_\ca}{\il{fca}}
\indexspace
\ind{\Gr r \ast}{\ir{Gromfs}}
\ind{\Gr{v,r}\ast}{\il{Grvr}}
\ind{\Gr r {\om,\exc}[\xi_1,\ldots,\xi_n]}{\il{Gr[}}
\ind{G_0}{\ir{G0}}
\ind{\tG $ universal covering group$ }{\il{tG}}
\ind{\glie_r}{\ir{glie}}
\ind{\glie}{\il{gliec}}
\indexspace
\ind{\sharm_r}{\il{sharm}}
\ind{\sharmb r}{\ir{sharmb}}
\ind{\sharmh r}{\ir{sharh}}
\ind{\harm_r(\Gm,v)}{\il{hn}}
\ind{\uhp,\;\lhp}{\il{ulhp}}
\ind{\H{r,\mu}}{\ir{Hrmu}, \il{Hrmucu}}
\ind{H(s,a,z)}{\ir{HLzt}}
\ind{H^1(\Gm;V)}{\ir{1coh}}
\ind{\hpar^1(\Gm;V,W)}{\il{parbc}, \il{hpar1mpc}}
\ind{\hpar^1(\Gm;V)}{\il{hpar}}
\indexspace
\ind{\I r \ld}{\il{Irld}}
\ind{I(r,s)}{\ir{Irs}}
\indexspace
\ind{K_r(\cdot;\cdot)}{\ir{Kq}, \il{Kr1}}
\ind{K_{v,r}}{\il{Kvr}}
\ind{k(\th)=\matr{\cos\th}{\sin\th}{-\sin\th}{\cos\th}}{\il{k(.)}}
\ind{\tilde k(\th) \in \tG}{\il{tldk}}
\indexspace
\ind{ L:g\mapsto  L_g = f|g }{\il{Lg}}
\ind{L_\XX}{\il{LLie}}
\ind{L\bigl(\eta^{2r},s\bigr)}{\ir{L-etap}}
\ind{\ell(\gm)}{\il{elleta}}
\indexspace
\ind{\M{r,\mu}}{\ir{mex}, \ir{Mrmu}, \il{Mrmucu}}
\ind{M_r(\Gm,v)}{\il{Mdef}}
\indexspace
\ind{\N r \om,\; \N r {\fs,\wdg}}{\il{Nr}}
\ind{N_R(\cdot)}{}
\indexspace
\ind{\hol}{\il{holdef}}
\ind{\hol_\uhp}{\il{holuhp}}
\indexspace
\ind{\Pcal=\Ci\dsv{2-r}{-\infty}}{\ir{Pkr}}
\ind{\proj\CC,\; \proj\RR}{\il{proj}}
\ind{\P{r,\mu}}{\ir{Prmu}, \il{Prmucu}}
\ind{\tP\subset\tG}{\il{tP}}
\ind{\pr:\tG \rightarrow\PSL_2(\RR)}{\ir{pr}}
\ind{\Prj{2-r}}{\ir{Prj}, \il{Prj1}}
\ind{P_\ca}{\il{Pinf}}
\ind{p_k(r)}{\il{pk-eta}}
\ind{p_r(z;\tau)}{\ir{prkernel}}
\ind{p_\ph}{\ir{pphi-def}}
\indexspace
\ind{\Q_{v,2-r}}{\il{Qv}}
\ind{Q}{\il{Q}}
\ind{Q_F}{\ir{QF}}
\ind{Q_r(\cdot;\cdot)}{\ir{Qr-def}, \il{Qruc}}
\ind{\bcoh r \om}{\il{bcoh}, \il{bcoh1}}
\ind{q(\cdot)}{\il{q}}
\indexspace
\ind{\R}{\il{Rqaf}}
\ind{\R_{v,2-r}}{\il{Rv}}
\ind{R_r f}{\ir{Rrf}}
\ind{R_g}{\ir{rt}}
\ind{R_\XX}{\il{RLie}}
\ind{\coh r \om}{\il{cohrom-thm}, \il{cohom}, \il{coh1}}
\ind{\coh r \infty}{\ir{rinf}}
\ind{r $ weight $ }{\il{rwt}}
\indexspace
\ind{\Sg{v,2-r}{},\; \Sg{v,2-r,\xi}{}\; \Sg{v,2-r}{\ast,\ast}}{\ir{Sg}}
\ind{\Sg\xi{}}{\il{Sg-xi}}
\ind{\Sg{}{}\{x\}}{\ir{Sg-orb}}
\ind{\cusp r(\Gm,v)}{\il{Scf}}
\ind{\singr}{\il{sing}}
\ind{S=\matr0{-1}10}{\il{Smat}}
\indexspace
\ind{\tess}{\il{te}}
\ind{T=\matc 1101}{\il{Tmat}}
\indexspace
\ind{\mathcal U}{\il{Uenv}}
\ind{u(C;z), \; u(\tilde c;z),\; u([c];z)}{\ir{uc}}
\indexspace
\ind{\V{2-r}\om, \V{v,2-r}\om}{\ir{Vtmrom}}
\ind{\V{}\om[s,r]}{\il{Vps}}
\ind{\V{}\ast(s,p)}{\il{Vomsp},  \il{V*(s,p)}}
\ind{V_\ca}{\il{Vca}, \il{Va}}
\ind{v $ multiplier system$ }{\il{vnc}}
\ind{v_\ch}{\il{vch}}
\ind{v_r}{\ir{vr-mod}, \il{ucvr}}
\indexspace
\ind{\W r \om,\; \W{v,r}\om}{\ir{Wdef}}
\ind{\Whol r \om}{\ir{Whol}, \il{Whol1}}
\ind{w=\frac{z-i}{z+i}}{\il{pc}}
\indexspace
\ind{X^\tess_i}{\il{Xtess}}
\ind{X^{\tess,Y}_i}{\il{XiY}}
\ind{x=\re z}{\il{xulhp}}
\indexspace
\ind{y=\im z}{\il{yulhp}}
\indexspace
\ind{\tZ $ center of $\tG}{\il{tldZ}}
\ind{Z^1(\Gm;V)}{\il{Z1}}
\ind{\zpar^1(\Gm;V,W)}{\ir{zparb}}
\ind{ Z^i(F_\pnt^\tess;V,W)}{\il{ZiFtess}}
\ind{Z^i(F_\pnt^{\tess,Y};V)}{}
\ind{(z,\th)\,\in \tG}{\il{Ic}}
\indexspace
\indexspace
\ind{\al_r}{\ir{alr}}
\ind{\al_\ca}{\il{alca}}
\indexspace
\ind{\Gm $ cofinite discrete group$ }{\il{Gm}}
\ind{\bar \Gm = \Gm\bigm/\{1,-1\}}{\il{bGm}}
\ind{\tGm \subset\tG}{\ir{tGm}}
\ind{\Gmod $ modular group$ }{\il{Gmod}, \il{Gmodgen}}
\ind{\tGmod}{\il{tGmod}}
\ind{\Gm_\ca $ stabilizer of $\ca }{\il{stabxi}, \il{Gmca}}
\indexspace
\ind{\Dt_r}{\ir{Dtr}}
\indexspace
\ind{\eps(x,\gm^{-1}p)}{\ir{epsdef}, \il{eps1}}
\ind{\eps_P(e,p)}{\il{epsP}}
\indexspace
\ind{\eta^{2r}}{\ir{eta2r}}
\indexspace
\ind{\Ci $ involution$ }{\ir{Ci}}
\indexspace
\ind{\k=\k_{v,2-r,\gm}}{\il{kp-hyp}}
\indexspace
\ind{\shad_r}{\ir{shad}, \il{sh1}}
\indexspace
\ind{\rs_r}{\ir{rsl}}
\ind{\rsp_r}{\il{restr}, \il{res1}}
\indexspace
\ind{\pi_\ca}{\il{pica}}
\indexspace
\ind{\s_\ca}{\il{gxi}}
\indexspace
\ind{\ch_r}{\ir{uc-chr}}
\indexspace
\ind{\Psi_\Eta,\; \tilde\Psi_\Eta}{\ir{PsiEta}}
\ind{\ps^{z_0}_F}{\ir{psiz0def}, \il{psiz0prop}}
\ind{\ps^\ca_F}{\ir{psparb}}
\ind{\ps_F}{\ir{psF}}
\ind{\ps:\gm\mapsto \ps_\gm \text{ cocycle}}{}
\indexspace
\ind{\om}{\ir{omdef}}
\ind{L_\om=R_\om}{\ir{Cas}}
\ind{\om_r(F;t,z)}{\ir{omr}}
\ind{\om_r^\prj(F;t,z)}{\ir{omr-prj}}
\indexspace
\indexspace
\ind{V^\Gm,\; V^\gm $
(invariants)$ }{\il{VGm}, \il{Vinvgm}}
\ind{W[\xi]}{\il{W[]}}
\ind{f\mapsto f|_r}{\ir{SL2op}}
\ind{f\mapsto f|_{v,p}}{\il{vq-act}, \il{actGm-qaf}}
\ind{f\mapsto f|g}{\ir{Lt}}
\ind{f\mapsto f|^\prj_{s,p}g}{\il{proj-model}, \ir{proj-special}}
\ind{(x)|\gm}{\il{x|g}}
\ind{\XX f \;=R_\XX f}{\il{XXf}}
\ind{\ph^\prj\mapsto \ph^\prj|^\prj_{2-r}g}{\ir{prjact}}
\ind{[\cdot,\cdot]_r}{\ir{Gr-f}}
\ind{[\cdot,\cdot]}{\il{[.,.]}}
\ind{(\xi,\eta)_{\mathrm{cycl}} $ cyclic interval in
$ \proj\RR}{\il{cyclint}}
\ind{(a)_m}{\il{Ps}}
\ind{\sim}{\ir{asser}, \il{sim1}}
\ind{\stackrel\pnt=}{\il{pt=}}
\ind{g\mapsto \tilde g\;:\; \SL_2(\RR) \rightarrow\tG}{\ir{tildeg}}
\end{trivlist}
\end{multicols}

\end{document}

%% The following is the old Subsection 5.3

\subsection{Example} To illustrate \S\ref{sect-prfhl} we use $\eta^4$,
which served as a nice example in Chapter~15 of \cite{Br94} and \S4C1
in~\cite{BrDi}. It is a cusp form of weight~$2$. On the group
$\Gm_c\subset\SL_2(\ZZ)$ generated by $C=\matr2{-1}{-1}1$ and
$D=\matc2111$, it transforms according to the trivial multiplier system.

The holomorphic function
\be H(z) \;:=\; -2\pi i \int_\infty^z\eta^4(\tau)\, d\tau\ee
on $\uhp$ satisfies $H(\gm z) = H(z)+\ld(\gm)$ for $\gm\in \Gm_c$,
where $\ld:\Gm_c\rightarrow \CC$ is a group homomorphism. The image
of $\ld$ is the lattice $\Ld=\ZZ[\rho]\, \om$ in $\CC$ where
$\rho=e^{\pi i/3}$ and
\be \om \= \frac{\sqrt \pi\,\Gf(1/6)}{6\sqrt 3\,\Gf(2/3)}>0\,.\ee
We have $\ld(C) = \rho\om$ and $\ld(D) = \rho^{-1}\om$. (See \S15.2-3
in~\cite{Br94}.)

Since $\eta^4$ is a cusp form we use the base point $\infty$, and
consider instead of $Q_{\eta^4}$ in~\eqref{QF} the function
\be q(t) \;:=\; \int_\infty^{\bar t} \eta^4(\tau)\, d\tau \=
\frac{-1}{2\pi i}\, H(\bar t) \qquad(t\in \lhp)\,.\ee
Then  we have $\ps_{\eta^4,\gm}^\infty(t) = q|_{1,0}(\gm-1)(t) =
\frac{-\ld(\gm)}{2\pi i}$ for $\gm\in \Gm_c$,
\be (\Ci q )(z) \= \frac1{2\pi i}\,\overline{H(z)}\,,\qquad (\Ci
q)(\gm z) \=
(\Ci q)(z) = \frac{\overline{\ld(\gm)}}{2\pi i}\quad(\gm\in \Gm_c)\,,
\ee
and $(\shad_0 \Ci q)(z) = 2i\, \eta^4(z)$.  (The action of $\Gm_c$ on $\CC$ is 
the trivial one, so cocycles are group homomorphisms.)

To add a holomorphic function $m$ to the $0$-harmonic function $\Ci q$
so that $m+\Ci q$ is $\Gm_c$-invariant for the action $|_{1,0}$, we
use the
\il{Wzt}{Weierstrass zeta-function}\emph{Weierstrass
zeta-function}\ir{Wz}{\zt(\cdot;\Ld)}
\be \label{Wz} \zt(u;\Ld) \= \frac 1u + \sum_{\nu\in \Ld,\,
\nu\neq0}\Bigl( \frac 1{u-\nu}+\frac1\nu+\frac u{\nu^2}\Bigr)\,,\ee
which satisfies $\zt(u+\nobreak \nu;\Ld) = \zt(u;\Ld) + h(\nu)$ for
$\nu \in \Ld$, where $h$ is a group homomorphism $\Ld\rightarrow
\CC$. See, e.g., \S6, Chap.~I of~\cite{KK}. (The
usual notation $\eta$ for $h$ may cause confusion here.)

For the lattice $\Ld=\ZZ[\rho]\, \om$ a computation with the Legendre
relation gives
\be h(\nu) \= \frac{2\pi}{\sqrt 3\, \om^2}\, \bar \nu\qquad(\nu\in
\Ld)\,.
\ee
So the choice
\be m(z) \;:=\; \frac{i\sqrt 3\om^2}{4\pi^2 }\,
\zt\bigl(H(z);\Ld\bigr)
\ee
gives an explicit holomorphic function on $\uhp$ such that $(m+\Ci
q)|_{1,0}\gm= m+\Ci q$ for all $\gm\in \Gm_c$. The function $m+\Ci
q$is a harmonic lift of $2i\eta^4$. Furthermore
\be \ps_{\eta^3,\gm}^\infty \= \frac {i\sqrt 3\om^2}{4\pi^2}\,\Bigl(
\zt\bigl( H(-\gm t;\Ld\bigr) - \zt\bigl( H(-t);\Ld\bigr)
\Bigr)\qquad(\gm\in \Gm_c)\,.
\ee

\bibit{KK}{M.\,Koecher, A.\,Krieg}{Elliptische Funktionen und 
 Modulformen}{Springer-Verlag, 1998}